\numberwithin{equation}{section}
\let\OLDthebibliography\thebibliography
\renewcommand\thebibliography[1]{
  \OLDthebibliography{#1}
  \setlength{\parskip}{0pt}
  \setlength{\itemsep}{2pt plus 0.5ex}
}
\def\@cite#1#2{{\m@th\upshape\bfseries%
[{#1\if@tempswa{\m@th\upshape\mdseries, #2}\fi}]}}
\theoremstyle{plain}
\newtheorem{theorem}{Theorem}[subsection]
\newtheorem{corollary}[theorem]{Corollary}
\newtheorem{proposition}[theorem]{Proposition}
\newtheorem{lemma}[theorem]{Lemma}
\theoremstyle{definition}
\newtheorem{definition}[theorem]{Definition}
\newtheorem{example}[theorem]{Example}
\newtheorem{remark}[theorem]{Remark}
\theoremstyle{remark}
\renewcommand{\qedsymbol}{{\vrule height5pt width5pt depth1pt}}
\def\al{\alpha}
\def\be{\beta}
\def\Ga{\Gamma}
\def\ga{\gamma}
\def\de{\delta}
\def\la{\lambda}
\def\La{\Lambda}
\def\om{\omega}
\def\Om{\Omega}
\def\si{\sigma}
\newcommand{\eps}{\varepsilon}
\newcommand\vphi{\varphi}
\newcommand\id{\mathop{\rm id}}
\newcommand{\cl}[1]{\mathcal{#1}}
\newcommand{\bb}[1]{\mathbb{#1}}
\newcommand{\fr}[1]{\mathfrak{#1}}
\newcommand{\T}{{\mathcal{T}}}
\newcommand{\M}{{\mathcal{M}}}
\newcommand{\foral}{\text{ for all }}
\newcommand{\qand}{\quad\text{and}\quad}
\newcommand{\ca}{\mathrm{C}^*}
\newcommand{\cenv}{\mathrm{C}^*_{\textup{env}}}
\newcommand{\tenv}{\mathcal{T}_{\textup{env}}}
\newcommand{\ol}{\overline}
\newcommand{\wt}{\widetilde}
\newcommand{\wh}{\widehat}
\newcommand{\sca}[1]{\left\langle#1\right\rangle} 
\newcommand{\nor}[1]{\left\Vert #1\right\Vert} 
\newcommand{\un}[1]{{\underline{#1}}} 
\begin{document}

\title[Symmetrisations of operator spaces]
{Symmetrisations of operator spaces}

\author[G.K. Eleftherakis]{George K. Eleftherakis}
\address{Department of Mathematics\\Faculty of Sciences\\University of Patras\\26504 Patras\\Greece}
\email{gelefth@math.upatras.gr}

\author[E.T.A. Kakariadis]{Evgenios T.A. Kakariadis}
\address{School of Mathematics, Statistics and Physics\\ Newcastle University\\ Newcastle upon Tyne\\ NE1 7RU\\ UK}
\email{evgenios.kakariadis@newcastle.ac.uk}

\author[I.G. Todorov]{Ivan G. Todorov}
\address{School of Mathematical Sciences\\ University of Delaware\\ 501 Ewing Hall\\ Newark\\ DE 19716\\ USA} 
\email{todorov@udel.edu}

\thanks{2010 {\it  Mathematics Subject Classification.} 47L25, 46L07}

\thanks{{\it Key words and phrases:} Operator systems, tensor products.}

\begin{abstract}
Let $\mathcal{A}$ be a unital C*-algebra, $\mathcal{S}$ be an operator $\mathcal{A}$-system and $\mathcal{E}$ be an operator space that is a left operator $\mathcal{A}$-module.
We introduce the symmetrisation of the pair $(\cl E,\cl S)$ as the Hausdorff completion of the balanced tensor product $\mathcal{E}^* \odot^{\mathcal{A}} \mathcal{S} \odot^{\mathcal{A}} \mathcal{E}$ with respect to a seminorm arising from the family of completely contractive completely positive $\mathcal{A}$-balanced trilinear maps.
We show that the symmetrisation is a selfadjoint operator space in the sense of W. Werner, possessing a universal mapping property for pairs of representations of $\cl S$ and $\cl E$, compatible with the $\cl A$-module actions. 
We point out cases where the symmetrisation is an operator system, and where it does not admit an Archimedean order unit.

We study separately the case where $\mathcal{A} = \mathbb{C}$;
in this case, we show that the symmetrisation seminorm is a norm, which is equivalent to, yet different from, the Haagerup tensor norm.
When $\mathcal{S} = \mathbb{C}$ we show that the symmetrisation is compatible with taking operator space duals.
In the case where $\mathcal{E}$ is a function space and $\mathcal{S} = \mathbb{C}$, we characterise the positive matricial cones of the symmetrisation in terms of positive semi-definiteness of naturally associated matrix-valued functions. 

As an application, we provide a characterisation of Morita equivalence in the operator system category involving tensorial decomposition where the analytic structure is provided by the symmetrisation.
This establishes an operator system counterpart of the factorisation Morita Theorem in other categories.
\end{abstract}

\maketitle

\tableofcontents

\section{Introduction}\label{s_intr}

The close interplay between order and metric properties occupies a cornerstone place in non-commutative analysis. 
For example, the matricial norms of an abstract operator system can be completely recovered through its Archime\-dean matrix order structure, providing a concrete realisation as a unital selfadjoint subspace of the space $\cl B(H)$ of all bounded linear operators on some Hilbert space $H$, due to the Choi-Effros Theorem \cite{choi-effros}.
Further, the metric structure of an operator space can be completely recovered via the order properties of a canonical operator system (called the \emph{Pauslen system}) associated with the operator space. 

These schemes do not account for non-unital operator spaces with a selfadjoint structure.
Nevertheless, in his seminal work W. Werner \cite{werner} has provided necessary and sufficient conditions for an involutive operator space to have a concrete realisation as a selfadjoint subspace of $\cl B(H)$ for some Hilbert space $H$. 
Similar studies in this direction have been undertaken by A. Karn \cite{karn}, C.-K. Ng \cite{ng}, and exploited further by T. Russell \cite{russell}.
Recently there has been a revived interest in studying selfadjoint operator spaces and lifting results from the unital to the non-unital setting.
This has been taken forwards with much success by M. Kennedy, S.-J. Kim and N. Manor \cite{kkm} by considering the \lq\lq duality'' with the category of pointed compact noncommutative convex sets.
A further motivation for exploiting possibly non-unital selfadjoint operator spaces comes from the recent work of A. Connes and W. D. van Suijlekom \cite{cvs}, where they compare operator systems up to their (non-unital selfadjoint) stabilisations.

The stable isomorphism used in \cite{cvs} is an incarnation of Morita equivalence for operator systems, studied by the authors in \cite{ekt}.
This type of equivalence, called \emph{$\Delta$-equivalence}, was introduced by the first author for operator algebras \cite{elef1} and was further exploited in several categories of operator spaces \cite{elef2, ek, ep, ept}.
It appears that plenty of structural data remain invariant under $\Delta$-equivalence; in this vein, several analogues of the Morita Theorems, known for rings and C*-algebras, were obtained.
In this work, we wish to provide a Morita Theorem for operator systems in terms of a tensorial factorisation.
For nonselfadjoint operator algebras, such a realisation, using the balanced Haagerup tensor product, has appeared in the work of D. Blecher, P. Muhly and V. Paulsen \cite{bmp}.
However, in the category of operator systems, the factorisation should account for positivity, and the Haagerup tensor product machinery seems to be \emph{a priori} insufficient to recapture a matrix cone structure that is compatible with the metric properties.
Indeed, although, for an operator space $\cl E$, the Haagerup tensor product $\cl E^*\otimes_{\rm h} \cl E$ can be endowed with the canonical family of matrix cones
\[
C_n 
:= 
\left\{ \left(\sum_{p=1}^k x_{p,i}^* \otimes x_{p,j}\right)_{i,j=1}^n \ : \ (x_{i,j})_{i,j} \in M_{k,n}(\cl E); k \in \bb N \right\},
\]
the matrix norms associated with that family do not recover the metric structure of $\cl E^* \otimes_{\rm h} \cl E$, thus making impossible completely isometric identifications. 
It is thus natural to ask for a metric structure on the algebraic tensor product $\cl E^* \odot \cl E$ that is compatible with the cones $C_n$ and adheres to a universal property.

A further problem is that a completely isometric flip involution fails to be readily available for the Haagerup tensor product.
For an example, consider $\cl B(H) \otimes_{\rm h} \cl B(H)$ in the free product C*-algebra $\cl B(H) \ast_{\bb C} \cl B(H)$ (for 
the latter identification see, for example, \cite[Theorem 5.13]{pisier_intr}). 
Then the flip map $a^* \otimes b \mapsto b^* \otimes a$ does not coincide with the canonical involution on $\cl B(H) \ast_{\bb C} \cl B(H)$.
Indeed, let $\pi_1$ be the identity representation of $\cl B(H)$, and $\pi_2$ be the representation given by $\pi_2(x) = uxu^*$ for a non-trivial unitary $u \in \cl B(H)$. 
By the universal property of the free product, the pair $(\pi_1,\pi_2)$ gives rise to a $*$-representation $\pi_1 \ast \pi_2$ of $\cl B(H) \ast_{\bb C} \cl B(H)$ on $H$.
If the flip map coincided with the involution, then
\begin{align*}
b^* u a u^* 
& = (\pi_1 \ast \pi_2)(b^* \otimes a) 
= (\pi_1 \ast \pi_2)(a^* \otimes b)^* 
= u b^* u^* a,
\end{align*}
for any $a, b \in \cl B(H)$, which fails for $b = u$ and $a \notin \{u^*\}'$. 
The problem arises due to the freedom in the choice of $\pi_1$ and $\pi_2$, which does not compile with the flip map unless $\pi_1 = \pi_2$.

In order to exploit further the $\Delta$-equivalence for operator systems, it is therefore necessary to exhibit a compatible 
tensor product.
This necessitates a construction, in which the flip map is an isometric involution. 
This \emph{symmetrisation} $\cl E^* \otimes_{\rm s} \cl E$ of an operator space $\cl E$ is an operator space, metrically equivalent to, but geometrically different from, the Haagerup tensor product $\cl E^*\otimes_{\rm h} \cl E$. 
The key idea is to define $\cl E^* \otimes_{\rm s} \cl E$ as a universal object through the concrete realisations $[\phi(\cl E)^* \phi(\cl E)]$ over the completely contractive maps $\phi$ of $\cl E$, thus inducing a universal property on $\cl E^* \otimes_{\rm s} \cl E$.
Moving further, we wish to expand this construction to include the tensor product $\cl E^* \odot \cl S \odot \cl E$, for an operator system $\cl S$, as well as their balanced (bimodule) variants.
This generalisation is necessary for achieving (balanced) tensorial factorisations in operator system Morita theory (see for example \cite{bmp}), and reveals further connections with the Haagerup tensor product.

Our construction resembles a tensor product; however, the corresponding norm is not a cross norm, thus failing one of Groth\'endieck's axioms \cite{Gro53} -- hence we refer to it simply as \emph{the symmetrisation}. 
The need of introducing a new metric structure is dictated by the essence of producing a \emph{topological} version of Morita equivalence as an analogue to the \emph{algebraic} version in ring theory.
This obstacle has been already identified in the seminal work of Rieffel \cite{rieffel} and is manifested in what is known as Morita Theorem IV for stable isomorphisms.
Indeed, although Morita equivalence in the category of rings has an equivalent through infinite matrices due to Camillo's Theorem \cite{cam84}, such a result is not at hand in the topological context without additional separability conditions.
There are several metrics one can induce on an algebraic tensor product, and the problem of identifying the one that is compatible with Morita contexts lies at the epicenter of the theory.

Although our original point of motivation has been to establish a Morita factorisation for $\Delta$-equivalent systems, we provide applications of the symmetrisation beyond the setting of \cite{ekt}.
We show that elements of the matricial cones of the symmetrisation of a function space can be characterised through the positive semi-definiteness of canonically associated two-variable matrix-valued functions, rendering, in this special case, canonical the approach we have taken here.
In addition, we show that the operator space dual compares with the symmetrisation of the dual of an operator space in a canonical way; the result is an order counterpart of the celebrated self-duality of the Haagerup tensor product \cite{bs}. 
Our work exhibits, as a byproduct, natural selfadjoint operator spaces that are not operator systems, that is, operator spaces with an involution structure that do not possess completely isometric complete order embeddings into the space of bounded operators on a Hilbert space whose image contains the identity operator, thus reinforcing the argument for their study, complementing the recent development \cite{kkm}.

\medskip

We next state our main results. 
Let $\cl A$ be a unital C*-algebra, $\cl S$ be an operator $\cl A$-system and $\cl E$ be a left operator $\cl A$-module with a unital module action.
Note that, if $\cl A \subseteq \cl S$, then $1_{\cl A} = 1_{\cl S}$.
We will call a pair of linear maps $(\phi, \psi)$ \emph{$\cl A$-admissible} if $\phi \colon \cl E \to \cl B(H,K)$ is a completely contractive map, $\psi \colon \cl S \to \cl B(K)$ is a unital completely positive map (where $H$ and $K$ are Hilbert spaces), and
\[
\psi(s \cdot a) \phi(x) = \psi(s) \phi(a \cdot x), \ \ s \in \cl S, a \in \cl A, x \in \cl E.
\]
For an element $u\in M_n(\cl E^* \odot^{\cl A} \cl S \odot^{\cl A} \cl E)$, where $\odot^{\cl A}$ designates the balanced algebraic tensor product, we define
\[
\left\|u\right\|^{(n)}_{\rm s \cl A}
:= 
\sup\{\|(\phi^*\cdot\psi\cdot \phi)^{(n)}(u)\| \ : \ (\phi,\psi) \mbox{ an $\cl A$-admissible pair}\}
\]
(see (\ref{eq_dot}) for the definition of the map $\phi^*\cdot\psi\cdot \phi$). 
In Lemma \ref{l_balnorm} we show that $\|\cdot\|_{\rm s \cl A}^{(n)}$ is a seminorm dominated by the $\cl A$-balanced Haagerup norm $\|\cdot\|_{{\rm h} \cl A}$, that is, we have 
\[
\|u\|_{\rm s \cl A}^{(n)} \leq \|u\|_{{\rm h} \cl A}^{(n)}, \ \ \ \ u\in M_n(\cl E^* \odot^{\cl A} \cl S \odot^{\cl A} \cl E).
\]
It follows that the family of seminorms $\{\|\cdot\|_{\rm s \cl A}^{(n)}\}_{n \in \bb N}$ defines an operator space structure on the corresponding quotient of $\cl E^* \odot^{\cl A} \cl S \odot^{\cl A} \cl E$, and we write $\cl E^* \otimes^{\cl A}_{\rm s} \cl S \otimes^{\cl A}_{\rm s} \cl E$ for the Hausdorff completion, which we call the \emph{balanced symmetrisation of $\cl E$ by $\cl S$ over $\cl A$}.
We will write:
\begin{enumerate}
\item $\cl E^* \otimes_{\rm s} \cl S \otimes_{\rm s} \cl E$ for $\cl E^* \otimes^{\bb C}_{\rm s} \cl S \otimes^{\bb C}_{\rm s} \cl E$, 
\item 
$\cl E^* \otimes_{\rm s}^{\cl A} \cl E$ for $\cl E^* \otimes^{\cl A}_{\rm s} \cl A \otimes^{\cl A}_{\rm s} \cl E$, and 
\item 
$\cl E^* \otimes_{\rm s} \cl E$ for $\cl E^* \otimes_{\rm s}^{\bb C} \bb C \otimes_{\rm s}^{\bb C} \cl E$.
\end{enumerate}

Since, in addition to the symmetrisation, we will need (the universal property of) the Haagerup tensor products, we will make use of the following notation.
If $\theta \colon \cl E^* \times \cl S \times \cl E \to \cl B(H)$ is an $\cl A$-balanced trilinear map we will write $\wt{\theta}_{\cl A}$ for the induced linearisation on the balanced tensor product, and we will write
\begin{align*}
\wt{\theta}_{\rm h \cl A} \colon \cl E^* \otimes^{\cl A}_{\rm h} \cl S \otimes^{\cl A}_{\rm h} \cl E \to \cl B(H)
\qand
\wt{\theta}_{\rm s \cl A} \colon \cl E^* \otimes^{\cl A}_{\rm s} \cl S \otimes^{\cl A}_{\rm s} \cl E \to \cl B(H)
\end{align*}
for the induced extensions of $\wt{\theta}_{\cl A}$ when they exist.
We will simply write $\wt{\theta}$, $\wt{\theta}_{\rm h}$ and $\wt{\theta}_{\rm s}$ when $\cl A = \bb{C}$.

The operator space $\cl E^* \odot^{\cl A} \cl S \odot^{\cl A} \cl E$ admits an isometric involution given by the flip map $x^* \odot^{\cl A} y \mapsto y^* \odot^{\cl A} x$ and a matrix ordered structure given by the cones
\begin{align*}
C_n^{\cl A} 
& := 
\{u\in M_n(\cl E^*\otimes_{\rm s}^{\cl A} \cl S \otimes_{\rm s}^{\cl A} \cl E)_h \ : \ 
(\phi^*\cdot\psi \cdot \phi)^{(n)}(u)\in M_n(\cl B(H))^+, \\
& \hspace{3.5cm} \text{ if } (\phi,\psi) \text{ is an $\cl A$-admissible pair for some } (H,K)\}.
\end{align*}
In Corollary \ref{c_consym} we show that
\[
C_n^{\cl A} = \{x^* \odot^{\cl A} s \odot^{\cl A} x \ : \ x \in M_{k,n}(\cl E), s \in M_k(\cl S)^+, k \in \bb N\}^{-\|\cdot\|_{\rm s \cl A}^{(n)}};
\]
this can be viewed as a synthesis type result: the \lq\lq externally defined'' cones $C_n^{\cl A}$ admit an \lq\lq inner'' approximation characterisation. 

In our study, we first consider the case where $\cl A = \bb{C}$ as it is of particular importance.
In Lemma \ref{l_norm} we show that
\[
\frac{1}{4} \|u\|_{{\rm h}}^{(n)} \leq \|u\|_{\rm s}^{(n)} \leq \|u\|_{{\rm h}}^{(n)}, \ \ \ \ u\in M_n(\cl E^* \odot \cl S \odot \cl E),
\]
that is, the unbalanced symmetrisation norm is equivalent to the Haagerup tensor norm on the tensor product $\cl E^* \odot \cl S \odot \cl E$.
Then in Theorem \ref{th_estareos} we show that $\cl E^* \otimes_{\rm s} \cl S \otimes_{\rm s} \cl E$ is a selfadjoint operator space in the sense of W. Werner \cite{werner}.
The examination of the \lq\lq unbalanced'' version first allows us to highlight the key aspects of the construction, while in the \lq\lq balanced'' version we emphasise the necessary modifications.
The connection is further made clear in Corollary \ref{c_iden}, where we show that
\[
\cl E^* \otimes_{\rm s}^{\cl A} \cl S \otimes_{\rm s}^{\cl A} \cl E
\simeq
\cl E^* \otimes_{\rm s} \cl S \otimes_{\rm s} \cl E / \cl J_{\cl E,\cl S}^{\otimes_{\rm s}^{\cl A}},
\]
up to a canonical completely isometric complete order isomorphism, where $\cl J_{\cl E,\cl S}^{\otimes_{\rm s}^{\cl A}}$ is a naturally defined kernel that annihilates the $\cl A$-trilinear maps (see also Lemma \ref{l_kerne}).
In more detail, we have
\[
\cl J_{\cl E,\cl S}^{\otimes_{\rm s}^{\cl A}} :=  {\rm span} \{ u \in (\cl E^* \otimes_{\rm s} \cl S \otimes_{\rm s} \cl E)_{h} \ : \ u \in D_1^{\cl A}\cap (-D_1^{\cl A}) \},
\]
where
\begin{align*}
D_n^{\cl A} := \{ u\in M_n(\cl E^*\otimes_{\rm s}\cl S \otimes_{\rm s} \cl E)_h \ : \ 
& (\phi^*\cdot\psi \cdot \phi)^{(n)}(u)\in M_n(\cl B(H))^+, \\
& \; \text{ for every $\cl A$-admissible pair } (\phi,\psi) \},
\end{align*}
for $n\in \bb{N}$.
We note that $\cl J_{\cl E,\cl S}^{\otimes_{\rm s}^{\cl A}}$ contains the subspace
\begin{align*}
\cl J_{\cl E,\cl S}^{\cl A}
& := 
\ol{\rm span}\{ y^*\otimes (b \cdot s \cdot a)\otimes x -  (y^*\cdot b) \otimes  s \otimes (a \cdot x)  \ : \  \label{eq_J} \\
& \hspace{7cm} a,b\in \cl A, x,y\in \cl E\}
\end{align*}
related to the balanced relations as well.
In particular, $\cl J_{\cl E,\cl S}^{\otimes_{\rm s}^{\cl A}}$ 
lies in the kernel of every map of the form
$\wt{\theta}_{\rm s}$, arising from an $\cl A$-balanced trilinear map. 
In the following theorem we collect the main structural properties of the balanced symmetrisation.

\medskip

\noindent
{\bf Theorem A.} \emph{(Theorem \ref{t_symunibal})
Let $\cl A$ be a unital C*-algebra, $\cl S$ be an operator $\cl A$-system and $\cl E$ be an operator $\cl A$-space.
Then the following hold:
\begin{enumerate}
\item If $\phi \colon \cl E\to \cl B(H,K)$ is a completely contractive map and $\psi \colon \cl S\to \cl B(K)$ is a completely positive map with the property that $(\phi, \psi)$ is an $\cl A$-admissible pair, then the map
\[
\phi^*\cdot\psi\cdot \phi \colon \cl E^* \otimes_{\rm s}^{\cl A} \cl S \otimes_{\rm s}^{\cl A} \cl E \to \cl B(H)
\]
is a completely contractive completely positive map. 
\item If $\wt{\theta}_{\rm s \cl A} \colon \cl E^*\otimes_{\rm s}^{\cl A} \cl S \otimes_{\rm s}^{\cl A} \cl E\to \cl B(H)$ is a completely contractive completely positive map, then there exist a completely contractive map $\phi \colon \cl E \to \cl B(H,K)$ and a unital completely positive map $\psi \colon \cl S\to \cl B(K)$ such that the pair $(\phi, \psi)$ is $\cl A$-admissible and 
\[
\wt{\theta}_{\rm s \cl A} = \phi^* \cdot \psi \cdot \phi.
\]
\item If $\cl A \subseteq \cl S$ and $\wt{\theta}_{\rm s \cl A} \colon \cl E^* \otimes_{\rm s}^{\cl A} \cl S \otimes_{\rm s}^{\cl A} \cl E\to \cl B(H)$ is a completely contractive completely positive map, then we can choose $\psi$ in (ii) so that $\pi := \psi|_{\cl A}$ is a $*$-representation, $(\phi, \pi)$ is an $\cl A$-representa\-tion of $\cl E$ and $(\psi, \pi)$ is an $\cl A$-represen\-ta\-tion of $\cl S$.
\item If $\cl A \subseteq \cl S$, $\wt{\theta}_{\rm s \cl A} \colon \cl E^* \otimes_{\rm s}^{\cl A} \cl S \otimes_{\rm s}^{\cl A} \cl E \to \cl B(H)$ is a completely isometric completely positive map, and $(\phi, \psi)$ is as in (ii), then $\phi$ is a complete isometry. 
\end{enumerate}
}

\medskip

The symmetrisation $\cl E^* \odot^{\cl A} \cl S \odot^{\cl A} \cl E$ may inherit properties from $\cl E$ and $\cl S$.
In Theorem \ref{t_opBsy} we show that if $\cl E$ is in addition a right operator $\cl B$-space for a unital C*-algebra $\cl B$, then $\cl E^* \otimes_{\rm s} \cl S \otimes_{\rm s} \cl E$ becomes a selfadjoint operator $\cl B$-space in a canonical fashion.
The bimodule structure descends to the quotient $\cl E^* \otimes_{\rm s}^{\cl A} \cl S \otimes_{\rm s}^{\cl A} \cl E$ as long as $\cl J_{\cl E,\cl S}^{\otimes_{\rm s}^{\cl A}}$ is a $\cl B$-bimodule.
This happens for example when $\cl E$ is a TRO $\cl M$, and $\cl A = [\cl M \cl M^*]$ and $\cl B = [\cl M^* \cl M]$.
In this special case the balanced symmetrisation coincides with the balanced Haagerup tensor product, and thus $\cl J_{\cl E,\cl S}^{\otimes_{\rm s}^{\cl A}}$ coincides with the subspace $\cl J_{\cl E,\cl S}^{\cl A}$ of the balanced relations.

We note here that the notion of a selfadjoint operator space bimodule is new and we provide the pertinent details in Subsection \ref{ss_soap}.
For this development we follow the viewpoint of W. Werner \cite{werner} by considering a unital C*-algebra in the place of $\bb C$, which results in the appropriate notion of the modular partial unitisation and the equivalent condition for a concrete representation in some $\cl B(H)$.

Our main tool for Theorem A is a factorisation result for a class of trilinear maps.
It is akin to the factorisation results proved by E. Christensen and A. Sinclair \cite{christensen-sinclair}, V. Paulsen and R. R. Smith \cite{paulsen-smith}, and A. Sinclair and R. R. Smith \cite{ss} 
(see for example \cite[Theorem 1.5.7, Subsecton 1.5.8]{blm}). 

\medskip

\noindent
{\bf Theorem B.} 
\emph{(Lemma \ref{l_ssgen})
Let $\cl A$ be a unital C*-algebra, $\cl E$ be a left operator $\cl A$-space, $\cl S$ be an operator $\cl A$-system and $\theta \colon \cl E^*\times\cl S\times\cl E\to \cl B(H)$ be a completely bounded completely positive $\cl A$-balanced trilinear map. 
Then the following hold:
\begin{enumerate}
\item There exist a completely bounded map $\phi \colon \cl E\to \cl B(H,K)$ and a unital completely positive map $\psi \colon \cl S\to \cl B(K)$ such that $\wt{\theta} = \phi^*\cdot \psi\cdot\phi$ and
\[
\psi(s)\phi(a\cdot x) = \psi(s\cdot a)\phi(x), \ \ \ x\in \cl E, s\in \cl S, a\in \cl A.
\]
Moreover, $\phi$ and $\psi$ can be chosen so that $\|\theta\|_{\rm cb} = \|\phi\|_{\rm cb}^2$.
\item If, in addition, $\cl A \subseteq \cl S$, then we can choose $\psi$ so that $\pi := \psi|_{\cl A}$ is a $*$-representation, $(\phi,\pi)$ is an $\cl A$-representation of $\cl E$, and $(\psi,\pi)$ is an $\cl A$-repre\-sen\-tation of $\cl S$.
\item If $\cl S$ is a C*-algebra, then the map $\psi$ in item (i) can be chosen to be a $*$-representation. 
\end{enumerate}
}

\medskip

Combining Theorem A with Theorem B endows the balanced symmetrisation with the following universal property.

\medskip

\noindent
{\bf Theorem C.} \emph{(Theorem \ref{t_univpro})
Let $\cl A$ be a unital C*-algebra, $\cl S$ be an operator $\cl A$-system and $\cl E$ be an operator $\cl A$-space.
Then the balanced symmetrisation space $\cl E^* \otimes_{\rm s}^{\cl A} \cl S \otimes_{\rm s}^{\cl A} \cl E$ has the following universal property:
\begin{enumerate}
\item the canonical trilinear map
\[
\iota \colon \cl E^* \times \cl S \times \cl E \to \cl E^* \otimes_{\rm s}^{\cl A} \cl S \otimes_{\rm s}^{\cl A} \cl E; (y^*, s, x) \mapsto y^* \odot^{\cl A} s \odot^{\cl A} x
\]
is completely contractive and completely positive with dense range, and
\item if $\theta \colon \cl E^* \times \cl S \times \cl E \to \cl B(H)$ is a completely contractive completely positive $\cl A$-balanced trilinear map, then there exists a unique completely contractive completely positive map $\wt{\theta}_{\rm s \cl A} \colon \cl E^* \otimes_{\rm s}^{\cl A} \cl S \otimes_{\rm s}^{\cl A} \cl E \to \cl B(H)$ that makes the following diagram
\[
\xymatrix{
\cl E^* \times \cl S \times \cl E \ar[rr]^{\theta} \ar[d]^{\iota} & & \cl B(H) \\
\cl E^* \otimes_{\rm s}^{\cl A} \cl S \otimes_{\rm s}^{\cl A} \cl E \ar[urr]^{\wt{\theta}_{\rm s \cl A}} & &
}
\]
commutative.
\end{enumerate}
}

\medskip

We note that the operator space $\cl E^* \odot^{\cl A} \cl S \odot^{\cl A} \cl E$ is not necessarily unital; in fact, the requirement that $\cl E^* \odot^{\cl A} \cl S \odot^{\cl A} \cl E$ is an operator system is a strong condition as there are several metrics to keep track of.
Such a context would require: (a) the existence of an Archimedean matrix order unit, and (b) the symmetrisation norms to coincide with the norms induced by that Archimedean matrix order unit, at every level.
This fails to happen in general (even when $\cl E$ is finite dimensional).
In Theorem \ref{t_notos} we show exactly when this is the case for $\cl A = \cl S = \bb C$, while we provide several cases where it does not hold:
\begin{enumerate}
\item If $\cl E$ is non-separable, then $\cl E^* \otimes_{\rm s} \cl E$ does not admit an order unit (Theorem \ref{t_nonsepnonun}).
\item If $\cl E \subseteq \cl B(H)$ with $\dim(\cl E^* \cl E) < \infty$ and $\dim(\cl E^* \cl E) < \dim(\cl E^* \otimes_{\rm s} \cl E)$, then $\cl E^* \otimes_{\rm s} \cl E$ is not an operator system (Corollary \ref{c_notos}).
\item In particular, if $\cl E = M_{I,n}$ for finite $n$ and any set $I$ with $n < |I|$, then $\cl E^* \otimes_{\rm s} \cl E$ is not an operator system (Remark \ref{r_appnotos}).
\end{enumerate}
Denoting by $R_n$ (resp. $C_n)$ the row (resp. column) $n$-dimensional operator space, item (iii) shows in particular that $R_n \otimes_{\rm s} C_n$ is not an operator system for $n > 1$.
A second example arises by considering the space of diagonal $n \times n$ matrices $\cl E = D_n$.
At the other extreme, we obtain that $C_n \otimes_{\rm s} R_n$ coincides with $M_n$ and is thus an operator system.
Note here that all three spaces $R_n \otimes_{\rm s} C_n$, $D_n \otimes_{\rm s} D_n$, and $C_n \otimes_{\rm s} R_n$ are topologically isomorphic via completely positive maps with completely positive inverses, and thus they all admit an Archimedean matrix order unit.
The subtlety resides in that the induced norms in the first two cases do not coincide with the corresponding symmetrisation norms.
The identification 
\[
C_n \otimes_{\rm s} R_n \simeq M_n = [C_n R_n]
\]
is a special case of the following more general result for $\cl M = R_n$.
Below we denote by $\cl A_{\cl S}$ the multiplier C*-algebra of the operator system $\cl S$. 

\medskip

\noindent
{\bf Theorem D.} \emph{(Theorem \ref{t_trosym})
Let $\cl S$ be an operator system.
Suppose that $\psi \colon \cl S \to \cl B(K)$ is a unital complete order embedding and $\cl M \subseteq \cl B(H,K)$ is a closed TRO such that $\cl M \cl M^* \psi(\cl S) \subseteq \psi(\cl S)$.
Then $[\cl M \cl M^*] \hookrightarrow \cl A_{\cl S}$ and the canonical completely contractive completely positive map
\[
\cl M^* \otimes_{\rm s}^{[\cl M \cl M^*]} \cl S \otimes_{\rm s}^{[\cl M \cl M^*]} \cl M 
\to 
[\cl M^* \psi(\cl S) \cl M]
\]
is a complete order isomorphism.
}

\medskip

In particular, under the assumptions of Theorem D, we have that
\[
\cl M^* \otimes_{\rm s}^{[\cl M \cl M^*]} \cl S \otimes_{\rm s}^{[\cl M \cl M^*]} \cl M
\simeq
\cl M^* \otimes_{\rm h}^{[\cl M \cl M^*]} \cl S \otimes_{\rm h}^{[\cl M \cl M^*]} \cl M.
\]
However, this does not hold in general.
Although the unbalanced symmetrisation norm is equivalent to the unbalanced Haagerup tensor norm, they fail to coincide, and we provide a finite dimensional example in this respect ($\cl E = M_2$), see Proposition \ref{p_pcn} and Example \ref{e_diffhs}. 
As a further example, we note that $R_n \otimes_{\rm s} C_n$ is isomorphic, but not isometrically isomorphic, to the operator space $R_n \otimes_{\rm h} C_n$ of the $n \times n$ trace class matrices.

A further property of the symmetrisation is that it is injective as long as we use the same C*-algebra for the module actions, see Proposition \ref{p_tensmaps}.
If we allow for different modular actions, then this fails even in finite dimensional cases.
Denoting by $\cl A_l(\cl E)$ the left multiplier C*-algebra of $\cl E$, in Example \ref{e_ccnotcis} we show that the canonical completely contractive completely positive map
\[
\cl E^* \otimes_{\rm s} \cl E \to \cl E^* \otimes_{\rm s}^{\cl A_{\ell}(\cl E)} \cl E
\]
may not be even isometric.
Moreover, in Example \ref{e_env} we show that the canonical map
\[
q \colon \cl E^* \otimes_{\rm s}^{\cl A_{\ell}(\cl E)} \cl E \to \tenv(\cl E)^* 
\otimes_{\rm s}^{\cl A_{\ell}(\tenv(\cl E))} \tenv(\cl E),
\]
where $\tenv(\cl E)$ is the ternary envelope (also known as the Shilov boundary) \cite{blecher-shilov},
may not be completely isometric; a counterexample is constructed for the $2$-level rigid operator systems of \cite{cvs}.
The identification of the C*-envelope of the symmetrisation remains an open question.

One of the main goals of this work is to provide a Morita Theorem II for operator systems.
In \cite{ekt} we have shown that the notion of $\Delta$-equivalence, emerging from previous works of the authors \cite{elef1,elef2,ek,ep,ept}, has an appropriate counterpart in the category of operator systems.
One of the key results is its identification with stable isomorphism, and it has an equivalent reformulation in terms of compatible trilinear maps, following closely the analogy with ring theory.
Here we provide its incarnation in terms of tensorial factorisation.

\medskip

\noindent
{\bf Theorem E.} \emph{(Theorem \ref{th_facDelta})
Let $\cl S$ and $\cl T$ be operator systems. 
The following are equivalent:
\begin{enumerate}
\item $\cl S$ and $\cl T$ are $\Delta$-equivalent;
\item there exist a closed $\cl A_{\cl T}$-balanced $\cl T$-semi-unital TRO $\cl M$, such that 
the TRO $\cl M^*$ is $\cl A_{\cl S}$-balanced $\cl S$-semi-unital, 
\[
\cl A_{\cl T} \stackrel{\pi_{\cl T}}{\simeq} [\cl M \cl M^*]
\qand
\cl A_{\cl S} \stackrel{\pi_{\cl S}}{\simeq} [\cl M^* \cl M],
\]
and a compatible pair $(\al, \be)$ of unital complete order isomorphisms
\[
\alpha \colon \cl M^*\otimes_{\rm s}^{\cl A_{\cl T}}\cl T \otimes_{\rm s}^{\cl A_{\cl T}}\cl M \to \cl S
\text{ and }
\beta \colon \cl M\otimes_{\rm s}^{\cl A_{\cl S}}\cl S \otimes_{\rm s}^{\cl A_{\cl S}}\cl M^* \to \cl T,
\]
that are bimodule maps over $\cl A_{\cl S}$ and $\cl A_{\cl T}$ respectively, such that
\begin{align*}
\al(x_1^* \otimes 1_{\cl T} \otimes x_2) & = \pi_{\cl S}^{-1}(x_1^* x_2) \cdot 1_{\cl S} 
\\
\be(x_1 \otimes 1_{\cl S} \otimes x_2^*) &= \pi_{\cl T}^{-1}(x_1 x_2^*) \cdot 1_{\cl T},
\end{align*}
for all $x_1, x_2 \in \cl M$;
\item there exist a non-degenerate 
$\cl A_{\cl T}$-balanced $\cl T$-semi-unital operator $\cl A_{\cl T}$-$\cl A_{\cl S}$-module $\cl E$, such that $\cl E^*$ is $\cl A_{\cl S}$-balanced $\cl S$-semi-unital, and a compatible pair $(\al, \be)$ of unital completely positive maps
\[
\alpha \colon \cl E^*\otimes_{\rm s}^{\cl A_{\cl T}}\cl T \otimes_{\rm s}^{\cl A_{\cl T}}\cl E \to \cl S
\text{ and }
\beta \colon \cl E\otimes_{\rm s}^{\cl A_{\cl S}}\cl S \otimes_{\rm s}^{\cl A_{\cl S}}\cl E^* \to \cl T.
\]
\end{enumerate}
If any (and thus all) of the items hold, then the maps $\al$ and $\be$ in items (ii) and (iii) are surjective.
}

\medskip

The compatibility clause for the pair $(\al,\be)$ is a canonical intertwining condition for tensor products and maps, modeling associativity of the multiplication when the objects are concretely represented, and was already exhibited in the compatibility of canonical maps in \cite{ekt}.
It is worth pointing out that condition (iii) does not, however, have a natural counterpart in nonselfadjoint categories. 
It is an abstract incarnation of homomorphic equivalence of operator systems which, motivated by quantum graph homomorphisms in quantum information theory \cite{stahlke, tt}, was introduced and shown to be identical to $\Delta$-equivalence in \cite{ekt}. 

We describe some further applications of the symmetrisation.
One class of particular interest is that of function spaces.
If $\Om$ is a compact Hausdorff space and $C(\Om)$ is the C*-algebra of all continuous functions on $\Om$ then, by the Grothendieck's inequality and the equivalence of the Haagerup and the symmetrisation norms, the spaces $C(\Om) \otimes_{\rm s} C(\Om)$, $C(\Om) \otimes_{\rm h} C(\Om)$ and the projective tensor product $C(\Om) \hat{\otimes} C(\Om)$ (known as the Varopoulos algebra on $\Om \times \Om$) are all boundedly isomorphic.
Their elements can be canonically identified with continuous functions on $\Om\times\Om$ by seeing the elementary tensors $f \otimes g$ as a two variable continuous function; up to such an identification, the matricial cones have a canonical interpretation via positive semi-definiteness of two variable functions.

\medskip

\noindent
{\bf Theorem F.} \emph{(Theorem \ref{th_fspa})
Let $u\in M_n(C(\Om)^* \otimes_{\rm s} C(\Om))$, and 
continue to write $u \colon \Omega \times \Omega \to M_n$
for the canonically associated function. 
The following are equivalent:
\begin{enumerate}
\item $u\in M_n(C(\Om)^* \otimes_{\rm s} C(\Om))^+$;
\item $u \colon \Omega \times \Omega \to M_n$ is positive semi-definite. 
\end{enumerate}
}

\medskip

We note that the function with constant value one on the diagonal is a natural candidate for an Archimedean matrix order unit for $C(\Om)^* \otimes_{\rm s} C(\Om)$.
However, for the constant function to be even continuous, the set $\Om$ has to be finite, in which case $C(\Om)$ is the space of diagonal matrices.
But as we have noted, even under the latter conditions, the canonical Archimedean matrix order unit does not induce the symmetrisation norm.

Finally, we describe our main result regarding the dual of the symmetrisation. 
We write $\cl E^{\rm d}$ for the operator space dual of the operator space $\cl E$ (as the symbol $\cl E^*$ is reserved for the adjoint operator space of $\cl E$). 

\medskip

\noindent
{\bf Theorem G.} \emph{(Theorem \ref{th_findimsd})
Let $\cl E$ be an operator space. 
Then the mapping 
\[
\iota \colon (\cl E^{\rm d})^* \otimes_{\rm s} \cl E^{\rm d} 
\to \left(\cl E^*\otimes_{\rm s} \cl E\right)^{\rm d}, 
\text{ given by }
\iota(\Psi^*\otimes \Phi)(y^*\otimes x) = \Psi^*(y^*) \Phi(x),
\]
is a complete order isomorphism onto its range. 
If $\cl E$ is finite dimensional, then this map is surjective.}

\medskip

Theorem G is a symmetrised version of the celebrated Blecher-Smith Theorem on the self-duality of the Haagerup tensor product \cite{bs}. 
Similar duality results have been proven for the Haagerup tensor product over Hilbert C*-modules by A. Stern and W. D. van Suijlekom \cite{ss21}, and T. Crisp and M. Rosbotham \cite{cr24}.
We note, however, that self-duality in our setting only holds with respect to the matricial order; in fact, we show in Remark \ref{r_notisom} that the map $\iota \colon \cl E^{{\rm d} *}\otimes_{\rm s} \cl E^{\rm d} \to \left(\cl E^*\otimes_{\rm s} \cl E\right)^{\rm d}$ may fail to be a complete isometry even for finite dimensional $\cl E$.

\medskip

We finish the current section with describing the organisation of the manuscript. 
In Section \ref{s_prel} we collect preliminaries from operator spaces theory, fix notation, and include the necessary details about (not necessarily unital) selfadjoint operator spaces from \cite{kkm, werner} for future reference.  
Section \ref{s_quatbim} contains the material on quotient theory for operator systems and selfadjoint operator spaces, equipped with a bimodule action over a unital C*-algebra, which will be needed subsequently. 
In passing, we provide the unitisation of a bimodule operator space over a C*-algebra akin to the one introduced by W. Werner \cite{werner}.

In Section \ref{s_univco} we provide the factorisation result for completely bounded completely positive balanced trilinear maps, and define the symmetrisation of an operator space by an operator system.
We establish the main properties of the symmetrisation and make comparisons with the Haagerup tensor product.
We also provide a characterisation for a symmetrisation to be an operator system by using the notion of semi-units, introduced therein.

In Section \ref{s_fundual} we study the symmetrisation of a function space, obtaining a canonical description of positivity in terms of positive semi-definiteness. 
In Section \ref{s_selfduality} we explore the connection of the dual of the symmetrisation with the symmetrisation of the duals.
We further show that there is an affine bijection from 
the quasi-state space $\rm{CCP}(\cl E^* \otimes_{\rm s} \cl E, \bb C)$ to the completely contractive maps from $\mathcal{E}$ to $\mathcal{E}^{* \rm{d}}$ that factor symmetrically through a Hilbert space.

In Section \ref{s_balsym} we present the balanced symmetrisation where the operator space and the operator systems are equipped with compatible module actions by a unital C*-algebra. 
We thus define the balanced symmetrisation; this is kept separate from the symmetrisation over the scalars in order to clarify similarities, as well as differences.
In Section \ref{s_morita} we provide the factorisation of $\Delta$-equivalent operator systems which complements our previous study \cite{ekt}.

\medskip

\noindent {\bf Acknowledgements.}
The authors acknowledge the support from the European Union - Next Generation EU (Implementation Body: HFRI. Project name: Noncommutative Analysis: Operator Systems and Nonlocality. HFRI Project Number: 015825), by the Heilbronn Institute for Mathematical Research (HIMR) and the UKRI EPSRC Additional Funding Programme for Mathematical Sciences, and by the London Mathematical Society (grant No.\ 42113).
Ivan Todorov was supported by NSF grants CCF-2115071 and DMS-2154459.

\smallskip

\noindent {\bf Data availability statement.}
For the purposes of publication of this article, we note that data sharing is not applicable as no datasets were generated or analysed during the underlying research.

\smallskip

\noindent {\bf Conflict of interest statement.}
On behalf of all authors, the corresponding author states that there is no conflict of interest.

\smallskip

\noindent {\bf Open access statement.}
For the purpose of open access, the second author has applied a Creative Commons Attribution (CC BY) license to any Author Accepted Manuscript version arising.

\section{Preliminaries}\label{s_prel}

In this section we recall background, set notation, and establish some preliminary results that will be used in the sequel. 

\subsection{Operator spaces and systems}\label{ss_opspsy}

As usual, if $V$ is a vector space, we write $M_{m,n}(V)$ for the linear space of all $m$ by $n$ matrices with entries in $V$,  set $M_n(V) = M_{n,n}(V)$, $M_{m,n} = M_{m,n}(\bb{C})$ and $M_n = M_{n,n}$.
If $V$ and $W$ are vector spaces and $\phi \colon V\to W$ is a linear map, we write 
\[
\phi^{(m,n)} \colon M_{m,n}(V)\to M_{m,n}(W)
\]
for the map given by 
\[
\phi^{(m,n)}((v_{i,j})_{i,j}) = (\phi(v_{i,j}))_{i,j},
\]
and set $\phi^{(n)} = \phi^{(n,n)}$.
The symbols $H$ and $K$ will be reserved to denote Hilbert spaces, and $\cl B(H,K)$ will denote the space of all bounded linear operators from $H$ into $K$; as usual, we set $\cl B(H) = \cl B(H,H)$.
We write $I_H$ for the identity operator in $\cl B(H)$, and set $I_n = I_{\bb{C}^n}$; if $H$ is clear from the context, we write $I = I_H$. 

We recall some basic notions and results in operator space theory, and refer the reader to \cite{blm, Pa, pisier_intr} for further background.
For an operator space $\cl X$, we will denote by $\|\cdot\|^{(n)}$ the corresponding norm on $M_n(\cl X)$, $n\in \bb{N}$
(written $\|\cdot\|^{(n)}_{\cl X}$ when $\cl X$ needs to be 
emphasised). 
We write $R(\cl X)$ (resp. $C(\cl X)$) for the row (resp. column) operator space over $\cl X$ (corresponding to some cardinality that will be clear from the context). 
If $\cl X$ and $\cl Y$ are operator spaces, a linear map $\phi \colon \cl X\to \cl Y$ is called \emph{completely bounded} if 
\[
\|\phi\|_{\rm cb} := \sup_{n\in \bb{N}} \|\phi^{(n)}\| < \infty;
\]
it is called \emph{completely contractive} if $\|\phi\|_{\rm cb}\leq 1$, and a \emph{complete isometry} if $\phi^{(n)}$ is an isometry for every $n$.
Given an operator space $\cl X$, we write $\cl X^*$ for the adjoint vector space, which becomes an operator space with the norms given by 
\[
\|(x_{i,j}^*)_{i,j}\|_{\cl X^*}^{(n)} := \|(x_{j,i})_{i,j}\|_{\cl X}^{(n)}, \ \ \ x_{i,j}^* \in \cl X^*, n \in \bb N.
\]
If $\cl X$ is concretely represented in some $\cl B(H,K)$, then $\cl X^*$ is (completely isometric to) its adjoint space in $\cl B(K,H)$.
There is a correspondence between completely bounded maps $\phi$ defined on $\cl X$ and completely bounded maps $\phi^*$ defined on $\cl X^*$, given by 
\[
\phi^*(x^*):= \phi(x)^* \text{ for } x^* \in \cl X^*;
\]
we have $\|(\phi^*)^{(n)}\| = \|\phi^{(n)}\|$, $n \in \bb N$ (see \cite[Section 1.2.25]{blm} for more details).

A \emph{$*$-vector space} is a complex vector space $\cl X$, equipped with an involution. 
In this case, the space $M_n(\cl X)$ admits an induced involution, given by 
\[
(x_{i,j})_{i,j}^* := (x_{j,i}^*)_{i,j};
\]
the elements of the real vector space 
\[
M_n(\cl X)_{h} := \{x \in M_n(\cl X) \mid x = x^*\}
\]
are called \emph{hermitian}. 
A \emph{matrix ordered $*$-vector space} is a complex vector space $\cl X$ endowed with an involution and a family $\{M_n(\cl X)^+\}_{n \in \bb N}$ such that the following hold: 
\begin{enumerate}
\item $M_n(\cl X)^+ \subseteq M_n(\cl X)_{h}$, $n \in \bb N$;
\item $M_n(\cl X)^+ \cap [ - M_n(\cl X)^+] = \{0\}$, $n \in \bb N$;
\item every $M_n(\cl X)^+$, $n \in \bb N$, is a cone in $M_n(\cl X)_{h}$;
\item $\alpha^* M_m(\cl X)^+ \alpha \subseteq M_n(\cl X)^+$ for all $n,m \in \bb N$ and all $\alpha \in M_{m,n}$.
\end{enumerate}
We will refer to condition (iv) as \emph{compatibility} of the matrix cones. 
The elements in $M_n(\cl X)^+$ are called \emph{positive}.
Recall that axiom (iii) above can be equivalently replaced by:
\begin{enumerate}
\item[(iii')] $M_n(\cl X)^+ \oplus M_m(\cl X)^+ \subseteq M_{n+m}(\cl X)^+$, $n, m \in \bb N$.
\end{enumerate}

A selfadjoint map $\phi \colon \cl X\to \cl Y$ between matrix ordered $*$-vector spaces $\cl X$ and $\cl Y$ is called \emph{$n$-positive} if $\phi^{(n)}(M_n(\cl X)^+)\subseteq M_n(\cl Y)^+$, \emph{positive} if it is $1$-positive, and \emph{completely positive} if it is $n$-positive for every $n\in \bb{N}$. 
A map $\phi \colon \cl X\to \cl Y$ is called a \emph{complete order embedding} if it is completely positive, injective, and 
\[
\phi^{(n)}(M_n(\cl X)^+) = \phi^{(n)}(M_n(\cl X))\cap M_n(\cl Y)^+ \foral n\in \bb{N};
\] 
in this case, we write $\cl X\subseteq_{\rm c.o.i.}\cl Y$. 

The positive functionals on a matrix ordered $*$-vector space are automatically completely positive (see for example \cite[Proposition 13.2]{Pa}).
We recall the canonical duality between the linear maps $\phi \colon \cl X \to M_n$ and the linear functionals $s \colon M_n(\cl X) \to \bb C$ of a matrix ordered $*$-vector space $\cl X$ (see for example \cite[Chapter 13]{Pa}).  
For a linear map $\phi \colon \cl X \to M_n$, we define the linear functional $s_\phi \colon M_n(\cl X) \to \bb C$ by
\begin{equation}\label{eq_1n} 
s_\phi( (x_{i,j})_{i,j} ) := \sum_{i,j=1}^n \sca{\phi(x_{i,j}) e_j, e_i} = \sca{\phi^{(n)}( (x_{i,j})_{i,j} ) e, e},
\end{equation} 
where $e = e_1 \oplus \cdots \oplus e_n \in \bb C^{n^2}$ for the canonical basis $\{e_1, \dots, e_n\}$ of $\bb C^n$.
Reversely, let $\{E_{i,j}\}_{i,j}$ be the set of the canonical matrix units in $M_n$.
For a linear functional $s \colon M_n(\cl X) \to \bb C$, we define the linear map $\phi_s  \colon \cl X \to M_n$ by 
\[
\sca{\phi_s(x) e_j, e_i} = s(x \otimes E_{i,j}), \ \ \ x\in \cl X.
\]
As pointed out in \cite[Chapter 13]{Pa}, these operations are mutual inverses; we record the following result, known as Choi's Theorem.

\begin{theorem}\label{t_choi non-unital} \cite[Proposition 13.2]{Pa}
Let $\cl X$ be a matrix ordered $*$-vector space and $\phi \colon \cl X \to M_n$ be a linear map.
The following are equivalent:
\begin{enumerate}
\item $s_\phi(M_n(\cl X)^+) \geq 0$;
\item $\phi$ is $n$-positive;
\item $\phi$ is completely positive.
\end{enumerate} 
\end{theorem}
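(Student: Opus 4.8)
The plan is to prove the cycle of implications (iii) $\Rightarrow$ (ii) $\Rightarrow$ (i) $\Rightarrow$ (iii), with the first two steps being formal and the last one carrying all the content. The implication (iii) $\Rightarrow$ (ii) is immediate, since $n$-positivity is a special case of complete positivity. For (ii) $\Rightarrow$ (i), I would note that for $u \in M_n(\cl X)^+$ one has $\phi^{(n)}(u) \in M_n(M_n)^+$; identifying $M_n(M_n)$ with $M_{n^2}$ (the positive cones coincide, since $M_n$ is a C*-algebra), formula (\ref{eq_1n}) gives $s_\phi(u) = \sca{\phi^{(n)}(u) e, e} \geq 0$, where $e = e_1 \oplus \cdots \oplus e_n$. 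Hence $s_\phi(M_n(\cl X)^+) \geq 0$.

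The substantive direction is (i) $\Rightarrow$ (iii). Assuming $s_\phi(M_n(\cl X)^+) \geq 0$, I must show $\phi^{(m)}(M_m(\cl X)^+) \subseteq M_m(M_n)^+$ for every $m \in \bb N$. Fix $m$, an element $u = (x_{i,j})_{i,j=1}^m \in M_m(\cl X)^+$, and a vector $\xi = \xi_1 \oplus \cdots \oplus \xi_m \in (\bb C^n)^m$; identifying $M_m(M_n)$ with $M_{mn}$, it suffices to show $\sca{\phi^{(m)}(u)\xi, \xi} \geq 0$. The key step is to encode $\xi$ as the scalar matrix $\alpha \in M_{m,n}$ whose $i$-th row is $\xi_i$. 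By the compatibility axiom (iv) of a matrix ordered $*$-vector space, $\alpha^* u \alpha \in M_n(\cl X)^+$, so the hypothesis yields $s_\phi(\alpha^* u \alpha) \geq 0$. Expanding the entries of $\alpha^* u \alpha$ and applying (\ref{eq_1n}), a direct computation gives $s_\phi(\alpha^* u \alpha) = \sum_{i,j=1}^m \sca{\phi(x_{i,j}) \xi_j, \xi_i} = \sca{\phi^{(m)}(u)\xi, \xi}$, whence $\sca{\phi^{(m)}(u)\xi, \xi} \geq 0$. Since $\xi$ is arbitrary, $\phi^{(m)}(u)$ is positive, and since $m$ is arbitrary, $\phi$ is completely positive.

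The only point requiring care is the bookkeeping identity $s_\phi(\alpha^* u \alpha) = \sca{\phi^{(m)}(u)\xi, \xi}$, that is, correctly matching the compression of a positive matrix over $\cl X$ by a rectangular scalar matrix with the evaluation of the Choi functional against the corresponding vector. This is where the compatibility axiom does the real work: it converts a statement about all matrix levels into a statement about level $n$ alone, after which one just pairs against an arbitrary vector. I do not expect any genuine obstacle beyond this computation.
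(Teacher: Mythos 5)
The paper does not prove this statement; it records it as a quotation of \cite[Proposition 13.2]{Pa}, so there is no in-paper argument to compare against. Your proof is correct and is the standard one: the implications (iii)$\Rightarrow$(ii)$\Rightarrow$(i) are formal, and for (i)$\Rightarrow$(iii) the compression $\alpha^* u \alpha \in M_n(\cl X)^+$ for $\alpha \in M_{m,n}$ with rows $\xi_1,\dots,\xi_m$ combined with the identity $s_\phi(\alpha^* u \alpha) = \sca{\phi^{(m)}(u)\xi,\xi}$ (which checks out entrywise) is exactly the argument in Paulsen's book.
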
 

Let $\cl S$ be a matrix ordered $*$-vector space. An element $e\in \cl S_h$ is called an \emph{order unit} for $\cl S$ if for every $x\in \cl S_h$ there exists an $r > 0$ such that $re+x\ge 0$.
An order unit $e\in \cl S$ is called \emph{Archimedean} if, for $x\in \cl S_h$, the condition $re+x\ge 0$ for all $r>0$ implies that $x\ge 0$. 
An element $e \in \cl S$ is called an \emph{Archimedean matrix order unit} if $e \otimes I_n$ is an Archimedean order unit on $M_n(\cl S)$ for every $n \in \bb N$. 
An \emph{operator system} is a matrix ordered $*$-vector space $\cl S$ with an Archimedean matrix order unit.
An operator system $\cl S$ with Archimedean matrix order unit $e$ admits an operator space structure given by
\[
\nor{x}_e^{(n)} := \inf \left\{ r \geq 0 \ : \ \begin{pmatrix} r e \otimes I_n & x \\ x^* & r e \otimes I_n \end{pmatrix}  
\in M_{2n}(\cl S)^+ \right\},
\ \ \ x \in M_n(\cl S),
\] 
(see for example \cite[Proposition 13.3]{Pa}).
By virtue of the Choi-Effros Theorem, $\cl S$ admits a unital complete order embedding in some $\cl B(H)$, that is completely isometric with respect to these norms.

The morphisms in the operator system category are unital completely positive maps.
We note that often the terms \emph{completely contractive} and \emph{completely bounded} will be abbreviated \lq\lq c.c.'', and \lq\lq c.b.'', respectively, while the terms \emph{completely positive}, \emph{completely contractive completely positive} and \emph{unital completely positive} will be abbreviated \lq\lq c.p.'', \lq\lq c.c.p.'', and \lq\lq u.c.p.'', respectively.
Recall that the contractive functionals on an operator space are completely contractive; see for example \cite[Proposition 3.8]{Pa}.

We denote by $[\cl X]$ the closed linear span of a subset $\cl X$ of a normed space. 
We say that an operator space $\cl E \subseteq \cl B(H,K)$ is \emph{non-degenerate} if 
\[
I_H \in [\cl E^* \cl E] \qand I_K \in [\cl E \cl E^*].
\]
We say that a completely bounded map $\phi \colon \cl E\to \cl B(H,K)$ is \emph{non-degenerate} if $\phi(\cl E)$ acts non-degenerately, that is 
\[
[\phi(\cl E)H] = K \qand [\phi(\cl E)^* K] = H.
\] 
Given a completely bounded map $\phi \colon \cl E \to \cl B(H, K)$ we can write
\[
\phi(x) = \begin{pmatrix} P_{K'} \phi(x) |_{H'} & 0 \\ 0 & 0 \end{pmatrix}, \ \ \  x \in \cl E,
\]
where $K' := [\phi(\cl E) H]$ and $H' := [\phi(\cl E)^* K]$, and thus pass to a non-degenerate completely bounded compression.

We next record a variation of 
Haagerup-Paulsen-Wittstock's Representation Theorem for completely bounded maps that will be used in the sequel (see for example \cite[Theorem 8.2 and Theorem 8.4]{Pa}).
If $\cl E$ is an operator subspace of a unital C*-algebra $\cl A$ and $\phi \colon \cl E \to \cl B(H)$ is a completely bounded map, then there exists a completely bounded map $\psi \colon \cl A \to \cl B(H)$ such that 
\[
\psi|_{\cl E} = \phi \qand 
\|\psi\|_{\rm cb} = \|\phi\|_{\rm cb}.
\]
On the other hand, if $\cl A$ is a unital C*-algebra and $\phi \colon \cl A \to \cl B(H)$ is a completely bounded map, then there exists a Hilbert space $K$, a unital $*$-representation $\rho \colon \cl A \to \cl B(K)$, and bounded operators $W_1, W_2 \colon H \to K$ such that
\[
\|\phi\|_{\rm cb} = \|W_1\| \cdot \|W_2\|
\qand
\phi(a) = W_2^* \rho(a) W_1, \text{ for all } a \in \cl A.
\]
If in addition $\|\phi\|_{\rm cb} = 1$, then $W_1$ and $W_2$ can be taken to be isometries.
Furthemore, if $\phi$ is completely contractive, then $W_1$ and $W_2$ can be taken to be contractions.
Indeed, if $\|\phi\|_{\rm cb} = 0$, then we can chooose $W_1 = W_2 = 0$.
Otherwise, considering $\phi' := \|\phi\|_{\rm cb}^{-1} \phi$, we obtain isometries $V_1, V_2$ so that $\phi'(a) = V_2^* \rho(a) V_1$. 
By setting $W_i := \|\phi\|_{\rm cb}^{1/2} V_i$ we get
\[
\|W_i\| = \|\phi\|_{\rm cb}^{1/2} \leq 1
\]
and 
\[
\phi(a) = \|\phi\|_{\rm cb} \phi'(a) = \|\phi\|_{\rm cb} V_2^* \rho(a) V_1 = W_2^* \rho(a) W_1, 
\]
for all $a \in \cl A$.
We will also use the following variation for corner maps (see for example \cite[Theorem 1.2.8]{blm}).

\begin{theorem}\label{t_hpw} (Haagerup, Paulsen, Wittstock)
Let $H_1$ and $H_2$ be Hilbert spaces, $\cl A$ be a unital C*-algebra, and $\phi \colon \cl A \to \cl B(H_1, H_2)$ be a completely contractive map.
Then there exist a Hilbert space $K$, a unital $*$-representation $\rho \colon \cl A \to \cl B(K)$ and operators $W_1 \colon H_1 \to K$, $W_2 \colon H_2 \to K$ such that 
\[
\nor{\phi}_{\rm cb} = \|W_1\| \cdot \|W_2\|
\qand
\phi(a) = W_2^* \rho(a) W_1 \text{ for all } a \in \cl A.
\]
If in addition $\nor{\phi}_{\rm cb} = 1$, then the operators $W_1$ and $W_2$ can be chosen to be isometries.
\end{theorem}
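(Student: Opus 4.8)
The plan is to deduce this ``corner'' version from the square version of the Haagerup--Paulsen--Wittstock theorem recorded just above (for completely bounded maps $\cl A \to \cl B(H)$), by the classical off-diagonal trick. Throughout set $H := H_1 \oplus H_2$.

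First I would form $\wt\phi := j \circ \phi \colon \cl A \to \cl B(H)$, where $j \colon \cl B(H_1, H_2) \to \cl B(H)$ is the off-diagonal embedding $j(T) = \left(\begin{smallmatrix} 0 & 0 \\ T & 0 \end{smallmatrix}\right)$ sending $T$ to the corner from $H_1$ to $H_2$. Since $j$ is a complete isometry (at each matrix level it is the norm-preserving map $S \mapsto \left(\begin{smallmatrix} 0 & 0 \\ S & 0 \end{smallmatrix}\right)$ and $M_n(\cl B(H_1,H_2))$ carries exactly the operator norm inherited from the $n$-fold Hilbertian direct sums), $\wt\phi$ is completely contractive with $\|\wt\phi\|_{\rm cb} = \|\phi\|_{\rm cb}$. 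Applying the square HPW theorem to $\wt\phi$ yields a Hilbert space $K$, a unital $*$-representation $\rho \colon \cl A \to \cl B(K)$, and operators $V_1, V_2 \colon H \to K$ with $\|V_1\|\|V_2\| = \|\wt\phi\|_{\rm cb}$ and $\wt\phi(a) = V_2^* \rho(a) V_1$ for all $a \in \cl A$.

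Next I would decompose $V_i$ along $H = H_1 \oplus H_2$ as $V_i(h_1 \oplus h_2) = V_i^{(1)} h_1 + V_i^{(2)} h_2$ with $V_i^{(r)} \colon H_r \to K$, and compare the $H_1 \to H_2$ block of the operator identity $V_2^* \rho(a) V_1 = \left(\begin{smallmatrix} 0 & 0 \\ \phi(a) & 0 \end{smallmatrix}\right)$; this gives $\phi(a) = (V_2^{(2)})^* \rho(a) V_1^{(1)}$ for all $a$. Setting $W_1 := V_1^{(1)} \colon H_1 \to K$ and $W_2 := V_2^{(2)} \colon H_2 \to K$ delivers the factorisation $\phi(a) = W_2^* \rho(a) W_1$. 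For the norm identity, on one side $\|W_1\|\|W_2\| \le \|V_1\|\|V_2\| = \|\phi\|_{\rm cb}$ since each $W_i$ is a corner of $V_i$; on the other side, since $\rho$ is a $*$-representation the map $a \mapsto W_2^* \rho(a) W_1$ has completely bounded norm at most $\|W_1\|\|W_2\|$, and this map is $\phi$, so $\|\phi\|_{\rm cb} \le \|W_1\|\|W_2\|$; hence equality.

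Finally, for the last assertion: when $\|\phi\|_{\rm cb} = 1$ the square HPW theorem allows $V_1, V_2$ to be chosen isometric, so $V_i^* V_i = I_H = I_{H_1} \oplus I_{H_2}$; reading off the two diagonal blocks gives $(V_1^{(1)})^* V_1^{(1)} = I_{H_1}$ and $(V_2^{(2)})^* V_2^{(2)} = I_{H_2}$, that is, $W_1$ and $W_2$ are isometries. I do not anticipate a genuine obstacle here: the argument is a routine reduction to the already-available square case, and the only place requiring the small block computation is precisely this last step, where one verifies that restricting the global dilations $V_i$ to the relevant corners preserves isometricity.
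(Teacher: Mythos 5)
Your proposal is correct and follows essentially the same route as the paper: form the off-diagonal amplification $\wt\phi(a) = \left(\begin{smallmatrix} 0 & 0 \\ \phi(a) & 0 \end{smallmatrix}\right)$ on $H_1 \oplus H_2$, apply the square Wittstock representation theorem, and restrict the resulting dilation operators to the appropriate summands (the paper writes this as $W_i := \wt{W}_i|_{H_i}$, which is exactly your $V_i^{(i)}$), closing the norm identity by the same two-sided inequality. The block computations you spell out are left implicit in the paper but are precisely what justifies its one-line assertions.
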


\begin{proof}
Consider the map $\wt{\phi} \colon \cl A \to \cl B(H_1 \oplus H_2)$, given by 
\[
\wt{\phi}(a) = \begin{pmatrix} 0 & 0 \\ \phi(a) & 0 \end{pmatrix}, \ \ a \in \cl A.
\]
By Wittstock's Representation Theorem, there exist a Hilbert space $K$, a unital $*$-representa\-tion $\rho \colon \cl A \to \cl B(K)$ and bounded operators 
$\wt{W}_i \colon H_1 \oplus H_2 \to K$, for $i=1,2$, such that
\[
\| \wt{\phi} \|_{\rm cb} = \| \wt{W}_1 \| \cdot \| \wt{W}_2 \|
\qand
\wt{\phi}(a) = \wt{W}_2^* \rho(a) \wt{W}_1, \text{ for all } a \in \cl A.
\]
By setting $W_i := \wt{W}_i |_{H_i}$, for $i=1,2$, we have
\[
\phi(a) = W_2^* \rho(a) W_1, \text{ for all } a \in \cl A.
\]
If $\|\phi\|_{\rm cb} = 1$, then $\|\wt{\phi}\|_{\rm cb} = 1$ as well, and we can choose $\wt{W}_1, \wt{W}_2$ to be isometries; then $W_1$ and $W_2$ are isometries as well.
Finally, we have
\[
\nor{W_1} \cdot \nor{W_2} \leq \| \wt{W}_1 \| \cdot \| \wt{W}_2 \| = \| \wt{\phi} \|_{\rm cb} = \nor{\phi}_{\rm cb} \leq \nor{W_1} \cdot \nor{W_2},
\]
and thus $\nor{\phi}_{\rm cb} = \|W_1\| \cdot \|W_2\|$, as required.
\end{proof}

\subsection{Selfadjoint operator spaces}\label{ss_nonunital}

Matrix ordered operator spaces that are completely isometric to selfadjoint subspaces of C*-algebras were first studied in \cite{werner}.
A further detailed study was undertaken in \cite{kkm, russell}. 
They are sometimes referred to as \emph{non-unital operator systems} (see for example \cite{kkm}).
Here we call them \emph{selfadjoint operator spaces}, reflecting more closely the viewpoint of \cite{werner} (see also \cite{russell}). 
It is clarified through \cite{kkm, russell, werner} that such operator spaces can be viewed as either concrete or abstract. 
As is the case with operator systems and general operator spaces, if unclear from the context which of the two is intended, this will be explicitly stated. 

An operator space will be called a  \emph{matrix ordered operator space} if it is a matrix ordered $*$-vector space such that the involution at every $M_n(\cl X)$ is isometric, and every cone $M_n(\cl X)^+$ is closed.
If $\cl X$ and $\cl Y$ are matrix ordered operator spaces, we denote by ${\rm CCP}(\cl X, \cl Y)$ the (convex) set of all completely contractive completely positive maps from $\cl X$ into $\cl Y$.
We note that, if $\cl X$ is an operator system, then the matrix cone structure defines a natural family of matricial norms (see Subsection \ref{ss_opspsy}); thus, operator systems are matrix ordered operator spaces in a canonical fashion (as in \cite[Chapter 13]{Pa}).

Henceforth, let $\cl X$ be a matrix ordered operator space.
For $\eps > 0$ and $\alpha \in M_n^+$, let $\alpha_\eps := \alpha + \eps I_n$; thus, 
\[
\alpha_\eps^{-1/2} \in M_n^+
\qand
\alpha_\eps^{-1/2} x \alpha_\eps^{-1/2} \in M_n(\cl X),\text{ for } x \in M_n(\cl X).
\]
We equip the vector space $\cl X^\# = \cl X \oplus \bb C$ with the involution given by 
\[
(x,\lambda)^* := (x^*,\bar{\lambda})
\]
and notice that
\[
M_n(\cl X^\#)_h = M_n(\cl X)_h \oplus (M_n)_h.
\]
The \emph{partial unitisation} $\cl X^\#$ of a matrix ordered operator space $\cl X$ is the matrix ordered $*$-vector space $\cl X^\# = \cl X \oplus \bb C$ with the matrix cone structure $\{M_n(\cl X^\#)^+\}_{n \in \bb N}$ given as follows: an element $(x, \alpha) \in M_n(\cl X^\#)_h$  is in $M_n(\cl X^\#)^+$ if and only if $\alpha \geq 0$ and
\[
\varphi(\alpha_\eps^{-1/2} x \alpha_\eps^{-1/2}) \geq  -1 \foral \eps > 0 \text{ and all } 
\varphi \in {\rm CCP}(M_n(\cl X), \bb C).
\]
By \cite[Lemma 4.8]{werner}, the partial unitisation is an operator system with unit $(0,1)$.

\begin{remark}
When $\alpha \geq 0$, the condition $\varphi(\alpha_\eps^{-1/2} x \alpha_\eps^{-1/2}) \geq -1$ for all $\vphi \in {\rm CCP}(M_n(\cl X), \bb C)$ is equivalent to the condition $\varphi(\alpha_\eps^{-1/2} x \alpha_\eps^{-1/2}) \geq -1$ for all $\vphi \in {\rm CCP}(M_n(\cl X), \bb C)$  with $\|\vphi\| = 1$, since normalisations of completely positive maps are completely positive.
The former formulation appears in \cite{kkm}, and the latter in \cite{werner}.
The reader is also addressed to a correction on \cite{werner} provided at \cite[Lemma 2.10]{cvs}.
\end{remark}

By \cite[Lemma 4.9 (a)]{werner}, a completely contractive completely positive map $\phi \colon \cl X \to \cl Y$ extends to a unital completely positive map
\begin{equation}\label{eq_sharp}
\phi^\# \colon \cl X^\# \to \cl Y^\#; \ (x, \alpha) \mapsto (\phi(x), \alpha).
\end{equation}
If $\phi$ is a completely isometric complete order isomorphism, then $\phi^\#$ is a unital complete order isomorphism (see \cite[Theorem 2.8 (3)]{kkm} or apply \cite[Theorem 4.9 (a)]{werner} for $\phi$ and $\phi^{-1}$).
In the case where $\phi \colon \cl X \to \cl Y$ is completely contractive completely positive for a matrix ordered operator space $\cl X$ and an operator system $\cl Y$, we have that the extension
\[
\phi^\# \colon \cl X^\# \to \cl Y; \ (x, \alpha) \mapsto \phi(x) + \alpha \cdot 1_{\cl Y},
\]
is unital and completely positive \cite[Theorem 4.9 (c)]{werner}.

Let $\iota \colon \cl X \to \cl X^\#$ be the (inclusion) map, given by $\iota(x) = (x,0)$.
An application of the separation Hahn-Banach Theorem for cones shows that $\iota$ is (completely contractive and) completely positive, that is, the positive cones of $\cl X$ are included in the respective positive cones of $\cl X^\#$. 
A matrix ordered operator space $\cl X$ will be called a \emph{selfadjoint operator space} if the canonical inclusion map $\cl X \to \cl X^\#$ is completely isometric.
In this case we refer to $\cl X^\#$ as \emph{the unitisation} of $\cl X$.
It is shown in \cite[Lemma 4.8 and Theorem 4.15]{werner} (see also \cite[Theorem 2.15]{kkm}) that a matrix ordered operator space is a selfadjoint operator space if and only if for all $x \in M_n(\cl X)$ and $n \in \bb N$ we have
\[
\|x\|^{(n)} 
= \nu_n(x)
:= \sup \left\{ \left| \varphi \begin{pmatrix} 0 & x \\ x^* & 0 \end{pmatrix} \right| \ : \ \varphi \in {\rm CCP}(M_{2n}(\cl X), \bb C) \right\}.
\]

Selfadjoint operator spaces form a category with morphisms given by the completely contractive completely positive maps 
\cite{kkm}. 
A completely isometric map $\phi \colon \cl S \to \cl T$ between two selfadjoint operators spaces is called an \emph{embedding} if the unitisation $\phi^\# \colon \cl S^\# \to \cl T^\#$ is a unital complete order embedding in the category of operator systems.
The unitization $\phi^\#$ of a completely isometric complete order embedding $\phi \colon \cl S \to \cl T$ between selfadjoint operator spaces is not necessarily a complete order embedding (see \cite[Example 2.14]{kkm} for a counterexample).

Let $\tau \colon \cl X^\# \to \bb C$ be the (projection) map given by $\tau((x,\alpha)) = \alpha$; then $\tau$ is unital and (completely) positive.
By \cite[Lemmas 4.5, 4.8, 4.9(c) and Corollary 4.4]{werner}, the unitisation $\cl X^\#$ of a selfadjoint operator space $\cl X$ is an operator system that satisfies the following (universal) property:
\begin{enumerate}
\item there is a split exact sequence
\[
0 \longrightarrow \cl X \stackrel{\iota}{ \longrightarrow} \cl X^\# \stackrel{\tau}{ \longrightarrow} \bb C \to 0,
\]
so that $\iota$ is a complete order monomorphism, and $\tau$ is unital;
\item for each operator system $(\cl Y, e)$ and every 
completely contractive completely positive map $\phi \colon M_n(\cl X) \to \cl Y$ the extension
\[
\phi^\# \colon \iota^{(n)}(M_n( \cl X)) + \bb C (1 \otimes I_n) \to \cl Y; 
\iota^{(n)}(x) + \la (1 \otimes I_n) \mapsto \phi(x) + \la e
\]
is (unital and) completely positive. 
\end{enumerate}
Furthermore, by \cite[Corollary 4.4]{werner}, $\iota^{(n)}(M_n(\cl X)) + \bb C (1 \otimes I_n)$ is the partial unitisation of $M_n(\cl X)$,  that is, there is a canonical complete order embedding 
$M_n(\cl X)^\# \hookrightarrow M_n(\cl X^\#)$.
In this way we can extend completely contractive completely positive maps first from $M_n(\cl X)$ to $M_n(\cl X)^\#$ by the properties of the unitisation, and then from $M_n(\cl X)^\#$ to $M_n(\cl X^\#)$  by Arveson's Extension Theorem.

\begin{remark}\label{r_sos unit}
Let $\cl X$ be a selfadjoint operator space and let $e$ be an Archi\-medean matrix order unit with respect to the cone structure of $\cl X$.
This alone may not guarantee that $\cl X$ is an operator system with $e$ as a unit, and it is a consequence of the fact that $\cl X$ comes \emph{a priori} with a matrix-norm structure.
First of all, $e$ needs to have $\nor{\cdot}_{\cl X}$-norm equal to $1$.
Further, recall that the element $e$ induces a matrix-norm structure $\{\nor{\cdot}^{(n)}_e\}_{n\in \bb{N}}$, given by 
\[
\nor{x}^{(n)}_e := \inf \left\{r \geq 0 \ : \ \begin{pmatrix} r e\otimes I_n & x \\ x^* & r e\otimes I_n \end{pmatrix} \in M_{2n}(\cl X)^+ \right\}, \ \ x \in M_n(\cl X).
\]
By the Choi-Effros Theorem, there exists a unital complete order embedding $\phi \colon \cl X \to \cl B(H)$ such that $\phi(e) = I_H$.
Although $\phi$ is a complete isometry with respect to the family $\{\nor{\cdot}^{(n)}_e\}_{n\in \bb{N}}$ of matricial norms, it may fail to be a complete isometry with respect to the family $\{\nor{\cdot}^{(n)}_{\cl X}\}_{n\in \bb{N}}$.
Therefore, we will say that a selfadjoint operator space $\cl X$ is an operator system if it admits an Archimedean matrix order unit $e$ such that 
\[
\nor{x}_{\cl X}^{(n)} = \nor{x}^{(n)}_e, \ \ x \in M_n(\cl X), n \in \bb N.
\]
We will encounter this situation later on, for example in the setting of Theorem \ref{t_notos}, Remark \ref{r_appnotos} and Remark \ref{r_troscand}.
\end{remark}

\subsection{TRO-envelope}

A \emph{ternary ring of operators (TRO)} is a subspace $\cl M\subseteq \cl B(H,K)$, for some Hilbert spaces $H$ and $K$, such that $\cl M \cl M^* \cl M \subseteq \cl M$.
If in addition $\cl M$ is closed, then $\M$ is a TRO if and only if $[\cl M \cl M^* \cl M] = \cl M$.
A ternary morphism $\theta \colon \cl M \to \cl N$ between TRO's is a linear map that satisfies
\[
\theta(m_1 m_2^* m_3) = \theta(m_1) \theta(m_2)^* \theta(m_3), \ \ \ m_1, m_2, m_3 \in \cl M;
\]
ternary morphisms are completely contractive \cite[Lemma 8.3.2]{blm}.
Every ternary morphism $\theta \colon \cl M \to \cl B(H)$ induces $*$-representations
\[
\rho \colon [\cl M \cl M^*] \to \cl B(H); \ xy^* \mapsto \theta(x) \theta(y)^*
\]
and
\[
\si \colon [\cl M^* \cl M] \to \cl B(H); \ x^*y \mapsto \theta(x)^* \theta(y),
\]
that satisfy
\[
\theta(a \cdot x \cdot b) = \rho(a) \theta(x) \si(b), \ \ a \in [\cl M \cl M^*], b \in [\cl M^* \cl M], x \in \cl M,
\]
see for example \cite[Proof of Corollary 8.3.5]{blm}.
For a completely isometric map $\iota \colon \cl E \to \cl M$ into a TRO $\cl M$, let $\cl T(\iota(\cl E))$ be the TRO densely spanned by the products
\[
\iota(x_1) \iota(x_2)^* \iota(x_3) \iota(x_4)^* \cdots \iota(x_{2n})^* \iota(x_{2n+1})
\ \mbox{ for } 
n\geq 0, \ x_1,\dots, x_{2n+1} \in \cl E;
\]
we say that $(\cl T(\iota(\cl E)),\iota)$ is a \emph{TRO cover} of $\cl E$.

If $\cl E$ is unital, set $\cl S(\cl E) := \cl E + \cl E^*$; if not, set
\[
\cl S(\cl E) := \left\{ \begin{pmatrix} \la & x_1 \\ x_2^* & \mu \end{pmatrix} \mid x_1, x_2 \in \cl E, \la, \mu \in \bb C\right\},
\]
which is the \emph{Paulsen operator system} of $\cl E$.
The operator space $\cl E$ embeds completely isometrically into the operator system $\cl S(\cl E)$, that is, $\cl S(\cl E)$ is independent of the representation of $\cl E$ (see for example \cite[Lemma 1.3.15]{blm}); therefore $\cl E$ embeds into the injective envelope $\cl I(\cl S(\cl E))$ of $\cl S(\cl E)$.
The well-known Choi-Effros construction \cite{choi-effros} endows the injective envelope with a C*-algebraic structure.
Writing 
\begin{equation}\label{eq_I12}
\cl I(\cl S(\cl E)) = \begin{pmatrix} \cl I_{11}(\cl E) & \cl I_{12}(\cl E) \\ \cl I_{21}(\cl E)^* & \cl I_{22}(\cl E) \end{pmatrix}
\end{equation}
in a matrix form, the injective envelope $\cl I(\cl E)$ of $\cl E$ is defined to be the corner $\cl I_{12}(\cl E)$; in particular, it coincides with $\cl I(\cl E + \cl E^*)$ when $\cl E$ is unital (see \cite[Paragraph 4.4.2]{blm}).
The TRO cover of $\cl E$ inside $\cl I(\cl S(\cl E))$ will be denoted by $\tenv(\cl E)$.
M. Hamana \cite{Ham99} showed that $\tenv(\cl E)$ possesses the following universal property: given any 
TRO cover $(\cl M, j)$ of $\cl E$ there exists a (necessarily unique) surjective ternary morphism $\theta \colon \cl M \to \tenv(\cl E)$ such that $\theta(j(x)) = x$, for all $x\in \cl E$.
The operator space $\tenv(\cl E)$ is called the \emph{ternary} (or \emph{TRO}) \emph{envelope} of $\cl E$.
If $\cl E$ is unital, then the embedding $\cl E \hookrightarrow \cl I(\cl E)$ is unital and, the TRO envelope is a C*-algebra, denoted by $\cenv(\cl E)$ (see for example \cite[Remark 8.3.12, item (5)]{blm}).
The existence of the C*-envelope was established by M. Hamana \cite{Ham79}, following Arveson's quantisation program \cite{Arv69}
(see also \cite{blecher-shilov}).
For an alternative proof of Hamana's Theorem, using boundary subsystems, the reader is directed to \cite{Kak11-2}; the arguments in \cite{Kak11-2} suffice to show the existence of the TRO envelope as well, even though this is not mentioned therein.

A. Connes and W. D. van Suijlekom \cite{cvs} introduced the C*-envelope of a selfadjoint operator space.
A pair $(\cl C, \iota)$ is called a \emph{C*-cover} of a selfadjoint operator space $\cl E$ if $\cl C$ is a C*-algebra, $\iota \colon \cl E \to \cl C$ is an embedding and $\cl C = \ca(\iota(\cl E))$.
A C*-cover $(\cl C, \iota)$ is \emph{minimal} if for any C*-cover $(\cl C', \iota')$ there is a surjective $*$-homomorphism $\pi \colon \cl C' \to \cl C$ such that $\pi \circ \iota' = \iota$.
It  is shown in \cite[Corollary 7.3]{kkm} that the minimal C*-cover of a (unital) operator system in the category of selfadjoint operator spaces coincides with its C*-envelope.

We next recall some facts about multipliers of an operator space, and refer the reader to \cite[Section 4.5]{blm} for more details.
The \emph{left multiplier algebra} $\cl M_\ell(\cl E)$ of an operator space $\cl E$ is defined in \cite[Paragraph 4.5.1]{blm}, and its C*-diagonal $\cl A_{\ell}(\cl E)$ -- called the \emph{left C*-multiplier} of $\cl E$ -- can be defined as the C*-algebra whose elements are completely bounded maps $u \colon \cl E \to \cl E$ for which there exist a Hilbert space $H$, an operator $S \in \cl B(H)$, and a (linear) complete isometry $\si \colon \cl E \to \cl B(H)$ such that $S^* \si(\cl E) \subseteq \si(\cl E)$ and $\si(u (x)) = S \si(x)$ for all $x \in \cl E$.
By \cite[Proposition 4.5.8]{blm},
\[
\cl A_{\ell}(\cl E) \simeq \{a \in \cl I_{11}(\cl E) \ : \ a \cl E \subseteq \cl E \text{ and } a^*\cl E \subseteq \cl E \}.
\]
The \emph{right C*-multiplier} $\cl A_{r}(\cl E)$ of $\cl E$ is described analogously, that is, 
\[
\cl A_{r}(\cl E) \simeq \{a \in \cl I_{22}(\cl E) \ : \ \cl E a \subseteq \cl E \text{ and } \cl E a^* \subseteq \cl E \},
\]
as the diagonal of the \emph{right C*-multiplier} $\cl M_r(\cl E)$ of $\cl E$.
If $\cl S$ is an operator system, we have $\cl A_{\ell}(\cl S) = \cl A_r(\cl S) \subseteq \cl S$ due to the existence of the involution and the unit; in this case we simplify the notation to $\cl A_{\cl S}$.

For a TRO $\cl M$, the C*-algebra $\cl A_{\ell}(\cl M)$ is $*$-isomorphic to the multiplier algebra of $[\cl M \cl M^*]$; respectively, the C*-algebra $\cl A_{r}(\cl M)$ is $*$-isomorphic to the multiplier algebra of $[\cl M^* \cl M]$ (see for example \cite[Corollary 8.4.2]{blm}).
If, in addition, $\cl M$ is non-degenerate, then 
\[
\cl A_{\ell}(\cl M) \simeq [\cl M \cl M^*] \qand \cl A_{r}(\cl M) \simeq [\cl M^* \cl M], 
\]
since $[\cl M \cl M^*]$ and $[\cl M^* \cl M]$ are unital. 

\section{Quotients and bimodules}\label{s_quatbim}

In this section we collect some results on the general theory of bimodules and quotients for operator systems and for selfadjoint operator spaces. 
All bimodule actions will be unital over a unital C*-algebra.
We will be interested in bimodule structures that descend to a quotient.

\subsection{Quotients of operator systems}

Let $(\cl S, \{C_n\}_{n=1}^{\infty}, e)$ be an operator system.
Recall from \cite{kptt_adv} that a subspace $\cl J$ of an operator system $\cl S$ is called a \emph{kernel} of $\cl S$ if there exist a Hilbert space $H$ and a unital completely positive map $\phi \colon \cl S \to \cl B(H)$ such that $\cl J = \ker \phi$. 
The quotient vector space $\cl S/ \cl J$ is then endowed with the standard involution map and equipped with a family of matrix cones given by
\begin{align*}
C_n(\cl S/ \cl J) := \{u + M_n(\cl J) \in M_n(\cl S/\cl J)_h \ : \ & \, \forall \eps > 0 \ \exists \, w \in M_n(\cl J) \\
& \hspace{-0.5cm} \text{ such that } \eps(e \otimes 1_n) + u + w \in C_n\}.
\end{align*}
It follows that $e + \cl J$ is an Archimedean matrix order unit for $\cl S/ \cl J$, and the triple $(\cl S/ \cl J, \{C_n(\cl S/ \cl J)\}_{n=1}^\infty, e+ \cl J)$ is the induced quotient operator system of \cite[Definition 3.5]{kptt_adv}.
By \cite[Proposition 3.6]{kptt_adv}, the latter operator system has the following universal property: if $\phi \colon \cl S \to \cl B(H)$ is a unital completely positive map with $\phi|_{\cl J} = 0$, then there exists a unital completely positive map $\phi' \colon \cl S/ \cl J \to \cl B(H)$ making the diagram
\[
\xymatrix{
\cl S \ar[rr]^{\phi} \ar[d] & & \cl B(H) \\
\cl S/ \cl J \ar[urr]_{\phi'} & &
}
\]
commutative.

For every $n \in \bb N$, we can define the matrix norms on $M_n(\cl S/ \cl J)$, given by
$$\|u + M_n(\cl J)\|_{\rm osp}^{(n)}  := \sup \{ \|\phi^{(n)}(u)\| \ : \ \phi \textup{ c.c. map on }\cl S \textup{ with } \phi|_{\cl J} = 0 \}
$$
and
$$\|u + M_n(\cl J)\|_{\rm osy}^{(n)}  := \sup \{ \|\phi^{(n)}(u)\| \ : \ \phi \textup{ u.c.p. map on }\cl S \textup{ with } \phi|_{\cl J} = 0 \},$$
as well as the matrix norm structure on $\cl S/ \cl J$ induced by $e + \cl J$.
We see that $\| \cdot \|_{\rm osy}^{(n)} \leq \| \cdot \|_{\rm osp}^{(n)}$ for every $n \in \bb N$, and by \cite[Example 4.4]{kptt_adv} we have that equality may not hold in general.
We note that, on the other hand, $u + M_n(\cl J) \in C_n(\cl S/ \cl J)$ if and only if $\phi^{(n)}(u) \geq 0$ for every unital completely positive map $\phi$ on $\cl S$ with $\phi|_{\cl J} = 0$.
Moreover, if $\phi$ is a unital completely positive map on $\cl S$, then
\[
\begin{pmatrix} \la & \phi^{(n)}(u) \\ \phi^{(n)}(u)^* & \la \end{pmatrix} \geq 0
\Leftrightarrow 
\|\phi^{(n)}(u)\| \leq \la.
\]
Therefore $\| \cdot \|_{e + \cl J}^{(n)} = \| \cdot \|_{\rm osy}^{(n)}$.
Consequently $C_n(\cl S/ \cl J)$ is closed with respect to $\| \cdot \|_{\rm osy}^{(n)}$, and thus also with respect to $\| \cdot \|_{\rm osp}^{(n)}$, $n\in \bb{N}$.

\begin{proposition}\label{p_quoos}
Let $\cl J$ be a kernel of an operator system $\cl S$, and let $q \colon \cl S \to \cl S/ \cl J$ be the canonical quotient map.
Then
\[
\ol{q^{(n)}(C_n)}^{\| \cdot \|_{\rm osy}^{(n)}} 
= 
C_n(\cl S/ \cl J), \ \ \ n \in \bb N.
\]
\end{proposition}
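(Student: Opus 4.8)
The plan is to show the two inclusions separately, using the fact (established just before the statement) that $\|\cdot\|_{e+\cl J}^{(n)} = \|\cdot\|_{\rm osy}^{(n)}$ and that $C_n(\cl S/\cl J)$ is closed with respect to $\|\cdot\|_{\rm osy}^{(n)}$. For the inclusion $\ol{q^{(n)}(C_n)}^{\|\cdot\|_{\rm osy}^{(n)}} \subseteq C_n(\cl S/\cl J)$, first I would observe that $q$ is unital and completely positive (it corresponds to the identity quotient map, which is exactly one of the admissible u.c.p.\ maps annihilating $\cl J$), so $q^{(n)}(C_n) \subseteq C_n(\cl S/\cl J)$; since $C_n(\cl S/\cl J)$ is closed in $\|\cdot\|_{\rm osy}^{(n)}$, the closure of $q^{(n)}(C_n)$ is contained in it as well.

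For the reverse inclusion $C_n(\cl S/\cl J) \subseteq \ol{q^{(n)}(C_n)}^{\|\cdot\|_{\rm osy}^{(n)}}$, I would take $u + M_n(\cl J) \in C_n(\cl S/\cl J)$ and unpack the definition: for every $\eps > 0$ there is $w_\eps \in M_n(\cl J)$ with $\eps(e\otimes 1_n) + u + w_\eps \in C_n$. Thus $q^{(n)}(\eps(e\otimes 1_n) + u + w_\eps) = \eps(e + \cl J)\otimes 1_n + (u + M_n(\cl J)) \in q^{(n)}(C_n)$. It then remains to check that these elements converge to $u + M_n(\cl J)$ in the $\|\cdot\|_{\rm osy}^{(n)}$ norm as $\eps \to 0$; this amounts to showing $\|\eps(e+\cl J)\otimes 1_n\|_{\rm osy}^{(n)} \to 0$, which is immediate since $\|\eps (e+\cl J)\otimes 1_n\|_{\rm osy}^{(n)} = \eps \|(e+\cl J)\otimes 1_n\|_{\rm osy}^{(n)} \le \eps$ (the image of the Archimedean matrix order unit under a unital completely positive map is $I_n$, of norm $1$). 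Hence $u + M_n(\cl J)$ lies in the $\|\cdot\|_{\rm osy}^{(n)}$-closure of $q^{(n)}(C_n)$.

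I do not expect a genuine obstacle here; the statement is essentially a repackaging of the definition of the quotient matrix cones combined with the norm identification $\|\cdot\|_{e+\cl J}^{(n)} = \|\cdot\|_{\rm osy}^{(n)}$. The one point requiring a little care is making sure the perturbing terms $w_\eps \in M_n(\cl J)$ are correctly absorbed by the quotient map $q$ (they vanish under $q^{(n)}$, which is what makes $q^{(n)}(\eps(e\otimes 1_n) + u + w_\eps)$ depend only on $u + M_n(\cl J)$ and the scalar $\eps$), and confirming that the relevant norm on the one-dimensional span of the order unit behaves as expected so that the approximation genuinely converges.
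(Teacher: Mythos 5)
Your argument is correct and coincides with the paper's own proof: both directions are handled the same way, namely $q^{(n)}(C_n)\subseteq C_n(\cl S/\cl J)$ plus closedness of the quotient cone for one inclusion, and the $\eps(e\otimes 1_n)+u+w\in C_n$ perturbation from the definition of $C_n(\cl S/\cl J)$, which lies in $q^{(n)}(C_n)$ and is within $\eps$ of $u+M_n(\cl J)$ in $\|\cdot\|_{\rm osy}^{(n)}$, for the other. No gaps.
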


\begin{proof}
It is immediate from the definition that
\[
q^{(n)}(C_n)\subseteq C_n(\cl S/ \cl J);
\]
since $C_n(\cl S/ \cl J)$ is closed, we have 
\[
\ol{q^{(n)}(C_n)}^{\| \cdot \|_{\rm osy}^{(n)}} \subseteq C_n(\cl S/ \cl J).
\]

For the reverse inclusion, let $u + M_n(\cl J) \in C_n(\cl S/ \cl J)$ and, for $\eps >0$, let $w \in M_n(\cl J)$ be such that $\eps (e \otimes 1_n) + u + w \in C_n$.
From the first paragraph we have 
\begin{align*}
\eps (e \otimes 1_n) + u + M_n(\cl J) 
& = q^{(n)}(\eps (e \otimes 1_n) + u + w) \\
& \in q^{(n)}(C_n) \subseteq C_n(\cl S/ \cl J) 
\end{align*}
while clearly
\begin{align*}
\left\|(u + M_n(\cl J) ) - (\eps (e \otimes 1_n) + u + M_n(\cl J)) \right\|_{\rm osy}^{(n)} 
= 
\eps,
\end{align*}
and the proof is complete.
\end{proof}

Note that the statement in Proposition \ref{p_quoos} is still true if $\| \cdot \|_{\rm osy}^{(n)}$ is replaced by $\| \cdot \|_{\rm osp}^{(n)}$.

\subsection{Operator $\cl A$-systems}

We collect some facts about operator bimodules.
%
Let $\cl A$ be a unital C*-algebra. 
An operator space $\cl E$ will be called a \emph{left} (resp. \emph{right}) \emph{operator $\cl A$-space} if $\cl E$ is a left (resp. right) operator $\cl A$-module with unital action, that is, the modular actions are unital and completely contractive. 
An \emph{$\cl A$-representation} of a left operator $\cl A$-space (or a \emph{C*-representation} if $\cl A$ is clear from the context) is a pair $(\phi,\pi)$, where $\phi \colon \cl E\to \cl B(H,K)$ is a completely bounded map, $\pi \colon \cl A \to \cl B(K)$ is a unital $*$-representation, for some Hilbert spaces $H$ and $K$, and 
\begin{equation}
\phi(a\cdot x) = \pi(a)\phi(x), \ \ \ x\in \cl E, a\in \cl A.
\end{equation}
An $\cl A$-representation of $\cl E$ will be called \emph{completely contractive} (resp. \emph{completely isometric}) if $\phi$ is completely contractive (resp. completely isometric). 
An operator space $\cl E$ is a left operator $\cl A$-space if and only if it has a completely isometric $\cl A$-representation; likewise for the right operator modules (see \cite[Theorem 4.6.7]{blm} in conjunction with \cite[Definition 3.1.1]{blm}).
There is a similar theory for operator systems that are bimodules over a unital C*-algebra \cite[Chapter 15]{Pa}, which we now review.

\begin{definition}
Let $\cl S \subseteq \cl B(H)$ be an operator system that is a bimodule over a unital C*-algebra $\cl A$.
We say that $\cl S$ is a \emph{concrete operator $\cl A$-system} if there exists a unital $*$-representation $\pi \colon \cl A \to \cl B(H)$ such that
\[
\pi(a) s = a \cdot s \mbox{ and }
s \pi(a) = s \cdot a,
\foral s \in \cl S, a \in \cl A.
\]
\end{definition}

Note that, if $\cl S \subseteq \cl B(H)$ is a concrete operator $\cl A$-system, then
\[
a^* \cdot C_m \cdot a = \pi^{(m,n)}(a)^* C_m \pi^{(m,n)}(a) \subseteq C_n
\foral
a \in M_{m,n}(\cl A),
\]
and that, for all $a \in \cl A$, we get 
\[
a \cdot e = \pi(a) e = \pi(a) = e \pi(a) = e \cdot a,
\]
where we use that the unit $e \in \cl S$ coincides with $I_H$.

\begin{definition}\label{d_opeAsy}
Let $(\cl S,\{C_n\}_{n\in \bb{N}},e)$ be an operator system that is a bimodule over a unital C*-algebra $\cl A$.
We say that $\cl S$ is an \emph{abstract operator $\cl A$-system} if the following hold:
\begin{enumerate}
\item $a^* \cdot C_m \cdot a \subseteq C_n$ for all $a \in M_{m,n}(\cl A)$;
\item $a \cdot e = e \cdot a$ for all $a \in \cl A$;
\item $(a \cdot s)^* = s^* \cdot a^*$ for all $s \in \cl S$ and 
all $a \in \cl A$.
\end{enumerate}
\end{definition}

An abstract operator $\cl A$-system has a concrete realisation, as seen from the following result. 

\begin{theorem}\label{t_opA-sys}
\cite[Theorem 15.12]{Pa}
Let $\cl S$ be an operator system that is a bimodule over a unital C*-algebra $\cl A$.
If $\cl S$ is an abstract operator $\cl A$-system, then there exists a unital complete order embedding $\ga \colon \cl S \to \cl B(H)$ and a unital $*$-representation $\pi \colon \cl A \to \cl B(H)$ such that
\[
\pi(a) \ga(s) = \ga(a \cdot s)
\qand
\ga(s) \pi(a) = \ga(s \cdot a)
\]
for all $a \in \cl A$ and $s \in \cl S$.
\end{theorem}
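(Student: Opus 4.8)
\textbf{Proof proposal for Theorem \ref{t_opA-sys}.}

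The plan is to realise $\cl S$ concretely using the Choi--Effros/Arveson machinery and then \emph{upgrade} the representation so that it intertwines an honest $*$-representation of $\cl A$. First I would embed $\cl S$ completely order isomorphically and unitally into some $\cl B(H_0)$ via the Choi--Effros Theorem; write $\kappa\colon \cl S\to\cl B(H_0)$ for this embedding, so that $\kappa(e)=I_{H_0}$. The issue is that the module action of $\cl A$ need not be implemented spatially on $H_0$. To remedy this, I would dilate: for each $a\in \cl A$ the map $s\mapsto \kappa(a^*\cdot s\cdot a)$ is completely positive (by axiom (i) applied to the $1\times 1$ matrix $a$), and more generally the data $\{\kappa(a^*\cdot s\cdot b)\}$ assemble into a completely positive map on $M_2(\cl S)$ because of the compatibility of the cones under conjugation by $2\times 1$ matrices over $\cl A$; this is precisely the kind of positivity needed to run a Stinespring-type argument producing an isometry $V\colon H_0\to H$ and a unital $*$-representation $\pi\colon\cl A\to\cl B(H)$ together with a unital complete order embedding $\ga\colon\cl S\to\cl B(H)$ with $\ga(s)=V^*(\cdots)V$ and $\pi(a)\ga(s)=\ga(a\cdot s)$, $\ga(s)\pi(a)=\ga(s\cdot a)$. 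In effect one builds the Hilbert space $H=\cl A\otimes_{\mathrm{alg}}H_0$ completed in the (semi-definite, by positivity of the cones) inner product $\lip a\otimes\xi, b\otimes\eta\rip := \lip \kappa(b^*\cdot a)\xi,\eta\rip$ when $a,b$ range over $\cl A$ — wait, one needs $\cl S$-valued expressions, so more precisely one takes $H$ to carry the GNS-type data coming from the completely positive maps $s\mapsto\kappa(a^*\cdot s\cdot b)$, lets $\cl A$ act by left multiplication in the first leg (this is the source of $\pi$), lets $\cl S$ act via the dilated version of $\kappa$, and lets $V\xi := e\otimes\xi$ play the role of the cyclic-type vector realising $\kappa(s)=V^*\ga(s)V$.

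The key steps, in order, are: (1) invoke Choi--Effros to get a unital complete order embedding $\kappa\colon\cl S\hookrightarrow\cl B(H_0)$; (2) use axioms (i)--(iii) to check that the assignment $(a\otimes s\otimes b)\mapsto\kappa(a^*\cdot s\cdot b)$ (extended appropriately to matrices over $\cl A$ and $\cl S$) defines a positive-semidefinite sesquilinear form, and form the Hilbert space $H$ by separation and completion; (3) define $\pi\colon\cl A\to\cl B(H)$ by left multiplication on the $\cl A$-leg and verify it is a unital $*$-representation, using that $\cl A$ is a C*-algebra and that the inner product is $\cl A$-compatible; (4) define $\ga\colon\cl S\to\cl B(H)$ by the dilated action and verify it is unital and completely positive, with $\pi(a)\ga(s)=\ga(a\cdot s)$ and $\ga(s)\pi(a)=\ga(s\cdot a)$ — here axiom (ii) ensures $\ga(e)$ is the identity and axiom (iii) handles the interaction with adjoints; (5) finally, show $\ga$ is a complete order \emph{embedding}: one direction is completely positivity from (4), and for the reverse, note that the isometry $V\colon H_0\to H$ satisfies $V^*\ga^{(n)}(x)V=\kappa^{(n)}(x)$ for $x\in M_n(\cl S)$, so if $\ga^{(n)}(x)\ge 0$ then $\kappa^{(n)}(x)\ge 0$, whence $x\in M_n(\cl S)^+$ because $\kappa$ is a complete order embedding.

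I expect step (2) to be the main obstacle: one must isolate exactly how much positivity axiom (i) gives. The subtlety is that (i) only provides conjugation-invariance of the cones by rectangular matrices over $\cl A$, and turning this into honest positive-semidefiniteness of the GNS form requires organising the relevant finite sums $\sum_{i,j}\lip\kappa(a_i^*\cdot s_{ij}\cdot a_j)\xi_j,\xi_i\rip$ and recognising them as $\lip\kappa^{(N)}\big((a^* \cdot s \cdot a)\big)\Xi,\Xi\rip$ for a suitable matrix $a\in M_{N,1}(\cl A)$, $s\in M_N(\cl S)$ (which one must first verify is hermitian and positive when the $s_{ij}$ come from a positive element), and then applying complete positivity of $\kappa$. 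Matching indices and keeping track of which tensor leg carries which action is where the care is needed; once the form is known to be positive-semidefinite, steps (3)--(5) are routine GNS-style bookkeeping. This is, of course, exactly the content of \cite[Theorem 15.12]{Pa}, so I would either reproduce that argument in the bimodule language above or simply cite it; since the statement is quoted verbatim from Paulsen, citing it is legitimate, but the dilation sketch above is the proof one would write if a self-contained argument were wanted.
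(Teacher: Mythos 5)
Your proposal is fine: the paper gives no proof of this statement at all, quoting it verbatim with the citation \cite[Theorem 15.12]{Pa}, so citing Paulsen is exactly what is done, and your dilation sketch (GNS form on $\cl A\odot H_0$ built from $\kappa(b^*\cdot s\cdot a)$, positivity via axiom (i) in the form $\bigl(\kappa(a_i^*\cdot s_{ij}\cdot a_j)\bigr)_{i,j}=\kappa^{(N)}(a^*\cdot s\cdot a)\ge 0$, left multiplication giving $\pi$, and compression by $V\xi=1_{\cl A}\otimes\xi$ recovering $\kappa$ to get the order embedding) is precisely the standard argument behind that theorem. No gap.
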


In Subsection \ref{ss_constpro} below, we will need 
bits of quotient theory for operator systems in the presence of a modular action, which we now develop.

\begin{theorem}\label{th_modquo}
Let $\cl A$ be a unital C*-algebra, $\cl S$ be an operator $\cl A$-system, and $\cl J\subseteq \cl S$ be a kernel, such that $\cl A \cdot \cl J \subseteq\cl J$. 
Then the quotient operator system $\cl S/\cl J$ is an operator $\cl A$-system in a canonical way. 
\end{theorem}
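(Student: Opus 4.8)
The goal is to upgrade the quotient operator system $\cl S/\cl J$ from the previous discussion to an operator $\cl A$-system. I would proceed in two stages: first put a well-defined $\cl A$-bimodule structure on the quotient space, then verify that the triple $(\cl S/\cl J,\{C_n(\cl S/\cl J)\}_n,e+\cl J)$ satisfies the three axioms of Definition~\ref{d_opeAsy}. Since $\cl S$ is an abstract operator $\cl A$-system, by Theorem~\ref{t_opA-sys} we may as well assume $\cl S\subseteq\cl B(H)$ concretely with a unital $*$-representation $\pi\colon\cl A\to\cl B(H)$ implementing both actions, which makes axiom (i) manifest on $\cl S$; but the quotient reasoning is cleanest done abstractly, so I would keep the abstract picture and only invoke the concrete one where it simplifies matters.

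\textbf{Step 1 (bimodule structure on the quotient).} The hypothesis $\cl A\cdot\cl J\subseteq\cl J$ handles the left action. For the right action I would first observe that axiom (iii) of Definition~\ref{d_opeAsy} gives $\cl J\cdot\cl A = (\cl A^*\cdot\cl J^*)^* = (\cl A\cdot\cl J^*)^*$, and since $\cl J$ is a kernel it equals $\ker\phi$ for a u.c.p.\ map $\phi$, hence $\cl J^* = \cl J$ (kernels are selfadjoint); combined with $\cl A\cdot\cl J\subseteq\cl J$ this yields $\cl J\cdot\cl A\subseteq\cl J$. Therefore both one-sided actions descend to well-defined bilinear maps $\cl A\times(\cl S/\cl J)\to\cl S/\cl J$ and $(\cl S/\cl J)\times\cl A\to\cl S/\cl J$ by $a\cdot(s+\cl J):=(a\cdot s)+\cl J$ and $(s+\cl J)\cdot a:=(s\cdot a)+\cl J$; associativity and unitality of these actions are inherited from $\cl S$. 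That $\cl S/\cl J$ is an \emph{operator} $\cl A$-bimodule (complete contractivity of the actions) follows since, by the quotient's universal property (\cite[Proposition 3.6]{kptt_adv}) applied to the u.c.p.\ maps $\phi$ on $\cl S$ annihilating $\cl J$, the matrix norm $\|\cdot\|_{\rm osy}^{(n)}$ on $\cl S/\cl J$ is computed by pushing through such $\phi$, and on each $\phi(\cl S)$ the module action is implemented by a $*$-representation of $\cl A$ (via the concrete realisation of Theorem~\ref{t_opA-sys} composed with $\phi$), hence completely contractive; taking the supremum over $\phi$ preserves this. (Alternatively one cites that $\cl S/\cl J$ sits completely isometrically in $\cl B(H_\phi)$ for the direct sum representation and the action there is a restriction of a $*$-representation.)

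\textbf{Step 2 (the three axioms).} Axiom (ii), $a\cdot(e+\cl J)=(e+\cl J)\cdot a$, is immediate from axiom (ii) for $\cl S$, since $a\cdot e=e\cdot a$ in $\cl S$. Axiom (iii), $(a\cdot(s+\cl J))^*=(s+\cl J)^*\cdot a^*$, follows by applying axiom (iii) for $\cl S$ and the fact that the involution on the quotient is the one induced from $\cl S$ (using again $\cl J^*=\cl J$ so that the induced involution is well defined). The substantive point is axiom (i): for $a\in M_{m,n}(\cl A)$ we must show $a^*\cdot C_m(\cl S/\cl J)\cdot a\subseteq C_n(\cl S/\cl J)$. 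I would unfold the definition of $C_m(\cl S/\cl J)$: given $u+M_m(\cl J)\in C_m(\cl S/\cl J)$ and $\eps>0$, pick $w\in M_m(\cl J)$ with $\eps(e\otimes 1_m)+u+w\in C_m$. Apply $a^*\cdot(-)\cdot a$; by axiom (i) for $\cl S$ the result $a^*(\eps(e\otimes1_m)+u+w)a$ lies in $C_n$. Now $a^*(e\otimes1_m)a = \eps' $-bounded by $\|a\|^2 (e\otimes 1_n)$ in the order sense (using that $a^*a\le\|a\|^2 I_n$ in $M_n(\cl A)$ together with axiom (i) for $\cl S$ applied to $(\|a\|^2 I_n - a^*a)^{1/2}$), and $a^*wa\in M_n(\cl J)$ since $\cl J$ is an $\cl A$-bimodule and $M_n(\cl J)$ is stable under $M_{m,n}(\cl A)$-conjugation. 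Thus for every $\eps>0$ we have produced $w':=a^*wa\in M_n(\cl J)$ with $\eps\|a\|^2(e\otimes1_n)+a^*ua+w'\in C_n$; rescaling $\eps$, this says precisely $a^*\cdot(u+M_m(\cl J))\cdot a\in C_n(\cl S/\cl J)$.

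\textbf{Expected obstacle.} The routine parts (well-definedness, axioms (ii)--(iii)) are easy; the one place needing care is axiom (i), specifically controlling the term $a^*(e\otimes 1_m)a$ so that the "$\eps$-perturbation by a multiple of the unit" survives the conjugation — one must keep track that conjugating an $\eps$-fattened positive element by $a$ still produces something $\eps'$-fattened, which is exactly the domination $a^*(e\otimes 1_m)a\le \|a\|^2(e\otimes 1_n)$ inside the concrete realisation. A secondary subtlety is ensuring the operator-bimodule norm estimates on the quotient (complete contractivity of the descended actions), for which the cleanest route is to invoke the concrete realisation of $\cl S/\cl J$ via u.c.p.\ maps annihilating $\cl J$ rather than arguing abstractly. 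Once these are in place, the triple $(\cl S/\cl J,\{C_n(\cl S/\cl J)\}_n,e+\cl J)$ satisfies Definition~\ref{d_opeAsy} and is therefore an abstract operator $\cl A$-system, completing the proof.
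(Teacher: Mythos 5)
Your proposal is correct and follows essentially the same route as the paper: the core step in both is to take $w\in M_m(\cl J)$ with $\eps(e\otimes I_m)+u+w\in C_m$, conjugate by $a$, and absorb $a^*\cdot(e\otimes I_m)\cdot a=a^*a\cdot(e\otimes I_n)$ into $\|a\|^2(e\otimes I_n)$ using positivity of $\|a\|^2 I_n-a^*a$, with the remaining axioms and the well-definedness of the right action (via selfadjointness of the kernel $\cl J$) handled exactly as you do. Your extra verification of complete contractivity of the descended actions is not required by Definition \ref{d_opeAsy} and is the content of the paper's Remark \ref{r_contaut}, where it is deduced a posteriori from the concrete realisation of the quotient as an operator $\cl A$-system.
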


\begin{proof}
Set $C_n = M_n(\cl S)^+$ and $C_n(\cl S/ \cl J) = M_n(\cl S/\cl J)^+$, $n\in \bb{N}$. 
Since the space $\cl J$ is selfadjoint, 
we have that it is an $\cl A$-bimodule, and hence
$\cl S/\cl J$ is an $\cl A$-bimodule with unital actions, given by
\[
a\cdot (x + \cl J) := a\cdot x + \cl J, \ (x + \cl J)\cdot b := x\cdot b + \cl J, \ \ \ a,b\in \cl A, x\in \cl S.
\]
We will show that $\cl S/ \cl J$ satisfies the three axioms of an operator $\cl A$-system (see Definition \ref{d_opeAsy}).

For the first axiom, let $u \in M_m(\cl S)$ be such that $u + M_m(\cl J) \in C_m(\cl S/\cl J)$, and let $a \in M_{m,n}(\cl A)$, for some $n,m\in \bb{N}$. 
For a fixed $\eps > 0$, let $w \in M_m(\cl J)$ be such that $\eps (e \otimes I_m) + u + w \in C_m$. 
Since $\cl S$ is an operator $\cl A$-system, we have
\[
\eps a^*a \cdot (e \otimes I_n) + a^*\cdot u \cdot a + a^*\cdot w \cdot a \in a^* \cdot C_m \cdot a \subseteq C_n,
\]
where we have used that 
\[
a^* \cdot (e \otimes I_n) \cdot a = a^*a \cdot (e \otimes I_n)
\]
by axiom (ii).
Therefore we obtain 
\begin{align*}
\eps \|a\|^2 \cdot (e \otimes I_n) + a^*\cdot u \cdot a + a^*\cdot w \cdot a 
& = \\
& \hspace{-3cm} = 
\eps a^*a \cdot (e \otimes I_n) + a^*\cdot u \cdot a + a^*\cdot w \cdot a + \\
& \hspace{-1cm} + 
\eps (\|a\|^2 (e \otimes I_n) - a^*a \cdot (e \otimes I_n)) \in C_n,
\end{align*}
where we have used that
\[
\|a\|^2 (e \otimes I_n) - a^*a \cdot (e \otimes I_n)
\in C_n.
\]
It follows that 
\[
a^*\cdot (u + M_m(\cl J)) \cdot a \in C_n(\cl S/\cl J),
\]
as required.

For the second axiom, we have
\[
a \cdot (e + \cl J) = a \cdot e + \cl J = e \cdot a + \cl J = (e + \cl J) \cdot a
\]
for all $a \in \cl A$.

For the third axiom, we have
\[
(a \cdot (s + \cl J))^* = (a \cdot s + \cl J)^* = (a \cdot s )^* + \cl J = s^* \cdot a ^* + \cl J = (s^* + \cl J) \cdot a^*
\]
for all $a \in \cl A$ and $s \in \cl S$, and the proof is complete.
\end{proof}

\begin{remark}\label{r_contaut}
It follows from Theorem \ref{th_modquo} that the $\cl A$-module action on the quotient operator system $\cl S/\cl J$ is automatically completely contractive.
This fact that can be also verified directly, by using the definition of the quotient matricial order in the operator system category. 
\end{remark}

\subsection{Quotients of selfadjoint operator spaces}

Let $\cl X$ be a selfadjoint operator space with family $\{C_n\}_{n \in \bb N}$ of matricial cones. 
The notion of a kernel is extended in \cite[Definition 8.1, Remark 8.2]{kkm} to selfadjoint operator spaces:  a \emph{kernel} of a selfadjoint operator space $\cl X$ is a subspace $\cl J\subseteq \cl X$ such that $\cl J = \ker \phi$ for a completely contractive completely positive map $\phi \colon \cl X \to \cl B(H)$.
It follows that a kernel is itself a selfadjoint space.

\begin{remark}\label{r_kernel}
As in \cite[Proposition 3.1]{kptt_adv}, we have that the following statements are equivalent for a selfadjoint operator space $\cl X$ and a selfadjoint subspace $\cl J\subseteq \cl X$:
\begin{enumerate}
\item $\cl J$ is a kernel; 
\item $\cl J = \ker \phi$ for a completely bounded completely positive map $\phi \colon \cl X \to \cl B(H)$;
\item $\cl J = \bigcap_{\al \in \bb{A}} \ker f_\al$ for a set $\{f_\al\}_{\al \in \bb{A}}$ of (completely) contractive and (completely) positive functionals on $\cl X$.
\end{enumerate}
Indeed, (i) is equivalent to (ii) after normalisation. 
If (i) holds, then (iii) holds for the family $\{f_\alpha \circ \phi\}_{\alpha \in \bb A}$, where $\{f_\alpha\}_{\alpha \in \bb A}$ is the family of all states on $\cl B(H)$.
If (iii) holds, then (i) is obtained by setting $\phi = \oplus_{\alpha \in \bb A} f_{\alpha}$.
\end{remark}

The space $\cl X/ \cl J$ is equipped with a natural involution given by 
\[
(x + \cl J)^* = x^* + \cl J
\foral x \in \cl X.
\] 
For every $n \in \bb N$, we define
\[
\|u + \cl J\|_{\rm sos}^{(n)}
:=
\sup\{ \|\phi^{(n)}(u)\| \ : \ \phi \text{ is a c.c.p. map with }  \phi|_{\cl J} = 0\}.
\]
Since $\cl J$ is a kernel, we have that each $\|\cdot\|_{\rm sos}^{(n)}$ is a norm.
We equip $\cl X/ \cl J$ with the matricial cones
\[
C_n(\cl X/\cl J) := \ol{q^{(n)}(M_n(\cl X)^+)}^{\nor{\cdot}_{\rm sos}^{(n)}}, \ \ \ n\in \bb{N}.
\] 
It is shown in \cite[Section 8]{kkm} that $\cl X/ \cl J$ is then a selfadjoint operator space with the following universal property: if $\phi \colon \cl X \to \cl B(H)$ is a completely contractive completely positive map with $\phi|_{\cl J} = 0$, then there exists a (necessarily unique) completely contractive completely positive map $\wt{\phi}$ such that the diagram
\[
\xymatrix{
\cl X \ar[rr]^{\phi} \ar[d]^{q} & & \cl B(H) \\
\cl X/ \cl J \ar[urr]_{\wt{\phi}} & &
}
\]
is commutative.
Due to the universal property of the quotient, we have that $u + M_n(\cl J) \in C_n(\cl X/ \cl J)$ if and only if $\phi^{(n)}(u) \geq 0$ for every completely contractive completely positive map $\phi$ of $\cl X$ with $\phi|_{\cl J} = 0$.

\begin{proposition}\label{p_quosos}
Let $\cl X$ be a selfadjoint operator space and let $\cl J \subseteq \cl X$ be a kernel.
Consider the partial unitisation
\[
0 \longrightarrow \cl X \stackrel{\iota}{ \longrightarrow} \cl X^\# \stackrel{\tau}{ \longrightarrow} \bb C \to 0
\]
of $\cl X$.
Then $\iota(\cl J)$ is a kernel of $\cl X^\#$, and the canonical map
\[
\cl X/ \cl J \to \cl X^\#/\iota(\cl J); \ x + \cl J \mapsto \iota(x) + \iota(\cl J)
\]
is a completely isometric complete order embedding, that promotes to a complete order isomorphism $(\cl X/ \cl J)^\# \simeq \cl X^\# / \iota(\cl J)$.
\end{proposition}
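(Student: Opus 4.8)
The plan is to handle the three assertions in turn. First I would verify that $\iota(\cl J)$ is a kernel of $\cl X^\#$: since $\cl J$ is a kernel of $\cl X$, by Remark \ref{r_kernel} we may write $\cl J = \bigcap_{\al} \ker f_\al$ for a family of contractive positive functionals $f_\al$ on $\cl X$. By the universal property of the partial unitisation (specifically, the extension statement recorded after \eqref{eq_sharp}, applied with $\cl Y = \bb C$), each $f_\al$ extends to a unital positive functional $f_\al^\#$ on $\cl X^\#$. Then $\iota(\cl J) = \bigcap_{\al} \ker(f_\al^\# - \tau)$, say, or more carefully one checks $\iota(\cl J) = \iota(\cl X) \cap \bigcap_\al \ker f_\al^\#$; in any case $\iota(\cl J)$ is cut out by contractive positive functionals together with the (contractive, positive) functional $\tau$, so by Remark \ref{r_kernel} again it is a kernel of $\cl X^\#$.

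Next I would show that the canonical map $j \colon \cl X/\cl J \to \cl X^\#/\iota(\cl J)$, $x + \cl J \mapsto \iota(x) + \iota(\cl J)$, is a completely isometric complete order embedding. It is well defined and injective because $\iota^{-1}(\iota(\cl J)) = \cl J$ (as $\iota$ is injective). For the isometry and order-embedding statements, the key observation is a bijective correspondence between c.c.p.\ maps on $\cl X$ vanishing on $\cl J$ and u.c.p.\ maps on $\cl X^\#$ vanishing on $\iota(\cl J)$: given a c.c.p.\ $\phi \colon \cl X \to \cl B(H)$ with $\phi|_{\cl J} = 0$, its unitisation $\phi^\# \colon \cl X^\# \to \cl B(H)^\# = \cl B(H \oplus H')$ (or into $\cl B(H)$ when one prefers) is u.c.p.\ and kills $\iota(\cl J)$; conversely restriction along $\iota$ recovers $\phi$. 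Since the matricial norms and cones on both quotients are defined by suprema over, respectively, such c.c.p.\ maps and such u.c.p.\ maps (using the formula $\nu_n$ for the norm of a selfadjoint operator space and $\|\cdot\|_{\rm sos}^{(n)}$ together with the characterisation of $C_n(\cl X/\cl J)$ after Proposition \ref{p_quosos}'s surrounding discussion), these suprema agree under $j$, giving both the complete isometry and the complete order embedding.

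Finally, for the identification $(\cl X/\cl J)^\# \simeq \cl X^\#/\iota(\cl J)$, I would invoke the universal property of the partial unitisation. The composite $\cl X \xrightarrow{q} \cl X/\cl J \xrightarrow{\iota_{\cl X/\cl J}} (\cl X/\cl J)^\#$ is c.c.p.\ and kills $\cl J$, while $\cl X^\#/\iota(\cl J)$ is an operator system (quotient of an operator system by a kernel), and $j$ exhibits $\cl X/\cl J$ inside it; one shows $j$ has the right codomain to be \emph{the} unitisation by checking the split exact sequence $0 \to \cl X/\cl J \to \cl X^\#/\iota(\cl J) \to \bb C \to 0$ (the quotient map $\tau$ descends since $\tau(\iota(\cl J)) = 0$) and the universal extension property of §\ref{ss_nonunital}. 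Then uniqueness of the unitisation yields the asserted complete order isomorphism.

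\textbf{Main obstacle.} The delicate point is the norm/order comparison in the second step: one must be careful that the quotient norm $\|\cdot\|_{\rm sos}^{(n)}$ on $\cl X/\cl J$, which is a supremum over c.c.p.\ maps, matches the norm on $\cl X^\#/\iota(\cl J)$ \emph{as a selfadjoint operator space} (via $\nu_n$), which is a supremum over u.c.p.\ functionals on matrix amplifications of $\cl X^\#/\iota(\cl J)$. Bridging these requires the precise dictionary: a c.c.p.\ map $\cl X \to \cl B(H)$ vanishing on $\cl J$ corresponds, via $(\cdot)^\#$ and then Arveson extension through the embedding $M_n(\cl X)^\# \hookrightarrow M_n(\cl X^\#)$ (Corollary 4.4 of \cite{werner}, recalled in §\ref{ss_nonunital}), to a u.c.p.\ map on $\cl X^\#$ vanishing on $\iota(\cl J)$, and one must verify that this correspondence is norm-preserving at every matrix level and also reflects positivity (not merely preserves it) — that is, that the externally defined cones $C_n(\cl X^\#/\iota(\cl J))$ pull back to exactly $C_n(\cl X/\cl J)$. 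I expect this to be the step requiring the most care, with the rest being a routine assembly of the universal properties already established in the cited literature.
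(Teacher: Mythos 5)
Your proposal is correct and follows essentially the same route as the paper's proof: identify $\iota(\cl J)$ as a kernel via the unitisation of a map detecting $\cl J$, use the bijection $\phi \leftrightarrow \phi^\#$ between c.c.p.\ maps on $\cl X$ annihilating $\cl J$ and u.c.p.\ maps on $\cl X^\#$ annihilating $\iota(\cl J)$ to match the quotient norms and cones, and then verify the universal property of the partial unitisation for $\cl X^\#/\iota(\cl J)$ via the split exact sequence and Proposition \ref{p_quoos}. The only (cosmetic) deviations are that the paper obtains $\iota(\cl J)=\ker\phi^\#$ directly rather than through a separating family of functionals, and gets one of the two norm inequalities for free from the universal property of $\cl X/\cl J$ instead of comparing the two suprema symmetrically.
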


\begin{proof}
Let $\phi \colon \cl X \to \cl Y$ be a completely contractive completely positive map, such that $\cl J = \ker \phi$.
Then $\iota(\cl J) = \ker \phi^\#$ for the unital completely positive extension $\phi^\# \colon \cl  X^\# \to \cl Y^\#$.
Hence $\cl X^\#/ \iota(\cl J)$ is an operator system.
In order to make a distinction we write
\[
q_{\cl J} \colon \cl X \to \cl X/ \cl J
\qand
q_{\iota(\cl J)} \colon \cl X^\# \to \cl X^\# / \iota(\cl J)
\]
for the corresponding quotient maps.

By the universal property of $\cl X/\cl J$, there exists a completely contractive completely positive map $\wt{q_{\iota(\cl J)} \circ \iota}$ that makes the diagram
\[
\xymatrix{
\cl X \ar[rr]^{\iota} \ar[d]^{q_{\cl J}} & & \cl X^\# \ar[rr]^{q_{\iota(\cl J)}} & & \cl X^{\#}/\iota(\cl J) \\
\cl X/\cl J  \ar[urrrr]_{\wt{q_{\iota(\cl J)} \circ \iota}} & & & &
}
\]
commutative.
We note that if $\phi$ is a completely contractive completely positive map on $\cl X$ with $\phi|_{\cl J} = 0$, then $\phi^\#|_{\iota(\cl J)} = 0$, and thus
\[
\|\phi^{(n)}(x)\| = \|(\phi^\#)^{(n)}(\iota^{(n)}(x))\| \leq \|\iota^{(n)}(x) + M_n(\iota(\cl J))\|_{\rm osy}^{(n)}
\]
for every $x \in M_n(\cl X)$.
Taking suprema gives that
\[
\|x + M_n(\cl J)\|_{\rm sos}^{(n)} \leq \|\iota^{(n)}(x) + M_n(\iota(\cl J))\|_{\rm osy}^{(n)},
\]
and therefore $\wt{q_{\iota(\cl J)} \circ \iota}$  is a complete isometry.

To complete the proof of the first part, let $u + M_n(\cl J) \in M_n(\cl X/\cl J)$ be such that $\iota^{(n)}(u) + M_n(\iota(\cl J)) \geq 0$.
Let $\phi$ be a completely contractive completely positive map such that $\phi|_{\cl J} = 0$, and let $\phi^\#$ be the unital completely positive extension on $\cl X^\#$, so that $\phi^\#|_{\iota(\cl J)} = 0$.
Since $\iota^{(n)}(u) + M_n(\iota(\cl J)) \geq 0$, we have 
\[
\phi^{(n)}(u) = (\phi^\#)^{(n)}(\iota^{(n)}(u)) \geq 0,
\]
where we used the description of the cones for a quotient operator system.
By the description of the cones in the quotient selfadjoint operator space we conclude that $u + M_n(\cl J) \in C_n(\cl X/\cl J)$.
Hence the completely isometric completely positive map $\wt{q_{\iota(\cl J)} \circ \iota}$ is a complete order embedding. 

Finally we show that $\cl X^\# / \iota(\cl J)$ has the universal property of the partial unitisation.
Since $\cl X^\#/\iota(\cl J) \simeq (\iota(\cl X)/\iota(\cl J)) \oplus \bb C$ we have a split exact sequence
\[
0 \longrightarrow \cl X/ \cl J \longrightarrow \cl X^\#/\iota(\cl J) \longrightarrow \bb C \to 0,
\]
where we have shown that the first arrow is a complete order monomorphism, and the second arrow unital.
Now suppose that $(\cl Y,e)$ is a (unital) operator system and $\phi \colon M_n(\cl X/\cl J) \to \cl Y$ is a completely contractive completely positive map.
We have to show that the extension $\phi^\# \colon M_n(\cl X^\#/\iota(\cl J)) \to \cl Y$ given by
\[
\phi^\#(\iota^{(n)}(x) + \la(1 \otimes I_n) + M_n(\iota(\cl J))) = \phi(x + M_n(\cl J)) + \la(e \otimes I_n),
\]
for $x \in M_n(X)$ and $\la \in \bb C$, is completely positive.
Towards this end we note that 
\[
\phi^\# \circ q^{(n)}_{\iota(\cl J)} = \left( \phi \circ \wt{q^{(n)}_{\iota(\cl J)} \circ \iota^{(n)}} \circ q^{(n)}_{\cl J} \right)^\#,
\]
and thus $\phi^\# \circ q^{(n)}_{\iota(\cl J)}$ is completely positive.
By the identification of the matricial cones in $\cl X^\#/\iota(\cl J)$ from $\cl X^\#$ obtained in Proposition \ref{p_quoos}, we then obtain that $\phi^\#$ is completely positive, and the proof is complete. 
\end{proof}

\subsection{Selfadjoint operator $\cl A$-spaces}\label{ss_soap}

Henceforth we assume that $\cl X$ is a selfadjoint operator space and $\cl A$ is a unital C*-algebra such that $\cl X$ is an $\cl A$-bimodule with unital modular actions. 
We wish to obtain a concrete $\cl A$-representation of $\cl X$, and to point out a suitable $\cl A$-bimodule structure on quotient spaces of $\cl X$, in the presence of some natural compatibility conditions.

\begin{definition}
Let $\cl X \subseteq \cl B(H)$ be a selfadjoint operator space and $\cl A$ be a unital C*-algebra such that $\cl X$ is an $\cl A$-bimodule with unital modular actions.
We say that $\cl X$ is a \emph{concete selfadjoint operator $\cl A$-space} if there is a $*$-representation $\pi \colon \cl A \to \cl B(H)$ such that
\[
\pi(a) x = a \cdot x
\qand
x \pi(a) = x \cdot a
\]
for all $x \in \cl X$ and $a \in \cl A$.
\end{definition}

We also have an abstract definition of a selfadjoint operator $\cl A$-space.

\begin{definition}\label{d_sosbim}
Let $\cl X$ be a selfadjoint operator space (resp. matrix ordered operator space) and $\cl A$ be a unital C*-algebra such that $\cl X$ is an $\cl A$-bimodule with unital modular actions.
We say that $\cl X$ is an \emph{abstract selfadjoint operator $\cl A$-space} (resp. \emph{matrix ordered operator $\cl A$-space}) if:
\begin{enumerate}
\item $a^* \cdot M_m(\cl X)^+ \cdot a \subseteq M_n(\cl X)^+$ for all $a \in M_{m,n}(\cl A)$;
\item $\cl X$ is an operator $\cl A$-bimodule, that is, the modular actions are completely contractive;
\item $(a \cdot x)^* = x^* \cdot a^*$ for all $a \in \cl A$ and all $x \in \cl X$.
\end{enumerate}
\end{definition}

An \emph{$\cl A$-representation} of a matrix ordered operator $\cl A$-space (or a \emph{C*-repre\-sentation} if $\cl A$ is clear from the context) is a pair $(\phi,\pi)$, where $\phi \colon \cl X \to \cl B(H)$ is a completely positive map, $\pi \colon \cl A \to \cl B(H)$ is a unital $*$-represen\-tation for some Hilbert space $H$, and 
\begin{equation}\label{eq_Apair}
\phi(a\cdot x) = \pi(a)\phi(x), \ \ \ x\in \cl X, a\in \cl A.
\end{equation}
An $\cl A$-representation of $\cl X$ will be called \emph{completely contractive} (resp. \emph{completely isometric}) if $\phi$ is completely contractive (resp. completely isometric). 

\begin{remark} \label{r_comp}
We pass to non-degenerate compressions of C*-represen\-tations as follows.
If $\cl X$ is a selfadjoint operator $\cl A$-space, and $(\phi, \pi)$ is a completely contractive $\cl A$-representation we can take the non-degenerate compression $\phi'$ of $\phi$ on the subspace $[\phi(\cl X) H]$ so that
\[
\phi(x) = \begin{bmatrix} \phi'(x) & 0 \\ 0 & 0 \end{bmatrix}, \ \ \ x \in \cl X.
\]
It follows that $[\phi(\cl X) H]$ is reducing for $\pi$ and $\pi(1_{\cl A}) \phi(x) \xi = \phi(x) \xi$ since the modular action is unital.
Hence we can restrict $\pi$ to the \emph{unital} $*$-representation $\pi'$ on $[\phi(\cl X) H]$.
Thus the pair $(\phi', \pi')$ is a completely contractive $\cl A$-representa\-tion, such that $\phi'$ is non-degenerate, $\|\phi'^{(n)}(x)\| = \|\phi^{(n)}(x)\|$ for all $x \in M_n(\cl X)$, and $\pi'$ is unital.
Note, however, that faithfulness of $\pi$ does not imply faithfulness of $\pi'$.
\end{remark}

It is clear that, if $\cl A, \cl X \subseteq \cl B(H)$ with $\cl X^* = \cl X$ and $\cl A \cl X \subseteq \cl X$, then $\cl X$ is a selfadjoint operator $\cl A$-space.
Our goal is to show that selfadjoint operator 
$\cl A$-spaces are characterised by the existence of a concrete representation.
Towards this end, we follow the approach of the partial unitisation from \cite{werner}.

Let $\cl X$ be a matrix ordered operator $\cl A$-space for a unital C*-algebra $\cl A$.
Let $\cl X^\#_{\cl A}$ be the direct linear space sum $\cl X \oplus \cl A$ endowed with the involution
\[
(x, a)^* := (x^*, a^*), \ \ x \in \cl X, a \in \cl A.
\]
For every $n \in \bb N$ we write
\[
M_n(\cl X^\#_{\cl A})_h := M_n(\cl X)_h \oplus M_n(\cl A)_h.
\]
The matrix cone structure $\{M_n(\cl X^\#_{\cl A})^+\}_{n \in \bb N}$ is given as follows: an element $(x, a) \in M_n(\cl X^\#_{\cl A})_h$ is in $M_n(\cl X^\#_{\cl A})^+$ if and only if $a \geq 0$ and
\[
\varphi(a_\eps^{-1/2} x a_\eps^{-1/2}) \geq -1 \foral \eps > 0 \text{ and all } 
\varphi \in {\rm CCP}(M_n(\cl X), \bb C),
\]
where $a_\eps:= a + \eps (1_{\cl A} \otimes I_n)$.
Moreover we define the $\cl A$-modular actions
\[
b \cdot (x, a) := (b \cdot x, ba) \qand (x, a) \cdot b := (x \cdot b, ab)
\]
for all $a,b \in M_{n}(\cl A)$, $x \in M_n(\cl X)$ and $n \in \bb N$.
We refer to $\cl X^\#_{\cl A}$ as the \emph{$\cl A$-modular partial unitisation} of $\cl X$.

\begin{remark}
Let $\cl A$ be a unital C*-algebra and $\cl X$ be a matrix ordered operator $\cl A$-space.
If $a\in M_n(\cl A)^+$, then the condition $\varphi(a_\eps^{-1/2} x a_\eps^{-1/2}) \geq -1$ for all $\vphi \in {\rm CCP}(M_n(\cl X), \bb C)$ is equivalent to the condition $\varphi(a_\eps^{-1/2} x a_\eps^{-1/2}) \geq -1$ for all $\vphi \in {\rm CCP}(M_n(\cl X), \bb C)$  with $\|\vphi\| = 1$, since normalisations of completely positive maps are completely positive.

Furthermore, if $\vphi \in {\rm CCP}(M_n(\cl X), \bb C)$, then for $a \in M_{m,n}(\cl A)$ with $\|a\| \leq 1$ we can define the functional $\vphi_a$ given by
\[
\vphi_a(x) := \vphi(a^* x a), \ \ x \in M_m(\cl X).
\]
Since the modular actions are completely contractive we get $\|\vphi_a\| \leq \|a\|^2 \leq 1$, so that $\vphi_a$ is (completely) contractive.
Since $a^* M_m(\cl X)^+ a \subseteq M_n(\cl X)^+$, we conclude that $\vphi_a \in {\rm CCP}(M_n(\cl X), \bb C)$.
\end{remark}

For the next proposition we will use two key observations.
First recall that, if $\cl X$ is a matrix ordered operator space and $x \in M_n(\cl X)_h$, then $x \geq 0$ if and only if $\vphi(x) \geq 0$ for all $\vphi \in {\rm CCP}(M_n(\cl X), \bb C)$; see for example \cite[Lemma 3.1]{werner}.
Moreover, if $\vphi \in {\rm CCP}(M_n(\cl X), \bb C)$, then we can write $\vphi = (\vphi_{i,j})_{i,j}$ where $\vphi_{i,j}^* = \vphi_{j,i}$ are bounded functionals on $\cl X$, $\vphi_{i,i} \geq 0$ and $\sum_{i=1}^n \|\vphi_{i,i}\| \leq \|\vphi\| \leq 1$; see for example \cite[Lemma 2.7]{werner}.

\begin{proposition}\label{p_pumooas}
Let $\cl A$ be a unital C*-algebra and $\cl X$ be a matrix ordered operator $\cl A$-space.
Then the $\cl A$-modular partial unitisation $\cl X^\#_{\cl A}$ is an operator $\cl A$-system that contains the partial unitisation $\cl X^\#$ of $\cl X$.
\end{proposition}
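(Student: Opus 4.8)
The plan is to verify that $\cl X^\#_{\cl A}$ satisfies the three axioms of Definition \ref{d_opeAsy}, and then to exhibit the partial unitisation $\cl X^\#$ as a subsystem. The crucial point is that $\cl X^\#_{\cl A}$ is already \emph{known} to be a matrix ordered $*$-vector space with an Archimedean matrix order unit $(0, 1_{\cl A} \otimes I_n)$ at level $n$: this follows by applying \cite[Lemma 4.8]{werner} to the matrix ordered operator space $\cl X$, noting that the only difference in our set-up is that the scalar part $\bb C$ is replaced by the C*-algebra $\cl A$. More precisely, the ${\rm CCP}(M_n(\cl X), \bb C)$-based description of the cones is precisely the one used in \cite{werner} to define $\cl X^\#$; the modular structure enters only through the bimodule action $b \cdot (x, a) = (b \cdot x, ba)$ etc., and the unit is $(0, 1_{\cl A})$. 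So the first task is to record that $\cl X^\#_{\cl A}$ is an operator system with unit $(0, 1_{\cl A})$, citing \cite[Lemma 4.8]{werner} applied with $\cl A$ in place of $\bb C$, together with the fact that $M_n(\cl A)$ carries its canonical C*-cone structure making $1_{\cl A} \otimes I_n$ an Archimedean order unit.

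Next I would check the three bimodule axioms. Axiom (ii), $b \cdot (0, 1_{\cl A}) = (0, b) = (0, 1_{\cl A}) \cdot b$, is immediate from the definition of the modular actions. Axiom (iii), $(b \cdot (x,a))^* = (x,a)^* \cdot b^*$, reduces to $(b \cdot x)^* = x^* \cdot b^*$ in $\cl X$ (which holds since $\cl X$ is an abstract matrix ordered operator $\cl A$-space, Definition \ref{d_sosbim}(iii)) together with $(ba)^* = a^* b^*$ in $\cl A$. The substantive axiom is (i): for $\alpha \in M_{m,n}(\cl A)$ one must show $\alpha^* \cdot M_m(\cl X^\#_{\cl A})^+ \cdot \alpha \subseteq M_n(\cl X^\#_{\cl A})^+$. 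Take $(x, a) \in M_m(\cl X^\#_{\cl A})^+$, so $a \geq 0$ in $M_m(\cl A)$ and $\vphi(a_\eps^{-1/2} x a_\eps^{-1/2}) \geq -1$ for all $\vphi \in {\rm CCP}(M_m(\cl X), \bb C)$ and all $\eps > 0$. We must show $\alpha^* \cdot (x,a) \cdot \alpha = (\alpha^* \cdot x \cdot \alpha, \alpha^* a \alpha)$ lies in $M_n(\cl X^\#_{\cl A})^+$. Clearly $\alpha^* a \alpha \geq 0$. Without loss of generality normalise $\|\alpha\| \leq 1$ (scaling both sides). Set $b := \alpha^* a \alpha$ and $b_\eps := b + \eps(1_{\cl A}\otimes I_n)$. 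The required inequality is $\psi(b_\eps^{-1/2} (\alpha^* \cdot x \cdot \alpha) b_\eps^{-1/2}) \geq -1$ for all $\psi \in {\rm CCP}(M_n(\cl X), \bb C)$. The natural move is to write $b_\eps^{-1/2}(\alpha^* \cdot x \cdot \alpha) b_\eps^{-1/2} = \gamma^* \cdot x \cdot \gamma$ where $\gamma := a_\eps^{1/2}\alpha\, b_\eps^{-1/2} \in M_{m,n}(\cl A)$ — here one needs the algebraic identity $\gamma^* \gamma = b_\eps^{-1/2}\alpha^* a_\eps \alpha b_\eps^{-1/2} \leq b_\eps^{-1/2}(b + \eps \alpha^*\alpha)b_\eps^{-1/2} \leq b_\eps^{-1/2} b_\eps b_\eps^{-1/2} = 1$ since $\alpha^*\alpha \leq 1_{\cl A}\otimes I_n$, so $\|\gamma\| \leq 1$ — and then $\gamma^* \cdot x \cdot \gamma = (a_\eps^{1/2}\alpha b_\eps^{-1/2})^* \cdot x \cdot (a_\eps^{1/2}\alpha b_\eps^{-1/2}) = b_\eps^{-1/2}\alpha^* a_\eps^{1/2}\cdot x\cdot a_\eps^{1/2}\alpha b_\eps^{-1/2}$; comparing with $b_\eps^{-1/2}\alpha^* \cdot x\cdot \alpha b_\eps^{-1/2}$ one needs to absorb the $a_\eps^{1/2}$ factors, which is precisely where one uses $a_\eps^{-1/2}x a_\eps^{-1/2}$ rather than $x$ itself. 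The cleanest route: for $\psi \in {\rm CCP}(M_n(\cl X),\bb C)$, define $\vphi := \psi_\gamma$ where $\gamma := a_\eps^{1/2}\alpha b_\eps^{-1/2}$ via $\vphi(y) = \psi(\gamma^* \cdot y \cdot \gamma)$ for $y \in M_m(\cl X)$; by the remark preceding the proposition (the paragraph observing that $\vphi_a \in {\rm CCP}$ when $\|a\|\leq 1$), $\vphi \in {\rm CCP}(M_m(\cl X), \bb C)$ since $\|\gamma\| \leq 1$. Then by positivity of $(x,a)$ we have $\vphi(a_\eps^{-1/2}x a_\eps^{-1/2}) \geq -1$, i.e. $\psi(\gamma^* \cdot (a_\eps^{-1/2} x a_\eps^{-1/2}) \cdot \gamma) \geq -1$, and $\gamma^* \cdot (a_\eps^{-1/2}xa_\eps^{-1/2})\cdot\gamma = b_\eps^{-1/2}\alpha^* a_\eps^{1/2} a_\eps^{-1/2} \cdot x \cdot a_\eps^{-1/2} a_\eps^{1/2}\alpha b_\eps^{-1/2} = b_\eps^{-1/2}(\alpha^* \cdot x\cdot \alpha)b_\eps^{-1/2} = b_\eps^{-1/2}(\alpha^*\cdot x\cdot\alpha)b_\eps^{-1/2}$ using that $a_\eps$ is central relative to the $\cl A$-bimodule operations within $M_m(\cl A)$-multiplication — actually one must be slightly careful since the left action and right action by different matrices do not commute in general, but here $a_\eps$ and $a_\eps^{-1/2}$ are functions of the same element, so $a_\eps^{1/2} \cdot (a_\eps^{-1/2} \cdot x) = x$ from the left, and similarly from the right; this is the identity $b \cdot (c \cdot x) = (bc) \cdot x$ of a left module together with its right analogue, combined over commuting functional calculus of $a$. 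That gives $\psi(b_\eps^{-1/2}(\alpha^* \cdot x \cdot \alpha)b_\eps^{-1/2}) \geq -1$ for all $\psi$ and all $\eps$, which is exactly $(\alpha^*\cdot x\cdot\alpha, b) \in M_n(\cl X^\#_{\cl A})^+$.

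Finally, for the containment $\cl X^\# \subseteq \cl X^\#_{\cl A}$: the map $(x, \la) \mapsto (x, \la 1_{\cl A})$ embeds $\cl X^\# = \cl X \oplus \bb C$ into $\cl X^\#_{\cl A} = \cl X \oplus \cl A$ as a unital $*$-linear map. It is a complete order embedding because the defining inequality for the cones of $\cl X^\#$ at level $n$ — namely $\la \geq 0$ (as a scalar, equivalently $\la 1_{\cl A}\otimes I_n \geq 0$ in $M_n(\cl A)$) and $\vphi((\la + \eps)^{-1/2} x (\la+\eps)^{-1/2}) \geq -1$ for all $\vphi \in {\rm CCP}(M_n(\cl X),\bb C)$, all $\eps > 0$ — is verbatim the defining inequality for $(x, \la 1_{\cl A})$ to lie in $M_n(\cl X^\#_{\cl A})^+$, since $(\la 1_{\cl A})_\eps^{-1/2} = (\la + \eps)^{-1/2} 1_{\cl A}$ acts on $\cl X$ by scalar multiplication. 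Hence $M_n(\cl X^\#)^+$ is carried exactly onto $M_n(\cl X^\#_{\cl A})^+ \cap M_n(\cl X \oplus \bb C 1_{\cl A})$, giving a complete order embedding.

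\textbf{Main obstacle.} The only genuinely delicate point is axiom (i): keeping track of the noncommutativity of the left and right $\cl A$-actions while performing the functional-calculus manipulations $a_\eps^{1/2} a_\eps^{-1/2} = 1$ inside the module $M_m(\cl X)$, and verifying that the rescaled functional $\psi_\gamma$ genuinely lands in ${\rm CCP}(M_m(\cl X), \bb C)$ — this is handled precisely by the remark immediately preceding the proposition, which is presumably why it was inserted there. Everything else is a bookkeeping check reducing to \cite[Lemma 4.8]{werner} and the C*-identities in $\cl A$.
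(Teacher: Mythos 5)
Your proposal is correct and follows essentially the same route as the paper: the heart of the matter is the compatibility $\alpha^*\cdot M_m(\cl X^\#_{\cl A})^+\cdot\alpha\subseteq M_n(\cl X^\#_{\cl A})^+$, which you handle by conjugating by a contraction $\gamma=a_\eps^{1/2}\alpha b_\eps^{-1/2}$ and feeding the functional $\psi_\gamma$ back into the defining inequality for $(x,a)$ — this is exactly the paper's argument, except that you normalise $\|\alpha\|\leq 1$ and keep the same $\eps$ on both sides, while the paper keeps $\alpha$ arbitrary and replaces $\eps$ by $\eps_1=\eps\|\alpha^*\alpha\|^{-1}$ on the $a$-side; the two rescalings are interchangeable. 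The one place you are terser than you should be is the appeal to \cite[Lemma 4.8]{werner} ``with $\cl A$ in place of $\bb C$'': that lemma is stated only for the scalar unitisation and cannot be cited verbatim, so the properness of the cones, the direct-sum inclusion and the Archimedean order unit property must be reverified (as the paper does), using the C*-algebra facts $a+\|a\|1_{\cl A}\otimes I_n\geq 0$ and $(a+r1_{\cl A}\otimes I_n)_\eps=a_{r+\eps}$ in place of their scalar counterparts — these adaptations are routine but are genuinely part of the proof rather than a quotation.
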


\begin{proof}
We first note that, if $\cl X^\#_{\cl A}$ is shown to be an operator system, then by definition the inclusion map $\cl X^\# \hookrightarrow \cl X^\#_{\cl A}$ is automatically a unital complete order embedding.
We hereafter show that $\cl X^\#_{\cl A}$ is an operator $\cl A$-system.
The proof follows the arguments of \cite[Theorem 4.8]{werner}.

Note that, if $(\pm x, \pm a) \in M_n(\cl X^\#_{\cl A})^+$, then $\pm a \geq 0$, and so $a =0$.
Moreover, for every $\vphi \in {\rm CCP}(M_n(\cl X), \bb C)$ and $\eps>0$, we have 
\[
-\eps \leq \vphi((\eps^{1/2}(\pm a)_\eps^{-1/2}) (\pm x) (\eps^{1/2} (\pm a)_\eps^{-1/2})).
\]
As $a=0$ we have  $\eps^{1/2}(\pm a)_\eps^{-1/2} = \pm 1_{\cl A}$, and we conclude that $\vphi(\pm x) \geq -\eps$.
This implies that $\pm x \in M_n(\cl X)^+$ and so $x = 0$.
It follows that 
$$M_n(\cl X^\#_{\cl A})^+ \cap (-M_n(\cl X^\#_{\cl A})^+) = \{0\}.$$

By definition, we have $M_n(\cl X^\#_{\cl A})^+ \subseteq M_n(\cl X^\#_{\cl A})_h$ for all $n \in \bb N$.
For $(x, a) \in M_n(\cl X^\#_{\cl A})^+$, set $(\wt{x}, \wt{a}) \in M_{n+m}(\cl X^\#_{\cl A})$ by padding with zeroes, and note that 
\[
\wt{a}_\eps = a_\eps \oplus \eps(1_{\cl A} \otimes I_{m}),
\]
so that
\[
\wt{a}_\eps^{-1/2} \wt{x} \wt{a}_\eps^{-1/2} = 
\begin{pmatrix}
a_\eps^{-1/2} x a_\eps^{-1/2} & 0 \\ 0 & 0
\end{pmatrix}.
\]
For $\vphi \in {\rm CCP}(M_{n+m}(\cl X), \bb C)$ we can write $\vphi_{n}$ for the restriction of $\vphi$ to the $n \times n$ corner of $M_{n+m}(\cl X)$, which is in ${\rm CCP}(M_n(\cl X), \bb C)$.
Then
\[
\vphi(\wt{a}_\eps^{-1/2} \wt{x} \wt{a}_\eps^{-1/2}) = \vphi_{n}(a_\eps^{-1/2} x a_\eps^{-1/2}) \geq -1,
\]
since $(x, a) \geq 0$.
This shows that 
\begin{equation}\label{eq_ninm1}
\begin{pmatrix}
M_n(\cl X^\#_{\cl A})^+ & 0 \\ 0 & 0
\end{pmatrix}
 \subseteq M_{n+m}(\cl X^\#_{\cl A})^+.
\end{equation}
Similarly, 
\begin{equation}\label{eq_ninm2}
\begin{pmatrix}
0 & 0 \\ 0 & M_m(\cl X^\#_{\cl A})^+ 
\end{pmatrix}
\subseteq M_{n+m}(\cl X^\#_{\cl A})^+.
\end{equation}
Equations (\ref{eq_ninm1}) and (\ref{eq_ninm2}), together with the fact that 
$M_{n+m}(\cl X^\#_{\cl A})^+$ is a cone show that 
\[
M_n(\cl X^\#_{\cl A})^+ \oplus M_m(\cl X^\#_{\cl A})^+ \subseteq M_{n+m}(\cl X^\#_{\cl A})^+.
\]

Next we show that $b^* M_m(\cl X^\#_{\cl A})^+ b \subseteq M_n(\cl X^\#_{\cl A})^+$ for all $b \in M_{m,n}(\cl A)$.
Applying for scalar matrices $M_{m,n} \subseteq M_{m,n}(\cl A)$ 
then yields that $\cl X^\#_{\cl A}$ is a matrix ordered $*$-vector space.
Towards this end, let $(x, a) \in M_n(\cl X^\#_{\cl A})^+$ and $b \in M_{n,m}(\cl A)$.
Then $b^*(x, a) b = (b^* x b, b^* a b)$ and $b^* a b \geq 0$, since $a \geq 0$.
If $b=0$, then $b^* (x, a) b = 0$.
Otherwise, set
\[
\eps_1 := \eps \|b^* b\|^{-1} 
\text{ and }
c := a_{\eps_1}^{1/2}( b ( b^* a b)_{\eps_1}^{-1/2}) \in M_{m,n}(\cl A).
\]
We then have
\[
c^* (a_{\eps_1}^{-1/2} x a_{\eps_1}^{-1/2}) c = (b^* a b)_\eps^{-1/2} (b^* x b) (b^* a b)_\eps^{-1/2},
\]
and
\[
b^* a_{\eps_1} b = b^*ab + \eps \|b^* b\|^{-1} b^*b \leq (b^*ab)_\eps.
\]
Therefore
\begin{align*}
\|c\|^2 
& = \|(b^*ab)_\eps^{-1/2} (b^* a_{\eps_1} b) (b^*ab)_\eps^{-1/2}\| \\
& \leq \|(b^*ab)_\eps^{-1/2} (b^* a_{\eps} b) (b^*ab)_\eps^{-1/2}\| = 1,
\end{align*}
and therefore the functional $\vphi_c(x) := \vphi(c^* x c)$ is in ${\rm CCP}(M_m(\cl X), \bb C)$ whenever $\vphi \in {\rm CCP}(M_n(\cl X), \bb C)$.
We conclude
\[
\vphi((b^*ab)_\eps^{-1/2} (b^* x b) (b^*ab)_\eps^{-1/2})
=
\vphi_c(a_{\eps_1}^{-1/2} x a_{\eps_1}^{-1/2}) \geq -1,
\]
for all $\vphi \in {\rm CCP}(M_n(\cl X), \bb C)$, since $(x, a) \in M_m(\cl X^\#_{\cl A})^+$.
Hence we get $b^*(x, a) b \geq 0$, as required.

Finally, we show that $(0, 1_{\cl A})$ is an Archimedean matrix order unit for $\cl X^\#_{\cl A}$, concluding the proof that $\cl X^\#_{\cl A}$ is an operator $\cl A$-system.
For the matrix order, let $(x, a) \in M_n(\cl X^\#_{\cl A})_h$ and let $r = \|x\| + \|a\|$.
Then
\[
(x, a) + r(0, 1_{\cl A} \otimes I_n) = (x, \|x\| 1_{\cl A} \otimes I_n) + (0, a + \|a\| 1_{\cl A} \otimes I_n).
\]
Then we obtain
\[
(0, a + \|a\| 1_{\cl A} \otimes I_n) \geq 0
\text{ and }
x = x^*;
\]
further, for $\vphi \in {\rm CCP}(M_n(\cl X), \bb C)$, we have $|\vphi(x)| \leq \|x\|$.
Hence we get 
\begin{align*}
\vphi( (\|x\| 1_{\cl A} \otimes I_n)_\eps^{-1/2} x (\|x\| 1_{\cl A} \otimes I_n)_\eps^{-1/2} )
& = \\
& \hspace{-3cm} = (\|x\| + \eps)^{-1} \vphi(x) \geq -(\|x\| + \eps)^{-1} \|x\| \geq -1.
\end{align*}
Therefore $(x, \|x\| 1_{\cl A} \otimes I_n) \geq 0$, and so $(x, a) + r (0, 1_{\cl A} \otimes I_n)$ is positive as a sum of two positive elements.

For the Archimedean property, let $(x, a) \in M_n(\cl X^\#_{\cl A})_h$ and suppose that 
\[
(x, a + r 1_{\cl A} \otimes I_n) = (x, a) + r(0, 1_{\cl A} \otimes I_n) \geq 0,
\foral r > 0.
\]
Note that $(a + r 1_{\cl A} \otimes I_n)_\eps = a_{r + \eps}$ for every $\eps > 0$, and thus for every $\vphi \in {\rm CCP}(M_n(\cl X), \bb C)$ we have
\[
\vphi(a_{r+\eps}^{-1/2} x a_{r + \eps}^{-1/2}) = \vphi((a + r 1_{\cl A} \otimes I_n)_\eps^{-1/2} x (a + r 1_{\cl A} \otimes I_n)_\eps^{-1/2}) \geq -1.
\]
Since this holds for all $r, \eps >0$, and since $\lim_{r \to 0} a_{r+\eps}=a_\eps$, we derive the required $\vphi(a_{\eps}^{-1/2} x a_{\eps}^{-1/2}) \geq -1$ for all $\eps>0$, that is $(x, a) \geq 0$.
\end{proof}

\begin{remark}\label{r_ciemA}
If $\cl X$ is an abstract selfadjoint operator $\cl A$-space, then we obtain the (completely isometric complete order) embeddings
\[
\cl X \hookrightarrow \cl X^\# \hookrightarrow \cl X_{\cl A}^\#.
\]
Hence, if $\phi$ is a completely contractive completely positive map of $\cl X$, then it has a unital completely positive extension on $\cl X^\#$, and a further unital completely positive extension $\phi^\#_{\cl A}$ on $\cl X_{\cl A}^\#$.
We are going to use this scheme to induce the bimodule structure to appropriate quotients.
\end{remark}

We now arrive at the main theorem of this subsection.

\begin{theorem}\label{t_soap}
Let $\cl A$ be a unital C*-algebra and $\cl X$ be a selfadjoint operator space that is a bimodule over 
$\cl A$ with unital modular actions.
Then $\cl X$ is a selfadjoint operator $\cl A$-space if and only if there is a (non-degenerate) completely isometric complete order embedding $\ga \colon \cl X \to \cl B(H)$ and a unital $*$-representation $\pi \colon \cl A \to \cl B(H)$ such that $\pi(a) \ga(x) = \ga(a \cdot x)$ for all $x \in \cl X$.
\end{theorem}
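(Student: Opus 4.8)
The plan is to treat the easy implication directly and to obtain the nontrivial implication by passing to the $\cl A$-modular partial unitisation and invoking Theorem~\ref{t_opA-sys}.

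For the ``if'' direction, assume such $\ga$ and $\pi$ are given. A completely positive map between matrix ordered $*$-vector spaces is selfadjoint, so $\ga(x)^{*}=\ga(x^{*})$; taking adjoints in $\pi(a)\ga(x)=\ga(a\cdot x)$ and using that $\pi$ is a $*$-representation shows that $\ga$ also intertwines the right $\cl A$-action. Hence $\ga(\cl X)$ is a selfadjoint subspace of $\cl B(H)$ invariant under left and right multiplication by the C*-subalgebra $\pi(\cl A)$, so it is a concrete selfadjoint operator $\cl A$-space as observed after Definition~\ref{d_sosbim}; transporting this structure back along the complete order isomorphism $\ga$ shows that $\cl X$ satisfies axioms (i)--(iii) of Definition~\ref{d_sosbim}.

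For the ``only if'' direction, form the $\cl A$-modular partial unitisation $\cl X^{\#}_{\cl A}$. First I would record that $\cl X^{\#}_{\cl A}$ is an abstract operator $\cl A$-system in the sense of Definition~\ref{d_opeAsy}: it is an operator system and satisfies $b^{*}M_{m}(\cl X^{\#}_{\cl A})^{+}b\subseteq M_{n}(\cl X^{\#}_{\cl A})^{+}$ for all $b\in M_{m,n}(\cl A)$ by Proposition~\ref{p_pumooas}, while the remaining two axioms are immediate from the formulas for the unit $(0,1_{\cl A})$, the involution and the modular actions on $\cl X^{\#}_{\cl A}$ together with axiom (iii) for $\cl X$. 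By Theorem~\ref{t_opA-sys} there are a unital complete order embedding $\wt{\ga}\colon\cl X^{\#}_{\cl A}\to\cl B(H)$ and a unital $*$-representation $\pi\colon\cl A\to\cl B(H)$ with $\pi(a)\wt{\ga}(s)=\wt{\ga}(a\cdot s)$ and $\wt{\ga}(s)\pi(a)=\wt{\ga}(s\cdot a)$. Put $\ga:=\wt{\ga}\circ\iota$, where $\iota\colon\cl X\to\cl X^{\#}_{\cl A}$, $x\mapsto(x,0)$; then $\pi(a)\ga(x)=\ga(a\cdot x)$, and $\ga$ is a complete isometry because $\iota$ factors as $\cl X\hookrightarrow\cl X^{\#}\hookrightarrow\cl X^{\#}_{\cl A}$ (the first map completely isometric since $\cl X$ is a selfadjoint operator space, the second since it is a unital complete order embedding of operator systems) and $\wt{\ga}$ is completely isometric. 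To see that $\ga$ is a complete \emph{order} embedding, suppose $\ga^{(n)}(x)\ge 0$; then $(x,0)\in M_{n}(\cl X^{\#}_{\cl A})^{+}$, and unwinding the definition of that cone — with $0_{\eps}=\eps(1_{\cl A}\otimes I_{n})$, so that $0_{\eps}^{-1/2}\cdot x\cdot 0_{\eps}^{-1/2}=\eps^{-1}x$ by unitality of the modular actions — yields $\vphi(x)\ge -\eps$ for every $\vphi\in\mathrm{CCP}(M_{n}(\cl X),\bb C)$ and every $\eps>0$, hence $x\in M_{n}(\cl X)^{+}$ by \cite[Lemma~3.1]{werner}. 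Finally, to arrange non-degeneracy, set $K_{0}:=[\ga(\cl X)H]$; since $\ga(\cl X)$ is selfadjoint one checks that $\ga(x)=\ga_{0}(x)\oplus 0$ with respect to $K_{0}\oplus K_{0}^{\perp}$, with $\ga_{0}(x):=\ga(x)|_{K_{0}}$ a non-degenerate completely isometric complete order embedding, while $K_{0}$ reduces $\pi$ and $\pi(1_{\cl A})|_{K_{0}}=I_{K_{0}}$, so compressing $\pi(a)\ga(x)=\ga(a\cdot x)$ to $K_{0}$ gives $\pi_{0}(a)\ga_{0}(x)=\ga_{0}(a\cdot x)$ for the unital $*$-representation $\pi_{0}:=\pi|_{K_{0}}$.

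The substantive input is provided by Proposition~\ref{p_pumooas} and Theorem~\ref{t_opA-sys}. The steps needing genuine care are verifying that $\ga$ remains a complete \emph{order} embedding after restricting from $\cl X^{\#}_{\cl A}$ down to $\cl X$ — which is precisely where the definition of the modular partial unitisation cone and the functional characterisation of positivity enter — and performing the non-degeneracy compression simultaneously for $\ga$ and $\pi$ so that the intertwining relation survives; I expect the latter bookkeeping to be the main place where one must be attentive.
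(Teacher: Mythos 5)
Your proposal is correct and follows essentially the same route as the paper: the easy direction by transporting the concrete structure back along $\ga$, and the substantive direction by passing to the $\cl A$-modular partial unitisation $\cl X^{\#}_{\cl A}$, invoking Proposition~\ref{p_pumooas} and Theorem~\ref{t_opA-sys}, restricting along $\cl X\hookrightarrow\cl X^{\#}\hookrightarrow\cl X^{\#}_{\cl A}$, and finishing with the non-degenerate compression. Your explicit unwinding of the cone of $\cl X^{\#}_{\cl A}$ to check that the restriction is a complete order embedding is exactly the content the paper delegates to Remark~\ref{r_ciemA}.
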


\begin{proof}
Given such a pair $(\ga, \pi)$ it follows that $a^* M_m(\cl X)^+ a \subseteq M_n(\cl X)^+$ for all $a \in M_{m,n}(\cl A)$ since $\ga$ is a complete order isomorphism onto its range.
Moreover the modular actions are completely contractive since $\ga$ is complete isometric and $\pi$ is completely contractive.

Conversely, if $\cl X$ is a selfadjoint operator $\cl A$-space, then we have the (completely isometric complete order) embeddings
\[
\iota \colon \cl X \hookrightarrow \cl X^\# \hookrightarrow \cl X^\#_{\cl A},
\]
since $\cl X$ is a selfadjoint operator space (see Remark \ref{r_ciemA}). 
Let
\[
\ga \colon \cl X^\#_{\cl A} \to \cl I(\cl X^\#_{\cl A})
\qand
\pi \colon \cl A \to \cl I(\cl X^\#_{\cl A})
\]
be the $\cl A$-representation of $\cl X^\#_{\cl A}$, where $\ga$ is a complete order embedding, since $\cl X^\#_{\cl A}$ is an operator $\cl A$-system by Proposition \ref{p_pumooas}.
By the definition of the bimodule actions we see that $(\ga \circ \iota, \pi)$ is an $\cl A$-representation, and considering its non-degenerate compression completes the proof. 
\end{proof}

Note that in Theorem \ref{t_soap} we do not require $\ga$ be an embedding, although it can be chosen to be so.
This is consistent with the case where $\cl A = \bb C$, as there are completely isometric complete order embeddings of selfadjoint spaces that are not embeddings, see \cite[Example 2.14]{kkm}.

\begin{proposition}\label{p_qasosem}
Let $\cl X$ be a selfadjoint operator $\cl A$-space and let $\cl J$ be a kernel of $\cl X$.
Consider the diagram
\[
0 \longrightarrow \cl X \stackrel{\iota_\cl A}{ \longrightarrow} \cl X_{\cl A}^\# \stackrel{\tau_\cl A}{ \longrightarrow} \cl A \to 0.
\]
We have that 
$\iota_{\cl A}(\cl J)$ is a kernel of $\cl X_{\cl A}^\#$, the quotient $\cl X_{\cl A}^\# / \iota_{\cl A}(\cl J)$ is an operator system, and the canonical map
\[
\cl X/ \cl J \to \cl X_{\cl A}^\# / \iota_{\cl A}(\cl J); \ x + \cl J \mapsto \iota_{\cl A}(x) + \iota_{\cl A}(\cl J)
\]
is an embedding.
\end{proposition}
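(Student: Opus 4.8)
The plan is to follow the proof of Proposition~\ref{p_quosos}, with the $\cl A$-modular partial unitisation $\cl X^\#_{\cl A}$ playing the role of $\cl X^\#$ and with Remark~\ref{r_ciemA} used to transport completely contractive completely positive maps to $\cl X^\#_{\cl A}$. To see that $\iota_{\cl A}(\cl J)$ is a kernel of $\cl X^\#_{\cl A}$, write $\cl J = \ker\phi$ for a completely contractive completely positive map $\phi \colon \cl X \to \cl B(H)$; by Remark~\ref{r_ciemA} it admits a unital completely positive extension $\phi^\#_{\cl A} \colon \cl X^\#_{\cl A} \to \cl B(H)$ with $\phi^\#_{\cl A} \circ \iota_{\cl A} = \phi$. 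The projection $\tau_{\cl A}$ is unital completely positive, since membership of $M_n(\cl X^\#_{\cl A})^+$ forces the $\cl A$-coordinate to be positive. Fixing a faithful unital $*$-representation $\rho \colon \cl A \to \cl B(L)$, the map $\Psi := \phi^\#_{\cl A} \oplus (\rho \circ \tau_{\cl A}) \colon \cl X^\#_{\cl A} \to \cl B(H \oplus L)$ is unital completely positive, and $(x,a) \in \ker\Psi$ forces $a = \tau_{\cl A}(x,a) = 0$ by faithfulness of $\rho$, whence $\phi(x) = \phi^\#_{\cl A}(x,0) = 0$, i.e.\ $x \in \cl J$; thus $\ker\Psi = \iota_{\cl A}(\cl J)$. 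Since $\cl X^\#_{\cl A}$ is an operator system by Proposition~\ref{p_pumooas}, the quotient $\cl X^\#_{\cl A}/\iota_{\cl A}(\cl J)$ is an operator system with its canonical quotient structure.

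Next I would record that the canonical map $\bar q \colon \cl X/\cl J \to \cl X^\#_{\cl A}/\iota_{\cl A}(\cl J)$, $x + \cl J \mapsto \iota_{\cl A}(x) + \iota_{\cl A}(\cl J)$, is well defined and injective because $\iota_{\cl A}$ is, and is completely contractive completely positive, being the factorisation through $q_{\cl J}$ of the completely contractive completely positive map $q_{\iota_{\cl A}(\cl J)} \circ \iota_{\cl A}$ (which vanishes on $\cl J$). To prove that $\bar q$ is an embedding it then suffices to show that $\bar q^\# \colon (\cl X/\cl J)^\# \to (\cl X^\#_{\cl A}/\iota_{\cl A}(\cl J))^\#$ is a complete order embedding: it is automatically unital, and a unital complete order embedding of operator systems is completely isometric, so restricting along the isometric inclusions into the respective unitisations shows that $\bar q$ is completely isometric as well. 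Invoking the description of the matricial cones of a partial unitisation through completely contractive completely positive functionals, the complete order embedding property of $\bar q^\#$ reduces to the statement that every $\psi \in {\rm CCP}(M_n(\cl X/\cl J), \bb C)$ factors as $\psi = \Phi \circ \bar q^{(n)}$ for some $\Phi \in {\rm CCP}(M_n(\cl X^\#_{\cl A}/\iota_{\cl A}(\cl J)), \bb C)$; indeed, such a factorisation turns the defining inequalities for positivity in $M_n((\cl X/\cl J)^\#)$ into specialisations, along $\bar q$, of those in $M_n((\cl X^\#_{\cl A}/\iota_{\cl A}(\cl J))^\#)$.

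For the factorisation, I would pull $\psi$ back to $\psi_0 := \psi \circ q^{(n)}_{\cl J} \in {\rm CCP}(M_n(\cl X), \bb C)$, which vanishes on $M_n(\cl J)$, and extend it to the unital completely positive functional $\psi_0^\#$ on the partial unitisation $M_n(\cl X)^\#$. By \cite[Corollary~4.4]{werner} together with Proposition~\ref{p_pumooas}, $M_n(\cl X)^\# = \iota^{(n)}(M_n(\cl X)) + \bb C(1 \otimes I_n)$ sits as a unital operator subsystem of $M_n(\cl X^\#) \subseteq M_n(\cl X^\#_{\cl A})$, so Arveson's extension theorem yields a unital completely positive functional $\Phi_0$ on $M_n(\cl X^\#_{\cl A})$ extending $\psi_0^\#$. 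Being an extension of $\psi_0^\#$, the functional $\Phi_0$ vanishes on $M_n(\iota_{\cl A}(\cl J))$, hence descends to $\Phi \in {\rm CCP}(M_n(\cl X^\#_{\cl A}/\iota_{\cl A}(\cl J)), \bb C)$, and unravelling the identifications gives $\Phi \circ \bar q^{(n)} = \psi$ on $q^{(n)}_{\cl J}(M_n(\cl X))$, hence everywhere.

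I expect the main obstacle to be the bookkeeping across the nested unitisations and quotients: making precise that $M_n(\cl X)^\#$ is indeed a unital operator subsystem of $M_n(\cl X^\#_{\cl A})$ (so that Arveson's extension theorem genuinely applies), that $M_n(\cl X^\#_{\cl A}/\iota_{\cl A}(\cl J))$ may be identified with $M_n(\cl X^\#_{\cl A})/M_n(\iota_{\cl A}(\cl J))$, and, most delicately, that the ${\rm CCP}$-functional criterion for positivity in a partial unitisation (Werner's $\nu$-formula) faithfully reduces the complete order embedding claim for $\bar q^\#$ to the functional-extension statement above. With these identifications in place, the remainder is a routine diagram chase.
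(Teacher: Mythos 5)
Your argument is correct and follows essentially the same route as the paper: the kernel is exhibited as $\ker(\phi^\#_{\cl A}\oplus\tau_{\cl A})$ (the paper leaves the faithful representation of $\cl A$ implicit), and the embedding claim rests on Arveson-extending (functionals of) unital completely positive maps across $\cl X^\#\subseteq\cl X^\#_{\cl A}$ together with the observation that such extensions of maps vanishing on $\cl J$ still vanish on $\iota_{\cl A}(\cl J)$. The only structural difference is that the paper delegates the final step to Proposition \ref{p_quosos}, via the identification $(\cl X/\cl J)^\#\simeq\cl X^\#/\iota(\cl J)$ and the complete order embedding $\cl X^\#/\iota(\cl J)\to\cl X^\#_{\cl A}/\iota_{\cl A}(\cl J)$, whereas you inline the equivalent functional-factorisation argument.
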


\begin{proof}
Let $\phi$ be a completely contractive completely positive map of $\cl X$ such that $\cl J = \ker \phi$.
Let $\phi^\#$ be its unique unital completely positive extension on $\cl X^\#$.
Since $\cl X^\# \subseteq \cl X^\#_{\cl A}$, by Arveson's Extension Theorem there exists a unital completely positive extension $\phi^\#_{\cl A}$ of $\phi^\#$ on $\cl X_{\cl A}^\#$.
We have that 
\[
\iota_\cl A(\cl J) = \ker (\phi^\#_{\cl A} \oplus \tau_{\cl A}).
\]
Thus $\iota_{\cl A}(\cl J)$ is a kernel of $\cl X_{\cl A}^\#$, and we can consider the quotient operator system $\cl X_{\cl A}^\# / \iota_{\cl A}(\cl J)$.

Since $\iota(\cl J) = \iota_{\cl A}(\cl J)$, by definition we have that the canonical quotient map 
\[
\cl X^\# / \iota(\cl J) \to \cl X^\#_{\cl A} / \iota_{\cl A}(\cl J)
\]
is a complete order embedding between operator systems.
Therefore the extension of the canonical quotient map
\[
\cl X/ \cl J \to \cl X_{\cl A}^\# / \iota_{\cl A}(\cl J)
\]
to its unitisation is a complete order embedding, since $(\cl X/ \cl J)^\# \simeq \cl X^\# / \iota(\cl J)$ by Proposition \ref{p_quosos}, and the proof is complete.
\end{proof}

\begin{corollary}\label{c_qasosbm}
Let $\cl X$ be a selfadjoint operator $\cl A$-space and let $\cl J$ be a kernel of $\cl X$ such that $\cl A \cdot \cl J \subseteq \cl J$.
Then $\cl X/ \cl J$ is a selfadjoint operator $\cl A$-space in a canonical fashion.
\end{corollary}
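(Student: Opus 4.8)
The plan is to show that the quotient $\cl X/\cl J$, which is already a selfadjoint operator space by \cite[Section 8]{kkm}, carries a well-defined $\cl A$-bimodule structure satisfying the three axioms of Definition \ref{d_sosbim}, and then to invoke Theorem \ref{t_soap} to conclude. First I would note that since $\cl J$ is a kernel of a selfadjoint operator space it is itself selfadjoint, and since $\cl A \cdot \cl J \subseteq \cl J$ together with $(a \cdot x)^* = x^* \cdot a^*$ we also get $\cl J \cdot \cl A = (\cl A \cdot \cl J)^* \subseteq \cl J^* = \cl J$; hence $\cl J$ is an $\cl A$-subbimodule, and the formulas
\[
a \cdot (x + \cl J) := a \cdot x + \cl J
\qand
(x + \cl J) \cdot b := x \cdot b + \cl J
\]
are well defined and make $\cl X/\cl J$ an $\cl A$-bimodule with unital actions.

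Next I would verify the three conditions of Definition \ref{d_sosbim}. Conditions (ii) and (iii) pass to the quotient by direct computation, exactly as in the last two paragraphs of the proof of Theorem \ref{th_modquo}: complete contractivity of the modular actions on $\cl X/\cl J$ follows because the quotient norm is a supremum of $\|\phi^{(n)}(\cdot)\|$ over c.c.p.\ maps $\phi$ vanishing on $\cl J$, and each such $\phi$ gives a completely contractive $\cl A$-representation after the compression described in Remark \ref{r_comp}, so $\|a \cdot (u + M_n(\cl J))\|_{\rm sos}^{(n)} \leq \|a\| \cdot \|u + M_n(\cl J)\|_{\rm sos}^{(n)}$; and the involution identity $(a \cdot (x+\cl J))^* = (x^* + \cl J) \cdot a^*$ is immediate from $(a \cdot x)^* = x^* \cdot a^*$ in $\cl X$. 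For condition (i), I would argue that if $u + M_m(\cl J) \in C_m(\cl X/\cl J)$ and $b \in M_{m,n}(\cl A)$, then for every c.c.p.\ map $\phi$ on $\cl X$ with $\phi|_{\cl J} = 0$, passing to the compression $(\phi', \pi')$ of Remark \ref{r_comp} gives $\phi^{(n)}(b^* \cdot u \cdot b) = \pi'^{(m,n)}(b)^* \phi^{(m)}(u) \pi'^{(m,n)}(b) \geq 0$, using that $u + M_m(\cl J) \in C_m(\cl X/\cl J)$ forces $\phi^{(m)}(u) \geq 0$ by the universal property of the quotient cone recorded in \cite[Section 8]{kkm}. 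Hence $b^* \cdot (u + M_m(\cl J)) \cdot b \in C_n(\cl X/\cl J)$.

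A cleaner route — and probably the one I would ultimately present — bypasses the direct cone computation by going through the $\cl A$-modular partial unitisation. By Proposition \ref{p_qasosem}, $\iota_{\cl A}(\cl J)$ is a kernel of $\cl X_{\cl A}^\#$ and the quotient $\cl X_{\cl A}^\# / \iota_{\cl A}(\cl J)$ is an operator system; since $\cl A \cdot \cl J \subseteq \cl J$ (and, as noted, $\cl J \cdot \cl A \subseteq \cl J$), one checks that $\cl A \cdot \iota_{\cl A}(\cl J) \subseteq \iota_{\cl A}(\cl J)$ inside $\cl X_{\cl A}^\#$. Then Theorem \ref{th_modquo}, applied to the operator $\cl A$-system $\cl X_{\cl A}^\#$ and its $\cl A$-invariant kernel $\iota_{\cl A}(\cl J)$, shows that $\cl X_{\cl A}^\# / \iota_{\cl A}(\cl J)$ is an operator $\cl A$-system; a unital complete order embedding of it into some $\cl B(H)$ with a compatible $\ast$-representation $\pi$ of $\cl A$ (Theorem \ref{t_opA-sys}) restricts, via the embedding $\cl X/\cl J \hookrightarrow \cl X_{\cl A}^\#/\iota_{\cl A}(\cl J)$ of Proposition \ref{p_qasosem}, to a completely isometric complete order embedding $\ga \colon \cl X/\cl J \to \cl B(H)$ with $\pi(a)\ga(\xi) = \ga(a \cdot \xi)$; Theorem \ref{t_soap} then gives the conclusion.

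The main obstacle is the well-definedness and good behaviour of the modular action on the quotient — specifically, confirming that $\cl A$-invariance of $\cl J$ on one side genuinely yields invariance on both sides (this is where selfadjointness of $\cl J$ and the axiom $(a \cdot x)^* = x^* \cdot a^*$ are essential) and that the embedding $\cl X/\cl J \hookrightarrow \cl X_{\cl A}^\#/\iota_{\cl A}(\cl J)$ from Proposition \ref{p_qasosem} is compatible with the $\cl A$-actions, so that the concrete $\cl A$-representation of the ambient operator $\cl A$-system restricts correctly. Once that bookkeeping is in place, everything else is a matter of assembling the already-established results, and I expect the proof to be short.
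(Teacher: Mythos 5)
Your second route --- passing through the $\cl A$-modular partial unitisation, applying Proposition \ref{p_qasosem} and Theorem \ref{th_modquo} to $\cl X^\#_{\cl A}/\iota_{\cl A}(\cl J)$, then restricting a concrete realisation from Theorem \ref{t_opA-sys} along the embedding $\cl X/\cl J \hookrightarrow \cl X^\#_{\cl A}/\iota_{\cl A}(\cl J)$ and concluding with Theorem \ref{t_soap} --- is exactly the paper's proof, including the opening observation that selfadjointness of $\cl J$ together with axiom (iii) yields two-sided invariance. (Your first, direct route is shakier as written, since Remark \ref{r_comp} applies to $\cl A$-representations, not to arbitrary c.c.p.\ maps vanishing on $\cl J$; but you rightly set it aside in favour of the second.)
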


\begin{proof}
By using selfadjointness and the third axiom for selfadjoint operator $\cl A$-spaces we have $\cl J \cdot \cl A \subseteq \cl J$, and thus the bimodule actions descend to the quotient $\cl X/ \cl J$.
On the other hand, $\iota_{\cl A}(\cl J)$ is also a kernel of $\cl X^\#_{\cl A}$ and thus by Theorem \ref{th_modquo} we have that $\cl X^\#_{\cl A} / \iota_{\cl A}(\cl J)$ is an operator $\cl A$-system.

By choosing a concrete realisation of $\cl X^\#_{\cl A} / \iota_{\cl A}(\cl J)$ provided by Theorem \ref{t_opA-sys}, and the completely isometric complete order embedding of $\cl X/ \cl J$ in $\cl X_{\cl A}^\# / \iota_{\cl A}(\cl J)$, we obtain a realisation of $\cl X/ \cl J$ as a concrete selfadjoint operator $\cl A$-space, and Theorem \ref{t_soap} completes the proof.
\end{proof}

\section{Symmetrisation}\label{s_univco}

In this section we construct a canonical selfadjoint operator space associated with an operator space $\cl E$ and an operator system $\cl S$, which we call the \emph{symmetrisation of $\cl E$ by $\cl S$}.
We show that this selfadjoint operator space enjoys a natural universal property, and we establish its injectivity. 
At the base of our approach is a factorisation result for completely contractive completely positive balanced trilinear maps.
We will return to discussing a balanced form of 
symmetrisation in Section \ref{s_balsym}.

\subsection{Multilinear maps}

For $n\in \bb{N}$, we write $[n] = \{1,2,\dots,n\}$. 
The algebraic tensor product of vector spaces $V$ and $W$ will be denoted by $V\odot W$. 
If $x = (x_{i,j})_{i,j} \in M_{m,n}(V)$ and $y = (y_{i,j})_{i,j}\in M_{n,k}(W)$, let $x\odot y$ denote the matrix $(z_{i,j})_{i,j} \in M_{m,k}(V\odot W)$ with 
\[
z_{i,j} = \sum_{p=1}^n x_{i,p}\otimes y_{p,j}, \ \ \ i\in [m], j\in [k].
\]
The canonical identification $\bb{C}\odot V \odot \bb{C} = V$ yields a map 
\[
M_{m,n}\times M_{n,k}(V) \times M_{k,\ell} \to M_{m,\ell}(V); \ (\alpha,x,\beta) \mapsto 
\alpha\cdot x \cdot \beta := \alpha\odot x \odot \beta.
\] 

Let $V, V_i$, $i = 1,\dots,N$, be complex vector spaces. 
A multilinear map $\theta \colon V_1\times V_2\times \cdots \times V_N \to V$ gives rise to a multilinear map (denoted in the same way)
\[
\theta \colon M_{k_1,k_2}(V_1) \times M_{k_2,k_3}(V_2) \times \cdots \times M_{k_{N-1},k_N}(V_N) \to M_{k_1,k_N}(V)
\]
by letting 
\begin{align*}
\theta\left((v_{i_1,i_2}^{(1)})_{i_1,i_2},(v_{i_2,i_3}^{(2)})_{i_2,i_3}, \dots, (v_{i_{N-1},i_{N}}^{(N)})_{i_{N-1}, i_N}\right) 
& = \\
& \hspace{-5cm} = 
\left(\sum_{i_{N-1} = 1}^{k_{N-1}} \cdots \sum_{i_2 =1}^{k_2} \theta\left(v_{i_1,i_2}^{(1)},v_{i_2,i_3}^{(2)}, \dots, v_{i_{N-1},i_{N}}^{(N)}\right)\right)_{i_1,i_N}.
\end{align*}
Let $\cl A_j$, $j = 1, \dots, N$, be algebras and suppose that each $V_i$ is an $\cl A_j$-$\cl A_{j+1}$-bimodule for $j = 1, \dots, N-1$, and $V$ is an $\cl A_1$-$\cl A_N$-module; then we say that $\theta$ is a module map over $\cl A_1,\dots,\cl A_N$, if 
\[
\theta(a_1\cdot v_1\cdot a_2, v_2\cdot a_3, \dots, v_N\cdot a_N) 
= 
a_1\cdot \theta(v_1, a_2 \cdot v_2, \dots, a_{N-1} \cdot v_N)\cdot a_N,
\]
for all $v_i\in V_i$, $a_j\in \cl A_j$.
Here, we will be concerned exclusively with the cases where $N=2,3$.

Assume that $V_1$ and $V_2$ are operator spaces.
The norm of a bilinear map $\theta \colon V_1 \times V_2 \to V$ is defined by letting
\[
\|\theta\| := \inf\{ C \in \bb R^+ \ : \ \|\theta(v_1, v_2)\| \leq C \cdot \|v_1\|_{V_1} \cdot \|v_2\|_{V_2} \}.
\]
The map $\theta$ is called \emph{completely bounded} if 
its completely bounded norm (abbreviated cb-norm)
\[
\|\theta\|_{\rm cb} := \sup_{n\in \bb{N}} \|\theta \colon M_n(V_1) \times M_n(V_2) \to M_n(V) \|
\]
is finite; see, for example, \cite[Subsection 1.5.4]{blm}.
These definitions extend to multilinear maps, see for example \cite[Section 9.4]{er}. 

For two operator spaces $\cl E$ and $\cl F$ we denote by $\cl E \otimes_{\rm h} \cl F$ their \emph{Haagerup tensor product};
by its definition, the Haagerup tensor product norm of an element $u \in M_n(\cl E \odot \cl F)$ is given by
\[
\|u\|_{\rm h}^{(n)} := \inf \{ \nor{x} \cdot \nor{y} \ : \ u = x \odot y, \ x \in M_{n,k}(\cl E), y \in M_{k,n}(\cl F), k \in \bb N \},
\]
see for example \cite[Section 5]{pisier_intr}.
The Haagerup tensor product linearises bilinear completely bounded maps $u \colon \cl E \times \cl F\to \cl B(H)$ to completely bounded maps $\wt{u} \colon \cl E \otimes_{\rm h} \cl F\to \cl B(H)$ with preservation of cb-norm, see for example \cite[Subsection 1.5.4]{blm}.
Its associativity allows to extend this property to multilinear completely bounded maps, see for example \cite[Section 9.4]{er}.

We note that, if $\theta \colon V \times V \to W$ is a bilinear map for $*$-vector spaces $V$ and $W$, then the polarisation identity reads
\begin{align*}
4 \theta(v_1^*, v_2) 
& = \theta( (v_1 + v_2)^*, v_1 + v_2) - \theta((v_1-v_2)^*, v_1-v_2)  + \\ 
& \hspace{.5cm} + i \theta( (v_1 - i v_2)^*, v_1 - i v_2) - i \theta( (v_1 + i v_2)^*, v_1 + i v_2).
\end{align*}
Consequently, if $\theta(v^*, v)^* = \theta(v^*, v)$ for all $v \in V$, then $\theta(v_1^*, v_2)^* = \theta(v_2^*, v_1)$ for all $v_1, v_2 \in V$.

\subsection{Factorisation of trilinear maps}

Let $\cl E$ and $\cl F$ be (abstract) operator spaces.  
Given linear maps $\phi \colon \cl E\to \cl B(H)$ and $\psi \colon \cl F\to \cl B(H)$, we will write $\psi\cdot\phi \colon \cl F\odot\cl E \to \cl B(H)$ for the linear map given by 
\begin{equation}\label{eq_dot}
(\psi\cdot\phi)(y\otimes x) := \psi(y)\phi(x), \ \ \ x\in \cl E, y\in \cl F.
\end{equation}
If $\cl G$ is a(nother) operator space and $\theta \colon \cl G\to \cl B(H)$ is a linear map, we set 
$\theta \cdot \psi \cdot \phi := \theta \cdot (\psi \cdot \phi) = (\theta \cdot \psi) \cdot\phi$.
We note that, if $u\in M_n(\cl G\odot \cl F\odot\cl E)$, $\alpha \in M_{m,n}$ and $\beta\in M_{n,m}$, then 
\begin{equation}\label{eq_modpr}
(\theta\cdot \psi\cdot\phi)^{(m)}(\alpha u \beta) = \alpha (\theta\cdot \psi\cdot\phi)^{(n)}(u) \beta.
\end{equation}

Let $\cl S$ and $\cl T$ be operator systems and let $\cl E$ be an operator space.
Recall the operator space structure of the adjoint $\cl E^*$
from Subsection \ref{ss_opspsy}.
We say that a trilinear map $\theta \colon \cl E^* \times \cl S \times \cl E \longrightarrow \cl T$ is \emph{positive}, if 
\[
\theta(x^*, \cl S^+, x) \subseteq \cl T^+, \ \  x \in \cl E.
\]
We record the following remark for positive maps for further use.

\begin{remark}\label{r_adjoint} 
If $\cl E$ is an operator space, $\cl S$ and $\cl T$ are operator systems and $\theta \colon \cl E^* \times \cl S \times \cl E \longrightarrow \cl T$ is a positive map, then
\[
\theta(y^*, s, x)^* = \theta(x^*, s^*, y), \ \ x,y \in \cl E, s \in \cl S
\]
(that is, positive trilinear maps are automatically compatible with taking adjoints).
Indeed, first note that $\theta(x^*, s, x)^* = \theta(x^*, s, x)$ for all $s \geq 0$ and $x \in \cl E$.
Next, for $s \in \cl S_h$, write $s = s_1 - s_2$ for $s_1, s_2 \geq 0$ in $\cl S$ and we have
\begin{align*}
\theta(x^*, s, x)^*
& =
\theta(x^*, s_1, x)^* - \theta(x^*, s_2, x)^* \\
& = 
\theta(x^*, s_1, x) - \theta(x^*, s_2, x)
=
\theta(x^*, s, x),
\end{align*}
for all $x \in \cl E$.
Next, let $s \in \cl S_h$ and consider the bilinear map
\[
\theta_s \colon \cl E^* \times \cl E \to \cl T; (x^*,y) \mapsto \theta(x^*, s, y).
\]
Due to the positivity of $\theta$ we have
\[
\theta_s(x^*, x)^* = \theta(x^*, s, x)^* = \theta(x^*, s, x) = \theta_s(x^*, x).
\]
The polarisation identity for $\theta_s$ then implies that
\[
\theta(y^*, s, x)^* = \theta_s(y^*, x)^* = \theta_s(x^*,y) = \theta(x^*, s, y).
\]
Finally, for an arbitrary $s \in \cl S$, write $s = s_1 + is_2$ for $s_1, s_2 \in \cl S_h$ and compute
\begin{align*}
\theta(y^*, s, x)^*
& =
\theta(y^*, s_1, x)^* - i \theta(y^*, s_2, x)^* \\
& =
\theta(x^*, s_1, y) - i \theta(x^*, s_2, y)
=
\theta(x^*, s^*, y),
\end{align*}
for all $x,y \in \cl E$. 
\end{remark}

We will consider trilinear maps that are positive at every matrix level.

\begin{definition}
Let $\cl S$ and $\cl T$ be operator systems and let $\cl E$ be an operator space.
We say that a trilinear map $\theta \colon \cl E^* \times \cl S \times \cl E \to \cl T$ is \emph{completely positive} if the induced trilinear map (denoted by the same symbol)
\[
\theta \colon M_{n,m}(\cl E^*)\times M_m(\cl S) \times M_{m,n}(\cl E) \to M_n(\cl T)
\]
is positive for all $n,m\in \bb{N}$, that is, if
\begin{equation}\label{eq_npos}
\theta(x^*, s, x)\in M_n(\cl T)^+, \ \ x \in M_{m,n}(\cl E), s \in M_m(\cl S)^+, n,m\in \bb{N}.
\end{equation}

Let $\cl A$ be a unital C*-algebra.
If $\cl E$ is a left operator $\cl A$-space and $\cl S$ is an operator $\cl A$-system, we say that a map $\theta \colon \cl E^* \times \cl S \times \cl E \longrightarrow \cl T$ is \emph{$\cl A$-balanced} if 
\[
\theta(y^*, a \cdot s \cdot b, x) = \theta(y^*\cdot a, s, b\cdot x), \ \ \ x, y\in \cl E, s\in \cl S, a,b\in \cl A.
\]
We write $\wt{\theta}_{\cl A}$ for the linearisation of $\theta$ to a map
\[
\wt{\theta}_{\cl A} \colon \cl E^* \odot^{\cl A} \cl S \odot^{\cl A} \cl E\to \cl B(H).
\]
When $\cl A = \bb C$ we will simply write $\wt{\theta}$.
\end{definition}

There is a straightforward way for producing completely bounded completely positive $\cl A$-balanced trilinear maps.

\begin{definition}
Let $\cl A$ be a unital C*-algebra, $\cl E$ be a left operator $\cl A$-space, $\cl S$ be an operator $\cl A$-system.
Let $\phi \colon \cl E\to \cl B(H,K)$ be a completely bounded map and $\psi \colon \cl S\to \cl B(K)$ be a unital completely positive map.
We say that the pair $(\phi,\psi)$ is \emph{$\cl A$-admissible} if 
\[
\psi(s \cdot a ) \phi(x) = \psi(s) \phi(a \cdot x), \ \ s \in \cl S, a \in \cl A, x \in \cl E.
\]
\end{definition}

If $(\phi,\psi)$ is an $\cl A$-admissible pair for $\cl E$ and $\cl S$, then we can define the map
\[
\theta \colon \cl E^* \times \cl S \times \cl E \to \cl B(H); \ (x^*,s,y) \mapsto \phi(x)^* \psi(s) \phi(y).
\]
It is readily verified that $\theta$ is a completely bounded $\cl A$-balanced trilinear map with $\nor{\theta}_{\rm cb} \leq \nor{\phi}_{\rm cb}^2$, that is by definition completely positive.
Moreover, its linearisation 
$\wt{\theta}_{\cl A}$ satisfies $\wt{\theta}_{\cl A} = \phi^*\cdot \psi\cdot\phi$.

The next lemma is a factorisation result which shows that maps described in the previous paragraph form the only class of examples of completely bounded completely positive $\cl A$-balanced trilinear maps.
It is akin to the factorisation results proved by E. Christensen and A. Sinclair \cite{christensen-sinclair}, V. Paulsen and R. R. Smith \cite{paulsen-smith}, and A. Sinclair and R. R. Smith \cite{ss}
(see for example \cite[Theorem 1.5.7, Subsection 1.5.8]{blm}). 

\begin{lemma}\label{l_ssgen}
Let $\cl A$ be a unital C*-algebra, $\cl E$ be a left operator $\cl A$-space, $\cl S$ be an operator $\cl A$-system and $\theta \colon \cl E^*\times\cl S\times\cl E\to \cl B(H)$ be a completely bounded completely positive $\cl A$-balanced trilinear map. 
Then the following hold:
\begin{enumerate}
\item[(i)]
There exist a completely bounded map $\phi \colon \cl E\to \cl B(H,K)$ and a unital completely positive map $\psi \colon \cl S\to \cl B(K)$ such that the pair $(\phi,\psi)$ is $\cl A$-admissible and 
$\wt{\theta}_{\cl A} = \phi^*\cdot \psi\cdot\phi$.
Moreover, $\phi$ and $\psi$ can be chosen so that $\|\theta\|_{\rm cb} = \|\phi\|_{\rm cb}^2$.

\item[(ii)]
If, in addition, $\cl A \subseteq \cl S$, then we can choose $\psi$ 
in item (i) so that $\pi := \psi|_{\cl A}$ is a $*$-representation, $(\phi,\pi)$ is an $\cl A$-representation of $\cl E$, and $(\psi,\pi)$ is an $\cl A$-representation of $\cl S$.

\item[(iii)]
If $\cl S$ is a C*-algebra, then the map $\psi$ in item (i) can be chosen to be a $*$-representation. 
\end{enumerate}
\end{lemma}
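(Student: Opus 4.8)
The plan is to reduce the trilinear factorisation to a bilinear Stinespring-type construction carried out on the balanced Haagerup tensor product, and then read off the modular compatibility from the balanced structure. First I would consider the induced linear map $\wt\theta_{\cl A}$ on $\cl E^*\odot^{\cl A}\cl S\odot^{\cl A}\cl E$; since $\theta$ is completely bounded and $\cl A$-balanced, it factors through $\cl E^*\otimes_{\rm h}^{\cl A}\cl S\otimes_{\rm h}^{\cl A}\cl E$ with preservation of cb-norm, by the universal property of the (balanced) Haagerup tensor product. The next step is to exploit complete positivity: regarding $\cl S\otimes_{\rm h}^{\cl A}\cl E$ as a left operator $\cl E^*$-object via $y^*\cdot(s\otimes x)=(y^*\odot^{\cl A} s\odot^{\cl A} x)$, complete positivity of $\theta$ says precisely that $\wt\theta_{\cl A}$ is a completely positive $\cl E^*$-$\cl E$-balanced form in the sense of a Stinespring-type bilinear map. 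Using the Christensen--Sinclair / Paulsen--Smith multilinear factorisation (cited as \cite{christensen-sinclair, paulsen-smith, ss}, see \cite[Theorem 1.5.7, Subsection 1.5.8]{blm}) applied to the Haagerup-linearised map, I obtain a Hilbert space $K_0$, a $*$-representation of a containing C*-algebra, and completely bounded maps out of $\cl E$ and a u.c.p.\ map out of $\cl S$ whose Stinespring-type composition recovers $\wt\theta_{\cl A}$; the norm bookkeeping there yields $\|\theta\|_{\rm cb}=\|\phi\|_{\rm cb}^2$, giving item~(i). The $\cl A$-admissibility relation $\psi(s\cdot a)\phi(x)=\psi(s)\phi(a\cdot x)$ is then forced by the $\cl A$-balancing of $\theta$: evaluating $\wt\theta_{\cl A}$ on $y^*\odot^{\cl A}(s\cdot a)\odot^{\cl A}x = y^*\odot^{\cl A} s\odot^{\cl A}(a\cdot x)$ for all $y$ and using that $\phi$ has (after a non-degenerate compression as in Remark~\ref{r_comp}) sufficiently large range, we can cancel $\phi(y)^*$ and deduce the identity on the relevant subspace of $K$.

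For item~(ii), under the hypothesis $\cl A\subseteq\cl S$, I would first apply item~(i) to produce $(\phi,\psi)$ and then arrange that $\pi:=\psi|_{\cl A}$ is a genuine $*$-representation. The idea is that, since $\psi$ is unital and completely positive on $\cl S$ and $\cl A$ is a unital C*-subalgebra of $\cl S$ containing $1_{\cl S}=1_{\cl A}$, the restriction $\psi|_{\cl A}$ is a unital completely positive map on a C*-algebra, so by the multiplicative-domain (Choi) argument it suffices to show $\cl A$ lies in the multiplicative domain of $\psi$. Concretely, one uses the Stinespring dilation $\psi=V^*\rho(\cdot)V$ with $V$ an isometry and shows that the admissibility relation, together with the module structure on $\cl E$, forces $\rho(a)$ to commute with the projection onto $VK$, which gives $\psi(ab)=\psi(a)\psi(b)$ for $a\in\cl A$, $b\in\cl S$. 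Then $\pi=\psi|_{\cl A}$ is a unital $*$-homomorphism, the relation $\phi(a\cdot x)=\pi(a)\phi(x)$ follows from admissibility with $s$ replaced by $1$ (since $\psi(1\cdot a)\phi(x)=\pi(a)\phi(x)$ and $\psi(1)\phi(a\cdot x)=\phi(a\cdot x)$), so $(\phi,\pi)$ is an $\cl A$-representation of $\cl E$, and $\psi(a\cdot s)=\psi(as)=\pi(a)\psi(s)$ shows $(\psi,\pi)$ is an $\cl A$-representation of $\cl S$. For item~(iii), when $\cl S$ itself is a C*-algebra one repeats the same multiplicative-domain argument with $\cl A$ replaced by $\cl S$: the point is that the admissibility relation, extended by the $\cl S$-module structure of $\cl E^*\otimes\cl E$ through $\phi$, forces all of $\cl S$ into the multiplicative domain of $\psi$, whence $\psi$ is a $*$-representation after compressing to a reducing subspace.

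The main obstacle I anticipate is the passage from the abstract Haagerup-factorisation of $\wt\theta_{\cl A}$ to a factorisation of the \emph{trilinear} map $\theta$ with the precise admissibility relation on the nose, rather than merely modulo the balancing ideal: one must be careful that the completely bounded map $\phi$ one extracts is well-defined on $\cl E$ (not just on a quotient) and that, after taking the non-degenerate compression of Remark~\ref{r_comp}, the cancellation of $\phi(y)^*$ is legitimate, i.e.\ that $\{\phi(y)\xi : y\in\cl E,\ \xi\in H\}$ is total in $K$. This is handled by choosing $K$ minimal in the Stinespring-type construction (so that $\phi$ is automatically non-degenerate) and by using that complete positivity of $\theta$ forces $\wt\theta_{\cl A}$ to be a genuine positive form, so the associated GNS-type space is spanned exactly by the vectors $\phi(x)\xi$. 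A secondary technical point is verifying the norm identity $\|\theta\|_{\rm cb}=\|\phi\|_{\rm cb}^2$ rather than merely an inequality; the inequality $\|\theta\|_{\rm cb}\le\|\phi\|_{\rm cb}^2$ is the easy direction from the factorisation $\wt\theta_{\cl A}=\phi^*\cdot\psi\cdot\phi$, and the reverse follows by tracking the norms through the Haagerup factorisation and the Stinespring dilation, as in the cited references.
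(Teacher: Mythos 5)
There is a genuine gap at the heart of item (i). You propose to obtain the factorisation by applying the Christensen--Sinclair/Paulsen--Smith multilinear representation theorem to the Haagerup-linearised map and claim that this yields ``completely bounded maps out of $\cl E$ and a u.c.p.\ map out of $\cl S$'' composing to $\wt\theta_{\cl A}$. That is not what CSPS gives: it produces a factorisation $\theta = \phi_1^*\cdot\psi_0\cdot\phi_2$ in which the two outer maps $\phi_1,\phi_2$ are in general \emph{different} completely bounded maps and the middle map $\psi_0$ is merely completely bounded, not unital completely positive. The entire content of the lemma is to use complete positivity to force $\phi_1=\phi_2$ and $\psi$ u.c.p., and there is no mechanism in the Haagerup/CSPS machinery for this symmetrisation -- indeed the paper's introduction points out (via the free product example) that the Haagerup tensor product does not even carry a compatible flip involution, which is precisely the obstruction. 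The correct route, which you only gesture at in your ``obstacle'' paragraph, is a bespoke GNS construction: one defines a positive semi-definite form on $\cl E\odot H$ by $\langle x\otimes\xi,y\otimes\eta\rangle:=\langle\theta(y^*,1_{\cl S},x)\xi,\eta\rangle_H$ (positivity uses complete positivity of $\theta$ at $1_{\cl S}$), completes to $K$, sets $\phi(x)\xi=x\otimes\xi+N$, and then defines $\psi(s)$ for $s\ge 0$ by the domination $0\le\langle\cdot,\cdot\rangle_s\le\|s\|\langle\cdot,\cdot\rangle$ coming from $0\le s\le\|s\|1$; one then extends $\psi$ by linearity and verifies complete positivity by hand. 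The norm identity $\|\theta\|_{\rm cb}=\|\phi\|_{\rm cb}^2$ also falls out of this construction, not out of CSPS bookkeeping (which would give a product of three generally unrelated cb-norms). For contrast, the paper's Proposition~\ref{p_cbbalance} shows what one actually gets from CSPS plus polarisation: a decomposition of a cb trilinear map into a \emph{sum of four} symmetric positive pieces, not a single one.

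Item (iii) as you state it is also incorrect: the claim that the admissibility relation ``forces all of $\cl S$ into the multiplicative domain of $\psi$'' is false in general, since $\psi$ constructed above is typically a genuine compression and is not multiplicative on $\cl S$ even when $\cl S$ is a C*-algebra. The correct argument is to Stinespring-dilate $\psi=V^*\rho(\cdot)V$ and replace $\phi$ by $\wh\phi:=V\phi$, so that $\wt\theta_{\cl A}=\wh\phi^*\cdot\rho\cdot\wh\phi$ with $\rho$ a $*$-representation; one then checks that $(\wh\phi,\rho)$ is still $\cl A$-admissible using that $VV^*$ commutes with $(\rho\circ q)(\cl A)$ (because $\pi$ is a direct summand of $\rho\circ q$). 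Your argument for item (ii) is essentially sound -- once $[\phi(\cl E)H]=K$ one gets $\pi(a)\phi(x)=\phi(a\cdot x)$ from admissibility at $s=1_{\cl S}$ and hence $\psi(s\cdot a)=\psi(s)\pi(a)$, so $\pi$ is multiplicative -- though the detour through the Stinespring projection is unnecessary.
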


\begin{proof}
We equip the algebraic tensor product $\cl E\odot H$ with the sesquilinear form given by 
\[
\langle x\otimes \xi, y \otimes \eta \rangle := \langle \theta(y^*,1_{\cl S},x) \xi, \eta \rangle_H, \ \ \ x,y\in \cl E, \xi,\eta\in H,
\]
and let 
\[
N := \{u\in \cl E\odot H \ : \ \langle u,u\rangle = 0\}.
\]
A standard argument shows that $N$ is a linear subspace of $\cl E\odot H$ and that the quotient $\cl E\odot H/N$ is an inner product space. 
We write $K$ for the resulting Hilbert space completion with inner product $\sca{\cdot, \cdot}_K$ and associated norm $\|\cdot\|_K$. 

Next we define a unital completely positive map $\psi \colon \cl S \to \cl B(K)$ such that
\begin{equation}\label{eq_psis}
\sca{\psi(s) (x \otimes \xi + N), y \otimes \eta + N}_K = \sca{ \theta( y^*, s, x ) \xi, \eta}_H,
\end{equation}
$s\in \cl S$, $x,y\in \cl E$, $\xi,\eta\in H$. 
Towards this end, we first assume that $s \in \cl S^+$ and define a sesquilinear form on $\cl E \odot H$ by letting
\[
\sca{x \otimes \xi, y \otimes \eta}_s := \sca{ \theta( y^*, s, x ) \xi, \eta}_H.
\] 
For $u = \sum_{k=1}^n x_k \otimes \xi_k$, let $X = [x_1,\dots,x_n]\in M_{1,n}(\cl E)$ and $\xi = (\xi_i)_{i=1}^n\in H^n$.
We then have
\begin{align*}
\sca{u,u}_s
& =
\sum_{k, \ell=1}^n \sca{\theta(x_\ell^*, s, x_k) \xi_k, \xi_\ell}
=
\sca{\theta(X^*, s, X) \xi, \xi} \geq 0,
\end{align*} 
where we used that $s \in \cl S^+$ and $\theta$ is completely positive; thus the form $\sca{\cdot,\cdot}_s$ is positive semi-definite.
Moreover, since $0 \leq s \leq \|s\| 1$ and the map $\theta$ is completely positive, we have 
\begin{align*}
\sca{u, u}_s
& =
\left\langle \theta\left(X^*, s, X\right)\xi,\xi \right\rangle_H
\leq
\|s\|
\left\langle \theta\left(X^*, 1, X\right)\xi,\xi \right\rangle_H
=
\|s\| \sca{u,u}.
\end{align*}
In particular, if $\sca{u,u} = 0$, then $\sca{u,u}_s = 0$, and thus we obtain an induced positive sesquilinear form (denoted in the same way) by passing to the quotient of $\cl E \odot H$ by $N$.
By the Cauchy-Schwarz inequality, we have 
$$
|\sca{u + N,v + N}_s|^2
\leq 
\|u + N\|_s^2 \cdot \|v + N\|_s^2
\leq 
\|s\|^2 \cdot \|u + N\|_K^2 \cdot \|v + N\|_K^2
$$
for all $u, v \in \cl E \odot H$, 
and now the existence of a unique operator $\psi(s) \in \cl B(K)$ satisfying
\[
\sca{\psi(s) (u +N), v + N}_K = \sca{u + N, v + N}_s,
\]
and thus (\ref{eq_psis}), follows from the Riesz Representation Theorem. 

Since the positive elements span $\cl S$, we can now derive a well-defined linear map $\psi(s)$, satisfying (\ref{eq_psis}), for every $s \in \cl S$.
Indeed, if $s_1, s_2 \geq 0$ then we have $s_1 + s_2 \geq 0$, and equation (\ref{eq_psis}) yields
\begin{align*}
\sca{\psi(s_1 + s_2) (x \otimes \xi + N), y \otimes \eta + N}_K 
& = \\
& \hspace{-6cm} = \sca{ \theta( y^*, s_1 + s_2, x ) \xi, \eta}_H \\
& \hspace{-6cm} =\sca{ \theta( y^*, s_1, x ) \xi, \eta}_H + \sca{ \theta( y^*, s_2, x ) \xi, \eta}_H \\
& \hspace{-6cm} = \sca{\psi(s_1) (x \otimes \xi + N), y \otimes \eta + N}_K  + \sca{\psi(s_2) (x \otimes \xi + N), y \otimes \eta + N}_K, \\
& \hspace{-6cm} = \sca{(\psi(s_1) + \psi(s_2)) (x \otimes \xi + N), y \otimes \eta + N}_K,
\end{align*}
showing that $\psi(s_1 + s_2) = \psi(s_1) + \psi(s_2)$. 
For an arbitrary selfadjoint $s \in \cl S$ we can write $s = s_1 - s_2$ for $s_1, s_2 \geq 0$ and we define 
$\psi(s) = \psi(s_1) - \psi(s_2)$.
The operator $\psi(s)$ is well defined: if $s= s_1 - s_2 = t_1 - t_2$ for $s_1, s_2, t_1, t_2 \geq 0$, then $s_1 + t_2 = s_2 + t_1 \geq 0$ and so
\begin{align*}
\psi(s_1) + \psi(t_2) = \psi(s_1 + t_2) = \psi(s_2 + t_1) = \psi(s_2) + \psi(t_1),
\end{align*}
showing that $\psi(s_1) - \psi(s_2) = \psi(t_1) - \psi(t_2)$.
For an arbitrary $s \in \cl S$, write $s = s_1 + is_2$ for 
\[
s_1 = \frac{s + s^*}{2}
\text{ and }
s_2 = \frac{s - s^*}{2i},
\]
and set $\psi(s) = \psi(s_1) + i\psi(s_2)$.
Since $(s+t)_1 = s_1 + t_1$ and $(s+t)_2 = s_2 + t_2$ we have that $\psi$ is additive. 
We thus have a well-defined linear map $\psi(s)$, $s\in \cl S$, satisfying (\ref{eq_psis}). 
Equation (\ref{eq_psis}) now implies that $\psi$ is linear; by its definition, it is unital. 

We next claim that $\psi$ is completely positive.
Towards this end, it suffices to show that 
\[
\left\langle \psi^{(m)}(S) u,u \right\rangle
\geq 0 \text{ if } S = (s_{i,j})_{i,j =1}^m \in M_m(\cl S)^+ , u = (u_i)_{i=1}^m\in (\cl E\odot H)^{(m)}.
\] 
Write $u_i = \sum_{p=1}^{n} x_p^{(i)}\otimes \xi_p^{(i)}$, where $x_p^{(i)}\in \cl E$ and $\xi_p^{(i)} \in H$, $p = 1,\dots,n$, $i = 1,\dots,m$; we can assume that the number of elementary tensors in the expression for each $u_i$ is the same by adding zeros if necessary.
For each $i = 1,\dots,m$, let 
\[
\eta_i = (\xi_p^{(i)})_{p=1}^n \in H^{(n)}
\qand
\eta = (\eta_i)_{i=1}^m\in H^{(mn)}.
\]
Let $X_i = (x_p^{(i)})_{p=1}^n \in M_{1,n}(\cl E)$ and 
\[
X = \begin{pmatrix} X_1 & 0 & \cdots & 0 \\ 0 & X_2 & \cdots & 0 \\ \vdots & \vdots & \cdots & \vdots \\ 0 & 0 & \cdots & X_m \end{pmatrix} \in M_{m, mn}( \cl E).
\] 
We use the same symbol $\theta$ for the multivariable extension
\[
\theta \colon M_{mn, m}(\cl E^*) \times M_{m,m}(\cl S) \times M_{m, mn}(\cl E) \to \cl B(H^{(mn)});
\]
we thus have 
\begin{align*}
\theta(X^*, S, X)
& =
\bigg( \sum_{k, \ell =1}^m \theta( (X^*)_{i,k}, s_{k,\ell}, X_{\ell,j}) \bigg)_{i,j=1}^m \\
& =
\bigg( \theta( (X^*)_{i,i}, s_{i,j}, X_{j,j}) \bigg)_{i,j=1}^m 
=
\bigg( \theta(X_i^*, s_{i,j}, X_j) \bigg)_{i,j=1}^m.
\end{align*}
On the other hand, 
\begin{align*}
\theta(X_i^*, s_{i,j}, X_j)
& =
\theta\left( \begin{pmatrix} x_1^{(i) *} \\ \vdots \\ x_n^{(i) *} \end{pmatrix}, s_{i,j}, \begin{pmatrix}x_1^{(j)} & \cdots & x_n^{(j)} \end{pmatrix} \right) \\
& =
\bigg( \theta(x_p^{(i) *}, s_{i,j}, x_q^{(j)}) \bigg)_{p,q=1}^n.
\end{align*}
Consequently, we get the required
\begin{align*}
\left\langle \psi^{(m)}(S)u,u \right\rangle
& = \\
& \hspace{-2.2cm} =
\sum_{i,j=1}^m \left\langle \psi(s_{i,j})u_j,u_i\right\rangle
 = 
\sum_{i,j=1}^m \sum_{p,q=1}^n \left\langle \psi(s_{i,j}) \left(x_q^{(j)}\otimes \xi_q^{(j)}\right), 
x_p^{(i)}\otimes \xi_p^{(i)} \right\rangle \\
& \hspace{-2.2cm} =
\sum_{i,j=1}^m \sum_{p,q=1}^n 
\left\langle \theta\left(x_p^{(i)*},s_{i,j},x_q^{(j)}\right)\xi_q^{(j)},\xi_p^{(i)}\right\rangle_H 
 =
\sum_{i,j=1}^m \left\langle \theta(X_i^*,s_{i,j},X_j)\eta_j,\eta_i \right\rangle_{H^n} \\
& \hspace{-2.2cm} = 
\left\langle \theta(X^*,S,X)\eta,\eta \right\rangle_{H^{mn}} \geq 0.
\end{align*}

Next we consider the map $\phi \colon \cl E\to \cl B(H,K)$ given by 
\[
\phi(x)\xi = x\otimes \xi + N, \ \ \ x\in \cl E, \ \xi\in H.
\]
For well-definedness, a direct computation gives that
\[
\| \phi(x) \xi \|_K
=
| \sca{\theta(x^*, 1, x)\xi, \xi}_H |^{1/2}
\leq
\|\theta\|^{1/2} \cdot \nor{x} \cdot \nor{\xi}.
\]
By definition, $\phi(\cl E)H$ spans a dense subset of $K$, and we have
\[
\phi(x)^* (y \otimes \eta + N) = \theta(x^*, 1_{\cl S}, y) \eta.
\]
Clearly $\phi$ is a linear map.
We verify that $\phi$ is completely bounded.
Towards this end, let
\[
X = (x_{i,j})_{i,j=1}^m \in M_m(\cl E) \qand \xi = (\xi_i)_{i=1}^m \in H^{(m)}.
\]
We then have
\begin{align*}
\| \phi^{(m)}(X)\xi \|^2_{K^{(m)}} 
& = \\
& \hspace{-2.2cm} =
\langle\phi^{(m)}(X)\xi,\phi^{(m)}(X)\xi \rangle_{K^{(m)}} 
 = 
\sum_{i,j,k = 1}^m \hspace{-0.15cm} \left\langle\phi(x_{k,j})\xi_j, \phi(x_{k,i})\xi_i \right\rangle_K \\
& \hspace{-2.2cm} = 
\sum_{i,j,k = 1}^m \left\langle x_{k,j} \otimes \xi_j + N, x_{k,i} \otimes \xi_i + N\right\rangle_K 
 = 
\sum_{i,j,k = 1}^m \left\langle \theta(x_{k,i}^*, 1_{\cl S}, x_{k,j})\xi_j, \xi_i \right\rangle_H \\
& \hspace{-2.2cm} =
\left\langle \left( \sum_{k=1}^m \theta(x_{k,i}^*, 1_{\cl S}, x_{k,j}) \right)_{i,j} \xi, \xi \right\rangle_{H^{(m)}} 
 =
\left\langle \theta(X^*,1_{\cl S} \otimes I_m,X)  \xi, \xi \right\rangle_{H^{(m)}} \\
& \hspace{-2.2cm} \leq 
\|\theta\|^{(m)} \|X\|^2\|\xi\|^2_{H^{(m)}}.
\end{align*}
Thus $\|\phi^{(m)}\|^2\leq \|\theta\|^{(m)}$ and therefore $\|\phi\|_{\rm cb}^2\leq \|\theta\|_{\rm cb}$, implying that $\phi$ is completely bounded.
Finally, note that, if $x, y\in \cl E$, $s\in \cl S$ and $\xi,\eta\in H$, then 
\begin{align*}
\langle \phi(y)^*\psi(s) \phi(x)\xi,\eta\rangle_H
& = 
\langle \psi(s) \phi(x)\xi,\phi(y)\eta\rangle_K \\
& = 
\langle \psi(s) (x\otimes \xi + N), y \otimes \eta + N \rangle_K \\
& = 
\langle\theta(y^*,s,x)\xi,\eta\rangle_H.
\end{align*}
Therefore
\[
\theta(y^*,s,x) = \phi(y)^*\psi(s) \phi(x), \ \ \ x, y \in \cl E, s \in \cl S.
\]
A similar computation holds at every matrix level, giving 
\[
\|\theta\|_{\rm cb} \leq \|\phi\|_{\rm cb}^2 \cdot \|\psi\|_{\rm cb} = \|\phi\|_{\rm cb}^2,
\]
and hence $\|\theta\|_{\rm cb} = \|\phi\|_{\rm cb}^2$.

Using the representation obtained in the first part of the proof, we write $\wt{\theta}_{\cl A} = \phi^*\cdot\psi\cdot \phi$. 
For all elements $a,b\in \cl A$, $x,y\in \cl E$, $s\in \cl S$ and all vectors $\xi,\eta\in H$, we then have 
\[
\langle\theta(y^*\cdot b, s, a \cdot x)\xi,\eta \rangle_H = \langle\theta(y^*, b \cdot s \cdot a, x)\xi,\eta\rangle_H.
\]
By setting $b = 1_{\cl A}$ we have
\[
\langle \psi(s) \phi(a \cdot x) \xi, \phi(y) \eta \rangle_K = \langle \psi(s \cdot a) \phi(x) \xi, \phi(y) \eta\rangle_K.
\]
Since $\phi(\cl E)H$ spans a dense subspace of $K$, this implies 
\begin{equation}\label{eq_admiagai}
\psi(s)\phi(a\cdot x) = \psi(s\cdot a)\phi(x), \ \ \ x\in \cl E, s\in \cl S, a\in \cl A.
\end{equation}
This completes the proof of item (i).

For item (ii), assume that $\cl A \subseteq \cl S$.
Let $\pi \colon \cl A \to \cl B(K)$ be the map given by $\pi(a) := \psi(a)$, $a\in \cl A$. 
Since $\psi$ is unital, applying (\ref{eq_admiagai}) for $s = 1_{\cl S}$ yields 
\[
\phi(a\cdot x) = \pi(a)\phi(x), \ \ \ x\in \cl E, a\in \cl A,
\]
and therefore
\[
\psi(s\cdot a)\phi(x) = \psi(s) \phi(a \cdot x) = \psi(s)\pi(a)\phi(x), \ \ \ s\in \cl S, a\in \cl A, x\in \cl E.
\]
Since $[\phi(\cl E) H] = K$ we get
\begin{equation}\label{eq_multdom}
\psi(s\cdot a) = \psi(s)\pi(a), \ \ \ s\in \cl S, a\in \cl A.
\end{equation}
In particular, $\pi$ is a $*$-homomorphism.

For item (iii), assume that $\cl S$ is a unital 
C*-algebra and that $(\psi,\pi)$ is an $\cl A$-admissible pair. 
Let $q \colon \cl A \to \cl S$ be the map, 
given by $q(a) = a\cdot 1_{\cl S}$; we have that $q$ is a $*$-homomorphism. 
Using the Stinespring Theorem, let $\rho \colon \cl S\to \cl B(\wh{K})$ be a unital $*$-representa\-tion and $V \colon K\to \wh{K}$ be an isometry such that 
\[
\psi(s) = V^* \rho(s) V, \ \ \ s \in \cl S.
\]
Let $\wh{\phi} \colon \cl E\to \cl B(H,\wh{K})$ be the map given by $\wh{\phi}(x) := V \phi(x)$. 
By construction, we have 
\[
\wt{\theta}_{\cl A} = \wh{\phi}^* \cdot \rho \cdot \wh{\phi}.
\]
Setting $\pi = \psi \circ q$, equation (\ref{eq_multdom}) implies that $q(\cl A)$ is in the multiplicative domain of $\psi$; that is, $\pi$ is a $*$-homomorphism, and $(\psi, \pi)$ is an $\cl A$-representation.
Since $\rho \circ q$ is a $*$-homomorphism dilating the $*$-homomorphism $\pi$, we have that $\pi$ is a direct summand of $\rho \circ q$.
We have that 
$$V\pi(a) = VV^*(\rho\circ q)(a) V, \ \ \ a\in \cl A,$$
and since $VV^*$ commutes with the image of $\rho\circ q$, 
we have 
$$V\pi(a) = (\rho\circ q)(a) V, \ \ \ a\in \cl A.$$
Thus, 
\begin{align*}
\rho(s)\wh{\phi}(a\cdot x) 
& = \rho(s)V\phi(a\cdot x)
=  \rho(s)V\pi(a)\phi(x) \\
& = \rho(s)(\rho\circ q)(a) V\phi(x)
= \rho(s\cdot a)\wh{\phi}(x)
\end{align*}
for all $x\in \cl E$, $s\in \cl S$, $a\in \cl A$,
that is, the pair $(\wh{\phi}, \rho)$ is $\cl A$-admissible.
\end{proof}

The decomposition of Lemma \ref{l_ssgen} remains valid under the weaker complete boundedness condition, when $\cl A = \bb C$.

\begin{proposition} \label{p_cbbalance}
Let $\cl E$ be an operator space, $\cl S$ be an operator system and $\theta \colon \cl E^*\times\cl S\times\cl E\to \cl B(H)$ be a trilinear map.
If $\theta$ is completely bounded (resp. completely contractive), then there are completely bounded (resp. completely contractive) completely positive trilinear maps $\theta_m$, $m=1,2,3,4$, such that $\wt{\theta} = \sum_{m=1}^4 \wt{\theta}_{m}$.
\end{proposition}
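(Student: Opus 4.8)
The plan is to reduce to $\|\theta\|_{\rm cb}\le 1$ by rescaling, and then to peel off the three arguments one at a time. First I would invoke the representation theorem for completely bounded trilinear maps on operator spaces (see \cite[Theorem 1.5.7 and Subsection 1.5.8]{blm}, building on \cite{christensen-sinclair, paulsen-smith, ss}): there exist Hilbert spaces $K_1, K_2$ and complete contractions $\sigma_1 \colon \cl E^* \to \cl B(K_1, H)$, $\sigma_2 \colon \cl S \to \cl B(K_2, K_1)$ and $\sigma_3 \colon \cl E \to \cl B(H, K_2)$ with $\theta(y^*, s, x) = \sigma_1(y^*)\sigma_2(s)\sigma_3(x)$. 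Extending $\sigma_2$ to a completely contractive map on a unital C*-algebra containing $\cl S$ (Wittstock) and applying the corner Haagerup--Paulsen--Wittstock theorem (Theorem~\ref{t_hpw}), I can write $\sigma_2(s) = V_1^* \pi(s) V_2$, where $\pi \colon \cl S \to \cl B(K')$ is a unital completely positive map (the restriction to $\cl S$ of a unital $*$-representation of the ambient C*-algebra) and $V_1 \colon K_1 \to K'$, $V_2 \colon K_2 \to K'$ are contractions. Absorbing $V_1^*$ into $\sigma_1$ and $V_2$ into $\sigma_3$, and using the isometric correspondence between completely contractive maps on $\cl E^*$ and on $\cl E$ from Subsection~\ref{ss_opspsy}, this yields completely contractive maps $\Psi, \Phi \colon \cl E \to \cl B(H, K')$ such that $\theta(y^*, s, x) = \Psi(y)^* \pi(s) \Phi(x)$ for all $x, y \in \cl E$ and $s \in \cl S$.

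Next I would symmetrise the two outer maps. Let $\phi \colon \cl E \to \cl B(H, K' \oplus K')$ be $\phi(x) = \Psi(x) \oplus \Phi(x)$, so that $\|\phi\|_{\rm cb} \le \sqrt 2$, and let $\Theta \colon \cl S \to M_2(\cl B(K'))$ be the map whose value at $s$ has $(1,2)$-entry $\pi(s)$ and all other entries $0$; a direct computation gives $\wt\theta = \phi^* \cdot \Theta \cdot \phi$. The key point is that $\Theta$ is a linear combination of completely positive maps: writing $W_c \colon K' \to K' \oplus K'$, $\xi \mapsto \xi \oplus c\xi$, for $c \in \{1, -1, i, -i\}$, one has the operator-matrix identity
\[
\Theta(s) = \tfrac14\left( W_1 \pi(s) W_1^* - W_{-1} \pi(s) W_{-1}^* + i\, W_i \pi(s) W_i^* - i\, W_{-i} \pi(s) W_{-i}^* \right),
\]
and each $\Theta_c \colon s \mapsto \tfrac14 W_c \pi(s) W_c^*$ is completely positive with $\|\Theta_c\|_{\rm cb} \le \tfrac12$ (since $\|W_c\|^2 = 2$ and $\pi$ is unital completely positive). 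Setting $\theta_c$ to be the trilinear map $(y^*, s, x) \mapsto \phi(y)^* \Theta_c(s) \phi(x)$, each $\theta_c$ is completely positive --- because $\theta_c(x^*, s, x) = \phi^{(m,n)}(x)^* \Theta_c^{(m)}(s) \phi^{(m,n)}(x) \ge 0$ whenever $s \in M_m(\cl S)^+$ and $x \in M_{m,n}(\cl E)$ --- and completely contractive, since $\|\theta_c\|_{\rm cb} \le \|\phi\|_{\rm cb}^2 \|\Theta_c\|_{\rm cb} \le 2 \cdot \tfrac12 = 1$. Hence $\wt\theta = \wt\theta_1 - \wt\theta_2 + i\,\wt\theta_3 - i\,\wt\theta_4$ with all $\theta_m$ completely contractive and completely positive; for a general completely bounded $\theta$, applying the above to $\|\theta\|_{\rm cb}^{-1}\theta$ produces completely bounded completely positive $\theta_m$ with $\|\theta_m\|_{\rm cb} \le \|\theta\|_{\rm cb}$.

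The two operator-matrix identities and the cb-norm estimates are routine; the only genuine subtlety I anticipate is the bookkeeping of constants, which must be arranged so that the outputs are completely contractive (not merely completely bounded) in the completely contractive case --- the factor $\sqrt 2$ lost in forming $\phi = \Psi \oplus \Phi$ is recovered exactly from the factor $\tfrac12$ in the decomposition of $\Theta$. One should also make sure the invoked representation theorem for completely bounded trilinear maps is used with its precise ``product of complete contractions'' normalisation (it is, by \cite[Theorem 1.5.7]{blm}), and that the corner Haagerup--Paulsen--Wittstock theorem delivers a \emph{unital} completely positive $\pi$ on $\cl S$ rather than merely a completely contractive one.
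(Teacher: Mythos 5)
Your proof is correct and follows essentially the same route as the paper's: a CSPS factorisation of the trilinear map, a Wittstock/Haagerup--Paulsen--Wittstock upgrade of the middle map to the restriction of a unital $*$-representation, and then polarisation. Indeed your $2\times 2$ operator-matrix packaging unwinds to exactly the paper's terms, since $W_c^*\phi = \Psi + \bar c\,\Phi$, so that $\theta_c = \tfrac14(\Psi+\bar c\Phi)^*\cdot\pi\cdot(\Psi+\bar c\Phi)$ is the paper's $\tfrac14(\phi_1'+i^m\phi_2')^*\cdot\psi'\cdot(\phi_1'+i^m\phi_2')$ in disguise. One remark worth recording: what you actually obtain is the unimodular combination $\wt\theta = \wt\theta_1 - \wt\theta_2 + i\,\wt\theta_3 - i\,\wt\theta_4$ rather than the plain sum asserted in the statement, and this is unavoidable --- a sum of completely positive trilinear maps is completely positive, while $\theta$ need not be. The paper's displayed decomposition has the same feature (the polarisation identity only closes after inserting the coefficients $i^m/4$), so your honest signs are a feature, not a gap; the only use of the proposition, in Lemma \ref{l_norm}, requires merely the estimate $\|\wt\theta^{(n)}(u)\|\le\sum_{m=1}^4\|\wt\theta_m^{(n)}(u)\|$, which your version delivers. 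Your bookkeeping of constants ($\|\phi\|_{\rm cb}\le\sqrt2$ against $\|\Theta_c\|_{\rm cb}\le\tfrac12$) is also correct and matches the paper's bound $\tfrac14(\|W_1\|\,\|\phi_1\|_{\rm cb}+\|W_2\|\,\|\phi_2\|_{\rm cb})^2\le 1$ in the completely contractive case.
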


\begin{proof}
Let $\theta$ be a completely bounded trilinear map.
By the CSPS Factorisation Theorem (see for example \cite[Theorem 1.5.7]{blm}), there exist Hilbert spaces $K_1$ and $K_2$, and completely bounded maps 
\[
\phi_1 \colon \cl E\to \cl B(H,K_1),
\psi \colon \cl S\to \cl B(K_2,K_1),
\phi_2 \colon \cl E\to \cl B(H,K_2),
\]
such that $\wt{\theta} = \phi_1^{*}\cdot \psi \cdot\phi_2$.
Without loss of generality we may assume that $\cl S$ is a concrete operator system, and using Arveson's Extension Theorem, we extend $\psi$ to a completely bounded map on $\ca(\cl S)$.
Using Wittstock's Representation Theorem, we write 
\[
\psi(s) = W_1^*\rho(s)W_2, \ \ \ s\in \cl S,
\]
for a unital $*$-representation $\rho \colon \ca(\cl S)\to \cl B(K)$, and $W_1 \in \cl B(K_1, K)$ and $W_2 \in \cl B(K_2, K)$. 
Letting
\[
\phi_1' := W_1 \phi_1, \psi':= \rho|_{\cl S}, \phi_2' := W_2 \phi_2,
\]
we obtain that $\phi_1'$ and $\phi_2'$ are completely bounded maps, $\psi'$ is unital completely positive, and $\wt{\theta} = \phi_1^{'*} \cdot \psi' \cdot\phi_2'$.
Using the polarisation identity, we then write $\wt{\theta} = \sum_{m=1}^4 \wt{\theta}_{m}$ for
\[
\wt{\theta}_{m} := \frac{1}{4} (\phi_1' + i^m \phi_2')^* \cdot \psi' \cdot (\phi_1' + i^m \phi_2'), \ \ \ m=1,2,3,4.
\]
Notice that
\[
\|\wt{\theta}_{m}\|_{\rm cb} 
\leq 
\frac{1}{4} \| \phi_1' + i^m \phi_2' \|_{\rm cb}^2 \cdot \|\psi'\|
\leq
\frac{1}{4} \left( \|W_1\| \cdot \| \phi_1\|_{\rm cb} + \|W_2\| \cdot \| \phi_2 \|_{\rm cb} \right)^2.
\]
If, in particular, $\theta$ is completely contractive, then $\phi_1$, $\phi_2$ and $\psi$ are completely contractive.
Hence we can choose $W_1, W_2$ to be contractions, and thus $\|\wt{\theta}_{m}\|_{\rm cb} \leq 4^{-1} \cdot 2^2 = 1$.
\end{proof}

\subsection{The symmetrisation norm}\label{ss_sn}

For $u\in M_n(\cl E^*\odot\cl S \odot\cl E)$, we define 
the \emph{symmetrisation norm} of $u$ by
\begin{equation}\label{eq_normde}
\|u\|_{\rm s}^{(n)} 
:= 
\sup\{\|\wt{\theta}^{(n)}(u)\| \ : \ \theta \mbox{ c.c.p. trilinear map on $\cl E^* \times \cl S \times \cl E$}\}.
\end{equation}
We note that, although the class of completely contractive completely positive trilinear maps may not form a set, the class of real values $\|\wt{\theta}^{(n)}(u)\|$ is inside $\bb R^+$, and thus the supremum is well defined.

\begin{lemma}\label{l_norm}
Let $\cl E$ be an operator space and $\cl S$ be an operator system. 
Then the map $\|\cdot\|_{\rm s}^{(n)} \colon M_n(\cl E^*\odot\cl S \odot\cl E) \to \bb{R}^+$ is a norm for every $n \in \bb{N}$. 
Moreover, 
\begin{equation}\label{eq_esymh}
\frac{1}{4} \|u\|_{\rm h}^{(n)} \leq \|u\|_{\rm s}^{(n)}\leq \|u\|_{\rm h}^{(n)}, \ \ \ \ u\in M_n(\cl E^*\odot\cl S \odot\cl E).
\end{equation}
\end{lemma}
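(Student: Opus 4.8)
The plan is to verify the three claims in the statement separately: that each $\|\cdot\|_{\rm s}^{(n)}$ is a seminorm, that it is bounded above by $\|\cdot\|_{\rm h}^{(n)}$, and that it is bounded below by $\frac14\|\cdot\|_{\rm h}^{(n)}$; the last inequality in particular forces $\|\cdot\|_{\rm s}^{(n)}$ to be a genuine norm, since the Haagerup norm is a norm on $\cl E^*\odot\cl S\odot\cl E$.

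First I would establish the upper bound $\|u\|_{\rm s}^{(n)}\le\|u\|_{\rm h}^{(n)}$. Given a completely contractive completely positive trilinear map $\theta\colon\cl E^*\times\cl S\times\cl E\to\cl B(H)$, its linearisation $\wt\theta$ is, by Lemma~\ref{l_ssgen}(i) (with $\cl A=\bb C$), of the form $\phi^*\cdot\psi\cdot\phi$ for a completely contractive $\phi\colon\cl E\to\cl B(H,K)$ and a unital completely positive (hence completely contractive) $\psi\colon\cl S\to\cl B(K)$. But $\phi^*\cdot\psi\cdot\phi$ factors through the Haagerup tensor product: the trilinear map $(y^*,s,x)\mapsto\phi(y)^*\psi(s)\phi(x)$ is completely bounded with cb-norm at most $\|\phi\|_{\rm cb}^2\|\psi\|_{\rm cb}\le1$, so by the universal property of the (triple) Haagerup tensor product it extends to a complete contraction on $\cl E^*\otimes_{\rm h}\cl S\otimes_{\rm h}\cl E$. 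Hence $\|\wt\theta^{(n)}(u)\|\le\|u\|_{\rm h}^{(n)}$ for every admissible $\theta$, and taking the supremum gives the upper bound. (One could alternatively bound $\|\phi^{(n,k)}(x)\|\cdot\|\psi^{(k)}(s)\|^{1/2}\cdot\|\phi^{(k,n)}(x)\|$ directly over factorisations $u=x^*\odot s\odot x$, $s\ge0$, but routing through Lemma~\ref{l_ssgen} is cleaner.) That $\|\cdot\|_{\rm s}^{(n)}$ satisfies the triangle inequality, is homogeneous, and satisfies the operator-space matrix-norm axioms follows routinely from the fact that it is a supremum of seminorms $u\mapsto\|\wt\theta^{(n)}(u)\|$ and from \eqref{eq_modpr}; and finiteness is exactly the upper bound just proved.

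For the lower bound $\frac14\|u\|_{\rm h}^{(n)}\le\|u\|_{\rm s}^{(n)}$, the idea is to start from \emph{any} completely contractive trilinear map $\theta$ on $\cl E^*\times\cl S\times\cl E$ (not necessarily completely positive) — these compute the Haagerup norm, i.e.\ $\|u\|_{\rm h}^{(n)}=\sup\{\|\wt\theta^{(n)}(u)\|:\theta\text{ c.c. trilinear}\}$ by the universal property of the Haagerup tensor product — and decompose it into completely positive pieces. This is precisely Proposition~\ref{p_cbbalance}: if $\theta$ is completely contractive then there are completely contractive completely positive trilinear maps $\theta_1,\dots,\theta_4$ with $\wt\theta=\sum_{m=1}^4\wt\theta_m$. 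Therefore
\[
\|\wt\theta^{(n)}(u)\|\le\sum_{m=1}^4\|\wt\theta_m^{(n)}(u)\|\le4\|u\|_{\rm s}^{(n)},
\]
and taking the supremum over completely contractive $\theta$ gives $\|u\|_{\rm h}^{(n)}\le4\|u\|_{\rm s}^{(n)}$. Since $\|\cdot\|_{\rm h}^{(n)}$ is a norm on $M_n(\cl E^*\odot\cl S\odot\cl E)$, this in particular shows $\|u\|_{\rm s}^{(n)}=0\Rightarrow u=0$, completing the proof that it is a norm.

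The main obstacle is the lower bound, and it is entirely absorbed into Proposition~\ref{p_cbbalance}, which has already been proved above in the excerpt; the genuine content there is the CSPS/Wittstock factorisation $\wt\theta=\phi_1'^*\cdot\psi'\cdot\phi_2'$ followed by the polarisation identity, with careful bookkeeping of the cb-norms of $W_1,W_2$ to ensure each of the four polarised pieces remains completely contractive. Everything else — the upper bound and the seminorm axioms — is a routine consequence of the factorisation Lemma~\ref{l_ssgen} and the universal property of the Haagerup tensor product, so I would present those briefly and devote the main attention to citing Proposition~\ref{p_cbbalance} for the lower bound.
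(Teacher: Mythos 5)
Your proposal is correct and follows essentially the same route as the paper: the upper bound comes from the fact that every completely contractive completely positive trilinear map linearises to a complete contraction on the Haagerup tensor product (the paper simply cites Pisier's characterisation of the Haagerup norm via completely contractive multilinear maps, so your detour through Lemma~\ref{l_ssgen} is unnecessary, though harmless), and the lower bound, together with the norm property, is obtained exactly as you do, by decomposing an arbitrary completely contractive trilinear map into four completely contractive completely positive pieces via Proposition~\ref{p_cbbalance} and the polarisation identity.
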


\begin{proof} 
The subadditivity and the homogeneity of $\|\cdot\|_{\rm s}^{(n)}$ are clear. 
In addition, \cite[Theorem 5.1]{pisier_intr} implies the second inequality in (\ref{eq_esymh}). 
It remains to show the first inequality in (\ref{eq_esymh}); this will also imply that $\|\cdot\|_{\rm s}^{(n)}$ is a norm, 
as $\|\cdot\|_{\rm h}^{(n)}$ is so.

Towards this end, let $\theta \colon \cl E^* \times \cl S \times \cl E \to \cl B(H)$ be a completely contractive trilinear map.
By Proposition \ref{p_cbbalance}, there exist completely contractive completely positive maps $\theta_m$, $m=1,2,3,4$, such that $\wt{\theta} = \sum_{m=1}^4 \wt{\theta}_m$.
Therefore for $u \in M_n(\cl E^*\odot\cl S \odot\cl E)$ we have
\[
\| \wt{\theta}^{(n)}(u) \| \leq \sum_{m=1}^4 \|\wt{\theta}_m(u)\| \leq 4 \|u\|^{(n)}_{\rm s}.
\]
By \cite[Theorem 5.1]{pisier_intr}, taking suprema over all completely contractive trilinear maps $\theta$ implies that $\|u\|_{\rm h}^{(n)} \leq 4 \|u\|^{(n)}_{\rm s}$, as required.
\end{proof}

Due to Lemma \ref{l_ssgen}, we can use decompositions of trilinear maps for the symmetrisation norm.
We introduce some terminology in this respect.

\begin{definition}
Let $\cl E$ be an operator space and $\cl S$ be an operator system. 
We call a pair $(\phi,\psi)$ of maps \emph{admissible} if it is $\bb{C}$-admissible, that is, 
for a pair $(H,K)$ of Hilbert spaces, 
$\phi \colon \cl E\to \cl B(H,K)$ is completely contractive and 
$\psi \colon \cl S\to \cl B(K)$ is unital and completely positive. 
We will say that $(\phi,\psi)$ is \emph{associated} with $(H,K)$. 
An admissible pair $(\phi,\psi)$ associated with $(H,K)$ will be called \emph{non-degenerate} if $\phi$ is non-degenerate, that is, $[\phi(\cl E) H] = K$ and $[\phi(\cl E)^* K] = H$.
\end{definition}

\begin{remark}\label{ex_existad}
\rm
There is an abundance of admissible pairs for an operator space $\cl E$ and an operator system $\cl S$.
As a first example, let $\tau \colon \cl S \to \bb C$ be a state on $\cl S$; then for every $\phi \colon \cl E \to \cl B(H,K)$ we can define $\psi:=\tau(\cdot) I_K$, for which we get that $(\phi, \psi)$ is an admissible pair.

For another example, let $\phi \colon \cl E \to \cl B(H, K)$ be a completely contractive map and $\psi \colon \cl S \cl \to \cl B(L)$ be a unital completely positive map.
Then, setting 
\[
\wt{\phi}(\cdot) := \phi(\cdot) \otimes I_L
\text{ and }
\wt{\psi}(\cdot) := I_{K} \otimes \psi(\cdot),
\] 
we have that $(\wt{\phi}, \wt{\psi})$ is an admissible pair associated with $(H \otimes L, K \otimes L)$.
\end{remark}

The following observation allows us to consider only (non-degenerate) admissible pairs.

\begin{proposition}\label{p_nd}
Let $\cl E$ be an operator space and $\cl S$ be an operator system. 
Then 
\begin{align*}
\|u\|_{\rm s}^{(n)} 
& = 
\sup\{\|(\phi^*\cdot\psi\cdot \phi)^{(n)}(u)\| \ : \ (\phi,\psi) \mbox{ adm. pair}\} \\
& = 
\sup\{\|(\phi^*\cdot\psi\cdot \phi)^{(n)}(u)\| \ : \ (\phi,\psi) \mbox{ non-degenerate adm. pair}\},
\end{align*}
for all $u\in M_n(\cl E^*\odot\cl S \odot\cl E)$.
\end{proposition}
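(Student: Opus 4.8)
The plan is to prove the two displayed suprema in Proposition \ref{p_nd} coincide with $\|u\|_{\rm s}^{(n)}$ by squeezing. The inequality
\[
\sup\{\|(\phi^*\cdot\psi\cdot\phi)^{(n)}(u)\| : (\phi,\psi)\text{ non-deg.\ adm.}\}
\leq
\sup\{\|(\phi^*\cdot\psi\cdot\phi)^{(n)}(u)\| : (\phi,\psi)\text{ adm.}\}
\]
is trivial, since every non-degenerate admissible pair is admissible. Next, given \emph{any} admissible pair $(\phi,\psi)$ associated with $(H,K)$, the map $\theta\colon\cl E^*\times\cl S\times\cl E\to\cl B(H)$ defined by $\theta(x^*,s,y):=\phi(x)^*\psi(s)\phi(y)$ is, as noted in the paragraph following the definition of a $\bb C$-admissible pair, a completely bounded completely positive trilinear map with $\|\theta\|_{\rm cb}\le\|\phi\|_{\rm cb}^2\le 1$, so $\theta$ is completely contractive and completely positive; its linearisation is $\wt{\theta}=\phi^*\cdot\psi\cdot\phi$. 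Hence $\|(\phi^*\cdot\psi\cdot\phi)^{(n)}(u)\|\le\|u\|_{\rm s}^{(n)}$, which gives
\[
\sup\{\|(\phi^*\cdot\psi\cdot\phi)^{(n)}(u)\| : (\phi,\psi)\text{ adm.}\}\leq\|u\|_{\rm s}^{(n)}.
\]

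It therefore remains to prove $\|u\|_{\rm s}^{(n)}\le\sup\{\|(\phi^*\cdot\psi\cdot\phi)^{(n)}(u)\| : (\phi,\psi)\text{ non-deg.\ adm.}\}$, which closes the chain of inequalities. Fix a completely contractive completely positive trilinear map $\theta\colon\cl E^*\times\cl S\times\cl E\to\cl B(H)$. By Lemma \ref{l_ssgen}(i) (with $\cl A=\bb C$), there exist a completely bounded map $\phi\colon\cl E\to\cl B(H,K)$ and a unital completely positive map $\psi\colon\cl S\to\cl B(K)$ with $\wt{\theta}=\phi^*\cdot\psi\cdot\phi$ and $\|\phi\|_{\rm cb}^2=\|\theta\|_{\rm cb}\le 1$; in particular $\phi$ is completely contractive, so $(\phi,\psi)$ is an admissible pair. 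Thus $\|\wt{\theta}^{(n)}(u)\|=\|(\phi^*\cdot\psi\cdot\phi)^{(n)}(u)\|$, showing the supremum over all c.c.p.\ trilinear maps is attained within the class of admissible pairs — this re-proves the reverse of the previous inequality and shows the middle supremum equals $\|u\|_{\rm s}^{(n)}$.

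Finally, to replace $(\phi,\psi)$ by a \emph{non-degenerate} admissible pair without decreasing $\|(\phi^*\cdot\psi\cdot\phi)^{(n)}(u)\|$, I would follow the compression procedure recalled in Subsection \ref{ss_opspsy} (the paragraph on passing to non-degenerate compressions) together with Remark \ref{r_comp}. Set $K':=[\phi(\cl E)H]$ and $H':=[\phi(\cl E)^*K]$; write $\phi'$ for the compression of $\phi$ to $\cl B(H',K')$, so $\|\phi'^{(n)}(x)\|=\|\phi^{(n)}(x)\|$ for all $x\in M_n(\cl E)$ and $\phi'$ is non-degenerate. The subspace $K'$ is reducing for $\psi(1_{\cl S})=I_K$, hence we may restrict $\psi$ to the unital completely positive map $\psi'\colon\cl S\to\cl B(K')$. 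Since $\phi(x)^*\psi(s)\phi(y)$ has range in $H'$ and its adjoint-compression is computed on $K'$, we have $(\phi'^*\cdot\psi'\cdot\phi')(y^*\odot s\odot x)=P_{H'}\phi(x)^*\psi(s)\phi(y)|_{H'}$, and because $\phi(\cl E)^*K\subseteq H'$ this equals the compression to $H'$ of $\wt{\theta}(y^*\odot s\odot x)$; compressions to reducing-type subspaces do not increase operator norm, and in fact here
\[
\|(\phi'^*\cdot\psi'\cdot\phi')^{(n)}(u)\|=\|\wt{\theta}^{(n)}(u)\|
\]
since $\wt{\theta}^{(n)}(u)$ already has range in $M_n(H')$ and is zero on its complement. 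Taking suprema over all $\theta$ then yields $\|u\|_{\rm s}^{(n)}\le\sup\{\|(\phi^*\cdot\psi\cdot\phi)^{(n)}(u)\|:(\phi,\psi)\text{ non-deg.\ adm.}\}$, completing the proof. The one point requiring care — and the step I expect to be the mild obstacle — is verifying that this compression leaves the relevant matricial norm genuinely unchanged (not merely bounded above), i.e.\ checking that $\wt{\theta}^{(n)}(u)$ is supported on $M_n(H')$ so that passing to $\phi'$ loses nothing; everything else is bookkeeping with the factorisations from Lemma \ref{l_ssgen} and the admissibility definition.
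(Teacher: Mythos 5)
Your proof is correct and follows essentially the same route as the paper: the first equality comes from Lemma \ref{l_ssgen} together with the observation that every admissible pair yields a completely contractive completely positive trilinear map, and the second comes from compressing to $H'=[\phi(\cl E)^*K]$ and $K'=[\phi(\cl E)H]$. The point you flagged as a possible obstacle is resolved exactly as in the paper: since $\phi(x)=\left(\begin{smallmatrix}\phi'(x)&0\\0&0\end{smallmatrix}\right)$ with respect to $H=H'\oplus (H')^\perp$, $K=K'\oplus(K')^\perp$, the operator $(\phi^*\cdot\psi\cdot\phi)^{(n)}(u)$ is literally $((\phi')^*\cdot\psi'\cdot\phi')^{(n)}(u)\oplus 0$, so the norm is preserved exactly.
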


\begin{proof}
The first equality follows from Lemma \ref{l_ssgen} and the fact that, for every admissible pair 
$(\phi,\psi)$, the map $\phi^*\times\psi\times\phi$ on $\cl E^*\times\cl S\times \cl E$, given by  
\[
(\phi^*\times\psi\times\phi)(x^*,s,y) := \phi^*(x^*)\psi(s)\phi(y), \ \ \ x, y \in \cl E, s \in \cl S,
\] 
is completely contractive and completely positive.
For the second equality, let $(\phi, \psi)$ be an admissible pair associated with $(H, K)$, 
let $K' = [\phi(\cl E) H]$ and $H' = [\phi(\cl E)^* K]$, and set
\[
\phi'(\cdot) = P_{K'} \phi(\cdot) |_{H'} \qand \psi'(\cdot) = P_{K'} \psi(\cdot) |_{K'}.
\]
Since
\[
\phi(x) = \begin{pmatrix} \phi'(x) & 0 \\ 0 & 0 \end{pmatrix}, \ \ \  x \in \cl E,
\]
with respect to the decompositions $H = H'\oplus (H')^{\perp}$ and 
$K = K'\oplus (K')^{\perp}$, we see that
\[
(\phi^* \cdot \psi \cdot \phi)^{(n)}(u)
=
\begin{pmatrix}
((\phi')^* \cdot \psi' \cdot \phi')^{(n)}(u) & 0 \\
0 & 0
\end{pmatrix}.
\]
By construction, $(\phi', \psi')$ is a non-degenerate admissible pair, and the proof is complete.
\end{proof}

Let $\cl E^*\otimes_{\rm s}\cl S \otimes_{\rm s} \cl E$ be the completion of $\cl E^*\odot\cl S\odot \cl E$ with respect to $\|\cdot\|_{\rm s}$.

\begin{proposition}\label{p_estareos}
Let $\cl E$ be an operator space and $\cl S$ be an operator system. 
Then $\cl E^*\otimes_{\rm s}\cl S \otimes_{\rm s} \cl E$, equipped with the family $\{\|\cdot\|_{\rm s}^{(n)}\}_{n\in \bb{N}}$ of matricial norms, is an operator space.
\end{proposition}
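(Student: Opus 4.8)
The plan is to verify that the family $\{\|\cdot\|_{\rm s}^{(n)}\}_{n\in\bb N}$ satisfies Ruan's axioms on $M_n(\cl E^*\odot\cl S\odot\cl E)$, whence by Ruan's Theorem $\cl E^*\odot\cl S\odot\cl E$ carries an operator space structure, and then to observe that these axioms persist after passing to the completion $\cl E^*\otimes_{\rm s}\cl S\otimes_{\rm s}\cl E$. By Lemma \ref{l_norm} each $\|\cdot\|_{\rm s}^{(n)}$ is a genuine norm, and it is finite because it is dominated by $\|\cdot\|_{\rm h}^{(n)}$; moreover, by Proposition \ref{p_nd},
\[
\|u\|_{\rm s}^{(n)}=\sup\bigl\{\,\|(\phi^*\cdot\psi\cdot\phi)^{(n)}(u)\| \ : \ (\phi,\psi)\ \text{an admissible pair}\,\bigr\}.
\]
The guiding idea is that $\|\cdot\|_{\rm s}^{(n)}$ is the \emph{supremum}, over all admissible pairs $(\phi,\psi)$, of the matricial norms obtained by pulling back the operator space structure of $\cl B(H)$ along the linear maps $\phi^*\cdot\psi\cdot\phi\colon\cl E^*\odot\cl S\odot\cl E\to\cl B(H)$, and such a supremum is again an operator space matrix-norm structure.

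For Ruan's first axiom I would invoke identity (\ref{eq_modpr}): given $u\in M_n(\cl E^*\odot\cl S\odot\cl E)$, scalar matrices $\alpha\in M_{m,n}$, $\beta\in M_{n,m}$, and any admissible pair $(\phi,\psi)$, one has $(\phi^*\cdot\psi\cdot\phi)^{(m)}(\alpha u\beta)=\alpha\,(\phi^*\cdot\psi\cdot\phi)^{(n)}(u)\,\beta$ in $\cl B(H^{(m)})$, so that $\|(\phi^*\cdot\psi\cdot\phi)^{(m)}(\alpha u\beta)\|\le\|\alpha\|\cdot\|u\|_{\rm s}^{(n)}\cdot\|\beta\|$; taking the supremum over admissible pairs yields $\|\alpha u\beta\|_{\rm s}^{(m)}\le\|\alpha\|\,\|u\|_{\rm s}^{(n)}\,\|\beta\|$. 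For the second axiom, let $u\in M_n(\cl E^*\odot\cl S\odot\cl E)$ and $v\in M_m(\cl E^*\odot\cl S\odot\cl E)$; since amplification acts entrywise, $(\phi^*\cdot\psi\cdot\phi)^{(n+m)}(u\oplus v)$ is the block-diagonal operator $(\phi^*\cdot\psi\cdot\phi)^{(n)}(u)\oplus(\phi^*\cdot\psi\cdot\phi)^{(m)}(v)$ in $\cl B(H^{(n+m)})$, whose norm equals the maximum of the norms of its two diagonal blocks. Applying the elementary identity $\sup_i\max\{a_i,b_i\}=\max\{\sup_i a_i,\sup_i b_i\}$ to the supremum over admissible pairs then gives $\|u\oplus v\|_{\rm s}^{(n+m)}=\max\{\|u\|_{\rm s}^{(n)},\|v\|_{\rm s}^{(m)}\}$.

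With both axioms in hand, Ruan's Theorem shows that $(\cl E^*\odot\cl S\odot\cl E,\{\|\cdot\|_{\rm s}^{(n)}\}_{n\in\bb N})$ is an operator space. To finish, I would note that the matricial norms on the various $M_n$ are mutually bounded (by the first axiom), so the completion $M_n(\cl E^*\otimes_{\rm s}\cl S\otimes_{\rm s}\cl E)$ is canonically the $\|\cdot\|_{\rm s}^{(n)}$-completion of $M_n(\cl E^*\odot\cl S\odot\cl E)$, the norms $\|\cdot\|_{\rm s}^{(n)}$ extend continuously, and the (in)equalities expressing Ruan's axioms survive by density and continuity; hence $\cl E^*\otimes_{\rm s}\cl S\otimes_{\rm s}\cl E$ is an operator space. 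I do not expect a substantive obstacle: the only slightly delicate point is this last bookkeeping — reconciling the matrix levels of the completion with the completions of the matrix levels — which is entirely routine once the axioms hold at the algebraic level, and could alternatively be bypassed by taking the norm closure of $\cl E^*\odot\cl S\odot\cl E$ inside the $\cl B(\cl H)$ furnished by its completely isometric Ruan realisation.
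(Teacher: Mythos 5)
Your proposal is correct and follows essentially the same route as the paper: both verify Ruan's axioms for $\{\|\cdot\|_{\rm s}^{(n)}\}_{n\in\bb N}$ by expressing the norm as a supremum over admissible pairs (Proposition \ref{p_nd}), using identity (\ref{eq_modpr}) for the first axiom and the block-diagonal structure of $(\phi^*\cdot\psi\cdot\phi)^{(n+m)}(u\oplus v)$ for the second. The only cosmetic difference is that you obtain the direct-sum equality in one step via $\sup_i\max\{a_i,b_i\}=\max\{\sup_i a_i,\sup_i b_i\}$, whereas the paper proves the two inequalities separately.
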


\begin{proof}
The proof is straightforward as the norm is given through a collection of representations.
In short, suppose that $u\in M_n(\cl E^*\otimes_{\rm s}\cl S \otimes_{\rm s}\cl E)$, $\alpha \in M_{m,n}$ and $\beta\in M_{n,m}$. 
Using (\ref{eq_modpr}), we have 
\begin{align*}
\left\|\alpha u \beta\right\|^{(m)}_{\rm s}  
& =  
\sup\{\|(\phi^*\cdot\psi \cdot \phi)^{(m)}(\alpha u \beta)\| \ : \  
(\phi,\psi) \mbox{ admissible pair}\}\\
& = 
\sup\{\|\alpha(\phi^*\cdot\psi \cdot \phi)^{(n)}(u)\beta\| \ : \  (\phi,\psi) \mbox{ admissible pair}\}\\
& \leq 
\left\|\alpha\right\| \left\|\beta\right\| \left\|u\right\|^{(n)}_{\rm s}.
\end{align*}

Now let, in addition, $v\in M_m(\cl E^*\otimes_{\rm s}\cl S \otimes_{\rm s}\cl E)$. 
Then 
\begin{align*}
\left\|
\left(\begin{matrix}
u & 0\\
0 & v
\end{matrix}
\right)
\right\|^{(n + m)}_{\rm s} & = \\
& \hspace{-2cm} = 
\sup\left\{
\left\|
\left(\begin{matrix}
(\phi^*\cdot\psi\cdot \phi)^{(n)}(u) & 0\\
0 & (\phi^*\cdot\psi\cdot \phi)^{(m)}(v)
\end{matrix}
\right)
\right\|  \ : \  (\phi,\psi) \mbox{ adm.\ pair}\right\}\\
& \hspace{-2cm} \leq 
\max\left\{\left\|u\right\|^{(n)}_{\rm s},\left\|v\right\|^{(m)}_{\rm s}\right\}.
\end{align*}
On the other hand, 
\[
\left\|u\right\|^{(n)}_{\rm s}
= 
\left\|
\left(\begin{matrix}
u & 0\\
0 & 0
\end{matrix}
\right)
\right\|^{(n + m)}_{\rm s}
\leq 
\left\|
\left(\begin{matrix}
u & 0\\
0 & v
\end{matrix}
\right)
\right\|^{(n + m)}_{\rm s},
\]
where we used that
\[
\left\|
\left(\begin{matrix}
(\phi^*\cdot\psi\cdot \phi)^{(n)}(u) & 0\\
0 & 0
\end{matrix}
\right)
\right\| 
\leq
\left\|
\left(\begin{matrix}
(\phi^*\cdot\psi\cdot \phi)^{(n)}(u) & 0\\
0 & (\phi^*\cdot\psi\cdot \phi)^{(m)}(v)
\end{matrix}
\right)
\right\| 
\]
for every admissible pair $(\phi,\psi)$.
Therefore, by symmetry, we obtain that
\[
\left\|
\left(\begin{matrix}
u & 0\\
0 & v
\end{matrix}
\right)
\right\|^{(n + m)}_{\rm s}  
= 
\max\left\{\left\|u\right\|^{(n)}_{\rm s},\left\|v\right\|^{(m)}_{\rm s}\right\},
\]
and the proof is complete. 
\end{proof}

\begin{definition}
Let $\cl E$ be an operator space and $\cl S$ be an operator system.
The (complete) operator space $\cl E^*\otimes_{\rm s}\cl S \otimes_{\rm s} \cl E$ will be called the \emph{symmetrisation of $\cl E$ by $\cl S$}.
\end{definition}

We will refer to $\{\|\cdot\|_{\rm s}^{(n)}\}_{n\in \bb{N}}$ as the \emph{symmetric operator space structure}, and for convenience we set 
\[
\cl E^*\otimes_{\rm s} \cl E := \cl E^*\otimes_{\rm s} \bb{C} \otimes_{\rm s} \cl E.
\] 
We note that, by Lemma \ref{l_norm}, 
$\cl E^*\otimes_{\rm s}\cl S \otimes_{\rm s} \cl E$ is completely boundedly isomorphic 
to the Haagerup tensor product $\cl E^*\otimes_{\rm h}\cl S \otimes_{\rm h} \cl E$. 
We will later see that the symmetrisation norm and the Haagerup tensor norm differ in general.

\begin{remark}\label{r_norest} 
Let $\theta \colon \cl E^* \times \cl S \times \cl E \to \cl B(H)$ be a completely bounded trilinear map and let $\wt{\theta} \colon \cl E^* \odot \cl S \odot \cl E \to \cl B(H)$ be the induced map on the tensor product.
We can then extend to a completely bounded map
\[
\wt{\theta}_{\rm h} \colon \cl E^* \otimes_{\rm h} \cl S \otimes_{\rm h} \cl E \to \cl B(H)
\]
on the Haagerup tensor product, for which we get $\| \wt{\theta}_{\rm h} \|_{\rm cb} = \| \theta \|_{\rm cb}$.
Using Lemma \ref{l_norm} we can also induce a completely bounded map
\[
\wt{\theta}_{\rm s} \colon \cl E^* \otimes_{\rm s} \cl S \otimes_{\rm s} \cl E \to \cl B(H)
\]
on the symmetrisation, for which we get
\[
\| \wt{\theta}_{\rm h} \|_{\rm cb}
\leq 
\| \wt{\theta}_{\rm s} \|_{\rm cb}
\leq
4 \cdot \| \wt{\theta}_{\rm h} \|_{\rm cb}.
\]
If, in addition, $\theta$ is completely positive, then
\[
\| \theta \|_{\rm cb}
=
\| \wt{\theta}_{\rm h} \|_{\rm cb}
=
\| \wt{\theta}_{\rm s} \|_{\rm cb}.
\]
Indeed, without loss of generality assume that $\| \wt{\theta}_{\rm h} \|_{\rm cb} = 1$.
Then $\theta$ is a completely contractive completely positive map, and by the definition of the symmetrisation norm we obtain 
$\| \wt{\theta}_{\rm s} \|_{\rm cb} \leq 1 = \| \wt{\theta}_{\rm h} \|_{\rm cb}$,
as required.
\end{remark}

\subsection{The selfadjoint operator space structure}\label{ss_sss}

We now show that the operator space $\cl E^*\otimes_{\rm s} \cl S \otimes_{\rm s} \cl E$ is in fact a selfadjoint operator space.
Towards this end, for $u = \sum_{i=1}^N y_i^*\otimes s_i \otimes x_i \in \cl E^*\odot \cl S\odot \cl E$, set 
\[
u^* := \sum_{i=1}^N x_i^*\otimes s_i^*\otimes y_i.
\] 
The map $u \mapsto u^*$ is a (well-defined) isometric involution; indeed,
\begin{eqnarray*}
\|u\|_{\rm s} \hspace{-0.2cm}
& = & \hspace{-0.2cm}
\sup\left\{\left\|\sum_{i=1}^N \hspace{-0.03cm}\phi(y_i)^*\psi(s_i)\phi(x_i)\right\| \ : \ (\phi,\hspace{-0.05cm}\psi) \mbox{ admissible pair}\right\}\\
\hspace{-0.2cm} & = & \hspace{-0.2cm}
\sup\left\{\left\|\sum_{i=1}^N \hspace{-0.03cm}\phi(x_i)^*\psi(s_i^*)\phi(y_i)\right\| \ : \ (\phi,\hspace{-0.05cm}\psi) \mbox{ admissible pair}\right\}
\hspace{-0.09cm} = \hspace{-0.07cm} \|u^*\hspace{-0.04cm}\|_{\rm s}\hspace{-0.02cm}.
\end{eqnarray*}
We hence extend it to an (isometric) involution on $\cl E^*\otimes_{\rm s}\cl S \otimes_{\rm s}\cl E$. 
For $n\in \bb{N}$, let
\begin{align*}
C_n 
& := 
\{u\in M_n(\cl E^*\otimes_{\rm s}\cl S \otimes_{\rm s} \cl E)_h \ : \ 
(\phi^*\cdot\psi \cdot \phi)^{(n)}(u)\in M_n(\cl B(H))^+, \\
& \hspace{3.5cm} \mbox{ if } (\phi,\psi) \mbox{ is an admissible pair for some } (H,K)\}.
\end{align*}
By definition, if $(\phi, \psi)$ is an admissible pair, then
\[
(\phi^* \cdot \psi \cdot \phi)^{(n)}(C_n) \subseteq M_n(\cl B(H))^+, \ \ \ n \in \bb N.
\]
We point out that, since $M_n(\cl B(H))^+$ is closed in the operator norm, $C_n$ is closed in the symmetrisation norm.

\begin{lemma}\label{l_mos}
Let $\cl E$ be an operator space and $\cl S$ be an operator system. 
Then the following hold:
\begin{enumerate}
\item the family $\{C_n\}_{n\in \bb{N}}$ is a matrix order structure on the $*$-vector space $\cl E^*\otimes_{\rm s} \cl S \otimes_{\rm s}\cl E$;
\item the family $\{C_n'\}_{n \in \bb{N}}$ given by
\[
C_n' := \left\{x^*\odot s\odot x \ : \  x\in M_{k,n}(\cl E), s\in M_k(\cl S)^+, k\in \bb{N}\right\}^{-\|\cdot\|_{\rm s}},
\]
is a matrix order structure on the $*$-vector space $\cl E^*\otimes_{\rm s} \cl S \otimes_{\rm s}\cl E$;
\item $C_n' \subseteq C_n$ for every $n \in \bb{N}$.
\end{enumerate}
\end{lemma}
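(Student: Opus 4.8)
The plan is to verify, for each of the two families, the four axioms (i)--(iv) of a matrix order structure from Subsection~\ref{ss_opspsy}, using the linearity of the maps $\phi^*\cdot\psi\cdot\phi$, the identity $(\phi^*\cdot\psi\cdot\phi)^{(m)}(\alpha u\beta)=\alpha(\phi^*\cdot\psi\cdot\phi)^{(n)}(u)\beta$ from (\ref{eq_modpr}), and the fact, noted just before the lemma, that each $C_n$ is closed in the symmetrisation norm since $M_n(\cl B(H))^+$ is operator-norm closed. First I would record a small preliminary observation: since a completely contractive completely positive trilinear map $\theta$ induces a completely contractive map $\wt{\theta}_{\rm s}=\phi^*\cdot\psi\cdot\phi$ on $\cl E^*\otimes_{\rm s}\cl S\otimes_{\rm s}\cl E$ (Remark~\ref{r_norest}), a routine density argument upgrades the formula of Proposition~\ref{p_nd},
\[
\|u\|_{\rm s}^{(n)}=\sup\{\|(\phi^*\cdot\psi\cdot\phi)^{(n)}(u)\| \ : \ (\phi,\psi)\text{ admissible}\},
\]
to every $u\in M_n(\cl E^*\otimes_{\rm s}\cl S\otimes_{\rm s}\cl E)$; this is the only point where a limiting argument enters, and it is what makes axiom~(ii) below work.

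For part~(i), the inclusion $C_n\subseteq M_n(\cl E^*\otimes_{\rm s}\cl S\otimes_{\rm s}\cl E)_h$ and the cone property of $C_n$ are immediate from the linearity of $(\phi^*\cdot\psi\cdot\phi)^{(n)}$ and the fact that $M_n(\cl B(H))^+$ is a cone. If $u\in C_n\cap(-C_n)$, then for every admissible pair $(\phi,\psi)$ one has $(\phi^*\cdot\psi\cdot\phi)^{(n)}(u)\in M_n(\cl B(H))^+\cap(-M_n(\cl B(H))^+)=\{0\}$, whence $\|u\|_{\rm s}^{(n)}=0$ by the displayed formula, i.e.\ $u=0$. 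For compatibility, (\ref{eq_modpr}) gives $(\phi^*\cdot\psi\cdot\phi)^{(n)}(\alpha^* u\alpha)=\alpha^*(\phi^*\cdot\psi\cdot\phi)^{(m)}(u)\alpha$, which lies in $M_n(\cl B(H))^+$ whenever $u\in C_m$ and $\alpha\in M_{m,n}$; hence $\alpha^* C_m\alpha\subseteq C_n$.

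Next I would prove part~(iii), since it yields axiom~(ii) for $\{C_n'\}$ for free. A direct computation of matrix entries shows that, for $x\in M_{k,n}(\cl E)$, $s\in M_k(\cl S)^+$ and any admissible pair $(\phi,\psi)$,
\[
(\phi^*\cdot\psi\cdot\phi)^{(n)}(x^*\odot s\odot x)=\phi^{(k,n)}(x)^*\,\psi^{(k)}(s)\,\phi^{(k,n)}(x),
\]
and the right-hand side is positive because $\psi^{(k)}(s)\in M_k(\cl B(K))^+$ by complete positivity of $\psi$. Hence $\{x^*\odot s\odot x \ : \ x\in M_{k,n}(\cl E),\ s\in M_k(\cl S)^+,\ k\in\bb N\}\subseteq C_n$, and since $C_n$ is closed, $C_n'\subseteq C_n$. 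For part~(ii): each $x^*\odot s\odot x$ is hermitian, as $(x^*\odot s\odot x)^*=x^*\odot s^*\odot x=x^*\odot s\odot x$, so $C_n'$ is contained in the (closed) set $M_n(\cl E^*\otimes_{\rm s}\cl S\otimes_{\rm s}\cl E)_h$; the generating set is a cone because $\lambda(x^*\odot s\odot x)=x^*\odot(\lambda s)\odot x$ for $\lambda\geq0$ and $(x_1^*\odot s_1\odot x_1)+(x_2^*\odot s_2\odot x_2)=x^*\odot s\odot x$ with $x=\binom{x_1}{x_2}$ and $s=s_1\oplus s_2$, so its closure $C_n'$ is a cone; axiom~(ii) follows from $C_n'\cap(-C_n')\subseteq C_n\cap(-C_n)=\{0\}$, using parts~(iii) and~(i); and compatibility follows from the identity $\alpha^*(x^*\odot s\odot x)\alpha=(x\alpha)^*\odot s\odot(x\alpha)$, valid for $\alpha\in M_{m,n}$ and $x\in M_{k,m}(\cl E)$ with $x\alpha\in M_{k,n}(\cl E)$, together with continuity of $v\mapsto\alpha^* v\alpha$.

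I expect none of these steps to be genuinely hard; the main things to get right are the bookkeeping with the $\odot$-notation, the scalar matrix actions, and the matrix involutions on $\cl E$ and $\cl E^*$ (in particular, that $x^*\odot s\odot x$ is hermitian and transforms as a congruence under scalar matrices), together with the extension of the sup-formula for $\|\cdot\|_{\rm s}^{(n)}$ to the completion, which is the only spot calling for a limiting argument.
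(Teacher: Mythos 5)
Your proof is correct and follows essentially the same route as the paper's: establish (i) via the sup-formula for $\|\cdot\|_{\rm s}^{(n)}$ and the modularity identity (\ref{eq_modpr}), prove (iii) by the direct computation $(\phi^*\cdot\psi\cdot\phi)^{(n)}(x^*\odot s\odot x)=\phi^{(k,n)}(x)^*\psi^{(k)}(s)\phi^{(k,n)}(x)$ together with closedness of $C_n$, and deduce properness of $C_n'$ from $C_n'\cap(-C_n')\subseteq C_n\cap(-C_n)$. Your explicit remark about extending the supremum formula of Proposition \ref{p_nd} to the completion is a small point the paper leaves implicit, but otherwise the two arguments coincide.
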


\begin{proof}
For item (i), for every $n$, the set $C_n$ is a cone consisting of hermitian elements by its definition. 
Let $n\in \bb{N}$ and $u\in C_n \cap (-C_n)$. 
Then 
\[
(\phi^*\cdot\psi\cdot \phi)^{(n)}(u)\in M_n(\cl B(H))^+ \cap [-M_n(\cl B(H))^+] = \{0\}
\]
for all admissible pairs $(\phi, \psi)$ on some $(H,K)$.
Thus, $\|u\|_{\rm s}^{(n)} = 0$, and hence $u = 0$ by Lemma \ref{l_norm} and Proposition \ref{p_nd}, implying that $C_n$ is a proper cone. 
Moreover we have that $C_n \oplus C_m \subseteq C_{n+m}$ for every $n,m \in \bb N$.
Furthermore, since
\[
(\phi^* \cdot \psi \cdot \phi)^{(m)}(\al^* u \al)
=
\al^* (\phi^* \cdot \psi \cdot \phi)^{(n)}(u) \al, 
\text{ for all } \al \in M_{n,m}, u \in C_n,
\]
the family $\{C_n\}_{n\in \bb{N}}$ is compatible, and thus defines a matrix ordering.

For item (iii), if $x\in M_{m,n}(\cl E)$ and $s\in M_m(\cl S)^+$, then trivially
\[
(x^*\odot s\odot x)^* = x^*\odot s\odot x.
\]
If $(\phi,\psi)$ is an admissible pair associated with the pair $(H,K)$ of Hilbert spaces, then
\[
(\phi^*\cdot\psi\cdot \phi)^{(n)}(x^*\odot s\odot x) = \phi^{(m,n)}(x)^* \psi^{(m)}(s)\phi^{(m,n)}(x) \in M_n(\cl B(H))^+.
\]
As this holds for all admissible pairs, we obtain that $x^*\odot s\odot x \in C_n$.

For item (ii), if $x_i\in M_{k_i,n}(\cl E)$, $s_i\in M_{k_i}(\cl S)^+$, $i = 1,2$, then 
\[
x_1^* \odot s_1 \odot x_1 + x_2^* \odot s_2 \odot x_2 
= 
\left(\begin{matrix}
x_1\\
x_2
\end{matrix}\right)^*
\odot
\left(\begin{matrix}
s_1 & 0\\
0 & s_2
\end{matrix}\right)
\odot
\left(\begin{matrix}
x_1\\
x_2
\end{matrix}\right).
\]
This shows that every $C_n'$ is a cone, and that $C_n' \oplus C_m' \subseteq C_{n+m}'$ for every $n,m \in \bb N$.
Further, if $\alpha\in M_{n,l}$, $x\in M_{k,n}(\cl E)$ and $s\in M_k(\cl S)^+$, then 
\[
\alpha^*(x^*\odot s\odot x)\alpha = (x\cdot\alpha)^*\odot s\odot (x\cdot\alpha).
\]
Clearly $C_n' \cap [-C_n'] \subseteq C_n \cap [-C_n] = \{0\}$ for every $n \in \bb N$.
It follows that the family $\{C_n'\}_{n\in \bb{N}}$ is compatible and is hence a matrix ordering.
\end{proof}

\begin{remark} \label{r_span}
By Lemma \ref{l_mos} we have that the cones densely span the symmetrisation at every matrix level.
Indeed, every element $y^* \odot s \odot x$ can be chosen so that $s$ is a square matrix, and because $\cl S$ is an operator system, every such element is spanned by elements where the middle term is assumed to be positive.
Now by using the polarisation identity we can see that every element $y^* \odot s \odot x$ with $s \geq 0$ is a linear combination of elements of the form $x^* \odot s \odot x$ with $s \geq 0$, which are positive.
\end{remark}

\begin{theorem}\label{th_estareos} 
Let $\cl E$ be an operator space and $\cl S$ be an operator system. 
Let $\{D_n\}_{n\in \bb{N}}$ be a family of matricial cones such that $D_n \subseteq C_n$ for all $n \in \bb{N}$.
Then $\cl E^*\otimes_{\rm s}\cl S \otimes_{\rm s}\cl E$, equipped with the family $\{D_n\}_{n\in \bb{N}}$, is a selfadjoint operator space. 
\end{theorem}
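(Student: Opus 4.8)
The plan is to reduce the claim to Werner's intrinsic characterisation of selfadjoint operator spaces, namely the equality $\|u\|^{(n)} = \nu_n(u)$ for all $u \in M_n(\cl X)$, stated earlier in Subsection \ref{ss_nonunital}. Since $\cl X := \cl E^*\otimes_{\rm s}\cl S \otimes_{\rm s}\cl E$ is already an operator space (Proposition \ref{p_estareos}) with an isometric involution and each $D_n$ is a closed cone inside the closed cone $C_n$, and since the family $\{D_n\}_{n\in\bb N}$ is a compatible matrix ordering (as $\{C_n\}$ is, by Lemma \ref{l_mos}(i), and $D_n \cap (-D_n) \subseteq C_n \cap (-C_n) = \{0\}$, while $\alpha^* D_m \alpha \subseteq \alpha^* C_m \alpha \subseteq C_n$ — here one uses that the hypothesis ensures $\{D_n\}$ really is a matrix order structure, which is part of the standing assumption), the object is a matrix ordered operator space. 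It then remains to verify the norm-recovery identity.

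\textbf{Key steps.} First I would establish the inequality $\nu_n(u) \le \|u\|_{\rm s}^{(n)}$, which holds for every matrix ordered operator space: any $\varphi \in {\rm CCP}(M_{2n}(\cl X),\bb C)$ applied to $\begin{pmatrix} 0 & u \\ u^* & 0\end{pmatrix}$ is bounded in modulus by the norm of that $2n\times 2n$ matrix, which by Proposition \ref{p_estareos} equals $\|u\|_{\rm s}^{(n)}$. Second, and this is the substantive direction, I would prove $\|u\|_{\rm s}^{(n)} \le \nu_n(u)$. Fix $u \in M_n(\cl X)$ and an admissible pair $(\phi,\psi)$ associated with $(H,K)$, which by Proposition \ref{p_nd} may be taken non-degenerate; I need $\|(\phi^*\cdot\psi\cdot\phi)^{(n)}(u)\| \le \nu_n(u)$. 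Form the operator $T := \begin{pmatrix} 0 & (\phi^*\cdot\psi\cdot\phi)^{(n)}(u) \\ (\phi^*\cdot\psi\cdot\phi)^{(n)}(u)^* & 0\end{pmatrix} \in M_{2n}(\cl B(H))$, whose norm equals $\|(\phi^*\cdot\psi\cdot\phi)^{(n)}(u)\|$ since it is the selfadjoint-dilation of a single operator. Because $(\phi^*\cdot\psi\cdot\phi)^{(n)}(u)^* = (\phi^*\cdot\psi\cdot\phi)^{(n)}(u^*)$ (using that $\theta(y^*,s,x)^* = \theta(x^*,s^*,y)$ from the admissible factorisation and the behaviour of the involution on $\cl X$), this $T$ equals $(\phi^*\cdot\psi\cdot\phi)^{(2n)}$ applied to $\begin{pmatrix} 0 & u \\ u^* & 0\end{pmatrix}$ after the obvious reindexing. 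Now pick a unit vector $\zeta \in H^{2n}$ with $\langle T\zeta,\zeta\rangle$ arbitrarily close to $\pm\|T\|$ (using selfadjointness of $T$), and define the functional $\varphi$ on $M_{2n}(\cl X)$ by $\varphi(v) := \langle (\phi^*\cdot\psi\cdot\phi)^{(2n)}(v)\zeta,\zeta\rangle$. Since $\phi^*\cdot\psi\cdot\phi = \wt{\theta}_{\rm s}$ is completely contractive and completely positive on the symmetrisation — which is exactly the content of the definition of $\|\cdot\|_{\rm s}$ together with Lemma \ref{l_ssgen} / Remark \ref{r_norest} — the functional $\varphi$ lies in ${\rm CCP}(M_{2n}(\cl X),\bb C)$. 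Then $|\varphi\begin{pmatrix} 0 & u \\ u^* & 0\end{pmatrix}|$ is within $\varepsilon$ of $\|(\phi^*\cdot\psi\cdot\phi)^{(n)}(u)\|$, hence bounded by $\nu_n(u)$; letting $\varepsilon \to 0$ and taking the supremum over admissible pairs gives $\|u\|_{\rm s}^{(n)} \le \nu_n(u)$.

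\textbf{Finishing and main obstacle.} Combining the two inequalities yields $\|u\|_{\rm s}^{(n)} = \nu_n(u)$ for all $n$ and all $u$, so by the Werner--Karn--Kennedy--Kim--Manor criterion recalled in Subsection \ref{ss_nonunital}, $\cl X$ equipped with $\{D_n\}$ is a selfadjoint operator space. The only genuinely delicate point is ensuring that the functional $\varphi$ built from an admissible pair is completely \emph{positive} with respect to the cone family $\{D_n\}$ that actually appears in the statement — not $\{C_n\}$. But $\nu_n$ is defined using ${\rm CCP}(M_{2n}(\cl X),\bb C)$ relative to whatever cones $\cl X$ carries; since $D_n \subseteq C_n$, a map that is completely positive for $\{C_n\}$ is completely positive for $\{D_n\}$, so $\varphi \in {\rm CCP}$ for the $D$-structure too, and the argument goes through verbatim. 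Thus the passage between the two cone families costs nothing, and the proof is complete.
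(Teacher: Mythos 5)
Your proposal is correct and follows essentially the same route as the paper's proof: both reduce the statement to Werner's criterion $\|\cdot\|_{\rm s}^{(n)}=\nu_n(\cdot)$, obtain $\nu_n^D(u)\le\|u\|_{\rm s}^{(n)}$ from complete contractivity together with the isometric involution and the $L^\infty$-norm identity for the off-diagonal $2n\times 2n$ matrix, and obtain the reverse inequality by composing a near-norming admissible pair with a vector state, using $D_n\subseteq C_n$ to see that the resulting functional is completely positive for the $D$-cones. The paper merely phrases this last point as the chain $\|u\|_{\rm s}^{(n)}\le\nu_n^C(u)\le\nu_n^D(u)\le\|u\|_{\rm s}^{(n)}$, which is the same observation you make directly.
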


\begin{proof} 
Let $\{\nu_n^D\}_{n\in \bb{N}}$ for $n \in \bb N$ be the seminorms 
arising from the family $\{D_n\}_{n\in \bb{N}}$
as in Subsection \ref{ss_nonunital}.
By \cite[Lemma 4.8]{werner} we have to show that $\| \cdot \|_{\rm s}^{(n)} = \nu_n^D(\cdot)$ for all $n \in \bb N$.
We will show that
\[
\nor{u}_{\rm s} \leq \nu_n^C(u) \leq \nu_n^D(u) \leq \nor{u}_{\rm s},
\]
for all $u \in M_n(\cl E^* \odot_{\rm s} \cl S \odot_{\rm s} \cl E)$.
Since the involution is isometric, we have that
\[
\nu_n^D(u) 
\leq 
\nor{ \begin{pmatrix} 0 & u \\ u^* & 0 \end{pmatrix} }_{\rm s}^{(2n)}
=
\max\{ \|u\|_{\rm s}^{(n)}, \|u^*\|_{\rm s}^{(n)} \} 
=
\|u\|_{\rm s}^{(n)},
\]
while the inclusion $D_n \subseteq C_n$ for all $n \in \bb N$ yields $\nu_n^C(u) \leq \nu_n^D(u)$.

For the remaining inequality, let $(\phi, \psi)$ be an admissible pair associated with $(H, K)$, such that
\begin{align*}
\nor{ \begin{pmatrix} 0 & u \\ u^* & 0 \end{pmatrix} }_{\rm s}^{(2n)} - \eps
& \leq 
\nor{ (\phi^* \cdot \psi \cdot \phi)^{(2n)} \begin{pmatrix} 0 & u \\ u^* & 0 \end{pmatrix} }.
\end{align*}
Note that
\begin{align*}
(\phi^* \cdot \psi \cdot \phi)^{(2n)} \begin{pmatrix} 0 & u \\ u^* & 0 \end{pmatrix}
& =
\begin{pmatrix} 0 & (\phi^* \cdot \psi \cdot \phi)^{(n)}(u) \\ (\phi^* \cdot \psi \cdot \phi)^{(n)}(u)^* & 0 \end{pmatrix}, 
\end{align*}
and the latter element is a selfadjoint operator in $\cl B(H^{(2n)})$. 
For a vector $\xi \in H^{(2n)}$, set 
\[
\omega_{\xi, \xi}(T) := \sca{T \xi, \xi}, \ \ \ T \in \cl B(H^{(2n)}).
\]
Then for a suitably chosen unit vector $\xi \in H^{(2n)}$ we have 
\begin{align*}
\nor{ (\phi^* \cdot \psi \cdot \phi)^{(2n)} \begin{pmatrix} 0 & u \\ u^* & 0 \end{pmatrix} } - \eps 
& \leq
\left| \omega_{\xi, \xi} \circ (\phi^* \cdot \psi \cdot \phi)^{(2n)} \begin{pmatrix} 0 & u \\ u^* & 0 \end{pmatrix} \right|.
\end{align*}
Since $\omega_{\xi, \xi} \in {\rm CCP}(M_n(\cl B(H^{(2)}), \bb C)$, and $\phi^* \cdot \psi \cdot \phi$ is completely positive on $\cl E^* \otimes_{\rm s} \cl S \otimes_{\rm s} \cl E$ with respect to $\{C_n\}_n$, we have that the map $\omega_{\xi, \xi} \circ (\phi^* \cdot \psi \cdot \phi)^{(2n)}$ is completely contractive completely positive on $M_{2n}(\cl E^* \otimes_{\rm s} \cl S \otimes_{\rm s} \cl E)$ with respect to $\{C_{2nm}\}_{m \in \bb{N}}$.
Thus, by the definition of $\nu_n^C(u)$, we have
\[
\left| \omega_{\xi, \xi} \circ (\phi^* \cdot \psi \cdot \phi)^{(2n)} \begin{pmatrix} 0 & u \\ u^* & 0 \end{pmatrix} \right|
\leq \nu_n^C(u).
\]
In total we have
\begin{align*}
\|u\|_{\rm s}^{(n)} - 2 \eps
& =
\nor{ \begin{pmatrix} 0 & u \\ u^* & 0 \end{pmatrix} }_{\rm s}^{(2n)} - 2 \eps \\
& \leq
\nor{ (\phi^* \cdot \psi \cdot \phi)^{(2n)} \begin{pmatrix} 0 & u \\ u^* & 0 \end{pmatrix} } - \eps
\leq
\nu_n^C(u).
\end{align*}
Letting $\eps \to 0$ gives that $\|u\|_{\rm s}^{(n)} \leq \nu_n^C(u)$, and the proof is complete.
\end{proof}

We can now give an explicit description of the positive cones of $\cl E^*\otimes_{\rm s}\cl S\otimes_{\rm s}\cl E$, which supersedes Lemma \ref{l_mos}.

\begin{proposition}\label{p_synth}
Let $\cl E$ be an operator space and $\cl S$ be an operator system.
Then, for every $n \in \bb N$, we have 
\begin{equation}\label{eq_Cn}
C_n = \left\{x^*\odot s\odot x \ : \  x\in M_{k,n}(\cl E), s\in M_k(\cl S)^+, k\in \bb{N}\right\}^{-\|\cdot\|_{\rm s}}.
\end{equation}
\end{proposition}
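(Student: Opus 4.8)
The plan is to establish the non-trivial inclusion $C_n\subseteq C_n'$, since $C_n'\subseteq C_n$ is Lemma~\ref{l_mos}(iii). Write $\cl X:=\cl E^*\otimes_{\rm s}\cl S\otimes_{\rm s}\cl E$ and equip it with the matricial cones $\{C_m'\}_{m\in\bb N}$; by Lemma~\ref{l_mos}(ii) (and Theorem~\ref{th_estareos}, using $C_m'\subseteq C_m$) this makes $\cl X$ a matrix ordered operator space. Fix $n$ and suppose, towards a contradiction, that $u\in C_n$ but $u\notin C_n'$. Since the hermitian part of $M_n(\cl X)$ is densely real-spanned by $C_n'$ (Remark~\ref{r_span}), positive functionals on $M_n(\cl X)$ are real-valued on $M_n(\cl X)_h$; hence, by the separation property of the cones of a matrix ordered operator space \cite[Lemma~3.1]{werner}, there is $\vphi\in{\rm CCP}(M_n(\cl X),\bb C)$ — taken with respect to $\{C_m'\}_m$ — with $\vphi(u)<0$.

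Next I would pass from $\vphi$ to the map $\Phi\colon\cl X\to M_n$ associated to it by the duality recalled in Subsection~\ref{ss_opspsy}, so that $s_\Phi=\vphi$. Then $\Phi$ is selfadjoint (because $\vphi$ is), it is completely bounded (because $M_n$ is finite dimensional), and, after replacing $\Phi$ by $\|\Phi\|_{\rm cb}^{-1}\Phi$ if needed — note $\Phi\ne 0$, since $\vphi(u)<0$ — completely contractive. By Choi's Theorem~\ref{t_choi non-unital}, applied with the cone family $\{C_m'\}_m$, the inequality $s_\Phi(C_n')=\vphi(C_n')\geq 0$ shows that $\Phi$ is completely positive with respect to $\{C_m'\}_m$. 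Finally, by \eqref{eq_1n}, $\sca{\Phi^{(n)}(u)e,e}=s_\Phi(u)=\vphi(u)<0$, so the selfadjoint operator $\Phi^{(n)}(u)\in M_n(\cl B(\bb C^n))$ is \emph{not} positive.

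I would then manufacture an admissible pair out of $\Phi$. Let $\iota\colon\cl E^*\times\cl S\times\cl E\to\cl X$ be the canonical trilinear map $(y^*,s,x)\mapsto y^*\odot s\odot x$, whose range spans a dense subspace of $\cl X$, and put $\theta:=\Phi\circ\iota\colon\cl E^*\times\cl S\times\cl E\to M_n\subseteq\cl B(\bb C^n)$. Then $\theta$ is a completely bounded trilinear map: for matrices $a$ over $\cl E^*$, $s$ over $\cl S$ and $b$ over $\cl E$ one has $\|\theta(a,s,b)\|=\|\Phi(a\odot s\odot b)\|\leq\|a\odot s\odot b\|_{\rm s}\leq\|a\odot s\odot b\|_{\rm h}\leq\|a\|\,\|s\|\,\|b\|$. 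It is $\bb C$-balanced trivially, and it is completely positive in the sense of \eqref{eq_npos}: for $x\in M_{k,m}(\cl E)$ and $s\in M_k(\cl S)^+$ the element $x^*\odot s\odot x$ is a generator of $C_m'$, so $\theta(x^*,s,x)=\Phi^{(m)}(x^*\odot s\odot x)\in M_m(\cl B(\bb C^n))^+$ because $\Phi$ is completely positive with respect to $\{C_m'\}_m$. Now Lemma~\ref{l_ssgen}(i) (with $\cl A=\bb C$) provides a completely bounded $\phi\colon\cl E\to\cl B(\bb C^n,K)$ and a unital completely positive $\psi\colon\cl S\to\cl B(K)$ with $\wt\theta=\phi^*\cdot\psi\cdot\phi$ and $\|\theta\|_{\rm cb}=\|\phi\|_{\rm cb}^2\leq 1$; thus $\phi$ is completely contractive and $(\phi,\psi)$ is an admissible pair. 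The map $\phi^*\cdot\psi\cdot\phi$ is $\|\cdot\|_{\rm s}$-bounded, by the very definition of $\|\cdot\|_{\rm s}$, and agrees with $\wt\theta$, hence with $\Phi$, on the dense subspace $\cl E^*\odot\cl S\odot\cl E$; therefore $\phi^*\cdot\psi\cdot\phi=\Phi$ on $\cl X$.

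Putting this together: since $u\in C_n$ and $(\phi,\psi)$ is an admissible pair, the definition of $C_n$ gives $\Phi^{(n)}(u)=(\phi^*\cdot\psi\cdot\phi)^{(n)}(u)\in M_n(\cl B(\bb C^n))^+$, contradicting the last conclusion of the second paragraph. Hence $u\in C_n'$, so $C_n=C_n'$, which is \eqref{eq_Cn}. The one genuinely delicate decision is to run the separation against the \emph{smaller} cone family $\{C_m'\}_m$ rather than $\{C_m\}_m$: this is precisely what makes the separating map $\Phi$ restrict to a completely positive \emph{trilinear} map, so that the factorisation Lemma~\ref{l_ssgen} applies and delivers an admissible pair; the larger family $\{C_m\}_m$ enters only at the final step, through the standing hypothesis $u\in C_n$.
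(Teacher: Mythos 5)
Your proof is correct, and its engine is the same as the paper's: a map that is completely positive with respect to the inner cones $\{C_m'\}_{m}$ factors, by Lemma \ref{l_ssgen}, as $\phi^*\cdot\psi\cdot\phi$ for an admissible pair, and is therefore automatically positive on the externally defined cones $\{C_m\}_{m}$. The only difference is in packaging: the paper applies this to a single global completely isometric complete order embedding of $(\cl E^*\otimes_{\rm s}\cl S\otimes_{\rm s}\cl E,\{C_m'\}_m)$ supplied by Theorem \ref{th_estareos} and reads off $C_n\subseteq C_n'$ directly from the order-embedding property, whereas you argue by contradiction, separating a putative $u\in C_n\setminus C_n'$ from the closed cone $C_n'$ by a completely contractive positive functional, converting it into a completely positive map into $M_n$ via the Choi duality of Subsection \ref{ss_opspsy}, and then factoring that map. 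Both routes are sound; yours essentially inlines the functional separation that underlies the construction of the global embedding, and your closing observation — that the separation must be run against the smaller cone family so that the resulting map is positive on the generators and hence factorisable — is exactly the right point to flag.
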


\begin{proof}
Following the notation of Lemma \ref{l_mos} we write
\[
C_n' 
:= 
\left\{x^*\odot s\odot x \ : \  x\in M_{k,n}(\cl E), s\in M_k(\cl S)^+, k\in \bb{N}\right\}^{-\|\cdot\|_{\rm s}}.
\]
By Lemma \ref{l_mos}, the family $\{C_n'\}$ defines a matrix order structure on $\cl E^* \otimes_{\rm s} \cl S \otimes_{\rm s} \cl E$ satisfying $C_n' \subseteq C_n$, for $n \in \bb N$; thus the identity map
\[
\id \colon \left(\cl E^* \otimes_{\rm s} \cl S \otimes_{\rm s} \cl E, \{C_n'\}_{n\in \bb{N}}\right) 
\to \left(\cl E^* \otimes_{\rm s} \cl S \otimes_{\rm s} \cl E, \{C_n\}_{n\in \bb{N}}\right)
\]
is completely isometric and completely positive.

By Theorem \ref{th_estareos}, there exists a completely isometric complete order embedding
\[
\wt{\theta} \colon (\cl E^* \otimes_{\rm s} \cl S \otimes_{\rm s} \cl E, \{C_n'\}_{n\in \bb{N}}) \to \cl B(H).
\]
Let $\theta \colon \cl E^* \times \cl S \times \cl E \to \cl B(H)$ be its delinearisation.
By the definition of the cones $C_n'$, the map $\theta$ is completely positive, and thus we can write $\wt{\theta} = \phi^* \cdot \psi \cdot \phi$ by Lemma \ref{l_ssgen}.
Therefore $\wt{\theta}$ is completely positive with respect to the family $\{C_n\}_{n\in \bb{N}}$ by the definition of the cones $C_n$.
Hence 
\[
\wt{\theta}(C_n) \subseteq \wt{\theta}(\cl E^* \otimes_{\rm s} \cl S \otimes_{\rm s} \cl E) \cap M_n(\cl B(H))^+ = \wt{\theta}(C_n'),
\]
as $\wt{\theta}$ is a complete order embedding.
Since $\wt{\theta}$ is completely isometric, we get the required inclusions $C_n \subseteq C_n'$ for all $n \in \bb{N}$.
\end{proof}

As an immediate consequence, we obtain that the symmetrisation is injective with respect to complete order embeddings.

\begin{theorem} \label{t_injective}
Let $\cl E_1 \subseteq \cl E_2$ be operator spaces and $\cl S_1 \subseteq \cl S_2$ be operator systems.
Then the inclusion map
\[
\cl E_1^* \odot \cl S_1 \odot \cl E_1 \to \cl E_2^* \odot \cl S_2 \odot \cl E_2
\]
extends to a completely isometric complete order embedding of the respective symmetrisations.
\end{theorem}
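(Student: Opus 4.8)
The plan is to reduce the statement to a single extension/restriction principle for admissible pairs, since, by Proposition~\ref{p_nd} and the definition of the cones $C_n$ preceding Lemma~\ref{l_mos}, both the symmetrisation norm and the positive cones of $\cl E_i^*\otimes_{\rm s}\cl S_i\otimes_{\rm s}\cl E_i$ are completely determined by the family of admissible pairs for $(\cl E_i,\cl S_i)$. I would first observe that $\cl E_1^*\subseteq\cl E_2^*$ completely isometrically (immediate from the definition of the adjoint operator space structure in Subsection~\ref{ss_opspsy}), so the inclusion $j\colon\cl E_1^*\odot\cl S_1\odot\cl E_1\to\cl E_2^*\odot\cl S_2\odot\cl E_2$ is a well-defined injective $*$-linear map at the algebraic level (tensor products of injective linear maps over a field are injective), and likewise for its matricial amplifications $j^{(n)}$. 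The whole proof then rests on the following lemma, which I would isolate: \emph{the restrictions to $(\cl E_1,\cl S_1)$ of the admissible pairs for $(\cl E_2,\cl S_2)$ are exactly the admissible pairs for $(\cl E_1,\cl S_1)$.}

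The restriction half is routine: if $\phi_2\colon\cl E_2\to\cl B(H,K)$ is completely contractive then $\phi_2|_{\cl E_1}$ is, and if $\psi_2\colon\cl S_2\to\cl B(K)$ is unital completely positive then $\psi_2|_{\cl S_1}$ is too (here I use that $\cl S_1\subseteq\cl S_2$ is a unital complete order embedding and $1_{\cl S_1}=1_{\cl S_2}$); for $\cl A=\bb C$ there is no balancing condition to verify. The extension half is where the work is, and I expect it to be the main obstacle, although conceptually it is nothing more than the injectivity of $\cl B(H)$ in the operator-system and operator-space categories. Given a unital completely positive $\psi_1\colon\cl S_1\to\cl B(K)$, Arveson's Extension Theorem supplies a unital completely positive $\psi_2\colon\cl S_2\to\cl B(K)$ restricting to $\psi_1$. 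Given a completely contractive $\phi_1\colon\cl E_1\to\cl B(H,K)$, I would embed $\cl E_2$ completely isometrically in some $\cl B(L)$, regard $\phi_1$ as a completely contractive map into $\cl B(H\oplus K)$ via its $(2,1)$-corner, extend it to a completely contractive map on $\cl B(L)$ using the Wittstock-type extension recalled in Subsection~\ref{ss_opspsy} (or Theorem~\ref{t_hpw}), restrict to $\cl E_2$, and compress back to the $(2,1)$-corner to obtain a completely contractive $\phi_2\colon\cl E_2\to\cl B(H,K)$ with $\phi_2|_{\cl E_1}=\phi_1$. The minor care needed here is exactly because the cited extension statements are phrased for maps into $\cl B(H)$ rather than into a corner $\cl B(H,K)$.

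Granting the lemma, the complete isometry is quick. For $u\in M_n(\cl E_1^*\odot\cl S_1\odot\cl E_1)$ and any admissible pair $(\phi,\psi)$ for $(\cl E_2,\cl S_2)$, a check on elementary tensors shows $(\phi^*\cdot\psi\cdot\phi)^{(n)}(j^{(n)}(u))=((\phi|_{\cl E_1})^*\cdot(\psi|_{\cl S_1})\cdot(\phi|_{\cl E_1}))^{(n)}(u)$. Taking suprema and using that the restricted pairs range over all admissible pairs for $(\cl E_1,\cl S_1)$, Proposition~\ref{p_nd} yields $\|j^{(n)}(u)\|_{\rm s}^{(n)}=\|u\|_{\rm s}^{(n)}$. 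Hence $j$ is completely isometric on the algebraic tensor products and extends to a completely isometric $*$-embedding of the symmetrisations; in particular it is injective.

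For the complete order embedding I would argue directly with the external description of the cones. The inclusion $j^{(n)}(C_n^{(1)})\subseteq C_n^{(2)}$ follows from the restriction half: evaluating an admissible pair for $(\cl E_2,\cl S_2)$ at $j^{(n)}(u)$ returns the value of the restricted pair at $u$, which is positive when $u\in C_n^{(1)}$. For the reverse, suppose $u\in M_n(\cl E_1^*\otimes_{\rm s}\cl S_1\otimes_{\rm s}\cl E_1)_h$ satisfies $j^{(n)}(u)\in C_n^{(2)}$; for any admissible pair $(\phi_1,\psi_1)$ for $(\cl E_1,\cl S_1)$, extend it to $(\phi_2,\psi_2)$ for $(\cl E_2,\cl S_2)$ and compute $(\phi_1^*\cdot\psi_1\cdot\phi_1)^{(n)}(u)=(\phi_2^*\cdot\psi_2\cdot\phi_2)^{(n)}(j^{(n)}(u))\geq 0$, so $u\in C_n^{(1)}$. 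All identities between evaluations pass from the algebraic tensor product to its completion by continuity, since $j$ is isometric and each $\phi^*\cdot\psi\cdot\phi$ is completely contractive and completely positive on the symmetrisation. Combined with injectivity of $j$, this shows $j$ is a complete order embedding, finishing the proof. (Alternatively, one can compare the inner cones via the synthesis description of Proposition~\ref{p_synth}, but the argument through admissible pairs is the shortest.)
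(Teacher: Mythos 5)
Your proposal is correct and follows essentially the same route as the paper: restriction of admissible pairs gives one inequality and one cone inclusion, while Arveson/Wittstock-type extensions of admissible pairs from $(\cl E_1,\cl S_1)$ to $(\cl E_2,\cl S_2)$ give the reverse inequality and the reverse cone inclusion. The only (immaterial) differences are that you handle the extension of $\phi_1$ into the corner $\cl B(H,K)$ more explicitly than the paper does, and that you verify complete positivity of the inclusion directly from the external description of the cones where the paper invokes Proposition~\ref{p_synth}.
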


\begin{proof}
We will use the notation $\cl E_i^* \odot_{\rm s} \cl S_i \odot_{\rm s} \cl E_i$ for the corresponding algebraic tensor products, when considered as operator subspaces of $\cl E_i^* \otimes_{\rm s} \cl S_i \otimes_{\rm s} \cl E_i$, $i = 1,2$. 
Let 
\[
\iota \colon \cl E_1^* \odot_{\rm s} \cl S_1 \odot_{\rm s} \cl E_1 
\to \cl E_2^* \odot_{\rm s} \cl S_2 \odot_{\rm s} \cl E_2
\]
be the inclusion map.
We check that $\iota$ is a complete isometry. 
Towards this end, let $n \in \bb N$ and $u\in M_n(\cl E_1^* \otimes_{\rm s} \cl S_1 \otimes_{\rm s} \cl E_1)$. 
If $(\wt{\phi}, \wt{\psi})$ is an admissible pair for $(\cl E_2, \cl S_2)$, then $(\wt{\phi}|_{\cl E_1}, \wt{\psi}|_{\cl S_1})$ is an admissible pair for $(\cl E_1, \cl S_1)$. 
Thus, 
\begin{equation}\label{eq_inesy}
\|\iota^{(n)}(u)\|_{\rm s}^{(n)}\leq \|u\|_{\rm s}^{(n)}. 
\end{equation}
Let $\eps > 0$ and $(\phi, \psi)$ be an admissible pair for $(\cl E_1, \cl S_1)$, associated with a pair $(H,K)$ of Hilbert spaces, such that 
\[
\|u\|_{\rm s}^{(n)} - \eps < \|(\phi^*\cdot\psi\cdot \phi)^{(n)}(u)\|.
\]
Using Arveson's Extension Theorem, let $\wt{\phi}$ (resp. $\wt{\psi}$) be a completely contractive (resp. unital completely positive) extension of $\phi$ (resp. $\psi$). 
We have that $(\wt{\phi}, \wt{\psi})$ is an admissible pair for $(\cl E_2, \cl S_2)$ and 
\[
(\phi^*\cdot\psi\cdot\phi)^{(n)}(u) = (\wt{\phi}^*\cdot\wt{\psi}\cdot\wt{\phi})^{(n)}(\iota^{(n)}(u)).
\]
It follows that 
\begin{align*}
\|u\|_{\rm s}^{(n)} - \eps
<
|(\wt{\phi}^* \cdot \wt{\psi} \cdot \wt{\phi})^{(n)}(\iota^{(n)}(u))\|
 \leq
\|\iota^{(n)}(u)\|_{\rm s}^{(n)}.
\end{align*}
Taking $\eps \to 0$, together with (\ref{eq_inesy}), shows that $\iota$ is a complete isometry. 
Hence $\iota$ can be extended to a complete isometry (denoted in the same way)
\[
\iota \colon \cl E_1^* \otimes_{\rm s} \cl S_1 \otimes_{\rm s} \cl E_1 
\to \cl E_2^* \otimes_{\rm s} \cl S_2 \otimes_{\rm s} \cl E_2.
\]

By Proposition \ref{p_synth}, the map $\iota$ is completely positive. 
To complete the proof let $u\in M_n(\cl E_1^* \otimes_{\rm s} \cl S_1 \otimes_{\rm s} \cl E_1)$ such that $\iota^{(n)}(u)\in M_n(\cl E_2^* \otimes_{\rm s} \cl S_2 \otimes_{\rm s}\cl E_2)^+$. 
Let $(\phi, \psi)$ be an admissible pair for $(\cl E_1,\cl S_1)$, and let $(\wt{\phi},\wt{\psi})$ be an admissible pair for $(\cl E_2,\cl S_2)$ consisting of extensions of $\phi$ and $\psi$, as in the first paragraph.
We then have
\[
(\phi^*\cdot\psi\cdot\phi)^{(n)}(u) 
= 
(\wt{\phi}^*\cdot\wt{\psi}\cdot\wt{\phi})^{(n)}(\iota(u))
\in 
M_n(\cl B(H))^+.
\]
Since this holds for all admissible pairs $(\phi, \psi)$, by the definition of positivity we get $u \in M_n(\cl E_1^* \otimes_{\rm s} \cl S_1 \otimes_{\rm s} \cl E_1)^+$.
This shows that $\iota$ is a complete order embedding, and the proof is complete.
\end{proof} 

In the following theorem we collect some consequences of Lemma \ref{l_ssgen} for the symmetrisation.

\begin{theorem}\label{t_symuni}
Let $\cl S$ be an operator system and $\cl E$ be an operator space. 
Then the following hold:
\begin{enumerate}
\item
If $\phi \colon \cl E\to \cl B(H,K)$ is a completely contractive map and $\psi \colon \cl S\to \cl B(K)$ is a completely positive map, then the map
\[
\phi^*\cdot\psi\cdot \phi \colon \cl E^* \otimes_{\rm s} \cl S \otimes_{\rm s} \cl E \to \cl B(H)
\]
is completely contractive and completely positive. 

\item
If a map $\wt{\theta}_{\rm s} \colon \cl E^*\otimes_{\rm s}\cl S \otimes_{\rm s}\cl E\to \cl B(H)$ is completely contractive and completely positive, then there exist a completely contractive map $\phi \colon \cl E \to \cl B(H,K)$ and a unital completely positive map $\psi \colon \cl S\to \cl B(K)$ such that 
\[
\wt{\theta}_{\rm s} = \phi^* \cdot \psi \cdot \phi.
\]

\item
If $\wt{\theta}_{\rm s} \colon \cl E^*\otimes_{\rm s}\cl S \otimes_{\rm s}\cl E\to \cl B(H)$ is a completely isometric completely positive map, and $\phi$ and $\psi$ are as in (ii), then $\phi$ is a complete isometry. 
\end{enumerate}
\end{theorem}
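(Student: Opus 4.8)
The plan is to establish the three items in order, with Lemma~\ref{l_ssgen} and Proposition~\ref{p_synth} doing most of the work. For item~(i), I would start from the trilinear map $\theta\colon\cl E^*\times\cl S\times\cl E\to\cl B(H)$, $\theta(y^*,s,x):=\phi(y)^*\psi(s)\phi(x)$, and record the matricial identity $\theta^{(n)}(Y^*,S,X)=\phi^{(n)}(Y)^*\psi^{(n)}(S)\phi^{(n)}(X)$. This gives $\|\theta\|_{\rm cb}\le\|\phi\|_{\rm cb}^2\,\|\psi\|_{\rm cb}$, so $\theta$ is completely contractive --- here $\|\psi\|_{\rm cb}=\|\psi(1_{\cl S})\|\le1$ because $\psi$ is a completely positive contraction on the operator system $\cl S$ --- and $\theta$ is completely positive since $\theta(x^*,s,x)=\phi^{(m,n)}(x)^*\psi^{(m)}(s)\phi^{(m,n)}(x)\ge0$ for $s\in M_m(\cl S)^+$. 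By the definition~(\ref{eq_normde}) of the symmetrisation norm, $\|(\phi^*\cdot\psi\cdot\phi)^{(n)}(u)\|=\|\wt{\theta}^{(n)}(u)\|\le\|u\|_{\rm s}^{(n)}$ on $M_n(\cl E^*\odot\cl S\odot\cl E)$, so $\phi^*\cdot\psi\cdot\phi$ extends to a completely contractive map on $\cl E^*\otimes_{\rm s}\cl S\otimes_{\rm s}\cl E$; complete positivity of this extension follows because it is continuous, carries each $x^*\odot s\odot x$ with $s\ge0$ into the closed cone $M_n(\cl B(H))^+$, and by Proposition~\ref{p_synth} these elements are dense in the positive cone $C_n$.

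For item~(ii), I would set $\theta:=\wt{\theta}_{\rm s}\circ\iota$, where $\iota\colon\cl E^*\times\cl S\times\cl E\to\cl E^*\otimes_{\rm s}\cl S\otimes_{\rm s}\cl E$, $(y^*,s,x)\mapsto y^*\odot s\odot x$, is completely contractive since $\|\cdot\|_{\rm s}^{(n)}\le\|\cdot\|_{\rm h}^{(n)}$ by Lemma~\ref{l_norm} and the canonical trilinear map into the Haagerup tensor product is completely contractive. Then $\theta$ is completely contractive, and it is completely positive because $\theta(x^*,s,x)=\wt{\theta}_{\rm s}(x^*\odot s\odot x)\in M_n(\cl B(H))^+$ for $s\in M_m(\cl S)^+$, using $x^*\odot s\odot x\in C_n$ and complete positivity of $\wt{\theta}_{\rm s}$. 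Now Lemma~\ref{l_ssgen}(i) with $\cl A=\bb C$ yields a completely bounded $\phi\colon\cl E\to\cl B(H,K)$ and a unital completely positive $\psi\colon\cl S\to\cl B(K)$ with $\wt{\theta}=\phi^*\cdot\psi\cdot\phi$ on $\cl E^*\odot\cl S\odot\cl E$ and $\|\theta\|_{\rm cb}=\|\phi\|_{\rm cb}^2\le1$, so $\phi$ is completely contractive. By item~(i), $\phi^*\cdot\psi\cdot\phi$ is a completely contractive completely positive map on $\cl E^*\otimes_{\rm s}\cl S\otimes_{\rm s}\cl E$ that agrees with $\wt{\theta}_{\rm s}$ on the dense subspace $\cl E^*\odot\cl S\odot\cl E$; hence $\wt{\theta}_{\rm s}=\phi^*\cdot\psi\cdot\phi$.

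For item~(iii), with $\phi,\psi$ as in~(ii), I would test on the element $x^*\odot(1_{\cl S}\otimes I_n)\odot x$ for $x\in M_n(\cl E)$. For any admissible pair $(\phi_0,\psi_0)$, unitality of $\psi_0$ gives $(\phi_0^*\cdot\psi_0\cdot\phi_0)^{(n)}\bigl(x^*\odot(1_{\cl S}\otimes I_n)\odot x\bigr)=\phi_0^{(n)}(x)^*\phi_0^{(n)}(x)$, so $\|x^*\odot(1_{\cl S}\otimes I_n)\odot x\|_{\rm s}^{(n)}$ is the supremum of $\|\phi_0^{(n)}(x)\|^2$ over admissible pairs; choosing $\phi_0$ to be a complete isometry of $\cl E$ into some $\cl B(H_0,K_0)$ and $\psi_0:=\tau(\cdot)\,I_{K_0}$ for a state $\tau$ on $\cl S$ (an admissible pair by Remark~\ref{ex_existad}) shows this supremum equals $\|x\|_{M_n(\cl E)}^2$. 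The same computation applied to $(\phi,\psi)$ gives $\wt{\theta}_{\rm s}^{(n)}\bigl(x^*\odot(1_{\cl S}\otimes I_n)\odot x\bigr)=\phi^{(n)}(x)^*\phi^{(n)}(x)$, and since $\wt{\theta}_{\rm s}$ is a complete isometry, $\|\phi^{(n)}(x)\|^2=\|x\|^2$ for all $n$ and all $x\in M_n(\cl E)$, i.e.\ $\phi$ is a complete isometry.

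The routine part is the matricial amplification identities; the point that needs care is item~(i) --- ensuring that $\phi^*\cdot\psi\cdot\phi$, which is \emph{a priori} only defined on the algebraic tensor product, extends continuously to the symmetrisation and stays completely positive there, which is where the definition of $\|\cdot\|_{\rm s}$ and Proposition~\ref{p_synth} enter. Once (i) is available, items (ii) and (iii) are short, relying respectively on the factorisation Lemma~\ref{l_ssgen} and on the computation that inserting the unit $1_{\cl S}\otimes I_n$ in the middle tensor slot recovers the ambient matricial norm of $M_n(\cl E)$.
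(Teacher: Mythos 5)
Your proposal is correct and follows essentially the same route as the paper: item (i) from the very definition of the symmetrisation norm and of the cones $C_n$ (with Proposition~\ref{p_synth} supplying density of the elements $x^*\odot s\odot x$), item (ii) by delinearising and invoking Lemma~\ref{l_ssgen}, and item (iii) by the unit-insertion computation on $x^*\odot(1_{\cl S}\otimes I_n)\odot x$ against a completely isometric admissible pair. The only point worth noting is that, as you implicitly do, one must read ``completely positive'' in item (i) as ``completely positive and completely contractive'' (as is automatic for the unital maps occurring in admissible pairs), since otherwise the contractivity conclusion fails for trivial scaling reasons.
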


\begin{proof}
Item (i) follows from the definition of the operator space and the matrix order structure on $\cl E^*\otimes_{\rm s}\cl S \otimes_{\rm s} \cl E$. 

For item (ii), suppose that $\wt{\theta}_{\rm s} \colon \cl E^*\otimes_{\rm s}\cl S \otimes_{\rm s}\cl E\to \cl B(H)$ is completely contractive completely positive, and let $\theta$ be the associated trilinear map on $\cl E^* \times \cl S \times \cl E$.
By Remark \ref{r_norest}, $\|\theta\|_{\rm cb} = \|\wt{\theta}_{\rm s}\|_{\rm cb} \leq 1$, 
that is, $\theta$ is completely contractive.
Moreover, for every $x \in M_{m,n}(\cl E)$ and $s\in M_m(\cl S)^+$ we have
\[
\theta(x^*,s,x) = \wt{\theta}_{\rm s}(x^*\odot s \odot x)\in M_n(\cl B(H))^+,
\]
and thus $\theta$ is completely positive.
The conclusion follows from Lemma \ref{l_ssgen}.

For item (iii), let $x \in M_n(\cl E)$. 
If $\phi_0 \colon \cl E\to \cl B(H_0,K_0)$ is a complete isometry, then $\phi_0$ participates in an admissible pair $(\phi_0,\psi_0)$ by Remark \ref{ex_existad}, and using (ii) we have  
\begin{align*}
\|x\|^2 
& = 
\|\phi_0^{(n)}(x)^*\phi_0^{(n)}(x)\|
\leq \|x^* \odot (1_{\cl S} \otimes I_n) \odot x\|_{\rm s}^{(n)}\\ 
& = 
\|\wt{\theta}^{(n)}_{\rm s}(x^*\odot (1_{\cl S} \otimes I_n) \odot x)\|
=  \|\phi^{(n)}(x)^*\psi^{(n)}(1_{\cl S} \otimes I_n) \phi^{(n)}(x)\|\\
& \leq 
\|\psi^{(n)}(1_{\cl S} \otimes I_n)\| \|\phi^{(n)}(x)\|^2 = \|\phi^{(n)}(x)\|^2 \leq \|x\|^2.
\end{align*}
Thus, $\phi$ is a complete isometry. 
\end{proof}

\begin{corollary}\label{c_concreo}
Let $\cl E$ be an operator space and $\cl S$ be an operator system.
Then there exist Hilbert spaces $H$ and $K$, a complete isometry $\phi \colon \cl E\to \cl B(H,K)$ and a completely positive map $\psi \colon \cl S\to \cl B(K)$, such that
\[
\cl E^*\otimes_{\rm s} \cl S \otimes_{\rm s}\cl E \simeq [\phi(\cl E)^*\psi(\cl S)\phi(\cl E)]
\]
completely isometrically.
Moreover, if $\cl E$ and $\cl S$ are separable, then the Hilbert spaces $H$ and $K$ can be chosen to be separable.
\end{corollary}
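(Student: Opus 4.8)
The plan is to obtain the concrete realisation directly from the selfadjoint operator space structure established in Theorem \ref{th_estareos}, combined with the factorisation Lemma \ref{l_ssgen}, mirroring the opening of the proof of Proposition \ref{p_synth}. First I would apply Theorem \ref{th_estareos} with $D_n = C_n$ to conclude that $\cl E^* \otimes_{\rm s} \cl S \otimes_{\rm s} \cl E$, equipped with the cones $\{C_n\}_{n \in \bb N}$, is a selfadjoint operator space, hence admits a completely isometric complete order embedding $\wt\theta \colon \cl E^* \otimes_{\rm s} \cl S \otimes_{\rm s} \cl E \to \cl B(H)$ for some Hilbert space $H$. Delinearising $\wt\theta$ produces a completely bounded trilinear map $\theta \colon \cl E^* \times \cl S \times \cl E \to \cl B(H)$ whose symmetrisation extension is $\wt\theta$ itself. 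Since $x^* \odot s \odot x \in C_n$ for all $x \in M_{m,n}(\cl E)$ and $s \in M_m(\cl S)^+$, and $\wt\theta$ is positive with respect to $\{C_n\}$, the map $\theta$ is completely positive; being a complete isometry, $\wt\theta$ is completely contractive, so by Remark \ref{r_norest} the map $\theta$ is completely contractive as well.

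Next I would invoke Lemma \ref{l_ssgen}(i) in the case $\cl A = \bb C$ to produce a completely contractive map $\phi \colon \cl E \to \cl B(H, K)$ and a unital completely positive map $\psi \colon \cl S \to \cl B(K)$ with $\wt\theta = \phi^* \cdot \psi \cdot \phi$. Because $\wt\theta$ is a complete isometry, Theorem \ref{t_symuni}(iii) forces $\phi$ to be a complete isometry. Finally, since the symmetrisation is complete by Proposition \ref{p_estareos} and $\wt\theta$ is isometric, the range of $\wt\theta$ is a closed subspace of $\cl B(H)$; as $\wt\theta(y^* \otimes s \otimes x) = \phi(y)^* \psi(s) \phi(x)$ and $\cl E^* \odot \cl S \odot \cl E$ is dense in $\cl E^* \otimes_{\rm s} \cl S \otimes_{\rm s} \cl E$, this range equals $[\phi(\cl E)^* \psi(\cl S) \phi(\cl E)]$. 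Hence $\wt\theta$ implements the asserted complete isometry $\cl E^* \otimes_{\rm s} \cl S \otimes_{\rm s} \cl E \simeq [\phi(\cl E)^* \psi(\cl S) \phi(\cl E)]$.

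For the separability clause, if $\cl E$ and $\cl S$ are separable then $\cl E^* \odot \cl S \odot \cl E$ is separable, hence so is its completion $\cl E^* \otimes_{\rm s} \cl S \otimes_{\rm s} \cl E$; its unitisation is therefore a separable operator system, which admits a unital complete order embedding into $\cl B(H)$ with $H$ separable, so $\wt\theta$ above may be chosen with $H$ separable. In the GNS-type construction in Lemma \ref{l_ssgen}, the Hilbert space $K$ is the completion of a quotient of $\cl E \odot H$, hence separable once $\cl E$ and $H$ are. The one point I expect to require care is precisely this separability reduction — that a separable selfadjoint operator system has a completely isometric complete order representation on a separable Hilbert space — which is handled in the standard way by taking a countable separating family of finite-dimensional completely positive maps and forming their direct sum; everything else is a direct assembly of results already in hand.
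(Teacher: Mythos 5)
Your main argument is correct and is essentially the paper's own proof: take a completely isometric complete order embedding $\wt{\theta}_{\rm s}$ of the symmetrisation (Theorem \ref{th_estareos}), factor it as $\phi^*\cdot\psi\cdot\phi$ with $\phi$ a complete isometry via Theorem \ref{t_symuni}(iii), and identify the closed range with $[\phi(\cl E)^*\psi(\cl S)\phi(\cl E)]$ by density of the algebraic tensor product.

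The only place you diverge is the separability clause, and there your route is workable but heavier than necessary. You propose to show that a separable (unitised) operator system admits a unital complete order embedding into a separable $\cl B(H)$ by assembling a countable separating family of matrix-valued completely positive maps; this does work, but it requires a density argument to cut the family down to a countable one that is simultaneously completely norming and completely order determining. The paper sidesteps this: starting from \emph{any} complete order embedding $\wt{\theta}_{\rm s}$, the C*-algebra $\ca(\wt{\theta}_{\rm s})$ generated by its (separable) range is separable, hence admits a faithful $*$-representation $\rho$ on a separable $H'$; one then applies Theorem \ref{t_symuni}(iii) to $\rho\circ\wt{\theta}_{\rm s}$ and observes, as you do, that $K'=[\phi'(\cl E)H']$ is separable. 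Either way the conclusion holds, so there is no gap — just note that the paper's reduction is the cleaner one to cite if you want to avoid constructing the countable family explicitly.
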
 

\begin{proof}
Let $\wt{\theta}_{\rm s} \colon \cl E^* \otimes_{\rm s} \cl S \otimes_{\rm s} \cl E \to \cl B(H)$ be a 
(completely isometric) complete order embedding.
By item (iii) of Theorem \ref{t_symuni}, we can write $\wt{\theta}_{\rm s} = \phi^* \cdot \psi \cdot \phi$ for a complete isometry $\phi \colon \cl E \to \cl B(H, K)$ and a unital completely positive map $\psi \colon \cl S \to \cl B(K)$ with $K = [\phi(\cl E) H]$.
By construction, we have
\[
\cl E^*\otimes_{\rm s} \cl S \otimes_{\rm s}\cl E 
\simeq 
[\wt{\theta}_{\rm s}(\cl E^*\otimes_{\rm s} \cl S \otimes_{\rm s}\cl E)] 
= 
[\phi(\cl E)^*\psi(\cl S)\phi(\cl E)].
\]

Now assume that $\cl E$ and $\cl S$ are separable, and let $\ca(\wt{\theta}_{\rm s})$ be the C*-algebra generated by the image of a complete order embedding $\wt{\theta}_{\rm s}$ of the symmetrisation $\cl E^* \otimes_{\rm s} \cl S \otimes_{\rm s} \cl E$.
We have that ${\rm ran}(\wt{\theta}_{\rm s})$, and thus $\ca(\wt{\theta}_{\rm s})$, is separable.
Let 
\[
\rho \colon \ca(\wt{\theta}_{\rm s}) \to \cl B(H')
\]
be a faithful $*$-representation for a separable Hilbert space $H'$.
Applying item (iii) of Theorem \ref{t_symuni} to $\rho \circ \wt{\theta}_{\rm s}$ we get a decomposition into $(\phi')^* \cdot \psi' \cdot \phi'$  for a complete isometry $\phi' \colon \cl E \to \cl B(H', K')$ and a completely positive map $\psi \colon \cl E \to \cl B(K')$ with $K' = [\phi(\cl E) H']$.
As both $\cl E$ and $H'$ are separable, we get that $K'$ is separable, and by construction we have
\[
\cl E^*\otimes_{\rm s} \cl S \otimes_{\rm s}\cl E 
\simeq 
[\rho \circ \wt{\theta}_{\rm s}(\cl E^*\otimes_{\rm s} \cl S \otimes_{\rm s}\cl E)] 
= 
[\phi'(\cl E)^*\psi'(\cl S) \phi'(\cl E)],
\]
as required.
\end{proof}

\begin{remark} \label{r_nd}
Let $\theta$ be a trilinear map as in the statement of Lemma \ref{l_ssgen}.
By construction, we can choose an admissible pair $(\phi,\psi)$ such that $\wt{\theta}_{\rm s} = \phi^*\cdot\psi\cdot \phi$ and $[\phi(\cl E)H] = K$.
If, in addition, $\theta$ satisfies the condition $[\theta(\cl E^*, 1_{\cl S}, \cl E) H] = H$, then $[\phi(\cl E)^* K] = H$, and thus $\phi$ is non-degenerate.

Conversely, suppose that $(\phi, \psi)$ is an admissible pair as in the statement of Theorem \ref{t_symuni}, and suppose that $\phi$ is non-degenerate.
Then the induced map $\wt{\theta}_{\rm s} = \phi^* \cdot \psi \cdot \phi$ on $\cl E^* \otimes_{\rm s} \cl S \otimes_{\rm s} \cl E$ satisfies
\[
[\wt{\theta}_{\rm s}(\cl E^* \otimes 1_S \otimes \cl E) H]
=
[\phi(\cl E)^* \phi(\cl E) H]
=
[\phi(\cl E)^* K]
=
H.
\]
Hence the delinearisation $\theta$ on $\cl E^* \times \cl S \times \cl E$ satisfies $[\theta(\cl E^*, 1_{\cl S}, \cl E) H] = H$.
\end{remark}

\begin{remark}\label{r_Mn}
Suppose that the Hilbert space $H$ in Lemma \ref{l_ssgen} has dimension $n\in  \bb{N}$, and $\theta$ is completely bounded and $n$-positive in the sense that condition (\ref{eq_npos}) is fulfilled for the fixed $n$ and arbitrary $m\in \bb{N}$, that is
\[
\theta(x^*, s, x)\in M_n(\bb C), \ \ x \in M_{m,n}(\cl E), s \in M_m(\cl S)^+, m\in \bb{N}.
\]
By the universal property of the Haagerup tensor product, $\theta$ linearises to a completely bounded map $\wt{\theta}_{\rm h}$ on $\cl E^* \otimes_{\rm h} \cl S \otimes_{\rm h} \cl E$.
An application of Lemma \ref{l_norm} then gives a linearisation $\wt{\theta}_{\rm s}$ of $\theta$ to a completely bounded map on $\cl E^* \otimes_{\rm s} \cl S \otimes_{\rm s} \cl E$.
Since $\theta$ is $n$-positive, so is $\wt{\theta}_{\rm s}$.
Since $\cl E^* \otimes_{\rm s} \cl S \otimes_{\rm s} \cl E$ is a selfadjoint operator space and $H$ is $n$-dimensional, Theorem \ref{t_choi non-unital} yields that $\wt{\theta}_{\rm s}$ is completely positive. 
\end{remark}

Next we show that positivity can be traced by using just finite dimensional representations.

\begin{lemma} \label{l_finen}
Let $\cl E$ be an operator space and $\cl S$ be an operator system.
Let $(\phi, \psi)$ be an admissible pair with $\phi \colon \cl E \to \cl B(H, K)$ and $\psi \colon \cl E \to \cl B(K)$.
Let $(P_\beta)_{\beta\in \bb{B}}$ be a net of finite rank projections with $\textup{wot-}\lim_\beta P_\beta = I_K$, and 
set $\psi_\be(\cdot) := P_\be \psi(\cdot) P_\be$ and 
$\phi_\be(\cdot) := P_\be \phi(\cdot)$.
Then $(\phi_\be, \psi_\be)$ is an admissible pair and
\[
\textup{wot-}\lim_\be (\phi_\be^* \cdot \psi_\be \cdot \phi_\be)^{(n)} (u) 
=
(\phi^* \cdot \psi \cdot \phi)^{(n)}(u), \ \ 
u \in M_n(\cl E^* \otimes_{\rm s} \cl S \otimes_{\rm s} \cl E).
\]
\end{lemma}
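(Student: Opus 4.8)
The plan is to verify the two assertions separately: first that $(\phi_\be,\psi_\be)$ is an admissible pair for each $\be$, and then the claimed \textsc{wot}-convergence. For the first part, note that $P_\be$ is a projection and $\psi_\be(\cdot)=P_\be\psi(\cdot)P_\be$ is a compression of a unital completely positive map, hence completely positive; it need not be unital, but admissibility in the sense used here for the symmetrisation norm (Proposition \ref{p_nd}) only requires $\psi_\be$ to be completely positive, since we are working over $\cl A=\bb C$ where the admissibility relation $\psi(s\cdot a)\phi(x)=\psi(s)\phi(a\cdot x)$ is vacuous. Likewise $\phi_\be=P_\be\phi(\cdot)$ is completely contractive as a product of a contraction with a completely contractive map. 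So $(\phi_\be,\psi_\be)$ is an admissible pair; strictly, if one insists on $\psi_\be$ being unital one replaces it by $\psi_\be(\cdot)+\tau(\cdot)(I_K-P_\be)$ for a state $\tau$, which does not affect the compression computation below since $P_\be(I_K-P_\be)=0$.

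For the convergence, it suffices by linearity and density (Remark \ref{r_span}) to treat an elementary tensor $u=y^*\odot s\odot x$ at a fixed matrix level, and then the general case follows since a matrix of such elements is a finite sum. For an elementary tensor one computes directly
\[
(\phi_\be^*\cdot\psi_\be\cdot\phi_\be)(y^*\odot s\odot x)
=\phi(y)^*P_\be\,\psi_\be(s)\,P_\be\phi(x)
=\phi(y)^*P_\be\,\psi(s)\,P_\be\phi(x),
\]
using $P_\be^2=P_\be$. So I must show $\textsc{wot-}\lim_\be \phi(y)^*P_\be\psi(s)P_\be\phi(x)=\phi(y)^*\psi(s)\phi(x)$. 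Writing
\[
\phi(y)^*P_\be\psi(s)P_\be\phi(x)-\phi(y)^*\psi(s)\phi(x)
=\phi(y)^*P_\be\psi(s)(P_\be-I_K)\phi(x)+\phi(y)^*(P_\be-I_K)\psi(s)\phi(x),
\]
each term is handled by the standard fact that $P_\be\to I_K$ \textsc{wot} together with uniform boundedness of $(P_\be)$ implies $P_\be\to I_K$ in the strong operator topology, and hence $P_\be T\to T$ and $T^*P_\be\to T^*$ in \textsc{wot} (indeed strongly, resp.\ $*$-strongly) for any bounded $T$; applying this with $T=\psi(s)\phi(x)$ and, after taking adjoints inside the \textsc{wot}-pairing, with the operator $\psi(s)^*\phi(y)$ disposes of both summands. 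The bound $\|\psi(s)\|\le\|\psi\|_{\rm cb}\|s\|=\|s\|$ keeps everything uniformly controlled. Passing back to matrix entries, the same argument applied entrywise yields the \textsc{wot}-convergence of $(\phi_\be^*\cdot\psi_\be\cdot\phi_\be)^{(n)}(u)$ to $(\phi^*\cdot\psi\cdot\phi)^{(n)}(u)$ for every $u\in M_n(\cl E^*\odot\cl S\odot\cl E)$, and then for $u$ in the completion $M_n(\cl E^*\otimes_{\rm s}\cl S\otimes_{\rm s}\cl E)$ by a density argument using that each $\phi_\be^*\cdot\psi_\be\cdot\phi_\be$ and $\phi^*\cdot\psi\cdot\phi$ is completely contractive (Theorem \ref{t_symuni}(i)), so that the approximation is uniform in $\be$.

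I do not anticipate a serious obstacle here; the only mild subtlety is the one flagged above, namely that compressing $\psi$ by $P_\be$ breaks unitality, so one should either observe that unitality of the second map is not actually needed for the definition of $\|\cdot\|_{\rm s}$ (Proposition \ref{p_nd} phrases it with an arbitrary completely positive $\psi$) or restore unitality by the harmless correction term. The density step at the end — transferring convergence from the algebraic tensor product to its completion — requires the uniform equicontinuity of the family $\{\phi_\be^*\cdot\psi_\be\cdot\phi_\be\}_\be$, which is exactly complete contractivity, and a standard $3\eps$ argument: choose $u_0$ in the dense algebraic part with $\|u-u_0\|_{\rm s}^{(n)}<\eps$, then $\|(\phi_\be^*\cdot\psi_\be\cdot\phi_\be)^{(n)}(u)-(\phi_\be^*\cdot\psi_\be\cdot\phi_\be)^{(n)}(u_0)\|\le\eps$ uniformly in $\be$, and similarly for the limit map, reducing to the already established convergence on $u_0$.
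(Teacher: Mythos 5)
Your argument is correct and follows essentially the same route as the paper's proof: check admissibility, establish the convergence on elementary tensors via $P_\be\to I_K$ in the strong operator topology (which the paper leaves as "clear"), and then pass to general $u$ by a $3\eps$ density argument using the uniform complete contractivity of the maps $\phi_\be^*\cdot\psi_\be\cdot\phi_\be$. The only quibble is your reading of Proposition \ref{p_nd} --- admissible pairs in the paper do require $\psi$ unital --- but this is immaterial since your correction term $\psi_\be(\cdot)+\tau(\cdot)(I_K-P_\be)$ restores unitality without altering the computation (equivalently, one views $\psi_\be$ as a unital map into $\cl B(P_\be K)$, which is what the paper implicitly does).
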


\begin{proof}
It is clear from the definitions that $\psi_\be$ is a unital completely positive map, and thus $(\phi_\be, \psi_\be)$ is an admissible pair, for all $\be \in \bb B$.
Hence $\phi^* \cdot \psi_\be \cdot \phi$ is completely contractive, for all $\be \in \bb B$.

Assume that $u = y^* \odot s \odot x$ for $x,y \in M_{m,n}(\cl E)$ and $s \in M_m(\cl S)$.
Then it is clear that
\begin{align*}
\textup{wot-}\lim_\be (\phi_\be^* \cdot \psi_\be \cdot \phi_\be)^{(n)} (y^* \odot s \odot x)
& = \\
& \hspace{-4cm} =
\textup{wot-}\lim_\be \phi^{(m,n)}(y)^* (P_\be \otimes I_n) \psi^{(n)}(s) (P_\be \otimes I_n) \phi^{(m,n)}(x) \\
& \hspace{-4cm} =
\phi^{(m,n)}(y)^* \psi^{(n)}(s) \phi^{(m,n)}(x)
=
(\phi^* \cdot \psi \cdot \phi)^{(n)} (y^* \odot s \odot x).
\end{align*}
Now let $u\in M_n(\cl E \otimes_{\rm s} \cl S \otimes_{\rm s} \cl E)$ be arbitrary. 
Fix unit vectors $\xi,\eta\in H$ and $\eps > 0$, and let $x,y \in M_{m,n}(\cl E)$ and $s \in M_m(\cl S)$ be such that 
\[
\|y^* \odot s \odot x - u\|_{\rm s} < \eps.
\] 
Choose $\beta_0\in \bb{B}$ such that
\[
\left|\langle ( (\phi^* \cdot \psi_\be \cdot \phi)^{(n)} (y^* \odot s \odot x) - 
(\phi^* \cdot \psi \cdot \phi)^{(n)}(y^* \odot s \odot x) ) \xi, \eta \rangle \right| < \eps,
\]
for all $\beta\geq \beta_0$. 
For $\beta\geq \beta_0$ we now have 
\begin{align*}
\left|\langle ( (\phi_\be^* \cdot \psi_\be \cdot \phi_\be)^{(n)}(u) - 
(\phi^* \cdot \psi \cdot \phi)^{(n)}(u) ) \xi, \eta \rangle \right| 
& \leq \\
& \hspace{-6.5cm} \leq 
\left|\langle ( (\phi_\be^* \cdot \psi_\be \cdot \phi_\be)^{(n)}(u) - 
(\phi_\be^* \cdot \psi_\be \cdot \phi_\be)^{(n)}(y^* \odot s \odot x) ) \xi, \eta \rangle \right| \\
& \hspace{-6.2cm} + 
\left|\langle ( (\phi_\be^* \cdot \psi_\be \cdot \phi_\be)^{(n)}(y^* \odot s \odot x) - 
(\phi^* \cdot \psi \cdot \phi)^{(n)}(y^* \odot s \odot x) )\xi,\eta\rangle\right|\\
& \hspace{-6.2cm} + 
\left|\langle ((\phi^* \cdot \psi \cdot \phi)^{(n)}(y^* \odot s \odot x) - 
(\phi^* \cdot \psi \cdot \phi)^{(n)}(u)\xi,\eta\rangle\right|  \\
& \hspace{-6.5cm} \leq
2 \|x^* \odot s \odot y - u\|_{\rm s} + \eps
\leq 3\eps,
\end{align*}
as required.
\end{proof}

\begin{proposition} \label{p_finen}
Let $\cl E$ be an operator space and $\cl S$ be an operator system.
The following are equivalent:
\begin{enumerate}
\item $u \in M_n(\cl E \otimes_{\rm s} \cl S \otimes_{\rm s} \cl E)^+$;
\item $(\phi^* \cdot \psi \cdot \phi)^{(n)} (u) \geq 0$ for all the admissible pairs $(\phi, \psi)$ such that $\phi \colon \cl E \to \cl B(\bb C^k, K)$ and $\psi \colon \cl S \to \cl B(K)$, $k \in \bb N$;
\item $(\phi^* \cdot \psi \cdot \phi)^{(n)} (u) \geq 0$ for all the admissible pairs $(\phi, \psi)$ such that $\phi \colon \cl E \to \cl B(H, \bb C^m)$ and $\psi \colon \cl S \to M_m$, $m \in \bb N$;
\item $(\phi^* \cdot \psi \cdot \phi)^{(n)} (u) \geq 0$ for all the admissible pairs $(\phi, \psi)$ such that 
$\phi \colon \cl E \to M_{m,k}$ and $\psi \colon \cl S \to M_m$, $k, m \in \bb N$.
\end{enumerate}
\end{proposition}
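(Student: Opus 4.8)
The plan is to prove the four conditions equivalent by a circle of implications, using the compressibility Lemma \ref{l_finen} as the engine that lets us pass from general admissible pairs to ones with finite-dimensional Hilbert spaces, and then observing that the two separate finite-rank reductions — on $K$ and on $H$ — can be performed independently and then combined. The implications $(i)\Rightarrow(ii)$, $(i)\Rightarrow(iii)$ and $(i)\Rightarrow(iv)$ are immediate from the definition of $C_n = M_n(\cl E^*\otimes_{\rm s}\cl S\otimes_{\rm s}\cl E)^+$, since every pair listed in (ii), (iii), (iv) is in particular an admissible pair in the sense used to define the cones. Likewise $(iv)\Rightarrow(iii)\Rightarrow(ii)$ is trivial once we note that a pair $\phi\colon\cl E\to M_{m,k}$, $\psi\colon\cl S\to M_m$ is a special case of $\phi\colon\cl E\to\cl B(\bb C^k,\bb C^m)$, $\psi\colon\cl S\to M_m$, which in turn is a pair of the type in (ii) with $K=\bb C^m$. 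So the whole content is in the single implication $(ii)\Rightarrow(i)$, or more efficiently $(iv)\Rightarrow(i)$ — I would aim for the latter to get the strongest reduction for free, but it is cleanest to first prove $(ii)\Rightarrow(i)$ and then refine.

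For $(ii)\Rightarrow(i)$: let $(\phi,\psi)$ be an arbitrary admissible pair with $\phi\colon\cl E\to\cl B(H,K)$. By Remark \ref{r_comp} (or rather its operator-space analogue via compression, cf. Proposition \ref{p_nd}) we may pass to a non-degenerate compression without changing $(\phi^*\cdot\psi\cdot\phi)^{(n)}(u)$, so assume $\phi$ non-degenerate. Fix a net $(P_\beta)_\beta$ of finite-rank projections on $K$ with $\textup{wot-}\lim_\beta P_\beta=I_K$. By Lemma \ref{l_finen}, $(\phi_\beta,\psi_\beta)$ with $\phi_\beta(\cdot)=P_\beta\phi(\cdot)$ and $\psi_\beta(\cdot)=P_\beta\psi(\cdot)P_\beta$ is an admissible pair, and
\[
\textup{wot-}\lim_\beta(\phi_\beta^*\cdot\psi_\beta\cdot\phi_\beta)^{(n)}(u)=(\phi^*\cdot\psi\cdot\phi)^{(n)}(u).
\]
Each $\phi_\beta$ now takes values in $\cl B(H,P_\beta K)$ with $P_\beta K$ finite dimensional; identifying $P_\beta K\simeq\bb C^{k_\beta}$ this is exactly the shape required in (ii) but with the \emph{roles of $H$ and $K$ interchanged}. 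Here is the subtlety I expect to be the main obstacle: condition (ii) fixes the domain Hilbert space of $\phi$ to be finite dimensional ($\bb C^k$) and allows $K$ arbitrary, whereas the compression of $K$ produces a map $\cl E\to\cl B(H,\bb C^{k_\beta})$ with $H$ arbitrary — that is the shape of (iii), not (ii). So the two natural reductions land in different conditions, and the equivalence really needs the combination of both. The fix is to apply the compression argument twice: first compress $K$ as above to land in the situation of (iii), and then, on each $\phi_\beta\colon\cl E\to\cl B(H,\bb C^{k_\beta})$, compress $H$ by a net $(Q_\gamma)_\gamma$ of finite-rank projections with $\textup{wot-}\lim_\gamma Q_\gamma=I_H$. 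One checks directly (no appeal to Lemma \ref{l_finen} needed for this side, since compressing the domain of a corner map by a projection preserves admissibility and complete contractivity trivially, and a straightforward $\varepsilon$-argument as in the proof of Lemma \ref{l_finen} handles the \sot/\wot-convergence on the \emph{range} side) that $(Q_\gamma\phi_\beta(\cdot)|_{Q_\gamma H}$-type maps, appropriately set up) form admissible pairs with both Hilbert spaces finite dimensional and that the corresponding $(\cdot)^{(n)}(u)$ converge \wot\ to $(\phi_\beta^*\cdot\psi_\beta\cdot\phi_\beta)^{(n)}(u)$.

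Assembling: starting from (iv), every admissible pair with both Hilbert spaces finite dimensional has $(\phi^*\cdot\psi\cdot\phi)^{(n)}(u)\geq0$; the double-compression argument shows $(\phi_{\beta,\gamma}^*\cdot\psi_{\beta,\gamma}\cdot\phi_{\beta,\gamma})^{(n)}(u)$ is a net of positive operators converging \wot\ to $(\phi^*\cdot\psi\cdot\phi)^{(n)}(u)$ for an arbitrary admissible pair $(\phi,\psi)$; since $M_n(\cl B(H))^+$ is \wot-closed, the limit is positive; as this holds for every admissible pair, $u\in C_n=M_n(\cl E^*\otimes_{\rm s}\cl S\otimes_{\rm s}\cl E)^+$ by Proposition \ref{p_synth} (or directly by the definition of the cones), giving (i). Combined with the trivial implications $(i)\Rightarrow(iv)\Rightarrow(iii)\Rightarrow(ii)$ this closes the circle. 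The one place to be careful in writing this up is making sure the finite-rank compression on the \emph{domain} side genuinely preserves $\cl A$-admissibility (here $\cl A=\bb C$, so this is automatic) and that the convergence argument is phrased as in Lemma \ref{l_finen} — approximating $u$ in $\|\cdot\|_{\rm s}$ by a finite sum $y^*\odot s\odot x$, handling that elementary tensor exactly, and controlling the tail by complete contractivity of the compressed maps.
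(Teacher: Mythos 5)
Your core argument is the same as the paper's: the only substantive direction is from the ``both Hilbert spaces finite dimensional'' condition back to (i), and it is obtained by compressing by finite-rank projections on $K$ (this is exactly Lemma \ref{l_finen}) and on $H$, and using that positivity is preserved under weak operator limits. The paper merely splits the two compressions into two separate implications --- (iv)$\Rightarrow$(iii) compresses $H$, and (iii)$\Rightarrow$(i) compresses $K$ --- whereas you package them into a single double-limit argument for (iv)$\Rightarrow$(i). Your identification of the subtlety, namely that compressing the range lands you in the class of condition (iii) rather than (ii) so that both reductions are genuinely needed, is exactly right, and your technical handling (approximation of $u$ by elementary tensors, uniform complete contractivity of the compressed maps) matches the paper's.

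The one thing you must fix is the direction of your ``trivial'' implications. Condition (i) quantifies positivity over the largest class of admissible pairs and (iv) over the smallest, so (i) is the strongest of the four statements and (iv) the weakest; the specialisation you record (a pair of type (iv) belongs to the classes appearing in (ii) and in (iii)) therefore yields (ii)$\Rightarrow$(iv) and (iii)$\Rightarrow$(iv), not ``(iv)$\Rightarrow$(iii)$\Rightarrow$(ii)''. As literally assembled, your circle consists of (iv)$\Rightarrow$(i) together with (i)$\Rightarrow$(iv)$\Rightarrow$(iii)$\Rightarrow$(ii), which contains no implication whose hypothesis is (ii) or (iii); taken at face value the four conditions are not shown equivalent. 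The repair is immediate from what you already wrote: run (i)$\Rightarrow$(ii)$\Rightarrow$(iv)$\Rightarrow$(i) and (i)$\Rightarrow$(iii)$\Rightarrow$(iv)$\Rightarrow$(i), where the middle arrows are your class containments and the last is your double-compression argument. Note in passing that (iv)$\Rightarrow$(iii) is genuinely not trivial --- it is precisely the $H$-compression step, which is how the paper proves it --- so labelling it as a specialisation would be wrong even apart from the bookkeeping.
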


\begin{proof}
The implications [(i)$\Rightarrow$(ii)$\Rightarrow$(iv)] are immediate.
It suffices to show that [(iv)$\Rightarrow$(iii)$\Rightarrow$(i)].

Assume that item (iv) holds and let $(\phi, \psi)$ be an arbitrary admissible pair with $\phi \colon \cl E \to \cl B(H, \bb{C}^m)$ and $\psi \colon \cl S \to M_m$.
Let $(P_\beta)_{\beta\in \bb{B}}$ be a net of projections of finite rank with $\textup{wot-}\lim_\be P_\beta= I_H$, and set $\phi_\be(\cdot) := \phi(\cdot) P_\be$.
Then $(\phi_\be, \psi)$ is an admissible pair and
\[
\textup{wot-}\lim_\be (\phi_\be^* \cdot \psi \cdot \phi_\be)^{(n)} (u) = (\phi^* \cdot \psi \cdot \phi)^{(n)}(u).
\]
Since positivity is preserved by limits in the weak operator topology, this shows that if $(\phi_\be^* \cdot \psi \cdot \phi_\be)^{(n)} (u) \geq 0$ for all $\be$, then $(\phi^* \cdot \psi \cdot \phi)^{(n)}(u) \geq 0$, from which item (iii) follows.

Assume that item (iii) holds and let $(\phi, \psi)$ be an arbitrary admissible pair with $\phi \colon \cl E \to \cl B(H, K)$ and $\psi \colon \cl E \to \cl B(K)$.
Let $(P_\beta)_{\beta\in \bb{B}}$ be a net of projections of finite rank with $\textup{wot-}\lim_\be P_\beta= I_K$, and set $\psi_\be(\cdot) := P_\be \psi(\cdot) P_\be$.
Then $(\phi, \psi_\be)$ is an admissible pair, and by Lemma \ref{l_finen} we have
\[
\textup{wot-}\lim_\be (\phi^* \cdot \psi_\be \cdot \phi)^{(n)} (u) = (\phi^* \cdot \psi \cdot \phi)^{(n)}(u).
\]
Since positivity is preserved by limits in the weak operator topology, if we have $(\phi^* \cdot \psi_\be \cdot \phi)^{(n)} (u) \geq 0$ for all $\be$, then $(\phi^* \cdot \psi \cdot \phi)^{(n)}(u) \geq 0$, from which item (i) follows.
\end{proof}

\subsection{The symmetrisation norm versus the Haagerup norm}

We will see that the operator space structure on $\cl E^*\otimes_{\rm s} \cl S \otimes_{\rm s} \cl E$ differs from the Haagerup tensor product $\cl E^*\otimes_{\rm h} \cl S \otimes_{\rm h} \cl E$.
Towards this end we will give an equivalent formula for the symmetrisation norm when $\cl E$ is a C*-algebra and $\cl S = \bb C$.
For a Hilbert space $H$ and a linear map $\Phi \colon \cl B(H)\to \cl B(H)$,
we set 
\[
\|\Phi\|_{+} := \sup \{ \|\Phi(T)\| \ : \ T\in \cl B(H), 0\leq T\leq I \}.
\]
For $u = \sum_{i=1}^N a_i^* \otimes b_i \in \cl B(H)^* \odot \cl B(H)$, let $\Phi_u \colon \cl B(H)\to \cl B(H)$ be given by 
\[
\Phi_u(T) = \sum_{i=1}^N a_i^* T b_i, \ \ \ T \in \cl B(H);
\]
recall that $\Phi_u(T)$ is well defined, 
by considering the bilinear map
\[
\cl B(H)^* \times \cl B(H) \to \cl B(H); \ (a^*,b) \mapsto a^* T b.
\]
The latter map is bilinear and completely bounded with cb-norm at most $\|T\|$, and $\Phi_u(T)$ is well defined as is the image of $u$ under the linearisation of this bilinear map.

\begin{proposition}\label{p_pcn}
Let $\cl A$ be a unital C*-algebra and $u = \sum_{i=1}^N a_i^* \otimes b_i$ be an element of $\cl A^* \odot\cl A$. 
Then
\begin{equation}\label{eq_pit}
\|u\|_{\rm s} 
= 
\sup\left\{\left\|\Phi_{(\rho^*\otimes\rho)(u)}\right\|_{+} \ : \ \rho \colon \cl A\to \cl B(H) \mbox{ unital $*$-hom.}\right\}.
\end{equation}
Moreover, if $\cl A$ is separable, then (\ref{eq_pit}) holds with the supremum taken over $*$-represen\-tations on separable Hilbert spaces. 
\end{proposition}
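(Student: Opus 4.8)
The plan is to exploit that, since $u\in\cl A^*\odot\cl A$, we are working in $\cl A^*\otimes_{\rm s}\cl A=\cl A^*\otimes_{\rm s}\bb C\otimes_{\rm s}\cl A$, so any unital completely positive map on $\bb C$ is forced to be $\lambda\mapsto\lambda I$; by Proposition \ref{p_nd} this gives
\[
\|u\|_{\rm s}=\sup\Bigl\{\,\Bigl\|\sum_{i=1}^N\phi(a_i)^*\phi(b_i)\Bigr\|\ :\ \phi\colon\cl A\to\cl B(H,K)\text{ completely contractive}\,\Bigr\}.
\]
It then remains to compare this supremum with the right-hand side of (\ref{eq_pit}), and the bridge in both directions is the Haagerup--Paulsen--Wittstock factorisation for corner maps (Theorem \ref{t_hpw}), which says that a completely contractive $\phi\colon\cl A\to\cl B(H,K)$ factors as $\phi(a)=W_2^*\rho(a)W_1$ with $\rho$ a unital $*$-representation and $W_1,W_2$ contractions.

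To show the right-hand side of (\ref{eq_pit}) is at most $\|u\|_{\rm s}$, I would take a unital $*$-homomorphism $\rho\colon\cl A\to\cl B(H)$ and $T\in\cl B(H)$ with $0\le T\le I$, and set $\phi:=T^{1/2}\rho(\cdot)$. As $\phi$ is the composition of the completely contractive map $\rho$ with left multiplication by the contraction $T^{1/2}$, it is completely contractive, so $(\phi,\psi)$ with $\psi(\lambda)=\lambda I_H$ is an admissible pair, and $\sum_i\phi(a_i)^*\phi(b_i)=\sum_i\rho(a_i)^*T\rho(b_i)=\Phi_{(\rho^*\otimes\rho)(u)}(T)$. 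Taking the supremum over $0\le T\le I$ and then over all such $\rho$ gives the claimed inequality.

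To show the reverse, I would start from an arbitrary completely contractive $\phi\colon\cl A\to\cl B(H,K)$, apply Theorem \ref{t_hpw} (to $\|\phi\|_{\rm cb}^{-1}\phi$ and rescale as in the discussion preceding it, the case $\phi=0$ being trivial) to get $\rho\colon\cl A\to\cl B(L)$ unital and contractions $W_1\colon H\to L$, $W_2\colon K\to L$ with $\phi(a)=W_2^*\rho(a)W_1$, and then observe
\[
\sum_{i=1}^N\phi(a_i)^*\phi(b_i)=W_1^*\Bigl(\sum_{i=1}^N\rho(a_i)^*(W_2W_2^*)\rho(b_i)\Bigr)W_1=W_1^*\,\Phi_{(\rho^*\otimes\rho)(u)}(W_2W_2^*)\,W_1.
\]
Since $0\le W_2W_2^*\le I$ and $\|W_1\|\le 1$, the right-hand side has norm at most $\|\Phi_{(\rho^*\otimes\rho)(u)}\|_{+}$; taking the supremum over $\phi$ yields $\|u\|_{\rm s}\le$ the right-hand side of (\ref{eq_pit}), and equality follows.

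Finally, for the separable refinement I would first use Corollary \ref{c_concreo}: as $\cl A$ and $\bb C$ are separable, there is a completely contractive (indeed completely isometric) $\phi_0\colon\cl A\to\cl B(H_0,K_0)$ with $H_0,K_0$ separable realising $\|u\|_{\rm s}=\|\sum_i\phi_0(a_i)^*\phi_0(b_i)\|$. Running the previous paragraph for this single $\phi_0$ produces $\rho\colon\cl A\to\cl B(L)$ and $W_1\colon H_0\to L$, $W_2\colon K_0\to L$, and I would then compress $\rho$ to $L_0:=[\rho(\cl A)(W_1H_0\cup W_2K_0)]$: this subspace is reducing for the $*$-representation $\rho$, and it is separable because $\cl A$, $H_0$ and $K_0$ are. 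Replacing $W_2$ by $P_{L_0}W_2$ preserves $\phi_0(a)=W_2^*\rho(a)W_1$ and hence the displayed factorisation, so $\|u\|_{\rm s}\le\|\Phi_{(\rho|_{L_0}^*\otimes\rho|_{L_0})(u)}\|_{+}$ with $\rho|_{L_0}$ acting on a separable Hilbert space; combined with the first inequality (which is indifferent to the size of $H$), this gives (\ref{eq_pit}) with the supremum over separable $*$-representations. The only genuinely delicate point is this compression step — ensuring the dilation of $\phi_0$ can be taken on a separable space — but it is routine once one notes that $\phi_0$ only ``sees'' $\rho$ on the cyclic-type subspace generated by $W_1H_0\cup W_2K_0$.
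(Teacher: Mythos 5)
Your proof of the main equality is correct and coincides with the paper's: the same reduction of admissible pairs to completely contractive maps $\phi$ (since $\psi$ on $\bb C$ is forced to be $\la\mapsto\la I$), the same choice $\phi=T^{1/2}\rho(\cdot)$ for one inequality, and the same Wittstock factorisation $\phi=W_2^*\rho(\cdot)W_1$ with $T=W_2W_2^*$ for the other. Where you genuinely diverge is the separable refinement. The paper keeps $\phi$ arbitrary, decomposes its (possibly non-separable) dilation $\rho$ into cyclic summands $\oplus_\la\rho_\la$ (each separable because $\cl A$ is), and for each $\eps>0$ finds a vector $\xi$ nearly attaining the norm whose image under $\sum_i\rho(a_i)^*T\rho(b_i)$ is supported on countably many summands $H'$, then compresses to $H'$. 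You instead invoke Corollary \ref{c_concreo} to realise $\|u\|_{\rm s}$ \emph{exactly} on separable spaces by a single $\phi_0$, dilate that one map, and cut the dilation down to the separable reducing subspace $L_0=[\rho(\cl A)(W_1H_0\cup W_2K_0)]$; the verifications you flag (that $L_0$ is reducing, separable, and contains the ranges of $W_1,W_2$ so that the factorisation survives the compression) all go through. Your route avoids the $\eps$-bookkeeping at the cost of leaning on Corollary \ref{c_concreo}, which itself rests on Theorem \ref{t_symuni}(iii); the paper's route is self-contained within the Wittstock picture but needs the countable-support extraction. Both are valid.
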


\begin{proof}
Given a unital $*$-representation $\rho \colon \cl A\to \cl B(H)$ and $\eps > 0$, let $T\in \cl B(H)$ be a positive contraction, such that 
\[
\left\|\Phi_{(\rho^*\otimes\rho)(u)}\right\|_{+} - \eps < \|\Phi_{(\rho^* \otimes \rho)(u)}(T)\|.
\] 
Let $\phi \colon \cl A\to \cl B(H)$ be given by $\phi(\cdot) := T^{1/2}\rho(\cdot)$.
Clearly, $\phi$ is completely contractive, and
\[
\Phi_{(\rho^*\otimes\rho)(u)}(T) 
= 
\sum_{i=1}^N \rho(a_i)^*T\rho(b_i) 
= 
(\phi^*\cdot\phi)(u).
\]
Therefore,
\[
\left\|\Phi_{(\rho^*\otimes\rho)(u)}\right\|_{+} - \eps < \|u\|_{\rm s}.
\]
Since $\eps$ is arbitrary, the right hand side of (\ref{eq_pit}) is dominated by $\|u\|_{\rm s}$.

For the reverse inequality, let $\phi \colon \cl A\to \cl B(K)$ be a complete contraction and, using Wittstock's Theorem, write $\phi(\cdot) = W_2^* \rho(\cdot) W_1$ for a unital $*$-representa\-tion $\rho \colon \cl A\to \cl B(H)$ and contractions $W_1, W_2 \in \cl B(K,H)$. 
Using the positive contraction $T := W_2W_2^*$, we have 
\begin{align*}
\|(\phi^*\cdot\phi)(u)\| 
& = 
\left\|W_1^*\left(\sum_{i=1}^N \rho(a_i)^* W_2 W_2^* \rho(b_i)\right)W_1\right\| \\
& \leq 
\left\|\sum_{i=1}^N \rho(a_i)^* T \rho(b_i)\right\|
\leq  
\left\|\Phi_{(\rho^*\otimes\rho)(u)}\right\|_{+}.
\end{align*}
Taking supremum over all complete contractions $\phi$ yields the equality (\ref{eq_pit}). 

Next assume that $\cl A$ is separable.
By the first part of the proof, 
\[
\sup\left\{\left\|\Phi_{(\rho^*\otimes\rho)(u)}\right\|_{+} \ : \ \rho \colon \cl A\to \cl B(H) \mbox{ unital $*$-repn., $H$ separable}\right\}
\leq
\|u\|_{\rm s}.
\]
For the reverse inequality, let again $\phi \colon \cl A \to \cl B(K)$ be a complete contraction, $\rho \colon \cl A \to \cl B(H)$ be a $*$-representation, and $W_1, W_2 \in \cl B(K, H)$ be contractions such that $\phi(\cdot) = W_2^* \rho(\cdot) W_1$.
Since $\cl A$ is separable we can decompose the $*$-representation $\rho = \oplus_{\la \in \La} \rho_\la$ over a set $\La$ such that every $\rho_\la$ acts on a separable Hilbert space $H_\la$ with $H = \oplus_{\la \in \La} H_\la$.
We have
\begin{align*}
\left\|(\phi^*\cdot\phi)(u)\right\| 
& =
\left\| \sum_{i=1}^N W_1^* \rho(a_i)^* T \rho(b_i) W_1\right\| 
\leq
\left\| \sum_{i=1}^N \rho(a_i)^* T \rho(b_i) \right\|.
\end{align*}
For $\eps >0$ let $\xi \in H$ such that
\[
\left\| \sum_{i=1}^N \rho(a_i)^* T \rho(b_i) \right\| - \eps
<
\left\| \sum_{i=1}^N \rho(a_i)^* T \rho(b_i)\xi \right\|.
\]
Since $\sum_{i=1}^N \rho(a_i)^* T \rho(b_i)$ is bounded there exists an at most countable set $\La'_\xi \subseteq \La$ such that, for $H' := \oplus_{\la \in \La'_\xi} H_\la$, we have
\[
\sum_{i=1}^N \rho(a_i)^* T \rho(b_i) \xi
=
P_{H'} \left(\sum_{i=1}^N \rho(a_i)^* T \rho(b_i) \right) P_{H'} \xi.
\]
By definition, $H'$ is separable and reducing for $\rho$.
We then have
\begin{align*}
\left\|(\phi^*\cdot\phi)(u)\right\| - \eps
& \leq
\left\| \sum_{i=1}^N \rho(a_i)^* T \rho(b_i)\xi \right\| 
=
\left\| \sum_{i=1}^N P_{H'} \rho(a_i)^* T \rho(b_i) P_{H'}\xi \right\| \\
& \leq
\left\| \sum_{i=1}^N P_{H'} \rho(a_i)^* P_{H'} T P_{H'} \rho(b_i) \right\| 
\leq
\left\| \Phi_{(\rho^*|_{H'} \otimes \rho|_{H'})}(u) \right\|_+,
\end{align*}
where we used that $P_{H'} T P_{H'}$ is a positive contraction in $\cl B(H')$.
Since $H'$ is separable this quantity is dominated by the right hand side of (\ref{eq_pit}) when the supremum is taken over separable Hilbert spaces, and taking $\eps \to 0$ completes the proof.
\end{proof}

The difference between $\| \cdot \|_{\rm s}$ and $\| \cdot \|_{\rm h}$ becomes clear from Proposition \ref{p_pcn}, since $\|u\|_{\rm h}$ coincides with the cb-norm of $\Phi_u \colon \cl B(H) \to \cl B(H)$ for $u = \sum_{i=1}^N a_i^* \otimes b_i$ when $\cl A \subseteq \cl B(H)$, that is,
$\nor{u}_{\rm h} = \nor{\Phi_u}_{\rm cb}$ 
(see for example \cite[Theorem 5.12]{pisier_intr}).
Let us provide a concrete example.

\begin{example}\label{e_diffhs}
The separably acting unital $*$-representations of $M_2$ are all unitarily equivalent to sub-representations of
$\pi \colon M_2\to \cl B(\bb{C}^2\otimes \ell^2)$, where $\pi(x) = x\otimes I$, $x\in M_2$. 
If $\si$ is a sub-representation of $\pi$ acting on $K \subseteq \bb C^2 \otimes \ell^2$ and $ u = \sum_{i=1}^N a_i^* \otimes b_i \in M_2^* \odot M_2$, then 
\[
\Phi_{(\si^* \otimes \si)(u)}(P_K T P_K) = P_K \Phi_{(\pi^* \otimes \pi)(u)}(T) P_K,
\textup{ for all } T \in \cl B(\bb{C}^2\otimes \ell^2),
\]
and thus
\begin{equation}\label{eq_plus}
\| \Phi_{(\si^* \otimes \si)(u)} \|_+ \leq \|\Phi_{(\pi^* \otimes \pi)(u)}\|_+.
\end{equation}
Therefore, 
\begin{equation} \label{eq_s=id}
\nor{u}_{\rm s} = \|\Phi_{(\pi^* \otimes \pi)(u)}\|_+, \ \ \ u \in M_2^* \odot M_2.
\end{equation}

Let 
$p = \left(\smallmatrix 1 & 0\\ 0 & 0\endsmallmatrix\right)$ and, for $t\in (0,1)$, 
let  $u_t\in M_2$ be the unitary matrix
\[
u_t = \begin{pmatrix} t & \sqrt{1 - t^2} \\ \sqrt{1-t^2} & -t \end{pmatrix},
\]
so that $u_te_1 = (t,\sqrt{1-t^2})$. 
Set $q_t := u_tpu_t^*$.
Since both $p$ and $q_t$ are projections and the Haagerup norm is a cross-norm  \cite[p. 142]{er}, we have 
\begin{equation}\label{eq_h=1}
\|p\otimes q_t\|_{\rm h} = 1.
\end{equation}
Let 
\[
\gamma(t) 
:= 
\sup\left\{\|(p\otimes I) T (u_tp \otimes I)\| \ : \ T\in \cl B(\bb{C}^2\otimes \ell^2), 0\leq T\leq I\right\}.
\]
By (\ref{eq_plus}), Proposition \ref{p_pcn} and the fact that $u_t^* \otimes I$ is a unitary, we have 
\begin{align*}
\|p\otimes q_t\|_{\rm s}
& =
\sup\left\{\|(p\otimes I) T (u_t p u_t^* \otimes I)\| \ : \ T \in \cl B(\bb{C}^2\otimes \ell^2), 0 \leq T \leq 1\right\} \\
& =
\sup\left\{\|(p\otimes I) T (u_t p \otimes I)\| \ : \ T \in \cl B(\bb{C}^2\otimes \ell^2), 0 \leq T \leq 1\right\}\\
& =
\ga(t).
\end{align*}
Suppose that the symmetrisation norm and the Haagerup norm coincided; then equations (\ref{eq_s=id}) and (\ref{eq_h=1}) would imply that $\gamma(t) = 1$.

Let $\eps = \min\left\{\frac{1}{2}, 2t^2\right\}$, $T\in M_2\otimes \cl B(\ell^2)$ be a positive contraction, and $\xi_t,\eta_t \in \bb{C}^2\otimes\ell^2$ be unit vectors such that 
\[
1 - \eps < |\sca{ (p\otimes I) T (u_tp \otimes I) \xi_t, \eta_t} | \leq 1.
\]
Replacing $\xi_t$ with $(p \otimes I) \xi_t$, and $\eta_t$ with $(p \otimes I) \eta_t$, we assume that 
\[
(p \otimes I)\xi_t = \xi_t, \ (p \otimes I)\eta_t = \eta_t, \textup{ and } 1 - \eps < |\langle (u_t\otimes I)\xi_t, T\eta_t \rangle| \leq 1.
\]
Replacing further $\eta_t$ with a suitable unimodular multiple, we can assume that $\langle (u_t\otimes I)\xi_t, T\eta_t \rangle \in \bb{R}^+$. 
Then we have 
\begin{equation}\label{eq_xpu0}
\|T\eta_t - (u_t\otimes I)\xi_t \|^2
=
\|T\eta_t\|^2 - 2\sca{(u_t \otimes I)\xi_t, T\eta_t} + \|(u_t \otimes I) \xi_t\|^2
< 2\eps.
\end{equation}
Write $\xi_t = (\xi_t^{(i)})_{i\in \bb{N}}$ and $\eta_t = (\eta_t^{(i)})_{i\in \bb{N}}$, 
where $\xi_t^{(i)}, \eta_t^{(i)}\in \bb{C}^2$,  
and let the scalars $\lambda_i(t),\mu_i(t) \in \bb{C}$ satisfying
\[
\lambda_i(t) e_1 =  \xi_t^{(i)} \qand \mu_i(t)e_1 = \eta_t^{(i)}, \ \ \ i \in \bb{N},
\]
since $(p \otimes I)\xi_t = \xi_t$ and $(p \otimes I)\eta_t = \eta_t$.
Letting $\lambda(t) = (\lambda_i(t))_{i\in \bb{N}}$ and $\mu(t) = (\mu_i(t))_{i\in \bb{N}}$ (as vectors in $\ell^2$), we see that
\[
\|\la(t)\| = \|\xi_t\| = 1
\qand
\|\mu(t)\| = \|\eta_t\| = 1.
\]
Then 
\[
(u_t\otimes I)\xi_t = \left(t\lambda_i(t)e_1 + \sqrt{1-t^2}\lambda_i(t)e_2\right)_{i\in \bb{N}}
\in \bb C^2 \otimes \ell^2.
\]
After applying the canonical shuffle, consider $T$ as an element 
\[
T = 
\left(\begin{matrix}
A & B \\
B^* & C
\end{matrix}\right)
\in M_2(\cl B(\ell^2)).
\] 
Then the inequality (\ref{eq_xpu0}) reads
\[
\left\|\left(\begin{matrix}
A & B \\
B^* & C
\end{matrix}\right)\left[\begin{matrix}
\mu(t) \\
0
\end{matrix}\right]
- 
\left[\begin{matrix}
t \lambda(t) \\
\sqrt{1-t^2}\lambda(t)
\end{matrix}\right] \right\|^2 < 2\eps,
\]
implying that
\[
\left\|A\mu(t) - t \lambda(t)\right\|^2 < 2\eps
\ \mbox{ and } \ \left\|B^*\mu(t) - \sqrt{1-t^2}\lambda(t)\right\|^2 < 2\eps.
\]
By applying the triangle inequality we have
\begin{align}\label{eq_mumu}
|\sca{A \mu(t), \mu(t)}|
& \leq 
|\sca{t\la(t), \mu(t)}| + |\sca{A\mu(t) - t \lambda(t), \mu(t)}| \nonumber\\
& \leq
t |\sca{\la(t), \mu(t)}| + \sqrt{2 \eps},
\end{align}
and
\begin{align}\label{eq_mula}
\left|\sca{\la(t), \sqrt{1-t^2} \la(t)}\right|
& \leq
|\sca{\la(t), B^* \mu(t)}| + \left|\sca{\la(t), B^*\mu(t) - \sqrt{1-t^2}\lambda(t)}\right| \nonumber\\
& \leq
|\sca{\la(t), B^* \mu(t)}| + \sqrt{2 \eps}.
\end{align}
On the other hand, using the positivity of $T$ and the fact that $C$ is a contraction, we have
\begin{align*}
\left|\sca{\la(t), B^* \mu(t)}\right| 
& =
|\sca{B\la(t), \mu(t)}| \\
& \leq
|\sca{A \mu(t), \mu(t)}| \cdot |\sca{C \la(t), \la(t)}| \\
& \leq 
|\sca{A \mu(t), \mu(t)}|,
\end{align*}
see for example \cite[Exercise 3.2 (i)]{Pa}.
Using (\ref{eq_mumu}) and (\ref{eq_mula}), we get
\begin{align*}
\sqrt{1-t^2}
& = 
|\langle\lambda(t),\sqrt{1-t^2}\lambda(t) \rangle |
\leq 
\left|\langle\lambda(t),B^* \mu(t) \rangle\right| + \sqrt{2\eps}\\
& \leq 
\left|\langle A\mu(t), \mu(t)\rangle\right| + \sqrt{2\eps}
\leq t \left|\langle\lambda(t),\mu(t)\rangle \right|  + 2\sqrt{2\eps}\\
& \leq 
t + 2\sqrt{2\eps} \leq 5t,
\end{align*}
a condition that fails if $t < \frac{1}{\sqrt{26}}$. 
\end{example}

We next provide a more general result that showcases the difference between the symmetrisation norm and the Haagerup tensor norm.

\begin{theorem}\label{t_diffhs}
Let $\cl A$ be a unital C*-algebra with dimension greater or equal than $2$.
Then the completely contractive map $\cl A^* \otimes_{\rm h} \cl A \to \cl A^* \otimes_{\rm s} \cl A$ fixing $\cl A^* \odot \cl A$ is not completely isometric.
\end{theorem}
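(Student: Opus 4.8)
\emph{Strategy.} The plan is to show that the map in question is not even isometric at the first matrix level $n=1$ (which is part of any complete isometry), by exhibiting a single elementary tensor $u = f^* \otimes g \in \cl A^* \odot \cl A$ for which $\|u\|_{\rm s} \leq \frac{1}{2}\|u\|_{\rm h} < \|u\|_{\rm h}$. The lower estimate $\|u\|_{\rm h} = \|f\|\,\|g\| > 0$ will be free, since the Haagerup norm is a cross-norm \cite[p.~142]{er}. The upper estimate for $\|u\|_{\rm s}$ will be extracted from the formula of Proposition \ref{p_pcn}, which expresses $\|u\|_{\rm s}$ as a supremum over unital $*$-representations $\rho$ of $\cl A$ of $\|\Phi_{(\rho^*\otimes\rho)(u)}\|_{+}$.

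\emph{Choice of $f$ and $g$.} First I would locate two suitably \lq\lq orthogonal'' elements inside $\cl A$. Since $\dim \cl A \geq 2$ and $\cl A$ is the complex span of its selfadjoint part, there is a selfadjoint $a \in \cl A \setminus \bb C 1$; as $a$ is selfadjoint, $\si(a)$ necessarily has at least two distinct points. Picking disjoint open neighbourhoods (in $\si(a)$) of two such points and applying Urysohn's Lemma gives $h_1, h_2 \in C(\si(a))$ with $0 \leq h_i \leq 1$, each $h_i$ attaining the value $1$, and $h_1 h_2 = 0$. Via the continuous functional calculus I set $f := h_1(a) \in \cl A$ and $g := h_2(a) \in \cl A$; then $f = f^* \geq 0$, $g = g^* \geq 0$, $\|f\| = \|g\| = 1$, and $fg = (h_1 h_2)(a) = 0$. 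I take $u := f^* \otimes g \in \cl A^* \odot \cl A$, writing $f^*$ for the image of the selfadjoint element $f$ in the adjoint operator space $\cl A^*$.

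\emph{Estimating $\|u\|_{\rm s}$.} Let $\rho \colon \cl A \to \cl B(H)$ be a unital $*$-homomorphism. By Proposition \ref{p_pcn} it suffices to bound $\|\rho(f)^* T \rho(g)\|$ for an arbitrary $T \in \cl B(H)$ with $0 \leq T \leq I$. Since $\rho(fg) = 0$ we have $\rho(f)\rho(g) = 0$, so $\mathrm{ran}\,\rho(g) \subseteq \ker\rho(f) = \ker\rho(f)^*$, the last equality because $\rho(f)$ is selfadjoint. Writing $E$ for the orthogonal projection onto $\ker\rho(f)$, this gives $\rho(g) = E\rho(g)$ and $\rho(f)^* = \rho(f)^*(I-E)$. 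Putting $S := T - \frac{1}{2} I$, so that $S = S^*$ and $\|S\| \leq \frac{1}{2}$, and using $(I-E)E = 0$, I get
\[
\rho(f)^* T \rho(g) = \rho(f)^*(I-E)\,T\,E\,\rho(g) = \rho(f)^*(I-E)\,S\,E\,\rho(g),
\]
so $\|\rho(f)^* T \rho(g)\| \leq \|\rho(f)\|\,\|S\|\,\|\rho(g)\| \leq \frac{1}{2}\|f\|\,\|g\|$. Taking suprema over $T$ and then over $\rho$, Proposition \ref{p_pcn} yields $\|u\|_{\rm s} \leq \frac{1}{2}\|f\|\,\|g\| = \frac{1}{2}$.

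\emph{Conclusion and the main point.} Combining with $\|u\|_{\rm h} = \|f^*\|_{\cl A^*}\,\|g\|_{\cl A} = \|f\|\,\|g\| = 1$ gives $\|u\|_{\rm s} \leq \frac{1}{2} < 1 = \|u\|_{\rm h}$, so the canonical (completely contractive) map $\cl A^* \otimes_{\rm h} \cl A \to \cl A^* \otimes_{\rm s} \cl A$ fixing $\cl A^*\odot\cl A$ is not isometric at the level $n=1$, hence not completely isometric. I expect the only non-routine ingredient to be the selection of $u$: one must build it from two elements of $\cl A$ with zero product so that, after the shift $T \mapsto T - \frac{1}{2}I$, the operator $\rho(f)^*T\rho(g)$ becomes an off-diagonal corner of a norm-$\le\frac12$ operator; once this is seen, the existence of such elements whenever $\dim\cl A\ge 2$ is precisely what the functional-calculus argument in the second step supplies, and the rest is a short computation.
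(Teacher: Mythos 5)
Your proof is correct, and it takes a genuinely different route from the paper's. The paper argues by contradiction through structural machinery: it realises $\cl A^* \otimes_{\rm h} \cl A$ inside the free product $\cl A^* \ast_1 \cl A$ via the Christensen--Effros--Sinclair--Pisier theorem, uses Lemma \ref{l_technical} to show that a complete isometry from $\cl A^* \otimes_{\rm s} \cl A$ onto this unital operator space carrying the positive element $1_{\cl A}\otimes 1_{\cl A}$ to the unit would force the image to be an operator system with the flip as involution, and then obtains a contradiction by mapping into $\cl A^* \otimes_{\min} \cl A$; no explicit element witnessing the norm gap is produced. Your argument instead exhibits one: with $f,g \in \cl A$ positive of norm one and $fg=0$ (available whenever $\dim \cl A \geq 2$ by functional calculus on a non-scalar selfadjoint element), the identity $\rho(f)^* T \rho(g) = \rho(f)^*(I-E)\bigl(T-\tfrac12 I\bigr)E\rho(g)$, with $E$ the projection onto $\ker \rho(f)$, combined with Proposition \ref{p_pcn} and the cross-norm property of $\|\cdot\|_{\rm h}$, gives $\|f^*\otimes g\|_{\rm s} \leq \tfrac12 < 1 = \|f^*\otimes g\|_{\rm h}$, so the map already fails to be isometric at the first matrix level. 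I checked the details (in particular that $0\le T\le I$ forces $\|T-\tfrac12 I\|\le\tfrac12$, and that $\rho(fg)=0$ yields the two orthogonality relations used) and they are sound; the bound $\tfrac12$ is in fact attained for $\cl A = \bb C^2$. Your approach is more elementary and quantitative, and it subsumes the paper's Example \ref{e_diffhs} (the $M_2$ case, which there requires a considerably longer estimate with non-commuting projections); what the paper's route buys instead is the conceptual explanation that the obstruction is the failure of the flip to be an isometric involution on the Haagerup tensor product, which is the theme the surrounding discussion emphasises.
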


In order to show that the map from 
Corollary \ref{t_diffhs} does not admit a completely contractive inverse, we will need a technical lemma.

\begin{lemma}\label{l_technical}
Let $H$ be a Hilbert space, 
$\cl X \subseteq \cl B(H)$ be a selfajoint operator space, $\cl E$ be a unital operator space, and $\iota_{\rm env} \colon \cl E \to \cenv(\cl E)$ be the canonical unital completely isometric embedding.
Let $\Phi \colon \cl X \to \cl E$ be a surjective completely isometric map and suppose that $\Phi(u) = 1_{\cl E}$ for a positive element $u \in \cl X$.
Then $\iota_{\rm env} \circ \Phi$ is a completely isometric completely positive map.
Consequently, $\iota_{\rm env}(\cl E) = \iota_{\rm env} \circ \Phi(\cl X)$ is an operator subsystem of $\cenv(\cl E)$.
\end{lemma}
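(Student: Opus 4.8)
The plan is to set $\Psi := \iota_{\rm env} \circ \Phi \colon \cl X \to \cenv(\cl E)$; as a composition of complete isometries it is a complete isometry, so the entire content of the statement is that $\Psi$ is completely positive. I would first dispatch the ``consequently'' clause as a formal corollary of complete positivity: a completely positive map into a C*-algebra is $*$-preserving, so $\Psi(\cl X) = \Psi(\cl X^*) = \Psi(\cl X)^*$ since $\cl X$ is selfadjoint; moreover $\Psi(u) = \iota_{\rm env}(1_{\cl E})$ is the unit of $\cenv(\cl E)$ and $u \in \cl X$, so $\iota_{\rm env}(\cl E) = \Psi(\cl X)$ is a closed, unital, selfadjoint subspace of $\cenv(\cl E)$, hence an operator subsystem with the inherited matricial order.

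To prove complete positivity, fix a faithful unital representation $\cenv(\cl E) \subseteq \cl B(L)$, so that $\Psi(u) = I_L$. Since $u \geq 0$ in $\cl B(H)$ and $\Psi$ is isometric, $\|u\| = \|I_L\| = 1$, whence $0 \leq u \leq I_H$. Using the Haagerup--Paulsen--Wittstock extension theorem for completely bounded maps defined on the operator subspace $\cl X \subseteq \cl B(H)$ (recorded in the preliminaries), extend $\Psi$ to a completely contractive map $\wh{\Psi} \colon \cl B(H) \to \cl B(L)$; then apply Wittstock's representation theorem to $\wh{\Psi}$ on the unital C*-algebra $\cl B(H)$ to obtain a Hilbert space $M$, a unital $*$-representation $\rho \colon \cl B(H) \to \cl B(M)$, and contractions $W_1, W_2 \colon L \to M$ with $\wh{\Psi}(a) = W_2^* \rho(a) W_1$ for all $a \in \cl B(H)$.

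The crux is to upgrade this corner factorisation to an honest compression. From $I_L = \wh{\Psi}(u) = W_2^* \rho(u) W_1$ together with $0 \leq \rho(u) \leq I_M$, I would run, for a unit vector $\xi \in L$, the chain
\[
1 = \langle \rho(u)^{1/2} W_1 \xi, \rho(u)^{1/2} W_2 \xi \rangle \leq \|\rho(u)^{1/2} W_1 \xi\| \, \|\rho(u)^{1/2} W_2 \xi\| \leq \|W_1 \xi\| \, \|W_2 \xi\| \leq 1,
\]
so that equality holds at each step. This forces $W_1$ and $W_2$ to be isometries, $\rho(u) W_i = W_i$ for $i = 1, 2$, and, from the equality case of Cauchy--Schwarz combined with $\rho(u)^{1/2} W_i \xi = W_i \xi$, that $W_1 \xi = W_2 \xi$; hence $W_1 = W_2 =: W$. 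Consequently $\Psi(x) = \wh{\Psi}(x) = W^* \rho(x) W$ for every $x \in \cl X$, which, being the restriction to $\cl X$ of the compression of a unital $*$-representation, is completely positive (and $*$-preserving, which is exactly what feeds back into the first paragraph).

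I expect the identification $W_1 = W_2$ to be the main obstacle: a unital complete isometry need not be positive, and it is precisely the positivity of $u$, entering through $0 \leq \rho(u) \leq I_M$, that rigidifies the Wittstock data into a $*$-compression. Minor points to keep honest are that the first extension uses the subspace version of Haagerup--Paulsen--Wittstock rather than the corner version, that $\cl X$ is norm-complete so that $\Psi(\cl X)$ is closed, and that $\iota_{\rm env}$ is genuinely unital so that $1_{\cenv(\cl E)} \in \iota_{\rm env}(\cl E)$.
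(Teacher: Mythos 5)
Your argument is correct, and it reaches the conclusion by a genuinely different route than the paper. The paper works through the ternary envelope: it extends $\Phi$ to a ternary morphism $\wt{\Phi}$ on the TRO $\cl M$ generated by $\cl X$ (using Hamana's universal property and the fact that $\tenv(\cl E) = \cenv(\cl E)$ for unital $\cl E$), derives the induced $*$-representations $\rho$ of $[\cl M\cl M^*]$ and $\si$ of $[\cl M^*\cl M]$, checks that $u^{1/2}$ lies in both C*-algebras with $\rho(u^{1/2}) = \si(u^{1/2}) = I$, and then shows that the unital completely contractive (hence u.c.p.) map $x \mapsto \wh{\Phi}(u^{1/2}xu^{1/2})$ restricts to $\Phi$ on $\cl X$. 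You instead go straight to a Wittstock factorisation $\wh{\Psi} = W_2^*\rho(\cdot)W_1$ of the extension of $\iota_{\rm env}\circ\Phi$ to $\cl B(H)$, and use the single identity $W_2^*\rho(u)W_1 = I_L$ together with $0 \leq \rho(u) \leq I$ to force, via the equality case of Cauchy--Schwarz, that $W_1 = W_2$ is an isometry fixed by $\rho(u)$; I checked the chain and the extraction of $W_1\xi = W_2\xi$ from $\rho(u)^{1/2}W_i\xi = W_i\xi$, and it is sound (note that the two-sided Wittstock theorem recorded in the preliminaries even lets you take $W_1,W_2$ to be isometries from the start, though your derivation of this is harmless). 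Your route is more elementary and self-contained — no ternary envelopes, no multiplicativity bookkeeping — and it isolates the role of positivity of $u$ as a pure Hilbert-space rigidity statement about corner factorisations; the paper's route is more structural and makes visible why conjugation by $u^{1/2}$ acts as the identity on $\cl X$, which ties into the TRO formalism used elsewhere in the paper. Both yield the same map $W^*\rho(\cdot)W$ in the end.
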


\begin{proof}
Without loss of generality, we assume that $\cl E$ is concretely represented inside $\cenv(\cl E) \subseteq \cl B(K)$ as a unital operator subspace.
We will show that the map $\Phi \colon \cl X \to \cl B(K)$ is completely positive.

Let $\cl M$ be the TRO generated by $\cl X$ inside $\cl B(H)$.
By the universal property of the ternary envelope, there is a 
canonical ternary morphism $\cl M \to \cl T_{\rm env}(\cl E)$.
Since $\cl E$ is unital, its ternary envelope coincides with $\cenv(\cl E)$, see for example \cite[Remark 8.3.12, item (5)]{blm}; thus the map $\Phi$ extends to a ternary morphism
\[
\wt{\Phi} \colon \cl M \to \cenv(\cl E),
\]
see for example \cite[Remark 8.3.12, item (2)]{blm}.
Since $\wt{\Phi}$ is a ternary morphism, we obtain $*$-representations
\[
\rho \colon [\cl M \cl M^*] \to \cl B(K); \ yx^* \mapsto \wt{\Phi}(y) \wt{\Phi}(x)^*,
\]
and
\[
\si \colon [\cl M^* \cl M] \to \cl B(K); \ y^*x \mapsto \wt{\Phi}(y)^* \wt{\Phi}(x),
\]
so that
\[
\wt{\Phi}(a \cdot x \cdot b) = \rho(a) \wt{\Phi}(x) \si(b), \ \ \  a \in [\cl M \cl M^*], b \in [\cl M^* \cl M], x \in \cl M.
\]

We claim that $u^{1/2} \in [\cl M \cl M^*]$ and that $\rho(u^{1/2}) = I_K$; and similarly that $u^{1/2} \in [\cl M^* \cl M]$ and that $\si(u^{1/2}) = I_K$.
Therefore we will have
\[
\wt{\Phi}(u^{1/2} \cdot x \cdot u^{1/2}) = \rho(u^{1/2}) \wt{\Phi}(x) \si(u^{1/2}) = \wt{\Phi}(x),
\]
for all $x \in \cl M$.
We will show that this is the case for $\rho$; similar arguments apply for $\si$.

Towards this end, first note that $u^2 = u u^* \in [\cl M \cl M^*]$ since $u \geq 0$.
Since $[\cl M \cl M^*]$ is a C*-algebra, we get $u^{1/2} = (u^2)^{1/4} \in [\cl M \cl M^*]$.
Since $\rho$ is a $*$-representation, we thus get
\[
\rho(u^{1/2})^4 = \rho(u^2) = \rho(u u^*) = \wt{\Phi}(u) \wt{\Phi}(u)^* = \Phi(u) \Phi(u)^* = I_K \cdot I_K = I_K;
\]
since $u^{1/2} \geq 0$, we have $\rho(u^{1/2}) = I_K$.

Recall that ternary morphisms are completely contractive, see \cite[Lemma 8.3.2]{blm}.
Hence, by Wittstock's Extension Theorem, we can extend $\wt{\Phi}$ to a completely contractive map $\wh{\Phi} \colon \cl B(H) \to \cl B(K)$, and set
\[
\Psi \colon \cl B(H) \to \cl B(K); \ x \mapsto \wh{\Phi}(u^{1/2} \cdot x \cdot u^{1/2}).
\]
Note here that
\[
\|u^{1/2} \| = \| u \|^{1/2} = \|\Phi(u)\|^{1/2} = 1.
\]
Hence $\Psi$ is completely contractive and satisfies
\[
\Psi(I_H) = \wh{\Phi}(u^{1/2} \cdot I_H \cdot u^{1/2}) = \wh{\Phi}(u) = \Phi(u) = I_K.
\]
Thus $\Psi$ is unital completely contractive, and therefore it is a unital completely positive map by \cite[Proposition 3.5]{Pa}.
Note that, if $x \in \cl X$, then
\[
u^{1/2} \cdot x \cdot u^{1/2} \in [\cl M \cl M^* \cl X \cl M^* \cl M] \subseteq [\cl M \cl M^* \cl M \cl M^* \cl M] = \cl M.
\]
Therefore we have
\begin{align*}
\Psi(x) 
& = 
\wh{\Phi}(u^{1/2} \cdot x \cdot u^{1/2}) 
= 
\wt{\Phi}(u^{1/2} \cdot x \cdot u^{1/2}) \\
& = 
\rho(u^{1/2}) \wt{\Phi}(x) \si(u^{1/2}) 
= 
\Phi(x).
\end{align*}
Thus $\Phi$ is a completely positive map as the restriction of the completely positive map $\Psi$ to $\cl X$.

To finish the proof, we observe that 
\[
\cl E^* = \Phi(\cl X)^* = \Psi(\cl X)^* = \Psi(\cl X^*) = \Psi(\cl X) = \Phi(\cl X) = \cl E,
\]
since $\Psi$ is selfadjoint and $\Psi|_{\cl X} = \Phi$.
Hence $\cl E$ is a selfadjoint operator subspace of $\cl B(K)$ that contains $I_K$, and thus it is an operator system.
\end{proof}

We now return to the proof of Theorem \ref{t_diffhs}.\\ 

\noindent
{\bf Proof of Theorem \ref{t_diffhs}.}
We will write $\cl A^* \ast_1 \cl A$ for the free product of $\cl A$ with itself.
By the Christensen--Effros--Sinclair--Pisier Theorem \cite[Theorem 17.11]{Pa}, the linear map
\[
\cl A^* \otimes_{\rm h} \cl A \to \cl A^* *_1 \cl A; \ y^* \otimes x \mapsto y^* * x
\]
is a complete isometry, mapping $1_{\cl A} \otimes 1_{\cl A}$ to the unit of $\cl A^* *_1 \cl A$.
Hence $\cl A^* \otimes_{\rm h} \cl A$ is a unital operator space.
Let  
\[
\iota_{\rm env} \colon \cl A^* \otimes_{\rm h} \cl A \to \cenv(\cl A^* \otimes_{\rm h} \cl A) \subseteq \cl B(K)
\]
be the canonical unital completely isometric map.
To reach a contradiction, suppose that the map 
\[
\Phi \colon \cl A^* \otimes_{\rm s} \cl A \to \cl A^* \otimes_{\rm h} \cl A; \ y^* \otimes_{\rm s} x \mapsto y^* \otimes_{\rm h} x
\]
is completely isometric (where we are using the notation $\otimes_{\rm s}$ and $\otimes_{\rm h}$ to differentiate between the symmetrisation and the Haagerup tensor product).
Then $\Phi(u) = 1_{\cl A} \otimes_{\rm h} 1_{\cl A}$, for $u = 1_{\cl A} \otimes_{\rm s} 1_{\cl A}$ which is a positive element in the symmetrisation by Proposition \ref{p_synth}.
By Lemma \ref{l_technical}, $\iota_{\rm env} (\cl A^* \otimes_{\rm h} \cl A) = \iota_{\rm env} \circ \Phi(\cl A^* \otimes_{\rm s} \cl A)$ is an operator system with
\begin{align*}
\iota_{\rm env}(y^* \otimes_{\rm h} x)^*
& =
(\iota_{\rm env} \circ \Phi)(y^* \otimes_{\rm s} x)^* 
= 
(\iota_{\rm env} \circ \Phi)( (y^* \otimes_{\rm s} x)^* ) \\
& = (\iota_{\rm env} \circ \Phi)(x^* \otimes_{\rm s} y) 
=
\iota_{\rm env} (x^* \otimes_{\rm h} y).
\end{align*}

On the other hand, by the universal property of the Haagerup tensor product we derive a completely contractive map
\[
\Psi \colon \iota_{\rm env}(\cl A^* \otimes_{\rm h} \cl A) \to \cl A^* \otimes_{\rm h} \cl A \to \cl A^* \otimes_{\min} \cl A; \ \iota_{\rm env}(y^* \otimes_{\rm h} x) \mapsto y^* \otimes_{\min} x.
\]
Since the map $\Psi$ is unital and $\iota_{\rm env} (\cl A^* \otimes_{\rm h} \cl A)$ is an operator system, it follows that 
$\Psi$ is completely positive by \cite[Proposition 3.5]{Pa}, and thus selfadjoint.
In particular,
\begin{align*}
y^* \otimes_{\min} x
& = 
(y \otimes_{\min} x^*)^* = \Psi( \iota_{\rm env}(y \otimes_{\rm h} x^*) )^* \\
& =
\Psi( \iota_{\rm env} (y \otimes_{\rm h} x^*)^* )
=
\Psi( \iota_{\rm env}(x \otimes_{\rm h} y^*) )
=
x \otimes_{\min} y^*, 
\end{align*}
for all $x,y \in \cl A$, which is a contradiction since $\dim \cl A \geq 2$.
\hfill{$\qedsymbol$}

\subsection{Semi-units}

In \cite{ekt}, a concrete operator space $\cl E \subseteq \cl B(H, K)$ was called \emph{semi-unital} if $I_H \in [\cl E^* \cl E]$; equivalently, if there is a pair $((\underline{y}_i)_{i\in \bb{I}}, (\underline{x}_i)_{i\in \bb{I}})$ of nets of finitely supported columns over $\cl E$ such that $\underline{y}_i^* \underline{x}_i \longrightarrow_{i\in \bb{I}} I_H$ in the norm topology. 
Here we will work with a version of this notion for abstract operator spaces, which we now define.

Let $\cl E$ be an operator space and $\phi \colon \cl E\to \cl B(H,K)$ be a complete isometry.
A pair $((\underline{y}_i)_{i\in \bb{I}}, (\underline{x}_i)_{i\in \bb{I}})$ of nets of finitely supported columns over $\cl E$ is called a \emph{$\phi$-semi-unit} for $\cl E$, if 
\[
\phi^{(\infty)}(\underline{y}_i)^*\phi^{(\infty)}(\underline{x}_i) \longrightarrow_{i\in \bb{I}} I_H
\]
in the norm topology.
We say that $\cl E$ is \emph{semi-unital} if it admits a $\phi$-semi-unit for a complete isometry $\phi$. 
A $\phi$-semi-unit $((\underline{y}_i)_{i\in \bb{I}}, (\underline{x}_i)_{i\in \bb{I}})$ is called \emph{symmetric} if $\underline{y}_{i} = \underline{x}_{i}$ for every $i\in \bb{I}$.

Let, in addition, $\cl S$ be an operator system and $(\phi,\psi)$ be an admissible pair associated with a pair $(H,K)$ of Hilbert spaces, such that $\wt{\theta}_{\rm s} := \phi^* \cdot \psi \cdot \phi$ is a completely isometric complete order embedding of $\cl E^* \otimes_{\rm s} \cl S \otimes_{\rm s} \cl E$. 
A pair $((\underline{y}_i)_{i\in \bb{I}}, (\underline{x}_i)_{i\in \bb{I}})$ of nets of finitely supported columns over $\cl E$ is called a \emph{$(\phi,\psi)$-semi-unit for $\cl E$ relative to $\cl S$} if 
\[
\phi^{(\infty)}(\underline{y}_i)^*\phi^{(\infty)}(\underline{x}_i) \longrightarrow_{i\in \bb{I}} I_H.
\]
We say that $\cl E$ is \emph{$\cl S$-semi-unital} if there exists a $(\phi,\psi)$-semi-unit for some admissible pair $(\phi,\psi)$ that gives rise to a completely isometric complete order embedding  $\wt{\theta}$ of $\cl E^* \otimes_{\rm s} \cl S \otimes_{\rm s} \cl E$; it follows that if $\cl E$ is $\cl S$-semi-unital, then the selfadjoint operator space $\cl E^* \otimes_{\rm s} \cl S \otimes_{\rm s} \cl E$ is an operator system. 
By item (iii) of Theorem \ref{t_symuni}, if $\cl E$ is $\cl S$-semi-unital, then $\cl E$ is semi-unital.
By the proof of Proposition \ref{p_nd} we can assume that the pair $(\phi,\psi)$ associated to the semi-unit is non-degenerate.

Recall that, due to the injectivity of the symmetrisation (Theorem \ref{t_injective}), there exists a completely isometric complete order embedding
\[
\iota \colon \cl E^* \otimes_{\rm s} \cl E \to \cl E^* \otimes_{\rm s} \cl S \otimes_{\rm s} \cl E; \ y^* \otimes \la \otimes x \mapsto y^* \otimes \la \cdot 1_{\cl S} \otimes x.
\]
We will say that the embedding $\iota$ is \emph{unital} provided that both the selfadjoint operator spaces are operator systems and $\iota$ preserves the Archimedean matrix order units.

\begin{proposition}\label{p_semi-unit}
Let $\cl E$ be an operator space and $\cl S$ be an operator system. 
The following are equivalent:
\begin{enumerate}
\item $\cl E$ is $\cl S$-semi-unital;
\item the canonical embedding $\iota \colon \cl E^* \otimes_{\rm s} \cl E \to 
\cl E^* \otimes_{\rm s} \cl S \otimes_{\rm s} \cl E$ is unital.
\end{enumerate}
In particular, if $\cl E$ is $\cl S$-semi-unital, then it admits a symmetric $\cl S$-semi-unit.
\end{proposition}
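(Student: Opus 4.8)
The plan is to move back and forth between $\cl E^* \otimes_{\rm s} \cl S \otimes_{\rm s} \cl E$, the canonically embedded copy of $\cl E^* \otimes_{\rm s} \cl E$ inside it (the embedding $\iota$ being available by Theorem \ref{t_injective}), and concrete realisations of the form $\phi^* \cdot \psi \cdot \phi$ coming from admissible pairs; the description of the positive cones in Proposition \ref{p_synth} and the criterion for an operator system among selfadjoint operator spaces (Remark \ref{r_sos unit}) will carry most of the load.

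For the implication (i)$\Rightarrow$(ii), I would start from an admissible pair $(\phi,\psi)$ and a $(\phi,\psi)$-semi-unit $((\underline{y}_i),(\underline{x}_i))$ witnessing that $\cl E$ is $\cl S$-semi-unital, so that $\wt{\theta}_{\rm s} = \phi^* \cdot \psi \cdot \phi$ is a completely isometric complete order embedding of $\cl E^* \otimes_{\rm s} \cl S \otimes_{\rm s} \cl E$ onto $[\phi(\cl E)^* \psi(\cl S) \phi(\cl E)]$. Since $\psi(1_{\cl S}) = I_K$, the net $\wt{\theta}_{\rm s}(\underline{y}_i^* \odot 1_{\cl S} \odot \underline{x}_i) = \phi^{(\infty)}(\underline{y}_i)^* \phi^{(\infty)}(\underline{x}_i)$ converges to $I_H$, so $I_H$ lies in the norm-closed range of $\wt{\theta}_{\rm s}$; hence $[\phi(\cl E)^* \psi(\cl S) \phi(\cl E)]$ is a concrete operator system, and because $\wt{\theta}_{\rm s}$ is completely isometric this forces $\cl E^* \otimes_{\rm s} \cl S \otimes_{\rm s} \cl E$ to be an operator system with Archimedean matrix order unit $e := \wt{\theta}_{\rm s}^{-1}(I_H)$. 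Composing with $\iota$, the map $\wt{\theta}_{\rm s} \circ \iota$ is a completely isometric complete order embedding of $\cl E^* \otimes_{\rm s} \cl E$ onto $[\phi(\cl E)^* \phi(\cl E)]$, and the same net shows $I_H \in [\phi(\cl E)^* \phi(\cl E)]$, so $\cl E^* \otimes_{\rm s} \cl E$ is likewise an operator system, with unit $e_0 := (\wt{\theta}_{\rm s} \circ \iota)^{-1}(I_H)$. Finally $\wt{\theta}_{\rm s}(\iota(e_0)) = I_H = \wt{\theta}_{\rm s}(e)$ together with injectivity of $\wt{\theta}_{\rm s}$ force $\iota(e_0) = e$, which is exactly the statement that $\iota$ is unital.

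For (ii)$\Rightarrow$(i), assuming $\iota$ unital I would first record that both spaces are operator systems with units $e_0$, $e$ and $\iota(e_0) = e$. Applying the Choi--Effros Theorem to $\cl E^* \otimes_{\rm s} \cl S \otimes_{\rm s} \cl E$ produces a unital complete order embedding $\wt{\theta}_{\rm s} \colon \cl E^* \otimes_{\rm s} \cl S \otimes_{\rm s} \cl E \to \cl B(H)$ with $\wt{\theta}_{\rm s}(e) = I_H$, which by Remark \ref{r_sos unit} is completely isometric for the symmetrisation norm; Theorem \ref{t_symuni}(ii)--(iii) then supplies an admissible pair $(\phi,\psi)$ with $\phi$ a complete isometry and $\wt{\theta}_{\rm s} = \phi^* \cdot \psi \cdot \phi$. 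The unit $e_0$ of the operator system $\cl E^* \otimes_{\rm s} \cl E$ is positive, hence lies in $C_1$; by Proposition \ref{p_synth} applied with $\cl S = \bb C$, together with the identity $x^* \odot s \odot x = \underline{z}^* \odot \underline{z}$ for $\underline{z} := s^{1/2} \cdot x$ whenever $s \geq 0$, the cone $C_1$ is the $\|\cdot\|_{\rm s}$-closure of $\{\underline{x}^* \odot \underline{x} : \underline{x} \text{ a finitely supported column over } \cl E\}$. Choosing a net $(\underline{x}_i)$ with $\underline{x}_i^* \odot \underline{x}_i \to e_0$ in $\|\cdot\|_{\rm s}$ and applying $\wt{\theta}_{\rm s} \circ \iota$, using $\psi(1_{\cl S}) = I_K$, I obtain $\phi^{(\infty)}(\underline{x}_i)^* \phi^{(\infty)}(\underline{x}_i) = \wt{\theta}_{\rm s}(\iota(\underline{x}_i^* \odot \underline{x}_i)) \to \wt{\theta}_{\rm s}(\iota(e_0)) = \wt{\theta}_{\rm s}(e) = I_H$. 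Thus $((\underline{x}_i),(\underline{x}_i))$ is a \emph{symmetric} $(\phi,\psi)$-semi-unit for $\cl E$ relative to $\cl S$, so $\cl E$ is $\cl S$-semi-unital; and since (i) implies (ii), every $\cl S$-semi-unital $\cl E$ admits a symmetric $\cl S$-semi-unit, which settles the final assertion.

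The step I expect to be the main obstacle is bookkeeping rather than a deep point: one has to track the symmetrisation norm, the norm a candidate Archimedean matrix order unit would induce (Remark \ref{r_sos unit}), and the concrete operator norm on $[\phi(\cl E)^* \psi(\cl S) \phi(\cl E)]$ at the same time, and to verify that $\wt{\theta}_{\rm s}$ restricts and corestricts as claimed — in particular that $\wt{\theta}_{\rm s} \circ \iota$ is the map $y^* \otimes x \mapsto \phi(y)^* \phi(x)$ with range exactly $[\phi(\cl E)^* \phi(\cl E)]$. Once these identifications are pinned down the rest is a short diagram chase.
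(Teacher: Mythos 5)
Your proof is correct and follows essentially the same route as the paper's: both directions hinge on factoring the completely isometric embedding as $\phi^*\cdot\psi\cdot\phi$ via Theorem \ref{t_symuni}, identifying $\iota(\cl E^*\otimes_{\rm s}\cl E)$ with $[\phi(\cl E)^*\phi(\cl E)]$ using $\psi(1_{\cl S})=I_K$, and invoking Proposition \ref{p_synth} to produce the symmetric semi-unit. The only cosmetic difference is that the paper extracts the symmetric semi-unit directly in the forward direction from the positivity of the unit, whereas you obtain it while proving (ii)$\Rightarrow$(i); your version is more detailed in the bookkeeping but not different in substance.
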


\begin{proof}
\noindent
[(i)$\Rightarrow$(ii)]:
Suppose that $\cl E$ is $\cl S$-semi-unital. 
By definition, there exist a Hilbert space $H$ and an embedding $\wt{\theta}_{\rm s} = \phi^* \cdot \psi \cdot \phi$ of $\cl E^* \otimes_{\rm s} \cl S \otimes_{\rm s} \cl E$ in $\cl B(H)$, and a pair $((\underline{y}_i)_{i\in \bb{I}}, (\underline{x}_i)_{i\in \bb{I}})$ of nets such that
\[
\phi^{(\infty)}(\underline{y}_i)^*\phi^{(\infty)}(\underline{x}_i) \longrightarrow_{i\in \bb{I}} I_H.
\]
Hence $\cl E^* \otimes_{\rm s} \cl S \otimes_{\rm s} \cl E$ is an operator system.
Due to the embedding so is the space $\cl E^* \otimes_{\rm s} \bb{C} \otimes_{\rm s} \cl E$ with the same unit, which by definition is preserved by the inclusion.

Since the unit is in 
$(\cl E^* \otimes_{\rm s} \cl S \otimes_{\rm s} \cl E)^+$, Proposition \ref{p_synth} implies that the $\cl S$-semi-unit can be chosen to be symmetric. 

\smallskip

\noindent
[(ii)$\Rightarrow$(i)]:
Let $\wt{\theta}_{\rm s} \colon \cl E^* \otimes_{\rm s} \cl S \otimes_{\rm s} \cl E\to \cl B(H)$ be a completely isometric complete order isomorphism onto an operator system in $\cl B(H)$. 
Write $\wt{\theta}_{\rm s} = \phi^*\cdot\psi\cdot\phi$ by virtue of Theorem \ref{t_symuni} so that $\phi$ is a complete isometry. 
By the assumption in (ii), we have
\[
\wt{\theta}_{\rm s}(1) = (\phi^*\cdot\psi\cdot\phi)(1) = (\phi^*\cdot\phi)(1).
\]
The conclusion follows from the fact that $[(\phi^*\cdot\phi)(\cl E^* \otimes_{\rm s} \cl E)] = [\phi(\cl E)^*\phi(\cl E)]$. 
\end{proof}

The existence of a semi-unit for the symmetrisation is a rather strong assumption as we are about to show.
There are several obstructions in this respect.
The first one is the existence of an order unit.

\begin{theorem}\label{t_nonsepnonun} 
If $\cl E$ is a non-separable operator space, then the selfadjoint operator space $\cl E^*\otimes_s\cl E$ does not admit an order unit, and thus it is not an operator system.
\end{theorem}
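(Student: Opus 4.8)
The plan is to argue by contradiction: suppose $u \in (\cl E^* \otimes_{\rm s} \cl E)_h$ is an order unit. Since $\cl E^* \otimes_{\rm s} \cl E$ is a selfadjoint operator space (Theorem \ref{th_estareos}), by Corollary \ref{c_concreo} we may fix a complete isometry $\phi \colon \cl E \to \cl B(H,K)$ and a completely positive $\psi$ (here trivially $\psi = \id$ since $\cl S = \bb C$, so $\wt{\theta}_{\rm s} = \phi^* \cdot \phi$) realising $\cl E^* \otimes_{\rm s} \cl E \simeq [\phi(\cl E)^* \phi(\cl E)]$ completely isometrically and as a complete order embedding. The first step is to observe that an order unit $u$, being positive, lies in $C_1$, and by Proposition \ref{p_synth} can be norm-approximated by elements of the form $x^* \odot x$ with $x \in M_{k,1}(\cl E)$; moreover, since $u$ is an \emph{order} unit and the cone elements $x^* \odot x$ correspond to $\phi^{(\infty)}(x)^* \phi^{(\infty)}(x) \in \cl B(H)^+$, I expect to show that $\phi^*\cdot\phi(u)$ must dominate a positive multiple of every such $\phi^{(\infty)}(x)^*\phi^{(\infty)}(x)$ up to the order-unit scalar, which forces $\phi^*\cdot\phi(u)$ to be a bounded operator with $\phi^*\cdot\phi(u) \geq \delta I_H$ for some $\delta > 0$ after rescaling — i.e. it behaves like a strictly positive (in fact invertible-modulo-approximation) element of $[\phi(\cl E)^*\phi(\cl E)]^{-\|\cdot\|}$, hence $I_H \in [\phi(\cl E)^* \phi(\cl E)]$, so $\phi$ (equivalently $\cl E$) is semi-unital.

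The second and main step is the separability obstruction. Write $u = \lim_n u_n$ in $\|\cdot\|_{\rm s}$ with each $u_n \in \cl E^* \odot \cl E$; each $u_n$ involves only finitely many elements of $\cl E$, so there is a \emph{separable} closed subspace $\cl E_0 \subseteq \cl E$ containing all of them. By injectivity of the symmetrisation (Theorem \ref{t_injective}), $\cl E_0^* \otimes_{\rm s} \cl E_0 \hookrightarrow \cl E^* \otimes_{\rm s} \cl E$ is a completely isometric complete order embedding, and $u$ lies in (the closure of) $\cl E_0^* \otimes_{\rm s} \cl E_0$. Now I invoke Step 1 applied inside $\cl E_0^* \otimes_{\rm s} \cl E_0$: if $u$ is an order unit for $\cl E^* \otimes_{\rm s} \cl E$ then its restriction is an order unit for the sub-system, so $\cl E_0$ is semi-unital; choosing a separable representation $\phi_0 \colon \cl E_0 \to \cl B(H_0,K_0)$ as in Corollary \ref{c_concreo} with $H_0$ separable, semi-unitality gives $I_{H_0} \in [\phi_0(\cl E_0)^*\phi_0(\cl E_0)]$. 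The key point is then that for \emph{any} element $z \in \cl E \setminus \cl E_0$, positivity of $z^* \odot z$ and the order-unit property of $u$ force $z^* \odot z \leq r u$ for some $r>0$, and tracking this inequality through a faithful realisation should show that $\phi(z)$ is "controlled" by operators coming from $\cl E_0$ — concretely, that $\phi(z)^*\phi(z) \leq r\, \phi^*\cdot\phi(u)$, and combined with $\phi^*\cdot\phi(u) \in [\phi(\cl E_0)^*\phi(\cl E_0)]^{-\|\cdot\|}$ this constrains $\phi(z)$ to the closed operator-space module generated by $\phi(\cl E_0)$, ultimately forcing $\cl E \subseteq \overline{\cl E_0}$ and contradicting non-separability.

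The hard part will be making that last implication rigorous: converting the order relation $z^* \odot z \leq r u$ in the abstract matrix-ordered space into an honest operator inequality $\phi(z)^*\phi(z) \leq r\, T$ with $T \in [\phi(\cl E_0)^*\phi(\cl E_0)]^{-\|\cdot\|}$, and then deducing $\phi(z) \in [\cl B(K)\,\phi(\cl E_0)]^{-\|\cdot\|}$ (a Douglas-type factorisation / range-inclusion argument), and finally pushing this back to $z \in \overline{\cl E_0}$ using that $\phi$ is a complete isometry. An alternative, possibly cleaner route for this step is to bypass the "separable subspace capturing $u$" idea and instead argue directly: any order unit $u$ yields, via Theorem \ref{t_choi non-unital} and the quasi-state description, that the norm $\|\cdot\|_{\rm s}$ coincides with (equivalently, is dominated by a multiple of) the order-unit norm $\|\cdot\|_u$, whence $\cl E^* \otimes_{\rm s} \cl E$ is an operator system with a \emph{countable} approximate structure forced by separability of its positive cone — but the positive cone $C_1$ is densely spanned by $\{x^*\odot x : x \in M_{k,1}(\cl E)\}$, which cannot be separable in $\|\cdot\|_{\rm s}$ when $\cl E$ is non-separable because the map $x \mapsto x^*\odot x$ is bounded below on the unit sphere of $\cl E$ (using $\|x^*\odot x\|_{\rm s} \geq \frac14\|x^*\odot x\|_{\rm h} = \frac14\|x\|^2$ from Lemma \ref{l_norm}, together with a lower estimate on $\|x^*\odot x - z^*\odot z\|_{\rm s}$ separating distinct directions). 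I would develop this second route in parallel, as it localises the difficulty to a concrete metric-geometry estimate on the cone rather than an operator-factorisation argument, and the final contradiction is then immediate since an order unit would make $C_1$ have nonempty interior in a separable-at-the-level-of-the-cone space, which is impossible.
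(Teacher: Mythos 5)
Your high-level strategy --- each candidate order unit $e$ is ``supported'' on a separable piece of $\cl E$, and non-separability should then produce an element $z$ with $z^*\otimes z \not\leq re$ for all $r$ --- is exactly the right one, and it is the paper's. But neither of your two routes for closing the argument works as written. In Route 1, the Douglas step is the fatal gap: from $\phi(z)^*\phi(z)\leq rT$ with $T=\wt{\theta}_{\rm s}(e)\in\ol{[\phi(\cl E_0)^*\phi(\cl E_0)]}$ you only get a factorisation $\phi(z)=W(rT)^{1/2}$, i.e.\ a range inclusion for $\phi(z)^*$; this does not place $\phi(z)$ in any norm-closed module generated by $\phi(\cl E_0)$, and even the intermediate claim $\phi(z)\in[\cl B(K)\phi(\cl E_0)]^{-\|\cdot\|}$ would be useless, since that module is typically all of $\cl B(H,K)$ (e.g.\ as soon as $\phi(\cl E_0)$ contains a left-invertible operator), so it cannot force $z\in\ol{\cl E_0}$. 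In Route 2, the ``final contradiction is immediate'' claim rests on a false principle: possessing an order unit is entirely compatible with non-separability of the space and of its cone ($\ell^\infty(\Gamma)$ for uncountable $\Gamma$, or $\cl B(H)$ for non-separable $H$, are non-separable operator systems), so ``nonempty interior in a non-separable cone'' is not a contradiction. The obstruction is not separability of the cone but the countable support of each \emph{individual} element of the completion.

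The missing idea, which makes the proof two lines once you have it, is to test the order inequality against \emph{scalar} positive functionals of the form $f^*\cdot f$ with $f=\vphi\circ\phi$ a contractive functional on $\cl E$. Fix a representation $e=\lim_n\sum_{i=1}^{k_n}(y_i^n)^*\otimes x_i^n$ and let $\Omega$ be the closed span of the right legs $x_i^n$, which is separable; pick $z\in\cl E\setminus\Omega$ and, by Hahn--Banach, a contractive $\vphi$ on $\cl B(H,K)$ with $\vphi(\phi(\Omega))=\{0\}$ and $\vphi(\phi(z))\neq 0$. Then $(f^*\cdot f)$ is completely contractive and completely positive on $\cl E^*\otimes_{\rm s}\cl E$ by Theorem \ref{t_symuni}, it kills $e$ because $(f^*\cdot f)((y_i^n)^*\otimes x_i^n)=\ol{f(y_i^n)}\,f(x_i^n)=0$, yet $(f^*\cdot f)(z^*\otimes z)=|f(z)|^2>0$; so $z^*\otimes z\leq re$ fails for every $r>0$. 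Note that neither semi-unitality nor the approximation of $e$ by elements $x^*\odot x$ (your Step 1) is needed.
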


\begin{proof} 
We will show that for every positive element $e \in \cl E^* \otimes_{\rm s} \cl E$ there exists an $x \in \cl E$ such that the inequality $x^* \otimes x \leq r e$ does not hold for any $r > 0$.
Towards this end, fix a positive element $e \in \cl E^* \otimes_{\rm s} \cl E$.
Let $\wt{\theta}_{\rm s} \colon \cl E^* \otimes_{\rm s} \cl E \rightarrow \cl B(H)$ be a complete order embedding.
By Corollary \ref{c_concreo}, there exists a complete isometry $\phi\colon \cl E\rightarrow \cl B(H, K)$ such that $\wt{\theta}_{\rm s} = \phi^* \cdot \phi$.
Let $(y^n_{i})_{i=1}^{k_n}, (x^n_{i})_{i=1}^{k_n} \subseteq \cl E$, $n\in \bb{N}$, such that 
\[
e = \lim_{n\to \infty}\sum_{i=1}^{k_n} (y^n_{i})^* \otimes x^n_{i},
\]
and set
\[
\Omega := \ol{\rm span} \{ x^n_{i} \ : \ i=1,\dots,k_n, n\in \bb N \}.
\]
Since $\cl E$ is non-separable there exists $0 \neq x\in \cl E$ such that $x \notin \Omega$.

By the Hahn-Banach Theorem, there exists a contractive functional $\vphi$ on $\cl B(H, K)$, such that $\vphi( \phi(x) ) \neq 0$ and $\vphi \circ \phi(\Omega) = \{0\}$.
Since the contractive functional $\vphi \circ \phi \colon \cl E \to \bb C$ is automatically completely contractive, by Theorem \ref{t_symuni} the map $(\vphi \circ \phi)^* \cdot (\vphi \circ \phi)$ is a completely contractive completely positive functional on the symmetrisation $\cl E^* \otimes_{\rm s} \cl E$ such that
\[
((\vphi \circ \phi)^* \cdot (\vphi \circ \phi))(x^* \otimes x) 
= 
\varphi (\phi(x))^*\varphi (\phi(x)) 
= 
| \varphi(\phi(x)) |^2 > 0.
\]
If there existed an $r >0$ such that $x^*\otimes x \leq r e$, then we would have 
\begin{align*}
0 
& < 
((\vphi \circ \phi)^* \cdot (\vphi \circ \phi) )(x^* \otimes x) 
\leq 
r ((\vphi \circ \phi)^* \cdot (\vphi \circ \phi) )(e) \\
& = 
r \lim_{n\to \infty} \sum_{i=1}^{k_n} \varphi(\phi(y^n_{i}))^* \varphi(\phi(x^n_{i}))=0,
\end{align*}
a contradiction.  
\end{proof}

The second obstruction to the symmetrisation being an operator system resides in the matching of the norms as explained in Remark \ref{r_sos unit}, although an Archimedean matrix order unit may exist.

\begin{theorem} \label{t_notos}
Let $\cl E \subseteq \cl B(H)$ be an operator space.
Then $\cl E^* \otimes_{\rm s} \cl E$ is an operator system if and only if there exist Hilbert spaces $H'$ and $L$, a unital $*$-representation $\rho \colon \cl B(H) \to \cl B(H')$ and an isometry $W \colon L \to H'$ such that the completely contractive completely positive map
\[
\wt{\theta}_{\rm s} \colon \cl E^* \otimes_{\rm s} \cl E \to \cl B(L); \ y^* \otimes x \mapsto W^* \rho(y^* x) W
\]
is a completely isometric complete order embedding with 
\[
I_L \in \wt{\theta}_{\rm s}(\cl E^* \otimes_{\rm s} \cl E) = [W^* \rho(\cl E^* \cl E) W].
\]
\end{theorem}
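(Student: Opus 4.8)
The plan is to prove the two implications separately; the \emph{only if} direction is the substantial one, and it rests on the rigidity fact that a completely positive map annihilating an Archimedean matrix order unit is zero. For the \emph{sufficiency} direction, assume $\rho,W,L,H'$ are as in the statement. The range $\cl R := [W^*\rho(\cl E^*\cl E)W]$ is a norm-closed selfadjoint subspace of $\cl B(L)$ (selfadjoint since $(W^*\rho(y^*x)W)^* = W^*\rho(x^*y)W$) containing $I_L$ by hypothesis, hence a concrete operator system whose order-unit matricial norms are the operator norms. Since $\wt\theta_{\rm s}$ is a completely isometric complete order embedding onto $\cl R$, it is a completely isometric complete order \emph{isomorphism} $\cl E^*\otimes_{\rm s}\cl E\to\cl R$; set $e := \wt\theta_{\rm s}^{-1}(I_L)$. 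Then $e\ge 0$, $e$ is an Archimedean matrix order unit of $\cl E^*\otimes_{\rm s}\cl E$ (the preimage under a complete order isomorphism of the Archimedean matrix order unit $I_L$ of $\cl R$), and $\|u\|_{\rm s}^{(n)} = \|\wt\theta_{\rm s}^{(n)}(u)\| = \|\wt\theta_{\rm s}^{(n)}(u)\|_{I_L}^{(n)} = \|u\|_e^{(n)}$ for all $u,n$. Since $\cl E^*\otimes_{\rm s}\cl E$ is a selfadjoint operator space (Theorem~\ref{th_estareos}), Remark~\ref{r_sos unit} then gives that it is an operator system.

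For \emph{necessity}, assume $\cl E^*\otimes_{\rm s}\cl E$ is an operator system; by Remark~\ref{r_sos unit} it carries an Archimedean matrix order unit $e$ with $\|\cdot\|_e^{(n)} = \|\cdot\|_{\rm s}^{(n)}$. By the Choi--Effros Theorem there is a unital complete order embedding $\wt\theta_{\rm s}^0\colon\cl E^*\otimes_{\rm s}\cl E\to\cl B(L)$, completely isometric for $\|\cdot\|_e = \|\cdot\|_{\rm s}$, with $\wt\theta_{\rm s}^0(e) = I_L$. It is completely contractive and completely positive, so by Theorem~\ref{t_symuni}(ii) we may write $\wt\theta_{\rm s}^0 = \phi_0^*\cdot\phi_0$ with $\phi_0\colon\cl E\to\cl B(L,K)$ completely contractive (the accompanying unital completely positive map on $\bb C$ being $\lambda\mapsto\lambda I_K$), and, excluding the trivial case $\cl E=\{0\}$, $\|\phi_0\|_{\rm cb}=1$. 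Using $\cl E\subseteq\cl B(H)$ I would extend $\phi_0$ to a completely contractive $\wh\phi_0\colon\cl B(H)\to\cl B(L,K)$ by Wittstock's theorem (applied to the off-diagonal amplification of $\phi_0$, as in the proof of Theorem~\ref{t_hpw}), and then invoke Theorem~\ref{t_hpw} to obtain a unital $*$-representation $\rho\colon\cl B(H)\to\cl B(H')$ and isometries $W\colon L\to H'$ and $W_2\colon K\to H'$ with $\phi_0(x) = \wh\phi_0(x) = W_2^*\rho(x)W$ for $x\in\cl E$.

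The crux is then that the ``left-uncompressed'' map $\phi_1 := \rho(\cdot)W\colon\cl E\to\cl B(L,H')$ produces the \emph{same} map on the symmetrisation: $\phi_1^*\cdot\phi_1 = \phi_0^*\cdot\phi_0 = \wt\theta_{\rm s}^0$. To see this, put $\Delta := \phi_1^*\cdot\phi_1 - \phi_0^*\cdot\phi_0$, a bounded map on $\cl E^*\otimes_{\rm s}\cl E$. Since $\phi_0 = W_2^*\phi_1$ with $W_2W_2^*\le I_{H'}$, for $x\in M_{k,n}(\cl E)$ we have $(\phi_0^*\cdot\phi_0)^{(n)}(x^*\odot x) = \phi_1^{(k,n)}(x)^*(W_2W_2^*\otimes I_k)\phi_1^{(k,n)}(x)\le\phi_1^{(k,n)}(x)^*\phi_1^{(k,n)}(x) = (\phi_1^*\cdot\phi_1)^{(n)}(x^*\odot x)$; as $C_n$ is the $\|\cdot\|_{\rm s}^{(n)}$-closure of conic combinations of such generators (Proposition~\ref{p_synth}) and $\Delta$ is continuous, $\Delta$ is completely positive. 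Also $\phi_1^*\cdot\phi_1$ is completely contractive (the map of the admissible pair $(\phi_1,\lambda\mapsto\lambda I_{H'})$), so $\|\phi_1^*\cdot\phi_1(e)\|\le 1$, while $\phi_1^*\cdot\phi_1(e) = I_L + \Delta(e)\ge I_L$; a selfadjoint operator that is $\ge I_L$ and has norm $\le 1$ equals $I_L$, whence $\Delta(e) = 0$. Finally, $\Delta$ completely positive with $\Delta^{(n)}(e\otimes I_n) = 0$ forces $\Delta = 0$: for $u\in C_n$, $0\le u\le\|u\|_{\rm s}^{(n)}(e\otimes I_n)$, so $0\le\Delta^{(n)}(u)\le\|u\|_{\rm s}^{(n)}\Delta^{(n)}(e\otimes I_n) = 0$, and $C_n$ spans $M_n(\cl E^*\otimes_{\rm s}\cl E)$ (Remark~\ref{r_span}). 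Hence $\wt\theta_{\rm s} := \phi_1^*\cdot\phi_1$, which sends $y^*\otimes x$ to $W^*\rho(y)^*\rho(x)W = W^*\rho(y^*x)W$, coincides with $\wt\theta_{\rm s}^0$: a completely isometric complete order embedding whose range $[W^*\rho(\cl E^*\cl E)W] = \wt\theta_{\rm s}^0(\cl E^*\otimes_{\rm s}\cl E)$ contains $I_L = \wt\theta_{\rm s}^0(e)$, which is the asserted form. The step I expect to demand the most care is the passage from $\phi_0$ to its $\cl B(H)$-extension and the ensuing Haagerup--Paulsen--Wittstock factorisation (keeping the Hilbert spaces straight and confirming that $W$ can be taken to be an isometry), together with the routine density argument upgrading ``$\Delta$ nonnegative on generators'' to ``$\Delta$ completely positive''; the conceptual content is the rigidity observation just sketched.
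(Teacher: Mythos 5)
Your proof is correct and follows essentially the same route as the paper's: both directions proceed identically (realise the range as a concrete operator system and pull back $I_L$ for sufficiency; for necessity, take a Choi--Effros embedding, write it as $\phi_0^*\cdot\phi_0$ via Theorem~\ref{t_symuni}, extend by Wittstock, factor through Theorem~\ref{t_hpw}, and observe that dropping the projection $W_2W_2^*$ perturbs the map by a completely positive $\Delta$ that must vanish because it annihilates the unit). The only cosmetic difference is the final rigidity step, where the paper invokes the bound $\|\Phi\|\le 2\|\Phi(e)\|$ for positive maps on an operator system, while you use the squeeze $0\le\Delta^{(n)}(u)\le\|u\|_{\rm s}^{(n)}\,\Delta^{(n)}(e\otimes I_n)=0$ on the cones together with the fact that they span.
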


\begin{proof}
Suppose that $\cl E^* \otimes_{\rm s} \cl E$ is an operator system with an Archimedean matrix order unit $e$.
Then, by Remark \ref{r_sos unit},
\[
\nor{\cdot}_{\rm s}^{(n)} = \nor{\cdot}_e^{(n)}, 
\foral 
n \in \mathbb{N},
\]
and there exists a unital completely isometric complete order embedding 
\[
\wt{\theta}_{\rm s} \colon \cl E^* \otimes_{\rm s} \cl E \to \cl B(L).
\]
Since $\wt{\theta}_{\rm s}$ is completely isometric with respect to the symmetrisation norm, by Theorem \ref{t_symuni} there exists a completely isometric map $\phi \colon \cl E \to \cl B(L,K)$ such that $\wt{\theta}_{\rm s} = \phi^* \cdot \phi$.
We use the same sumbol to denote the completely contractive extension
\[
\phi \colon \cl B(H) \to \cl B(L,K),
\]
obtained by Wittstock's Extension Theorem.
By Theorem \ref{t_hpw}, there exists a unital $*$-representation $\rho \colon \cl B(H) \to \cl B(H')$ and isometries $W_1 \colon L \to H'$ and $W_2 \colon K \to H'$ such that
\[
\phi(x) = W_2^* \rho(x) W_1, \ \ x \in \cl B(H).
\]
Consequently, 
\[
\wt{\theta}_{\rm s}(y^* \otimes x) 
= 
W_1^* \rho(y)^* W_2 W_2^* \rho(x) W_1, \ \ x,y \in \cl E.
\]

Let $\phi' \colon \cl E \to \cl B(L, H')$ be the map, given by $\phi'(x) = \rho(x) W_1$.
By the definition of the symmetrisation we obtain a completely contractive completely positive map
\[
\wt{\theta}'_{\rm s} 
:= 
(\phi')^* \cdot \phi' \colon \cl E^* \otimes_{\rm s} \cl E \to \cl B(L); \ y^* \otimes x \mapsto W_1^* \rho(y^* x) W_1.
\]
We claim that $\wt{\theta}_{\rm s} = \wt{\theta}'_{\rm s}$, which will complete the proof of the forward direction.

Towards this end, we claim that $\wt{\theta}_{\rm s}(u) \leq \wt{\theta}_{\rm s}'(u)$ for all $u \in (\cl E^* \otimes_{\rm s} \cl E)^+$.
By Proposition \ref{p_synth}, it suffices to prove the inequality for $u = x^* \odot x$, where $x \in M_{k,1}(\cl E)$, $k \in \mathbb{N}$.
Since $W_2 W_2^*$ is a projection, 
\begin{align*}
\wt{\theta}_{\rm s}(x^* \odot x)
& =
(\phi^{(k,1)}(x))^* \phi^{(k,1)}(x) \\
& =
W_1 \rho^{(k,1)}(x)^* (W_2 W_2^* \otimes I_k) \rho^{(k,1)}(x) W_1 \\
& \leq
W_1 \rho^{(k,1)}(x)^* \rho^{(k,1)}(x) W_1 \\
& =
\wt{\theta}_{\rm s}'(x^* \odot x).
\end{align*}
This shows that the selfadjoint map $\wt{\theta}_{\rm s}' - \wt{\theta}_{\rm s}$ is positive on $\cl E^* \otimes_{\rm s} \cl E$ and therefore 
\begin{equation}\label{eq_twon}
\|\wt{\theta}_{\rm s}' - \wt{\theta}_{\rm s}\| \leq 2 \|(\wt{\theta}_{\rm s}' - \wt{\theta}_{\rm s})(e)\|
\end{equation}
by \cite[Proposition 2.1]{Pa}.
However, $\wt{\theta}_{\rm s}(e) \leq \wt{\theta}_{\rm s}'(e)$ and thus
\[
I_L = \wt{\theta}_{\rm s}(e) \leq \wt{\theta}_{\rm s}'(e) \leq \|\wt{\theta}_{\rm s}'(e)\| I_L \leq I_L,
\]
since $\wt{\theta}_{\rm s}'$ is contractive and $\nor{e}_{\rm s} = 1$ by assumption.
Hence $\wt{\theta}_{\rm s}'(e) - \wt{\theta}_{\rm s}(e) = 0$, giving, by (\ref{eq_twon}), that $\wt{\theta}_{\rm s}' - \wt{\theta}_{\rm s} = 0$ as required.

For the converse, suppose that the constructed $\wt{\theta}_{\rm s}$ is a completely isometric complete order embedding, and set $e = \wt{\theta}_{\rm s}^{-1}(I_L)$.
Since $I_L$ is an Archimedean matrix order unit for $\wt{\theta}_{\rm s}(\cl E^* \otimes_{\rm s} \cl E)$, the element $e$ is an Archimedean matrix order unit for $\cl E^* \otimes_{\rm s} \cl E$.
Moreover, for every $u \in M_n(\cl E^* \otimes_{\rm s} \cl E)$, we have
\begin{align*}
\nor{u}_{\rm s}^{(n)}
& =
\|\wt{\theta}_{\rm s}^{(n)}(u)\|_{\cl B(L^{(n)})}
=
\|\wt{\theta}_{\rm s}^{(n)}(u)\|_{I_L}^{(n)}
=
\nor{u}_e^{(n)}
\end{align*}
where we used that $\wt{\theta}_{\rm s}$ is a complete order embedding in the last equality of the norms induced by $I_L$ and $e$, respectively.
By Remark \ref{r_sos unit}, $\cl E^* \otimes_{\rm s} \cl E$ is an operator system.
\end{proof}

\begin{corollary}\label{c_notos}
Let $\cl E \subseteq \cl B(H)$ be an operator space such that $\dim(\cl E^* \cl E) < \infty$.
If $\dim(\cl E^* \cl E) < \dim(\cl E^* \otimes_{\rm s} \cl E)$, then $\cl E^* \otimes_{\rm s} \cl E$ is not an operator system.
\end{corollary}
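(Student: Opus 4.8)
The plan is to derive the statement directly from the characterisation in Theorem \ref{t_notos} by a vector-space dimension count. I would argue by contradiction: assume that $\cl E^* \otimes_{\rm s} \cl E$ is an operator system. Then the forward implication of Theorem \ref{t_notos} produces Hilbert spaces $H'$ and $L$, a unital $*$-representation $\rho \colon \cl B(H) \to \cl B(H')$ and an isometry $W \colon L \to H'$ such that the completely contractive completely positive map $\wt{\theta}_{\rm s} \colon \cl E^* \otimes_{\rm s} \cl E \to \cl B(L)$ determined by $y^* \otimes x \mapsto W^* \rho(y^* x) W$ is a completely isometric complete order embedding whose range equals $[W^* \rho(\cl E^* \cl E) W]$.

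Next I would bound the dimension of this range. The assignment $z \mapsto W^* \rho(z) W$ is a linear map on the linear span of $\cl E^* \cl E$, which by hypothesis is finite dimensional; hence its image has dimension at most $\dim(\cl E^* \cl E) < \infty$, and, being a finite dimensional subspace of $\cl B(L)$, it is already closed, so it coincides with $[W^* \rho(\cl E^* \cl E) W]$. Since a complete isometry is injective, $\wt{\theta}_{\rm s}$ is a linear isomorphism of $\cl E^* \otimes_{\rm s} \cl E$ onto its range, and therefore $\dim(\cl E^* \otimes_{\rm s} \cl E) = \dim([W^* \rho(\cl E^* \cl E) W]) \leq \dim(\cl E^* \cl E)$. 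This contradicts the hypothesis $\dim(\cl E^* \cl E) < \dim(\cl E^* \otimes_{\rm s} \cl E)$ and completes the argument. The same chain of inequalities also handles the case where $\cl E^* \otimes_{\rm s} \cl E$ is infinite dimensional, since then the derived inequality would force it to be finite dimensional, again a contradiction.

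I do not expect a genuine obstacle here: the only care needed is the routine distinction between $\cl E^* \cl E$ as a set of products and its (closed) linear span, a distinction that evaporates under the finite-dimensionality hypothesis, together with the elementary observation that it is the injectivity, rather than the full strength of the complete isometry, of $\wt{\theta}_{\rm s}$ that drives the dimension count. Thus the proof is essentially a one-line consequence of Theorem \ref{t_notos}.
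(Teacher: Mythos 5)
Your proposal is correct and follows essentially the same route as the paper: argue by contradiction, invoke Theorem \ref{t_notos} to obtain the embedding $y^* \otimes x \mapsto W^*\rho(y^*x)W$, and compare dimensions. The extra remarks about closedness of the finite-dimensional image and injectivity sufficing are fine elaborations of the same one-line dimension count the paper uses.
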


\begin{proof}
Assume, towards a contradiction, that $\cl E^* \otimes_{\rm s} \cl E$ is an operator system.
By Theorem \ref{t_notos}, there exists a unital completely isometric complete order embedding of the form
\[
\cl E^* \otimes_{\rm s} \cl E \to \cl B(L); \ y^* \otimes x \mapsto V^* \rho(y^* x) V
\]
for a unital $*$-representation $\rho$ of $\cl B(H)$ and an isometry $V$.
This shows that 
\[
\dim(\cl E^* \otimes_{\rm s} \cl E) = \dim(V^*\rho(\cl E^* \cl E) V) \leq \dim(\cl E^*\cl E),
\]
a contradiction.
\end{proof}

We close with a note on the spaces $C_n$, $R_n$ and $D_n$.

\begin{remark} \label{r_appnotos}
Corollary \ref{c_notos} shows that the symmetrisation $\cl E^* \otimes_{\rm s} \cl E$ is not an operator system for $\cl E = M_{I, n}$ for any finite $n$, and every set $I$ with $n < |I|$, since
\begin{align*}
\dim(M_{I, n}^* M_{I, n}) 
& = \dim(M_n) 
= n^2 \\
& < |I|^2 
\leq \dim( M_{I, n} )^2 \\
& \leq \dim(M_{I, n}^* \otimes M_{I, n}) 
\leq \dim(M_{I, n}^* \otimes_{\rm s} M_{I, n}).
\end{align*}
In particular $R_{I} \otimes_{\rm s} C_{I}$ is not an operator system for any set $I$ with $|I|\geq 2$.

A second example arises by considering the space $D_n$ of the diagonal $n$-by-$n$ matrices whenever $n \neq 1$.
In this case we have 
\[
\dim (D_n^* D_n) = \dim(D_n) = n < n^2 = \dim(D_n)^2 \leq \dim( D_n^* \otimes_{\rm s} D_n),
\]
and therefore $D_n \otimes_{\rm s} D_n$ is not an operator system, whenever $n \neq 1$.

However both $R_n \otimes_{\rm s} C_n$ and $D_n^* \otimes_{\rm s} D_n$ admit an Archimedean matrix order unit.
Indeed, by Lemma \ref{l_norm} there exists a canonical topological isomorphism
\[
R_n \otimes_{\rm s} C_n \simeq R_n \otimes_{\rm h} C_n \simeq M_n^{\rm d},
\]
where $M_n^{\rm d}$ denotes $M_n$ with the trace norm \cite[Corollary 5.11]{pisier_intr}.
The positive matrices induce a matrix ordered structure on $M_n^{\rm d}$ which becomes an operator system with unit $I_n$ (see \cite[Theorem 6.2]{ptt}).
Due to Proposition \ref{p_synth}, the canonical topological isomorphism is completely positive with a completely positive inverse, and thus $I_n$ defines an Archimedean matrix order unit in $R_n \otimes_{\rm s} C_n$.
This gives another example where the symmetrisation norm differs from the Haagerup tensor norm.

For $D_n^* \otimes_{\rm s} D_n$ recall that the map
\[
\phi \colon C_n \to D_n; \phi(w)(k) = w_k
\]
is a completely contractive isomorphism with cb-norm equal to $n^{-1/2}$, since the supremum norm is dominated by the $\ell^2$-norm.
Therefore the map 
\[
\phi^* \odot \phi \colon R_n \otimes_{\rm s} C_n \to D_n^* \otimes_{\rm s} D_n
\]
is a completely contractive isomorphism, and by construction it is completely positive with a completely positive inverse.
Hence an Archimedean matrix order unit of $R_n \otimes_{\rm s} C_n$ passes to $D_n^* \otimes_{\rm s} D_n$.
In Subsection \ref{ss_fsys} we will identify $D_n^* \otimes_{\rm s} D_n$ with the functions on $[n] \times [n]$ and we will see that the characteristic function on the diagonal is an Archimedean matrix order unit.

Theorem \ref{t_notos} does not cover the case $C_{I} \otimes_{\rm s} R_{I}$.
We will show in Remark \ref{r_troscand} that $C_{I} \otimes_{\rm s} R_{I}$ is completely isometrically completely order isomorphic to $\cl K_I$, and thus it is an operator system if and only if $I$ is finite.
\end{remark}

\section{Function spaces}\label{s_fundual}

In this section, we describe explicitly the positive cones of the symmetrisation of a function space.
For the rest of the section, we fix a compact Hausdorff space $\Omega$.
Suppose that $\cl E \subseteq C(\Omega)$ is a function space. 
By Theorem \ref{t_injective}, $\cl E^*\otimes_{\rm s}\cl E$ can be viewed as a selfadjoint subspace of $C(\Om)^* \otimes_{\rm s} C(\Om)$. 
Without loss of generality, we can therefore restrict our attention to the case where $\cl E = C(\Omega)$.
The main result of this section, Theorem \ref{th_fspa}, shows that the positivity of an element $u$ of a matrix space over the symmetrisation $C(\Om)^* \otimes_{\rm s} C(\Om)$ is equivalent to the positive semi-definiteness of the matrix-valued function on $\Om\times \Om$ canonically associated with $u$. 

\subsection{Positive semi-definiteness}\label{ss_fsys}

For a Borel probability measure $\mu$ on $\Omega$, set $H_{\mu} = L^2(\Omega,\mu)$ and let $\pi_{\mu} \colon C(\Omega) \to \cl B(H_{\mu})$ be the $*$-representation given by 
\[
\pi_{\mu}(f)(\xi) = f\xi, \ \ \ f\in C(\Omega), \xi\in H_{\mu}.
\]
For a function $u \in C(\Omega\times\Omega)$, let $T_{u}^\mu \in \cl B(H_{\mu})$ be the integral operator given by 
\[
T_{u}^\mu(\eta)(x) := \int_{\Om} u(x,y) \eta(y) d\mu(y), \ \ \ \eta\in H_{\mu}, x\in \Omega.
\]

We recall that a continuous function $u \colon \Omega \times \Omega \to M_n$ is called \emph{positive semi-definite} if, for every $x_1, \dots, x_r \in \Omega$, we have that the matrix $(u(x_p,x_q))_{p,q=1}^r$ is positive semi-definite in $M_{r n}$. 
We write $u_{i,j} \colon \Om\to \bb{C}$ for the (scalar-valued) continuous functions given by 
\[
u_{i,j}(x,y) := E_{i,i} u(x,y) E_{j,j},
\]
so that $u = (u_{i,j})_{i,j=1}^n$.
Given a Borel propability measure $\mu$ on $\Omega$, we write 
\[
T_u^\mu := (T_{u_{i,j}}^\mu)_{i,j=1}^n
\]
for the induced operator acting on $H_\mu^{(n)}$.

\begin{remark} 
The definition of positive semi-definiteness for a function $u \colon \Omega \times \Omega \to M_n$ does not require that the points are distinct, that is, the following are equivalent:
\begin{enumerate}
\item $(u(x_p,x_q))_{p,q=1}^r$ is positive in $M_{r n}$ for every $x_1, \dots, x_r \in \Omega$, $r \in \bb N$;
\item $(u(x_p,x_q))_{p,q=1}^r$ is positive in $M_{r n}$ for every distinct $x_1, \dots, x_r \in \Omega$, $r \in \bb N$.
\end{enumerate}
Indeed, the implication [(i)$\Rightarrow$(ii)] is trivial. 
For the converse, suppose that (ii) holds, and let the points $x_0, x_1, x_2, \dots, x_r$ for $x_1, \dots, x_2$ distinct and $x_0 = x_1$.
Then we can write
\begin{align*}
(u(x_p,x_q))_{p,q=0}^r
& =
\begin{pmatrix}
u(x_1, x_1) & R \\
C & B
\end{pmatrix}
\end{align*}
where $B = (u(x_p,x_q))_{p,q=1}^r$, $R$ is the first row of $B$ and $C$ is the first column of $B$.
Since $B$ is positive, we have $C = R^*$.
For $\zeta \in \bb C^{nr+n}$ we can write $\zeta = (\xi, \eta)$ with $\xi \in \bb C^n$ and $\eta \in \bb C^{nr}$.
Set $\eta' = \xi' + \eta \in \bb C^{nr}$ with $\xi' = (\xi, 0, \dots, 0)$. 
We then compute
\begin{align*}
\sca{ (u(x_p,x_q))_{p,q=0}^r \zeta, \zeta}
& =
\sca{u(x_1,x_1)\xi, \xi} + \sca{R \eta, \xi} + \sca{C\xi, \eta} + \sca{B \eta, \eta} \\
& =
\sca{B\xi', \xi'} + \sca{B\eta, \xi'} + \sca{B \xi', \eta} + \sca{B\eta, \eta}\\
& =
\sca{B \eta', \eta'} \geq 0.
\end{align*}
Therefore $(u(x_p,x_q))_{p,q=0}^r$ is positive.
Inductively (and by using a canonical shuffle) we deduce the same for any collection of points that has more than one repetition.
\end{remark}

The following lemma is rather well known, but we were not able to locate a precise reference; we include a proof for sake of completeness.

\begin{lemma}\label{l_pdeag}
Let $u \colon \Omega\times\Omega \to M_n$ be a continuous function.
The following are equivalent:
\begin{enumerate}
\item $u$ is positive semi-definite;
\item $T_u^\mu$ is positive for every finite positive Borel measure $\mu$ on $\Omega$;
\item $T_u^\mu$ is positive for every Borel probability measure $\mu$ on $\Omega$.
\end{enumerate}
\end{lemma}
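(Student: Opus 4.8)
The plan is to prove the cycle of implications $[(i)\Rightarrow(ii)\Rightarrow(iii)\Rightarrow(i)]$, with the implication $(ii)\Rightarrow(iii)$ being trivial since every Borel probability measure is a finite positive Borel measure. So the real content is in the two implications $[(i)\Rightarrow(ii)]$ and $[(iii)\Rightarrow(i)]$.

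For $[(i)\Rightarrow(ii)]$, I would fix a finite positive Borel measure $\mu$ on $\Omega$ and a vector $\eta = (\eta_1,\dots,\eta_n)\in H_\mu^{(n)}$ and compute
\[
\sca{T_u^\mu \eta, \eta}
=
\sum_{i,j=1}^n \int_\Omega\!\int_\Omega u_{i,j}(x,y)\,\eta_j(y)\,\overline{\eta_i(x)}\,d\mu(y)\,d\mu(x).
\]
Writing $\eta(y) = (\eta_1(y),\dots,\eta_n(y))\in\bb C^n$, this is $\int_\Omega\!\int_\Omega \sca{u(x,y)\eta(y),\eta(x)}_{\bb C^n}\,d\mu(y)\,d\mu(x)$, which one recognises as the limit of Riemann-type sums $\sum_{p,q} \sca{u(x_p,x_q)\eta(x_q),\eta(x_p)}\mu(A_p)\mu(A_q)$ over measurable partitions $\{A_p\}$ of $\Omega$ with sample points $x_p\in A_p$; uniform continuity of $u$ on the compact set $\Omega\times\Omega$ and continuity of $\eta$ after an $L^2$-approximation by continuous functions (or one may simply work with the integral directly) make this rigorous. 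Each such sum equals $\sca{(u(x_p,x_q))_{p,q}\,\zeta,\zeta}$ for the vector $\zeta$ whose $p$-th block is $\sqrt{\mu(A_p)}\,\eta(x_p)$, hence is $\geq 0$ by positive semi-definiteness of $u$. Passing to the limit gives $\sca{T_u^\mu\eta,\eta}\geq 0$, and since $T_u^\mu$ is selfadjoint (because $u(y,x) = u(x,y)^*$, which follows from positive semi-definiteness applied to the two-point set $\{x,y\}$), this yields $T_u^\mu\geq 0$.

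For $[(iii)\Rightarrow(i)]$, I would argue by contraposition, or directly via point masses: given $x_1,\dots,x_r\in\Omega$, I would first reduce to the case of distinct points using the remark preceding the lemma, and then take $\mu := \frac{1}{r}\sum_{p=1}^r \delta_{x_p}$, a Borel probability measure. Then $H_\mu = L^2(\Omega,\mu)$ has dimension $r$ with orthonormal basis $\{\sqrt r\,\chi_{\{x_p\}}\}_{p=1}^r$, and a short computation identifies the operator $T^\mu_{u_{i,j}}$ on this basis: $\sca{T^\mu_{u_{i,j}}(\sqrt r\chi_{\{x_q\}}), \sqrt r\chi_{\{x_p\}}} = r\cdot\frac1r u_{i,j}(x_p,x_q)\cdot\frac1r = \frac1r u_{i,j}(x_p,x_q)$ (one must be a little careful with the measure-theoretic bookkeeping of $\int u(x,y)\chi_{\{x_q\}}(y)d\mu(y)$ evaluated at $x=x_p$). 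Hence, under the canonical identification $H_\mu^{(n)}\cong\bb C^{rn}$, the operator $T_u^\mu$ is exactly $\frac1r\big(u(x_p,x_q)\big)_{p,q=1}^r$ up to a positive scalar and a unitary reshuffle of coordinates. Positivity of $T_u^\mu$ for this $\mu$ therefore forces $\big(u(x_p,x_q)\big)_{p,q=1}^r\geq 0$, giving (i).

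The main obstacle is the direction $[(i)\Rightarrow(ii)]$: one must justify the passage from the finite positive-semi-definiteness hypothesis (a condition about finitely many points) to positivity of an integral operator against a general measure. The cleanest route is to approximate $\mu$ weak-$*$ by finitely supported probability measures — or, equivalently, to approximate the double integral by the Riemann sums above — and use uniform continuity of $u$; the scalar case $n=1$ is classical (it is the easy half of Mercer-type statements) and the matrix case is a routine coordinatisation. I would present the $n=1$ computation in detail and then indicate that the matricial case follows by the same argument applied entrywise, keeping track of $\bb C^n$-valued integrands. The remaining implications are essentially bookkeeping once the identification of $T_u^\mu$ for point-mass combinations is in place.
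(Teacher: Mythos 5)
Your proposal is correct and follows essentially the same route as the paper: both directions reduce to finitely supported measures, with $(i)\Rightarrow(ii)$ obtained by approximating the double integral $\int\!\int \sca{u(x,y)\eta(y),\eta(x)}\,d\mu\,d\mu$ through finite point-mass configurations (your Riemann sums over measurable partitions are exactly the paper's weak$^*$ approximation of $\mu$ by positive combinations of Dirac measures, and both first work with continuous $\eta$ and then use density of $C(\Omega)$ in $H_\mu$). The only cosmetic difference is in the converse: you normalise to a probability measure and identify $T_u^\mu$ explicitly on the $rn$-dimensional space, whereas the paper keeps $\mu=\sum_p\delta_{x_p}$ and tests against continuous functions interpolating the given vector $h$ at the (distinct) points $x_p$; both are valid and equivalent.
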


\begin{proof}
\noindent
[(i)$\Rightarrow$(ii)].
Suppose that $u$ is positive semi-definite and first consider a Borel finite measure of the form $\mu = \sum_{p=1}^r \la_p \de_{x_p}$ for $x_1, \dots, x_r \in \Omega$ and $\la_1, \dots, \la_r > 0$.
For $\eta_1, \dots, \eta_n \in C(\Om)$, let $h \in \ell^2([r] \times [n])$ be the vector
\[
h \colon [r] \times [n] \to \bb C; \ h_{p,i} := \la_p \eta_i(x_p).
\]
Then a direct computation yields
\begin{align}\label{eq_cocofu}
\sca{T_u^\mu (\eta_i)_{i=1}^n, (\eta_i)_{i=1}^n}
& = \nonumber \\
& \hspace{-2.5cm} =
\sum_{i,j = 1}^n \sca{T_{u_{i,j}}^\mu \eta_j, \eta_i} 
=
\sum_{i,j=1}^n \int_{\Omega\times\Omega} u_{i,j}(x,y) \eta_j(y) \ol{\eta_i(x)} d\mu(y) d\mu(x) \nonumber \\
& \hspace{-2.5cm} =
\sum_{i,j=1}^n \sum_{p,q=1}^r \la_p \la_q u_{i,j}(x_p, x_q) \eta_j(x_q) \ol{\eta_i(x_p)}
=
\sum_{i,j =1}^n \sum_{p,q=1}^r u_{i,j}(x_p, x_q) h_{q,j} \ol{h_{p,i}} \\
& \hspace{-2.5cm} =
\sca{ (u_{i,j}(x_p, x_q))_{(p,i), (q,j)} h, h} 
=
\sca{ (u(x_p, x_q))_{p,q} h, h } \geq 0, \nonumber
\end{align}
since $u$ is positive semi-definite.
More generally, we see that if $(\mu_i)_i$ is a net of finite positive Borel measures with weak* limit $\mu$, then $(T_u^{\mu_i})_i$ converges to $T_u^\mu$ in the weak operator topology.
Using the fact that every positive finite Borel measure on $\Omega$ is the weak* limit of positive combinations of Dirac measures, we conclude that $\sca{T_u^\mu (\eta_i)_{i=1}^n, (\eta_i)_{i=1}^n}\geq 0$ for every positive finite Borel measure $\mu$. 
Since $C(\Om)$ is dense in $H_{\mu}$, we conclude that the operator $T_u^\mu$ is positive. 

\smallskip

\noindent
[(ii)$\Rightarrow$(i)].
Let $x_1, \dots, x_r \in \Omega$ be distinct points and $h \in \ell^2([r] \times [n])$; we have to show that
\[
\sca{(u(x_p, x_q))_{p,q} h, h } \geq 0.
\]
Consider the finite Borel measure $\mu = \sum_{p=1}^r \de_{x_p}$.
By assumption we have $T_u^\mu \geq 0$ in $\cl B(H_\mu^{(n)})$. 
Choose $\eta_1, \dots, \eta_n \in C(\Om)$ with
\[
\eta_i(x_p) := h_{p,i}, \ p=1, \dots, r, i=1, \dots, n.
\]
Then $\eta_i \in H_\mu$, and reading (\ref{eq_cocofu}) in reverse, together with the positivity $T_u^\mu$, shows that $\sca{(u(x_p, x_q)_{p,q} h, h } \geq 0$.

\smallskip

\noindent
[(ii)$\Leftrightarrow$(iii)]. It follows by a normalisation of a positive finite Borel measure to a Borel probability measure.
\end{proof}

\subsection{The symmetrisation cones}\label{ss_symcon}

By Lemma \ref{l_norm}, the symmetrisation space $C(\Om)^* \otimes_{\rm s} C(\Om)$ is completely boundedly isomorphic to the Haagerup tensor product $C(\Omega)\otimes_{\rm h} C(\Omega)$. 
On the other hand, by the Groth\'endieck Inequality, the space $C(\Omega)\otimes_{\rm h} C(\Omega)$ is boundedly isomorphic to the Banach space projective tensor product $C(\Omega) \hat{\otimes} C(\Omega)$, which is an algebra of continuous functions over $\Omega \times \Omega$ (known as the \emph{Varopoulos algebra} \cite{var}). 
The latter isomorphism is realised via the following identification: for a point $x \in \Omega$, we view the point mass measure $\delta_x$ as an element of the dual $C(\Omega)^{\rm d}$ in the canonical way; an element $u\in C(\Omega) \hat{\otimes} C(\Omega)$ gives rise to the continuous function (denoted in the same way) $u \colon \Omega\times\Omega\to \bb{C}$, given by 
\[
u(x, y) = (\delta_x \otimes \delta_y)(u), \ \ \ x,y \in \Om.
\]
We thus identify the elements of $C(\Om)^* \otimes_{\rm s} C(\Om)$ as continuous functions on $\Omega\times\Omega$. 
We note that, if $x, y \in \Omega$, then $\de_x \otimes \de_y$ is a (completely) contractive functional on $C(\Omega) \otimes_{\rm h} C(\Omega)$ and thus
\begin{equation}\label{eq_infhai}
\|u\|_\infty = \sup_{x,y \in \Om} |u(x,y)| = \sup_{x,y \in \Om} |(\de_x \otimes \de_y)(u)| \leq \|u\|_{\rm h}.
\end{equation}

We will use the identification between the elements of $C(\Om)^* \otimes_{\rm s} C(\Om)$ and the continuous functions on $\Omega\times\Omega$ described above. 
With an element 
\[
u = (u_{i,j})_{i,j=1}^n \in M_n(C(\Om)^* \otimes_{\rm s} C(\Om)),
\]
we associate the function (denoted in the same way) $u \colon \Omega\times \Omega \to M_n$, given by 
\[
u(x,y) = (u_{i,j}(x,y))_{i,j=1}^n, \ \ \ x,y \in \Om.
\] 
Further, if $H$ and $K$ are Hilbert space, and $\xi\in H$ and $\eta\in K$, we let $\theta_{\eta,\xi}\in \cl B(H,K)$ be the rank one operator, given by $\theta_{\eta,\xi}(\zeta) = \sca{\zeta,\xi}\eta$. 

\begin{lemma}\label{l_intop}
Let $\Omega$ be a compact Hasudorff space, $\mu$ be a Borel probability measure on $\Om$ and $H$ be a Hilbert space.
For $h_1, h_2 \in H$ and $\xi_1, \xi_2 \in C(\Omega)$, let $\phi_1, \phi_2 \colon C(\Om)\to \cl B(H_{\mu},H)$ be the completely contractive maps, given by 
\[
\phi_i(f) := \theta_{h_i, \xi_i} \pi_{\mu}(f), \ \ f \in H_\mu, \ \ i = 1,2.
\]
Let $u\in M_n(C(\Om)^* \otimes_{\rm s} C(\Om))$ and $\omega \in C(\Omega\times\Omega, M_n)$ be the function given by 
\begin{equation}\label{eq_Tom0-}
\omega(x,y) := \langle h_2,h_1\rangle (\xi_1(x) \otimes I_n) u(x,y) (\overline{\xi_2(y)} \otimes I_n), \ \ \ x,y\in \Omega.
\end{equation}
Then 
\begin{equation}\label{eq_Tom}
(\phi_1^*\cdot \phi_2)^{(n)}(u) = T_{\omega}^\mu.
\end{equation}
\end{lemma}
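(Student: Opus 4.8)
The plan is to verify the identity \eqref{eq_Tom} by unwinding both sides on elementary tensors and simple vectors, and then invoking density. First I would fix an elementary element $u = y^* \otimes g \otimes x$ with $x, y, g \in C(\Omega)$, viewed inside $C(\Omega)^* \otimes_{\rm s} \bb C \otimes_{\rm s} C(\Omega)$; since $\cl S = \bb C$ here, one should keep track of the scalar middle slot but it plays no essential role. Then $(\phi_1^* \cdot \phi_2)(u) = \phi_1(y)^* \phi_2(x) = (\theta_{h_1,\xi_1} \pi_\mu(y))^* \theta_{h_2, \xi_2} \pi_\mu(x) = \pi_\mu(\ol{y}) \theta_{\xi_1, h_1} \theta_{h_2, \xi_2} \pi_\mu(x)$, and since $\theta_{\xi_1, h_1} \theta_{h_2, \xi_2} = \sca{h_2, h_1} \theta_{\xi_1, \xi_2}$, this equals $\sca{h_2, h_1} \pi_\mu(\ol{y}) \theta_{\xi_1, \xi_2} \pi_\mu(x)$. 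Computing the action on $\eta \in H_\mu = L^2(\Omega,\mu)$ gives, at a point $t \in \Omega$,
\[
\left( (\phi_1^* \cdot \phi_2)(u) \eta \right)(t) = \sca{h_2, h_1}\, \ol{y(t)}\, \xi_1(t) \int_\Omega \ol{\xi_2(s)}\, x(s)\, \eta(s)\, d\mu(s),
\]
which, recalling that for this $u$ the associated function is $u(t,s) = \ol{y(t)} x(s)$ (under the identification via $\de_t \otimes \de_s$, up to the scalar middle factor), is exactly $\int_\Omega \omega(t,s) \eta(s)\, d\mu(s) = (T_\omega^\mu \eta)(t)$, where $\omega$ is defined by \eqref{eq_Tom0-}. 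This settles the scalar case $n=1$ on elementary tensors.

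Next I would promote this to matrix level. For $u = (u_{i,j})_{i,j=1}^n \in M_n(C(\Omega)^* \otimes_{\rm s} C(\Omega))$, the map $(\phi_1^* \cdot \phi_2)^{(n)}$ acts entrywise, and $\phi_1, \phi_2$ are built from a \emph{common} measure $\mu$, so the $(i,j)$-entry of $(\phi_1^* \cdot \phi_2)^{(n)}(u)$ is $(\phi_1^* \cdot \phi_2)(u_{i,j})$, which by the $n=1$ computation equals the integral operator with kernel $\sca{h_2,h_1} \xi_1(t) u_{i,j}(t,s) \ol{\xi_2(s)}$; assembling these entries is precisely $T_\omega^\mu$ with $\omega$ as in \eqref{eq_Tom0-}, since there $\xi_1(t) \otimes I_n$ and $\ol{\xi_2(s)} \otimes I_n$ act diagonally. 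Here one uses the notation $T_u^\mu := (T^\mu_{u_{i,j}})_{i,j}$ introduced before the statement.

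Finally, for general $u \in M_n(C(\Omega)^* \otimes_{\rm s} C(\Omega))$ I would argue by continuity. Both sides of \eqref{eq_Tom} are linear in $u$, so the identity holds on the dense subspace $M_n(C(\Omega)^* \odot C(\Omega))$. The left-hand side $u \mapsto (\phi_1^* \cdot \phi_2)^{(n)}(u)$ is norm-continuous because $\phi_1^* \cdot \phi_2$ is completely contractive on the symmetrisation by Theorem \ref{t_symuni}(i) (or directly by the definition of $\nor{\cdot}_{\rm s}$, since $(\phi_1,\phi_2)$ arises from admissible data). For the right-hand side, I would observe that $\nor{T_\omega^\mu} \leq \nor{\omega}_\infty \leq \nor{\omega}_{\rm h}$ by \eqref{eq_infhai} (applied entrywise, or via the standard bound $\nor{T_\omega^\mu} \le \sup_{t,s}\nor{\omega(t,s)}$ for an integral operator against a probability measure), and that $u \mapsto \omega$ is bounded from $\nor{\cdot}_{\rm s}$ since multiplication by the fixed continuous functions $\xi_1, \ol{\xi_2}$ and the scalar $\sca{h_2,h_1}$ is bounded on $C(\Omega)^* \otimes_{\rm s} C(\Omega)$; alternatively, since $\nor{\cdot}_{\rm s}$ and $\nor{\cdot}_{\rm h}$ are equivalent by Lemma \ref{l_norm}, it suffices to bound everything through $\nor{\cdot}_{\rm h}$. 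Hence both sides are norm-continuous in $u$ and agree on a dense set, so they agree everywhere, proving \eqref{eq_Tom}. I expect the main obstacle to be purely bookkeeping: correctly carrying the scalar $\sca{h_2,h_1}$ and the two rank-one vectors $\xi_1, \xi_2$ through the matrix amplification and confirming that the function associated with $u$ under the $\de_t \otimes \de_s$ identification matches the kernel that falls out of $\pi_\mu(\ol{y})\theta_{\xi_1,\xi_2}\pi_\mu(x)$; there is no conceptual difficulty, only the risk of a misplaced conjugate or a transposed index.
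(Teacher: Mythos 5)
Your proposal is correct and follows essentially the same route as the paper: verify the identity on elementary tensors (the paper treats $u = F^*\odot G$ with $F,G\in M_{m,n}(C(\Omega))$ directly via an inner-product computation, while you do the scalar case through the operator identity $\theta_{\xi_1,h_1}\theta_{h_2,\xi_2}=\langle h_2,h_1\rangle\theta_{\xi_1,\xi_2}$ and then amplify entrywise — these are equivalent) and then extend by density and linearity. The only minor divergence is in the approximation step, where you deduce norm convergence of $T_{\omega_k}^{\mu}$ from the bound $\|T_{\omega}^{\mu}\|\leq C_n\|\omega\|_{\infty}$ together with $\|\omega\|_{\infty}\lesssim\|u\|_{\rm s}$, whereas the paper settles for weak operator convergence via uniform boundedness of the kernels and dominated convergence; both suffice to conclude.
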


\begin{proof}
We set $W_i := \theta_{h_i, \xi_i}$ for brevity, $i = 1,2$. 
Suppose first that $u = F^*\odot G$, for some $F, G \in M_{m,n}(C(\Omega))$, so that
\[
u_{i,j}(x,y) = \sum_{k=1}^m \ol{F_{k, i}(x)} G_{k,j}(y), \ \ \  i,j=1, \dots, n.
\]
Thus
\begin{equation}\label{eq_omom}
\om_{i,j}(x,y) = \sum_{k=1}^m \sca{h_2, h_1} \xi_1(x) \ol{F_{k,i}(x)} G_{k,j}(y) \ol{\xi_2(y)}.
\end{equation}
For $\eta_{1,i}, \eta_{2,i} \in H_{\mu}$, $i=1, \dots, n$, we set
\[
\eta_1 := (\eta_{1,i})_{i=1}^n
\qand
\eta_2 := (\eta_{2,i})_{i=1}^n,
\]
and, using (\ref{eq_omom}), we compute
\begin{align*}
\langle (\phi_1^*\cdot \phi_2)(u)\eta_2, \eta_1\rangle
& = \\
& \hspace{-2.7cm} = 
\left\langle (W_2 \otimes I_m) \pi_{\mu}^{(m,n)}(G)\eta_2, 
(W_1 \otimes I_m) \pi_{\mu}^{(m,n)}(F) \eta_1\right\rangle \\
& \hspace{-2.7cm} = 
\left\langle \left(\sum_{j=1}^n W_2 G_{k,j} \eta_{2,j}\right)_{k=1}^m, 
\left(\sum_{i=1}^n W_1 F_{k,i} \eta_{1,i}\right)_{k=1}^m \right\rangle \\
& \hspace{-2.7cm} = 
\left\langle \left(\sum_{j=1}^n \sca{G_{k,j} \eta_{2,j}, \xi_2} h_2\right)_{k=1}^m, \left(\sum_{i=1}^n \sca{F_{k,i} \eta_{1,i}, \xi_1} h_1\right)_{k=1}^m \right\rangle \\
& \hspace{-2.7cm} = 
\sum_{k=1}^m \sum_{i,j=1}^n \sca{G_{k,j} \eta_{2,j}, \xi_2} \sca{\xi_1, F_{k,i} \eta_{1,i}} \sca{h_2, h_1} \\
& \hspace{-2.7cm} = 
\sum_{k=1}^m \sum_{i,j=1}^n \int_{\Om \times \Om} \sca{h_2, h_1} \xi_1(x) \ol{F_{k,i}(x)} G_{k,j}(y)  \ol{\xi_2(y)} \eta_{2,j}(y) \ol{\eta_{1,i}(x)} d\mu(x) d\mu(y) \\
& \hspace{-2.7cm} = 
\sum_{i,j=1}^n \int_{\Om \times \Om} \om_{i,j}(x,y) \eta_{2,j}(y) \ol{\eta_{1,i}(y)} d\mu(x) d\mu(y) \\
& \hspace{-2.7cm} = 
\sum_{i,j=1}^n \sca{T_{\om_{i,j}}^\mu \eta_{2,j}, \eta_{1,i} }
=
\sca{T_\om^\mu \eta_2, \eta_1}.
\end{align*}
It follows that (\ref{eq_Tom}) is fulfilled when $u = F^* \odot G$. 

Next suppose that $u\in M_n(C(\Om)^* \otimes_{\rm s} C(\Om))$ is arbitrary and let $(u_k)_{k\in \bb{N}}\in M_n(C(\Omega)^* \odot C(\Omega))$ be a sequence, such that $\lim_k \|u - u_k\|_{\rm s}^{(n)} = 0$. 
By Lemma \ref{l_norm}, $\lim_{k\to\infty} \|u - u_k\|_{\rm h}^{(n)} = 0$ and so 
\begin{equation}\label{eq_phi12a}
\lim_{k\to\infty} \| (\phi_1^* \cdot \phi_2)(u_k) - (\phi_1^* \cdot \phi_2)(u) \| = 0.
\end{equation}
On the other hand, by (\ref{eq_infhai}), $\lim_{k\to\infty} u_k(x,y) = u(x,y)$ for all $x,y\in \Omega$, and thus, letting $\omega_k$ correspond to $u_k$ via (\ref{eq_Tom0-}), we have 
\[
\lim_{k\to\infty} \om_k(x,y)= \om(x,y), \ \ \ x,y\in \Omega.
\]
Let $C > 0$ be such that $\|u_k\|_{\rm s}^{(n)} \leq C$, $k\in \bb{N}$. 
In particular, using Lemma \ref{l_norm}, 
\[
\|u_k(x,y)\| = \|(\delta_x\otimes\delta_y)^{(n)}(u_k)\| \leq \|u_k\|_{\rm h}^{(n)} \leq 4 \|u_k\|_{\rm s}^{(n)} \leq 4 C, \ \ \ k\in \bb{N},
\]
and thus the $\om_k$ are uniformly bounded with
\[
\|\om_k\|_\infty 
\leq 
|\sca{h_1, h_2}| \|\xi_1\|_\infty \|\xi_2\|_\infty \|u_k\|_\infty
\leq
4C \hspace{0.05cm} |\hspace{-0.1cm}\sca{h_1, h_2}\hspace{-0.1cm}| \|\xi_1\|_\infty  \|\xi_2\|_\infty.
\]
The finiteness of the measure $\mu$ and the Lebesgue Dominated Convergence Theorem imply that $\text{wot-}\lim_{k\to\infty} T_{\om_k}^\mu = T_\om^\mu$. 
Equation (\ref{eq_phi12a}), together with the first part of the proof, now imply (\ref{eq_Tom}), that is
\[
(\phi_1^* \cdot \phi_2)^{(n)}(u)
=
\text{wot-}\lim_{k\to\infty} (\phi_1^* \cdot \phi_2)^{(n)}(u_k)
=
\text{wot-}\lim_{k\to\infty} T_{\om_k}^{\mu}
=
T_{\om}^{\mu},
\]
and the proof is complete.
\end{proof}

The maps  of Lemma \ref{l_intop} arise naturally in the context of the symmetrisation.
For a set ${\sf M} = \{\mu_\al\}_{\al \in \bb A}$ of Borel probability measures on $\Omega$, we denote the sum of the corresponding $*$-representations on $H_{\sf M} := \oplus_{\al \in \bb A} H_{\mu_\al}$ by 
\[
\pi_{\sf M} := \oplus_{\al \in \bb A} \pi_{\mu_\al}.
\]
Let $H$ and $K$ be Hilbert spaces, and $V \colon H_{\sf M} \to H$ and $W \colon H_{\sf M} \to K$ be bounded operators such that $\|V\| \cdot \|W\| \leq 1$.
Then the map
\[
\Phi_{{\sf M}, W, V} \colon C(\Omega) \to \cl B(H, K), 
\text{ given by } \Phi_{{\sf M}, W, V}(f) := W \pi_{\sf M}(f) V^*
\]
is completely contractive.
We will simply write $\Phi_{\mu, W,V}$ for $\Phi_{\{\mu\},W,V}$.
In the case where $H = H_{\sf M}$, we set 
\[
\Phi_{{\sf M}, W} := \Phi_{{\sf M}, W, I_{H_{\sf M}}}.
\]

Conversely, if $\Phi \colon C(\Omega) \to \cl B(H,K)$ is a completely contractive map, then by the Haagerup-Paulsen-Wittstok Theorem \cite[Theorem 1.2.8]{blm}, there exist a unital $*$-repre\-sen\-tation $\pi \colon C(\Omega) \to \cl B(L)$, and contractions $V \colon L \to H$ and $W \colon L \to K$ such that 
\[
\Phi(f) = W \pi(f) V^*, \ \ \  f\in C(\Om).
\]
We can write $\pi = \oplus_{\al \in \bb A} \pi_\al$ for cyclic representations $\pi_\al$, with cyclic vectors $\xi_\al$.
By the Riesz-Markov-Kakutani Theorem and the GNS construction, there exist a family $\{\mu_\al\}_{\al \in \bb A}$ of Borel probability measures and a family $\{U_\al\}_{\al \in \bb A}$ of unitary operators, such that $\pi_\al(\cdot) = U_\al \pi_{\mu_\al}(\cdot) U_\al^*$, $\alpha\in \bb{A}$. 
Set $U = \oplus_{\al \in \bb A} U_\al$,  $W' = W U$, $V' = V U$ and $M=\{\mu_\al\}_{\al \in \bb A}$. Then $W'$ and $V'$ are contractions and $\Phi = \Phi_{{\sf M}, W', V'}$.

\begin{proposition}\label{p_rank1}
Let $u\in M_n(C(\Om)^* \otimes_{\rm s} C(\Om))$. 
The following are equivalent:
\begin{enumerate}
\item $u$ is positive semi-definite;
\item $(\Phi_{\mu, W}^* \cdot \Phi_{\mu,W})^{(n)}(u) \geq 0$ for every Borel probability measure $\mu$ and every rank one contraction $W$;
\item $(\Phi_{\mu, W}^* \cdot \Phi_{\mu,W})^{(n)}(u) \geq 0$ for every Borel probability measure $\mu$ and every finite rank contraction $W$.
\end{enumerate}
\end{proposition}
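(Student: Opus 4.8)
The plan is to establish the cycle of implications [(i)$\Rightarrow$(iii)$\Rightarrow$(ii)$\Rightarrow$(i)], with the first and last implications being the substantive ones and the middle one a triviality. The key device throughout is Lemma \ref{l_intop}, which computes $(\phi_1^* \cdot \phi_2)^{(n)}(u)$ as an integral operator $T_\omega^\mu$ with $\omega$ given by the pointwise formula (\ref{eq_Tom0-}), combined with Lemma \ref{l_pdeag}, which characterises positive semi-definiteness of a continuous $M_n$-valued function on $\Omega\times\Omega$ in terms of positivity of the integral operators $T_u^\mu$. The implication [(iii)$\Rightarrow$(ii)] is immediate, since a rank one contraction is in particular a finite rank contraction.

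For [(i)$\Rightarrow$(iii)], suppose $u$ is positive semi-definite and fix a Borel probability measure $\mu$ and a finite rank contraction $W \colon H_\mu \to K$. Write $W = \sum_{p=1}^r \theta_{h_p,\xi_p}$ for suitable vectors $h_p \in K$ and $\xi_p \in H_\mu$ (which we may take in the dense subspace $C(\Omega)$, approximating and passing to a wot-limit at the end if needed, using Lemma \ref{l_intop} and the uniform bounds it provides). Expanding $\Phi_{\mu,W}(f) = W\pi_\mu(f)$ bilinearly and applying Lemma \ref{l_intop} to each summand, one gets
\[
(\Phi_{\mu,W}^* \cdot \Phi_{\mu,W})^{(n)}(u) = \sum_{p,q=1}^r T_{\omega_{p,q}}^\mu = T_\omega^\mu,
\]
where $\omega(x,y) = \sum_{p,q} \langle h_q, h_p\rangle (\xi_p(x) \otimes I_n) u(x,y) (\overline{\xi_q(y)} \otimes I_n)$, which is itself an $M_n$-valued continuous function on $\Omega\times\Omega$. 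It then remains to verify that $\omega$ is positive semi-definite whenever $u$ is: given points $x_1,\dots,x_m \in \Omega$ and a vector in $\ell^2([m]\times[n])$, one rewrites the quadratic form $\langle (\omega(x_a,x_b))_{a,b} v, v\rangle$ as $\langle (u(x_a,x_b))_{a,b} w, w\rangle$ for a vector $w$ obtained by absorbing the scalars $\langle h_q,h_p\rangle$ and $\xi_p(x_a)$ — the Gram-matrix structure $(\langle h_q,h_p\rangle)_{p,q} \geq 0$ is what makes this substitution legitimate (it factors through a single column of $h_p$'s). Once $\omega$ is positive semi-definite, Lemma \ref{l_pdeag} gives $T_\omega^\mu \geq 0$, which is exactly item (iii).

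For [(ii)$\Rightarrow$(i)], assume $(\Phi_{\mu,W}^* \cdot \Phi_{\mu,W})^{(n)}(u) \geq 0$ for all probability measures $\mu$ and rank one contractions $W$. Fix distinct points $x_1,\dots,x_r \in \Omega$ and a vector $h \in \ell^2([r]\times[n])$; we must show $\langle (u(x_p,x_q))_{p,q} h, h\rangle \geq 0$. Take $\mu = \frac{1}{r}\sum_{p=1}^r \delta_{x_p}$, choose $\eta_i \in C(\Omega)$ with $\eta_i(x_p)$ equal to the appropriate component of $h$, and choose a rank one contraction $W = \theta_{h_0,\xi}$ with $\xi \equiv 1$ and $h_0$ a unit vector (so $\langle h_0,h_0\rangle = 1$ and $\|W\| = \|\xi\|_{H_\mu}\cdot\|h_0\| \leq 1$ after normalising the measure appropriately). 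By Lemma \ref{l_intop}, $(\Phi_{\mu,W}^* \cdot \Phi_{\mu,W})^{(n)}(u) = T_\omega^\mu$ with $\omega(x,y) = u(x,y)$ in this case (up to the constant $\langle h_0,h_0\rangle\,\xi(x)\overline{\xi(y)} = 1$). Pairing $T_u^\mu$ against the vector $(\eta_i)_{i=1}^n \in H_\mu^{(n)}$ and reading the computation (\ref{eq_cocofu}) in reverse recovers $\langle (u(x_p,x_q))_{p,q} h, h\rangle$ up to a positive scalar, so positivity of $T_u^\mu$ forces the desired inequality. I expect the main obstacle to be the bookkeeping in [(i)$\Rightarrow$(iii)]: one must carefully track the index sets (the $\ell^2([r]\times[n])$ versus $\ell^2([m]\times[n])$ structures, where $r$ is the rank of $W$ and $m$ the number of test points) and verify the Gram-matrix reduction cleanly, together with handling the approximation of $\xi_p$ by continuous functions so that Lemma \ref{l_intop} applies verbatim.
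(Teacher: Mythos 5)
Your proof is correct and rests on the same two pillars as the paper's: Lemma \ref{l_intop} to identify $(\Phi_{\mu,W}^*\cdot\Phi_{\mu,W})^{(n)}(u)$ with an integral operator $T_\omega^\mu$, and Lemma \ref{l_pdeag} to convert positivity of such operators into positive semi-definiteness of the symbol. The only organisational difference is in how the finite-rank case is handled: you prove (i)$\Rightarrow$(iii) directly by showing that the combined symbol $\omega(x,y)=\sum_{p,q}\langle h_q,h_p\rangle(\xi_p(x)\otimes I_n)u(x,y)(\overline{\xi_q(y)}\otimes I_n)$ is positive semi-definite via the positivity of the Gram matrix $(\langle h_q,h_p\rangle)_{p,q}$ (your substitution is legitimate — expanding $\langle h_q,h_p\rangle=\sum_s (h_p)_s\overline{(h_q)_s}$ writes the quadratic form as a sum of quadratic forms of $(u(x_a,x_b))_{a,b}$), whereas the paper proves (i)$\Leftrightarrow$(ii) first and then reduces (iii) to (ii) by writing the finite-rank contraction as $\sum_k\theta_{h_k,\xi_k}$ with the $h_k$ pairwise \emph{orthogonal}, so that all cross terms vanish and $\Phi_{\mu,W}^*\cdot\Phi_{\mu,W}$ splits as a sum of rank-one contributions. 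These are two ways of exploiting the same positivity of $W^*W$, and both are fine; the paper's orthogonalisation avoids the index bookkeeping you flag as the main obstacle. Your (ii)$\Rightarrow$(i) step re-derives the discrete-measure computation, where the paper simply cites Lemma \ref{l_pdeag} after taking $W=\theta_{1,1}$ — again equivalent.
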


\begin{proof}
\noindent 
[(i)$\Leftrightarrow$(ii)]. 
Suppose that item (i) holds.
First let $\xi\in C(\Om)$, $h\in H$ with $\|h\| = 1$, and set $W := \theta_{h, \xi}$; thus, $W\in \cl B(H_\mu,H)$.
By Lemma \ref{l_intop}, 
\[
(\Phi_{\mu, W}^* \cdot \Phi_{\mu,W})^{(n)}(u) = T_\om^\mu, 
\]
where 
\[
\om(x,y) := (\xi(x) \otimes I_n) u(x,y) (\ol{\xi(y)} \otimes I_n), \ \ \ 
x,y\in \Om.
\]
Hence it suffices to show that $T_\om^\mu \geq 0$; equivalently, due to Lemma \ref{l_pdeag}, that $\om$ is positive semi-definite.
For $x_1, \dots, x_r \in \Omega$, we have
\begin{align*}
(\om(x_p,x_q))_{p,q=1}^r 
& = \\
& \hspace{-2cm} =
( \xi(x_p) u(x_p,x_q) \ol{\xi(x_q)} )_{p,q=1}^r \\
& \hspace{-2cm} =
{\rm diag} \{\xi(x_p) \otimes I_n\}_{p=1}^r
\cdot
(u(x_p,x_q))_{p,q=1}^n
\cdot
({\rm diag} \{\xi(x_p) \otimes I_n\}_{p=1}^r )^*
\geq 0,
\end{align*}
since $u$ is positive semi-definite, showing that $\om$ is positive semi-definite.

Next take an arbitrary $\xi\in H_{\mu}$ and let $(\xi_k)_{k\in \bb{N}}\subseteq C(\Om)$ be a sequence, converging to $\xi$. 
Letting $W_k = \theta_{h,\xi_k}$, we have $\lim_{k \to \infty} \|W_k - W\| = 0$, and hence 
\[
\lim_{k \to \infty} \|\Phi_{\mu, W_k} - \Phi_{\mu, W}\|_{\rm cb} = 0.
\]
It then follows that
\[
\lim_{k \to \infty} (\Phi_{\mu, W_k}^* \cdot \Phi_{\mu,W_k})^{(n)}(u) = (\Phi_{\mu, W}^* \cdot \Phi_{\mu,W})^{(n)}(u),
\]
and so $(\Phi_{\mu, W}^* \cdot \Phi_{\mu,W})^{(n)}(u) \geq 0$. 

Conversely, if item (ii) holds, then by choosing $W = \theta_{1, 1}$ for $1 \in H_\mu$, and using Lemma \ref{l_intop}, we have 
\[
0 \leq (\Phi_{\mu, W}^* \cdot \Phi_{\mu,W})^{(n)}(u) = T_u^\mu.
\]
Since this holds for every Borel probability measure $\mu$, $u$ is positive semi-definite by Lemma \ref{l_pdeag}.

\smallskip

\noindent
[(ii)$\Leftrightarrow$(iii)].
Suppose that item (ii) holds and let $W$ be a finite rank contraction.
Write $W = \sum_{k=1}^R \theta_{h_k, \xi_k}$ with $h_k$ pairwise orthogonal.
Similarly to the first paragraph of the proof of [(i)$\Leftrightarrow$(ii)] above, it suffices to assume that $\xi_k\in C(\Om)$, $k = 1,\dots,R$. 
We claim that
\begin{equation}\label{eq_capPhi}
\Phi_{\mu, W}^* \cdot \Phi_{\mu, W} = \sum_{k=1}^R \Phi_{\mu, \theta_{h_k, \xi_k}}^* \cdot \Phi_{\mu, \theta_{h_k, \xi_k}},
\end{equation}
therefore concluding
\begin{align*}
(\Phi_{\mu, W}^* \cdot \Phi_{\mu, W})^{(n)}(u)
& = \sum_{k=1}^R (\Phi_{\mu, \theta_{h_k, \xi_k}}^* \cdot \Phi_{\mu, \theta_{h_k, \xi_k}})^{(n)}(u) \geq 0.
\end{align*}
By linearity and boundedness, it suffices to check (\ref{eq_capPhi}) for the elements $u$ of the form $f^* \otimes g$ with $f, g \in C(\Omega)$.
A direct computation yields
\begin{align*}
(\Phi_{\mu, W}^* \cdot \Phi_{\mu, W})(f^* \otimes g)
& = \\
& \hspace{-2.5cm} =
\pi_\mu(f)^* W^*W \pi_\mu(g) 
=
\pi_\mu(f)^*\left( \sum_{k, \ell=1}^R \theta_{\xi_k, \sca{h_\ell, h_k} \xi_\ell} \right) \pi_\mu(g) \\
& \hspace{-2.5cm} =
\pi_\mu(f)^*\left( \sum_{k=1}^R \theta_{\xi_k, \sca{h_k, h_k} \xi_k} \right) \pi_\mu(g) 
=
\sum_{k=1}^R \pi_\mu(f)^* \theta_{\xi_k, \sca{h_k, h_k} \xi_k} \pi_\mu(g) \\
& \hspace{-2.5cm} =
\sum_{k=1}^R (\Phi_{\mu, \theta_{h_k, \xi_k}}^* \cdot \Phi_{\mu, \theta_{h_k, \xi_k}}) (f^* \otimes g),
\end{align*}
as required.

Finally, the implication [(iii)$\Rightarrow$(ii)] is trivial.
\end{proof}

We proceed to the description of the 
matricial cones of $C(\Om)^* \otimes_{\rm s} C(\Om)$.
Towards this end, we can restrict our attention to maps arising from finite sets ${\sf M}$ and finite rank contractions.

\begin{lemma}\label{l_finred}
Let $u\in M_n(C(\Om)^* \otimes_{\rm s} C(\Om))$. 
The following are equivalent:
\begin{enumerate}
\item
$u\in M_n(C(\Om)^* \otimes_{\rm s} C(\Om))^+$;

\item
$(\Phi_{{\sf M},W,V}^*\cdot \Phi_{{\sf M},W,V})^{(n)}(u)\in M_n(\cl B(H))^+$ for all sets $\sf M$ of Borel probability measures on $\Omega$ and contractions $W \in \cl B(H_{\sf M}, K)$ and $V \in \cl B(H_{\sf M}, H)$;

\item
$(\Phi_{{\sf M},W}^*\cdot \Phi_{{\sf M},W})^{(n)}(u)\in M_n(\cl B(H_{\sf M}))^+$ for all sets $\sf M$ of Borel probability measures on $\Omega$ and contractions $W \in \cl B(H_{\sf M}, K)$;

\item
$(\Phi_{{\sf M},W}^*\cdot \Phi_{{\sf M},W})^{(n)}(u)\in M_n(\cl B(H_{\sf M}))^+$ for all finite sets $\sf M$ of Borel probability measures on $\Omega$ and contractions $W \in \cl B(H_{\sf M}, K)$;

\item
$(\Phi_{{\sf M},W}^*\cdot \Phi_{{\sf M},W})^{(n)}(u)\in M_n(\cl B(H_{\sf M}))^+$ for all finite sets $\sf M$ of Borel probability measures on $\Omega$ and contractions $W \in \cl B(H_{\sf M},K)$ of finite rank;

\item
$(\Phi_{{\sf M},W}^*\cdot \Phi_{{\sf M},W})^{(n)}(u)\in M_n(\cl B(H_{\sf M}))^+$ for all finite sets $\sf M$ of Borel probability measures on $\Omega$ and contractions $W \in \cl B(H_{\sf M})$ of finite rank. 
\end{enumerate}
\end{lemma}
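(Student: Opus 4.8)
The plan is to prove a chain of implications among the six conditions. The implications [(ii)$\Rightarrow$(iii)$\Rightarrow$(iv)$\Rightarrow$(v)$\Rightarrow$(vi)] and [(iii)$\Rightarrow$(vi)] are immediate from the definitions by progressively restricting the class of admissible data; the content lies in the two implications [(i)$\Rightarrow$(ii)] and [(vi)$\Rightarrow$(i)]. For [(i)$\Rightarrow$(ii)], I would note that $\Phi_{{\sf M},W,V}\colon C(\Omega)\to\cl B(H,K)$ is completely contractive (as recorded in the paragraph preceding the statement), so $(\Phi_{{\sf M},W,V},\tau)$ is an admissible pair for any state $\tau$ on $\bb C=\cl S$; more precisely, the map $\Phi_{{\sf M},W,V}^*\cdot\Phi_{{\sf M},W,V}$ is of the form $\phi^*\cdot\psi\cdot\phi$ for an admissible pair by Remark~\ref{ex_existad} (or directly since $\cl S=\bb C$), and hence is completely positive on $C(\Om)^*\otimes_{\rm s}C(\Om)$ by item~(i) of Theorem~\ref{t_symuni}. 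Therefore $u\in M_n(C(\Om)^*\otimes_{\rm s}C(\Om))^+$ forces $(\Phi_{{\sf M},W,V}^*\cdot\Phi_{{\sf M},W,V})^{(n)}(u)\geq 0$, which is (ii).

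The crux is [(vi)$\Rightarrow$(i)]. Here I would start from an arbitrary admissible pair $(\phi,\psi)$ for $(C(\Om),\bb C)$; since $\psi$ is a unital completely positive map on $\bb C$ it is just $\la\mapsto\la I_K$, so $\phi^*\cdot\psi\cdot\phi=\phi^*\cdot\phi$, and by Proposition~\ref{p_finen} it suffices to check $(\phi^*\cdot\phi)^{(n)}(u)\geq 0$ for $\phi\colon C(\Om)\to M_{m,k}$. Such a finite-dimensional completely contractive $\phi$ can, by the Haagerup--Paulsen--Wittstock Theorem, be written as $\phi(f)=W\pi(f)V^*$ for a unital $*$-representation $\pi$ of $C(\Om)$ and contractions $V,W$ into a finite-dimensional space; decomposing $\pi$ into cyclic pieces and invoking the Riesz--Markov--Kakutani theorem and the GNS construction (exactly as in the paragraph before the statement) yields a \emph{finite} set ${\sf M}$ of Borel probability measures and contractions $W'\in\cl B(H_{\sf M},\bb C^m)$, $V'\in\cl B(H_{\sf M},\bb C^k)$, both necessarily of finite rank since the target spaces are finite dimensional, with $\phi=\Phi_{{\sf M},W',V'}$. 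Then $(\phi^*\cdot\phi)^{(n)}(u)=(\Phi_{{\sf M},W',V'}^*\cdot\Phi_{{\sf M},W',V'})^{(n)}(u)$, and I would compress to $H_{\sf M}$: writing the finite-rank map with range in $H_{\sf M}$ and using $\Phi_{{\sf M},W',V'}(f)=W'\pi_{\sf M}(f)(V')^*$, one has $(\Phi_{{\sf M},W',V'}^*\cdot\Phi_{{\sf M},W',V'})^{(n)}(u)=(V'\otimes I_n)(\Phi_{{\sf M},I}^*\cdot\Phi_{{\sf M},I})^{(n)}(u)(V'\otimes I_n)^*$-type identity isn't quite literal, so more carefully I would absorb $W'$ into a single finite-rank contraction $\tilde W\in\cl B(H_{\sf M})$ and $V'$ similarly, reducing to data of the form in (vi) up to conjugation by a fixed matrix (which preserves positivity). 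Applying (vi) then gives $(\phi^*\cdot\phi)^{(n)}(u)\geq 0$, and Proposition~\ref{p_finen} completes the argument.

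The main obstacle I anticipate is the bookkeeping in [(vi)$\Rightarrow$(i)]: one must be careful that when passing from a general admissible pair to $\Phi_{{\sf M},W}$-type maps, the operators $W$ and $V$ that appear are genuinely finite rank and that conjugating $(\Phi_{{\sf M},W}^*\cdot\Phi_{{\sf M},W})^{(n)}(u)$ by the finite matrices corresponding to $V$ (and by the rank-one reductions of $W$, as in the proof of Proposition~\ref{p_rank1}) does not leave the class of maps covered by (vi). This is purely organisational rather than deep, and the identity $(\Phi^* \cdot \Phi)^{(n)}(\alpha^* u \alpha) = \alpha^*(\Phi^*\cdot\Phi)^{(n)}(u)\alpha$ together with the decomposition $\Phi_{{\sf M},W}^*\cdot\Phi_{{\sf M},W}=\sum_k\Phi_{{\sf M},\theta_{h_k,\xi_k}}^*\cdot\Phi_{{\sf M},\theta_{h_k,\xi_k}}$ from Proposition~\ref{p_rank1} gives all the tools needed. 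An alternative, cleaner route for [(vi)$\Rightarrow$(i)] is to combine Proposition~\ref{p_rank1} with Theorem~\ref{th_fspa}: condition (vi) restricted to rank-one $W$ already implies $u$ is positive semi-definite by Proposition~\ref{p_rank1}, and then Theorem~\ref{th_fspa} gives $u\in M_n(C(\Om)^*\otimes_{\rm s}C(\Om))^+$; if Theorem~\ref{th_fspa} is proved after this lemma one should instead route through Proposition~\ref{p_finen} as above.
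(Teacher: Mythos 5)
Your chain of downward implications and your argument for [(i)$\Rightarrow$(ii)] agree with the paper, but your treatment of the hard direction is genuinely different. The paper climbs back up the chain one step at a time: [(vi)$\Rightarrow$(v)] by replacing a finite-rank contraction $W\in\cl B(H_{\sf M},K)$ with $(W^*W)^{1/2}\in\cl B(H_{\sf M})$, [(v)$\Rightarrow$(iv)] via Lemma \ref{l_finen} (compression by finite-rank projections and a weak operator topology limit), [(iv)$\Rightarrow$(iii)] by a weak* limit over finite subsets of ${\sf M}$, and [(iii)$\Rightarrow$(ii)$\Rightarrow$(i)] by conjugation and Theorem \ref{t_symuni}. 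Your shortcut — invoke Proposition \ref{p_finen} to reduce (i) to admissible pairs with finite-dimensional targets, realise such a $\phi\colon C(\Om)\to M_{m,k}$ as $\Phi_{{\sf M},W',V'}$, absorb $W'$ into $\wt{W}=((W')^*W')^{1/2}\in\cl B(H_{\sf M})$ and peel off $V'$ by conjugation — is a legitimate and arguably cleaner route that bypasses both limit arguments; the conjugation identity you need is exactly the one the paper uses in its [(ii)$\Leftrightarrow$(iii)] step.

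There is, however, one assertion whose stated justification does not cover it: that the set ${\sf M}$ arising from the Haagerup--Paulsen--Wittstock decomposition of $\phi\colon C(\Om)\to M_{m,k}$ is \emph{finite}. Finite-dimensionality of the targets does give finite rank of $W'$ and $V'$, but the representation $\rho$ supplied by Wittstock's theorem may act on an arbitrarily large space and decompose into infinitely many cyclic summands, so a priori ${\sf M}$ is infinite and condition (vi) is not directly applicable. Since your whole argument rests on landing inside the class covered by (vi), this must be repaired: compress $\rho$ to the reducing subspace generated by the (at most $(m+k)$-dimensional) sum of the ranges of the adjoints of the two Wittstock contractions; a unital representation of $C(\Om)$ generated by finitely many vectors is a direct sum of at most that many cyclic representations, each unitarily equivalent to some $\pi_\mu$, and this yields a finite ${\sf M}$ together with finite-rank $W'$, $V'$. (Alternatively, keep ${\sf M}$ infinite and reinstate the paper's weak*-limit-over-finite-subsets argument.) With that repair the proof is complete. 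Your closing observation that routing through Proposition \ref{p_rank1} and Theorem \ref{th_fspa} would be circular is correct, and your fallback through Proposition \ref{p_finen} is the right call.
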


\begin{proof}
\noindent
[(i)$\Leftrightarrow$(ii)]. 
The equivalence is immediate in view of Theorem \ref{t_symuni}, since every $\Phi$ can be written in the form of $\Phi_{{\sf M}, W, V}$ for an appropriate choice of ${\sf M}$, $W$ and $V$.

\smallskip

\noindent
[(ii)$\Leftrightarrow$(iii)]. 
The forward implication is immediate by considering $V$ to be the identity in $\cl B(H_{\sf M})$.
Conversely, given $V$ and $W$, we see that
\begin{align*}
(\Phi_{{\sf M},W,V}^*\cdot \Phi_{{\sf M},W,V})^{(n)}(u) 
& = 
(V\otimes I_n)
(\Phi_{{\sf M},W'}^*\cdot \Phi_{{\sf M},W})^{(n)}(u)(V\otimes I_n)^* \geq 0,
\end{align*}
since $(\Phi_{{\sf M},W'}^*\cdot \Phi_{{\sf M},W'})^{(n)}(u) \geq 0$ by assumption.

\smallskip

\noindent
[(iii)$\Leftrightarrow$(iv)]. 
The forward direction is immediate.
For the converse, for every $n \in \bb N$ we have that $(\Phi_{{\sf M},W}^*\cdot \Phi_{{\sf M},W})^{(n)}$ is the point-weak* limit of the net $((\Phi_{{\sf M}_F,W}^*\cdot \Phi_{{\sf M}_F,W})^{(n)})_{{\sf M}_F}$ where ${\sf M}_F = \{\mu_{\al_i}\}_{i \in F}$ ranges over finite subsets $F \subseteq {\sf M}$ with the usual inclusion.
Then item (iii) follows from item (iv) as positivity is preserved under weak* limits.

\smallskip

\noindent
[(iv)$\Leftrightarrow$(v)]. 
The forward implication is trivial. 
The converse follows from Lemma \ref{l_finen} by taking $\phi = \Phi_{{\sf M}, W}$.

\smallskip

\noindent
[(v)$\Leftrightarrow$(vi)].
The forward implication is immediate.
For the converse, let a contraction $W \colon H_{\sf M} \to H$.
Then $W^*W$ is a positive contraction in $\cl B(H_{\sf M})$ and thus there is a positive finite rank contraction $W' \in \cl B( H_{\sf M})$ such that 
\[
W^* W = (W')^* W'.
\]
The conclusion then follows by noting that 
\[
\Phi_{{\sf M}, W}^* \cdot \Phi_{{\sf M}, W} = \Phi_{{\sf M}, W'}^* \cdot \Phi_{{\sf M}, W'},
\]  
and the proof is complete.
\end{proof}

For $N \in \bb N$, we write $\Om^{(\sqcup, N)}$ for the disjoint union of $N$ copies of $\Om$; we will write $\Om^{(k)}$ for the $k$-th copy of $\Om$ in $\Om^{(\sqcup, N)}$, and write $x^{(k)}$ for the element $x$ of $\Om$, when it is viewed as an element of $\Om^{(k)}$.
For a function $F \colon \Om \to M_{m,n}$, we define
\[
F^{(N)} \colon \Om^{(\sqcup, N)} \to M_{m,n} \text{ by } F^{(N)}|_{\Om^{(k)}} := F.
\]
For $F, G \colon \Om \to M_{m,n}$ we have that
\[
(F + G)^{(N)} = F^{(N)} + G^{(N)}.
\]
Likewise, for $u \colon \Om \times \Om \to M_n$, we define
\[
u^{(N^2)} \colon (\Om \times \Om)^{(\sqcup, N^2)} \to M_n \text{ by } u^{(N^2)}|_{(\Om \times \Om)^{(k, \ell)}} := u, \text{ for } k,\ell=1, \dots, N.
\]
Since the product distributes over the disjoint union we can make the identification
\[
(\Om \times \Om)^{(\sqcup, N^2)} =  \Om^{(\sqcup, N)} \times \Om^{(\sqcup, N)},
\]
and view $u^{(N^2)}$ as a function over $\Om^{(\sqcup, N)} \times \Om^{(\sqcup, N)}$.
With this identification, for $F, G \colon \Om \to M_{m,n}$ and the induced $F^* \otimes G \colon \Om \times \Om \to M_{m,n}$ we have
\[
(F^*)^{(N)} \otimes G^{(N)} = (F^* \otimes G)^{(N^2)}.
\]

\begin{lemma}\label{l_uamp}
Let $u \colon \Om \times \Om \to M_n$ and $N \in \bb N$.
Then $u$ is positive semi-definite if and only $u^{(N^2)} \colon \Om^{(\sqcup, N)} \times \Om^{(\sqcup, N)} \to M_n$ is positive semi-definite.
\end{lemma}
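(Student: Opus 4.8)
The plan is to reduce everything to the pointwise characterisation of positive semi-definiteness, using that the disjoint union of copies of $\Om$ simply re-indexes the finite families of evaluation points. First I would unwind the definitions: by the remark preceding Lemma \ref{l_pdeag}, $u$ is positive semi-definite if and only if for every finite tuple $x_1, \dots, x_r \in \Om$ the block matrix $(u(x_p, x_q))_{p,q=1}^r$ is positive in $M_{rn}$, and similarly $u^{(N^2)}$ is positive semi-definite if and only if for every finite tuple of points in $\Om^{(\sqcup, N)}$ the associated block matrix is positive. Each point of $\Om^{(\sqcup, N)}$ has the form $x^{(k)}$ for a unique $x \in \Om$ and $k \in \{1, \dots, N\}$, and by construction of $u^{(N^2)}$ we have $u^{(N^2)}(x^{(k)}, y^{(\ell)}) = u(x,y)$ for all $k, \ell$, independently of the copy labels.

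The forward direction is then immediate: given a finite tuple $x_1^{(k_1)}, \dots, x_r^{(k_r)}$ in $\Om^{(\sqcup, N)}$, the matrix $(u^{(N^2)}(x_p^{(k_p)}, x_q^{(k_q)}))_{p,q} = (u(x_p, x_q))_{p,q}$ is positive by hypothesis on $u$ (the copy labels are irrelevant). Conversely, if $u^{(N^2)}$ is positive semi-definite, then for any finite tuple $x_1, \dots, x_r \in \Om$ we simply place all of them in the first copy $\Om^{(1)}$, i.e. consider the tuple $x_1^{(1)}, \dots, x_r^{(1)}$ in $\Om^{(\sqcup, N)}$; positivity of $(u^{(N^2)}(x_p^{(1)}, x_q^{(1)}))_{p,q} = (u(x_p, x_q))_{p,q}$ gives exactly what is needed. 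So both implications follow directly once the re-indexing is made explicit.

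I do not anticipate a genuine obstacle here; the only point requiring a sentence of care is that $\Om^{(\sqcup, N)}$ is indeed compact Hausdorff (a finite disjoint union of compact Hausdorff spaces), so that the notion of positive semi-definiteness from Subsection \ref{ss_fsys} applies verbatim to $u^{(N^2)}$, and that $u^{(N^2)}$ is continuous since it restricts to the continuous function $u$ on each clopen piece $(\Om \times \Om)^{(k,\ell)}$. With these remarks in place the proof is a two-line verification in each direction, using only the definition of positive semi-definiteness and the defining property $u^{(N^2)}|_{(\Om\times\Om)^{(k,\ell)}} = u$.
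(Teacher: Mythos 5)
Your proposal is correct and follows essentially the same route as the paper: both directions are a direct unwinding of the pointwise definition of positive semi-definiteness, using that $u^{(N^2)}(x^{(k)}, y^{(\ell)}) = u(x,y)$ independently of the copy labels for the forward direction (the definition in Subsection \ref{ss_fsys} already allows repeated points, so collapsing the labels causes no issue) and embedding a tuple into a single copy for the converse. Your bookkeeping remarks on continuity of $u^{(N^2)}$ and compactness of $\Om^{(\sqcup,N)}$ are appropriate and, if anything, your indexing in the forward direction is stated more cleanly than in the paper's own proof.
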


\begin{proof}
Suppose that $u$ is positive semi-definite.
Let $x_1, \dots, x_r \in \Om^{(\sqcup, N)}$, and write $x_i = (x_{i,1},\dots,x_{i,N})$, where $x_{i,k}$ is the element of $\Omega^{(k)}$, corresponding to $x_i$, for $k = 1,\dots,N$, $i = 1,\dots,r$. 
We have 
\[
(u^{(N^2)}(x_{i}, x_{j}))_{i,j}
=
(u(x_{i,k}, x_{j,l}))_{(i,j), (k,l)} \geq 0.
\]
Conversely, if $x_1, \dots, x_r \in \Om$, then 
\[
(u(x_i, x_j))_{i,j}
=
\left(u^{(N^2)}\left(x_i^{(1)}, x_j^{(1)}\right)\right)_{i,j} \geq 0,
\]
and the proof is complete.
\end{proof}

We introduce some notation we will use shortly.
Let ${\sf M} = \{\mu_1, \dots, \mu_N\}$ be a (finite) set of Borel probability measures, and let the measure $\mu_{\sf M}$ on $\Om^{(\sqcup, N)}$ be given by 
\begin{equation}\label{eq_Bosp}
\mu_{\sf M}(A) := \frac{1}{N} \sum_{k=1}^N \mu_k\left(A \cap \Om^{(k)}\right) 
\ \textup{ for all Borel sets } A \subseteq \Om^{(\sqcup, N)}.
\end{equation}
Then the operator 
$U_{\sf M} \colon \oplus_{k=1}^N H_{\mu_k} \to H_{\mu_{\sf M}}$, given by
\[
U_{\sf M}\left((\eta_k)_{k=1}^N\right) = N^{1/2} \sum_{k=1}^N \eta_k \chi_{\Om^{(k)}}, 
\ \ \eta_k \in H_{\mu_k}, \ k=1, \dots, N,
\]
is unitary.
Indeed, on one hand, we have 
\begin{align*}
\sca{U_{\sf M}\left((\eta_k)_{k=1}^N\right), 
U_{\sf M}\left((\eta_k')_{k=1}^N\right)}_{H_{\mu_{\sf M}}}
& = \\
& \hspace{-4.5cm} =
\sum_{k=1}^N N \int_{\Om^{(\sqcup, N)}} \eta_k(x) \overline{\eta'_k(x)} \chi_{\Om^{(k)}} d \mu_{\sf M}(x) 
 = 
\sum_{k=1}^N \int_\Om \eta_k(x) \overline{\eta'_k(x)} d \mu_k(x) \\
& \hspace{-4.5cm} =
\sum_{k=1}^N \sca{\eta_k, \eta_k'}_{H_{\mu_k}} 
 =
\sca{(\eta_k)_{k=1}^N, (\eta_k')_{k=1}^N}_{\oplus_{k=1}^N H_{\mu_k}},
\end{align*}
and thus $U_{\sf M}$ is an isometry.
On the other hand, for $\eta \in H_{\mu_{\sf M}}$ let 
\[
\eta_k := N^{-1/2} \eta \chi_{\Om^{(k)}}, \ \ \ k=1, \dots, N;
\]
then $U\left((\eta_k)_{k=1}^N\right) = \eta$, and thus $U_{\sf M}$ is a surjection.
We now note that 
\begin{align*}
U_{\sf M} \pi_{\sf M}(f) (\eta_k)_{k=1}^N
& =
N^{1/2} \sum_{k=1}^N  f \eta_k \chi_{\Om^{(k)}}
=
\pi_{\mu_{\sf M}}(f^{(N)}) U_{\sf M}(\eta_k)_{k=1}^N,
\end{align*}
that is, 
\begin{equation}\label{eq_sfM2}
\pi_{\sf M}(f) = U_{\sf M}^* \pi_{\mu_{\sf M}}(f^{(N)}) U_{\sf M}, \ \ \ f \in C(\Om).
\end{equation}

\begin{lemma}\label{l_Mamp}
Let ${\sf M} = \{\mu_1, \dots, \mu_N\}$ be a (finite) set of Borel probability measures, $H$ be a Hilbert space, and $W \colon \oplus_{k=1}^N H_{\mu_k} \to H$ be a bounded operator.
Then 
\[
\pi_{\sf M}^{(m,n)}(F) = (U_{\sf M} \otimes I_m)^* \pi_{\mu_{\sf M}}^{(m,n)}(F^{(N)}) (U_{\sf M} \otimes I_n)
\]
for all $F \in C(\Om, M_{m,n})$, and
\begin{align*}
(\Phi_{{\sf M}, W}^* \cdot \Phi_{{\sf M}, W})^{(n)}(u) 
& = \\
& \hspace{-2.5cm} =
(U_{\sf M} \otimes I_n)^* (\Phi_{\mu_{\sf M}, W U_{\sf M} \otimes I_m}^* \cdot \Phi_{\mu_{\sf M}, W U_{\sf M} \otimes I_m})^{(n)}(u^{(N^2)}) (U_{\sf M} \otimes I_n)
\end{align*}
for all $u \in M_n(C(\Om)^* \otimes_{\rm s} C(\Om))$.
\end{lemma}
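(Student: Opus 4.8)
The plan is to treat the two displayed identities separately. The first is just an entrywise reformulation of (\ref{eq_sfM2}): writing $F=(f_{i,j})_{i,j}\in M_{m,n}(C(\Om))$ we have $(F^{(N)})_{i,j}=f_{i,j}^{(N)}$, and since $(U_{\sf M}\otimes I_m)^*$ (respectively $U_{\sf M}\otimes I_n$) is the block-diagonal operator with $m$ (respectively $n$) copies of $U_{\sf M}^*$ (respectively $U_{\sf M}$) on the diagonal, the $(i,j)$-entry of $(U_{\sf M}\otimes I_m)^*\,\pi_{\mu_{\sf M}}^{(m,n)}(F^{(N)})\,(U_{\sf M}\otimes I_n)$ is $U_{\sf M}^*\pi_{\mu_{\sf M}}(f_{i,j}^{(N)})U_{\sf M}=\pi_{\sf M}(f_{i,j})$ by (\ref{eq_sfM2}), which is the $(i,j)$-entry of $\pi_{\sf M}^{(m,n)}(F)$.

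For the second identity, set $W_0:=WU_{\sf M}^*\colon H_{\mu_{\sf M}}\to H$ (the operator named in the statement). First I would argue that $u\mapsto u^{(N^2)}$ is well defined on the symmetrisation over $\Om^{(\sqcup,N)}$, not merely on $C(\Om^{(\sqcup,N)}\times\Om^{(\sqcup,N)})$: the unital $*$-monomorphism $j\colon C(\Om)\to C(\Om^{(\sqcup,N)})$, $j(f):=f^{(N)}$, sends $v=\sum_i F_i^*\odot G_i\in M_n(C(\Om)^*\odot C(\Om))$ to $\sum_i(F_i^{(N)})^*\odot G_i^{(N)}=v^{(N^2)}$ by the relation $(F^{(N)})^*\odot G^{(N)}=(F^*\odot G)^{(N^2)}$ recorded just before the statement; and since $\cl S=\bb C$ here, every admissible pair over $\Om^{(\sqcup,N)}$ has the form $(\phi,\mathrm{triv})$ with $\phi$ a complete contraction, so $\phi\circ j$ is a complete contraction of $C(\Om)$ and $(\phi^*\cdot\phi)^{(n)}(v^{(N^2)})=((\phi\circ j)^*\cdot(\phi\circ j))^{(n)}(v)$, whence $\|v^{(N^2)}\|_{\rm s}^{(n)}\le\|v\|_{\rm s}^{(n)}$. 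Thus $v\mapsto v^{(N^2)}$ extends to a completely contractive map of symmetrisations (this is also a special case of Proposition \ref{p_tensmaps}), so $u^{(N^2)}$ is a genuine element of $M_n(C(\Om^{(\sqcup,N)})^*\otimes_{\rm s}C(\Om^{(\sqcup,N)}))$. Both sides of the asserted identity are then continuous in $u$ for $\|\cdot\|_{\rm s}^{(n)}$ — the left side by Theorem \ref{t_symuni}(i) applied to $\Phi_{{\sf M},W}$, the right side by the same applied to $\Phi_{\mu_{\sf M},W_0}$ together with boundedness of conjugation by $U_{\sf M}\otimes I_n$ — and $M_n(C(\Om)^*\odot C(\Om))$ is dense in $M_n(C(\Om)^*\otimes_{\rm s}C(\Om))$, so it suffices to verify the identity for $u=F^*\odot G$ with $F,G\in M_{m,n}(C(\Om))$.

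For such $u$ we have $(\Phi_{{\sf M},W}^*\cdot\Phi_{{\sf M},W})^{(n)}(u)=\Phi_{{\sf M},W}^{(m,n)}(F)^*\,\Phi_{{\sf M},W}^{(m,n)}(G)$, and $\Phi_{{\sf M},W}(f)=W\pi_{\sf M}(f)$ gives $\Phi_{{\sf M},W}^{(m,n)}(G)=(W\otimes I_m)\pi_{\sf M}^{(m,n)}(G)$. Substituting the first identity of the lemma for $\pi_{\sf M}^{(m,n)}(F)$ and $\pi_{\sf M}^{(m,n)}(G)$ turns the product into
\[
(U_{\sf M}\otimes I_n)^*\,\pi_{\mu_{\sf M}}^{(m,n)}(F^{(N)})^*\,(U_{\sf M}\otimes I_m)(W^*W\otimes I_m)(U_{\sf M}\otimes I_m)^*\,\pi_{\mu_{\sf M}}^{(m,n)}(G^{(N)})\,(U_{\sf M}\otimes I_n).
\]
Since $U_{\sf M}$ is unitary, $(U_{\sf M}\otimes I_m)(W^*W\otimes I_m)(U_{\sf M}\otimes I_m)^*=(W_0^*W_0)\otimes I_m$, so the middle block equals $\Phi_{\mu_{\sf M},W_0}^{(m,n)}(F^{(N)})^*\,\Phi_{\mu_{\sf M},W_0}^{(m,n)}(G^{(N)})=(\Phi_{\mu_{\sf M},W_0}^*\cdot\Phi_{\mu_{\sf M},W_0})^{(n)}((F^{(N)})^*\odot G^{(N)})$; and $(F^{(N)})^*\odot G^{(N)}=(F^*\odot G)^{(N^2)}=u^{(N^2)}$, which is precisely the claimed formula. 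I expect the only step needing more than a routine unwinding of the amplification conventions to be the verification that $u^{(N^2)}$ lies in the symmetrisation over $\Om^{(\sqcup,N)}$, which is exactly what the $*$-homomorphism $j$ is used for and what makes the density reduction legitimate.
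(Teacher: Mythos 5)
Your proof is correct and follows essentially the same route as the paper's: the first identity is checked entrywise from (\ref{eq_sfM2}), and the second by the same substitution computation on elementary tensors $F^*\odot G$, using $(F^{(N)})^*\odot G^{(N)}=(F^*\odot G)^{(N^2)}$ and the unitarity of $U_{\sf M}$. Your extra care in showing that $u\mapsto u^{(N^2)}$ extends completely contractively to the symmetrisation (via the unital $*$-homomorphism $j$) and that both sides depend continuously on $u$ legitimately fills in the paper's unexplained ``it suffices to assume $u=F^*\odot G$'', and your reading of the operator in the statement as $WU_{\sf M}^*$ is the correct resolution of the paper's notation.
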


\begin{proof}
For $F \in C(\Om, M_{m,n})$ we can write $F = (F_{i,j})_{i,j}$ for $F_{i,j} = E_{i,i} F E_{j,j} \in C(\Om)$.
Then $F^{(N)} = ( F_{i,j}^{(N)} )_{i,j}$ and thus, by 
(\ref{eq_sfM2}), 
\begin{align*}
\pi_{\sf M}^{(m,n)}(F)
& = 
( \pi_{\sf M}(F_{i,j}) )_{i,j} \\
& =
( U_{\sf M}^* \pi_{\mu_{\sf M}}(F^{(N)}_{i,j}) U_{\sf M} )_{i,j}\\
& =
(U_{\sf M} \otimes I_m)^* ( \pi_{\mu_{\sf M}}(F^{(N)}_{i,j}) )_{i,j} (U_{\sf M} \otimes I_n) \\
& =
(U_{\sf M} \otimes I_m)^* \pi_{\mu_{\sf M}}^{(m,n)}(F^{(N)}) (U_{\sf M} \otimes I_n).
\end{align*}

For the second claim, it suffices to assume that $u = F^* \odot G$ for some $F, G \in C(\Om, M_{m,n})$.
We note that
\begin{align*}
\left(F^{(N)}\right)^* \odot G^{(N)}
& =
\left( \sum_{k=1}^m (F_{ki}^*)^{(N)} \otimes G_{kj}^{(N)} \right)_{i,j} 
=
\left( \sum_{k=1}^m \left( F_{ki}^* \otimes G_{kj} \right)^{(N^2)} \right)_{i,j} \\
& =
\left( \left(\sum_{k=1}^m F_{ki}^* \otimes G_{kj}\right)_{i,j} \right)^{(N^2)} 
=
(F^* \odot G)^{(N^2)}.
\end{align*}
Hence 
\begin{align*}
(\Phi_{{\sf M}, W}^* \cdot \Phi_{{\sf M}, W})^{(n)}(F^* \odot G)
& = \\
& \hspace{-4cm} =
\pi_{\sf M}^{(m,n)}(F)^* (W^*W \otimes I_m) \pi_{\sf M}^{(m,n)}(G) \\
& \hspace{-4cm} =
(U_{\sf M} \otimes I_n) \pi_{\mu_{\sf M}}^{(m,n)}(F^{(N)})^* (W U_{\sf M} \otimes I_m)^* \cdot \\
& \hspace{1cm} \cdot
(W U_{\sf M} \otimes I_m) \pi_{\mu_{\sf M}}^{(m,n)}(G^{(N)}) (U_{\sf M} \otimes I_n) \\
& \hspace{-4cm} =
(U_{\sf M} \otimes I_n)^* (\Phi_{\mu_{\sf M}, W U_{\sf M} \otimes I_m}^* \cdot \Phi_{\mu_{\sf M}, W U_{\sf M} \otimes I_m})^{(n)}((F^{(N)})^* \odot G^{(N)}) (U_{\sf M} \otimes I_n) \\
& \hspace{-4cm} =
(U_{\sf M} \otimes I_n)^* (\Phi_{\mu_{\sf M}, W U_{\sf M} \otimes I_m}^* \cdot \Phi_{\mu_{\sf M}, W U_{\sf M} \otimes I_m})^{(n)}((F^* \odot G)^{(N^2)}) (U_{\sf M} \otimes I_n),
\end{align*}
and the proof is complete.
\end{proof}

\begin{theorem}\label{th_fspa}
Let $u\in M_n(C(\Om)^* \otimes_{\rm s} C(\Om))$. 
The following are equivalent:
\begin{enumerate}
\item
$u\in M_n(C(\Om)^* \otimes_{\rm s} C(\Om))^+$;

\item
$u \colon \Omega \times \Omega \to M_n$ is positive semi-definite. 
\end{enumerate}
\end{theorem}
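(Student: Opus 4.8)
The plan is to prove the two implications separately, using the machinery of Lemmas \ref{l_finred}, \ref{l_uamp} and \ref{l_Mamp} to reduce positivity in the symmetrisation to positive semi-definiteness of an amplified function, and then Lemma \ref{l_pdeag} (via Lemma \ref{l_intop} and Proposition \ref{p_rank1}) to handle the single-measure case.

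\medskip

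\noindent \textbf{[(ii)$\Rightarrow$(i)].}
Assume $u$ is positive semi-definite as a function $\Om \times \Om \to M_n$. By the equivalence [(i)$\Leftrightarrow$(v)] in Lemma \ref{l_finred}, it suffices to show that $(\Phi_{{\sf M},W}^* \cdot \Phi_{{\sf M},W})^{(n)}(u) \geq 0$ for every finite set ${\sf M} = \{\mu_1,\dots,\mu_N\}$ of Borel probability measures and every finite rank contraction $W \colon \oplus_{k=1}^N H_{\mu_k} \to H$. Fix such ${\sf M}$ and $W$. By Lemma \ref{l_uamp}, the amplified function $u^{(N^2)} \colon \Om^{(\sqcup,N)} \times \Om^{(\sqcup,N)} \to M_n$ is positive semi-definite. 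By Lemma \ref{l_Mamp},
\[
(\Phi_{{\sf M},W}^* \cdot \Phi_{{\sf M},W})^{(n)}(u)
=
(U_{\sf M} \otimes I_n)^* (\Phi_{\mu_{\sf M}, W U_{\sf M} \otimes I_m}^* \cdot \Phi_{\mu_{\sf M}, W U_{\sf M} \otimes I_m})^{(n)}(u^{(N^2)}) (U_{\sf M} \otimes I_n),
\]
so it is enough to prove positivity of the middle factor, which is of the single-measure type $(\Phi_{\mu, W'}^* \cdot \Phi_{\mu, W'})^{(n)}(u^{(N^2)})$ with $\mu = \mu_{\sf M}$ on the compact Hausdorff space $\Om^{(\sqcup,N)}$ and $W'$ a finite rank contraction. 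But this follows at once from the implication [(i)$\Rightarrow$(iii)] of Proposition \ref{p_rank1} applied to $u^{(N^2)}$ over the space $\Om^{(\sqcup,N)}$. Conjugating by $U_{\sf M} \otimes I_n$ preserves positivity, completing this direction.

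\medskip

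\noindent \textbf{[(i)$\Rightarrow$(ii)].}
Conversely, assume $u \in M_n(C(\Om)^* \otimes_{\rm s} C(\Om))^+$. By Lemma \ref{l_finred} [(i)$\Leftrightarrow$(ii)], in particular $(\Phi_{\mu,W}^* \cdot \Phi_{\mu,W})^{(n)}(u) \geq 0$ for every Borel probability measure $\mu$ and every rank one contraction $W$; that is, condition (ii) of Proposition \ref{p_rank1} holds for $u$ over $\Om$ itself. Hence $u$ is positive semi-definite by Proposition \ref{p_rank1} [(ii)$\Rightarrow$(i)]. (Alternatively, one can argue directly: taking $W = \theta_{1,1}$ for the constant function $1 \in H_\mu$ gives $T_u^\mu = (\Phi_{\mu,W}^* \cdot \Phi_{\mu,W})^{(n)}(u) \geq 0$ for every Borel probability measure $\mu$, and then Lemma \ref{l_pdeag} [(iii)$\Rightarrow$(i)] yields positive semi-definiteness of $u$.)

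\medskip

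The conceptual work has already been absorbed into the preceding lemmas, so the proof itself is a short assembly. The step requiring the most care is the bookkeeping in the [(ii)$\Rightarrow$(i)] direction: one must correctly match the amplification $u \mapsto u^{(N^2)}$ with the unitary $U_{\sf M}$ and the passage from $\oplus_k H_{\mu_k}$ to $H_{\mu_{\sf M}}$, and ensure that the finite-rank reduction in Lemma \ref{l_finred} is invoked in a way that is compatible with the finite-rank hypothesis in Proposition \ref{p_rank1}(iii). Once the identity from Lemma \ref{l_Mamp} is in hand, the remaining argument is purely formal, since conjugation by an isometry (indeed a unitary, tensored with $I_n$) preserves the positive cone of $M_n(\cl B(H))$.
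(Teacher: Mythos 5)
Your proposal is correct and follows essentially the same route as the paper: Lemma \ref{l_finred} reduces positivity to finite sets of measures and finite-rank contractions, Lemmas \ref{l_uamp} and \ref{l_Mamp} transfer the problem to the single measure $\mu_{\sf M}$ on $\Om^{(\sqcup,N)}$ via the amplification $u^{(N^2)}$ and conjugation by $U_{\sf M}\otimes I_n$, and Proposition \ref{p_rank1} handles both directions of the single-measure case. The assembly, including the alternative argument via $W=\theta_{1,1}$ and Lemma \ref{l_pdeag}, matches the paper's proof.
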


\begin{proof}
\noindent
[(i)$\Rightarrow$(ii)]. 
Suppose that $u\in M_n(C(\Om)^* \otimes_{\rm s} C(\Om))^+$.
By Lemma \ref{l_finred}, $(\Phi_{\mu, W}^* \cdot \Phi_{\mu, W})^{(n)}(u) \geq 0$ for every Borel probability measure $\mu$ and a finite rank contraction $W$.
By Proposition \ref{p_rank1}, $u$ is positive semi-definite.

\smallskip

\noindent
[(ii)$\Rightarrow$(i)].
Suppose that $u$ is positive semi-definite.
By Lemma \ref{l_finred}, it suffices to show that $(\Phi_{{\sf M}, W}^* \cdot \Phi_{{\sf M}, W})^{(n)}(u) \geq 0$ for every finite set ${\sf M} = \{\mu_1, \dots, \mu_N\}$ of Borel probability measures and a finite rank contraction $W$.
Let $\mu_{\sf M}$ be the measure defined in (\ref{eq_Bosp}), and $U_{\sf M}$ be the induced unitary so that $W U_{\sf M} \otimes I_m$ is also a finite rank contraction.
Since $u$ is positive semi-definite we have that $u^{(N^2)}$ is positive semi-definite by Lemma \ref{l_uamp}.
By Proposition \ref{p_rank1}, 
\[
\left(\Phi_{\mu_{\sf M}, W U_{\sf M} \otimes I_m}^* \cdot \Phi_{\mu_{\sf M}, W U_{\sf M} \otimes I_m}\right)^{(n)}(u^{(N^2)}) \geq 0.
\]
Therefore, using Lemma \ref{l_Mamp},
\begin{align*}
\left(\Phi_{\sf M, W}^* \cdot \Phi_{\sf M, W}\right)^{(n)}(u)
& = \\
& \hspace{-3cm} = 
(U_{\sf M} \otimes I_n)^* \left(\Phi_{\mu_{\sf M}, W U_{\sf M} \otimes I_m}^* \cdot \Phi_{\mu_{\sf M}, W U_{\sf M} \otimes I_m}\right)^{(n)}(u^{(N^2)}) (U_{\sf M} \otimes I_n) \geq 0,
\end{align*}
and the proof is complete.
\end{proof}

Note that the involution on $C(\Om)^* \otimes_{\rm s} C(\Om)$ 
translates into the involution on the space of 
functions $u \colon \Om \times \Om \to \bb C$, given by 
\[
u^*(x,y):= \ol{u(y,x)},
\]
extending canonically to functions with range in the matrix algebras.
Therefore, if $u \in C(\Om)^* \otimes_{\rm s} C(\Om)$ is selfadjoint, then $(u(x_p,x_q))_{p,q=1}^r \in M_{rn}$ is also selfadjoint for every $x_1, \dots, x_r \in \Om$.

\begin{proposition}
Let $\Om$ be a compact space and let $\mathbf{1}$ be the characteristic function on the diagonal of $\Om \times \Om$, that is,
\[
\mathbf{1}(x,y)
=
\begin{cases}
1 & \text{ if } x=y, \\
0 & \text{ if } x \neq y.
\end{cases}
\]
The following are equivalent:
\begin{enumerate}
\item $\Om$ is a finite set;
\item $\Om$ is a discrete topological space;
\item $\mathbf{1}$ is a continuous function on $\Om \times \Om$.
\end{enumerate}
If any of the above holds, then $\mathbf{1}$ is an Archimedean matrix order unit in $C(\Om) \otimes_{\rm s} C(\Om)$.
\end{proposition}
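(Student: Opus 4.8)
The plan is to prove the cycle $(i)\Rightarrow(ii)\Rightarrow(iii)\Rightarrow(i)$ for the three topological conditions, and then deduce the order-unit assertion from Theorem~\ref{th_fspa}. For $(i)\Rightarrow(ii)$, a finite Hausdorff space is discrete, since singletons are closed and hence every subset, being a finite union of singletons, is closed, so every subset is open. For $(ii)\Rightarrow(iii)$, if $\Omega$ is discrete then $\Omega\times\Omega$ is discrete, so every function on it is continuous; in particular $\mathbf{1}$ is. For $(iii)\Rightarrow(i)$, if $\mathbf{1}$ is continuous then the diagonal $\Delta=\mathbf{1}^{-1}(\{1\})$ is open in $\Omega\times\Omega$; for each $x\in\Omega$ pick a basic open rectangle $U\times V\subseteq\Delta$ with $(x,x)\in U\times V$ and set $W:=U\cap V$, an open neighbourhood of $x$ with $W\times W\subseteq\Delta$. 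Since $W\times W\subseteq\Delta$ forces any two points of $W$ to coincide, $W=\{x\}$, so $\{x\}$ is open; thus $\{\{x\}\}_{x\in\Omega}$ is an open cover of the compact space $\Omega$, admits a finite subcover, and $\Omega$ is finite.

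For the final claim, assume $\Omega$ is finite. Then $C(\Omega)^*\otimes_{\mathrm s}C(\Omega)$ is finite dimensional and, under the identification of Section~\ref{s_fundual}, it is linearly isomorphic to the space of all functions $\Omega\times\Omega\to\mathbb C$; in particular the (continuous) function $\mathbf{1}$ is a genuine element, it is hermitian because $\mathbf{1}^*(x,y)=\overline{\mathbf{1}(y,x)}=\mathbf{1}(x,y)$, and $\mathbf{1}\otimes I_n$ corresponds to the function $(x,y)\mapsto\mathbf{1}(x,y)I_n$. Moreover $\mathbf{1}$ is positive semi-definite: for $x_1,\dots,x_r\in\Omega$ the matrix $(\mathbf{1}(x_p,x_q))_{p,q}$ has $(p,q)$ entry equal to $1$ exactly when $x_p=x_q$, so after a permutation it is block diagonal with all-ones blocks, hence positive; thus $\mathbf{1}\ge0$ by Theorem~\ref{th_fspa}. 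To show $\mathbf{1}\otimes I_n$ is an order unit for $M_n(C(\Omega)^*\otimes_{\mathrm s}C(\Omega))$, take a hermitian $u$, regarded as a function $u\colon\Omega\times\Omega\to M_n$ with $u(y,x)^*=u(x,y)$, and set $r_0:=\|(u(y,y'))_{y,y'\in\Omega}\|$, the norm of the finite hermitian matrix indexed by $\Omega\times\Omega$. For $r\ge r_0$ the element $r(\mathbf{1}\otimes I_n)+u$ is the function $v_r$ with $v_r(x,x)=u(x,x)+rI_n$ and $v_r(x,y)=u(x,y)$ for $x\ne y$, so $(v_r(y,y'))_{y,y'}=(u(y,y'))_{y,y'}+rI_{n|\Omega|}\ge0$. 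By the remark preceding Lemma~\ref{l_pdeag} it suffices to test positive semi-definiteness of $v_r$ on tuples of \emph{distinct} points, and for such a tuple $(v_r(x_p,x_q))_{p,q}$ is a principal submatrix of $(v_r(y,y'))_{y,y'}$, hence positive; so $v_r$ is positive semi-definite and $r(\mathbf{1}\otimes I_n)+u\ge0$ by Theorem~\ref{th_fspa}.

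For the Archimedean property, suppose $r(\mathbf{1}\otimes I_n)+u\ge0$ for all $r>0$. By Theorem~\ref{th_fspa}, $v_r$ is positive semi-definite for every $r>0$, and since $v_r(x,y)\to u(x,y)$ pointwise as $r\to0^+$ while the cone of positive semi-definite matrices is closed, $u$ is positive semi-definite, i.e.\ $u\ge0$; hence $\mathbf{1}$ is an Archimedean matrix order unit. The topological equivalences are entirely routine, and the main point requiring care in the last part is the possible coincidence of the test points in the positive-semi-definiteness criterion for $v_r$; this is already settled by the remark preceding Lemma~\ref{l_pdeag}, which lets one reduce to distinct points and hence to principal submatrices of the single finite hermitian matrix $(u(y,y'))_{y,y'}+rI$, so no genuine obstacle remains.
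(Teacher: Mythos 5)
Your proof is correct and follows essentially the same route as the paper: identify elements with $M_n$-valued functions on $\Om\times\Om$, evaluate at tuples of (reducible-to-distinct) points so that everything becomes a principal submatrix of the single finite hermitian matrix indexed by $\Om$, and invoke Theorem~\ref{th_fspa} for both the order-unit and the Archimedean assertions. The only difference is cosmetic (you add $rI$ to the full matrix and pass to principal submatrices, whereas the paper bounds the compression's norm by the full matrix's norm), and you spell out the topological equivalences that the paper dismisses as straightforward.
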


\begin{proof}
The equivalence of items (i)-(iii) is straightforward.
Suppose that $\Om$ is finite, say $\Om = \{x_1, \dots, x_k\}$. 
Then $\mathbf{1} = \sum_{i=1}^k \de_{x_i}^* \otimes \de_{x_i}$ and thus $\mathbf{1} \in C(\Om)^* \odot C(\Om)$.
Therefore $\mathbf{1} \in C(\Om)^* \otimes_{\rm s} C(\Om)$, and it remains to show that $\mathbf{1}$ is an Archimedean matrix order unit in $C(\Om)^* \otimes_{\rm s} C(\Om)$.

First we show that $\mathbf{1}$ is a matrix order unit.
Towards this end, let $u$ be a selfadjoint element in $M_n(C(\Om)^* \otimes_{\rm s} C(\Om))$.
Let $x_1, \dots, x_r \in \Om$ be distinct points, and so $r \leq k = |\Om|$.
Note that $( u(x_p,x_q) )_{p,q=1}^r$ is a selfadjoint matrix in $M_{rn}$, and a compression of $( u(x_p,x_q) )_{p,q=1}^k$.
Thus
\begin{align*}
\left\| ( u(x_p,x_q) )_{p,q=1}^r \right\|_{M_{rn}}
\leq
\left\| ( u(x_p,x_q) )_{p,q=1}^k \right\|_{M_{kn}}
=: m_u,
\end{align*}
and we have 
\begin{align*}
\left( ( m_u \cdot \mathbf{1} \otimes I_n + u)(x_p,x_q) \right)_{p,q=1}^r
& = \\
& \hspace{-2.5cm} =
\left( ( m_u \cdot \mathbf{1} \otimes I_n)(x_p,x_q) + u(x_p,x_q) \right)_{p,q=1}^r \\
& \hspace{-2.5cm} =
m_u 1_{rn} + (u(x_p,x_q))_{p,q=1}^r \\
& \hspace{-2.5cm} \geq
\| ( u(x_p,x_q) )_{p,q=1}^r \|_{M_{rn}} \cdot 1_{rn} +  (u(x_p,x_q))_{p,q=1}^r
\geq 0,
\end{align*}
where we used that $1_{rn}$ is an order unit in $M_{rn}$.
Thus $m_u \cdot \mathbf{1} + u$ is positive semi-definite, and therefore it is in $M_n(C(\Om)^* \otimes_{\rm s} C(\Om))^+$ by Theorem \ref{th_fspa}.

For the Archimedean property, we proceed likewise.
Let $u$ be a selfadjoint element in $M_n(C(\Om)^* \otimes_{\rm s} C(\Om))$ such that $\eps \mathbf{1} + u \geq 0$ for every $\eps >0$.
Then $\eps \mathbf{1} + u$ is positive semi-definite.
Thus for every $x_1, \dots, x_r \in \Om$ we have 
\[
\eps 1_{rn} + (u(x_p,x_q))_{p,q=1}^r = \left( (\eps \cdot \mathbf{1} \otimes I_n + u)(x_p,x_q) \right)_{p,q=1}^r \geq 0
\]
Since $(u(x_p,x_q))_{p,q=1}^r$ is selfadjoint and this holds for every $\eps>0$ we conlcude that $(u(x_p,x_q))_{p,q=1}^r \geq 0$, and thus $u \in M_n(C(\Om) \otimes_{\rm s} C(\Om))^+$ by Theorem \ref{th_fspa}.
\end{proof}

\begin{remark}
As we have noted the symmetrisation $C(\Om) \otimes_{\rm s} C(\Om)$ is not an operator system when $\Om$ is finite.
Indeed, if $|\Om| = n$, then 
\[
C(\Om)^* \otimes_{\rm s} C(\Om) = D_n^* \otimes_{\rm s} D_n,
\]
and the conclusion follows from Remark \ref{r_appnotos}.
This shows that the extended norm arising from 
the element $\mathbf{1}$ does not coincide with the symmetrisation norm. 
\end{remark}

\section{Duals}\label{s_selfduality}

In this section, we examine the symmetrisation of the dual of a given operator space. 
We describe explicitly the positivity in the matrix order dual of the symmetrisation of a given operator space; as a consequence, we see that the symmetrisation enjoys a property akin to self-duality. 
As an application we show that the completely contractive completely positive functionals on $\cl E^* \otimes_{\rm s} \cl E$ correspond to completely bounded maps from $\cl E$ to $\cl E^{* {\rm d}}$ that factor symmetrically through a Hilbert space.

\subsection{The operator space dual: preliminaries}

We fix an operator space $\cl E$ and let $\cl E^{\rm d}$ be its dual Banach space. 
Every element $\Phi = (\Phi_{i,j})_{i,j} \in M_{m,n}(\cl E^{\rm d})$ gives rise to a linear map 
$F_{\Phi} \colon \cl E\to M_{m,n}$, given by
\[
F_{\Phi}(x) = \left(\Phi_{i,j}(x)\right)_{i,j}, \ \ \ x\in \cl E.
\]
By letting 
\[
\|\Phi\|^{(n)} := \|F_{\Phi}\|_{\rm cb}, \ \ \ \Phi\in M_{n}(\cl E^{\rm d}),
\]
we obtain a family $\{\|\cdot\|^{(n)}\}_{n\in \bb{N}}$ of matricial norms, which turn $\cl E^{\rm d}$ into an operator space, called the \emph{dual operator space} of $\cl E$, see for example \cite[Paragraphs 1.2.19 and 1.2.20]{blm}. 

\begin{lemma}\label{l_dstar}
Let $\cl E$ be an operator space.
Then the map
\[
\phi \colon (\cl E^{\rm d})^* \to (\cl E^*)^{\rm d}; \ \phi(\Phi^*)(x^*) := \ol{\Phi(x)},
\]
is a completely isometric isomorphism.
\end{lemma}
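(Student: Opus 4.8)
The plan is to check directly that $\phi$ is a linear bijection and then that it is completely isometric, the latter by unwinding, at each matrix level, the definition of the adjoint operator space structure together with the definition of the dual operator space structure.

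First I would dispose of the algebraic points. Since $x \mapsto x^*$ is conjugate-linear on $\cl E$, $\Phi$ is linear, and complex conjugation is conjugate-linear, the composite $x^* \mapsto \overline{\Phi(x)}$ is a linear functional on $\cl E^*$, and it is bounded because $x \mapsto x^*$ is isometric from $\cl E$ onto $\cl E^*$; thus $\phi(\Phi^*) \in (\cl E^*)^{\rm d}$ is well defined. As $\Phi \mapsto \Phi^*$ is the (conjugate-linear) involution of $\cl E^{\rm d}$, a one-line check gives $\phi(\la \Phi^* + \Psi^*) = \la\,\phi(\Phi^*) + \phi(\Psi^*)$, so $\phi$ is linear. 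For surjectivity, given $\Psi \in (\cl E^*)^{\rm d}$ I would set $\Phi(x) := \overline{\Psi(x^*)}$, which is a bounded linear functional on $\cl E$ by the same conjugate-linearity bookkeeping, and observe $\phi(\Phi^*)(x^*) = \overline{\Phi(x)} = \Psi(x^*)$, so $\phi(\Phi^*) = \Psi$. Injectivity will follow once complete isometry is established.

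For the isometric statement, fix $n \in \bb N$ and an element $U = (\Phi_{i,j}^*)_{i,j} \in M_n((\cl E^{\rm d})^*)$. By the definition of the adjoint operator space norm and then the definition of the dual operator space norm,
\[
\|U\|^{(n)}_{(\cl E^{\rm d})^*} = \|(\Phi_{j,i})_{i,j}\|^{(n)}_{\cl E^{\rm d}} = \|G\|_{\rm cb}, \qquad G \colon \cl E \to M_n, \quad G(x) = (\Phi_{j,i}(x))_{i,j}.
\]
On the other hand, $\phi^{(n)}(U) = (\phi(\Phi_{i,j}^*))_{i,j} \in M_n((\cl E^*)^{\rm d})$ has norm $\|H\|_{\rm cb}$, where $H \colon \cl E^* \to M_n$ is given by $H(x^*) = (\overline{\Phi_{i,j}(x)})_{i,j}$. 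The key step will be the identity $H = G^*$, with $G^*$ as in Subsection \ref{ss_opspsy}: comparing $(i,j)$-entries, $(G(x)^*)_{i,j} = \overline{(G(x))_{j,i}} = \overline{\Phi_{i,j}(x)} = (H(x^*))_{i,j}$, so $H(x^*) = G(x)^*$ for all $x \in \cl E$. Since the correspondence $G \mapsto G^*$ preserves every matricial norm (hence the cb-norm), and since $M_n^*$ is completely isometric to $M_n$ via the adjoint, we obtain $\|H\|_{\rm cb} = \|G^*\|_{\rm cb} = \|G\|_{\rm cb} = \|U\|^{(n)}_{(\cl E^{\rm d})^*}$. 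As $n$ is arbitrary, $\phi$ is completely isometric, and in particular injective, which completes the argument.

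I do not expect a genuine obstacle; the only delicate point is to keep straight the three distinct conjugations at play — the involution on $\cl E$, the involution on $\cl E^{\rm d}$, and entrywise conjugation in $M_n$ — together with the index transposition built into the adjoint operator space norm, so that the matrix $(\Phi_{j,i})_{i,j}$, rather than $(\Phi_{i,j})_{i,j}$, is what appears and matches $G^*$ correctly.
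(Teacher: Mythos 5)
Your proof is correct and follows essentially the same route as the paper's: both arguments unwind the adjoint and dual operator space norms at each matrix level, identify $F_{\phi^{(n)}(\Phi^*)}$ with the adjoint map $F_{\Phi}^{\ *}$ (the index transposition and entrywise conjugation cancelling exactly as you record), and use that the adjoint correspondence preserves cb-norms; surjectivity is obtained by the same explicit inverse formula.
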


\begin{proof}
It is routine to check that the map $\phi$ is well defined.
We next show that it is completely isometric.
Towards this end, let $\Phi = (\Phi_{i,j})_{i,j} \in M_n(\cl E^{\rm d})$; for every $x = (x_{q,w})_{q,w} \in M_m(\cl E)$ we have 
\begin{align*}
F_{\phi^{(n)}(\Phi^*)}^{(m)}(x^*)
& =
\big( \phi(\Phi^*_{j,i})(x_{w,q}^*) \big)_{i,j,q,w} \\
& =
\big( \ol{\Phi_{j,i}(x_{w,q})} )_{i,j,q,w}
=
\big( \Phi_{i,j}(x_{q,w}) )_{i,j,q,w}^*,
\end{align*}
and therefore, 
\begin{align*}
\| F_{\phi^{(n)}(\Phi^*)}^{(m)}(x^*) \|_{M_{mn}}
& =
\| \big( \Phi_{i,j}(x_{q,w}) )_{i,j,q,w}^* \|_{M_{mn}} \\
& =
\| \big( \Phi_{i,j}(x_{q,w}) )_{i,j,q,w} \|_{M_{mn}}
=
\|F_\Phi^{(m)}(x)\|_{M_{mn}}.
\end{align*}
Taking suprema over $\nor{x^*} \leq 1$ (which is equivalent to $\nor{x} \leq 1$) and $m \in \bb N$ on both sides gives that
\[
\|\phi^{(n)}(\Phi^*) \|^{(n)} = \|F_{\phi^{(n)}(\Phi^*)}\|_{\rm cb} = \nor{F_\Phi}_{\rm cb} = \nor{\Phi}^{(n)} = \nor{\Phi^*}^{(n)},
\]
and thus the map $\phi$ is completely isometric.

In order to show surjectivity we define the linear map 
$\psi \colon (\cl E^*)^{\rm d} \to (\cl E^{\rm d})^*$ by letting 
\[
\psi(\Psi)^*(x) := \ol{\Psi(x^*)}, \ \ \ \Psi \in (\cl E^*)^{\rm d}, \ x \in \cl E.
\]
Then
\begin{align*}
(\phi \circ \psi)(\Psi) (x^*)
& =
\phi(\psi(\Psi))(x^*)
=
\ol{ (\psi(\Psi)^*)(x) }
=
\ol{ \ol{\Psi(x^*)} }
=
\Psi(x^*),
\end{align*}
for all $x \in \cl E$.
Hence $\psi$ is a right inverse of $\phi$ which completes the proof.
\end{proof}

\subsection{Embedding into symmetrisation's dual}

Henceforth we suppress the explicit use of the
notation $\phi$ from Lemma \ref{l_dstar}, and set $\cl E^{{\rm d}*} := \left(\cl E^{\rm d}\right)^*$.
By \cite[Theorem 9.4.7]{er} (see also \cite[Paragraph 1.6.9]{blm}), if $\cl E$ and $\cl F$ are operator spaces, there exists a (not necessarily surjective) complete isometry 
\[
\iota \colon \cl F^{{\rm d}}\otimes_{\rm h} \cl E^{\rm d} \to \left(\cl F\otimes_{\rm h} \cl E\right)^{\rm d}, 
\text{ such that }
\iota(\Psi \otimes \Phi)(y \otimes x) = \Psi(y) \Phi(x).
\]
If either $\cl E$ or $\cl F$ is finite dimensional, then this map is surjective (see for example \cite[Corollary 9.4.8]{er}).

\begin{lemma}\label{l_FPsi}
Let $\cl E$ and $\cl F$ be operator spaces.
Let $\Psi\in M_{m,k}(\cl F^{\rm d})$, $\Phi \in M_{k,n}(\cl E^{\rm d})$ so that $\Psi \odot\Phi\in M_{m,n}(\cl F^{\rm d}\odot \cl E^{\rm d})$. 
Then 
\[
F_{\iota^{(m,n)}(\Psi \odot\Phi)}^{(p,t)}(y \odot x) 
= 
F_{\Psi}^{(p,r)}(y) F_{\Phi}^{(r,t)}(x), \ \ \ y \in M_{p,r}(\cl F), x \in M_{r,t}(\cl E).
\]
\end{lemma}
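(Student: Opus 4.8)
The plan is to prove the identity by a direct index computation: first reduce to elementary tensors using the defining formula for $\iota$, then reassemble the matrix amplifications block by block. There is no analytic content here — everything is a formal unpacking of the definitions of the map $F_{(-)}$, of the amplification of a linear map, and of the symbol $\odot$ — so the only real work is bookkeeping.

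First I would unravel the notation. Write $\Psi = (\Psi_{a,b})_{a\in[m],b\in[k]}$ with $\Psi_{a,b}\in\cl F^{\rm d}$ and $\Phi = (\Phi_{b,c})_{b\in[k],c\in[n]}$ with $\Phi_{b,c}\in\cl E^{\rm d}$, so that, by the definition of $\odot$ recalled in Subsection \ref{ss_opspsy},
\[
\Psi\odot\Phi = \Bigl(\sum_{b=1}^{k}\Psi_{a,b}\otimes\Phi_{b,c}\Bigr)_{a\in[m],c\in[n]}\in M_{m,n}(\cl F^{\rm d}\odot\cl E^{\rm d}).
\]
Applying $\iota$ entrywise and using that $\iota$ is linear with $\iota(\Psi'\otimes\Phi')(y'\otimes x') = \Psi'(y')\Phi'(x')$ for $\Psi'\in\cl F^{\rm d}$, $\Phi'\in\cl E^{\rm d}$, $y'\in\cl F$, $x'\in\cl E$, one sees that $\iota^{(m,n)}(\Psi\odot\Phi)\in M_{m,n}\bigl((\cl F\otimes_{\rm h}\cl E)^{\rm d}\bigr)$ is the matrix of functionals whose $(a,c)$-entry sends $y'\otimes x'$ to $\sum_{b=1}^{k}\Psi_{a,b}(y')\Phi_{b,c}(x')$. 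Consequently, for the associated map $F_{\iota^{(m,n)}(\Psi\odot\Phi)}\colon\cl F\otimes_{\rm h}\cl E\to M_{m,n}$ and any $y'\in\cl F$, $x'\in\cl E$,
\[
F_{\iota^{(m,n)}(\Psi\odot\Phi)}(y'\otimes x') = \Bigl(\sum_{b=1}^{k}\Psi_{a,b}(y')\Phi_{b,c}(x')\Bigr)_{a,c} = F_{\Psi}(y')\,F_{\Phi}(x'),
\]
the last equality being matrix multiplication together with $(F_\Psi(y'))_{a,b} = \Psi_{a,b}(y')$ and $(F_\Phi(x'))_{b,c} = \Phi_{b,c}(x')$.

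Next I would pass to the amplifications. Write $y = (y_{i,l})_{i\in[p],l\in[r]}$ and $x = (x_{l,j})_{l\in[r],j\in[t]}$, so that $y\odot x\in M_{p,t}(\cl F\odot\cl E)$ has $(i,j)$-entry $\sum_{l=1}^{r}y_{i,l}\otimes x_{l,j}$. Under the canonical identification $M_{p,t}(M_{m,n}) = M_{pm,tn}$, the $(i,j)$-block of $F_{\iota^{(m,n)}(\Psi\odot\Phi)}^{(p,t)}(y\odot x)$ is, by the definition of the amplification of a linear map, equal to $F_{\iota^{(m,n)}(\Psi\odot\Phi)}\bigl(\sum_{l=1}^{r}y_{i,l}\otimes x_{l,j}\bigr)$, which by linearity and the previous paragraph equals $\sum_{l=1}^{r}F_{\Psi}(y_{i,l})\,F_{\Phi}(x_{l,j})$. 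On the other hand, $F_{\Psi}^{(p,r)}(y) = (F_{\Psi}(y_{i,l}))_{i,l}\in M_{p,r}(M_{m,k})$ and $F_{\Phi}^{(r,t)}(x) = (F_{\Phi}(x_{l,j}))_{l,j}\in M_{r,t}(M_{k,n})$, and the $(i,j)$-block of the (block) matrix product $F_{\Psi}^{(p,r)}(y)\,F_{\Phi}^{(r,t)}(x)$ is precisely $\sum_{l=1}^{r}F_{\Psi}(y_{i,l})\,F_{\Phi}(x_{l,j})$. Comparing the two expressions block by block gives the claimed identity.

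The step I expect to be the only delicate one is this last bookkeeping: one must keep the three index ranges $[m],[k],[n]$ attached to the functional matrices strictly separate from the amplification ranges $[p],[r],[t]$, and one must observe that the canonical shuffle $M_{p,t}(M_{m,n}) = M_{pm,tn}$ intertwines ``entrywise application of a map followed by block multiplication'' with ``ordinary matrix multiplication'' — which is exactly the associativity built into the symbol $\odot$. Beyond this, the argument is a routine verification on definitions.
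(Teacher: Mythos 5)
Your proof is correct and follows essentially the same route as the paper's: a direct entrywise/blockwise unpacking of the definitions of $\iota$, $F_{(-)}$, $\odot$, and the matrix amplifications, with the two sides compared index by index. The only (immaterial) difference is organisational — you first establish the elementary-tensor identity $F_{\iota^{(m,n)}(\Psi\odot\Phi)}(y'\otimes x')=F_\Psi(y')F_\Phi(x')$ and then amplify, whereas the paper computes both amplified expressions in one pass.
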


\begin{proof}
Let us write $\Psi = (\Psi_{i,\ell})_{i,\ell}$, $\Phi = (\Phi_{\ell,j})_{\ell,j}$, and $y = (y_{z,q})_{z,q}$, $x = (x_{q,w})_{q,w}$.
By definition, for every $(i,j)$ we have
\begin{align*}
\iota^{(m,n)}(\Psi \odot \Phi)_{i,j}
=
\iota( (\Psi \odot \Phi)_{i,j} )
=
\sum_{\ell=1}^k \iota( \Psi_{i,\ell} \otimes \Phi_{\ell,k} ).
\end{align*}
We thus have 
\begin{align*}
F_{\iota^{(m,n)}(\Psi \odot\Phi)}^{(p,t)}(y \odot x)
& = 
\sum_{q=1}^r F_{\iota^{(m,n)}(\Psi \odot\Phi)}^{(p,t)}( (y_{z,q} \otimes x_{q,w})_{z,w} ) \\
& =
\sum_{q=1}^r ( F_{\iota^{(m,n)}(\Psi \odot\Phi)}( y_{z,q} \otimes x_{q,w} ) )_{z,w} \\
& =
\sum_{q=1}^r ( \iota^{(m,n)}(\Psi \odot\Phi)_{i,j} ( y_{z,q} \otimes x_{q,w} ) )_{i,j,z,w} \\
& =
\sum_{q=1}^r \sum_{\ell=1}^{k} ( \iota(\Psi_{i,\ell} \otimes \Phi_{\ell,j}) (y_{z,q} \otimes x_{q,w}) )_{i, j, z, w} \\ 
& =
\sum_{q=1}^r \sum_{\ell=1}^{k} ( \Psi_{i,\ell}(y_{z,q}) \Phi_{\ell,j}(x_{q,w}) )_{i, j, z, w}.
\end{align*}
On the other hand, 
\begin{align*}
F_{\Psi}^{(p,r)}(y) F_{\Phi}^{(r,t)}(x)
& =
(F_{\Psi}(y_{z,q}) )_{z,q} \cdot ( F_\Phi(x_{q,w}) )_{q,w}\\ 
& =
\sum_{q=1}^r \big( F_{\Psi}(y_{z,q}) \cdot F_\Phi(x_{q,w}) \big)_{z,w} \\
& =
\sum_{q=1}^r \big( (\Psi_{i,\ell}(y_{z,q}))_{i, \ell} \cdot (\Phi_{\ell,j}(x_{q,w}))_{\ell,j} \big)_{z,w} \\
& =
\sum_{q=1}^r \sum_{\ell=1}^{k} \big( \Psi_{i,\ell}(y_{z,q}) \Phi_{\ell, j}(x_{q,w}) \big)_{i,j,z,w}.
\end{align*}
Since the two expressions match at every entry, the proof is complete.
\end{proof}

By considering $\cl F = \cl E^*$, Lemma \ref{l_dstar} implies the existence of a (not necessarily surjective) complete isometry
\[
\iota \colon \cl E^{{\rm d} *}\otimes_{\rm h} \cl E^{\rm d} \to \left(\cl E^*\otimes_{\rm h} \cl E\right)^{\rm d};
\iota(\Psi^*\otimes \Phi)(x^*\otimes y) = \Psi^*(x^*) \Phi(y), \ \ \ x,y\in \cl E.
\]
By Lemma \ref{l_norm}, the mapping $\iota$ gives rise to a completely bounded isomorphism onto its range (denoted in the same way)
\begin{equation}\label{eq_ds}
\iota \colon \cl E^{{\rm d} *}\otimes_{\rm s} \cl E^{\rm d} \to \left(\cl E^*\otimes_{\rm s} \cl E\right)^{\rm d}.
\end{equation}
If $\cl E$ is finite dimensional, then this map is surjective.
Our aim is to show that $\iota$ is a complete order topological monomorphism, although it need not be a complete isometry.
We break the proof into several steps.

\begin{lemma}\label{l_Fpos}
Let $\cl E$ be an operator space and let $\Phi \in M_{m,n}(\cl E^{\rm d})$. 
If $u \in M_p(\cl E^*\otimes_{\rm s} \cl E)^+$, then $F_{\iota^{(n)}(\Phi^* \odot \Phi)}^{(p)}(u) \in M_{pn}^+$. 
\end{lemma}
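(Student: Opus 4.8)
The plan is to reduce to the generating positive elements of the symmetrisation, where Lemma \ref{l_FPsi} applies, and then identify the outcome as a matrix of the form $A^*A$. First I would note that $F_{\iota^{(n)}(\Phi^* \odot \Phi)} \colon \cl E^* \otimes_{\rm s} \cl E \to M_n$ is bounded: since $\iota$ is completely bounded (see (\ref{eq_ds}), which rests on Lemma \ref{l_norm}), the element $\iota^{(n)}(\Phi^* \odot \Phi)$ genuinely lies in $M_n\big((\cl E^* \otimes_{\rm s} \cl E)^{\rm d}\big)$, so $F^{(p)}_{\iota^{(n)}(\Phi^* \odot \Phi)} \colon M_p(\cl E^* \otimes_{\rm s} \cl E) \to M_{pn}$ is continuous. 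By Proposition \ref{p_synth} applied with $\cl S = \bb C$, the cone $M_p(\cl E^* \otimes_{\rm s} \cl E)^+$ is the $\|\cdot\|_{\rm s}^{(p)}$-closure of $\{z^* \odot s \odot z \ : \ z \in M_{r,p}(\cl E),\ s \in M_r^+,\ r \in \bb N\}$; writing $s = s^{1/2} s^{1/2}$ and absorbing $s^{1/2}$ into $z$, this generating set coincides with $\{z^* \odot z \ : \ z \in M_{r,p}(\cl E),\ r \in \bb N\}$. As $M_{pn}^+$ is norm closed, it therefore suffices to prove $F^{(p)}_{\iota^{(n)}(\Phi^* \odot \Phi)}(z^* \odot z) \in M_{pn}^+$ for each such $z$.

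For this I would apply Lemma \ref{l_FPsi} with $\cl F = \cl E^*$, in the notation of that lemma taking $\Psi := \Phi^* \in M_{n,m}\big((\cl E^*)^{\rm d}\big)$ (identified with $M_{n,m}(\cl E^{{\rm d}*})$ via Lemma \ref{l_dstar}) and keeping $\Phi \in M_{m,n}(\cl E^{\rm d})$, and with $y := z^* \in M_{p,r}(\cl E^*)$, $x := z \in M_{r,p}(\cl E)$; this yields
\[
F^{(p)}_{\iota^{(n)}(\Phi^* \odot \Phi)}(z^* \odot z) = F^{(p,r)}_{\Phi^*}(z^*)\, F^{(r,p)}_{\Phi}(z).
\]
The one remaining ingredient is the adjoint compatibility $F^{(p,r)}_{\Phi^*}(z^*) = \big(F^{(r,p)}_{\Phi}(z)\big)^*$. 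At the scalar level this comes from Lemma \ref{l_dstar}: under the identification $\cl E^{{\rm d}*} = (\cl E^*)^{\rm d}$, the $(j,i)$-entry of $\Phi^*$ acts on $\cl E^*$ by $x^* \mapsto \overline{\Phi_{i,j}(x)}$, so $F_{\Phi^*}(x^*) = \big(\overline{\Phi_{i,j}(x)}\big)_{j,i} = \big(F_{\Phi}(x)\big)^*$ for $x \in \cl E$; amplifying and matching blocks (the adjoint of the $(a,b)$-block $F_{\Phi}(z_{a,b})$ occupies the $(b,a)$-block) gives the matricial identity. Plugging it in,
\[
F^{(p)}_{\iota^{(n)}(\Phi^* \odot \Phi)}(z^* \odot z) = \big(F^{(r,p)}_{\Phi}(z)\big)^* F^{(r,p)}_{\Phi}(z) \in M_{pn}^+,
\]
which together with the first paragraph completes the argument.

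The hard part here is purely organisational rather than conceptual: one must keep the two identifications straight ($\cl E^{{\rm d}*} \cong (\cl E^*)^{\rm d}$ from Lemma \ref{l_dstar}, and the canonical $M_{r,p}(M_{m,n}) \cong M_{rm,pn}$), check that the index permutations in Lemma \ref{l_FPsi} line up correctly when specialising $\cl F = \cl E^*$ and substituting $y = x^*$, and verify carefully the stated collapse of the generating set of the positive cone. None of these steps is deep, but they are exactly where an error would be most likely to slip in.
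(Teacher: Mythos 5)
Your proposal is correct and follows essentially the same route as the paper: reduce via Proposition \ref{p_synth} to elements of the form $z^*\odot z$, apply Lemma \ref{l_FPsi} with $\Psi=\Phi^*$, and use the adjoint compatibility $F^{(p,r)}_{\Phi^*}(z^*)=\bigl(F^{(r,p)}_{\Phi}(z)\bigr)^*$ (which the paper verifies by the same index computation you sketch via Lemma \ref{l_dstar}) to recognise the result as $A^*A\geq 0$. Your explicit remarks on continuity of $F_{\iota^{(n)}(\Phi^*\odot\Phi)}$ and the absorption of $s^{1/2}$ merely spell out what the paper leaves implicit in its appeal to Proposition \ref{p_synth}.
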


\begin{proof}
By Proposition \ref{p_synth}, it suffices to establish the 
statement in the case where $u = x^*\odot x$ for some $x \in M_{r,p}(\cl E)$, $r \in \bb{N}$.  
Let us write $\Phi = (\Phi_{i,j})_{i,j}$ and $x = (x_{q,w})_{q,w}$.
Then
\begin{align*}
F^{(p,r)}_{\Phi^*}(x^*)
& =
( F_{\Phi^*}(x_{w,q}^*) )_{q,w} 
=
(\Phi^*_{j,i}(x_{w,q}^*) )_{i, j, q, w} 
=
( \ol{\Phi_{j,i}(x_{w,q})} )_{i,j, q, w} \\
& =
( (\Phi_{i,j}(x_{q,w}) )_{i,j,q,w} )^* 
=
\left( ( F_{\Phi}(x_{q,w}) )_{q,w} \right)^*
 =
\left( F_{\Phi}^{(r,p)}(x) \right)^*.
\end{align*}
By Lemma \ref{l_FPsi}, 
\begin{align*}
F_{\iota^{(p)}(\Phi^* \odot \Phi)}^{(n)}(x^* \odot x)
& =
F^{(p,r)}_{\Phi^*}(x^*) F^{(r,p)}_{\Phi}(x) 
=
\left( F^{(r,p)}_{\Phi}(x) \right)^* F^{(r,p)}_{\Phi}(x) \geq 0,
\end{align*}
and the proof is complete.
\end{proof}

Henceforth we denote by $J \colon \cl E \to \cl E^{\rm dd}$ the canonical embedding of $\cl E$ in its second dual.
By \cite[Proposition 1.4.1]{blm}, the map $J$ is completely isometric.
For $x = (x_{q,w})_{q,w} \in M_{r,p}(\cl E)$, we have $J^{(r,p)}(x) \in M_{r,p}(\cl E^{\rm dd})$, and therefore we can write
\[
F_{J^{(r,p)}(x)} \colon \cl E^{\rm d} \to M_{r,p}
\]
for the induced completely bounded map.
Thus we obtain a completely bounded completely positive map
\[
\left( F_{J^{(r,p)}(x)} \right)^* \cdot F_{J^{(r,p)}(x)} \colon \cl E^{\rm d *} \otimes_{\rm s} \cl E^{\rm d} \to M_p.
\]
In the next lemma we give an elementwise description for this map.

\begin{lemma}\label{l_JGa}
Let $\cl E$ be an operator space and $x \in M_{r,p}(\cl E)$.
We have
\[
\left( \left( F_{J^{(r,p)}(x)} \right)^* \cdot F_{J^{(r,p)}(x)} \right)^{(n)} 
\hspace{-0.15cm}(\Ga)
=
F^{(p)}_{\iota^{(n)}(\Ga)} (x^* \odot x), 
\foral
\Ga \in M_n(\cl E^{\rm d *} \otimes_{\rm s} \cl E^{\rm d}).
\]
\end{lemma}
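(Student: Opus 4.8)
The plan is to verify the identity first on the algebraic tensor product and then extend by continuity. Both sides are norm-continuous in $\Ga$: the left-hand side is $\Ga$ fed through the completely bounded map $\big(F_{J^{(r,p)}(x)}\big)^* \cdot F_{J^{(r,p)}(x)}$ at the $n$-th matrix level --- here $F_{J^{(r,p)}(x)} \colon \cl E^{\rm d} \to M_{r,p}$ is completely bounded with $\|F_{J^{(r,p)}(x)}\|_{\rm cb} = \|x\|$ since $J$ is completely isometric, and Theorem \ref{t_symuni}(i) (applied with the middle operator system $\bb C$ and $\psi(1) = I_r$, after rescaling by $\|x\|^{-1}$) shows $\big(F_{J^{(r,p)}(x)}\big)^* \cdot F_{J^{(r,p)}(x)}$ is completely bounded on $\cl E^{\rm d*}\otimes_{\rm s}\cl E^{\rm d}$; the right-hand side is $\Ga$ fed through $\iota^{(n)}$ of (\ref{eq_ds}), then the (isometric) map $\Theta \mapsto F_\Theta$, and then evaluated at the fixed element $x^* \odot x$. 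As $M_n(\cl E^{\rm d*}\odot\cl E^{\rm d})$ is dense in $M_n(\cl E^{\rm d*}\otimes_{\rm s}\cl E^{\rm d})$, it suffices to prove the identity for $\Ga$ in the algebraic tensor product; and every such $\Ga$ equals $\Psi^* \odot \Phi$ for some $k \in \bb N$ and $\Psi, \Phi \in M_{k,n}(\cl E^{\rm d})$.

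First I would record the bookkeeping identity
\[
F_{J^{(r,p)}(x)}^{(k,n)}(\Phi) = F_\Phi^{(r,p)}(x), \qquad \Phi \in M_{k,n}(\cl E^{\rm d}),
\]
which holds after the canonical shuffle identifying $M_{k,n}(M_{r,p})$ with $M_{r,p}(M_{k,n})$: both sides have $(\Phi_{a,b}(x_{q,w}))$ as entries, because $(J^{(r,p)}(x))_{q,w}(\Phi_{a,b}) = \Phi_{a,b}(x_{q,w})$. Combining this with the adjoint computation from the proof of Lemma \ref{l_Fpos}, namely $F_{\Psi^*}^{(p,r)}(x^*) = \big(F_\Psi^{(r,p)}(x)\big)^*$, and with the definition of the product $(\,\cdot\,)^* \cdot (\,\cdot\,)$ on the symmetrisation, the left-hand side unwinds to
\[
\Big( \big( F_{J^{(r,p)}(x)} \big)^* \cdot F_{J^{(r,p)}(x)} \Big)^{(n)}(\Psi^* \odot \Phi)
= F_{J^{(r,p)}(x)}^{(k,n)}(\Psi)^* \, F_{J^{(r,p)}(x)}^{(k,n)}(\Phi)
= F_{\Psi^*}^{(p,r)}(x^*) \, F_\Phi^{(r,p)}(x).
\]

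For the right-hand side I would invoke Lemma \ref{l_FPsi} with $\cl F = \cl E^*$, so that $\cl F^{\rm d} = \cl E^{\rm d*}$ through Lemma \ref{l_dstar}, applied to $\Psi^* \in M_{n,k}(\cl E^{\rm d*})$ and $\Phi \in M_{k,n}(\cl E^{\rm d})$ and evaluated on $y \odot x$ with $y = x^* \in M_{p,r}(\cl E^*)$: this yields at once
\[
F_{\iota^{(n)}(\Psi^* \odot \Phi)}^{(p)}(x^* \odot x) = F_{\Psi^*}^{(p,r)}(x^*) \, F_\Phi^{(r,p)}(x),
\]
which is the same expression as for the left-hand side. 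Hence the identity holds on elementary tensors, and therefore on all of $M_n(\cl E^{\rm d*}\otimes_{\rm s}\cl E^{\rm d})$ by linearity and continuity.

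The main difficulty here is organisational rather than conceptual: one must be careful about which indices are amplification indices and which belong to the ambient matrix algebras $M_{r,p}$, and keep the canonical shuffles between $M_{k,n}(M_{r,p})$, $M_{r,p}(M_{k,n})$ and $M_{kr,np}$ consistent across the two sides, so that the operator adjoint appearing on the left and the matrix products appearing on both sides are taken with respect to compatible identifications. Once those identifications are fixed, each step is a mechanical unwinding of definitions together with Lemmas \ref{l_FPsi} and \ref{l_Fpos}.
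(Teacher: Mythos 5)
Your proof is correct and follows essentially the same route as the paper's: verify the identity on elementary tensors $\Psi^*\odot\Phi$ by unwinding both sides to $F_{\Psi^*}^{(p,r)}(x^*)\,F_{\Phi}^{(r,p)}(x)$ (via the bookkeeping identity for $F_{J^{(r,p)}(x)}$ and Lemma \ref{l_FPsi}), then extend by density and the norm-continuity of both sides in $\Ga$ — the left side via the complete boundedness of $\big(F_{J^{(r,p)}(x)}\big)^*\cdot F_{J^{(r,p)}(x)}$ on the symmetrisation, the right side via the complete boundedness of $\iota$ together with Lemma \ref{l_norm} and the definition of the dual operator space norm. The only cosmetic difference is that you state the continuity of both sides abstractly up front, whereas the paper runs the limit argument along an approximating sequence; the substance is identical.
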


\begin{proof}
First assume that $\Ga = \Psi^* \odot \Phi$ for $\Psi, \Phi \in M_{k,n}(\cl E^{\rm d})$.
We have
\begin{align*}
\left( \left( F_{J^{(r,p)}(x)} \right)^* \cdot F_{J^{(r,p)}(x)} \right)^{(n)} (\Psi^* \odot \Phi)
& = \\
& \hspace{-2.5cm} = 
\left( F^{(k,n)}_{J^{(r,p)}(x)} (\Psi) \right)^* \cdot \left( F^{(k,n)}_{J^{(r,p)}(x)} (\Phi) \right) \\
& \hspace{-2.5cm} = \left( F_{J^{(r,p)}(x)} (\Psi_{i,j}) \right)_{i,j}^* \cdot \left( F_{J^{(r,p)}(x)} (\Phi_{i,j}) \right)_{i,j} \\
& \hspace{-2.5cm} = 
\left( J^{(r,p)}(x) (\Psi_{i,j}) \right)_{i,j}^* \cdot \left( J^{(r,p)}(x) (\Phi_{i,j}) \right)_{i,j} \\
& \hspace{-2.5cm} = 
\big( J(x_{q,w}) (\Psi_{i,j}) \big)_{i,j,q,w}^* \cdot \big( J(x_{q,w}) (\Phi_{i,j}) \big)_{i,j,q,w} \\
& \hspace{-2.5cm} = 
\left( \Psi_{i,j}(x_{q,w}) \right)_{i,j,q,w}^* \cdot \left( \Phi_{i,j}(x_{q,w}) \right)_{i,j,q,w}.
\end{align*}
On the other hand, by Lemma \ref{l_FPsi}, 
\begin{align*}
F^{(p)}_{\iota^{(n)}(\Ga)} (x^* \odot x)
& =
\left(F^{(r,p)}_\Psi(x) \right)^* \cdot F^{(r,p)}_{\Phi}(x) \\
& =
(F_\Psi(x_{q,w})_{q,w})^* \cdot (F_\Phi(x_{q,w}))_{q,w} \\
& =
\left( \Psi_{i,j}(x_{q,w}) \right)_{i,j,q,w}^* \cdot \left( \Phi_{i,j}(x_{q,w}) \right)_{i,j,q,w}.
\end{align*}
This shows that the statement holds for $\Ga = \Psi^* \odot \Phi$.

Next suppose that $(\Ga_\al)_\al$ is a net in $M_n(\cl E^{\rm d *} \otimes_{\rm s} \cl E^{\rm d})$, such that $\Ga_\al = \Psi_\al \odot \Phi_\al$, converging to $\Ga \in M_n(\cl E^{\rm d *} \otimes_{\rm s} \cl E^{\rm d})$ in the symmetrisation norm.
By the previous paragraph, 
\[
\left( \left( F_{J^{(r,p)}(x)} \right)^* \cdot F_{J^{(r,p)}(x)} \right)^{(n)} (\Ga_\al) 
= F_{\iota^{(n)}(\Ga_\al)}(x^* \odot x)
\]
for all $\al$.
By the definition of the symmetrisation norm we have
\[
\lim_\al \left( \left( F_{J^{(r,p)}(x)} \right)^* \cdot F_{J^{(r,p)}(x)} \right)^{(n)} (\Ga_\al)
=
\left( \left( F_{J^{(r,p)}(x)} \right)^* \cdot F_{J^{(r,p)}(x)} \right)^{(n)} (\Ga).
\]
On the other hand we have that the map $\iota \colon \cl E^{{\rm d} *}\otimes_{\rm h} \cl E^{\rm d} \to \left(\cl E^*\otimes_{\rm h} \cl E\right)^{\rm d}$ is completely bounded.
By Lemma \ref{l_norm} we thus get $\lim_\al \iota^{(n)}(\Ga_\al) = \iota^{(n)}(\Ga)$ in $M_n(\left(\cl E^*\otimes_{\rm h} \cl E\right)^{\rm d})$.
By the definition of the operator space structure of the dual we have
\[
\lim_\al F_{\iota^{(n)}(\Ga_\al)} = F_{\iota^{(n)}(\Ga)}
\]
in the cb-norm, and therefore
\[
\lim_\al F_{\iota^{(n)}(\Ga_\al)}(x^* \odot x) = F_{\iota^{(n)}(\Ga)}(x^* \odot x)
\]
in $M_{pn}$.
Consequently we have 
\begin{align*}
\left( \left( F_{J^{(r,p)}(x)} \right)^* \cdot F_{J^{(r,p)}(x)} \right)^{(n)} (\Ga)
& =
\lim_\al \left( \left( F_{J^{(r,p)}(x)} \right)^* \cdot F_{J^{(r,p)}(x)} \right)^{(n)} (\Ga_\al) \\
& =
\lim_\al F_{\iota^{(n)}(\Ga_\al)}(x^* \odot x) \\
& =
F_{\iota^{(n)}(\Ga)}(x^* \odot x),
\end{align*}
and the proof is complete.
\end{proof}

\begin{lemma} \label{l_unbdd}
Let $\cl E$ be an operator space and let $f \in M_{r,p}(\cl E^{\rm dd})$.
Let $(x^\al)_\al$ be a uniformly bounded net in $M_{r,p}(\cl E)$ such that $\textup{w*-}\lim_\al J(x^\al) = f$.
Then
\[
\lim_\al \left( \left( F_{J^{(r,p)}(x^\al)} \right)^* \cdot F_{J^{(r,p)}(x^\al)} \right)^{(n)} (\Ga)
=
(f^* \cdot f)^{(n)} (\Ga)
\]
for all $\Ga \in M_n(\cl E^{\rm d *} \otimes_{\rm s} \cl E^{\rm d})$, where the limit is taken in $M_{rn,pn}$.
\end{lemma}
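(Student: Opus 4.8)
The plan is to reduce the statement to the elementwise formula of Lemma \ref{l_JGa} and then push the weak* limit through. First I would note that, by Lemma \ref{l_JGa} applied at each index $\al$, we have
\[
\left( \left( F_{J^{(r,p)}(x^\al)} \right)^* \cdot F_{J^{(r,p)}(x^\al)} \right)^{(n)} (\Ga)
=
F^{(p)}_{\iota^{(n)}(\Ga)} (x^{\al *} \odot x^\al),
\]
so the whole problem becomes showing that
\[
\lim_\al F^{(p)}_{\iota^{(n)}(\Ga)}(x^{\al *} \odot x^\al) = (f^* \cdot f)^{(n)}(\Ga)
\]
in the norm topology of $M_{pn}$. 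The right-hand side should be interpreted via the completely bounded completely positive map $f^* \cdot f \colon \cl E^{\rm d*} \otimes_{\rm s} \cl E^{\rm d} \to M_p$, where $f \colon \cl E^{\rm d} \to M_{r,p}$ denotes $F_f$; a limiting argument identical to the one in the proof of Lemma \ref{l_JGa} (using density of elementary tensors and continuity of $\iota$ guaranteed by Lemma \ref{l_norm}) shows $(f^*\cdot f)^{(n)}(\Ga) = \widetilde{\langle \iota^{(n)}(\Ga), f\rangle}$ in a suitable pairing; more concretely, for $\Ga = \Psi^* \odot \Phi$ with $\Psi,\Phi \in M_{k,n}(\cl E^{\rm d})$ one computes directly that $(f^*\cdot f)^{(n)}(\Psi^*\odot\Phi) = (F_\Psi \text{ evaluated against } f)^* (F_\Phi \text{ evaluated against } f)$, i.e. the entries are $\big(\Psi_{i,j}\text{-}f\big)$-type products, and this is exactly the weak*-limit of the corresponding expressions with $x^\al$ in place of $f$ since $\textup{w*-}\lim_\al J(x^\al_{q,w}) = f_{q,w}$ entrywise.

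Second, I would handle the general $\Ga$ by an $\eps/3$ argument. Fix $\eps > 0$ and choose $\Ga_0 = \Psi^* \odot \Phi$ (a finite sum of such, absorbed into matrix size $k$) with $\|\Ga - \Ga_0\|_{\rm s}^{(n)} < \eps$. Let $C := \sup_\al \|x^\al\|^{(r,p)} < \infty$; note also $\|J(f)\| = \|f\| \le C$ since $J$ is a complete isometry and weak* limits do not increase norm. The map $F_{J^{(r,p)}(x^\al)} \colon \cl E^{\rm d}\to M_{r,p}$ has cb-norm at most $C$, hence $\big(F_{J^{(r,p)}(x^\al)}\big)^* \cdot F_{J^{(r,p)}(x^\al)}$ is completely bounded with cb-norm at most $C^2$ as a map on the symmetrisation (by item (i) of Theorem \ref{t_symuni}, after rescaling); the same bound holds for $f^*\cdot f$. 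Therefore
\[
\left\| \left( \big(F_{J(x^\al)}\big)^* \cdot F_{J(x^\al)} \right)^{(n)}(\Ga - \Ga_0) \right\| \le C^2 \eps
\quad\text{and}\quad
\left\| (f^*\cdot f)^{(n)}(\Ga - \Ga_0) \right\| \le C^2 \eps,
\]
uniformly in $\al$. For the middle term $\Ga_0 = \Psi^*\odot\Phi$, the entries of $\big(\big(F_{J(x^\al)}\big)^*\cdot F_{J(x^\al)}\big)^{(n)}(\Ga_0)$ are finite sums of products of the form $\overline{\Psi_{\ell,i}(x^\al_{w,q})}\,\Phi_{\ell,j}(x^\al_{q',w'})$, each factor being an application of a fixed functional in $\cl E^{\rm d}$ to an entry of $x^\al$; since $\textup{w*-}\lim_\al J(x^\al_{q,w}) = f_{q,w}$ means exactly that $\varphi(x^\al_{q,w}) \to f_{q,w}(\varphi)$ for every $\varphi \in \cl E^{\rm d}$, each such product converges to the corresponding product with $f$, and these products are uniformly bounded (by $C^2\|\Psi_{\ell,i}\|\,\|\Phi_{\ell,j}\|$), so the finite sums converge. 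Hence $\lim_\al \big(\big(F_{J(x^\al)}\big)^* \cdot F_{J(x^\al)}\big)^{(n)}(\Ga_0) = (f^*\cdot f)^{(n)}(\Ga_0)$ in $M_{pn}$, and combining the three estimates gives $\limsup_\al \| (\cdots)^{(n)}(\Ga) - (f^*\cdot f)^{(n)}(\Ga) \| \le 2C^2\eps$; letting $\eps\to 0$ finishes the proof.

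The main obstacle I anticipate is making precise the object $f^* \cdot f$ on the right-hand side and checking that the elementwise computation of Lemma \ref{l_JGa} extends verbatim with $f \in M_{r,p}(\cl E^{\rm dd})$ in place of $J^{(r,p)}(x)$: one must observe that $F_f \colon \cl E^{\rm d} \to M_{r,p}$ is completely bounded with $\|F_f\|_{\rm cb} = \|f\|$, that $(F_f)^* \cdot F_f$ is therefore well defined on $\cl E^{\rm d*} \otimes_{\rm s} \cl E^{\rm d}$ by Theorem \ref{t_symuni}(i), and that the density-plus-continuity argument at the end of the proof of Lemma \ref{l_JGa} applies without change. Everything else is a routine uniform-boundedness argument; the one point requiring care is that the net $(x^\al)_\al$ is \emph{not} assumed to converge in norm, only weak*, which is precisely why the uniform cb-bound $C^2$ and the reduction to elementary tensors are essential rather than cosmetic.
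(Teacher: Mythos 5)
Your proof is correct and follows essentially the same route as the paper's: first establish the convergence for elementary tensors $\Ga = \Psi^*\odot\Phi$ via entrywise weak* convergence of $\varphi(x^\al_{q,w})$ to $f_{q,w}(\varphi)$ and joint continuity of matrix products, then pass to general $\Ga$ by a three-term estimate using the uniform cb-bound $C^2$ on the maps $\bigl(F_{J^{(r,p)}(x^\al)}\bigr)^*\cdot F_{J^{(r,p)}(x^\al)}$ and on $f^*\cdot f$ (the paper simply normalises to $C=1$ first). The preliminary detour through Lemma \ref{l_JGa} is harmless but unnecessary, since your second paragraph works directly with the $F_{J^{(r,p)}(x^\al)}$ expressions anyway.
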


\begin{proof}
Without loss of generality we may assume that $\nor{x^\al} \leq 1$ for all $\al$.
First assume that $\Ga = \Psi^* \odot \Phi$ for $\Psi, \Phi \in M_{k,n}(\cl E^{\rm d})$.
By the definition of the weak* topology we have 
\[
\lim_\al \Psi_{i,j}(x^\al_{q,w}) = f_{q,w}(\Psi_{i,j})
\qand
\lim_\al \Phi_{i,j}(x^\al_{q,w}) = f_{q,w}(\Phi_{i,j}),
\]
for all $q \in \{1, \dots, r\}$, $w \in \{1, \dots, p\}$, $i \in \{1, \dots, k\}$ and $j \in \{1, \dots, n\}$.
Since the matrix multiplication is jointly continuous, we get
\begin{align*}
\lim_\al \left( \left( F_{J^{(r,p)}(x^\al)} \right)^* \cdot F_{J^{(r,p)}(x^\al)} \right)^{(n)} (\Psi^* \odot \Phi)
& = \\
& \hspace{-3cm} = 
\lim_\al \left( \Psi_{i,j}(x_{q,w}^\al) \right)_{i,j,q,w}^* \cdot \left( \Phi_{i,j}(x_{q,w}^\al) \right)_{i,j,q,w} \\
& \hspace{-3cm} = 
( f_{q,w}(\Psi_{i,j}) )_{i,j,q,w}^* \cdot (f_{q,w}(\Phi_{i,j}))_{i,j,q,w} \\
& \hspace{-3cm} = 
(f^* \cdot f)^{(n)}(\Psi^* \odot \Phi),
\end{align*}
and thus the statement holds for $\Ga = \Psi^* \odot \Phi$.

Next let $\Ga \in M_n(\cl E^{\rm d *} \otimes_{\rm s} \cl E^{\rm d})$.
For a given $\eps>0$, there exists $\Psi, \Phi \in M_{k,n}(\cl E^{\rm d})$ such that 
\[
\nor{\Ga - \Psi^* \odot \Phi}_{\rm s}^{(n)} < \eps.
\]
By the previous paragraph, there exists $\al_0$ such that
\[
\left\| \left( \left( F_{J^{(r,p)}(x^{\al})} \right)^* \cdot F_{J^{(r,p)}(x^{\al})} \right)^{(n)} (\Psi^*\odot \Phi) - (f^* \cdot f)^{(n)}(\Psi^* \odot \Phi) \right\|_{M_{rn, pn}} \hspace{-0.3cm} < \eps,
\]
for all $\al \geq \al_0$.
Note here that 
\begin{align*}
\nor{x^{\al}} \leq 1
& \Rightarrow
\nor{J^{(r,p)}(x^{\al})} \leq 1 \\
& \Rightarrow
\|F_{J^{(r,p)}(x^{\al})} \|_{\rm cb} \leq 1 \\
& \Rightarrow
\| (F_{J^{(r,p)}(x^{\al})})^* \cdot F_{J^{(r,p)}(x^{\al})} \|_{\rm cb} \leq 1 \\
& \Rightarrow
\| \left( \left( F_{J^{(r,p)}(x^{\al})} \right)^* \cdot F_{J^{(r,p)}(x^{\al})} \right)^{(n)} \| \leq 1.
\end{align*}
Therefore, for $\al \geq \al_0$, we conclude
\begin{align*}
\left\| (f^* \cdot f)^{(n)} (\Ga) - \left( \left( F_{J^{(r,p)}(x^\al)} \right)^* \cdot F_{J^{(r,p)}(x^\al)} \right)^{(n)} (\Ga) \right\|_{M_{rn, pn}} \leq \\
& \hspace{-10cm} \leq
\left\| (f^* \cdot f)^{(n)} (\Ga - \Psi^* \odot \Phi) \right\|_{M_{rn, pn}} \\
& \hspace{-9.5cm} +
\left\|(f^* \cdot f)^{(n)} (\Psi^* \odot \Phi) - \left( \left( F_{J^{(r,p)}(x^{\al})} \right)^* \cdot F_{J^{(r,p)}(x^{\al})} \right)^{(n)} (\Psi^* \odot \Phi) \right\|_{M_{rn, pn}} \\
& \hspace{-9.5cm} +
\left\|\left( \left( F_{J^{(r,p)}(x^{\al})} \right)^* \cdot F_{J^{(r,p)}(x^{\al})} \right)^{(n)} (\Psi^* \odot \Phi - \Ga) \right\|_{M_{rn, pn}} \\
& \hspace{-10cm} \leq
\left\| (f^* \cdot f) \right\|_{\rm cb} 
\left\|\Ga - \Psi^* \odot \Phi \right\|_{\rm s}^{(n)}
+ \eps \\
& \hspace{-9.5cm} +
\left\| \left( \left( F_{J^{(r,p)}(x^{\al})} \right)^* \cdot F_{J^{(r,p)}(x^{\al})} \right)^{(n)} \right\| \left\|\Ga - \Psi^* \odot \Phi \right\|_{\rm s}^{(n)} \\
& \hspace{-10cm} \leq
( \|  (f^* \cdot f) \|_{\rm cb} + 2) \eps,
\end{align*}
and the proof is complete.
\end{proof}

Since $\cl E^* \otimes_{\rm s} \cl E$ is a matrix ordered $*$-vector space, we can endow $\left(\cl E^*\otimes_{\rm s} \cl E\right)^{\rm d}$ with a matrix cone structure in the following way.
We say that $\fr{X} \in M_n(\left(\cl E^*\otimes_{\rm s} \cl E\right)^{\rm d})$ is \emph{positive} if the induced map $F_{\fr{X}} \colon \cl E^*\otimes_{\rm s} \cl E \to M_n$ is completely positive.

\begin{theorem}\label{th_findimsd}
Let $\cl E$ be an operator space. 
Then the mapping 
\[
\iota \colon \cl E^{{\rm d} *}\otimes_{\rm s} \cl E^{\rm d} 
\to \left(\cl E^*\otimes_{\rm s} \cl E\right)^{\rm d}, 
\text{ given by }
\iota(\Psi^*\otimes \Phi)(y^*\otimes x) = \Psi^*(y^*) \Phi(x),
\]
is a complete order isomorphism onto its range. 
If $\cl E$ is finite dimensional, then this map is surjective.
\end{theorem}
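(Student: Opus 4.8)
The map $\iota$ was already shown to be a completely bounded isomorphism onto its range in \eqref{eq_ds}, inherited from the Haagerup-level complete isometry of \cite[Theorem 9.4.7]{er} combined with Lemma \ref{l_norm}; surjectivity when $\cl E$ is finite dimensional follows from \cite[Corollary 9.4.8]{er}. So the only substantive content is that $\iota$ is a \emph{complete order} embedding, i.e.\ that for every $n$ and every $\Ga \in M_n(\cl E^{{\rm d}*}\otimes_{\rm s}\cl E^{\rm d})$, the element $\iota^{(n)}(\Ga)$ is positive in $M_n((\cl E^*\otimes_{\rm s}\cl E)^{\rm d})$ precisely when $\Ga$ is positive in $M_n(\cl E^{{\rm d}*}\otimes_{\rm s}\cl E^{\rm d})$. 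The plan is to verify the two implications separately.

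\emph{Positivity is preserved by $\iota$.} Suppose $\Ga \in M_n(\cl E^{{\rm d}*}\otimes_{\rm s}\cl E^{\rm d})^+$. Using Proposition \ref{p_synth} (applied to the operator space $\cl E^{\rm d}$ and $\cl S = \bb C$), the cone $M_n(\cl E^{{\rm d}*}\otimes_{\rm s}\cl E^{\rm d})^+$ is the closure of sums of elements $\Phi^*\odot\Phi$ with $\Phi \in M_{k,n}(\cl E^{\rm d})$; since the cone $M_{pn}^+$ is closed and, by Lemma \ref{l_norm}, $\iota$ and the evaluation at a fixed $u$ are continuous for the symmetrisation norm, it suffices to treat $\Ga = \Phi^*\odot\Phi$. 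For such $\Ga$, Lemma \ref{l_Fpos} gives $F_{\iota^{(n)}(\Phi^*\odot\Phi)}^{(p)}(u)\in M_{pn}^+$ for every $u \in M_p(\cl E^*\otimes_{\rm s}\cl E)^+$, which is exactly the statement that $F_{\iota^{(n)}(\Ga)}$ is completely positive, i.e.\ $\iota^{(n)}(\Ga)$ is positive.

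\emph{Positivity is reflected by $\iota$.} Conversely, suppose $\iota^{(n)}(\Ga)$ is positive, so $F_{\iota^{(n)}(\Ga)}\colon \cl E^*\otimes_{\rm s}\cl E \to M_n$ is completely positive; we must show $\Ga \in M_n(\cl E^{{\rm d}*}\otimes_{\rm s}\cl E^{\rm d})^+$. Since $\cl E^{{\rm d}*}\otimes_{\rm s}\cl E^{\rm d}$ is a selfadjoint operator space (Theorem \ref{th_estareos}), by Theorem \ref{t_choi non-unital} (Choi's Theorem) it is enough to show that $\Ga$ is annihilated nonnegatively by every completely contractive completely positive functional $\vphi$ on $M_n(\cl E^{{\rm d}*}\otimes_{\rm s}\cl E^{\rm d})$, equivalently that the associated map $\cl E^{{\rm d}*}\otimes_{\rm s}\cl E^{\rm d}\to M_n$ is completely positive at $\Ga$; by Theorem \ref{t_symuni}(ii) (for the operator space $\cl E^{\rm d}$ and $\cl S=\bb C$) every completely contractive completely positive map $\cl E^{{\rm d}*}\otimes_{\rm s}\cl E^{\rm d}\to M_p$ has the form $G^*\cdot G$ for a completely contractive $G\colon \cl E^{\rm d}\to \cl B(\bb C^p,K)$. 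The goal is therefore to show $(G^*\cdot G)^{(n)}(\Ga) \geq 0$ for every such $G$. The idea is to realise $G^*\cdot G$ as a limit of the maps $(F_{J^{(r,p)}(x^\al)})^*\cdot F_{J^{(r,p)}(x^\al)}$ appearing in Lemmas \ref{l_JGa} and \ref{l_unbdd}: after compressing to finite rank and using the canonical shuffle, a completely contractive $G\colon\cl E^{\rm d}\to M_{r,p}$ corresponds to an element $f \in M_{r,p}(\cl E^{\rm dd})$ of norm $\le 1$, and by Goldstine's theorem (density of $J(\mathrm{ball}(\cl E))$ in $\mathrm{ball}(\cl E^{\rm dd})$ in the weak* topology, applied entrywise) there is a uniformly bounded net $(x^\al)_\al$ in $M_{r,p}(\cl E)$ with $\mathrm{w^*}\text{-}\lim_\al J(x^\al)=f$. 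Then Lemma \ref{l_unbdd} yields
\[
(G^*\cdot G)^{(n)}(\Ga) = (f^*\cdot f)^{(n)}(\Ga) = \lim_\al \bigl( F_{J^{(r,p)}(x^\al)}^{\,*}\cdot F_{J^{(r,p)}(x^\al)}\bigr)^{(n)}(\Ga),
\]
and by Lemma \ref{l_JGa} each term equals $F^{(p)}_{\iota^{(n)}(\Ga)}(x^{\al*}\odot x^\al)$, which is positive because $F_{\iota^{(n)}(\Ga)}$ is completely positive and $x^{\al*}\odot x^\al \in M_p(\cl E^*\otimes_{\rm s}\cl E)^+$ (Proposition \ref{p_synth}). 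As $M_{rn,pn}^+$ — more precisely the relevant cone of positive operators — is closed, the limit is positive, so $(G^*\cdot G)^{(n)}(\Ga)\geq 0$. Reducing a general completely contractive $G\colon\cl E^{\rm d}\to\cl B(\bb C^p,K)$ to the finite-rank/finite-dimensional case uses that positivity is detected after compressing $K$ to finite-dimensional subspaces (as in the proof of Proposition \ref{p_finen}), so the above suffices.

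\emph{Main obstacle.} The delicate point is the reflection direction: one must pass from an abstract completely contractive $G$ on the \emph{dual} space $\cl E^{\rm d}$ to a weak*-approximating net from $\cl E$ itself, and ensure that the relevant limiting and matrix-amplification steps are compatible with the symmetrisation norm rather than merely the Haagerup norm. This is exactly the content packaged into Lemmas \ref{l_JGa} and \ref{l_unbdd}, so the proof reduces to assembling these with Goldstine's theorem, Proposition \ref{p_synth}, and Theorems \ref{t_choi non-unital} and \ref{t_symuni}; the finite-dimensional surjectivity is then immediate from \cite[Corollary 9.4.8]{er}.
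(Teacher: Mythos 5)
Your proposal is correct and follows essentially the same route as the paper's proof: the topological part via the Haagerup self-duality and Lemma \ref{l_norm}, the forward implication via Proposition \ref{p_synth} and Lemma \ref{l_Fpos}, and the reverse implication by reducing to finite-dimensional completely contractive maps on $\cl E^{\rm d}$ (the paper packages this reduction as Proposition \ref{p_finen}) and then combining Goldstine's theorem with Lemmas \ref{l_JGa} and \ref{l_unbdd}. The detour through Choi's Theorem in the reflection step is harmless but unnecessary, since positivity in the symmetrisation cones is by definition tested against admissible pairs.
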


\begin{proof}
The fact that $\iota$ is a well-defined topological monomorphism follows by Lemma \ref{l_norm} and the self-duality of the Haagerup tensor product \cite{bs}. 
We have also noted that surjectivity of the map follows if $\cl E$ is finite dimentional.

We show that $\iota$ is completely positive.
Towards this end, it suffices to show that, if $\Phi \in M_{k,n}(\cl E^{\rm d *} \otimes_{\rm s} \cl E^{\rm d})^+$, then $F_{\iota^{(n)}(\Phi^* \odot \Phi)}$ is a completely positive map.
However this follows directly from Lemma \ref{l_Fpos}.

Conversely, let $\Ga \in M_n(\cl E^{\rm d *} \otimes_{\rm s} \cl E^{\rm d})$ be such that $\iota^{(n)}(\Ga) \geq 0$, that is, such that the map $F_{\iota^{(n)}(\Ga)}$ is completely positive.
By invoking Proposition \ref{p_finen}, it suffices to show that $(f^* \cdot f)^{(n)}(\Ga) \geq 0$ for every completely contractive map $f \colon \cl E^{\rm d} \to M_{r,p}$.
By definition, $f \in M_{r,p}(\cl E^{\rm dd})$; by Goldstine's Theorem, there exists a uniformly bounded net $(x^\al)$ in $M_{r,p}(\cl E)$ such that 
\[
\text{w*-}\lim_\al J^{(r,p)}(x^\al) = f
\]
in the weak* topology of the second dual.
By Lemmas \ref{l_JGa} and \ref{l_unbdd}, 
\begin{align*}
(f^* \cdot f)^{(n)} (\Ga)
& =
\lim_\al \left( \left( F_{J^{(r,p)}(x^\al)} \right)^* \cdot F_{J^{(r,p)}(x^\al)} \right)^{(n)} (\Ga) \\
& =
\lim_\al F^{(p)}_{\iota^{(n)}(\Ga)} ((x^\al)^* \odot x^\al),
\end{align*}
where the limit is taken in $M_{rn, pn}$.
However $F_{\iota^{(n)}(\Ga)}$ is assumed to be completely positive, and thus $F^{(p)}_{\iota^{(n)}(\Ga)}$ is a positive map; therefore we obtain $F^{(p)}_{\iota^{(n)}(\Ga)} ((x^\al)^* \odot x^\al) \geq 0$.
Since positivity is preserved by limits in $M_{rn,pn}$ we get $(f^* \cdot f) (\Ga) \geq 0$ as required.
\end{proof}

\begin{remark}\label{r_notisom} 
The map $\iota \colon \cl E^{{\rm d} *}\otimes_{\rm s} \cl E^{\rm d} \to \left(\cl E^*\otimes_{\rm s} \cl E\right)^{\rm d}$ of Theorem \ref{th_findimsd} need not be a complete isometry.
For an example, consider $\cl E = R_n$, in which case we have $\cl E^{\rm d} = \cl E^* = C_n$.
Then we have the following sequence of completely contractive maps
\begin{align*}
(M_n)^{\rm d}
\simeq
R_n \otimes_{\rm h} C_n
& \stackrel{\id}{\longrightarrow}
R_n \otimes_{\rm s} C_n
=
\cl E^{\rm d *} \otimes_{\rm s} \cl E^{\rm d} 
\longrightarrow \\
& \stackrel{\iota}{\longrightarrow}
\left(\cl E^*\otimes_{\rm s} \cl E\right)^{\rm d}
\simeq
\left( C_n \otimes_{\rm s} R_n \right)^{\rm d}
\simeq
(M_n)^{\rm d}
\end{align*}
which give the identity map on $(M_n)^{\rm d}$.
If $\iota$ were completely isometric, then the map $\id \colon R_n \otimes_{\rm h} C_n \to R_n \otimes_{\rm s} C_n$ would be completely isometric.
However since $R_n \otimes_{\rm h} C_n \simeq (M_n)^{\rm d}$ is an operator system, then $R_n \otimes_{\rm s} C_n$ would be an operator system, which contradicts Remark \ref{r_appnotos}.
\end{remark}

We finish this section with a note on the quasi-state space of $\cl E^*\otimes_{\rm s}\cl E$, that is, the convex set 
\[
{\rm CCP}(\cl E^*\otimes_{\rm s}\cl E, \bb C) := \{\vphi \colon \cl E^*\otimes_{\rm s}\cl E \to \bb{C} 
\ : \ \vphi \mbox{ is contractive and positive}\}.
\]
We will show that there is a canonical identification with mappings that factor through Hilbert spaces.

Recall that, if $\cl E$ and $\cl F$ are operator spaces, we say that a completely bounded map $f \colon \cl E\to \cl F^{\rm d}$ \emph{factors through a Hilbert space} if there exist a Hilbert space $K$ and complete bounded maps $r \colon \cl E\to K_{\rm c}$ and $s \colon K_{\rm c} \to \cl F^{\rm d}$ such that $f = s \circ r$ (see for example \cite[Section 13.3]{er}). 
We write
\[
\Gamma_{2}^{\rm c}(\cl E,\cl F^{\rm d})
= \{f \colon \cl E\to \cl F^{\rm d} \ : \
f \mbox{ factors through a Hilbert space}\}.
\]
By identifying a matrix $f \in M_n(\Gamma_{2}^{\rm c}(\cl E,\cl F^{\rm d}))$ with the mapping $f \colon \cl E \to M_n(\cl F^{\rm d})$ we see that there are completely bounded maps
\[
r \colon \cl E \to M_{1,n}(K_{\rm c})
\qand 
s \colon K_{\rm c} \to M_{n,1}(\cl F^{\rm d})
\]
such that $f = s^{(1,n)} \circ r$
(see \cite[Section 13.3]{er}).
The space $\Gamma_{2}^{\rm c}(\cl E,\cl F^{\rm d})$ endowed with
\[
\ga_{2,n}^{\rm c}(f) := \inf\{ \|s\|_{\rm cb} \cdot \|r\|_{\rm cb} \ : \ \textup{ $f = s \circ r$ factors through a Hilbert space} \}
\]
becomes an operator space.
In particular, by \cite[Lemma 13.3.1]{er} there is a completely isometric isomorphism
\begin{equation} \label{eq_lam}
\La_{\rm h} \colon (\cl F \otimes_{\rm h} \cl E)^{\rm d} \to \Gamma_{2}^{\rm c}(\cl E,\cl F^{\rm d})
\end{equation}
given by
\begin{equation*}
\La_{\rm h}(\vphi)(x)(y) = \vphi(y \otimes x), \ \ \ x \in \cl E, y \in \cl F.
\end{equation*}
By considering $\cl F \subseteq \cl F^{\rm dd}$, this gives a description of the operator space structure on $\Gamma_{2}^{\rm c}(\cl E,\cl F^{\rm d})$.

We will use (\ref{eq_lam}) in order to describe ${\rm CCP}(\cl E^*\otimes_{\rm s}\cl E, \bb C)$.
Towards this end, we include a definition.
Recall the identification $K_{\rm c}^* \simeq K_{\rm r}$, and thus, if $r \colon \cl E \to K_{\rm c}$ is completely bounded, then we can consider the completely bounded map $r^* \colon \cl E^* \to K_{\rm r}$.
Moreover, we also have the identification $K_{\rm r}^{\rm d} \simeq K_{\rm c}$, and thus we obtain the completely bounded map $r^{* \rm d} \colon K_{\rm c} \to \cl E^{* \rm d}$.

\begin{definition}
Let $\cl E$ be an operator space.
We say that a completely bounded map $\vphi \colon \cl E \to \cl E^{* \rm d}$ \emph{factors symmetrically through a Hilbert space} if there exists a Hilbert space $K$ and a completely bounded map $r \colon \cl E \to K_{\rm c}$ such that $\vphi = r^{* \rm d} \circ r$.
We write
\[
\Gamma_{2 +}^{\rm c}(\cl E, \cl E^{* {\rm d}})
:= \{\vphi \colon \cl E\to \cl E^{* {\rm d}} \ : \
\vphi \mbox{ factors sym. through a Hilbert space}\}.
\]
\end{definition}

It is easy to see that $\Gamma_{2 +}^{\rm c}(\cl E, \cl E^{* {\rm d}})$ is a cone in $\Gamma_{2}^{\rm c}(\cl E, \cl E^{* {\rm d}})$.
Since $\cl E^* \otimes_{\rm s} \cl E$ is completely boundedly isomorphic to $\cl E^* \otimes_{\rm h} \cl E$ by Lemma \ref{l_norm}, (\ref{eq_lam}) defines a completely bounded isomorphism
\begin{equation} \label{eq_lam2}
\La_{\rm s} \colon (\cl E^* \otimes_{\rm s} \cl E)^{\rm d} \to \Gamma_{2}^{\rm c}(\cl E,\cl E^{* \rm d})
\end{equation}
given by
\begin{equation*}
\La_{\rm s}(\vphi)(x)(y^*) = \vphi(y^* \otimes x), \ \ \ x,y \in \cl E.
\end{equation*}
This map will give the required identification of ${\rm CCP}(\cl E^*\otimes_{\rm s}\cl E, \bb C)$ with the ball of $\Gamma_{2 +}^{\rm c}(\cl E, \cl E^{* {\rm d}})$.

\begin{corollary}\label{c_states}
Let $\cl E$ be an operator space.
Then the map
\[
\La_{\rm s +} \colon {\rm CCP}(\cl E^*\otimes_{\rm s}\cl E, \bb C) \to \{f \in \Gamma_{2 +}^{\rm c}(\cl E,\cl E^{* \rm d}) \ : \ \ga_{2,1}^{\rm c}(f) \leq 1 \}
\]
given by
\[
\La_{\rm s +}(\vphi)(x)(y^*) = \vphi(y^* \otimes x), \ \ \ x,y \in \cl E,
\]
is an isometric affine surjection.
\end{corollary}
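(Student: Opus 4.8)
The plan is to deduce Corollary~\ref{c_states} from Theorem~\ref{th_findimsd} together with the completely isometric isomorphism $\La_{\rm s}$ of~(\ref{eq_lam2}). First I would observe that $\La_{\rm s}$ restricts to a bijection between the unit ball of $(\cl E^*\otimes_{\rm s}\cl E)^{\rm d}$ and the unit ball of $\Gamma_2^{\rm c}(\cl E,\cl E^{*\rm d})$ (with respect to $\ga_{2,1}^{\rm c}$), because $\La_{\rm s}$ is a completely isometric isomorphism and hence isometric at the first matrix level; this already gives that $\La_{\rm s+}$, defined by the same formula, is an isometric affine map on its domain, provided the domain and range match up. So the content of the corollary is the identification of the two distinguished convex subsets: $\vphi\in{\rm CCP}(\cl E^*\otimes_{\rm s}\cl E,\bb C)$ (contractive \emph{and} positive) if and only if $\La_{\rm s}(\vphi)\in\Gamma_{2+}^{\rm c}(\cl E,\cl E^{*\rm d})$ (factors \emph{symmetrically} through a Hilbert space) with $\ga_{2,1}^{\rm c}(\La_{\rm s}(\vphi))\le 1$.

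The key step is therefore to characterise, among the maps $\vphi\colon\cl E^*\otimes_{\rm s}\cl E\to\bb C$ that are contractive, exactly those that are positive, in terms of the factorisation of $\La_{\rm s}(\vphi)$. In one direction, suppose $\vphi$ is a state, i.e.\ $\vphi\in{\rm CCP}(\cl E^*\otimes_{\rm s}\cl E,\bb C)$. Since positive functionals on a matrix ordered $*$-vector space are completely positive (as recalled in Subsection~\ref{ss_opspsy}), $\vphi$ is a completely contractive completely positive functional on the symmetrisation, i.e.\ a completely contractive completely positive trilinear-type map $\wt\theta_{\rm s}$ for the case $\cl S=\bb C$ and $H=\bb C$. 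By Theorem~\ref{t_symuni}(ii) (or Lemma~\ref{l_ssgen}), there is a completely contractive $\phi\colon\cl E\to\cl B(\bb C,K)=K_{\rm c}$ with $\vphi=\phi^*\cdot\phi$, that is $\vphi(y^*\otimes x)=\phi(y)^*\phi(x)=\langle\phi(x),\phi(y)\rangle_K$. Setting $r:=\phi\colon\cl E\to K_{\rm c}$, one checks directly that $\La_{\rm s}(\vphi)(x)(y^*)=\vphi(y^*\otimes x)=\langle\phi(x),\phi(y)\rangle=(r^{*\rm d}\circ r)(x)(y^*)$, using the identifications $K_{\rm c}^*\simeq K_{\rm r}$ and $K_{\rm r}^{\rm d}\simeq K_{\rm c}$; so $\La_{\rm s}(\vphi)=r^{*\rm d}\circ r$ factors symmetrically, and $\ga_{2,1}^{\rm c}(\La_{\rm s}(\vphi))\le\|r\|_{\rm cb}^2=\|\phi\|_{\rm cb}^2\le 1$. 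Conversely, if $\vphi$ is contractive and $\La_{\rm s}(\vphi)=r^{*\rm d}\circ r$ for some completely bounded $r\colon\cl E\to K_{\rm c}$, then unwinding the same identifications gives $\vphi(y^*\otimes x)=\langle r(x),r(y)\rangle_K$; hence on a generator $x^*\odot s\odot x$ with $s\in M_k(\bb C)^+$ (which by Proposition~\ref{p_synth} densely span the positive cone), $\vphi^{(k)}(x^*\odot x)=r^{(k,1)}(x)^*r^{(k,1)}(x)\ge 0$, so $\vphi$ is positive, hence a state. This establishes the set-theoretic bijection; affineness of $\La_{\rm s+}$ is immediate from linearity of $\La_{\rm s}$ in $\vphi$, and the isometric property is inherited from $\La_{\rm s}$.

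For surjectivity, I would argue as follows: given $f\in\Gamma_{2+}^{\rm c}(\cl E,\cl E^{*\rm d})$ with $\ga_{2,1}^{\rm c}(f)\le 1$, write $f=r^{*\rm d}\circ r$. Approximating the infimum defining $\ga_{2,1}^{\rm c}(f)$, and rescaling, one may assume $\|r\|_{\rm cb}\le 1$ (here the symmetric factorisation helps: a single map $r$ controls both factors, so $\ga_{2,1}^{\rm c}(f)=\inf\|r\|_{\rm cb}^2$ over symmetric factorisations, which I would record as a short lemma). Then $r$, viewed as a completely contractive map $\cl E\to K_{\rm c}$, participates in the admissible pair $(r,\mathrm{id}_{\bb C})$ for $\cl S=\bb C$, and $\wt\theta_{\rm s}:=r^*\cdot r$ is a completely contractive completely positive functional $\vphi$ on $\cl E^*\otimes_{\rm s}\cl E$ by Theorem~\ref{t_symuni}(i), with $\La_{\rm s}(\vphi)=f$ by the computation above; so $\vphi$ is the desired preimage. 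The main obstacle I anticipate is the careful bookkeeping of the dual/adjoint identifications $K_{\rm c}^*\simeq K_{\rm r}$, $K_{\rm r}^{\rm d}\simeq K_{\rm c}$ and their interaction with the map $\La_{\rm s}$ and the definition of $r^{*\rm d}$, together with verifying that the norm/$\ga_{2,1}^{\rm c}$ bookkeeping is tight (i.e.\ that the infimum over symmetric factorisations equals $\|r\|_{\rm cb}^2$ and matches the operator-norm bound on $\vphi$); the positivity equivalence itself is a direct consequence of Theorem~\ref{t_symuni} and Proposition~\ref{p_synth}.
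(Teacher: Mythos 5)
Your factorisation steps are sound and match the paper's: the forward direction via Lemma \ref{l_ssgen}/Theorem \ref{t_symuni} to write $\vphi=\phi^*\cdot\phi$ with $\phi\colon\cl E\to K_{\rm c}$, the identification $\La_{\rm s}(\vphi)=\phi^{*\rm d}\circ\phi$, and the positivity check on generators $x^*\odot x$ using Proposition \ref{p_synth}. However, there is a genuine error at the foundation of your norm bookkeeping: you assert that $\La_{\rm s}$ of (\ref{eq_lam2}) ``is a completely isometric isomorphism and hence isometric at the first matrix level,'' and deduce that $\La_{\rm s+}$ is ``already'' isometric. That is false. The map $\La_{\rm h}$ on the \emph{Haagerup} dual in (\ref{eq_lam}) is completely isometric, but $\La_{\rm s}$ is obtained by composing with the identity $\cl E^*\otimes_{\rm h}\cl E\to\cl E^*\otimes_{\rm s}\cl E$, which is only a completely bounded isomorphism; the two norms genuinely differ (Theorem \ref{t_diffhs}, Example \ref{e_diffhs}, Remark \ref{r_appnotos}), so $\La_{\rm s}$ is in general not isometric and your unit balls do not match up. The isometry of $\La_{\rm s+}$ is a special feature of the \emph{positive} part: for a completely positive functional the cb-norms on $\cl E^*\otimes_{\rm h}\cl E$ and $\cl E^*\otimes_{\rm s}\cl E$ coincide (Remark \ref{r_norest}), and combining this with the complete isometry of $\La_{\rm h}$ gives $\|\vphi\|_{\rm cb}=\|\vphi'\|_{\rm cb}=\ga_{2,1}^{\rm c}(\La_{\rm h}(\vphi'))=\ga_{2,1}^{\rm c}(\La_{\rm s+}(\vphi))$. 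Without routing through $\La_{\rm h}$ and Remark \ref{r_norest}, Lemma \ref{l_norm} only yields equivalence up to a factor of $4$.

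The same issue infects your surjectivity argument. You want to assume $\|r\|_{\rm cb}\le 1$ by ``approximating the infimum defining $\ga_{2,1}^{\rm c}(f)$,'' which requires the lemma that $\ga_{2,1}^{\rm c}(f)=\inf\|r\|_{\rm cb}^2$ over \emph{symmetric} factorisations; but $\ga_{2,1}^{\rm c}$ is defined as an infimum over arbitrary (possibly asymmetric) factorisations $f=s\circ r$, and passing from a near-optimal asymmetric factorisation to a symmetric one of the same norm is precisely the GNS-type content of Lemma \ref{l_ssgen} — the paper records this identity only \emph{after} the corollary, as a consequence of it, so taking it as a ``short lemma'' beforehand is circular or at best defers the real work. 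The paper's route avoids this entirely: take any symmetric factorisation $f=\phi^{*\rm d}\circ\phi$, set $\vphi:=\phi^*\cdot\phi$ (automatically completely positive), and compute $\|\vphi\|_{\rm cb}=\|\vphi'\|_{\rm cb}=\ga_{2,1}^{\rm c}(f)\le 1$ exactly, via the complete isometry of $\La_{\rm h}$ and Remark \ref{r_norest}, with no need for a near-optimal symmetric factorisation.
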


\begin{proof}
The map $\La_{\rm s +}$ is given by the restriction of the completely bounded isomorphism $\La_{\rm s}$, defined in (\ref{eq_lam2}), and thus is a well-defined convex map.
We need to show that it is an isometric bijection.

First note that, if $\vphi \in {\rm CCP}(\cl E^*\otimes_{\rm s}\cl E, \bb C)$ then, by Lemma \ref{l_ssgen}, 
there exist a Hilbert space $K$ and a completely contractive map
\[
\phi \colon \cl E \to \cl B(\bb C, K) \simeq K_{\rm c}
\]
such that $\vphi  = \phi^* \cdot \phi$ and $\|\phi\|_{\rm cb}^2 = \|\vphi\|_{\rm cb}$.
For all $x,y \in \cl E$ we then have
\begin{equation} \label{eq_lamplus}
\begin{split}
\La_{\rm s +}(\vphi)(x)(y^*)
& =
\vphi(y^* \otimes x)
=
\phi(y)^* \phi(x) \\
& =
\sca{\phi(x), \phi(y)}_{K}
=
(\phi^{* \rm d} \circ \phi)(x)(y^*).    
\end{split}
\end{equation}
Therefore $\La_{\rm s +}(\vphi) = \phi^{* \rm d} \circ \phi$ with
\[
\ga_{2,1}^{\rm c}(\La_{\rm s +}(\vphi)) \leq \|\phi^{* \rm d}\|_{\rm cb} \cdot \|\phi\|_{\rm cb} = \|\phi\|_{\rm cb}^2 = \|\vphi\|_{\rm cb} \leq 1.
\]
Thus $\La_{\rm s +}$ takes values inside the unit ball of $\Gamma_{2 +}^{\rm c}(\cl E,\cl E^{* \rm d})$.

For $\vphi$ and $\phi$ as above, let us write $\vphi'$ for the induced map on $\cl E^* \otimes_{\rm h} \cl E$; by the definition of $\La_{\rm s +}$, we have
\[
\La_{\rm h}(\vphi')
=
\La_{\rm s +}(\vphi)
=
\phi^{* \rm d} \circ \phi.
\]
Using the fact that $\La_{\rm h}$ is completely isometric and that $\|\vphi\|_{\rm cb} = \|\vphi'\|_{\rm cb}$ by Remark \ref{r_norest} we have
\begin{align*}
\|\vphi\|_{\rm cb}
& =
\|\vphi'\|_{\rm cb}
=
\ga_{2,1}^{\rm c}(\La_{\rm h}(\vphi'))
\leq
\|\phi\|_{\rm cb}^2
=
\|\vphi\|_{\rm cb}.
\end{align*}
This shows that $\La_{\rm s +}$ is an isometry.

For surjectivity, let $f \in \Gamma_{2 +}^{\rm c}(\cl E,\cl E^{* \rm d})$ with $\ga_{2,1}^{\rm c}(f) \leq 1$.
Let $\phi \colon \cl E \to \cl B(\bb C, K)$ be a completely bounded map such that $f = \phi^{* \rm d} \circ \phi$.
Let us write $\vphi := \phi^* \cdot \phi$ on $\cl E^* \otimes_{\rm s} \cl E$ and let $\vphi'$ be the induced map defined on $\cl E^* \otimes_{\rm h} \cl E$.
Then $\vphi$ is completely bounded and completely positive, and by (\ref{eq_lamplus}) we get  
\[
\La_{\rm s +}(\vphi) = \La_{\rm h}(\vphi') = f.
\]
By using that $\La_{\rm h}$ is completely isometric and that $\|\vphi\|_{\rm cb} = \|\vphi'\|_{\rm cb}$ from Remark \ref{r_norest} we have
\[
\|\vphi\|_{\rm cb} = \|\vphi'\|_{\rm cb} = \ga_{2,1}^{\rm c}(f) \leq 1,
\]
and thus $\vphi \in {\rm CCP}(\cl E^*\otimes_{\rm s}\cl E, \bb C)$.
\end{proof}

\begin{remark}
By Corollary \ref{c_states}, the set 
\[
\{f \in \Gamma_{2 +}^{\rm c}(\cl E,\cl E^{* \rm d}) \ : \ \ga_{2,1}^{\rm c}(f) \leq 1 \}
\]
is a convex subset of $\Gamma_{2}^{\rm c}(\cl E,\cl E^{* \rm d})$, being isometrically isomorphic to the convex set ${\rm CCP}(\cl E^*\otimes_{\rm s}\cl E, \bb C)$.
Moreover by the proof of Corollary \ref{c_states} we have
\begin{align*}
\ga_{2,1}^{\rm c}(f)
& =
\inf\{ \|\phi\|_{\rm cb}^2 \ : \ \phi \colon \cl E \to K_{\rm c}, f = \phi^{* \rm d} \circ \phi\}, 
\end{align*}
for any $f \in \Gamma_{2 +}^{\rm c}(\cl E,\cl E^{* \rm d})$.
In particular, the infimum is a minimum in virtue of Lemma \ref{l_ssgen} and (\ref{eq_lamplus}).
\end{remark}

\section{Balanced symmetrisation}\label{s_balsym}

Let $\cl A$ be a C*-algebra, $\cl S$ be an operator $\cl A$-system and $\cl E$ be an operator $\cl A$-space.
In this section we will define an $\cl A$-balanced version of the $\cl S$-symmetrisation.
We will show how this can be derived as a quotient of the $\cl S$-symmetrisation $\cl E^*\otimes_{\rm s}\cl S \otimes_{\rm s} \cl E$.

\subsection{Construction and properties}\label{ss_constpro}

Let $\cl A$ be a unital C*-algebra, $\cl S$ be an operator $\cl A$-system and $\cl E$ be an operator $\cl A$-space.
Note that, in this case, $\cl E^*$ is a right operator $\cl A$-space with respect to the action $x^*\cdot a := (a^*\cdot x)^*$.
Recall that an admissible pair $(\phi,\psi)$ of maps is called \emph{$\cl A$-admissible} if 
\begin{align*}
\psi(s \cdot a) \phi(x) = \psi(s) \phi(a \cdot x), \ \ s \in \cl S, a \in \cl A, x \in \cl E.
\end{align*}
We will further say that a map $\wt{\theta}_{\rm s} \colon \cl E^* \otimes_{\rm s} \cl S \otimes_{\rm s} \cl E\to \cl B(H)$ is \emph{$\cl A$-balanced} if the admissible pair, corresponding to $\wt{\theta}_{\rm s}$ via Theorem \ref{t_symuni}, is an $\cl A$-admissible pair; note that this happens if and only if the delinearisation $\theta$ of $\wt{\theta}_{\rm s}$ is an $\cl A$-balanced trilinear map.
It follows from Lemma \ref{l_ssgen} that every \emph{$\cl A$}-balanced completely contractive completely positive map can be factored through an $\cl A$-admissible pair, and in the other direction every $\cl A$-admissible pair defines an \emph{$\cl A$}-balanced completely contractive completely positive map.

The balanced symmetrisation can be produced in two ways.
First we see how it is given as a quotient of the symmetrisation.
For $n\in \bb{N}$, let 
\begin{align*}
D_n^{\cl A} := \{ u\in M_n(\cl E^*\otimes_{\rm s}\cl S \otimes_{\rm s} \cl E)_h \ : \ 
& (\phi^*\cdot\psi \cdot \phi)^{(n)}(u)\in M_n(\cl B(H))^+, \\
& \; \text{ for every $\cl A$-admissible pair } (\phi,\psi) \},
\end{align*}
and let
\[
\cl J_{\cl E,\cl S}^{\otimes_{\rm s}^{\cl A}} :=  {\rm span} \{ u \in (\cl E^* \otimes_{\rm s} \cl S \otimes_{\rm s} \cl E)_{h} \ : \ u \in D_1^{\cl A}\cap (-D_1^{\cl A}) \};
\]
by definition, $\cl J_{\cl E,\cl S}^{\otimes_{\rm s}^{\cl A}}$ is selfadjoint subspace of $\cl E \otimes_{\rm s} \cl S \otimes_{\rm s} \cl E$.

We will keep track of the subspace related to the balanced relations as well; towards this end we define
\begin{align}
\cl J_{\cl E,\cl S}^{\cl A}
& := 
\ol{\rm span}\{ y^*\otimes (b \cdot s \cdot a)\otimes x -  (y^*\cdot b) \otimes  s \otimes (a \cdot x)  \ : \  \label{eq_J} \\
& \hspace{7cm} a,b\in \cl A, x,y\in \cl E\}, \nonumber
\end{align}
which is a (closed) subspace of $\cl E^*\otimes_{\rm s} \cl S \otimes_{\rm s} \cl E$.
By the definition of the involution, it is clear that $\cl J_{\cl E,\cl S}^{\cl A}$ is selfadjoint and thus spanned by its hermitian elements.

\begin{lemma}\label{l_kerne}
Let $\cl A$ be a unital C*-algebra, $\cl S$ be an operator $\cl A$-system and $\cl E$ be an operator $\cl A$-space.
If $\theta \colon \cl E^* \times \cl S \times \cl E \to \cl B(H)$ is an $\cl A$-balanced completely contractive completely positive map, and $\wt{\theta}_{\rm s}$ is the induced map on the symmetrisation, then $\wt{\theta}_{\rm s}$ annihilates $\cl J_{\cl E, \cl S}^{\otimes_{\rm s}^{\cl A}}$.
Moreover $\cl J_{\cl E, \cl S}^{\otimes_{\rm s}^{\cl A}}$ is a kernel of $\cl E \otimes_{\rm s} \cl S \otimes_{\rm s} \cl E$ that contains $\cl J_{\cl E,\cl S}^{\cl A}$.
\end{lemma}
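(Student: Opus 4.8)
The plan is to prove the three assertions of Lemma \ref{l_kerne} in turn: that $\wt{\theta}_{\rm s}$ annihilates $\cl J_{\cl E,\cl S}^{\otimes_{\rm s}^{\cl A}}$, that $\cl J_{\cl E,\cl S}^{\otimes_{\rm s}^{\cl A}}$ is a kernel of $\cl E^* \otimes_{\rm s} \cl S \otimes_{\rm s} \cl E$, and that it contains $\cl J_{\cl E,\cl S}^{\cl A}$. For the first assertion, I would observe that if $u \in D_1^{\cl A} \cap (-D_1^{\cl A})$, then for every $\cl A$-admissible pair $(\phi,\psi)$ we have $(\phi^* \cdot \psi \cdot \phi)(u) \in \cl B(H)^+ \cap (-\cl B(H)^+) = \{0\}$; since $\theta$ is $\cl A$-balanced completely contractive completely positive, by Lemma \ref{l_ssgen} it factors as $\wt{\theta}_{\rm s} = \phi^* \cdot \psi \cdot \phi$ for some $\cl A$-admissible pair $(\phi,\psi)$, so $\wt{\theta}_{\rm s}(u) = 0$. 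Since $\cl J_{\cl E,\cl S}^{\otimes_{\rm s}^{\cl A}}$ is by definition the span of such $u$, and $\wt{\theta}_{\rm s}$ is linear, it annihilates all of $\cl J_{\cl E,\cl S}^{\otimes_{\rm s}^{\cl A}}$.

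For the second assertion, I would use Remark \ref{r_kernel}, which characterises kernels of a selfadjoint operator space as intersections of kernels of (completely) contractive (completely) positive functionals. The point is that $\cl J_{\cl E,\cl S}^{\otimes_{\rm s}^{\cl A}}$ should equal $\bigcap \ker \vphi$, where $\vphi$ ranges over all completely contractive completely positive $\cl A$-balanced functionals on $\cl E^* \otimes_{\rm s} \cl S \otimes_{\rm s} \cl E$ (equivalently, over the functionals $\vphi = \phi^* \cdot \psi \cdot \phi$ for $\cl A$-admissible pairs $(\phi,\psi)$ with $K$ a Hilbert space, using the first part). One inclusion is exactly the first assertion applied to functionals. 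For the reverse inclusion, I would take $u$ in the intersection of all such kernels and argue that $u$, decomposed into its real and imaginary (hermitian) parts, has the property that both parts lie in $D_1^{\cl A} \cap (-D_1^{\cl A})$: indeed, if $\vphi(u) = 0$ for all $\cl A$-balanced completely contractive completely positive functionals $\vphi$ and $v = v^*$ is a hermitian component, then composing with vector states $\omega_{\xi,\xi} \circ (\phi^* \cdot \psi \cdot \phi)$ for arbitrary $\cl A$-admissible $(\phi,\psi)$ (these are again $\cl A$-balanced completely contractive completely positive functionals, by Remark \ref{ex_existad}-style constructions or by amplification) forces $(\phi^* \cdot \psi \cdot \phi)(v)$ to be a hermitian operator all of whose vector states vanish, hence zero, so $v \in D_1^{\cl A} \cap (-D_1^{\cl A})$. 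Then $u \in {\rm span}\{v : v \in D_1^{\cl A} \cap (-D_1^{\cl A})\} = \cl J_{\cl E,\cl S}^{\otimes_{\rm s}^{\cl A}}$. One subtlety to address is whether the class of such functionals forms a set; this is handled exactly as in the discussion after \eqref{eq_normde}, since the values are controlled.

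For the third assertion, I would show that each generator $w := y^* \otimes (b \cdot s \cdot a) \otimes x - (y^* \cdot b) \otimes s \otimes (a \cdot x)$ of $\cl J_{\cl E,\cl S}^{\cl A}$ lies in $D_1^{\cl A} \cap (-D_1^{\cl A})$, so that each hermitian component of $w$ (and $w$ itself, being a linear combination of such) lies in $\cl J_{\cl E,\cl S}^{\otimes_{\rm s}^{\cl A}}$; closedness of $\cl J_{\cl E,\cl S}^{\otimes_{\rm s}^{\cl A}}$ as a kernel then gives $\ol{\rm span}$ of the generators is contained in it. For a single generator, given an $\cl A$-admissible pair $(\phi,\psi)$, compute
\[
(\phi^* \cdot \psi \cdot \phi)(w) = \phi(y)^* \psi(b \cdot s \cdot a) \phi(x) - \phi(b^* \cdot y)^* \psi(s) \phi(a \cdot x),
\]
and use the $\cl A$-admissibility relations $\psi(s \cdot a)\phi(x) = \psi(s)\phi(a \cdot x)$ together with the adjoint relation $\phi(b^* \cdot y)^* \psi(s) = \phi(y)^* \psi(b \cdot s)$ (obtained by taking adjoints in the admissibility identity, since $\psi$ is selfadjoint and $(\phi^*, \psi)$ then satisfies the mirrored relation) to see that the two terms cancel, giving $(\phi^* \cdot \psi \cdot \phi)(w) = 0 \in \cl B(H)^+ \cap (-\cl B(H)^+)$. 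Hence $w \in D_1^{\cl A} \cap (-D_1^{\cl A})$ as required; since $\cl J_{\cl E,\cl S}^{\cl A}$ is selfadjoint and spanned by its hermitian elements (as noted after \eqref{eq_J}), and $\cl J_{\cl E,\cl S}^{\otimes_{\rm s}^{\cl A}}$ is a closed subspace, we conclude $\cl J_{\cl E,\cl S}^{\cl A} \subseteq \cl J_{\cl E,\cl S}^{\otimes_{\rm s}^{\cl A}}$.

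The main obstacle I anticipate is the reverse inclusion in the second assertion — pinning down precisely that membership in the kernels of all $\cl A$-balanced completely contractive completely positive functionals forces membership in $\cl J_{\cl E,\cl S}^{\otimes_{\rm s}^{\cl A}}$, which requires carefully relating the functionals $\vphi = \phi^* \cdot \psi \cdot \phi$ to the cone condition defining $D_1^{\cl A}$ and checking that hermitian components can be separated (using that the involution is isometric and the standard decomposition of a hermitian operator with all vanishing vector states). The cancellation computation in the third assertion is routine once the mirrored admissibility relation is recorded, and the first assertion is essentially immediate from Lemma \ref{l_ssgen}.
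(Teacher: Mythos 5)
Your proposal is correct and follows essentially the same route as the paper's proof: factor $\wt{\theta}_{\rm s}$ through an $\cl A$-admissible pair via Lemma \ref{l_ssgen} and use properness of $\cl B(H)^+$ for the annihilation; identify $\cl J_{\cl E,\cl S}^{\otimes_{\rm s}^{\cl A}}$ with the intersection of the kernels of all $\cl A$-balanced completely contractive completely positive functionals (splitting into hermitian parts and composing with states of $\cl B(H)$) to get the kernel property via Remark \ref{r_kernel}; and check that the generators of $\cl J_{\cl E,\cl S}^{\cl A}$ are killed by every such map. The only differences are cosmetic — you use vector states where the paper uses arbitrary completely positive functionals on $\cl B(H)$, and you write out the cancellation on the generators that the paper declares clear.
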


\begin{proof}
Let $\theta \colon \cl E^* \times \cl S \times \cl E$ be an $\cl A$-balanced completely contractive completely positive map, and $\wt{\theta}_{\rm s}$ be the induced map on the symmetrisation.
Let $(\phi, \psi)$ be an $\cl A$-admissible pair such that $\wt{\theta}_{\rm s} = \phi^* \cdot \psi \cdot \phi$ obtained by Lemma \ref{l_ssgen}.
If $u$ is hermitian with $u \in D_1^{\cl A}\cap (-D_1^{\cl A})$, then we have that 
\[
\wt{\theta}_{\rm s}(\pm u) = (\phi^* \cdot \psi \cdot \phi)(\pm u) \geq 0,
\]
and thus $u \in \ker \wt{\theta}_{\rm s}$.
Therefore $\wt{\theta}_{\rm s}$ annihilates $\cl J_{\cl E, \cl S}^{\otimes_{\rm s}^{\cl A}}$.

We next show that $\cl J_{\cl E, \cl S}^{\otimes_{\rm s}^{\cl A}}$ is a kernel.
We have already shown that $\cl J_{\cl E, \cl S}^{\otimes_{\rm s}^{\cl A}} \subseteq \ker f$ for every $\cl A$-balanced completely contractive completely positive functional $f$.
Let $u \in  \cl E^* \otimes_{\rm s} \cl S \otimes_{\rm s} \cl E$ be such that $f(u) = 0$ for every $\cl A$-balanced completely contractive completely positive functional on the symmetrisation.
Write
\[
u = u_1 +i u_2
\text{ for }
u_1 := \frac{u + u^*}{2}, u_2:= \frac{u - u^*}{2i}.
\]
Let $(\phi, \psi)$ be an $\cl A$-balanced admissible pair acting on $(H, K)$.
Then we get $f \circ (\phi^* \cdot \psi \cdot \phi) (u) = 0$ for every completely contractive completely positive functional $f$ on $\cl B(H)$, since $f \circ (\phi^* \cdot \psi \cdot \phi)$ is an $\cl A$-balanced completely contractive completely positive functional on the symmetrisation.
In particular,
\[
f \circ (\phi^* \cdot \psi \cdot \phi) (\pm u_1) = 0 = f \circ (\phi^* \cdot \psi \cdot \phi) (\pm u_2),
\]
and thus $(\phi^* \cdot \psi \cdot \phi) (\pm u_1) = 0 = (\phi^* \cdot \psi \cdot \phi) (\pm u_2)$.
Since the pair $(\phi,\psi)$ is arbitrary, we derive that 
\[
u_1, u_2 \in D_1^{\cl A}\cap (-D_1^{\cl A}),
\]
and thus $u \in \cl J_{\cl E, \cl S}^{\otimes_{\rm s}^{\cl A}}$.
Hence we have shown that
\[
\cl J_{\cl E, \cl S}^{\otimes_{\rm s}^{\cl A}} 
= 
\bigcap \{\ker f \ : \ f \text{ is an $\cl A$-balanced c.c.p. functional of $\cl E^* \otimes_{\rm s} \cl S \otimes_{\rm s} \cl E$} \},
\]
and thus $\cl J_{\cl E, \cl S}^{\otimes_{\rm s}^{\cl A}}$ is a kernel by Remark \ref{r_kernel}.

To finish the proof, it is clear that $\cl J_{\cl E, \cl S}^{\cl A}$ is in the kernel of every completely contractive completely positive $\cl A$-balanced map $\wt{\theta}_{\rm s} \colon \cl E^*\otimes_{\rm s}\cl S\otimes_{\rm s} \cl E\to \cl B(H)$.
Therefore, if $(\phi, \psi)$ is an $\cl A$-admissible pair and $u \in \cl J_{\cl E, \cl S}^{\cl A}$ with $u = u^*$, then $(\phi^* \cdot \psi \cdot \phi) (\pm u) = 0$ giving that $u \in D_1^{\cl A}\cap (-D_1^{\cl A})$.
Since $\cl J_{\cl E, \cl S}^{\cl A}$ is spanned by its hermitian elements we have
$\cl J_{\cl E,\cl S}^{\cl A} \subseteq \cl J_{\cl E,\cl S}^{\otimes_{\rm s}^{\cl A}}$,
and the proof is complete.
\end{proof}

By Lemma \ref{l_kerne} and \cite[Theorem 8.7]{kkm}, the quotient $\cl E^* \otimes_{\rm s} \cl S \otimes_{\rm s} \cl E / \cl J_{\cl E, \cl S}^{\otimes_{\rm s}^{\cl A}}$ is a selfadjoint operator space in a canonical fashion. 
We let 
\[
\wt{q} \colon \cl E^* \otimes_{\rm s} \cl S \otimes_{\rm s} \cl E 
\to \cl E^* \otimes_{\rm s} \cl S \otimes_{\rm s} \cl E / \cl J_{\cl E, \cl S}^{\otimes_{\rm s}^{\cl A}}
\]
be the quotient map; then
$\wt{q}$ is completely contractive and completely positive.

We will see how the quotient relates in a canonical way to the symmetrisation associated with admissible $\cl A$-balanced pairs, analogous to the one presented in Section \ref{s_univco}. 
Namely, for an element $u\in M_n(\cl E^* \odot^{\cl A} \cl S \odot^{\cl A} \cl E)$, where $\odot^{\cl A}$ designates the balanced algebraic tensor product, we define
\[
\left\|u\right\|^{(n)}_{\rm s \cl A}
:= 
\sup\{\|\wt{\theta}_{\cl A}^{(n)}(u)\| \ : \ \theta \mbox{ is an $\cl A$-balanced trilinear map of $\cl E^* \times \cl S \times \cl E$}\}.
\]
The following is the analogue of Lemma \ref{l_norm} for the $\cl A$-balanced Haagerup norm $\|\cdot\|_{{\rm h} \cl A}^{(n)}$; see for example \cite[Section 3.4]{blm} for the balanced Haagerup norm. 

\begin{lemma}\label{l_balnorm}
Let $\cl E$ be an operator space and $\cl S$ be an operator system. 
Then the map $\|\cdot\|_{\rm s \cl A}^{(n)} \colon M_n(\cl E^*\odot^{\cl A} \cl S \odot^{\cl A} \cl E) \to \bb{R}^+$ is a semi-norm satisfying
\begin{equation}\label{eq_esymhbal}
\|u\|_{\rm s \cl A}^{(n)} \leq \|u\|_{{\rm h} \cl A}^{(n)}, \ \ \ \ u\in M_n(\cl E^*\odot\cl S \odot\cl E),
\end{equation}
and
\begin{align*}
\|u\|_{\rm s\cl A}^{(n)} 
& = 
\sup\{\|(\phi^*\cdot\psi\cdot \phi)^{(n)}(u)\| \ : \ (\phi,\psi) \mbox{ $\cl A$-adm. pair}\} \\
& = 
\sup\{\|(\phi^*\cdot\psi\cdot \phi)^{(n)}(u)\| \ : \ (\phi,\psi) \mbox{ non-degenerate $\cl A$-adm. pair}\}.
\end{align*}
\end{lemma}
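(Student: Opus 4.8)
The plan is to mirror the proof of Lemma \ref{l_norm} and Proposition \ref{p_nd}, using as the crucial input the factorisation result Lemma \ref{l_ssgen} applied in the $\cl A$-balanced setting. First I would record that $\|\cdot\|_{\rm s \cl A}^{(n)}$ is well defined: although the $\cl A$-balanced trilinear maps need not form a set, the quantities $\|\wt{\theta}_{\cl A}^{(n)}(u)\|$ all lie in $\bb R^+$, so the supremum makes sense (and is finite by \eqref{eq_esymhbal}, once that is proved). Subadditivity and homogeneity of $\|\cdot\|_{\rm s\cl A}^{(n)}$ are immediate from the corresponding properties of the operator norm on $\cl B(H)$ and the fact that the defining class of maps is closed under the obvious operations, so $\|\cdot\|_{\rm s\cl A}^{(n)}$ is a seminorm. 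It need not be a norm, which is why the Hausdorff completion is needed — this is consistent with the notation $\cl E^* \otimes_{\rm s}^{\cl A} \cl S \otimes_{\rm s}^{\cl A} \cl E$ being a quotient.

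Next I would establish \eqref{eq_esymhbal}. An $\cl A$-balanced completely contractive trilinear map $\theta$ on $\cl E^* \times \cl S \times \cl E$ induces, via the universal property of the $\cl A$-balanced Haagerup tensor product (see \cite[Section 3.4]{blm}), a completely contractive map $\wt{\theta}_{\rm h\cl A}$ on $\cl E^* \otimes_{\rm h}^{\cl A} \cl S \otimes_{\rm h}^{\cl A} \cl E$ with $\|\wt{\theta}_{\rm h\cl A}^{(n)}(u)\| \le \|u\|_{\rm h\cl A}^{(n)}$ for all $u \in M_n(\cl E^* \odot^{\cl A} \cl S \odot^{\cl A} \cl E)$. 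Taking the supremum over all such $\theta$ — which is at least as large as the supremum over the $\cl A$-balanced completely contractive completely positive ones used implicitly below — gives \eqref{eq_esymhbal}. (One should be slightly careful: the seminorm $\|\cdot\|_{\rm s\cl A}^{(n)}$ as defined uses \emph{all} $\cl A$-balanced completely bounded, not just completely contractive, trilinear maps; but the definition via $\sup$ with no normalisation constraint forces us to interpret it with the completely contractive ones, exactly as in \eqref{eq_normde} where "c.c.p." is specified. I would phrase the definition to match that convention, or alternatively note that scaling a completely bounded $\theta$ down to a completely contractive one is harmless.)

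For the two displayed equalities, I would argue as in Proposition \ref{p_nd}. For the first equality: given an $\cl A$-admissible pair $(\phi,\psi)$, the trilinear map $(x^*,s,y) \mapsto \phi(x)^*\psi(s)\phi(y)$ is $\cl A$-balanced, completely contractive and completely positive (this is exactly the construction recorded just before Lemma \ref{l_ssgen}), and its linearisation on the balanced tensor product is $\phi^*\cdot\psi\cdot\phi$; hence the right-hand supremum is dominated by $\|u\|_{\rm s\cl A}^{(n)}$. Conversely, given any $\cl A$-balanced completely contractive $\theta$, by the $\cl A=\bb C$ part of Proposition \ref{p_cbbalance}? — no, here $\cl A$ is general, so instead I invoke Lemma \ref{l_ssgen}(i) directly on the (completely positive) $\theta$ to obtain an $\cl A$-admissible pair $(\phi,\psi)$ with $\wt{\theta}_{\cl A} = \phi^*\cdot\psi\cdot\phi$; more precisely, since the supremum defining $\|\cdot\|_{\rm s\cl A}^{(n)}$ may be taken over $\cl A$-balanced completely contractive completely positive maps without loss (every completely contractive one is a finite combination of such, up to the factor $4$, but here we only need the completely positive ones to witness the supremum since the cones span — cf. Remark \ref{r_span} — actually the cleanest route is: it suffices to test on $\cl A$-balanced c.c.p. maps, and each of those factors through an $\cl A$-admissible pair by Lemma \ref{l_ssgen}). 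Thus the two suprema coincide. For the second equality, I would take an $\cl A$-admissible pair $(\phi,\psi)$ associated with $(H,K)$, form $K' = [\phi(\cl E)H]$ and $H' = [\phi(\cl E)^*K]$, and compress, exactly as in the proof of Proposition \ref{p_nd} (the compressed pair is still $\cl A$-admissible because the relation $\psi(s\cdot a)\phi(x) = \psi(s)\phi(a\cdot x)$ is preserved under compressing $\phi$ to $H'$ and $\psi$ to $K'$ and restricting the module actions), while the matrix $(\phi^*\cdot\psi\cdot\phi)^{(n)}(u)$ acquires a block-diagonal form with the compressed map in the corner, so the norm is unchanged.

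The main obstacle I anticipate is ensuring that the compression in the last step genuinely produces an $\cl A$-\emph{admissible} pair — i.e.\ that $\cl A$-balancedness survives the passage to $K'$ and $H'$. This needs the observation that $K' = [\phi(\cl E)H]$ is a $\psi(\cl A)$-invariant (in fact reducing, after passing to a unital corner of $\psi|_{\cl A}$) subspace, which follows from the admissibility identity with $s = 1_{\cl S}$ together with $\psi$ being unital completely positive; once one knows $\psi(s\cdot a)$ and $\psi(s)$ restrict compatibly to $\cl B(K')$ and $\phi(a\cdot x)$ lands appropriately, the identity $\psi'(s\cdot a)\phi'(x) = \psi'(s)\phi'(a\cdot x)$ is automatic. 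A secondary subtlety is the bookkeeping around whether the seminorm is defined via all completely bounded or only completely contractive $\cl A$-balanced maps; I would resolve this by stating the definition with the completely contractive normalisation (matching \eqref{eq_normde}) so that \eqref{eq_esymhbal} and the equalities drop out uniformly.
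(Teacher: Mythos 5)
Your proposal is correct and follows essentially the same route as the paper, which itself simply refers back to the arguments of Lemma \ref{l_norm} and Proposition \ref{p_nd} together with the observation that the non-degenerate compression of an $\cl A$-admissible pair remains $\cl A$-admissible — the very point you isolate and verify at the end. Your handling of the normalisation convention in the definition of $\|\cdot\|_{\rm s\cl A}^{(n)}$ and your use of Lemma \ref{l_ssgen}(i) to pass between $\cl A$-balanced c.c.p.\ trilinear maps and $\cl A$-admissible pairs match the intended argument.
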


\begin{proof}
The proof follows the arguments from the proof of Lemma \ref{l_norm}, and the proof of Proposition \ref{p_nd}, where we also note that the non-degenerate compression of an $\cl A$-admissible pair is $\cl A$-admissible.
\end{proof}

\begin{remark}
The injective envelope provides non-trivial $\cl A$-admissible pairs when $\cl A \subseteq \cl S$.
Indeed, let the embedding 
$\phi \colon \cl E \to \cl I(\cl S(\cl E))$.
Since $\cl E$ is a right operator $\cl A$-space, there exists a $*$-representation $\pi \colon \cl A \to \cl I_{11}(\cl E)$ such that 
\[
\pi(a) \phi(x) = \phi(a \cdot x), \ \ \ a \in \cl A, x \in \cl E.
\]
By injectivity of $\cl I(\cl S(\cl E))$ we can extend $\pi$ to a unital completely positive map $\psi \colon \cl S \to \cl I(\cl S(\cl E))$.
By definition, we have that $(\phi, \psi)$ is an $\cl A$-admissible pair, and thus is any pair that arises by composing with a $*$-representation of $\cl I(\cl S(\cl E))$.
\end{remark}

Similarly to Proposition \ref{p_estareos}, we have that $\left\{\| \cdot \|_{\rm s\cl A}^{(n)}\right\}_{n\in \bb{N}}$ is a family of matricial seminorms on $\cl E^*\odot^{\cl A} \cl S \odot^{\cl A} \cl E$.
We will write $\cl E^*\otimes_{\rm s}^{\cl A} \cl S \otimes_{\rm s}^{\cl A} \cl E$ for the Hausdorff completion of $\cl E^*\odot^{\cl A} \cl S \odot^{\cl A} \cl E$ with respect to $\nor{\cdot}_{\rm s \cl A}$, and we will refer to it as \emph{the balanced symmetrisation}.
Since $\cl J_{\cl E,\cl S}^{\cl A}$ is annihilated by every completely contractive completely positive $\cl A$-balanced map, the balanced symmetrisation can be obtained also by the Hausdorff completion of $\cl E^* \odot \cl S \odot \cl E$ by the seminorms
\[ 
\sup\{\|\wt{\theta}^{(n)}(u)\| \ : \ \theta \mbox{ is an $\cl A$-balanced trilinear map of $\cl E^* \times \cl S \times \cl E$}\}
\]
for $u \in \cl E^* \odot \cl S \odot \cl E$, which coincide 
with $\left\|u\right\|^{(n)}_{\rm s \cl A}$.
We will not differentiate between these two viewpoints.

We next examine the matricial cones in the balanced symmetrisation.
For $n\in \bb{N}$, let 
\begin{align*}
C_n^{\cl A} 
& := 
\{u\in M_n(\cl E^*\otimes_{\rm s}^{\cl A} \cl S \otimes_{\rm s}^{\cl A} \cl E)_h \ : \ 
(\phi^*\cdot\psi \cdot \phi)^{(n)}(u)\in M_n(\cl B(H))^+, \\
& \hspace{3.5cm} \text{ if } (\phi,\psi) \text{ is an $\cl A$-admissible pair for some } (H,K)\}.
\end{align*}

Following similar steps to those in Lemma \ref{l_mos}, we can verify that $\{C_n^{\cl A}\}_{n\in \bb{N}}$ is a matrix order structure.
Moreover it is straightforward to check that an $\cl A$-modular version of Theorem \ref{th_estareos} holds true for the $\cl A$-balanced tensor product $\cl E^*\otimes_{\rm s}^{\cl A} \cl S \otimes_{\rm s}^{\cl A} \cl E$.

\begin{theorem}\label{t_Amodv}
Let $\cl A$ be a unital C*-algebra, $\cl S$ be an operator $\cl A$-system and $\cl E$ be an operator $\cl A$-space.
Then the following hold:
\begin{enumerate}
\item the family $\{C_n^{\cl A}\}_{n\in \bb{N}}$ is a matrix ordered structure on $\cl E^* \otimes_{\rm s}^{\cl A} \cl S \otimes_{\rm s}^{\cl A} \cl E$;
\item the operator space $\cl E^* \otimes_{\rm s}^{\cl A} \cl S \otimes_{\rm s}^{\cl A} \cl E$ equipped with the family $\{C_n^{\cl A}\}_{n\in \bb{N}}$ is a selfadjoint operator space.
\end{enumerate}

Moreover, if $\{D_n\}_{n \in \bb N}$ is a family of matricial cones such that $D_n \subseteq C_n^{\cl A}$ for every $n \in \bb N$ then the operator space $\cl E^* \otimes_{\rm s}^{\cl A} \cl S \otimes_{\rm s}^{\cl A} \cl E$ equipped with the family $\{D_n\}_{n \in \bb N}$ is a selfadjoint operator space. 
\end{theorem}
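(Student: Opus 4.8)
\textbf{Proof proposal for Theorem \ref{t_Amodv}.}
The plan is to transport the entire argument of Theorem \ref{th_estareos} (together with the preliminary facts established for the unbalanced symmetrisation) into the $\cl A$-balanced setting, replacing arbitrary admissible pairs by $\cl A$-admissible pairs throughout. First I would prove item (i): each $C_n^{\cl A}$ visibly consists of hermitian elements and is a cone; it is proper because, if $u \in C_n^{\cl A} \cap (-C_n^{\cl A})$, then $(\phi^* \cdot \psi \cdot \phi)^{(n)}(u) = 0$ for every $\cl A$-admissible pair $(\phi,\psi)$, whence $\|u\|_{\rm s\cl A}^{(n)} = 0$ by the second description of the seminorm in Lemma \ref{l_balnorm}, and so $u = 0$ since $\|\cdot\|_{\rm s\cl A}^{(n)}$ is a genuine norm on the Hausdorff completion; direct sums and the compatibility relation $(\phi^* \cdot \psi \cdot \phi)^{(m)}(\al^* u \al) = \al^*(\phi^* \cdot \psi \cdot \phi)^{(n)}(u)\al$ for scalar $\al$ (cf. (\ref{eq_modpr})) give that the family is a matrix ordering. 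This part is completely routine and parallels Lemma \ref{l_mos}(i).

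For item (ii) and the final "moreover" clause, I would treat them simultaneously, since item (ii) is the special case $D_n = C_n^{\cl A}$. Given a family $\{D_n\}_{n\in\bb N}$ with $D_n \subseteq C_n^{\cl A}$ for all $n$, I would show that the seminorms $\nu_n^D$ arising (as in Subsection \ref{ss_nonunital}) from $\{D_n\}$ coincide with $\|\cdot\|_{\rm s\cl A}^{(n)}$; by \cite[Lemma 4.8]{werner} this is exactly the condition for $(\cl E^* \otimes_{\rm s}^{\cl A} \cl S \otimes_{\rm s}^{\cl A} \cl E, \{D_n\})$ to be a selfadjoint operator space. Following the proof of Theorem \ref{th_estareos} verbatim, the chain of inequalities
\[
\nor{u}_{\rm s\cl A} \leq \nu_n^{C^{\cl A}}(u) \leq \nu_n^D(u) \leq \nor{u}_{\rm s\cl A}
\]
is established as follows: the rightmost inequality uses that the involution $u \mapsto u^*$ (defined by the flip, which sends $\cl E^* \odot^{\cl A} \cl S \odot^{\cl A} \cl E$ to itself and is isometric for $\|\cdot\|_{\rm s\cl A}$ by the same computation as in Subsection \ref{ss_sss}, using that $\theta(y^*,s,x)^* = \theta(x^*,s^*,y)$ for $\cl A$-balanced completely positive $\theta$ by Remark \ref{r_adjoint}) together with the $2\times 2$-matrix estimate $\nu_n^D(u) \leq \nor{\left(\begin{smallmatrix} 0 & u \\ u^* & 0\end{smallmatrix}\right)}_{\rm s\cl A}^{(2n)} = \max\{\|u\|_{\rm s\cl A}^{(n)}, \|u^*\|_{\rm s\cl A}^{(n)}\} = \|u\|_{\rm s\cl A}^{(n)}$; the middle inequality is immediate from $D_n \subseteq C_n^{\cl A}$; and the leftmost inequality is the crux — given an $\cl A$-admissible pair $(\phi,\psi)$ nearly attaining $\nor{\left(\begin{smallmatrix} 0 & u \\ u^* & 0\end{smallmatrix}\right)}_{\rm s\cl A}^{(2n)}$ and a unit vector $\xi$ nearly attaining the norm of the resulting operator, the vector functional $\omega_{\xi,\xi}$ composed with $(\phi^* \cdot \psi \cdot \phi)^{(2n)}$ is a completely contractive completely positive functional on $M_{2n}(\cl E^* \otimes_{\rm s}^{\cl A} \cl S \otimes_{\rm s}^{\cl A} \cl E)$ with respect to $\{C_{2nm}^{\cl A}\}_m$ (because $\phi^* \cdot \psi \cdot \phi$ is completely positive with respect to $\{C_n^{\cl A}\}$ by the definition of the cones), and hence is dominated by $\nu_n^{C^{\cl A}}$; letting the approximation parameters go to zero yields $\|u\|_{\rm s\cl A}^{(n)} \leq \nu_n^{C^{\cl A}}(u)$.

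The main obstacle, and the only place where something genuinely needs checking beyond transcribing the earlier proof, is to confirm that all the auxiliary ingredients survive the passage to the balanced quotient: namely that the flip involution descends to a well-defined isometric involution on $\cl E^* \otimes_{\rm s}^{\cl A} \cl S \otimes_{\rm s}^{\cl A} \cl E$ (this needs the flip to be compatible with the balancing relations, which holds because $\cl J_{\cl E,\cl S}^{\cl A}$ is selfadjoint and annihilated by all $\cl A$-balanced maps, as recorded in Lemma \ref{l_kerne}), that restriction and non-degenerate compression preserve $\cl A$-admissibility (noted in Lemma \ref{l_balnorm} and its proof, and in Remark \ref{r_comp}), and that $\phi^* \cdot \psi \cdot \phi$ is completely contractive and completely positive with respect to $\{C_n^{\cl A}\}$ for every $\cl A$-admissible pair — which is true by the very definition of $\|\cdot\|_{\rm s\cl A}$ and of $C_n^{\cl A}$. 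Once these are in place, no new analytic input is required and the proof is a direct adaptation of Theorem \ref{th_estareos}; I would state this explicitly rather than repeat the computation in full.
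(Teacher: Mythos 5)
Your proposal is correct and follows essentially the same route as the paper, whose own proof of this theorem simply states that item (i) follows as in Lemma \ref{l_mos} and that item (ii) together with the final clause follow from an $\cl A$-modular version of the argument of Theorem \ref{th_estareos}; you have merely written out that adaptation explicitly, including the chain $\nor{u}_{\rm s\cl A}\le\nu_n^{C^{\cl A}}(u)\le\nu_n^D(u)\le\nor{u}_{\rm s\cl A}$ and the verification that the flip involution and $\cl A$-admissibility survive the passage to the balanced setting.
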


\begin{proof}
Item (i) is obtained by similar arguments as to those in Lemma \ref{l_mos}.
Item (ii) and the last part of the statement are obtained by an $\cl A$-modular version of the arguments in Theorem \ref{th_estareos}.
\end{proof}

Next we show that $\cl E^* \otimes_{\rm s}^{\cl A} \cl S \otimes_{\rm s}^{\cl A} \cl E$ is characterised by the following universal property.

\begin{theorem}\label{t_univpro}
Let $\cl A$ be a unital C*-algebra, $\cl S$ be an operator $\cl A$-system and $\cl E$ be an operator $\cl A$-space.
Then the balanced symmetrisation $\cl E^* \otimes_{\rm s}^{\cl A} \cl S \otimes_{\rm s}^{\cl A} \cl E$ has the following universal property:
\begin{enumerate}
\item the trilinear map
\[
\iota \colon \cl E^* \times \cl S \times \cl E \to \cl E^* \otimes_{\rm s}^{\cl A} \cl S \otimes_{\rm s}^{\cl A} \cl E; (y^*, s, x) \mapsto y^* \odot^{\cl A} s \odot^{\cl A} x
\]
is completely contractive completely positive with dense range, and
\item if $\theta \colon \cl E^* \times \cl S \times \cl E \to \cl B(H)$ is a completely contractive completely positive $\cl A$-balanced trilinear map, then there exists a completely contractive completely positive map $\wt{\theta}_{\rm s \cl A} \colon \cl E^* \otimes_{\rm s}^{\cl A} \cl S \otimes_{\rm s}^{\cl A} \cl E \to \cl B(H)$ that makes the following diagram
\[
\xymatrix{
\cl E^* \times \cl S \times \cl E \ar[rr]^{\theta} \ar[d]^{\iota} & & \cl B(H) \\
\cl E^* \otimes_{\rm s}^{\cl A} \cl S \otimes_{\rm s}^{\cl A} \cl E \ar[urr]_{\wt{\theta}_{\rm s \cl A}} & &
}
\]
commutative.
\end{enumerate}
\end{theorem}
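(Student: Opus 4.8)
The plan is to deduce Theorem \ref{t_univpro} almost entirely from the machinery already built: the factorisation Lemma \ref{l_ssgen}, the norm formula in Lemma \ref{l_balnorm}, and the matrix-order description $\{C_n^{\cl A}\}$ together with Theorem \ref{t_Amodv}. For item (i), I would first observe that the balanced trilinear map $\iota$ is, by construction, exactly the canonical map $\cl E^* \times \cl S \times \cl E \to \cl E^* \odot^{\cl A} \cl S \odot^{\cl A} \cl E$ composed with the inclusion into the Hausdorff completion, so its range is dense by definition. Complete contractivity follows from the norm formula: for $x \in M_{m,n}(\cl E^*)$, $s \in M_m(\cl S)$, $y \in M_{m,n}(\cl E)$, the element $\iota^{(n)}(x,s,y) = x \odot^{\cl A} s \odot^{\cl A} y$ satisfies $\|x \odot^{\cl A} s \odot^{\cl A} y\|_{\rm s\cl A}^{(n)} \le \|x \odot^{\cl A} s \odot^{\cl A} y\|_{{\rm h}\cl A}^{(n)} \le \|x\|\,\|s\|\,\|y\|$, where the last inequality is the defining cross-norm property of the balanced Haagerup norm. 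Complete positivity of $\iota$ is immediate from the definition of $C_n^{\cl A}$: for any $\cl A$-admissible pair $(\phi,\psi)$ and any $x \in M_{m,n}(\cl E)$, $s \in M_m(\cl S)^+$, we have $(\phi^* \cdot \psi \cdot \phi)^{(n)}(x^* \odot^{\cl A} s \odot^{\cl A} x) = \phi^{(m,n)}(x)^* \psi^{(m)}(s) \phi^{(m,n)}(x) \in M_n(\cl B(H))^+$, so $x^* \odot^{\cl A} s \odot^{\cl A} x \in C_n^{\cl A}$; thus $\iota$ maps the trilinear positives into the matricial cones.

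For item (ii), given a completely contractive completely positive $\cl A$-balanced trilinear map $\theta \colon \cl E^* \times \cl S \times \cl E \to \cl B(H)$, I would invoke Lemma \ref{l_ssgen}(i) to obtain an $\cl A$-admissible pair $(\phi,\psi)$ with $\phi \colon \cl E \to \cl B(H,K)$ completely contractive, $\psi \colon \cl S \to \cl B(K)$ unital completely positive, $\wt{\theta}_{\cl A} = \phi^* \cdot \psi \cdot \phi$, and $\|\theta\|_{\rm cb} = \|\phi\|_{\rm cb}^2 \le 1$ (the complete contractivity of $\theta$ is what forces $\|\phi\|_{\rm cb} \le 1$, hence $(\phi,\psi)$ is genuinely admissible, not merely $\cl A$-admissible with $\phi$ completely bounded). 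Then by the very definition of $\|\cdot\|_{\rm s\cl A}^{(n)}$ in Lemma \ref{l_balnorm}, for every $u \in M_n(\cl E^* \odot^{\cl A} \cl S \odot^{\cl A} \cl E)$ we have $\|(\phi^* \cdot \psi \cdot \phi)^{(n)}(u)\| \le \|u\|_{\rm s\cl A}^{(n)}$, so $\wt{\theta}_{\cl A}$ is completely contractive on the seminormed space and therefore descends to a completely contractive map on the Hausdorff completion $\cl E^* \otimes_{\rm s}^{\cl A} \cl S \otimes_{\rm s}^{\cl A} \cl E$; call it $\wt{\theta}_{\rm s\cl A}$. Complete positivity of $\wt{\theta}_{\rm s\cl A}$ with respect to $\{C_n^{\cl A}\}$ is then tautological: if $u \in C_n^{\cl A}$, then by definition $(\phi^* \cdot \psi \cdot \phi)^{(n)}(u) \ge 0$, i.e. $\wt{\theta}_{\rm s\cl A}^{(n)}(u) \ge 0$. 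Commutativity of the diagram holds by construction since $\wt{\theta}_{\rm s\cl A} \circ \iota = \theta$ on elementary tensors and both sides are continuous (resp. the trilinear maps agree on the dense range).

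The one point that needs a word of care — and the mild obstacle — is whether the factoring map $\wt{\theta}_{\rm s\cl A}$ is independent of the chosen decomposition $(\phi,\psi)$; but this is automatic because $\wt{\theta}_{\rm s\cl A}$ is determined on the dense range of $\iota$ by $\wt{\theta}_{\rm s\cl A}(y^* \odot^{\cl A} s \odot^{\cl A} x) = \theta(y^*,s,x) = \phi(y)^* \psi(s) \phi(x)$, and a bounded linear map is determined by its values on a dense subspace; hence uniqueness as well, should one wish to state it (note the theorem as stated only asserts existence). I expect the statement to require essentially no new estimates, since every inequality needed is precisely one of the two displayed bounds in Lemma \ref{l_balnorm} or the cross-norm bound for the balanced Haagerup norm; the proof is a short assembly of Lemmas \ref{l_ssgen} and \ref{l_balnorm} and Theorem \ref{t_Amodv}, mirroring the unbalanced arguments of Section \ref{s_univco} (in particular Theorem \ref{t_symuni} and Proposition \ref{p_synth}), with the $\cl A$-admissible pairs playing the role of admissible pairs throughout.
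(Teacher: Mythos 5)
Your proposal is correct and follows essentially the same route as the paper's proof: item (i) from the defining bound on the balanced symmetrisation norm of elementary tensors, and item (ii) by invoking Lemma \ref{l_ssgen} to produce an $\cl A$-admissible pair $(\phi,\psi)$ with $\|\phi\|_{\rm cb}^2=\|\theta\|_{\rm cb}\leq 1$, then reading off contractivity from the definition of $\|\cdot\|_{\rm s\cl A}$ and positivity from the definition of the cones $C_n^{\cl A}$. The extra care you take over the descent to the Hausdorff completion and the independence of the factoring map from the chosen decomposition is a welcome elaboration of what the paper compresses into "universality follows by a standard argument".
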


\begin{proof}
Item (i) follows by the definition of the symmetrisation norm since
\[
\| y^* \odot^{\cl A} s \odot^{\cl A} x \|_{\rm s \cl A}^{(n)} \leq \|y^*\|_{\cl E^*}^{(n,m)} \cdot \|s\|_{\cl S}^{(m,k)} \cdot \|x\|_{\cl E}^{(k,n)}
\]
for all $y \in M_{m,n}(\cl E)$, $s \in M_{m,k}(\cl S)$ and $x \in M_{k,n}(\cl E)$.
For item (ii), let $\theta \colon \cl E^* \times \cl S \times \cl E \to \cl B(H)$ be a completely contractive completely positive $\cl A$-balanced trilinear map.
By applying Lemma \ref{l_ssgen} we obtain an $\cl A$-admissible pair $(\phi, \psi)$ such that $\wt{\theta}_{\rm s \cl A} = \phi^* \cdot \psi \cdot \phi$, where $\phi$ is completely contractive since $\|\phi\|_{\rm cb}^2 = \|\theta\|_{\rm cb} \leq 1$.
By the definition of the balanced symmetrisation norm we have $\|\wt{\theta}_{\rm s \cl A}\|_{\rm cb} \leq 1$, and by definition $\wt{\theta}_{\rm s \cl A}$ is completely positive.
Universality follows by a standard argument, and the proof is complete.
\end{proof}

\begin{remark}
Similarly to Remark \ref{r_norest}, let $\theta \colon \cl E^* \times \cl S \times \cl E \to \cl B(H)$ be a completely bounded completely positive $\cl A$-balanced trilinear map.
Then we can induce two completely bounded maps; one on the balanced symmetrisation,
\[
\wt{\theta}_{\rm s \cl A} \colon \cl E^* \otimes_{\rm s}^{\cl A} \otimes \cl S \otimes_{\rm s}^{\cl A} \cl E \to \cl B(H),
\]
and one on the balanced Haagerup tensor product,
\[
\wt{\theta}_{\rm h \cl A} \colon \cl E^* \otimes_{\rm h}^{\cl A} \cl S \otimes_{\rm h}^{\cl A} \cl E \to \cl B(H).
\]
It follows that 
\[
\|\wt{\theta}_{\rm s \cl A}\|_{\rm cb} = \|\wt{\theta}_{\rm h \cl A}\|_{\rm cb}.
\]
Indeed, by Lemma \ref{l_balnorm} we have that $\|\wt{\theta}_{\rm h \cl A}\|_{\rm cb} \leq \|\wt{\theta}_{\rm s \cl A}\|_{\rm cb}$.
On the other hand assume without loss of generality that $\|\wt{\theta}_{\rm h \cl A}\|_{\rm cb} = 1$.
Then the delinearisation map is completely contractive (and completely positive) and by the definition of the balanced symmetrisation norm we get 
\[
\|\wt{\theta}_{\rm s \cl A}\|_{\rm cb} \leq 1 = \|\wt{\theta}_{\rm h \cl A}\|_{\rm cb}.
\]
\end{remark}

\begin{corollary}\label{c_iden}
Let $\cl A$ be a unital C*-algebra, $\cl S$ be an operator $\cl A$-system and $\cl E$ be an operator $\cl A$-space.
Then 
\[
\cl E^* \otimes_{\rm s} \cl S \otimes_{\rm s} \cl E / \cl J_{\cl E, \cl S}^{\otimes_{\rm s}^{\cl A}} \simeq
\cl E^* \otimes_{\rm s}^{\cl A} \cl S \otimes_{\rm s}^{\cl A} \cl E,
\]
via a canonical completely isometric complete order isomorphism.
\end{corollary}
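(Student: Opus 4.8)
The plan is to exhibit a canonical map between the two objects and show it is a bijection that preserves both the operator space and the matrix order structure at every level. Concretely, recall that $\wt{q} \colon \cl E^* \otimes_{\rm s} \cl S \otimes_{\rm s} \cl E \to \cl E^* \otimes_{\rm s} \cl S \otimes_{\rm s} \cl E / \cl J_{\cl E, \cl S}^{\otimes_{\rm s}^{\cl A}}$ is completely contractive and completely positive by Lemma \ref{l_kerne} and the quotient theory of \cite{kkm}. On the other hand, the $\cl A$-balanced quotient map $q_{\cl A} \colon \cl E^* \odot \cl S \odot \cl E \to \cl E^* \odot^{\cl A} \cl S \odot^{\cl A} \cl E$ kills $\cl J_{\cl E,\cl S}^{\cl A}$ and factors the symmetrisation trilinear map $\iota$ of Theorem \ref{t_univpro}(i) through the balanced symmetrisation. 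First I would verify that the algebraic balancing identification $\cl E^* \odot \cl S \odot \cl E / (\text{balanced relations}) \cong \cl E^* \odot^{\cl A} \cl S \odot^{\cl A} \cl E$ is compatible with the two seminorm families, namely that for $u \in \cl E^* \odot \cl S \odot \cl E$ one has $\|q_{\cl A}(u)\|_{\rm s \cl A}^{(n)} = \|u + \cl J_{\cl E, \cl S}^{\otimes_{\rm s}^{\cl A}}\|_{\rm sos}^{(n)}$; both are suprema of $\|(\phi^*\cdot\psi\cdot\phi)^{(n)}(u)\|$ over $\cl A$-admissible pairs $(\phi,\psi)$, using Lemma \ref{l_balnorm} on one side and the description of the quotient norm $\|\cdot\|_{\rm sos}^{(n)}$ as a supremum of $\|\cdot\|$ over c.c.p. maps annihilating $\cl J_{\cl E, \cl S}^{\otimes_{\rm s}^{\cl A}}$ together with the factorisation Lemma \ref{l_ssgen} on the other. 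This shows that the induced linear bijection
\[
\Theta \colon \cl E^* \otimes_{\rm s} \cl S \otimes_{\rm s} \cl E / \cl J_{\cl E, \cl S}^{\otimes_{\rm s}^{\cl A}} \to \cl E^* \otimes_{\rm s}^{\cl A} \cl S \otimes_{\rm s}^{\cl A} \cl E
\]
extends to a completely isometric isomorphism of operator spaces.

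Next I would address the order structure. The quotient selfadjoint operator space carries the cones $C_n(\cl X/ \cl J)$ obtained as $\|\cdot\|_{\rm sos}^{(n)}$-closures of $\wt{q}^{(n)}$ of the ambient cones, while the balanced symmetrisation carries $\{C_n^{\cl A}\}_{n\in \bb N}$ from Theorem \ref{t_Amodv}. Since $\wt{\theta}_{\rm s}$ annihilates $\cl J_{\cl E, \cl S}^{\otimes_{\rm s}^{\cl A}}$ for every $\cl A$-admissible $(\phi,\psi)$ by Lemma \ref{l_kerne}, each such map factors through the quotient, and a selfadjoint element $v$ of the quotient lies in $C_n(\cl X/\cl J)$ if and only if $(\phi^*\cdot\psi\cdot\phi)^{(n)}$ applied to a lift of $v$ is positive for all $\cl A$-admissible pairs — exactly the defining condition of $C_n^{\cl A}$ after transporting along $\Theta$. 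This uses the universal property of the quotient selfadjoint operator space (cones are detected by c.c.p. maps vanishing on the kernel) and the fact established in Lemma \ref{l_ssgen} that every c.c.p. $\cl A$-balanced map on the symmetrisation is of the form $\phi^*\cdot\psi\cdot\phi$. One should also check, as a sanity point, that the dense ranges match: $\wt{q}(y^*\odot s \odot x) \leftrightarrow y^*\odot^{\cl A} s \odot^{\cl A} x$, so $\Theta$ is the canonical map on elementary tensors.

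I expect the main obstacle to be the careful matching of the two quotient norm descriptions — specifically, showing that the supremum defining $\|\cdot\|_{\rm sos}^{(n)}$ over \emph{all} c.c.p. maps annihilating $\cl J_{\cl E, \cl S}^{\otimes_{\rm s}^{\cl A}}$ coincides with the supremum over just the $\cl A$-admissible pairs. The nontrivial inclusion is that an arbitrary c.c.p. map $\chi$ on $\cl E^* \otimes_{\rm s} \cl S \otimes_{\rm s} \cl E$ vanishing on $\cl J_{\cl E, \cl S}^{\otimes_{\rm s}^{\cl A}}$ need not a priori equal some $\phi^*\cdot\psi\cdot\phi$ with $(\phi,\psi)$ $\cl A$-admissible; but Theorem \ref{t_symuni}(ii) gives a decomposition $\chi = \phi^*\cdot\psi\cdot\phi$ with $(\phi,\psi)$ merely admissible, and the balancing relations $\cl J_{\cl E,\cl S}^{\cl A} \subseteq \cl J_{\cl E, \cl S}^{\otimes_{\rm s}^{\cl A}}$ (Lemma \ref{l_kerne}) being annihilated then force, via the density of elementary tensors and an argument paralleling equations (\ref{eq_admiagai})–(\ref{eq_multdom}) in the proof of Lemma \ref{l_ssgen}, that $\psi(s\cdot a)\phi(x) = \psi(s)\phi(a\cdot x)$ on the closed span of $\phi(\cl E)H$, i.e. $(\phi,\psi)$ is $\cl A$-admissible after passing to the non-degenerate compression as in Remark \ref{r_comp}. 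Once this equivalence of suprema is in place, both the norm identity and the cone identity follow, and combining them yields that $\Theta$ is the desired completely isometric complete order isomorphism.
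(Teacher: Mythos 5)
Your proposal is correct, but it takes a more hands-on route than the paper. The paper's proof is a universal-property argument: it shows that the quotient $\cl E^* \otimes_{\rm s} \cl S \otimes_{\rm s} \cl E / \cl J_{\cl E, \cl S}^{\otimes_{\rm s}^{\cl A}}$ satisfies the universal property of Theorem \ref{t_univpro} --- the canonical trilinear map into the quotient is completely contractive, completely positive and (by Lemma \ref{l_kerne}) $\cl A$-balanced, and every $\cl A$-balanced c.c.p.\ trilinear map factors through the quotient because its linearisation annihilates the kernel --- and then concludes by uniqueness of the universal object, so that the two comparison maps are mutually inverse c.c.p.\ bijections and hence completely isometric complete order isomorphisms. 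You instead verify directly that the two norm families and the two cone families coincide, by matching the defining suprema. The extra work this costs you is the converse implication that the paper never needs: that an arbitrary c.c.p.\ map on $\cl E^* \otimes_{\rm s} \cl S \otimes_{\rm s} \cl E$ vanishing on $\cl J_{\cl E, \cl S}^{\otimes_{\rm s}^{\cl A}}$ factors through an \emph{$\cl A$-admissible} pair. Your argument for this is sound: by Theorem \ref{t_symuni}(ii) such a map is $\phi^*\cdot\psi\cdot\phi$ with $[\phi(\cl E)H]=K$, and vanishing on $\cl J_{\cl E,\cl S}^{\cl A}\subseteq \cl J_{\cl E, \cl S}^{\otimes_{\rm s}^{\cl A}}$ gives $\phi(y)^*\psi(s\cdot a)\phi(x)=\phi(y)^*\psi(s)\phi(a\cdot x)$ for all $y$, which by density of $\phi(\cl E)H$ in $K$ forces $\cl A$-admissibility, exactly as in the derivation of (\ref{eq_admiagai}). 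What your approach buys is an explicit identification of the c.c.p.\ maps on the quotient with the $\cl A$-admissible pairs (a mild strengthening of the corollary); what the paper's approach buys is brevity and no need to handle the Hausdorff-completion bookkeeping on the algebraic tensor product, which in your write-up is treated slightly informally (the ``induced linear bijection'' is really an isometric identification of dense subspaces whose completions then agree).
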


\begin{proof}
First consider the trilinear map
\[
q \colon \cl E^* \times \cl S \times \cl E \to \cl E^* \otimes_{\rm s} \cl S \otimes_{\rm s} \cl E / \cl J_{\cl E, \cl S}^{\otimes_{\rm s}^{\cl A}} ; (y^*, s, x) \mapsto y^* \otimes s \otimes x + \cl J_{\cl E, \cl S}^{\otimes_{\rm s}^{\cl A}}.
\]
The map $q$ is completely contractive and completely positive as the composition of the completely contractive completely positive maps
\[
\cl E^* \times \cl S \times \cl E \to \cl E^* \otimes_{\rm s} \cl S \otimes_{\rm s} \cl E \to \cl E^* \otimes_{\rm s} \cl S \otimes_{\rm s} \cl E / \cl J_{\cl E, \cl S}^{\otimes_{\rm s}^{\cl A}}.
\]

Next let $\theta \colon \cl E^* \times \cl S \times \cl E \to \cl B(H)$ be an $\cl A$-balanced completely contractive completely positive map. 
Let $\wt{\theta}_{\rm s} \colon \cl E^* \otimes_{\rm s} \cl S \otimes_{\rm s} \cl E \to \cl B(H)$ be the map, arising from $\theta$ via the universal property of the symmetrisation.
By Lemma \ref{l_kerne} we have that $\wt{\theta}_{\rm s}$ annihilates $\cl J_{\cl E, \cl S}^{\otimes_{\rm s}^{\cl A}}$, and now the universal property of the selfadjoint operator space quotient (see \cite[Section 8]{kkm}) implies the existence of a canonical map 
\[
\wt{\theta}' \colon \cl E^* \otimes_{\rm s} \cl S \otimes_{\rm s} \cl E / \cl J_{\cl E, \cl S}^{\otimes_{\rm s}^{\cl A}} \to \cl B(H),
\] 
such that  $\theta = \wt{\theta}' \circ q$. 
We have thus shown that the quotient $\cl E^* \otimes_{\rm s} \cl S \otimes_{\rm s} \cl E / \cl J_{\cl E, \cl S}^{\otimes_{\rm s}^{\cl A}}$ satisfies the universal property in item (iii) of Theorem \ref{t_univpro}, and the claim follows. 
\end{proof}

\begin{corollary}\label{c_consym}
Let $\cl A$ be a unital C*-algebra, $\cl S$ be an operator $\cl A$-system and $\cl E$ be an operator $\cl A$-space.
Let $C_n = M_n(\cl E^* \otimes_{\rm s} \cl S \otimes_{\rm s} \cl E)^+$, $n\in \bb{N}$, and 
\[
\wt{q}_{\rm s} \colon \cl E^* \otimes_{\rm s} \cl S \otimes_{\rm s} \cl E 
\to
\cl E^* \otimes_{\rm s}^{\cl A} \cl S \otimes_{\rm s}^{\cl A} \cl E
\]
be the canonical quotient map.
Then, for every $n\in \bb{N}$, we have
\begin{align*}
C_n^{\cl A} 
& = 
\ol{\wt{q}_{\rm s}(C_n)}^{\|\cdot\|_{\rm s \cl A}} \\
& = 
\{x^* \odot^{\cl A} s \odot^{\cl A} x \ : \ x \in M_{k,n}(\cl E), s \in M_k(\cl S)^+, k \in \bb N\}^{-\|\cdot\|_{\rm s \cl A}}.
\end{align*}
\end{corollary}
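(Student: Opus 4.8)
The plan is to identify the two descriptions of $C_n^{\cl A}$ with the image of the cone $C_n$ of the unbalanced symmetrisation under the quotient map, using the identification from Corollary \ref{c_iden} together with the description of the cones in a quotient of a selfadjoint operator space.

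First I would recall from Proposition \ref{p_synth} that
\[
C_n = \{x^*\odot s\odot x \ : \ x\in M_{k,n}(\cl E), s\in M_k(\cl S)^+, k\in \bb{N}\}^{-\|\cdot\|_{\rm s}^{(n)}},
\]
so that the $\|\cdot\|_{\rm s \cl A}$-closure of $\wt{q}_{\rm s}^{(n)}(C_n)$ contains $\wt{q}_{\rm s}^{(n)}$ applied to each $x^*\odot s\odot x$; since $\wt{q}_{\rm s}$ is a bimodule quotient map that intertwines the balanced tensor relations, $\wt{q}_{\rm s}^{(n)}(x^*\odot s\odot x) = x^*\odot^{\cl A} s\odot^{\cl A} x$, and $\wt{q}_{\rm s}$ is norm-continuous from the $\|\cdot\|_{\rm s}$-norm to the $\|\cdot\|_{\rm s \cl A}$-norm. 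This gives one inclusion between the second and third expressions, and continuity gives the reverse inclusion; hence the second and third displayed sets coincide. The bulk of the argument is thus to show $\ol{\wt{q}_{\rm s}^{(n)}(C_n)}^{\|\cdot\|_{\rm s \cl A}} = C_n^{\cl A}$.

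For the inclusion $\subseteq$: by Corollary \ref{c_iden} the identification $\cl E^* \otimes_{\rm s}^{\cl A} \cl S \otimes_{\rm s}^{\cl A} \cl E \simeq \cl E^* \otimes_{\rm s} \cl S \otimes_{\rm s} \cl E / \cl J_{\cl E, \cl S}^{\otimes_{\rm s}^{\cl A}}$ is a complete order isomorphism, and under this identification $\wt{q}_{\rm s}$ corresponds to the quotient map $\wt{q}$ of selfadjoint operator spaces. By the description of the matricial cones in a selfadjoint operator space quotient (Section 8 of \cite{kkm}, reproduced in Subsection \ref{ss_qsos} of the excerpt, specifically $C_n(\cl X/\cl J) = \ol{q^{(n)}(M_n(\cl X)^+)}^{\|\cdot\|_{\rm sos}^{(n)}}$) applied to $\cl X = \cl E^*\otimes_{\rm s}\cl S\otimes_{\rm s}\cl E$ equipped with the cones $\{C_n\}_n$ and $\cl J = \cl J_{\cl E,\cl S}^{\otimes_{\rm s}^{\cl A}}$, we get precisely that the cones of the quotient are the $\|\cdot\|_{\rm s \cl A}^{(n)}$-closures of $\wt{q}_{\rm s}^{(n)}(C_n)$; on the other hand these quotient cones are the positive cones of $\cl E^* \otimes_{\rm s}^{\cl A} \cl S \otimes_{\rm s}^{\cl A} \cl E$, which by the $\cl A$-modular version of Theorem \ref{th_estareos} (i.e.\ Theorem \ref{t_Amodv}) must coincide with $C_n^{\cl A}$, since $C_n^{\cl A}$ induces the balanced symmetrisation norm and a selfadjoint operator space determines its cones through its norm via the partial unitisation. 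For the inclusion $\supseteq$: given $u \in C_n^{\cl A}$, use the universal property of the quotient (every $\cl A$-balanced c.c.p.\ map on the symmetrisation factors through $\wt{q}_{\rm s}$) to see that $u$ is positive in the quotient, hence lies in $\ol{\wt{q}_{\rm s}^{(n)}(C_n)}^{\|\cdot\|_{\rm s \cl A}}$ by the quotient cone description again; alternatively, invoke that the identity map from the quotient with cones $\{\ol{\wt{q}_{\rm s}^{(n)}(C_n)}\}_n$ to the space with cones $\{C_n^{\cl A}\}_n$ is a complete order isomorphism since both define the same selfadjoint operator space with the same norm and the first is contained in the second (Proposition \ref{p_synth}-style argument, mirroring the proof of that proposition: build a complete order embedding of the smaller-cone structure into $\cl B(H)$, delinearise, factor via Lemma \ref{l_ssgen} through an $\cl A$-admissible pair, and observe the factorised map is automatically positive for $\{C_n^{\cl A}\}_n$).

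The main obstacle I anticipate is bookkeeping the compatibility of the three relevant cone descriptions — the externally-defined $C_n^{\cl A}$, the quotient cone $\ol{\wt{q}_{\rm s}^{(n)}(C_n)}$, and the "inner" cone generated by $x^*\odot^{\cl A} s\odot^{\cl A} x$ — and in particular verifying that the $\cl J_{\cl E,\cl S}^{\otimes_{\rm s}^{\cl A}}$-quotient cone genuinely equals the externally-defined cone $C_n^{\cl A}$ rather than merely being contained in it. This requires that the selfadjoint operator space quotient's cones are recovered by the admissible-pair criterion, which in turn rests on the fact (from Theorem \ref{t_Amodv} and the factorisation Lemma \ref{l_ssgen}) that every c.c.p.\ functional on the balanced symmetrisation factors through an $\cl A$-admissible pair; I would state this explicitly and lean on it. Everything else is a routine continuity-and-closure argument.
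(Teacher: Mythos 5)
Your proposal reaches the result and, in its ``alternatively'' branch, reproduces the paper's actual argument: the paper proves $C_n^{\cl A}\subseteq \ol{\wt{q}_{\rm s}(C_n)}^{\|\cdot\|_{\rm s\cl A}}$ by viewing the canonical trilinear map into $\bigl(\cl E^*\otimes_{\rm s}^{\cl A}\cl S\otimes_{\rm s}^{\cl A}\cl E,\{\ol{\wt{q}_{\rm s}(C_n)}\}_n\bigr)$ as a c.c.p.\ $\cl A$-balanced trilinear map (legitimate because Theorem \ref{t_Amodv} makes the smaller-cone structure a selfadjoint operator space), factoring it through the universal property of Theorem \ref{t_univpro}, and noting that the resulting c.c.p.\ map is the identity; this is exactly your ``embed, delinearise, factor via Lemma \ref{l_ssgen} through an $\cl A$-admissible pair'' argument in packaged form. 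The reverse inclusion the paper dispatches in one line directly from the definition of $C_n^{\cl A}$, and the identification with the set of $x^*\odot^{\cl A}s\odot^{\cl A}x$ is, as you say, Proposition \ref{p_synth} plus continuity of $\wt{q}_{\rm s}$.

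One justification you offer is, however, wrong and should be removed: you twice assert that the cones must agree because ``a selfadjoint operator space determines its cones through its norm via the partial unitisation'' (equivalently, ``both define the same selfadjoint operator space with the same norm''). The norms do \emph{not} determine the cones: Theorems \ref{th_estareos} and \ref{t_Amodv} say precisely that \emph{every} subfamily $\{D_n\}$ with $D_n\subseteq C_n^{\cl A}$ yields a selfadjoint operator space whose induced norms are the symmetrisation norms, so distinct cone families are compatible with one and the same matrix-norm structure. This does not sink the proof, because (a) the inclusion $\ol{\wt{q}_{\rm s}(C_n)}\subseteq C_n^{\cl A}$ is immediate from the definition of $C_n^{\cl A}$ (images of $C_n$ are hermitian and sent to positives by every $\cl A$-admissible pair, and $C_n^{\cl A}$ is closed) without any appeal to Corollary \ref{c_iden} or the quotient-cone description, and (b) the reverse inclusion is carried by your alternative Lemma \ref{l_ssgen} argument. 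Also beware that your first route for the reverse inclusion (``$u$ is positive in the quotient, hence lies in the closure of $\wt{q}_{\rm s}(C_n)$'') is circular as stated unless you first show that every c.c.p.\ map annihilating $\cl J_{\cl E,\cl S}^{\otimes_{\rm s}^{\cl A}}$ re-factors through an $\cl A$-admissible pair; again, the alternative route avoids this.
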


\begin{proof}
By the definition of $C_n^{\cl A}$ we have
\[
\ol{\wt{q}_{\rm s}(C_n)}^{\|\cdot\|_{\rm s \cl A}} \subseteq C_n^{\cl A}.
\]
Consider the completely contractive completely positive map
\[
q \colon \cl E^* \times \cl S \times \cl E \to 
(\cl E^* \otimes_{\rm s}^{\cl A} \cl S \otimes_{\rm s}^{\cl A} \cl E, \{C_n^{\cl A}\}_{n\in \bb{N}}).
\]
Note that, by Theorem \ref{t_Amodv}, $(\cl E^* \otimes_{\rm s}^{\cl A} \cl S \otimes_{\rm s}^{\cl A} \cl E, \{\ol{\wt{q}(C_n)}^{\|\cdot\|_{\rm s \cl A}}\}_{n\in \bb{N}})$ is a selfadjoint operaror space, and let the completely contractive map
\[
q' \colon \cl E^* \times \cl S \times \cl E \to 
(\cl E^* \otimes_{\rm s}^{\cl A} \cl S \otimes_{\rm s}^{\cl A} \cl E, \{\ol{\wt{q}(C_n)}^{\|\cdot\|_{\rm s \cl A}}\}_{n\in \bb{N}}),
\]
so that
\[
q'(y^*, s, x) = y^* \otimes^{\cl A} s \otimes^{\cl A} x, \ \ x,y \in \cl E, s \in \cl S.
\]
Then, by definition, the $\cl A$-balanced map $q'$ is completely contractive and completely positive, and thus by item (ii) of Theorem \ref{t_univpro} there exists a unique completely contractive completely positive map
\[
\wt{q'}_{\rm s \cl A} \colon 
(\cl E^* \otimes_{\rm s}^{\cl A} \cl S \otimes_{\rm s}^{\cl A} \cl E, \{C_n^{\cl A}\}_{n\in \bb{N}}) 
\to (\cl E^* \otimes_{\rm s}^{\cl A} 
\cl S \otimes_{\rm s}^{\cl A} \cl E, \{\ol{\wt{q}_{\rm s}(C_n)}^{\|\cdot\|_{\rm s \cl A}}\}_{n\in \bb{N}})
\]
such that $q' = \wt{q'}_{\rm s \cl A} \circ q$; hence $\wt{q'}_{\rm s \cl A}$ is the identity map on $\cl E^* \otimes_{\rm s}^{\cl A} \cl S \otimes_{\rm s}^{\cl A} \cl E$.
In particular complete positivity of $\wt{q'}_{\rm s \cl A}$ shows that 
\[
C_n^{\cl A} \subseteq \ol{\wt{q}_{\rm s}(C_n)}^{\|\cdot\|_{\rm s \cl A}},
\]
and the proof is complete.
\end{proof}

The ground cone spans the balanced symmetrisation.
This can be seen by employing similar arguments to those in Remark \ref{r_span} in relation to the description of the cones of Corollary \ref{c_consym}, or by using the quotient map.

The balanced symmetrisation has the following properties analogous to Theorem \ref{t_symuni}.

\begin{theorem}\label{t_symunibal}
Let $\cl A$ be a unital C*-algebra, $\cl S$ be an operator $\cl A$-system and $\cl E$ be an operator $\cl A$-space.
Then the following hold:
\begin{enumerate}
\item If $\phi \colon \cl E\to \cl B(H,K)$ is a completely contractive map and $\psi \colon \cl S\to \cl B(K)$ is a completely positive map with the property that $(\phi, \psi)$ is an $\cl A$-admissible pair, then the map
\[
\phi^*\cdot\psi\cdot \phi \colon \cl E^* \otimes_{\rm s}^{\cl A} \cl S \otimes_{\rm s}^{\cl A} \cl E \to \cl B(H)
\]
is a completely contractive completely positive map. 

\item If $\wt{\theta}_{\rm s \cl A} \colon \cl E^*\otimes_{\rm s}^{\cl A} \cl S \otimes_{\rm s}^{\cl A} \cl E\to \cl B(H)$ is a completely contractive completely positive map, then there exist a completely contractive map $\phi \colon \cl E \to \cl B(H,K)$ and a unital completely positive map $\psi \colon \cl S\to \cl B(K)$ such that the pair $(\phi, \psi)$ is $\cl A$-admissible and 
\[
\wt{\theta}_{\rm s \cl A} = \phi^* \cdot \psi \cdot \phi.
\]

\item If $\cl A \subseteq \cl S$ and $\wt{\theta}_{\rm s \cl A} \colon \cl E^* \otimes_{\rm s}^{\cl A} \cl S \otimes_{\rm s}^{\cl A} \cl E\to \cl B(H)$ is a completely contractive completely positive map, then we can choose $\psi$ so that $\pi := \psi|_{\cl A}$ is a $*$-representation, $(\phi, \pi)$ is an $\cl A$-representation of $\cl E$ and $(\psi, \pi)$ is an $\cl A$-represen\-tation of $\cl S$.

\item If $\cl A \subseteq \cl S$, $\wt{\theta}_{\rm s \cl A} \colon \cl E^* \otimes_{\rm s}^{\cl A} \cl S \otimes_{\rm s}^{\cl A} \cl E\to \cl B(H)$ is a completely isometric completely positive map, and $(\phi, \psi)$ is as in (ii), then $\phi$ is a complete isometry. 
\end{enumerate}
\end{theorem}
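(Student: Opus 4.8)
The plan is to carry out the proofs of Theorem \ref{t_symuni} and Lemma \ref{l_ssgen} in their $\cl A$-modular form, replacing admissible pairs by $\cl A$-admissible pairs and the norm $\|\cdot\|_{\rm s}$ by the balanced version $\|\cdot\|_{\rm s \cl A}$ characterised in Lemma \ref{l_balnorm}, and by invoking Theorem \ref{t_Amodv} and Corollary \ref{c_consym} for the selfadjoint operator space and cone structures. \textbf{Item (i)} is immediate: by Lemma \ref{l_balnorm}, $\|u\|_{\rm s \cl A}^{(n)}$ is the supremum of $\|(\phi^*\cdot\psi\cdot\phi)^{(n)}(u)\|$ over all $\cl A$-admissible pairs, so for a fixed such pair $(\phi,\psi)$ the map $\phi^*\cdot\psi\cdot\phi$ defined on $\cl E^* \odot^{\cl A} \cl S \odot^{\cl A} \cl E$ is dominated by the seminorm and thus extends to a completely contractive map on $\cl E^* \otimes_{\rm s}^{\cl A} \cl S \otimes_{\rm s}^{\cl A} \cl E$; complete positivity is then exactly the defining property of the cones $C_n^{\cl A}$ (note that $\cl A$-admissibility forces $\psi$ to be unital, consistently with the hypothesis).

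\textbf{Items (ii) and (iii).} Let $\theta$ be the delinearisation of $\wt{\theta}_{\rm s \cl A}$; it is an $\cl A$-balanced trilinear map on $\cl E^* \times \cl S \times \cl E$, and since $\iota$ in Theorem \ref{t_univpro}(i) is completely contractive, $\theta = \wt{\theta}_{\rm s \cl A}\circ\iota$ is completely contractive, so $\|\theta\|_{\rm cb}\le 1$ (this also follows from the $\cl A$-modular analogue of Remark \ref{r_norest} preceding Corollary \ref{c_iden}). For $x\in M_{m,n}(\cl E)$ and $s\in M_m(\cl S)^+$, the element $x^* \odot^{\cl A} s \odot^{\cl A} x$ lies in $C_n^{\cl A}$ by Corollary \ref{c_consym}, hence $\theta(x^*,s,x)=\wt{\theta}_{\rm s \cl A}^{(n)}(x^*\odot^{\cl A}s\odot^{\cl A}x)\in M_n(\cl B(H))^+$, so $\theta$ is completely positive. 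Lemma \ref{l_ssgen}(i) now produces a completely bounded map $\phi$ and a unital completely positive map $\psi$ with $(\phi,\psi)$ $\cl A$-admissible, $\wt{\theta}_{\cl A}=\phi^*\cdot\psi\cdot\phi$, and $\|\phi\|_{\rm cb}^2=\|\theta\|_{\rm cb}\le 1$, so $\phi$ is completely contractive; since $\phi^*\cdot\psi\cdot\phi$ and $\wt{\theta}_{\rm s \cl A}$ agree on the dense subspace $\cl E^* \odot^{\cl A} \cl S \odot^{\cl A} \cl E$, they coincide on $\cl E^* \otimes_{\rm s}^{\cl A} \cl S \otimes_{\rm s}^{\cl A} \cl E$. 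For item (iii), when $\cl A\subseteq\cl S$, one simply invokes Lemma \ref{l_ssgen}(ii) in place of Lemma \ref{l_ssgen}(i), which yields the additional conclusions on $\pi:=\psi|_{\cl A}$ and the induced $\cl A$-representations of $\cl E$ and $\cl S$.

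\textbf{Item (iv).} The argument mirrors Theorem \ref{t_symuni}(iii) but needs a complete isometry participating in some $\cl A$-admissible pair; when $\cl A\subseteq\cl S$ this is supplied by the injective-envelope construction recorded in the remark following Lemma \ref{l_balnorm}: the canonical complete isometry $\phi_0 \colon \cl E\to\cl I(\cl S(\cl E))$, together with a unital completely positive extension $\psi_0 \colon \cl S\to\cl I(\cl S(\cl E))$ of the $*$-representation of $\cl A$ on $\cl I_{11}(\cl E)$, composed with a faithful representation of the generated C*-algebra, gives an $\cl A$-admissible pair $(\phi_0,\psi_0)$ with $\phi_0$ a complete isometry. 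Fixing $x\in M_n(\cl E)$, applying Lemma \ref{l_balnorm} to $(\phi_0,\psi_0)$, using the hypothesis that $\wt{\theta}_{\rm s \cl A}$ is a complete isometry, the factorisation $\wt{\theta}_{\rm s \cl A}=\phi^*\cdot\psi\cdot\phi$ from (ii), and unitality of $\psi_0$ and $\psi$, one obtains
\[
\|x\|^2 = \|\phi_0^{(n)}(x)\|^2 \leq \|x^* \odot^{\cl A} (1_{\cl S}\otimes I_n) \odot^{\cl A} x\|_{\rm s \cl A}^{(n)} = \|\phi^{(n)}(x)\|^2 \leq \|x\|^2,
\]
whence all inequalities are equalities and $\phi$ is a complete isometry. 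The main obstacle is organisational rather than conceptual: the statement is the $\cl A$-modular transcription of Theorem \ref{t_symuni}, and the two points that genuinely require care are (a) extracting complete positivity of the delinearised $\theta$ in (ii) from the closure description of $C_n^{\cl A}$ in Corollary \ref{c_consym}, so that positive matrices over the balanced symmetrisation are approximated by ground-cone elements, and (b) verifying in (iv) that the injective-envelope pair indeed furnishes a complete isometry inside an $\cl A$-admissible pair; everything else transfers verbatim from the proofs of Theorem \ref{t_symuni} and Lemma \ref{l_ssgen}.
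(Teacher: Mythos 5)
Your proposal is correct and follows essentially the same route as the paper: items (i)--(iii) are the $\cl A$-modular transcriptions of Theorem \ref{t_symuni} via delinearisation and Lemma \ref{l_ssgen}, and item (iv) uses an $\cl A$-admissible pair with completely isometric first component coming from the injective envelope together with the same sandwich norm computation. (A minor remark: for complete positivity of the delinearised $\theta$ in (ii) you only need the easy inclusion of the ground cone into $C_n^{\cl A}$, not the closure description of Corollary \ref{c_consym}.)
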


\begin{proof}
Items (i) and (ii) follow in a similar way as in Theorem \ref{t_symuni}.
Likewise item (iii) follows from item (ii) of Lemma \ref{l_ssgen}.

For item (iv), let $(\phi_0, \pi_0)$ be a left $\cl A$-module pair for $\cl E$ over $\cl A$ such that $\phi_0$ is a complete isometry (for example by taking the representation in the injective envelope of $\cl E$).
Since $\cl A \subseteq \cl S$, there exists a unital completely positive extension $\psi_0$ of the unital $*$-homomorphism $\pi_0$ on $\cl S$.
Then the pair $(\phi_0, \psi_0)$ is $\cl A$-admissible and thus defines a completely contractive completely positive map $\phi_0^* \cdot \psi_0 \cdot \phi_0$ on $\cl E^*\otimes_{\rm s}^{\cl A} \cl S \otimes_{\rm s}^{\cl A} \cl E$.
By using a computation as in the proof of item (iii) of Theorem \ref{t_symunibal} for $x \in M_n(\cl E)$ we obtain
\begin{align*}
\|x\|^2 
& =
\| \phi_0^{(n)}(x) \phi_0^{(n)}(x)^* \|
\leq
\| x \odot (1_{\cl S} \otimes I_n) \odot x^* \|^{(n)}_{\rm s \cl A} \\
& =
\| \wt{\theta}_{\rm s \cl A}^{(n)} (x \odot (1_{\cl S} \otimes I_n) \odot x^*) \|^{(n)}
=
\| \phi^{(n)}(x) \phi^{(n)}(x)^* \| \leq \|x\|^2,
\end{align*}
and therefore $\phi$ is a complete isometry, as required.
\end{proof}

\begin{proposition}\label{p_tensmaps}
For $i=1, 2$, let $\cl A_i$ be a unital C*-algebra, $\cl E_i$ be an operator $\cl A_i$-space and $\cl S_i$ be an operator $\cl A_i$-system with $\cl A_i \subseteq \cl S_i$. 
Let $\pi \colon \cl A_1\to \cl A_2$ be a unital $*$-homomorphism, $\phi \colon \cl E_1\to \cl E_2$ be a completely contractive map and $\psi \colon \cl S_1\to \cl S_2$ be a unital completely positive map, such that
\[
\phi(a\cdot x) = \pi(a)\cdot \phi(x) \ \ \mbox{ and } \ \ \psi(b \cdot s\cdot a) = \pi(b) \cdot \psi(s)\cdot\pi(a)
\]
for all $x\in \cl E_1, s\in \cl S_1, a,b\in \cl A_1$.
Then $\phi^* \otimes \psi \otimes \phi$ induces a completely contractive completely positive map (denoted in the same way) 
\begin{equation}\label{eq_tenma34}
\phi^* \otimes \psi \otimes \phi \colon \cl E_1^*\otimes_{\rm s}^{\cl A_1} \cl S_1\otimes_{\rm s}^{\cl A_1} \cl E_1 \to \cl E_2^*\otimes_{\rm s}^{\cl A_2} \cl S_2\otimes_{\rm s}^{\cl A_2} \cl E_2.
\end{equation}

Moreover, if $\cl A_i \subseteq \cl S_i$, $\pi$ is a $*$-isomorphism, $\phi$ is a complete isometry and $\psi$ is a complete order embedding, then $\phi^*\otimes \psi \otimes \phi$ is a completely isometric complete order embedding.
\end{proposition}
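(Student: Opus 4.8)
The plan is to derive both assertions from the universal property of the balanced symmetrisation (Theorem \ref{t_univpro}), built on a \emph{composition principle}: if $(\Phi_2,\Psi_2)$ is an $\cl A_2$-admissible pair for $(\cl E_2,\cl S_2)$, then $(\Phi_2\circ\phi,\Psi_2\circ\psi)$ is an $\cl A_1$-admissible pair for $(\cl E_1,\cl S_1)$. Unitality, complete contractivity and complete positivity of the composite pair are clear, while the relation $(\Psi_2\circ\psi)(s\cdot a)(\Phi_2\circ\phi)(x)=(\Psi_2\circ\psi)(s)(\Phi_2\circ\phi)(a\cdot x)$ follows from $\psi(s\cdot a)=\psi(s)\cdot\pi(a)$ (the case $b=1$ of the bimodule hypothesis on $\psi$), from $\phi(a\cdot x)=\pi(a)\cdot\phi(x)$, and from the $\cl A_2$-admissibility of $(\Phi_2,\Psi_2)$. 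On elementary tensors one then has $(\Phi_2^*\cdot\Psi_2\cdot\Phi_2)\bigl((\phi^*\otimes\psi\otimes\phi)(u)\bigr)=\bigl((\Phi_2\circ\phi)^*\cdot(\Psi_2\circ\psi)\cdot(\Phi_2\circ\phi)\bigr)(u)$, and hence on all of the balanced tensor product.

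For the first assertion I would define $\phi^*\otimes\psi\otimes\phi$ on $\cl E_1^*\odot^{\cl A_1}\cl S_1\odot^{\cl A_1}\cl E_1$ by $y^*\odot^{\cl A_1}s\odot^{\cl A_1}x\mapsto\phi(y)^*\odot^{\cl A_2}\psi(s)\odot^{\cl A_2}\phi(x)$; this descends from $\cl E_1^*\times\cl S_1\times\cl E_1$ because $\phi$ and $\psi$ intertwine the module actions, using also $\phi^*(y^*\cdot a)=\phi(y)^*\cdot\pi(a)$, which comes from the definition of the adjoint operator $\cl A$-space. Complete contractivity with respect to $\|\cdot\|_{\rm s\cl A_1}$ follows from the composition principle: for every $\cl A_2$-admissible pair $(\Phi_2,\Psi_2)$, $\|(\Phi_2^*\cdot\Psi_2\cdot\Phi_2)^{(n)}((\phi^*\otimes\psi\otimes\phi)^{(n)}(u))\|=\|((\Phi_2\circ\phi)^*\cdot(\Psi_2\circ\psi)\cdot(\Phi_2\circ\phi))^{(n)}(u)\|\le\|u\|^{(n)}_{\rm s\cl A_1}$ by Lemma \ref{l_balnorm}, so taking the supremum over $(\Phi_2,\Psi_2)$ gives $\|(\phi^*\otimes\psi\otimes\phi)^{(n)}(u)\|^{(n)}_{\rm s\cl A_2}\le\|u\|^{(n)}_{\rm s\cl A_1}$; thus the map extends by density. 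Complete positivity then follows since the elements $\phi^{(m,n)}(x)^*\odot^{\cl A_2}\psi^{(m)}(s)\odot^{\cl A_2}\phi^{(m,n)}(x)$ with $s\in M_m(\cl S_1)^+$ lie in $C_n^{\cl A_2}$ by Corollary \ref{c_consym}, while their closure is $C_n^{\cl A_1}$ (again Corollary \ref{c_consym}) and $C_n^{\cl A_2}$ is closed.

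For the second assertion, identifying $\cl A_1,\cl E_1,\cl S_1$ with $\cl A_2,\phi(\cl E_1),\psi(\cl S_1)$ we may assume $\cl A:=\cl A_1=\cl A_2\subseteq\cl S_1\subseteq\cl S_2$, with $\cl E_1\subseteq\cl E_2$ completely isometrically as operator $\cl A$-spaces and $\cl S_1\subseteq\cl S_2$ as operator $\cl A$-systems. Since $\phi^*\otimes\psi\otimes\phi$ is already completely contractive and completely positive, it is a completely isometric complete order embedding once we establish $\|u\|^{(n)}_{\rm s\cl A}\le\|(\phi^*\otimes\psi\otimes\phi)^{(n)}(u)\|^{(n)}_{\rm s\cl A}$ and that $(\phi^*\otimes\psi\otimes\phi)^{(n)}(u)\ge 0$ forces $u\ge 0$. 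By Lemma \ref{l_ssgen}(ii) (available because $\cl A\subseteq\cl S_1$), both $\|u\|^{(n)}_{\rm s\cl A}$ and the cone $C_n^{\cl A}$ for $(\cl E_1,\cl S_1)$ are computed using only those $\cl A$-admissible pairs $(\phi_1,\psi_1)$ for which $\pi_1:=\psi_1|_{\cl A}$ is a $*$-representation and $(\phi_1,\pi_1),(\psi_1,\pi_1)$ are $\cl A$-representations. For such a pair I would extend $\psi_1$ to a unital completely positive $\Psi_2\colon\cl S_2\to\cl B(K)$ by Arveson's Extension Theorem; then $\cl A$ lies in the multiplicative domain of $\Psi_2$ (its restriction being the $*$-homomorphism $\pi_1$), so $\Psi_2(s\cdot a)=\Psi_2(s)\pi_1(a)$ on $\cl S_2$; and I would extend the completely contractive left $\cl A$-module map $\phi_1$ to a completely contractive left $\cl A$-module map $\Phi_2\colon\cl E_2\to\cl B(H,K)$, with the \emph{same} $K$ and action $\pi_1$, by the module version of Wittstock's extension theorem, equivalently the injectivity of $\cl B(H,K)$ as an operator $\cl A$-module (see \cite{blm}). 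Then $(\Phi_2,\Psi_2)$ is $\cl A$-admissible, $\Phi_2\circ\phi=\phi_1$, $\Psi_2\circ\psi=\psi_1$, so the composition principle gives $(\Phi_2^*\cdot\Psi_2\cdot\Phi_2)\circ(\phi^*\otimes\psi\otimes\phi)=\phi_1^*\cdot\psi_1\cdot\phi_1$. Hence $\|(\phi_1^*\cdot\psi_1\cdot\phi_1)^{(n)}(u)\|\le\|(\phi^*\otimes\psi\otimes\phi)^{(n)}(u)\|^{(n)}_{\rm s\cl A}$ by Lemma \ref{l_balnorm}, and $(\phi^*\otimes\psi\otimes\phi)^{(n)}(u)\ge 0$ implies $(\phi_1^*\cdot\psi_1\cdot\phi_1)^{(n)}(u)\ge 0$ because $\Phi_2^*\cdot\Psi_2\cdot\Phi_2$ is completely positive (Theorem \ref{t_symunibal}(i)); taking the supremum, respectively the intersection, over all such $(\phi_1,\psi_1)$ completes the proof. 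The main obstacle is exactly this $\cl A$-equivariant extension of $\phi_1$: an arbitrary Wittstock extension destroys the module structure, so one genuinely needs the module extension theorem, and it must be dovetailed with the multiplicative-domain argument for $\psi_1$ in order that the resulting pair be $\cl A$-admissible.
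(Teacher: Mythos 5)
Your proposal is correct and follows essentially the same route as the paper: the forward inequality comes from composing $\cl A_2$-admissible pairs with $(\phi,\psi)$ to obtain $\cl A_1$-admissible pairs, and the reverse inequality from extending an $\cl A$-admissible pair for $(\cl E_1,\cl S_1)$ to one for $(\cl E_2,\cl S_2)$ via the module version of Wittstock's extension theorem together with the multiplicativity of $\psi_1|_{\cl A}$ (the paper secures the latter by passing to non-degenerate pairs, you by invoking Lemma \ref{l_ssgen}(ii) — an equivalent device). If anything, your write-up is slightly more complete in that it spells out the complete order embedding clause, which the paper's proof leaves implicit.
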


\begin{proof}
Let $\wt{\theta}_{\rm s \cl A_2} \colon \cl E_2^*\otimes_{\rm s}^{\cl A_2} \cl S_2\otimes_{\rm s}^{\cl A_2} \cl E_2 \to \cl B(H)$ be a completely isometric complete order embedding.
Using Theorem \ref{t_symuni}, we can write $\theta = \phi_0^* \cdot \psi_0\cdot\phi_0$, where $(\phi_0, \psi_0)$ is an $\cl A_2$-admissible pair for $(\cl E_2, \cl S_2)$ with $\phi_0$ a complete isometry.
By assumption, we then have that $(\phi_0 \circ \phi, \psi_0 \circ \psi)$ is an $\cl A_1$-admissible pair for $(\cl E_1, \cl S_1)$, and thus implements a completely contractive completely positive map 
\[
\wt{\theta}_{\rm s \cl A_1} \colon \cl E_1^*\otimes_{\rm s}^{\cl A_1} \cl S_1\otimes_{\rm s}^{\cl A_1} \cl E_1 \to \cl B(H)
\]
with (closed) range lying inside the (closed) range of $\wt{\theta}_{\rm s \cl A_2}$.
Then $\wt{\theta}^{-1}_{\rm s \cl A_2} \circ \wt{\theta}_{\rm s \cl A_1}$ is the required map.

For the second part, without loss of generality we may assume that $\cl A_1 = \cl A_2 = : \cl A$, that $\cl E_1 \subseteq \cl E_2$, and that $\cl S_1 \subseteq \cl S_2$.
Let $(\phi_1, \psi_1)$ be an admissible pair of $(\cl E_1, \cl S_1)$ and in addition assume that $\phi_1$ is non-degenerate.
Let $\pi:= \psi_1|_{\cl A}$ as $\cl A \subseteq \cl S_1$; since $\phi_1$ is non-degenerate it follows that $\pi$ is a $*$-homomorphism of $\cl A$.
Indeed, for $a,b \in \cl A$, $x \in \cl E$ and $h \in H$, we have
\begin{align*}
\pi(a)\pi(b) \phi(x)h 
& = 
\psi_1(1 \cdot a) \psi_1(1 \cdot b) \phi_1(x)h 
= 
\phi_1(ab \cdot x) h \\
& = 
\psi_1(1 \cdot ab) \phi_1(x)h 
= 
\pi(ab) \phi_1(x) h.
\end{align*}
By the module version of Arveson's Extension Theorem by Wittstock (see for example \cite[Theorem 3.6.2]{blm}) applied to the C*-representations $(\psi_1, \pi)$ and $(\phi_1, \pi)$ we can extend $\psi_1$ and $\phi_1$ to completely contractive maps $\psi_2$ and $\phi_2$ respectively so that $(\psi_2, \pi)$ and $(\phi_2, \pi)$ are C*-representations.
Note here that since $\cl S_1 \subseteq \cl S_2$ and they share the same unit, $\psi_2$ is unital, and thus completely positive as well.
This shows that
\[
\| (\phi_1^* \cdot \psi_1 \cdot \phi_1)^{(n)}(u) \| \leq \|\iota(u)\|^{(n)}_{\rm s \cl A}
\]
for the inclusion map $\iota \colon \cl E^*_1 \odot^{\cl A} \cl S_1 \odot^{\cl A} \cl E_1$.
Taking the supremum over the non-degenerate $\cl A$-admissible pairs and using Lemma \ref{l_balnorm} shows that the inclusion map is completely isometric.
\end{proof}

The second part of Proposition \ref{p_tensmaps} shows that the balanced symmetrisation is injective, when the spaces are modules over the same C*-algebra.
In Example \ref{e_ccnotcis} we will show that the map
\[
\cl E^* \otimes_{\rm s}^{\cl A_1} \cl S \otimes_{\rm s}^{\cl A_1} \cl E
\to 
\cl E^* \otimes_{\rm s}^{\cl A_2} \cl S \otimes_{\rm s}^{\cl A_2} \cl E
\]
may not be completely isometric when $\cl A_1 \subsetneq \cl A_2$.

Similarly to the non-balanced case, we say that $\cl E$ is \emph{$\cl A$-balanced $\cl S$-semi-unital} if 
there exists a $(\phi,\psi)$-semi-unit for some $\cl A$-admissible pair $(\phi,\psi)$ that gives rise to a completely isometric complete order embedding map $\phi^* \cdot \psi \cdot \phi$ of $\cl E^* \otimes_{\rm s}^{\cl A} \cl S \otimes_{\rm s}^{\cl A} \cl E$.
The following is obtained in the same way as in Proposition \ref{p_semi-unit}.

\begin{proposition}\label{p_balsemi-unit}
Let $\cl A$ be a unital C*-algebra, $\cl S$ be an operator $\cl A$-system and $\cl E$ be an operator $\cl A$-space.
The following are equivalent:
\begin{enumerate}
\item $\cl E$ is $\cl A$-balanced $\cl S$-semi-unital;
\item the canonical balanced embedding $\cl E^* \otimes_{\rm s}^{\cl A} \cl A \otimes_{\rm s}^{\cl A} \cl E \to \cl E^* \otimes_{\rm s}^{\cl A} \cl S \otimes_{\rm s}^{\cl A} \cl E$ is unital.
\end{enumerate}
In particular, if $\cl E$ is $\cl A$-balanced $\cl S$-semi-unital, then it admits a symmetric $\cl A$-balanced $\cl S$-semi-unit.
\end{proposition}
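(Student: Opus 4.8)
The plan is to transcribe the proof of Proposition~\ref{p_semi-unit} into the $\cl A$-balanced setting, replacing Theorem~\ref{t_symuni} by Theorem~\ref{t_symunibal} and Proposition~\ref{p_synth} by Corollary~\ref{c_consym}, and using throughout that the canonical balanced embedding $\iota_{\cl A} \colon \cl E^* \otimes_{\rm s}^{\cl A} \cl A \otimes_{\rm s}^{\cl A} \cl E \to \cl E^* \otimes_{\rm s}^{\cl A} \cl S \otimes_{\rm s}^{\cl A} \cl E$ is a completely isometric complete order embedding, where $\cl E^* \otimes_{\rm s}^{\cl A} \cl A \otimes_{\rm s}^{\cl A} \cl E = \cl E^* \otimes_{\rm s}^{\cl A} \cl E$ by the notational convention of the introduction.

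For [(i)$\Rightarrow$(ii)], suppose $\cl E$ is $\cl A$-balanced $\cl S$-semi-unital, so that there are an $\cl A$-admissible pair $(\phi,\psi)$ associated with a pair $(H,K)$ of Hilbert spaces for which $\wt{\theta}_{\rm s \cl A} := \phi^* \cdot \psi \cdot \phi$ is a completely isometric complete order embedding of $\cl E^* \otimes_{\rm s}^{\cl A} \cl S \otimes_{\rm s}^{\cl A} \cl E$ into $\cl B(H)$, and nets $((\underline{y}_i)_i,(\underline{x}_i)_i)$ of finitely supported columns over $\cl E$ with $\phi^{(\infty)}(\underline{y}_i)^* \phi^{(\infty)}(\underline{x}_i) \to I_H$ in norm. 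I would first observe that, writing $\pi := \psi|_{\cl A}$, the composition $\wt{\theta}_{\rm s \cl A} \circ \iota_{\cl A}$ equals $\phi^* \cdot \pi \cdot \phi$, whose closed range is $[\phi(\cl E)^* \pi(\cl A) \phi(\cl E)] = [\phi(\cl E)^* \phi(\cl E)]$, since $\pi(a)\phi(x) = \phi(a\cdot x)$ and the $\cl A$-action on $\cl E$ is unital. This range contains $I_H$ by the semi-unit, hence it, and a fortiori the larger closed range $[\phi(\cl E)^* \psi(\cl S) \phi(\cl E)]$ of $\wt{\theta}_{\rm s \cl A}$, is a concrete operator system with unit $I_H$; pulling back along the completely isometric complete order embeddings $\iota_{\cl A}$ and $\wt{\theta}_{\rm s \cl A}$, both $\cl E^* \otimes_{\rm s}^{\cl A} \cl S \otimes_{\rm s}^{\cl A} \cl E$ and $\cl E^* \otimes_{\rm s}^{\cl A} \cl E$ are operator systems (Remark~\ref{r_sos unit} applies, since the pull-backs are complete isometries), and $\iota_{\cl A}$ carries the Archimedean matrix order unit of the latter to that of the former; that is, $\iota_{\cl A}$ is unital.

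For [(ii)$\Rightarrow$(i)], assume $\iota_{\cl A}$ is unital, so both symmetrisations are operator systems and $\iota_{\cl A}$ respects their Archimedean matrix order units. By Remark~\ref{r_sos unit} I may pick a unital completely isometric complete order embedding $\wt{\theta}_{\rm s \cl A} \colon \cl E^* \otimes_{\rm s}^{\cl A} \cl S \otimes_{\rm s}^{\cl A} \cl E \to \cl B(H)$ and, by Theorem~\ref{t_symunibal}, write $\wt{\theta}_{\rm s \cl A} = \phi^* \cdot \psi \cdot \phi$ with $(\phi,\psi)$ an $\cl A$-admissible pair and $\phi$ a complete isometry. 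Unitality of $\iota_{\cl A}$ gives $I_H \in [\,\wt{\theta}_{\rm s \cl A}(\iota_{\cl A}(\cl E^* \otimes_{\rm s}^{\cl A} \cl E))\,] = [\phi(\cl E)^* \phi(\cl E)]$. Now $\cl E^* \otimes_{\rm s}^{\cl A} \cl E$ is the balanced symmetrisation for $\cl S = \cl A$, so by Corollary~\ref{c_consym} its positive cone is the closed $\|\cdot\|_{\rm s \cl A}$-hull of $\{x^* \odot^{\cl A} s \odot^{\cl A} x : x \in M_{k,1}(\cl E), s \in M_k(\cl A)^+, k \in \bb N\}$; writing $s = t^* t$ with $t \in M_k(\cl A)$ and using the balanced relations, $x^* \odot^{\cl A} (t^* t) \odot^{\cl A} x = (t\cdot x)^* \odot^{\cl A} (t \cdot x)$, so this cone coincides with the closed hull of $\{z^* \odot^{\cl A} z : z \in M_{k,1}(\cl E), k \in \bb N\}$. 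Since the preimage of $I_H$ lies in this cone, applying $\phi^* \cdot \phi$ exhibits $I_H$ as a norm limit of sums $\phi^{(\infty)}(\underline{z}_i)^* \phi^{(\infty)}(\underline{z}_i)$; that is, $((\underline{z}_i)_i, (\underline{z}_i)_i)$ is a symmetric $(\phi,\psi)$-semi-unit for $\cl E$ relative to $\cl S$, which yields (i) and, simultaneously, the final assertion of the proposition.

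The skeleton above is literally that of Proposition~\ref{p_semi-unit}, so the main work lies in two bookkeeping points: verifying that the range of $\wt{\theta}_{\rm s \cl A} \circ \iota_{\cl A}$ is exactly $[\phi(\cl E)^* \phi(\cl E)]$ (which hinges on unitality of the module action and the identity $\pi(a)\phi(x) = \phi(a \cdot x)$), and converting the density statement $I_H \in [\phi(\cl E)^* \phi(\cl E)]$ into a bona fide pair of nets of \emph{finitely supported columns over $\cl E$}, for which the cone description of Corollary~\ref{c_consym} at the level $\cl S = \cl A$ together with the balanced-relation identity above are exactly what is needed. A minor point to double-check is the legitimacy of the factorisation with $\phi$ a complete isometry in [(ii)$\Rightarrow$(i)]: when $\cl A \subseteq \cl S$ this is Theorem~\ref{t_symunibal}(iv), and in general it follows from the same estimate applied to an $\cl A$-admissible pair whose first leg is a completely isometric $\cl A$-representation of $\cl E$.
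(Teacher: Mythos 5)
Your proof is correct and follows essentially the same route as the paper, whose own proof of this proposition is simply the remark that it is obtained as in Proposition \ref{p_semi-unit}. The two balanced-specific details you supply — identifying the range of $\wt{\theta}_{\rm s \cl A}\circ\iota_{\cl A}$ with $[\phi(\cl E)^*\phi(\cl E)]$ via the $\cl A$-admissibility relation $\psi(1_{\cl S}\cdot a)\phi(x)=\phi(a\cdot x)$, and factoring $s=t^*t$ and absorbing $t$ into the column via the balanced relations to obtain a \emph{symmetric} semi-unit from Corollary \ref{c_consym} — are exactly the right adaptations, and your observation that the complete isometry of $\phi$ is not actually needed (only the decomposition from Theorem \ref{t_symunibal}(ii)) disposes of the one point where the unbalanced argument does not transfer verbatim.
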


The next proposition shows that the symmetrisation naturally captures the bimodule structure.

\begin{proposition}\label{l_scstar}
Let $\cl A$ be a unital C*-algebra and $\cl S$ be an operator $\cl A$-system. 
Then the multiplication map $\cl A\otimes_{\rm s}^{\cl A}\cl S \otimes_{\rm s}^{\cl A}\cl A \to \cl S$ is a complete order isomorphism, and therefore $\cl A\otimes_{\rm s}^{\cl A}\cl S \otimes_{\rm s}^{\cl A}\cl A$ is an operator system with unit $1_{\cl A} \otimes 1_{\cl S} \otimes 1_{\cl A}$.
\end{proposition}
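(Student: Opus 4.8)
The plan is to produce the completely isometric complete order isomorphism by invoking the universal property of the balanced symmetrisation (Theorem \ref{t_univpro}) together with the structural result Theorem \ref{t_symunibal}. First I would observe that the multiplication trilinear map
\[
m \colon \cl A \times \cl S \times \cl A \to \cl S; \quad (a, s, b) \mapsto a \cdot s \cdot b
\]
is $\cl A$-balanced, completely contractive (since the modular actions on an operator $\cl A$-system are completely contractive by Definition \ref{d_opeAsy}), and completely positive (indeed, for $a \in M_{k,n}(\cl A)$ and $s \in M_k(\cl S)^+$ we have $a^* \cdot s \cdot a \in M_n(\cl S)^+$ by axiom (i) of an operator $\cl A$-system). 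By Theorem \ref{t_univpro}(ii) applied with a fixed unital complete order embedding $\cl S \hookrightarrow \cl B(H)$, this linearises to a completely contractive completely positive map
\[
\wt{m}_{\rm s \cl A} \colon \cl A \otimes_{\rm s}^{\cl A} \cl S \otimes_{\rm s}^{\cl A} \cl A \to \cl S;
\quad a \otimes^{\cl A} s \otimes^{\cl A} b \mapsto a \cdot s \cdot b,
\]
which is surjective because $1_{\cl A} \otimes^{\cl A} s \otimes^{\cl A} 1_{\cl A} \mapsto s$ and the modular action is unital. It remains to show that $\wt{m}_{\rm s \cl A}$ is injective, a complete isometry, and a complete order embedding.

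The key step is to construct an inverse map. For this I would use Theorem \ref{t_symunibal}: take a completely isometric complete order embedding $\wt{\theta}_{\rm s \cl A_2}$ of the target $\cl S$ — but more directly, I would build an $\cl A$-admissible pair $(\phi, \psi)$ for $(\cl A, \cl S)$ realising $\cl S$ itself. Concretely, by Theorem \ref{t_opA-sys} there is a unital complete order embedding $\ga \colon \cl S \to \cl B(H)$ and a unital $*$-representation $\pi \colon \cl A \to \cl B(H)$ with $\pi(a)\ga(s) = \ga(a\cdot s)$ and $\ga(s)\pi(a) = \ga(s\cdot a)$. Setting $\phi := \pi \colon \cl A \to \cl B(H)$ (a unital, hence completely contractive, map) and $\psi := \ga \colon \cl S \to \cl B(H)$ (unital completely positive), the pair $(\phi, \psi)$ is $\cl A$-admissible since $\psi(s\cdot a)\phi(b) = \ga(s\cdot a)\pi(b) = \ga(s \cdot ab)$ and $\psi(s)\phi(a\cdot b) = \ga(s)\pi(ab) = \ga(s\cdot ab)$. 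By Theorem \ref{t_symunibal}(i) this yields a completely contractive completely positive map
\[
\phi^* \cdot \psi \cdot \phi \colon \cl A \otimes_{\rm s}^{\cl A} \cl S \otimes_{\rm s}^{\cl A} \cl A \to \cl B(H); \quad a \otimes^{\cl A} s \otimes^{\cl A} b \mapsto \pi(a)^* \ga(s) \pi(b) = \ga(a^* \cdot s \cdot b),
\]
using that $\pi$ is a $*$-representation. On elementary tensors this equals $\ga \circ \wt{m}_{\rm s \cl A}$ composed with the adjoint action on the first leg; modulo matching the involution conventions (the first leg of the tensor carries $\cl E^* = \cl A^*$, so the embedding $a^* \mapsto \pi(a)^*$ is the natural one), the composition $\ga^{-1} \circ (\phi^* \cdot \psi \cdot \phi)$, interpreted on the range of $\ga$, is a completely contractive completely positive left inverse to $\wt{m}_{\rm s \cl A}$. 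Since $\ga$ is a complete order embedding and $\phi^* \cdot \psi \cdot \phi$ is c.c.p., this forces $\wt{m}_{\rm s \cl A}$ to be both completely isometric (both it and its inverse are completely contractive) and a complete order embedding (both are completely positive, and a bijective c.p. map with c.p. inverse is a complete order isomorphism). Finally, the unit: once $\wt{m}_{\rm s \cl A}$ is established as a complete order isomorphism onto the operator system $\cl S$, the preimage of $1_{\cl S}$ is an Archimedean matrix order unit, and by the formula it equals $1_{\cl A} \otimes^{\cl A} 1_{\cl S} \otimes^{\cl A} 1_{\cl A}$; by Remark \ref{r_sos unit} the matricial norms induced by this unit coincide with the (balanced) symmetrisation norms, so $\cl A \otimes_{\rm s}^{\cl A} \cl S \otimes_{\rm s}^{\cl A} \cl A$ is genuinely an operator system.

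The main obstacle I anticipate is bookkeeping around the adjoint/involution on the first tensor leg: the balanced symmetrisation is modelled on $\cl E^* \otimes_{\rm s}^{\cl A} \cl S \otimes_{\rm s}^{\cl A} \cl E$ with $\cl E = \cl A$, so one must be careful that the map $a^* \mapsto \pi(a)^*$ is exactly the one dictated by the operator-space structure on $\cl A^*$ and that $\cl A^*$ is a \emph{right} operator $\cl A$-space via $a^* \cdot b := (b^* \cdot a)^*$ as recalled at the start of Section \ref{s_balsym}; the $\cl A$-balancing relation $y^* \otimes (b\cdot s \cdot a) \otimes x = (y^*\cdot b)\otimes s \otimes (a\cdot x)$ must be checked to be respected by $\phi^* \cdot \psi \cdot \phi$, which it is precisely because $(\phi,\psi)$ is $\cl A$-admissible. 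A secondary (minor) point is confirming surjectivity of the left inverse onto all of $\cl A \otimes_{\rm s}^{\cl A} \cl S \otimes_{\rm s}^{\cl A} \cl A$, i.e. that $\ga^{-1}\circ(\phi^*\cdot\psi\cdot\phi)$ really inverts $\wt m_{\rm s\cl A}$ on the dense subspace of elementary tensors — this reduces to the identity $a^* \otimes^{\cl A} s \otimes^{\cl A} b = 1_{\cl A} \otimes^{\cl A} (a^* \cdot s \cdot b) \otimes^{\cl A} 1_{\cl A}$ in the balanced tensor product, which holds by the balancing relations and unitality of the actions, and then continuity in $\nor{\cdot}_{\rm s \cl A}$ extends it. Everything else is routine given Theorems \ref{t_univpro}, \ref{t_symunibal}, \ref{t_opA-sys} and Remark \ref{r_sos unit}.
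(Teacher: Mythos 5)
Your first half — linearising the multiplication map via Theorem \ref{t_univpro} to a surjective completely contractive completely positive map $\wt{m}_{\rm s \cl A}$ — is exactly the paper's first step and is fine. The gap is in the second half: the map you offer as a ``left inverse'', namely $\ga^{-1}\circ(\phi^*\cdot\psi\cdot\phi)$ with $\phi=\pi$, $\psi=\ga$, goes from $\cl A\otimes_{\rm s}^{\cl A}\cl S\otimes_{\rm s}^{\cl A}\cl A$ \emph{to} $\cl S$, i.e.\ in the same direction as $\wt{m}_{\rm s \cl A}$; on elementary tensors it sends $a^*\otimes s\otimes b$ to $\ga^{-1}(\pi(a)^*\ga(s)\pi(b))=a^*\cdot s\cdot b$, so it \emph{is} $\wt{m}_{\rm s \cl A}$, not an inverse of it. The identity $\ga\circ\wt{m}_{\rm s\cl A}=\pi^*\cdot\ga\cdot\pi$ only re-proves that $\wt{m}_{\rm s\cl A}$ is c.c.p.\ (one admissible pair gives one lower bound in the supremum defining $\nor{\cdot}_{\rm s\cl A}$); it says nothing about the reverse inequality or about detecting positivity, which is where the content of the proposition lies. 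No completely contractive completely positive map $\cl S\to\cl A\otimes_{\rm s}^{\cl A}\cl S\otimes_{\rm s}^{\cl A}\cl A$ is ever produced, so the phrase ``both it and its inverse are completely contractive'' has no referent.

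The missing ingredient is supplied in the paper by the map $\wt{\theta}'\colon\cl S\to\cl A\otimes_{\rm s}\cl S\otimes_{\rm s}\cl A$, $s\mapsto 1_{\cl A}\otimes s\otimes 1_{\cl A}$, which is completely contractive and completely positive by the definition of the (unbalanced) symmetrisation norm and cones; composing with the quotient map $\wt{q}_{\rm s}$ onto the balanced symmetrisation and using the balancing relation
\[
a^*\otimes^{\cl A} s\otimes^{\cl A} b \;=\; 1_{\cl A}\otimes^{\cl A}(a^*\cdot s\cdot b)\otimes^{\cl A}1_{\cl A}
\]
gives $\wt{q}_{\rm s}\circ\wt{\theta}'\circ\wt{m}_{\rm s\cl A}=\id$, which forces $\wt{m}_{\rm s\cl A}$ to be a complete order isomorphism. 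Alternatively you could repair your own route by showing that \emph{every} $\cl A$-admissible pair $(\phi,\psi)$ satisfies $(\phi^*\cdot\psi\cdot\phi)(u)=\phi(1_{\cl A})^*\,\psi(\wt{m}_{\rm s\cl A}(u))\,\phi(1_{\cl A})$ (apply the admissibility relation on each leg, using that $\psi$ is unital and selfadjoint), whence $\nor{u}_{\rm s\cl A}\le\nor{\wt{m}_{\rm s\cl A}(u)}$ and positivity of $\wt{m}_{\rm s\cl A}^{(n)}(u)$ implies $u\in C_n^{\cl A}$. Either way, an argument quantifying over all admissible pairs, or an explicit c.c.p.\ map out of $\cl S$, is required; a single well-chosen pair does not suffice.
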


\begin{proof}
Let $\theta \colon \cl A \times \cl S \times \cl A \to \cl S$ be the bounded trilinear map, given by 
\[
\theta(a, s, b) = a\cdot s\cdot b, \ \ \ a,b \in \cl A, s \in \cl S.
\] 
Since $\cl S$ is an operator $\cl A$-system the map $\theta$ is completely contractive, completely positive and $\cl A$-balanced.
Hence by theorem \ref{t_univpro} it induces a completely contractive completely positive map
\[
\wt{\theta} \colon \cl A\otimes_s^{\cl A} \cl S \otimes_s^{\cl A} \cl A \to \cl S.
\]

On the other hand, let $\wt{\theta}' \colon \cl S\to \cl A\otimes_{\rm s}\cl S \otimes_{\rm s} \cl A$ be the map, given by 
\[
\wt{\theta}'(s) = 1_{\cl A}\otimes s \otimes 1_{\cl A}, \ \ \ s\in \cl S.
\]
By definition, $\wt{\theta}'$ is completely contractive and completely positive. 

Finally, let $\wt{q}_{\rm s} \colon \cl A\otimes_{\rm s}\cl S \otimes_{\rm s} \cl A \to \cl A\otimes_{\rm s}^{\cl A}\cl S \otimes_{\rm s}^{\cl A} \cl A$ be the (completely contractive completely positive) quotient map.
Consider the diagram
\[
\cl A\otimes_s^{\cl A} \cl S \otimes_s^{\cl A} \cl A \stackrel{\wt{\theta}}{\to} 
\cl S 
\stackrel{\wt{\theta}'}{\to} 
\cl A\otimes_s \cl S \otimes_s \cl A 
\stackrel{\wt{q}_{\rm s}}{\to} 
\cl A\otimes_s^{\cl A} \cl S \otimes_s^{\cl A} \cl A,
\]
and note that $\wt{q}_{\rm s}\circ \wt{\theta}' \circ \wt{\theta} = {\rm id}$.
It follows that each of the intermediate maps is a complete order isomorphism onto its range. 
However, $\wt{\theta}$ is clearly surjective; it follows that $\wt{\theta}$ is a complete order isomorphism.
\end{proof}

More generally we have the following proposition.

\begin{theorem}\label{t_opBsy}
Let $\cl A$ and $\cl B$ be unital C*-algebras, $\cl S$ be an operator $\cl A$-system and $\cl E$ be a left operator $\cl A$-space and a right operator $\cl B$-space. 
If 
\[
\cl B \cdot \cl J_{\cl E, \cl S}^{\otimes_{\rm s}^{\cl A}} \subseteq \cl J_{\cl E, \cl S}^{\otimes_{\rm s}^{\cl A}},
\]
then $\cl E^*\otimes_{\rm s}^{\cl A} \cl S\otimes_{\rm s}^{\cl A} \cl E$ is a selfadjoint operator $\cl B$-space in a canonical fashion.
\end{theorem}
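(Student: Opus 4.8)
The plan is to first endow $\cl E^* \otimes_{\rm s} \cl S \otimes_{\rm s} \cl E$ with a right $\cl B$-module action, then descend it to the quotient $\cl E^* \otimes_{\rm s}^{\cl A} \cl S \otimes_{\rm s}^{\cl A} \cl E \simeq \cl E^* \otimes_{\rm s} \cl S \otimes_{\rm s} \cl E / \cl J_{\cl E, \cl S}^{\otimes_{\rm s}^{\cl A}}$ (via Corollary \ref{c_iden}), and finally verify the axioms of a selfadjoint operator $\cl B$-space using Corollary \ref{c_qasosbm}. For the first step, I would use the fact that $\cl E$ is a right operator $\cl B$-space, so $\cl E^*$ is a left operator $\cl B$-space via $b \cdot x^* := (x \cdot b^*)^*$. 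Define the action on the algebraic tensor product by $(y^* \odot s \odot x) \cdot b := y^* \odot s \odot (x \cdot b)$ on the rightmost variable; equivalently, $b \cdot (y^* \odot s \odot x) := (b \cdot y^*) \odot s \odot x$ on the left. One checks these descend to a well-defined $\cl B$-bimodule structure on $\cl E^* \odot \cl S \odot \cl E$ since the tensor product is balanced over $\bb C$ only, and the two actions commute. To see the actions are completely contractive on the symmetrisation, I would exploit the universal property: for an admissible pair $(\phi, \psi)$, right multiplication by $b \in M_{p,q}(\cl B)$ on $\cl E^* \otimes_{\rm s} \cl S \otimes_{\rm s} \cl E$ corresponds, under $\phi^* \cdot \psi \cdot \phi$, to the module action on $\cl E$ realized by a right operator $\cl B$-space representation; complete contractivity then follows from the defining formula for the symmetrisation norm together with the fact that $\cl E$ is a right operator $\cl B$-space (so the module action $\cl E \times \cl B \to \cl E$ is completely contractive). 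The cone compatibility axiom (i) of Definition \ref{d_sosbim}, namely $b^* \cdot M_m(\cl X)^+ \cdot b \subseteq M_n(\cl X)^+$, follows from the synthesis description of the cones in Proposition \ref{p_synth}: an element $x^* \odot s \odot x$ with $s \geq 0$ satisfies $b^* \cdot (x^* \odot s \odot x) \cdot b = (x \cdot b)^* \odot s \odot (x \cdot b) \in C_n$ after noting $x \cdot b \in M_{k,n}(\cl E)$, and then passing to closures. Axiom (iii) is the relation $(b \cdot u)^* = u^* \cdot b^*$, which holds on elementary tensors by the definition of the involution $y^* \odot s \odot x \mapsto x^* \odot s^* \odot y$ combined with the definitions of the left and right $\cl B$-actions, and extends by continuity.

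Once $\cl E^* \otimes_{\rm s} \cl S \otimes_{\rm s} \cl E$ is a selfadjoint operator $\cl B$-space, I would invoke the hypothesis $\cl B \cdot \cl J_{\cl E, \cl S}^{\otimes_{\rm s}^{\cl A}} \subseteq \cl J_{\cl E, \cl S}^{\otimes_{\rm s}^{\cl A}}$, which is precisely the condition $\cl B \cdot \cl J \subseteq \cl J$ needed in Corollary \ref{c_qasosbm}. Since $\cl J_{\cl E, \cl S}^{\otimes_{\rm s}^{\cl A}}$ is a kernel of $\cl E^* \otimes_{\rm s} \cl S \otimes_{\rm s} \cl E$ by Lemma \ref{l_kerne} (it equals the intersection of the kernels of the $\cl A$-balanced c.c.p. functionals), and $\cl J_{\cl E, \cl S}^{\otimes_{\rm s}^{\cl A}}$ is selfadjoint, Corollary \ref{c_qasosbm} yields that the quotient $\cl E^* \otimes_{\rm s} \cl S \otimes_{\rm s} \cl E / \cl J_{\cl E, \cl S}^{\otimes_{\rm s}^{\cl A}}$ is a selfadjoint operator $\cl B$-space in a canonical fashion. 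Finally, Corollary \ref{c_iden} identifies this quotient completely isometrically and completely order isomorphically with $\cl E^* \otimes_{\rm s}^{\cl A} \cl S \otimes_{\rm s}^{\cl A} \cl E$, and this identification transports the $\cl B$-bimodule structure; one checks the transported action agrees with the obvious one $(y^* \odot^{\cl A} s \odot^{\cl A} x) \cdot b = y^* \odot^{\cl A} s \odot^{\cl A} (x \cdot b)$ on elementary tensors, so the balanced symmetrisation inherits the structure canonically.

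I expect the main obstacle to be verifying that the right (and left) $\cl B$-module action is genuinely \emph{completely contractive} on the symmetrisation, rather than merely bounded. The subtlety is that the symmetrisation norm is defined through admissible pairs $(\phi, \psi)$ where $\phi$ is a completely contractive map of $\cl E$, but such $\phi$ need not respect the right $\cl B$-module structure. To handle this I would pass, using the proof technique of Proposition \ref{p_nd} and the characterization Theorem \ref{t_soap}, to a concrete complete isometry $\phi \colon \cl E \to \cl B(H, K)$ coming with a $*$-representation of $\cl B$ implementing the right action (such a representation exists since $\cl E$ is a right operator $\cl B$-space, by \cite[Theorem 4.6.7]{blm}); then for a general admissible pair one tensors with this representation as in Remark \ref{ex_existad} to produce an admissible pair compatible with the $\cl B$-action, and the complete contractivity estimate $\| (y^* \odot s \odot x) \cdot b \|_{\rm s}^{(n)} \leq \|b\| \cdot \|y^* \odot s \odot x\|_{\rm s}^{(n)}$ follows. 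A careful but routine matrix-amplification bookkeeping is needed here, parallel to the proof of Proposition \ref{p_estareos}, but no genuinely new idea beyond the interplay of the module representation with the universal property of the symmetrisation.
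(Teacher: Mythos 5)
Your overall architecture coincides with the paper's: define the $\cl B$-bimodule action on elementary tensors of $\cl E^*\odot\cl S\odot\cl E$, verify the cone-compatibility axiom via Proposition \ref{p_synth} and the involution axiom on elementary tensors, and then descend to the balanced symmetrisation through Lemma \ref{l_kerne}, Corollary \ref{c_iden} and Corollary \ref{c_qasosbm}. Those parts are correct.

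The gap is in the step you yourself single out as the main obstacle: complete contractivity of the $\cl B$-action on $\cl E^*\otimes_{\rm s}\cl S\otimes_{\rm s}\cl E$. You must show that for \emph{every} admissible pair $(\phi,\psi)$,
\[
\big\|(\phi^*\cdot\psi\cdot\phi)^{(n)}\big((y^*\odot s\odot x)\cdot b\big)\big\|\leq \|b\|\cdot\|y^*\odot s\odot x\|_{\rm s}^{(n)},
\]
where the right-hand side is a supremum over admissible pairs; so for each \emph{given} $\phi$ you must produce some admissible pair whose value at $y^*\odot s\odot x$ dominates the left-hand side. Your proposed mechanism — fix a completely isometric $\cl B$-compatible representation $\phi_0$ of $\cl E$ and ``tensor a general admissible pair with it as in Remark \ref{ex_existad}'' — does not achieve this. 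Remark \ref{ex_existad} tensors a map of $\cl E$ with a map of $\cl S$ (two different spaces); two maps of $\cl E$ cannot be tensored into a single linear map of $\cl E$, and in any case passing to a $\cl B$-compatible pair gives no control over the value of the original, possibly $\cl B$-incompatible, $\phi$ at $x\cdot b$. Neither Proposition \ref{p_nd} nor Theorem \ref{t_soap} supplies the missing link (the latter cannot be invoked before the space is known to be an operator $\cl B$-space). The tool that closes the gap, and the one the paper uses, is the Christensen--Sinclair--Paulsen--Smith representation theorem \cite[Theorem 1.5.7]{blm} applied to the completely contractive bilinear map $\cl E\times\cl B\to\cl B(H,K)$, $(x,b)\mapsto\phi(x\cdot b)$: it yields completely contractive $\wt{\phi}$ on $\cl E$ and $\wt{\pi}$ on $\cl B$ with $\phi^{(k,n)}(x\cdot b)=\wt{\phi}^{(k,n)}(x)\,\wt{\pi}^{(n)}(b)$, so that $(\wt{\phi},\psi)$ is admissible and
\[
\phi^{(k,n)}(y)^*\psi^{(k)}(s)\phi^{(k,n)}(x\cdot b)
=
\wt{\pi}^{(n)}(1_{\cl B}\otimes I_n)^*\,\wt{\phi}^{(k,n)}(y)^*\psi^{(k)}(s)\wt{\phi}^{(k,n)}(x)\,\wt{\pi}^{(n)}(b),
\]
whose norm is at most $\|y^*\odot s\odot x\|_{\rm s}^{(n)}\cdot\|b\|$. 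With this replacement (and its mirror for the left action), the rest of your argument goes through.
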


\begin{proof}
We will show that the symmetrisation $\cl E^*\otimes_{\rm s} \cl S\otimes_{\rm s} \cl E$ becomes a selfadjoint operator $\cl B$-space in a canonical fashion.
Then Corollary \ref{c_iden} and Corollary \ref{c_qasosbm} apply to pass to the quotient.

First, note that the mapping
\[
\cl B\times (\cl E^*\odot\cl S\odot \cl E)\times \cl B \to \cl E^*\odot\cl S\odot \cl E,
\]
\[
(b_1, y^*\otimes s\otimes x, b_2)\to (y\cdot b_1)^*\otimes s\otimes (x\cdot b_2) \ \ \ x,y\in \cl E, s\in \cl S, b_1, b_2\in \cl B,
\]
induces a (well-defined) $\cl B$-bimodule unital action on the algebraic tensor product $\cl E^*\odot\cl S\odot \cl E$. 

Next observe that, if $b\in M_{n,m}(\cl B)$ and $u\in C_n$, then $b^*\cdot u\cdot b\in C_m$. 
Indeed, first suppose that $u = x^*\odot s\odot x$ for some $s\in M_k(\cl S)^+$ and some $x\in M_{k,n}(\cl E)$. 
By Lemma \ref{l_mos}, we have
\begin{equation}\label{eq_conjg}
b^*\cdot (x^*\odot s\odot x) \cdot b = (x\cdot b)^*\odot s\odot (x\cdot b)\in M_n(\cl E^*\otimes_{\rm s} \cl S\otimes_{\rm s} \cl E)^+.
\end{equation}
By the characterisation of the cones from Proposition \ref{p_synth} we then get that $\cl E^* \otimes_{\rm s} \cl S \otimes_{\rm s} \cl E$ satisfies axiom (i) of Definition \ref{d_sosbim}.

It also follows by the definition of the involution that $(b \cdot u)^* = u^* \cdot b^*$ for all $u \in \cl E^*\odot\cl S\odot \cl E$.
Indeed, it suffices to show this for elementary tensors where we get
\[
(b \cdot (y^* \otimes s \otimes x))^*
=
( (yb^*)^* \otimes s \otimes x)^*
=
x^* \otimes s \otimes (yb^*)
=
(y^* \otimes s \otimes x)^* \cdot b^*
\]
for all $b \in \cl B$, and $x, y \in \cl E$ and $s \in \cl S$.
Thus $\cl E^* \otimes_{\rm s} \cl S \otimes_{\rm s} \cl E$ satisfies axiom (iii) of Definition \ref{d_sosbim}.

It is left to show that the $\cl B$-actions are completely contractive on the algebraic tensor product with respect to the symmetrisation norm, and by extension $\cl E^*\otimes_{\rm s} \cl S\otimes_{\rm s} \cl E$ is an operator $\cl B$-bimodule (see \cite[Theorem 4.6.7]{blm}). 
Then $\cl E^* \otimes_{\rm s} \cl S \otimes_{\rm s} \cl E$ satisfies axiom (ii) of Definition \ref{d_sosbim}, and thus $\cl E^* \otimes_{\rm s} \cl S \otimes_{\rm s} \cl E$ is an operator $\cl B$-space.
That is, we need to show that
\[
\| b \cdot (y^* \odot s \odot x) \|_{M_n(\cl E^* \otimes_{\rm s} \cl S \otimes_{\rm s} \cl E)}
\leq
\| b \|_{M_n(\cl B)} \cdot \| y^* \odot s \odot x \|_{M_n(\cl E^* \otimes_{\rm s} \cl S \otimes_{\rm s} \cl E)},
\]
and that
\[
\| (y^* \odot s \odot x) \cdot b \|_{M_n(\cl E^* \otimes_{\rm s} \cl S \otimes_{\rm s} \cl E)}
\leq
\| y^* \odot s \odot x \|_{M_n(\cl E^* \otimes_{\rm s} \cl S \otimes_{\rm s} \cl E)} \cdot \| b\|_{M_n(\cl B)},
\]
for every $y^* \odot s \odot x \in M_n(\cl E^* \otimes_{\rm s} \cl S \otimes_{\rm s} \cl E)$ and $b \in M_n(\cl B)$.
Towards this end, let $(\phi, \psi)$ be an admissible pair so that
\[
\phi \colon \cl E \to \cl B(H, K) \qand \psi \colon \cl S \to \cl B(K).
\]
Since $\phi$ is completely contractive and $\cl E$ is a left operator $\cl B$-module, the bilinear map
\[
\cl E \times \cl B \rightarrow \cl B(H, K); \ (x, b) \mapsto \phi(x b)
\]
is completely contractive and thus, by the CSPS Theorem \cite[Theorem 1.5.7]{blm}, there exist completely contractive maps $\wt{\phi}$ of $\cl E$ and $\wt{\pi}$ of $\cl B$ such that 
\[
\phi^{(k,n)}(x \cdot b)= \wt{\phi}^{(k,n)}(x) \wt{\pi}^{(n)}(b), \ \ x\in M_{k,n}(\cl E), b \in M_n(\cl B).
\]
In particular, we have
\[
\phi^{(k,n)} ( x ) = \wt{\phi}^{(k,n)} (x) \wt{\pi}^{(n)} (1_{\cl B} \otimes I_n), \ \ x\in M_{k,n}(\cl E).
\]
Trivially $(\wt{\phi}, \psi)$ is an admissible pair.
Let $x,y \in M_{k,n}(\cl E)$, $s \in M_k(\cl S)$ and $b \in M_n(\cl B)$; we then have
\begin{align*}
\| (\phi^* \cdot \psi \cdot \phi)^{(n)}( (y^* \odot s \odot x) \cdot b) \| 
& = \\
& \hspace{-3cm} =
\| \phi^{(k,n)}(y)^* \psi^{(k)}(s) \phi^{(k,n)}(x b) \| \\
& \hspace{-3cm} =
\| \wt{\pi}^{(n)}(1_{\cl B} \otimes I_n)^* \wt{\phi}^{(k,n)}(y)^* \psi^{(k)}(s) \wt{\phi}^{(k,n)}(x) \wt{\pi}^{(n)}(b) \| \\
& \hspace{-3cm} \leq
\| \wt{\pi}^{(n)}(1_{\cl B} \otimes I_n)^*\| \cdot \| \wt{\phi}^{(k,n)}(y)^* \psi^{(k)}(s) \wt{\phi}^{(k,n)}(x)\| \cdot \|\wt{\pi}^{(n)}(b) \| \\
& \hspace{-3cm} \leq
\| y^* \odot s \odot x \|_{M_n(\cl E^* \otimes_{\rm s} \cl S \otimes_{\rm s} \cl E)} \cdot \|b\|_{M_n(\cl B)}.
\end{align*}
Thus by taking the supremum over all admissible pairs $(\phi, \psi)$ we derive that
\[
\| (y^* \odot s \odot x) \cdot b \|_{M_n(\cl E^* \otimes_{\rm s} \cl S \otimes_{\rm s} \cl E)} \leq 
\| y^* \odot s \odot x \|_{M_n(\cl E^* \otimes_{\rm s} \cl S \otimes_{\rm s} \cl E)} \cdot \|b\|_{M_n(\cl B)}.
\]
Since the set
\[
\{ y^* \odot s \odot x \ : \ x,y \in M_{k,n}(\cl E), s \in M_k(\cl S)\}
\]
is dense in $M_n(\cl E^* \otimes_{\rm s} \cl S \otimes_{\rm s} \cl E)$, and $k, n \in \bb N$ were arbitrary, we deduce that $\cl E^* \otimes_{\rm s} \cl S \otimes_{\rm s} \cl E$ becomes a right operator $\cl B$-space.
The symmetric arguments for the left action imply that $\cl E^* \otimes_{\rm s} \cl S \otimes_{\rm s} \cl E$ becomes an operator $\cl B$-space, and the proof is complete.
\end{proof}

\begin{remark} \label{r_bal}
Suppose that $\cl E$ is an $\cl A$-$\cl B$-bimodule over C*-algebras $\cl A$ and $\cl B$, and suppose that $\cl S$ is an operator space that is an $\cl A$-bimodule, so that $\cl E^* \otimes_{\rm s}^{\cl A} \cl S \otimes_{\rm s}^{\cl A} \cl E$ becomes a $\cl B$-bimodule.
Let $(\rho, \wt{\theta}_{\rm s \cl A})$ be a $\cl B$-bimodule map of the symmetrisation acting on a Hilbert space $H$.
Write $\wt{\theta}_{\rm s \cl A} = \phi^* \cdot \psi \cdot \phi$ for the admissible $\cl A$-pair constructed in the proof of Theorem \ref{t_symuni}.
We wish to show that $(\phi, \rho)$ is also a right $\cl B$-module map.

Towards this end, let $\theta$ be the delinearisation of $\wt{\theta}_{\rm s \cl A}$ so that 
\[
\theta(y^*, s, x) \rho(b) = \wt{\theta}_{\rm s \cl A}(y^* \otimes s \otimes x) \rho(b)
= \wt{\theta}_{\rm s \cl A}(y^* \otimes s \otimes (xb)) = \theta(y^*, s, xb)
\]
for all $x, y \in \cl E, s \in \cl S$ and $b \in \cl B$.
For $x \in \cl E$ and $b \in \cl B$, the construction of $K$ and $\phi$ of Lemma \ref{l_ssgen} yields
\begin{align*}
\sca{\phi(x) \rho(b) \xi, y \otimes \eta + N}_K
& =
\sca{\theta(y^*, 1_{\cl S}, x) \rho(b) \xi, \eta}_H \\
& =
\sca{\theta(y^*, 1_{\cl S}, x \cdot b) \xi, \eta}_H 
=
\sca{\phi(x b) \xi, y \otimes \eta + N}_K
\end{align*}
for all $y \in \cl E$, $\xi, \eta \in H$.
Thus $\phi(x) \rho(b) = \phi(x b)$ as required.
\end{remark}

\subsection{Symmetrisation and ternary rings of operators}\label{ss_env}

In this section we explore the properties of the symmetrisation when a ternary ring of operators is involved.
We further give some examples for showing that the tensor map 
(\ref{eq_tenma34}) of Proposition \ref{p_tensmaps} may not be an order embedding.

By Theorem \ref{t_symuni}, we have that every completely isometric completely positive map $\wt{\theta}_{\rm s} \colon \cl E^* \otimes_{\rm s} \cl S \otimes_{\rm s} \cl E \to \cl B(H)$ has the form $\wt{\theta}_{\rm s} = \phi^* \cdot \psi \cdot \phi$ for some complete isometry  $\phi \colon \cl E \to \cl B(H, K)$ and a unital completely positive map $\psi \colon \cl S \to \cl B(K)$.
The injectivity of the symmetrisation will allow us to choose $\wt{\theta}_{\rm s}$ in a way that $\psi$ is also completely isometric, provided that $\cl E^*$ is semi-unital.

\begin{proposition}\label{l_ci}
Let $\cl M \subseteq \cl B(H, K)$ be a closed TRO and $\cl S \subseteq \cl B(K)$ be an operator system such that $\cl S = [\cl M \cl M^* \cl S]$.
Let $\wt{\theta}_{\rm s} \colon \cl M^* \otimes_{\rm s} \cl S \otimes_{\rm s} \cl M\to \cl B(\tilde{H})$ be a completely isometric completely positive map for some Hilbert space $\tilde{H}$, and let $(\phi, \psi)$ be the admissible pair with $\wt{\theta}_{\rm s} = \phi^* \cdot \psi \cdot \phi$.
Then $\psi$ is completely isometric.
\end{proposition}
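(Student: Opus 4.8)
The plan is to play the hypothesis $\cl S=[\cl M\cl M^*\cl S]$ against the completely isometric map $\wt\theta_{\rm s}$ by comparing the latter with the canonical \emph{reference} map built from the concrete inclusions, and to manufacture out of finitely supported rows over $\cl M$ a two-sided approximate identity for $\cl S$. Throughout write $\phi\colon\cl M\to\cl B(\tilde H,\tilde K)$, $\psi\colon\cl S\to\cl B(\tilde K)$ for the admissible pair with $\wt\theta_{\rm s}=\phi^*\cdot\psi\cdot\phi$; since $\psi$ is unital completely positive it is completely contractive, so it suffices to prove $\|s\|\le\|\psi^{(n)}(s)\|$ for every $n\in\bb N$ and $s\in M_n(\cl S)$.

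First I would set up the module-theoretic preliminaries. As $\cl S$ is a concrete operator system on $K$ we may assume $1_{\cl S}=I_K$; then $I_K\in\cl S=[\cl M\cl M^*\cl S]$ forces the C*-algebra $\cl A:=[\cl M\cl M^*]\subseteq\cl B(K)$ to act non-degenerately on $K$, so it has a bounded approximate identity $(e_\lambda)$. Taking adjoints in $\cl M\cl M^*\cl S\subseteq\cl S$ gives $\cl S\cl M\cl M^*\subseteq\cl S$, hence $\cl S$ is an $\cl A$-bimodule with $\cl S=\ol{\cl A\cl S}=\ol{\cl S\cl A}$, and the usual density argument yields $e_\lambda s e_\lambda\to s$ in norm for every $s\in\cl S$, and likewise for the ampliations $(e_\lambda\otimes I_n)s(e_\lambda\otimes I_n)\to s$ with $s\in M_n(\cl S)$. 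Next, each positive element $a\in\cl A$ can be norm-approximated by elements $\underline m\,\underline m^{\,*}$ with $\underline m\in M_{1,k}(\cl M)$ and $\|\underline m\|^2\le\|a\|+\delta$: approximate $a^{1/2}\in\cl A=\ol{\rm span}\{mn^*\}$ by $c=\sum_{i=1}^k m_i n_i^*$, observe $cc^*=\underline m\,(\underline n^{\,*}\underline n)\,\underline m^{\,*}$ with $\underline m=[m_1,\dots,m_k]$, $\underline n=[n_1,\dots,n_k]\in M_{1,k}(\cl M)$ and $\underline n^{\,*}\underline n\ge0$, and rewrite $cc^*=\bigl(\underline m\,(\underline n^{\,*}\underline n)^{1/2}\bigr)\bigl(\underline m\,(\underline n^{\,*}\underline n)^{1/2}\bigr)^{*}$, where $\underline m\,(\underline n^{\,*}\underline n)^{1/2}\in M_{1,k}(\cl M)$ by $\cl M[\cl M^*\cl M]\subseteq\cl M$. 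Enlarging the index set I thus obtain a net $\underline m_\lambda\in M_{1,k_\lambda}(\cl M)$ with $\|\underline m_\lambda\underline m_\lambda^*-e_\lambda\|\to0$; consequently $\|\underline m_\lambda\|\to1$ and, for fixed $n$ and $s\in M_n(\cl S)$, $(\underline m_\lambda\underline m_\lambda^*\otimes I_n)\,s\,(\underline m_\lambda\underline m_\lambda^*\otimes I_n)\to s$ in norm.

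Second I would invoke Theorem~\ref{t_symuni}(i) for the pair formed by the inclusion $\cl M\hookrightarrow\cl B(H,K)$ (a complete isometry, hence completely contractive) and the inclusion $\cl S\hookrightarrow\cl B(K)$ (unital completely positive), obtaining a completely contractive completely positive map $\Theta\colon\cl M^*\otimes_{\rm s}\cl S\otimes_{\rm s}\cl M\to\cl B(H)$ with $\Theta(m^*\otimes s\otimes m')=m^*sm'$. Since $\wt\theta_{\rm s}$ is completely isometric, $\|\Theta^{(q)}(w)\|\le\|w\|_{\rm s}^{(q)}=\|\wt\theta_{\rm s}^{(q)}(w)\|$ for all $w$. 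I apply this with $w=x^*\odot s\odot x$, where $s\in M_n(\cl S)$ and $x=\mathrm{diag}_n(\underline m_\lambda)\in M_{n,nk_\lambda}(\cl M)$ is the block-diagonal matrix with $n$ copies of $\underline m_\lambda$: on one side $\Theta^{(nk_\lambda)}(w)=x^{*}sx$ (concrete product), and on the other $\wt\theta_{\rm s}^{(nk_\lambda)}(w)=\phi^{(n,nk_\lambda)}(x)^{*}\psi^{(n)}(s)\phi^{(n,nk_\lambda)}(x)$, whose norm is $\le\|\phi^{(n,nk_\lambda)}(x)\|^2\|\psi^{(n)}(s)\|\le\|\underline m_\lambda\|^2\|\psi^{(n)}(s)\|$ because $\phi$ is completely contractive. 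Conjugating $\|x^{*}sx\|\le\|\underline m_\lambda\|^2\|\psi^{(n)}(s)\|$ by $x$ and using $x x^{*}=\underline m_\lambda\underline m_\lambda^*\otimes I_n$ gives
\[
\bigl\|(\underline m_\lambda\underline m_\lambda^*\otimes I_n)\,s\,(\underline m_\lambda\underline m_\lambda^*\otimes I_n)\bigr\|\le\|\underline m_\lambda\|^4\,\|\psi^{(n)}(s)\|.
\]
Letting $\lambda\to\infty$, the left-hand side tends to $\|s\|$ and $\|\underline m_\lambda\|^4\to1$, so $\|s\|\le\|\psi^{(n)}(s)\|$, and $\psi$ is completely isometric.

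I expect the only genuine difficulty to be bookkeeping rather than a new idea: one must keep the $\cl M$/$\cl M^{*}$ orientations straight (the reference map $\Theta$ takes values in $\cl B(H)$ while $\cl S$ lives in $\cl B(K)$, which is exactly why the conjugation by $x=\mathrm{diag}_n(\underline m_\lambda)$ is applied on \emph{both} sides), and verify carefully that the approximating rows can be chosen inside $\cl M$ itself (the $\cl M[\cl M^*\cl M]\subseteq\cl M$ step) together with the elementary fact $e_\lambda se_\lambda\to s$ in norm. No structural input beyond Theorem~\ref{t_symuni} and the definition of the symmetrisation norm is required.
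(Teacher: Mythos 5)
Your argument is correct and follows essentially the same route as the paper's proof: both compare the completely contractive concrete multiplication map $\cl M^*\otimes_{\rm s}\cl S\otimes_{\rm s}\cl M\to[\cl M^*\cl S\cl M]$ with the completely isometric $\wt{\theta}_{\rm s}=\phi^*\cdot\psi\cdot\phi$ on elements of the form $\un{m}^*\odot s\odot\un{m}$, and recover $\|s\|$ by sandwiching $s$ between products $\un{m}\,\un{m}^*$ of finitely supported rows over $\cl M$ that approximate the identity of $[\cl M\cl M^*]$ acting on $\cl S$. The only divergence is that the paper first observes that $[\cl M\cl M^*]$ is unital and invokes Kasparov's Stabilisation Theorem to produce a single row contraction $\un{m}$ with $\sum_n m_nm_n^*=1_{\cl S}$ exactly, whereas you factor the approximate identity only approximately as $\un{m}_\lambda\un{m}_\lambda^*$ with $\|\un{m}_\lambda\|^2\le 1+\delta_\lambda$ --- a slightly more elementary device that yields the same estimate in the limit.
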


\begin{proof}
Let $(\un{m}^\mu)_\mu \subseteq R(\cl M)$ be a net of row contractions with finite support 
so that
\[
\lim_\mu \un{m}^\mu (\un{m}^\mu)^* \cdot m = m, \ \ m \in \cl M;
\]
see for example \cite[Paragraph 8.1.23]{blm}.
Since $\cl S = [\cl M \cl M^* \cl S]$, we obtain
\[
\lim_\mu \un{m}^\mu (\un{m}^\mu)^* \cdot s = \lim_\mu s \cdot \un{m}^\mu (\un{m}^\mu)^* = s, \ \ s \in \cl S.
\]
Applying for $s = 1_{\cl S} = I_K$, we get that $[\cl M \cl M^*]$ is unital. 
By an application of Kasparov's Stabilisation Theorem \cite[Lemma 7.3]{lance} we can choose a row contraction $\un{m} \in R(\cl M)$ over $\bb N$ such that $\sum_n m_n m_n^* = 1_{\cl S}$.
Setting 
\[
\un{m}_n := [m_1, \cdots, m_n, 0, \cdots],
\]
we obtain
\begin{align}\label{eq_mnooo}
\| (s_{i,j})_{i,j} \|_{M_k(\cl S)} 
& = \lim_{n\to\infty} \| (\un{m}_n \un{m}^*_n \otimes I_k) (s_{i,j})_{i,j} (\un{m}_n \un{m}^*_n \otimes I_k)\| \nonumber\\ 
& \leq \sup_{n\in \bb{N}} \| (\un{m}^*_n \otimes I_k) (s_{i,j})_{i,j} (\un{m}_n \otimes I_k) \|_{M_{kn}(\cl M^* \cl S \cl M)}
\end{align}
for every $(s_{i,j})_{i,j} \in M_k(\cl S)$, $k \in \bb N$.
The multiplication map
\[
\cl M^* \otimes_{\rm s} \cl S \otimes_{\rm s} \cl M \to [\cl M^* \cl S \cl M],
\]
together with the isomorphism 
\[
[\phi(\cl M)^* \psi(\cl S) \phi(\cl M)] \simeq \cl M^* \otimes_{\rm s} \cl S \otimes_{\rm s} \cl M,
\]
yield a completely positive map 
\[
[\phi(\cl M)^* \psi(\cl S) \phi(\cl M)] \to [\cl M^* \cl S \cl M]; \ \phi(m_1)^* \psi(s) \phi(m_2) \mapsto m^*_1 s m_2.
\]
Using (\ref{eq_mnooo}), we derive
\begin{align*}
\| (s_{i,j})_{i,j} \|_{M_k(\cl S)}
& \leq \sup_{n\in \bb{N}} \| (\un{m}^*_n \otimes I_k) (s_{i,j})_{i,j} (\un{m}_n \otimes I_k) \|_{M_{kn}(\cl M^* \cl S \cl M)} \\
& \leq \sup_{n\in \bb{N}} \| (\phi^{(k,kn)}(\un{m}_n) \otimes I_k)^* \psi^{(k)}( (s_{i,j})_{i,j} ) (\phi^{(k,kn)}(\un{m}_n) \otimes I_k) \| \\
& \leq \|\psi^{(k)}( (s_{i,j})_{i,j} )\|,
\end{align*}
giving that $\psi^{(k)}$ is isometric.
Since $k$ was arbitrary we have that $\psi$ is completely isometric. 
\end{proof}

\begin{corollary}
Let $\cl E$ be an operator space and $\cl S$ be an operator system.
If $\cl E^*$ is semi-unital, then there exists an admissible pair $(\phi, \psi)$ such that $\psi$ is a complete isometry and $\phi^* \cdot \psi \cdot \phi$ is a completely isometric complete order embedding.
\end{corollary}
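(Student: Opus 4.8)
The plan is to reduce the statement to Proposition \ref{l_ci} by first finding a completely isometric complete order embedding of $\cl M^* \otimes_{\rm s} \cl S \otimes_{\rm s} \cl M$ of the right form, where $\cl M$ is a suitable TRO cover of $\cl E$ and $\cl S$ is represented compatibly. First I would choose any completely isometric complete order embedding $\wt{\theta}_{\rm s} \colon \cl E^* \otimes_{\rm s} \cl S \otimes_{\rm s} \cl E \to \cl B(\wt H)$ and, using item (iii) of Theorem \ref{t_symuni}, write $\wt{\theta}_{\rm s} = \phi^* \cdot \psi \cdot \phi$ for a complete isometry $\phi \colon \cl E \to \cl B(\wt H, L)$ and a unital completely positive map $\psi \colon \cl S \to \cl B(L)$, with $L = [\phi(\cl E)\wt H]$. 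Passing to a non-degenerate compression (Proposition \ref{p_nd}) we may assume $[\phi(\cl E)\wt H] = L$ and $[\phi(\cl E)^* L] = \wt H$.

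The next step is to enlarge the representation so that a TRO structure and the compatibility hypothesis $\cl S = [\cl M \cl M^* \cl S]$ become available. Concretely, I would dilate $\psi$ to a unital $*$-representation $\rho \colon \ca(\cl S) \to \cl B(\wh L)$ with an isometry $V \colon L \to \wh L$ such that $\psi(s) = V^* \rho(s) V$ (Stinespring), set $\wh\phi(x) := V\phi(x)$, and replace $(\phi, \psi)$ by $(\wh\phi, \rho|_{\cl S})$; then $\wt{\theta}_{\rm s} = \wh\phi^* \cdot \rho|_{\cl S} \cdot \wh\phi$ still gives the same completely isometric complete order embedding. Now let $\cl M$ be the (closed) TRO generated by $\wh\phi(\cl E)$ inside $\cl B(\wh H, \wh L)$ where $\wh H := [\wh\phi(\cl E)^* \wh L]$, so that $\wh\phi \colon \cl E \to \cl M$ is a completely isometric TRO cover. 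Here I expect to need the hypothesis that $\cl E^*$ is semi-unital: by the injectivity of the symmetrisation (Theorem \ref{t_injective}) applied to the inclusion $\cl E \subseteq \cl M$ together with Theorem \ref{t_opA-sys}-type facts, one gets that $\cl E^* \otimes_{\rm s} \cl S \otimes_{\rm s} \cl E \hookrightarrow \cl M^* \otimes_{\rm s} \cl S \otimes_{\rm s} \cl M$ completely order isometrically; and semi-unitality of $\cl E^*$ guarantees that $I_{\wh H} \in [\cl M \cl M^* ] \cdot$ (closure of) $\wh\phi(\cl E)^*\wh\phi(\cl E)$, which after renormalising $\cl S$ by $[\cl M \cl M^* \cl S]$ puts us exactly in the setting of Proposition \ref{l_ci}. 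More carefully: one checks that $[\cl M \cl M^* \cl S_0] = \cl S_0$ for the compressed system $\cl S_0 := \rho(\cl S)$ because the $\cl M \cl M^*$-module generated by $\wh\phi(\cl E)^* \wh\phi(\cl E)$ contains an approximate unit for $\cl B(\wh L)$-compressions, using the $\phi$-semi-unit for $\cl E^*$.

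Finally I would apply Proposition \ref{l_ci} to the completely isometric completely positive map $\cl M^* \otimes_{\rm s} \cl S \otimes_{\rm s} \cl M \to \cl B(\wh H)$ extending $\wt{\theta}_{\rm s}$ (which exists and is a complete order embedding by Theorem \ref{t_injective}): writing it as $\Phi^* \cdot \Psi \cdot \Phi$ via Theorem \ref{t_symuni}, Proposition \ref{l_ci} yields that $\Psi$ is completely isometric, and restricting $\Phi$ to $\wh\phi(\cl E) \cong \cl E$ gives the required admissible pair $(\Phi \circ \wh\phi, \Psi)$ with $\Psi$ a complete isometry and $(\Phi \circ \wh\phi)^* \cdot \Psi \cdot (\Phi \circ \wh\phi) = \wt{\theta}_{\rm s}$ a completely isometric complete order embedding of $\cl E^* \otimes_{\rm s} \cl S \otimes_{\rm s} \cl E$. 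The main obstacle I anticipate is the bookkeeping in the second paragraph: verifying that the semi-unitality of $\cl E^*$ survives the dilation/TRO-generation process so that the hypothesis $\cl S = [\cl M \cl M^* \cl S]$ of Proposition \ref{l_ci} genuinely holds for the compressed system, rather than merely for some larger ambient algebra; this requires tracking how a $\phi$-semi-unit for $\cl E^*$ interacts with the Stinespring dilation and the passage to non-degenerate corners.
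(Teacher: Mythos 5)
Your plan correctly identifies Proposition \ref{l_ci} as the engine, but the reduction to it has a genuine gap at exactly the point you flag as ``bookkeeping''. The hypothesis $\cl S = [\cl M \cl M^* \cl S]$ of Proposition \ref{l_ci} is a module-invariance condition: it requires in particular that $\cl M\cl M^*\,\cl S \subseteq \cl S$ (as a closed span). For the TRO $\cl M$ generated by $\wh\phi(\cl E)$ and the dilated system $\cl S_0 = \rho(\cl S)$ there is no reason whatsoever for $\cl M\cl M^*\rho(\cl S)$ to land back in $\rho(\cl S)$ — already for $\cl E = \bb C$, $\wh\phi(1) = W$, the operator $WW^*\rho(s)$ will generically leave $\rho(\cl S)$. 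Your proposed fix via ``an approximate unit for $\cl B(\wh L)$-compressions'' only addresses the easy inclusion $\cl S_0 \subseteq [\cl M\cl M^*\cl S_0]$ (and even that needs $I_{\wh L} \in [\cl M\cl M^*]$, which is not automatic: semi-unitality of $\cl E^*$ is witnessed by \emph{some} complete isometry, and there is no claim in the paper that the semi-unit transfers to the particular $\wh\phi$ you extract from an arbitrary complete order embedding of the symmetrisation after a Stinespring dilation). The ``renormalisation of $\cl S$ by $[\cl M\cl M^*\cl S]$'' is also left undefined: $[\cl M\cl M^*\cl S_0]$ need not be selfadjoint, and you would still have to check that the replacement induces a completely isometric complete order embedding at the level of symmetrisations.

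The paper's proof reverses the order of your construction and uses a tensor trick to manufacture the invariance. One first fixes a concrete representation $\cl E \subseteq \cl B(H,K)$ with $I_K \in [\cl E\cl E^*]$ (this is where semi-unitality of $\cl E^*$ is used, \emph{before} any embedding of the symmetrisation is chosen), lets $\cl M$ be the TRO generated by $\cl E$ there, represents $\cl S \subseteq \cl B(L)$ arbitrarily, and sets $\fr M := \cl M \otimes \bb C I_L$ and $\fr S := [\cl M\cl M^*]\otimes\cl S$. Then $[\fr M\fr M^*\fr S] = [\cl M\cl M^*]\otimes\cl S = \fr S$ holds by construction (unitality of $[\cl M\cl M^*]$ gives one inclusion, the algebra property the other), the injectivity of the symmetrisation (Theorem \ref{t_injective}) embeds $\cl E^*\otimes_{\rm s}\cl S\otimes_{\rm s}\cl E$ completely order isometrically into $\fr M^*\otimes_{\rm s}\fr S\otimes_{\rm s}\fr M$, and Proposition \ref{l_ci} is applied to the latter. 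If you want to salvage your route, the minimal repair is to replace your vague renormalisation by an explicit enlargement of this kind (e.g.\ $\fr S = [\cl M\cl M^*]\otimes\cl S$, or $[\cl M\cl M^*\cl S\cl M\cl M^*]$ in a common representation), carried out in the representation that witnesses the semi-unit rather than in one extracted a posteriori.
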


\begin{proof}
Without loss of generality, let $\cl E \subseteq \cl B(H, K)$ be a concrete operator space with 
$I_K \in [\cl E \cl E^*]$, and let $\cl M$ be the closed TRO generated by $\cl E$.
Assume that $\cl S \subseteq \cl B(L)$ as an operator subsystem, and set
\[
\fr M := \cl M \otimes \bb{C}I_L \subseteq \cl B(H \otimes L, K \otimes L)
\qand
\fr S := [\cl M \cl M^*] \otimes \cl S \subseteq \cl B(K \otimes L).
\]
Note that $I_K \in [\cl M \cl M^*]$, since $I_K \in [\cl E \cl E^*]$, and so $\fr S$ is an operator system.
Consider the completely isometric map
\[
\cl E \to \fr M; \ x \mapsto x \otimes I_L,
\]
and the complete order embedding
\[
\cl S \to \fr S; \ s \mapsto I_K \otimes s.
\]
By the injectivity of the symmetrisation (Corollary \ref{t_injective}), these maps give rise to a completely isometric complete order embedding
\[
\cl E^* \otimes_{\rm s} \cl S \otimes_{\rm s} \cl E \hookrightarrow \fr M^* \otimes_{\rm s} \fr S \otimes_{\rm s} \fr M.
\]
Since $[\fr M \fr M^*]$ is unital, we have 
\[
\fr S \subseteq [\fr M \fr M^* \fr S] \subseteq [\cl M \cl M^* \cl M \cl M^*] \otimes \cl S = [\cl M \cl M^*] \otimes \cl S = \fr S,
\]
and thus $\fr M$ and $\fr S$ satisfy the assumptions of Proposition \ref{l_ci}.
Composing with a completely isometric complete order embedding of $\fr M^* \otimes_{\rm s} \fr S \otimes_{\rm s} \fr M$ 
and using Proposition \ref{l_ci} completes the proof.
\end{proof}

The following lemma gives a description of the Haagerup tensor product when involving TRO's; while the statement seems to be known, we have not been able to locate a precise reference and therefore include a proof for the convenience of the reader. 

\begin{lemma}\label{l_trohaag}
Let $\cl M_1 \subseteq \cl B(H,K)$ and $\cl M_2 \subseteq \cl B(H,K)$ be closed TRO's, and $\cl E \subseteq \cl B(K)$ be an operator space such that
\[
[\cl M_1 \cl M_1^*] \cdot \cl E \subseteq \cl E
\quad \textup{and} \quad
\cl E \cdot [\cl M_2 \cl M_2^*] \subseteq \cl E.
\]
Then the canonical map
\[
\cl M_1^* \otimes_{\rm h}^{[\cl M_1 \cl M_1^*]} \cl E \otimes_{\rm h}^{[\cl M_2 \cl M_2^*]} \cl M_2
\to [\cl M_1^* \cl E \cl M_2]; m^*_1 \otimes x \otimes m_2 \mapsto m^*_1 x m_2,
\]
is a complete isometry.
\end{lemma}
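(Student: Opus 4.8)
The plan is to reduce this balanced statement to a non-balanced Haagerup tensor identification and then invoke the known correspondence between balanced Haagerup tensor products and module-tensor realisations for TROs. Recall that for a closed TRO $\cl M_1 \subseteq \cl B(H,K)$ the algebra $[\cl M_1 \cl M_1^*]$ acts non-degenerately on $K$, and by \cite[Corollary 8.4.2]{blm} (together with the remarks on multipliers recalled in the preliminaries) one has $\cl A_\ell(\cl M_1) \simeq$ the multiplier algebra of $[\cl M_1 \cl M_1^*]$. Thus the module actions of $[\cl M_1 \cl M_1^*]$ on $\cl M_1^*$ (on the right) and on $\cl E$ (on the left) are the compressions of the honest operator multiplications on $\cl B(K)$, and similarly for $[\cl M_2 \cl M_2^*]$ acting on $\cl E$ (right) and $\cl M_2$ (left).

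First I would observe that the map in the statement is well defined and completely contractive: the trilinear map $\cl M_1^* \times \cl E \times \cl M_2 \to \cl B(H)$, $(m_1^*, x, m_2) \mapsto m_1^* x m_2$, is completely contractive (it is the restriction of operator multiplication, which is completely contractive) and is balanced over the two C*-algebras because multiplication is associative; hence by the universal property of the balanced Haagerup tensor product (see \cite[Section 3.4]{blm}) it linearises to a completely contractive map $\Theta$ onto $[\cl M_1^* \cl E \cl M_2]$, with dense range by construction. The content is that $\Theta$ is a complete isometry. For this I would use the standard CES-type factorisation for completely bounded maps together with the module structure: pick a completely bounded completely contractive map $\Phi$ on $[\cl M_1^* \cl E \cl M_2]$ witnessing a near-optimal value of $\|u\|$ for a given $u \in M_n(\cl M_1^* \odot^{[\cl M_1\cl M_1^*]} \cl E \odot^{[\cl M_2\cl M_2^*]} \cl M_2)$, extend it via Wittstock to $\cl B(H)$, and then use the Christensen--Sinclair--Paulsen--Smith representation to write the corresponding trilinear map as $m_1^* \otimes x \otimes m_2 \mapsto V_1^* \rho_1(m_1)^* R \sigma(x) S \rho_2(m_2) V_2$ for unital $*$-representations $\rho_1, \sigma, \rho_2$ and contractions $R, S$. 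The key point is that the ternary morphism property forces $\rho_1$ and the left action of $[\cl M_1\cl M_1^*]$ in $\sigma$ to intertwine (and likewise on the $\cl M_2$ side), so the balanced relations are respected and one recovers that $\|u\|_{\rm h}^{(n)}$ computed in the balanced tensor product is bounded above by the norm of the corresponding element of $[\cl M_1^*\cl E\cl M_2]$; the reverse inequality is complete contractivity. A cleaner route — and the one I would actually carry out — is to deduce this from \cite[Proposition 3.4.?]{blm}-style results: the balanced Haagerup tensor product $\cl M_1^* \otimes_{\rm h}^{[\cl M_1\cl M_1^*]} \cl E$ is completely isometric to $[\cl M_1^* \cl E]$ because $\cl M_1^*$ is a (left) $[\cl M_1\cl M_1^*]$-rigged module and $[\cl M_1 \cl M_1^*]$ acts non-degenerately, using the column-contraction net $(\un m^\mu)_\mu \subseteq R(\cl M_1)$ with $\un m^\mu (\un m^\mu)^* \to I_K$ strictly (as in the proof of Proposition \ref{l_ci}); applying this twice and using associativity of the balanced Haagerup tensor product over the intermediate module $\cl E$ gives the claimed identification with $[\cl M_1^* \cl E \cl M_2]$.

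More concretely, the steps in order would be: (1) verify well-definedness and complete contractivity of $\Theta$ via the universal property of $\otimes_{\rm h}^{\cl A}$; (2) produce the column-contraction nets $(\un m^\mu)_\mu$ over $\cl M_1$ and $(\un n^\nu)_\nu$ over $\cl M_2$ with $\un m^\mu(\un m^\mu)^* \to I_K$ and $\un n^\nu(\un n^\nu)^* \to I_K$ strictly, using \cite[Paragraph 8.1.23]{blm} and Kasparov stabilisation as in Proposition \ref{l_ci}; (3) for $u = (u_{i,j}) \in M_n(\cl M_1^* \odot^{\cl A_1} \cl E \odot^{\cl A_2} \cl M_2)$ write $u$ in a balanced representation $u = m_1^* \odot x \odot m_2$ with $m_1 \in M_{k,n}(\cl M_1)$, $x \in M_k(\cl E)$, $m_2 \in M_{k,n}(\cl M_2)$ (possible since the cone of such elements spans, and the balanced relations let us absorb the $\cl A_i$-factors into $m_1, m_2$), and use the strict approximate identities to show
\[
\|\Theta^{(n)}(u)\| = \lim_{\mu,\nu} \big\| (\un m^\mu{}^* \otimes I_n)\, \Theta^{(n)}(u)\, (\un n^\nu \otimes I_n) \big\|
\]
equals the balanced Haagerup norm $\|u\|_{\rm h \cl A_1 \cl A_2}^{(n)}$, the latter computed via the defining infimum over factorisations; (4) conclude $\Theta$ is a complete isometry onto the closed span $[\cl M_1^* \cl E \cl M_2]$, which is automatically closed as the range of a complete isometry from a complete space.

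The main obstacle I expect is step (3): carefully checking that the balanced Haagerup norm of $u$, which is an infimum over \emph{all} factorisations through \emph{abstract} intermediate spaces, is actually realised (or at least approximated) by the concrete operator-multiplication norm in $\cl B(H)$. This requires exploiting that $\cl M_1$ (resp. $\cl M_2$) is a TRO so that any bounded module map on it is implemented by a ternary morphism, hence by operators, and then that the module actions on $\cl E$ are concretely the operator products; the identity $[\cl M_1\cl M_1^*]\cdot\cl E\subseteq\cl E$ is exactly what makes $\cl M_1^*$ a nondegenerate rigged module over $[\cl M_1\cl M_1^*]$ compatible with $\cl E$. One must be slightly careful that $[\cl M_1\cl M_1^*]$ need not be unital a priori (it becomes unital here because $I_K$ lies in its strict closure via the stabilisation argument), so the approximate-identity limits in step (3) are genuinely needed rather than a one-line substitution. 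Everything else — well-definedness, the contractive direction, closedness of the range — is routine.
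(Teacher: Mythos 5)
Your proposal is correct and follows essentially the same route as the paper: linearise via the universal property of the balanced Haagerup tensor product, use a net of finitely supported column contractions over each TRO as an approximate unit, absorb the resulting $[\cl M_i\cl M_i^*]$-factors into the middle slot via the balanced relations (this is exactly where the hypotheses $[\cl M_1\cl M_1^*]\cdot\cl E\subseteq\cl E$ and $\cl E\cdot[\cl M_2\cl M_2^*]\subseteq\cl E$ enter), and bound the balanced Haagerup norm by the operator norm of the compressed product. Two small corrections: you neither have nor need $\un{m}^\mu(\un{m}^\mu)^*\to I_K$ strictly (this fails for degenerate TRO's) --- the one-sided approximate-unit property $(\un{m}^\mu)^*\un{m}^\mu\, m^*\to m^*$ for $m\in\cl M_1$ from \cite[Paragraph 8.1.23]{blm} is all that is used --- and your step (3) should be the inequality chain $\|u\|^{(n)}_{\rm h}\leq\big\|\un{m}^{*}\odot\big(\un{m}\,\Theta^{(n)}(u)\,\un{n}^{*}\big)\odot\un{n}\big\|^{(n)}_{\rm h}+\eps\leq\|\Theta^{(n)}(u)\|+\eps$ rather than an equality of limits.
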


\begin{proof}
The map is well-defined and completely contractive due to the universal property of the Haagerup tensor product.
In order to show that it is a complete isometry, let $X_i \in M_{k,n}(\cl M_1)$, $E_i \in M_{k,m}(\cl E)$ and $Y_i \in M_{m,n}(\cl M_2)$ for $i=1, \dots, N$; we will show that
\[
\left\| \sum_{i=1}^N X_i^* \odot E_i \odot Y_i \right\|_{\rm h} 
\leq \left\| \sum_{i=1}^N X_i^* E_i Y_i \right\|.
\]
Without loss of generality we may assume that $k=m$ by 
completing with zeroes.
Using the fact that $\cl M_1$ and $\cl M_2$ are TRO's, let $(\un{m}_{1}^\mu) \subseteq C(\cl M_1)$ and $(\un{m}_{2}^\la) \subseteq C(\cl M_2)$ be nets of column contractions such that
\[
\lim_\mu (\un{m}_{1}^\mu)^* \un{m}_{1}^\mu \cdot m_1^* = m_1^*
\qand
\lim_\la m_2 \cdot (\un{m}_{2}^\la)^* \un{m}_{2}^\la = m_2
\]
for all $m_1 \in \cl M_1$ and $m_2 \in \M_2$.
For $\eps>0$, let $\mu_0$ and $\la_0$ be such that
\begin{align*}
\left\| \sum_{i=1}^N X_i^* \odot E_i \odot Y_i \right\|_{\rm h} - \eps \leq
\\
& \hspace{-4cm} \leq
\left\| \sum_{i=1}^N ( \big[ (\un{m}_{1}^{\mu_0})^* \un{m}_{1}^{\mu_0} \otimes I_n) X_i^* \big] \odot E_i \odot \big[ Y_i ( (\un{m}_{2}^{\la_0})^* \un{m}_{2}^{\la_0} \otimes I_n) \big] \right\|_{\rm h} \\
& \hspace{-4cm} = 
\left\| (\un{m}_{1}^{\mu_0} \otimes I_n)^* \odot \left( \sum_{i=1}^N (\un{m}_{1}^{\mu_0} \otimes I_n) X_i^* E_i Y_i (\un{m}_{2}^{\la_0} \otimes I_n)^* \right) \odot (\un{m}_{2}^{\la_0} \otimes I_n) \right\|_{\rm h} \\
& \hspace{-4cm} \leq
\left\| \sum_{i=1}^N (\un{m}_{1}^{\mu_0} \otimes I_n) X_i^* E_i Y_i (\un{m}_{2}^{\la_0} \otimes I_n)^* \right\| \\
& \hspace{-4cm} =
\left\| (\un{m}_{1}^{\mu_0} \otimes I_n) \left( \sum_{i=1}^N X_i^* E_i Y_i \right) (\un{m}_{2}^{\la_0} \otimes I_n)^* \right\|
\leq
\left\| \sum_{i=1}^N X_i^* E_i Y_i \right\|,
\end{align*} 
where we have used the bimodule property of the balanced Haagerup tensor product and the fact that
\[
(\un{m}_{1}^{\mu_0} \otimes I_n) X_i^* \in M_{n,k}([\cl M_1 \cl M_1^*])
\qand
Y_i (\un{m}_{2}^{\la_0} \otimes I_n)^* \in M_{k,n}([\cl M_2 \cl M_2^*]).
\]
Taking $\eps \to 0$ gives the required inequality.
\end{proof}

\begin{theorem} \label{t_trosym}
Let $\cl S$ be an operator system.
Suppose that $\psi \colon \cl S \to \cl B(K)$ is a unital complete order embedding and $\cl M \subseteq \cl B(H,K)$ is a closed TRO such that $\cl M \cl M^* \psi(\cl S) \subseteq \psi(\cl S)$.
Then $[\cl M \cl M^*]$ embeds canonically and completely isometrically in $\cl A_{\cl S}$ and the completely contractive completely positive map
\begin{align*}
\cl M^* \otimes_{\rm s}^{[\cl M \cl M^*]} \cl S \otimes_{\rm s}^{[\cl M \cl M^*]} \cl M 
& \to 
[\cl M^* \psi(\cl S) \cl M]; \\
m_1^* \otimes^{[\cl M \cl M^*]} s \otimes^{[\cl M \cl M^*]} m_2 
& \mapsto 
m_1^* \psi(s) m_2,
\end{align*}
is a completely isometric complete order isomorphism.
In particular, the canonical map
\[
\cl M^* \otimes_{\rm h}^{[\cl M \cl M^*]} \cl S \otimes_{\rm h}^{[\cl M \cl M^*]} \cl M
\to
\cl M^* \otimes_{\rm s}^{[\cl M \cl M^*]} \cl S \otimes_{\rm s}^{[\cl M \cl M^*]} \cl M
\]
is a complete isometry.
\end{theorem}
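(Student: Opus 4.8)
\textbf{Proof plan for Theorem \ref{t_trosym}.}
The plan is to first establish the embedding $[\cl M \cl M^*] \hookrightarrow \cl A_{\cl S}$, then identify the symmetrisation with the concrete product space, and finally deduce the comparison with the balanced Haagerup tensor product. For the first part, I would pass through $\psi$ to view $\cl S$ concretely inside $\cl B(K)$ as $\psi(\cl S)$; the hypothesis $\cl M \cl M^* \psi(\cl S) \subseteq \psi(\cl S)$ together with selfadjointness of $\psi(\cl S)$ gives also $\psi(\cl S) \cl M \cl M^* \subseteq \psi(\cl S)$. Since $\cl M \cl M^*$ is a $*$-subalgebra of $\cl B(K)$, one checks that each $a \in [\cl M \cl M^*]$, acting by left (and right) multiplication on $\psi(\cl S)$, lies in the multiplier algebra description $\cl A_{\cl S} \simeq \{a \in \cl I_{11}(\cl S) : a \cl S \subseteq \cl S,\ a^*\cl S \subseteq \cl S\}$ recalled from \cite[Proposition 4.5.8]{blm}; injectivity of the embedding follows because the action of $\cl A_{\cl S}$ on $\cl S$ is faithful. (One also notes $[\cl M \cl M^*]$ is unital here, as $\cl M \cl M^* \psi(1_{\cl S}) = \cl M \cl M^*$ already gives an approximate unit for $\cl M \cl M^*$ whose limit acts as $1_K$ on $\psi(\cl S)$; this supplies the unital module action needed for the balanced symmetrisation over $[\cl M \cl M^*]$.)

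For the main identification, I would construct the admissible $[\cl M \cl M^*]$-pair realising the symmetrisation concretely. Take $\phi \colon \cl M \to \cl B(H,K)$ to be the inclusion and $\psi \colon \cl S \to \cl B(K)$ the given unital complete order embedding; the hypothesis $\cl M \cl M^* \psi(\cl S) \subseteq \psi(\cl S)$ is exactly the $[\cl M \cl M^*]$-admissibility relation $\psi(s \cdot a)\phi(m) = \psi(s)\phi(a \cdot m)$ for $a \in [\cl M \cl M^*]$. By Theorem \ref{t_univpro}(ii) this factors through a completely contractive completely positive map
\[
\wt{\theta}_{\rm s [\cl M \cl M^*]} \colon \cl M^* \otimes_{\rm s}^{[\cl M \cl M^*]} \cl S \otimes_{\rm s}^{[\cl M \cl M^*]} \cl M \to [\cl M^* \psi(\cl S) \cl M],
\]
which is the map in the statement; it is surjective by density. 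To see it is a complete isometry and complete order isomorphism, I would produce an inverse: $\cl M$ is semi-unital (it is a closed TRO, so $[\cl M^* \cl M]$ acts as a limit of finite-support $\un{m}^*\un{m}$'s on $\cl M$, cf. \cite[Paragraph 8.1.23]{blm}), and with $[\cl M \cl M^*]$ unital one has $[\cl M \cl M^* \cl M] = \cl M$. Then, arguing as in Proposition \ref{l_ci} and Theorem \ref{t_symunibal}(iv), the canonical map in the reverse direction is completely contractive and completely positive; composing the two maps gives the identity on the dense subspace $\cl M^* \odot^{[\cl M \cl M^*]} \cl S \odot^{[\cl M \cl M^*]} \cl M$, forcing both to be completely isometric complete order isomorphisms. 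Concretely, for the norm inequality I would use the stabilisation trick of Proposition \ref{l_ci}: pick a column contraction $\un{m} \in C(\cl M)$ over $\bb N$ with $\sum_n m_n m_n^* = 1_K$ (Kasparov stabilisation, \cite[Lemma 7.3]{lance}), and estimate the concrete norm of $\sum_i m_{1,i}^* \psi(s_i) m_{2,i}$ from below by compressing with the $\un{m}_n$'s.

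For the final clause, note that $[\cl M \cl M^*] \cdot \psi(\cl S) \subseteq \psi(\cl S)$ and $\psi(\cl S) \cdot [\cl M \cl M^*] \subseteq \psi(\cl S)$, so $\cl M_1 = \cl M_2 = \cl M$ and $\cl E = \psi(\cl S)$ satisfy the hypotheses of Lemma \ref{l_trohaag}, whence the canonical map $\cl M^* \otimes_{\rm h}^{[\cl M \cl M^*]} \cl S \otimes_{\rm h}^{[\cl M \cl M^*]} \cl M \to [\cl M^* \psi(\cl S) \cl M]$ is a complete isometry. Composing this with the inverse of $\wt{\theta}_{\rm s [\cl M \cl M^*]}$ obtained above gives the asserted complete isometry between the balanced Haagerup tensor product and the balanced symmetrisation; factoring through the concrete product space shows it is the canonical comparison map.

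\textbf{Main obstacle.} I expect the delicate point to be the lower norm estimate making $\wt{\theta}_{\rm s [\cl M \cl M^*]}$ a complete isometry: one must show that the \emph{abstract} balanced symmetrisation norm is controlled by the concrete operator norm of $m_1^* \psi(s) m_2$, and this requires care that the approximate-unit / stabilisation compressions respect the $[\cl M \cl M^*]$-balancing and remain admissible pairs at every matrix level. This is precisely where the hypothesis $\cl M \cl M^* \psi(\cl S) \subseteq \psi(\cl S)$ and the unitality of $[\cl M \cl M^*]$ are used, and it parallels (and slightly extends) the argument of Proposition \ref{l_ci}.
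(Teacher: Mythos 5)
Your overall architecture is sound and you have correctly identified the two relevant ingredients (Lemma \ref{l_trohaag} and the cone description of Corollary \ref{c_consym} via admissible pairs), but the central step is not actually carried out. You propose to ``produce an inverse'' of $\wt{\theta}_{\rm s [\cl M\cl M^*]}$ and then deduce everything from the composition being the identity on the algebraic tensor product; however, the well-definedness and complete contractivity/positivity of the candidate inverse $\sum_i m_{1,i}^*\psi(s_i)m_{2,i}\mapsto \sum_i m_{1,i}^*\otimes s_i\otimes m_{2,i}$ is precisely the content of the theorem, so this is circular unless an independent lower bound on the concrete norm is supplied. The device you invoke for that bound is not available here: Proposition \ref{l_ci} and the Kasparov stabilisation producing $\un{m}$ with $\sum_n m_nm_n^*=1_K$ rest on the hypothesis $\psi(\cl S)=[\cl M\cl M^*\psi(\cl S)]$ (which forces $1_K\in[\cl M\cl M^*]$), whereas the present theorem only assumes the inclusion $\cl M\cl M^*\psi(\cl S)\subseteq\psi(\cl S)$; in general the approximate unit of $[\cl M\cl M^*]$ converges to the projection onto $[\cl M H]$, not to $1_K$, so your parenthetical unitality claim fails and the compression estimate does not get off the ground. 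Finally, even granting the norm identity, you never explain why a \emph{positive} element of $M_n([\cl M^*\psi(\cl S)\cl M])$ pulls back into $C_n^{[\cl M\cl M^*]}$, which is the substance of the complete order isomorphism.

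Both gaps are repaired by reordering the pieces you already have. For the norms, observe that the composite
\[
\cl M^*\otimes_{\rm h}^{[\cl M\cl M^*]}\cl S\otimes_{\rm h}^{[\cl M\cl M^*]}\cl M\longrightarrow \cl M^*\otimes_{\rm s}^{[\cl M\cl M^*]}\cl S\otimes_{\rm s}^{[\cl M\cl M^*]}\cl M\stackrel{\Phi}{\longrightarrow}[\cl M^*\psi(\cl S)\cl M]
\]
is exactly the map of Lemma \ref{l_trohaag} (with $\cl M_1=\cl M_2=\cl M$ and $\cl E=\psi(\cl S)$) and hence completely isometric, while each factor is completely contractive by Lemma \ref{l_balnorm} and Theorem \ref{t_univpro}; therefore both factors are completely isometric on the dense algebraic tensor product, which yields the isometry of $\Phi$ and the final clause simultaneously --- Lemma \ref{l_trohaag} is the engine of the argument, not an afterthought. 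For the cones, use the net of finitely supported columns $\un{m}_\al\in C_{k_\al}(\cl M)$ with $\un{m}_\al^*\un{m}_\al m^*\to m^*$ for $m^*\in\cl M^*$ (available for any closed TRO, with no unitality assumption) to write a positive $u\in M_n([\cl M^*\psi(\cl S)\cl M])$ as $\lim_\al(\un{m}_\al\otimes I_n)^*s_\al(\un{m}_\al\otimes I_n)$, where $s_\al:=(\un{m}_\al\otimes I_n)u(\un{m}_\al\otimes I_n)^*$ lies in $M_{k_\al n}(\psi(\cl S))^+$ precisely because $\cl M\cl M^*\psi(\cl S)\subseteq\psi(\cl S)$; Corollary \ref{c_consym} then places $\Phi^{-1}(u)$ in $C_n^{[\cl M\cl M^*]}$. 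Your first part (the embedding $[\cl M\cl M^*]\hookrightarrow\cl A_{\cl S}$ via the multiplier description) is fine as stated.
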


\begin{proof} 
Let the map $m \colon [\cl M^* \cl M] \to \cl A_{\cl S}$ such that $m_a(s) = \psi^{-1}(a \psi(s))$;
by \cite[Theorem 4.6.2]{blm} this map is a well-defined $*$-homomorphism.
Suppose that $m_a = 0$, so that $\psi^{-1}(a \psi(s)) = 0$ for all $s \in \cl S$.
Then, in particular, $a = a \psi(1_{\cl S}) = 0$ and thus $m$ is injective.

By the definition of the symmetrisation, we have canonical completely contractive completely positive maps
\[
\cl M^* \hspace{-0.1cm}\otimes_{\rm s}^{[\cl M \cl M^*]} \cl S \otimes_{\rm s}^{[\cl M \cl M^*]} 
\hspace{-0.1cm}\cl M
\to \cl M^* \hspace{-0.1cm}\otimes_{\rm s}^{[\cl M \cl M^*]} \psi(\cl S) \otimes_{\rm s}^{[\cl M \cl M^*]} \hspace{-0.1cm}\cl M
\to [\cl M^* \psi(\cl S) \cl M],
\]
where the first arrow is a complete order isomorphism.
Hence without loss of generality we may assume that $\cl S \subseteq \cl B(K)$, that is, that $\psi = \id$.
By Lemma \ref{l_trohaag}, the composition
\[
\cl M^* \otimes_{\rm h}^{[\cl M \cl M^*]} \cl S \otimes_{\rm h}^{[\cl M \cl M^*]} \cl M
\to
\cl M^* \otimes_{\rm s}^{[\cl M \cl M^*]} \cl S \otimes_{\rm s}^{[\cl M \cl M^*]} \cl M
\to
[\cl M^* \cl S \cl M]
\]
is completely isometric.
Hence it suffices to show that the canonical completely isometric completely positive map
\[
\Phi \colon \cl M^* \otimes_{\rm s}^{[\cl M \cl M^*]} \cl S \otimes_{\rm s}^{[\cl M \cl M^*]} \cl M
\to
[\cl M^* \cl S \cl M]
\]
has a completely positive inverse.

By Corollary \ref{c_consym}, it suffices to show that if $u \in M_n([\cl M^* \cl S \cl M])$ is positive, then it is in the closure of
\begin{align*}
\Phi(\{m^* \odot^{\cl A} s \odot^{\cl A} m \ : \ m \in M_{k,n}(\cl M), s \in M_k(\cl S)^+, k \in \bb N\})
& = \\
& \hspace{-6cm} = 
\{m^* s m \ : \ m \in M_{k,n}(\cl M), s \in M_k(\cl S)^+, k \in \bb N\}.
\end{align*}
Towards this end, recall that since $\cl M$ is a closed TRO, there exists a net of finitely supported columns $\un{m}_\al \in C_{k_\al}(\cl M)$, such that
\[
\lim_\al \un{m}_\al^* \un{m}_\al m^* = m^*, \ \ m \in \cl M.
\]
and therefore
\[
\lim_\al \un{m}_\al^* \un{m}_\al u = u, \ \ u \in [\cl M^* \cl S \cl M].
\]
Fix $n \in \bb N$ and a positive element $u \in M_n([\cl M^* \cl S \cl M])$, and set
\[
s_\al := 
(\un{m}_\al \otimes I_n) 
\cdot
u
\cdot
(\un{m}_\al \otimes I_n) ^*.
\]
Note that
\[
s_{\alpha} \in M_{k_\al \cdot n, n}(\cl M) \cdot M_n([\cl M^* \cl S \cl M]) \cdot M_{n, k_\al \cdot n}(\cl M^*) \subseteq M_{k_\al \cdot n}(\cl S),
\]
since $\cl M \cl M^* \cl S \subseteq \cl S$.
Moreover, each $s_\al$ is positive since $u$ is positive.
We now observe that 
\begin{align*}
u
& =
\lim_\al \un{m}_\al^* \un{m}_\al \cdot u \cdot \un{m}_\al^* \un{m}_\al 
=
\lim_\al 
(\un{m}_\al \otimes I_n)^*
\cdot
s_\al
\cdot
(\un{m}_\al \otimes I_n),
\end{align*}
and the proof is complete.
\end{proof}

\begin{corollary} \label{c_trosymbim}
Under the conditions of Theorem \ref{t_trosym}, the balanced symmetrisation $\cl M^* \otimes_{\rm s}^{[\cl M \cl M^*]} \cl S \otimes_{\rm s}^{[\cl M \cl M^*]} \cl M$ becomes a selfadjoint 
operator $[\cl M^* \cl M]$-space in a canonical fashion.
\end{corollary}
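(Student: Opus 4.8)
The plan is to deduce Corollary \ref{c_trosymbim} from Theorem \ref{t_trosym} together with the bimodule machinery developed in Section \ref{s_balsym}. The key observation is that Theorem \ref{t_trosym} identifies the balanced symmetrisation $\cl M^* \otimes_{\rm s}^{[\cl M \cl M^*]} \cl S \otimes_{\rm s}^{[\cl M \cl M^*]} \cl M$ completely isometrically and completely order isomorphically with the concrete operator space $[\cl M^* \psi(\cl S) \cl M] \subseteq \cl B(H)$, and on the concrete side there is an obvious candidate for the $[\cl M^* \cl M]$-bimodule structure: the multiplier C*-algebra of $[\cl M^* \cl M]$ (which, since $[\cl M \cl M^*]$ is unital under the hypotheses, is $[\cl M^* \cl M]$ itself when $\cl M$ is non-degenerate, and in general acts via $\cl A_r(\cl M)$) multiplies $[\cl M^* \psi(\cl S) \cl M]$ on the left and right by $b \cdot (m_1^* \psi(s) m_2) := (b^* m_1)^* \psi(s) m_2$ and $(m_1^* \psi(s) m_2) \cdot b := m_1^* \psi(s) (m_2 b)$. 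First I would check that this action is well defined and unital, and that it agrees under the isomorphism of Theorem \ref{t_trosym} with the $[\cl M^* \cl M]$-bimodule action on the algebraic balanced tensor product, namely $b \cdot (m_1^* \odot^{[\cl M \cl M^*]} s \odot^{[\cl M \cl M^*]} m_2) = (b^* \cdot m_1)^* \odot^{[\cl M \cl M^*]} s \odot^{[\cl M \cl M^*]} m_2$; this is just tracking the definitions through $\Phi$.

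Next I would invoke Theorem \ref{t_opBsy} with $\cl A = [\cl M \cl M^*]$ and $\cl B = [\cl M^* \cl M]$: the operator space $\cl M$ is a left operator $[\cl M \cl M^*]$-space and a right operator $[\cl M^* \cl M]$-space (these are the standard TRO module actions, see for example \cite[Corollary 8.4.2]{blm}), and the hypothesis $\cl B \cdot \cl J_{\cl M, \cl S}^{\otimes_{\rm s}^{[\cl M \cl M^*]}} \subseteq \cl J_{\cl M, \cl S}^{\otimes_{\rm s}^{[\cl M \cl M^*]}}$ needs to be verified. Here the crucial simplification provided by Theorem \ref{t_trosym} is that, under its assumptions, the balanced symmetrisation coincides with the balanced Haagerup tensor product (as noted in the paragraph following Theorem D in the introduction, and in the last display of Theorem \ref{t_trosym}); consequently $\cl J_{\cl M, \cl S}^{\otimes_{\rm s}^{[\cl M \cl M^*]}}$ coincides with the subspace $\cl J_{\cl M, \cl S}^{[\cl M \cl M^*]}$ of the balanced relations spanned by elements $y^* \otimes (b \cdot s \cdot a) \otimes x - (y^* \cdot b) \otimes s \otimes (a \cdot x)$. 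This latter subspace is manifestly a $\cl B$-bimodule: multiplying such a generator on the left by $c \in \cl B = [\cl M^* \cl M]$ (which acts on the $\cl E^*$-leg via $y^* \cdot c$, using that $\cl E^* = \cl M^*$ is a right $[\cl M^* \cl M]$-module dually) produces again a balanced-relation element, since the $[\cl M \cl M^*]$-balancing relations are unaffected by the $\cl B$-action on the outer legs. So the hypothesis of Theorem \ref{t_opBsy} is automatically satisfied.

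Then Theorem \ref{t_opBsy} directly gives that $\cl M^* \otimes_{\rm s}^{[\cl M \cl M^*]} \cl S \otimes_{\rm s}^{[\cl M \cl M^*]} \cl M$ is a selfadjoint operator $[\cl M^* \cl M]$-space in a canonical fashion, which is exactly the assertion of Corollary \ref{c_trosymbim}. Alternatively, and perhaps more transparently, one can argue entirely on the concrete side: the operator system $[\cl M^* \psi(\cl S) \cl M]$ sits inside $\cl B(H)$, and $[\cl M^* \cl M] \subseteq \cl B(H)$ acts on it by left and right multiplication, so the pair consisting of the identity representation of $[\cl M^* \cl M]$ on $H$ and the inclusion $[\cl M^* \psi(\cl S) \cl M] \hookrightarrow \cl B(H)$ is a concrete realisation as a selfadjoint operator $[\cl M^* \cl M]$-space in the sense of Subsection \ref{ss_soap}; transporting along the complete order isomorphism of Theorem \ref{t_trosym} finishes the proof.

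I expect the main obstacle to be purely bookkeeping: one must be careful about which C*-algebra acts on which leg and with what handedness (the adjoint-space convention $x^* \cdot a := (a^* \cdot x)^*$ from Subsection \ref{ss_constpro} makes the $\cl E^*$-leg a right module for the left-module algebra, so the $\cl B = [\cl M^* \cl M]$-action on $\cl M^*$ is on the left via $b \cdot m^* = m^* \cdot \pi_r(b)$ or similar), and one must confirm that the multiplication map of Theorem \ref{t_trosym} intertwines the two bimodule actions rather than swapping them. Once the identification $\cl J_{\cl M, \cl S}^{\otimes_{\rm s}^{[\cl M \cl M^*]}} = \cl J_{\cl M, \cl S}^{[\cl M \cl M^*]}$ is in hand — which is where the TRO hypothesis does all the work — there is no genuine analytic difficulty, and the corollary follows by citing Theorem \ref{t_opBsy} (or, equivalently, Corollary \ref{c_qasosbm} via Theorem \ref{t_soap}).
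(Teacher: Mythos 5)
Your proposal is correct and follows essentially the same route as the paper: Theorem \ref{t_trosym} forces the kernel $\cl J_{\cl M,\cl S}^{\otimes_{\rm s}^{[\cl M\cl M^*]}}$ to coincide with the closed span of the balanced relations, which is visibly invariant under the outer $[\cl M^*\cl M]$-action, so Theorem \ref{t_opBsy} applies. The extra concrete-side verification in your first and last paragraphs is harmless but not needed for this argument.
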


\begin{proof}
Recall that the balanced Haagerup tensor product is the quotient of Haagerup tensor product by the balanced relations.
Since the canonical map
\begin{align*}
\cl M^* \otimes_{\rm h}^{[\cl M \cl M^*]} \cl S \otimes_{\rm h}^{[\cl M \cl M^*]} \cl M 
&\to 
\cl M^* \otimes_{\rm s}^{[\cl M \cl M^*]} \cl S \otimes_{\rm s}^{[\cl M \cl M^*]} \cl M 
\end{align*}
is a complete isometry by Theorem \ref{t_trosym}, we have that
\[
\cl J_{\cl E, \cl S}^{[\cl M^* \cl M]} = \cl J_{\cl E, \cl S}^{\otimes_{\rm s}^{[\cl M^* \cl M]}}.
\]
 By definition, 
\[
[\cl M^* \cl M] \cdot \cl J_{\cl E, \cl S}^{[\cl M^* \cl M]} \subseteq \cl J_{\cl E, \cl S}^{[\cl M^* \cl M]}.
\]
Therefore the condition of Theorem \ref{t_opBsy} is satisfied and the bimodule structure of $\cl M^* \otimes_{\rm s} \cl S \otimes_{\rm s} \cl M$ over $[\cl M^* \cl M]$ descends to the balanced symmetrisation.
\end{proof}

We record here the following remark on a well-known result for TRO's. 
We will use it in this setup in the sequel.

\begin{remark} \label{r_unit}
Assume the setup of Theorem \ref{t_trosym}, and suppose in addition that $\cl T := [\cl M^* \psi(\cl S) \cl M]$ is unital.
Let $\un{m}_\al \in C_{k_\al}(\cl M)$ be a net of finitely supported columns such that 
\[
\lim_\al \un{m}_\al^* \un{m}_\al m^* = m^*, \ \ \ m \in \cl M.
\]
Then, in particular, 
\[
\lim_\al \un{m}_\al^* \un{m}_\al t = t, \ \ t \in [\cl M^* \psi(\cl S) \cl M],
\]
and thus $\lim_\al \un{m}_\al^* \un{m}_\al = 1_{\cl T}$.
By the complete order isomorphism of Theorem \ref{t_trosym}, we have that $\cl M$ is $[\cl M^* \cl M]$-balanced $\cl S$-semi-unital, that is, the balanced symmetrisation $\cl M^* \otimes_{\rm s}^{[\cl M \cl M^*]} \cl S \otimes_{\rm s}^{[\cl M \cl M^*]} \cl M$ is unital, and if $e$ is its unit, then
\[
\lim_\al \un{m}_{\al}^* \odot (1_{\cl S} \otimes 1_{k_\al}) \odot \un{m}_\al = e.
\]
It moreover follows that $\cl M$ is finitely generated as a module over $[\cl M^* \cl M]$.
Indeed, since $\psi(\cl S)$ contains the unit, we have that $[\cl M^* \cl M] \subseteq \cl T$.
Since $\T$ is unital, then $\lim_\al \un{m}_\al^* \un{m}_\al = 1_{\cl T}$, and thus $[\cl M^* \cl M]$ is unital.
By an application of Kasparov's Stabilisation Theorem \cite[Lemma 7.3]{lance} there is a $(y_i)_{i \in \bb N} \subseteq \cl M$ such that $\sum_{i} y_i^* y_i = 1_{\cl M}$.
Let $n \in \bb N$ such that
\[
\left\|1 - \sum_{i=1}^n y_i^* y_i \right\| \leq 1/2.
\]
Then $c := \sum_{i=1}^n y_i^* y_i \in [\cl M^* \cl M]$ is a selfadjoint invertible element.
Set 
\[
x_i := y_i c^{-1/2} \in [\cl M \cl M^* \cl M] = \cl M
\]
and compute
\begin{align*}
\sum_{i=1}^n x_i^* x_i = c^{-1/2} \left( \sum_{i=1}^n y_i^* y_i \right) c^{-1/2} = c^{-1/2} c c^{-1/2} = 1.
\end{align*}
Hence $m = \sum_{i=1}^n (m x_i^*) x_i$ for every $m \in \cl M$.
\end{remark}

\begin{corollary}\label{c_troscand}
Let $\cl M$ be a closed TRO.
Then the maps
\[
\cl M^* \otimes_{\rm s}^{[\cl M \cl M^*]} \cl M \to [\cl M^* \cl M]; \ 
y^* \otimes x \mapsto y^*x
\]
and
\[
\cl M \otimes_{\rm s}^{[\cl M^* \cl M]} \cl M^* \to [\cl M \cl M^*]; \ 
x \otimes y^* \mapsto xy^*
\]
are completely isometric complete order isomorphisms.
\end{corollary}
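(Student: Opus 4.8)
The plan is to derive Corollary \ref{c_troscand} as the special case $\cl S = [\cl M^* \cl M]$ (respectively $\cl S = [\cl M \cl M^*]$) of Theorem \ref{t_trosym}, checking that the hypotheses of that theorem are met in this degenerate situation. For the first map, I would take $\cl S := [\cl M^* \cl M]$; since $\cl M$ is a closed TRO, $[\cl M^* \cl M]$ is a C*-algebra, and after passing to a concrete representation we may assume $\cl M \subseteq \cl B(H, K)$ with $K$ chosen so that $[\cl M^* \cl M] \subseteq \cl B(K)$ acts non-degenerately; then $\cl S$ is an operator system (in fact a C*-algebra) and $\psi := \id$ is a unital complete order embedding of $[\cl M^* \cl M]$ once we adjoin a unit if necessary — but since the map in the statement lands in $[\cl M^* \cl M]$ with the inclusion $\cl M^* \otimes_{\rm s}^{[\cl M \cl M^*]} \cl M = \cl M^* \otimes_{\rm s}^{[\cl M \cl M^*]} \cl A \otimes_{\rm s}^{[\cl M \cl M^*]} \cl M$ in the notation established earlier, I should instead set $\cl S = [\cl M \cl M^*]$ acting on $K$ where $\cl M \subseteq \cl B(H,K)$, with $\psi = \id$.

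More precisely: for the second displayed map, set $\cl S := [\cl M \cl M^*]$, which is a C*-algebra hence an operator system with unit (when $[\cl M \cl M^*]$ is non-unital, replace $K$ by a representation on which its multiplier unitisation acts, or equivalently work with the unital hull — but the statement only involves $[\cl M \cl M^*]$, so I would use the standard fact, recorded in the Preliminaries, that for a closed TRO $\cl M$ acting non-degenerately one has $\cl A_\ell(\cl M) \simeq [\cl M \cl M^*]$ being unital). Take $\psi \colon \cl S \to \cl B(K)$ to be the identity inclusion, which is trivially a unital complete order embedding. The condition $\cl M \cl M^* \psi(\cl S) \subseteq \psi(\cl S)$ reads $\cl M \cl M^* [\cl M \cl M^*] \subseteq [\cl M \cl M^*]$, which holds since $[\cl M \cl M^*]$ is a C*-algebra. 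Theorem \ref{t_trosym} then yields that the canonical completely contractive completely positive map
\[
\cl M^* \otimes_{\rm s}^{[\cl M \cl M^*]} [\cl M \cl M^*] \otimes_{\rm s}^{[\cl M \cl M^*]} \cl M \to [\cl M^* [\cl M \cl M^*] \cl M]
\]
is a completely isometric complete order isomorphism. Now $[\cl M^* [\cl M \cl M^*] \cl M] = [\cl M^* \cl M \cl M^* \cl M] = [\cl M^* \cl M]$ since $\cl M$ is a closed TRO, and by definition $\cl M^* \otimes_{\rm s}^{[\cl M \cl M^*]} [\cl M \cl M^*] \otimes_{\rm s}^{[\cl M \cl M^*]} \cl M$ is precisely $\cl M^* \otimes_{\rm s}^{[\cl M \cl M^*]} \cl M$ (the C*-algebra $[\cl M \cl M^*]$ plays the role of $\cl A$ in item (2) of the notational conventions in the Introduction). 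Tracking the elementary tensors through the identifications gives exactly $y^* \otimes x \mapsto y^* x$, establishing the first assertion of the corollary. The second assertion follows by the symmetric argument with the roles of $\cl M$ and $\cl M^*$ interchanged: $\cl M^*$ is a closed TRO with $[\cl M^* \cl M^{**}] = [\cl M^* \cl M]$ acting on $H$, apply Theorem \ref{t_trosym} with $\cl S = [\cl M^* \cl M]$ and $\psi = \id$, and note $[\cl M (\cl M^* \cl M) \cl M^*] = [\cl M \cl M^*]$.

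The only genuine subtlety — and the step I expect to require the most care — is the unitality bookkeeping: Theorem \ref{t_trosym} is phrased for an operator \emph{system} $\cl S$, i.e. one with an Archimedean matrix order unit, whereas $[\cl M \cl M^*]$ need not be unital if $\cl M$ acts degenerately. I would handle this by first reducing to the non-degenerate case: every closed TRO $\cl M \subseteq \cl B(H,K)$ compresses to a non-degenerate TRO on $[\cl M^* K] \subseteq H$ and $[\cl M H] \subseteq K$ without changing $[\cl M^* \cl M]$, $[\cl M \cl M^*]$, or the symmetrisation (the latter by the injectivity results and the fact that adding a zero corner does not affect any of the norms, as in Proposition \ref{p_nd}); then invoke the fact recorded in the Preliminaries that $[\cl M \cl M^*]$ and $[\cl M^* \cl M]$ are unital in the non-degenerate case, so that $\cl S = [\cl M \cl M^*]$ is indeed a (unital) operator system and $\psi = \id$ is a unital complete order embedding. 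With that reduction in place, everything else is a direct quotation of Theorem \ref{t_trosym} together with the elementary TRO identity $[\cl M^* \cl M \cl M^* \cl M] = [\cl M^* \cl M]$, and no further computation is needed.
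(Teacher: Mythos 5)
Your main reduction is exactly the intended one: the corollary is stated without proof because it is the specialisation of Theorem \ref{t_trosym} to $\cl S = [\cl M\cl M^*]$ (respectively, to the TRO $\cl M^*$ with $\cl S = [\cl M^*\cl M]$), with $\psi$ the identity inclusion. The hypothesis $\cl M\cl M^*\psi(\cl S)\subseteq\psi(\cl S)$ is automatic, the TRO identity gives $[\cl M^*[\cl M\cl M^*]\cl M]=[\cl M^*\cl M\cl M^*\cl M]=[\cl M^*\cl M]$, and the notational convention from the Introduction identifies $\cl M^*\otimes_{\rm s}^{[\cl M\cl M^*]}[\cl M\cl M^*]\otimes_{\rm s}^{[\cl M\cl M^*]}\cl M$ with $\cl M^*\otimes_{\rm s}^{[\cl M\cl M^*]}\cl M$; tracking elementary tensors gives $y^*\otimes x\mapsto y^*x$. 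All of this is correct, as is the symmetric argument for the second map.

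The step you yourself flag as requiring the most care is, however, the one that fails as written. Compressing $\cl M$ to $[\cl M^*K]$ and $[\cl M H]$ makes $\cl M$ \emph{act} non-degenerately, but it does not make $[\cl M\cl M^*]$ unital: the paper's notion of non-degeneracy is the strictly stronger requirement $I_K\in[\cl M\cl M^*]$ and $I_H\in[\cl M^*\cl M]$, and the statement in the Preliminaries that the linking algebras of a non-degenerate TRO are unital uses that definition — so your appeal to it after the compression is circular. (Take $\cl M=\cl K(\ell^2)$: it already acts non-degenerately, yet $[\cl M\cl M^*]=\cl K(\ell^2)$ has no unit.) The honest resolution is that the balanced symmetrisation $\otimes_{\rm s}^{\cl A}$ is only defined in this paper for a \emph{unital} C*-algebra $\cl A$ with unital module actions, so the expression $\cl M^*\otimes_{\rm s}^{[\cl M\cl M^*]}\cl M$ already presupposes that $[\cl M\cl M^*]$ is unital (as it is in every application the paper makes, e.g.\ Remark \ref{r_troscand}). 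Under that reading no reduction is needed: one only compresses $K$ to the range of the unit of $[\cl M\cl M^*]$ so that $\psi=\id$ is unital into $\cl B(K)$, and the rest of your argument quotes Theorem \ref{t_trosym} verbatim. Handling a genuinely non-unital linking algebra would require first extending the definition of the balanced symmetrisation, which neither you nor the paper does.
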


\begin{remark}\label{r_troscand}
An application of Corollary \ref{c_troscand} for $\cl M = R_n$ gives that
\[
R_n \otimes_{\rm s}^{M_n} C_n \simeq \bb C
\qand
C_n \otimes_{\rm s} R_n \equiv C_n \otimes_{\rm s}^{\bb C} R_n \simeq M_n,
\]
up to completely isometric complete order isomorphisms.
In particular, $C_n \otimes_{\rm s} R_n$ is an operator system.

More generally, write $R_I$ for the row space over a set $I$, and let $\cl K_I$ be the C*-algebra of all compact operators on $\ell^2(I)$.
Then
\[
C_I \otimes_{\rm s} R_I \equiv C_I \otimes_{\rm s}^{\bb C} R_I \simeq \cl K_I.
\]
Hence $C_I \otimes_{\rm s} R_I$ is unital if and only if $\cl K_I$ is unital if and only if $I$ is finite.
Indeed, if $I$ is finite, then $\cl K_I = M_{|I|}$ which is an operator system.
Conversely, if $\cl K_I$ is an operator system, then $\cl K_I$ is a unital C*-algebra (coinciding with its own C*-envelope), and thus $I$ is finite.
Hence even separable operator spaces may provide non-unital selfadjoint operator spaces through their symmetrisation, cf., Theorem \ref{t_nonsepnonun}.
\end{remark}

\begin{example} \label{e_ccnotcis}
For an operator space $\cl E$, the quotient map 
\[
\cl E^* \otimes_{\rm s} \cl E \to \cl E^* \otimes_{\rm s}^{\cl A_{\ell}(\cl E)} \cl E
\]
is completely contractive and completely positive.
We now give an example for which this map is not isometric.

Towards this end, consider the non-degenerate TRO $\cl M = M_{k,n} \oplus M_{m, \ell}$ for integers $k, n, m, \ell \neq 1$, so that 
\[
\cl A_{\ell}(\cl M) = [\cl M \cl M^*] = M_k \oplus M_m.
\]
Let $y := 0 \oplus E_{m,\ell}$, $x := E_{1,1} \oplus 0$ in $\cl \M$, and $a := E_{1, 1} \oplus 0$ in $\cl A_{\ell}(\cl M)$, so that $y^* a = 0$ and $ax = x$. 
On one hand, by Lemma \ref{l_norm}, we have
\[
\|y^* \otimes x\|_{\rm s} \geq \frac{1}{4} \|y^* \otimes x\|_{\rm h} = \frac{1}{4} \|y^*\| \cdot \|x\| = \frac{1}{4};
\]
on the other hand,
\[
\|y^* \otimes x\|_{{\rm s} \cl A_{\ell}(\cl M)} = 
\|y^* \otimes a x\|_{{\rm s} \cl A_{\ell}(\cl M)} =
\|y^* a \otimes x\|_{{\rm s} \cl A_{\ell}(\cl M)} = 0.
\]
Hence the map
\[
\cl M^* \otimes_{\rm s} \cl M \to \cl M^* \otimes_{\rm s}^{\cl A_{\ell}(\cl M)} \cl M; y^* \otimes x \mapsto y^* \otimes x
\]
is not isometric.
\end{example}

\begin{example}\label{e_env}
Let $\cl E$ be an operator space. Since $\cl A_{\ell}(\cl E)$ is a unital C*-subalgebra of $\cl A_{\ell}(\tenv(\cl E))$ in a canonical fashion, we have that the map 
\[
q \colon \cl E^* \otimes_{\rm s}^{\cl A_{\ell}(\cl E)} \cl E \to \tenv(\cl E)^* \otimes_{\rm s}^{\cl A_{\ell}(\tenv(\cl E))} \tenv(\cl E); \ y^* \otimes x \mapsto y^* \otimes x
\]
is well defined, completely contractive and completely positive.
We now provide an example of an operator space $\cl E$ for which this map is not completely isometric.

Towards this end, consider the space
\[
{\rm C}(S^1)^{(2)} := \left\{ \begin{pmatrix} \al & \be \\ \ga & \al \end{pmatrix} \ : \ \al, \be, \ga \in \bb C \right\}.
\]
A more general class of operator spaces (systems), including ${\rm C}(S^1)^{(2)}$, was considered in \cite{cvs}, where it was shown that 
\[
\cenv({\rm C}(S^1)^{(2)}) = M_2.
\]
In particular, ${\rm C}(S^1)^{(2)}$ is rigid, that is, 
\[
\cl A_{\ell}({\rm C}(S^1)^{(2)}) = \bb C.
\]
Note here that ${\rm C}(S^1)^{(2)}$ is a unital operator space, and thus its C*-envelope coincides with its TRO-envelope, see \cite[Section 4.4.7]{blm}.
In this case the map $q$ yields a canonical map
\[
({\rm C}(S^1)^{(2)})^* \otimes_{\rm s} {\rm C}(S^1)^{(2)} \to M_2^* \otimes_{\rm  s}^{M_2} M_2 \simeq [M_2^* M_2] = M_2,
\]
where we have applied Lemma \ref{c_troscand}.
Letting $y := E_{2,1}$ and $x := E_{1,2}$ as elements of ${\rm C}(S^1)^{(2)}$, 
by Lemma \ref{l_norm} we have 
\[
\| y^* \otimes x \|_{\rm s} \geq \frac{1}{4} \|y^* \otimes x\|_{\rm h} = \frac{1}{4} \cdot \|y^*\| \cdot \|x\| = \frac{1}{4}.
\]
On the other hand,
\[
\|q(y^* \otimes x)\|_{M_2} = \|y^* x\|_{M_2} = \| E_{2,1} \cdot E_{2,1} \|_{M_2} = 0,
\]
and thus the map $q$ is not (completely) isometric.
\end{example}

\section{Morita equivalence}\label{s_morita}

This section is devoted to one of our main applications of the symmetrisation construction, namely, 
a factorisation result for $\Delta$-equivalent operator systems that can be viewed, in the terminology of \cite{Bas62}, as Morita Theorem II for the category of operator systems. 

\subsection{$\Delta$-equivalence}

We recall from \cite{ekt} the notion of $\Delta$-equivalence in the operator system category, 
and its abstract characterisations obtained therein. 
Two concrete closed operator systems $\cl S\subseteq \cl B(H)$ and $\cl T \subseteq \cl B(K)$ are called \emph{TRO-equivalent} if there exists a closed TRO $\cl M\subseteq \cl B(H,K)$ such that 
\[
[\cl M^*\cl T\cl M] = \cl S \ \ \mbox{ and } \ \ [\cl M\cl S\cl M^*] = \cl T;
\]
in this case, we write $\cl S\sim_{\rm TRO}\cl T$.
If $\cl S$ and $\cl T$ are abstract operator systems, they are called \emph{$\Delta$-equivalent} if there exist 
(unital) complete order embeddings $\psi_{\cl S} \colon \cl S\to \cl B(H)$ and $\psi_{\cl T} \colon \cl T\to \cl B(K)$ such that $\psi_{\cl S}(\cl S) \sim_{\rm TRO} \psi_{\cl T}(\cl T)$; in this case we write $\cl S\sim_{\Delta} \cl T$. 

Let $\cl S$ and $\cl T$ be abstract operator systems and $\M$ be a TRO.
We say that the quintuple $\big( \cl S, \cl T, \M, [\cdot, \cdot, \cdot], (\cdot, \cdot, \cdot) \big)$ is a \emph{$\Delta$-context} if:

\begin{itemize}
\item[(i)] the C*-algebras $[\M^* \M]$ and $[\M \M^*]$ are unital;

\item[(ii)] $\cl S$ is an operator bimodule over the C*-algebra $[\M^* \M]$ and 
$\cl T$ is a operator bimodule over the C*-algebra $[\M \M^*]$; 

\item[(iii)] $[\cdot,\cdot,\cdot] \colon \M^* \times \cl T \times \M \longrightarrow \cl S$ and $(\cdot,\cdot,\cdot) \colon \M \times \cl S \times \M^* \longrightarrow \cl T$ are completely contractive completely positive maps, modular over $[\M^*\M]$ and $[\M \M^*]$ on the outer variables (with unital module actions);

\item[(iv)] the associativity relations
\[
(m_1, [m_2^*, t, m_3], m_4^*) = (m_1m_2^*) \cdot t \cdot (m_3 m_4^*)
\]
and
\[
[m_1^*, (m_2,  s, m_3^*), m_4] = (m_1^* m_2) \cdot s \cdot (m_3^* m_4)
\]
hold for all $s \in \cl S$, $t \in \cl T$ and all $m_1, m_2, m_3, m_4 \in \M$; and

\item[(v)] 
the trilinear maps $[\cdot, \cdot, \cdot]$ and $(\cdot, \cdot, \cdot)$ satisfy the relations
\begin{equation}\label{eq_move}
(m_1, 1_\cl S, m_2^*) = (m_1 m_2^*) \cdot 1_\cl T
\qand
[m_1^*, 1_\cl S, m_2] = (m_1^* m_2) \cdot 1_\cl S
\end{equation}
for all $m_1, m_2 \in \M$.
\end{itemize}
By items (i) and (iv) we see that the trilinear maps $[\cdot,\cdot,\cdot]$ and $(\cdot,\cdot,\cdot)$ are surjective.

More generally, let $\cl X$ be an abstract operator space.
We say that the quintuple $\big( \cl S, \cl T, \cl X, [\cdot, \cdot, \cdot], (\cdot, \cdot, \cdot) \big)$ is a \emph{bihomomorphism context} if:

\begin{itemize}
\item[(i)] $\cl X$ is non-degenerate;

\item[(ii)] 
$[\cdot,\cdot,\cdot] \colon \cl X^* \times \cl T \times \cl X \longrightarrow \cl S$ and 
$(\cdot,\cdot,\cdot) \colon \cl X \times \cl S \times \cl X^* \longrightarrow \cl T$
are completely contractive completely positive maps such that
\[
[\cl X^*, 1_\cl T, \cl X] \subseteq \cl A_{\cl S} \qand (\cl X, 1_\cl S, \cl X^*) \subseteq \cl A_{\cl T};
\]

\item[(iii)] the associativity relations
\[
[x_1^*, (x_2, s, x_3^*), x_4] = [x_1^*, 1_\cl T, x_2] \cdot s \cdot [x_3^*, 1_\cl T, x_4]
\]
and
\[
(x_1, [x_2^*, t, x_3], x_4^*) = (x_1, 1_\cl S, x_2^*) \cdot t \cdot (x_3, 1_\cl S, x_4^*)
\]
hold for all $s \in\cl S, t \in \cl T$ and all $x_1, x_2, x_3, x_4 \in \cl X$; and

\item[(iv)] there exist semi-units $((\un{y}_i)_i, (\un{x}_i)_i)$ and $((\un{w}_i)_i, (\un{z}_i)_i)$ over $\cl X$ and $\cl X^*$, respectively, such that 
\begin{equation}\label{eq_adjxi}
\lim_i [\un{y}_i^*, 1_\cl T \otimes I, \un{x}_i] =  1_\cl S
\ \ \mbox{ and } \ \ 
\lim_i (\un{w}_i, 1_\cl S \otimes I, \un{z}_i^*) = 1_\cl T.
\end{equation}
\end{itemize}
By items (iii) and (iv) we see that the trilinear maps $[\cdot,\cdot,\cdot]$ and $(\cdot,\cdot,\cdot)$ are surjective.
In \cite{ekt} we studied the connection between $\Delta$-contexts, bihomomorphism contexts and $\Delta$-equivalence.
We write $\cl K$ for the set of compact operators in $\ell^2$.

\begin{theorem} \cite[Theorem 3.17]{ekt}\label{th_ekt}
Let $\cl S$ and $\cl T$ be (abstract) operator systems. 
The following are equivalent:
\begin{enumerate}
\item $\cl S\sim_{\Delta}\cl T$;
\item $\cl S \otimes \cl K \simeq \cl T \otimes \cl K$ by a complete order isomorphism;
\item there exists a $\Delta$-context for $\cl S$ and $\cl T$;
\item there exists a bihomomorphism context for $\cl S$ and $\cl T$.
\end{enumerate}
\end{theorem}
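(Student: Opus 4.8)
The plan is to establish the four conditions as equivalent through the cycle $\text{(i)} \Rightarrow \text{(iii)} \Rightarrow \text{(iv)} \Rightarrow \text{(ii)} \Rightarrow \text{(i)}$, the central object being the concrete TRO-equivalence picture together with its destabilisation. For $\text{(i)} \Rightarrow \text{(iii)}$ I would begin from complete order embeddings $\psi_{\cl S} \colon \cl S \to \cl B(H)$ and $\psi_{\cl T} \colon \cl T \to \cl B(K)$ and a closed TRO $\cl M \subseteq \cl B(H,K)$ with $[\cl M^* \psi_{\cl T}(\cl T) \cl M] = \psi_{\cl S}(\cl S)$ and $[\cl M \psi_{\cl S}(\cl S) \cl M^*] = \psi_{\cl T}(\cl T)$, and define the trilinear maps by the ambient products, $[m_1^*, t, m_2] := \psi_{\cl S}^{-1}(m_1^* \psi_{\cl T}(t) m_2)$ and $(m_1, s, m_2^*) := \psi_{\cl T}^{-1}(m_1 \psi_{\cl S}(s) m_2^*)$. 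These are completely contractive and completely positive as compressions of the multiplications in $\cl B(H)$ and $\cl B(K)$; the associativity relations (iv) and the unit-compatibility relations (v) are then immediate from associativity in the ambient algebras together with $\psi_{\cl S}(1_{\cl S}) = I_H$ and $\psi_{\cl T}(1_{\cl T}) = I_K$, and the bimodule actions of $[\cl M^* \cl M]$ on $\cl S$ and of $[\cl M \cl M^*]$ on $\cl T$ are again given by multiplication. The one genuinely delicate point here is to arrange that $[\cl M^* \cl M]$ and $[\cl M \cl M^*]$ be \emph{unital}: since $I_H \in [\cl M^* \psi_{\cl T}(\cl T) \cl M]$ and $[\cl M^* \cl M] \subseteq \psi_{\cl S}(\cl S)$, I would produce a finite Parseval frame $\sum_i x_i^* x_i = 1$ for $\cl M$ over $[\cl M^* \cl M]$ exactly as in Remark \ref{r_unit} (via Kasparov's Stabilisation), which forces the linking C*-algebras to be unital.

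For $\text{(iii)} \Rightarrow \text{(iv)}$ I would observe that a $\Delta$-context is a bihomomorphism context in which the operator space $\cl X$ is the TRO $\cl M$: non-degeneracy of $\cl M$ is precisely unitality of $[\cl M^* \cl M]$ and $[\cl M \cl M^*]$; relation (v) gives $[m_1^*, 1_{\cl T}, m_2] = (m_1^* m_2) \cdot 1_{\cl S}$, which lies in $\cl A_{\cl S}$ because $[\cl M^* \cl M]$ acts on $\cl S$ as completely contractive module multipliers, and symmetrically for $\cl A_{\cl T}$; the bihomomorphism associativity (iii) follows from (iv) and (v) of the $\Delta$-context; and the semi-units are manufactured from the frame $\sum_i x_i^* x_i = 1$, yielding $\lim_i [\un{y}_i^*, 1_{\cl T} \otimes I, \un{x}_i] = 1_{\cl S}$. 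The step $\text{(iv)} \Rightarrow \text{(ii)}$ is the first substantial one: given a bihomomorphism context, I would represent $\cl T$ (hence $\cl A_{\cl T}$) faithfully and use the semi-units $(\un{x}_i)$, $(\un{y}_i)$ to assemble, on the amplified spaces, a family of matrix coefficients converting $\cl S$-data into $\cl T$-data; amplifying by $\cl K$ absorbs the merely approximate, net-wise nature of the semi-unit limits, so that the associativity relations promote the resulting completely positive maps to mutually inverse complete order isomorphisms $\cl S \otimes \cl K \simeq \cl T \otimes \cl K$.

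Finally, for $\text{(ii)} \Rightarrow \text{(i)}$ I would recover concrete TRO-equivalence from a complete order isomorphism $\Phi \colon \cl S \otimes \cl K \to \cl T \otimes \cl K$ by destabilisation. Representing $\cl T \otimes \cl K$ on a Hilbert space $\wt{K}$, the corner $\cl S \cong \cl S \otimes e_{11}$ is carried by $\Phi$ to a concrete copy of $\cl S$ inside $\cl B(\wt{K})$, while the off-diagonal images $\cl M := \ol{\rm span}\, \Phi(\cl S \otimes e_{1j})$ form a closed TRO intertwining the two representations; associativity of matrix multiplication in $\cl K$ then yields $[\cl M^* \cl T \cl M] = \cl S$ and $[\cl M \cl S \cl M^*] = \cl T$, so that $\cl S \sim_{\rm TRO} \cl T$ and hence $\cl S \sim_{\Delta} \cl T$, closing the cycle.

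I expect the two reconstruction directions, $\text{(iv)} \Rightarrow \text{(ii)}$ and $\text{(ii)} \Rightarrow \text{(i)}$, to be the crux. In $\text{(iv)} \Rightarrow \text{(ii)}$ the difficulty is to manufacture a genuine two-sided complete order isomorphism out of abstract trilinear data whose defining frames converge only in the limit, which is exactly why stabilisation by $\cl K$ is unavoidable; in $\text{(ii)} \Rightarrow \text{(i)}$ the difficulty is the operator-system analogue of the Brown--Green--Rieffel theorem, namely extracting a single TRO implementing the equivalence from a stable isomorphism while preserving the full matricial order throughout, and not merely the completely bounded structure.
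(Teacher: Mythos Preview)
This theorem is not proved in the present paper: it is stated with the citation \cite[Theorem 3.17]{ekt} and is simply recalled from the authors' earlier work as background for Section~\ref{s_morita}. Consequently there is no proof in this paper against which to compare your proposal. Your outline of the cycle $\text{(i)}\Rightarrow\text{(iii)}\Rightarrow\text{(iv)}\Rightarrow\text{(ii)}\Rightarrow\text{(i)}$ is a plausible sketch of how the argument in \cite{ekt} might go, and your identification of $\text{(iv)}\Rightarrow\text{(ii)}$ and $\text{(ii)}\Rightarrow\text{(i)}$ as the substantive steps is accurate, but to assess correctness in detail you would need to consult \cite{ekt} directly rather than the present paper.
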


In \cite{bmp} Blecher-Muhly-Paulsen defined the notion of a Morita context for a pair of (nonselfadjoint) operator algebras, and in \cite[Theorem 3.5]{bmp} they showed that a Morita context for approximately unital algebras results in a factorisation via Haagerup tensor products, in analogy to the characterisation of Morita equivalence of rings.
The additional step for the category of operator spaces in comparison to rings is that there are more than one topological tensor product, and the choice of the right norm plays an important role.
The following remark is analogous to \cite[Theorem 3.5]{bmp} for $\Delta$-equivalence of operator systems, taking into account the fact that $\Delta$-contexts are the analogues of Morita contexts in the operator system category.
We note that the notion of a bihomomorphism context does not have a counterpart in the Morita theory of operator algebras. 

\begin{remark}\label{r_deltahaag}
Following \cite{ek}, in \cite{ekt} we established that, if $\cl S \sim_{\Delta}\cl T$, then there exist complete order embeddings 
\[
\psi_{\cl S} \colon \cl S \to \cl B(H)
\qand 
\psi_{\cl T} \colon \cl T \to \cl B(K),
\]
and a non-degenerate TRO $\cl M \subseteq \cl B(H, K)$, such that $\psi_{\cl S}(\cl S) \sim_{\rm TRO} \psi_{\cl T}(\cl T)$ via $\cl M$ and, in addition, 
\[
\psi_{\cl S}(\cl A_{\cl S}) = [\cl M^* \cl M] \qand \psi_{\cl T}(\cl A_{\cl T}) = [\cl M \cl M^*].
\]
Furthermore, 
\[
\cenv(\cl S) \simeq \ca(\psi_{\cl S}(\cl S)) \qand \cenv(\cl T) \simeq \ca(\psi_{\cl T}(\cl T)),
\]
and consequently $\cenv(\cl S) \sim_{\Delta} \cenv(\cl T)$.
By using the properties of the symmetrisation we obtain canonical completely contractive completely positive maps
\[
\cl M^* \otimes_{s}^{\cl A_{\cl T}} \cl T \otimes_{s}^{\cl A_{\cl T}} \cl M 
\to [\cl M^* \psi_{\cl T}(\cl T) \cl M] 
\simeq \cl S
\]
and
\[
\cl M \otimes_{s}^{\cl A_{\cl S}} \cl S \otimes_{s}^{\cl A_{\cl S}} \cl M^* 
\to [\cl M \psi_{\cl S}(\cl S) \cl M^*] 
\simeq \cl T,
\]
induced by the multiplication maps.
An application of Theorem \ref{t_trosym} shows that these maps are completely isometric complete order isomorphisms.

An application of Remark \ref{r_unit} implies that $\cl M$ and $\cl M^*$ are necessarily finitely generated as right Hilbert modules.
Moreover, the symmetrisations are unital, and a fortiori $\cl M$ is $\cl A_{\cl T}$-balanced $\cl T$-semi-unital, and $\cl M^*$ is $\cl A_{\cl S}$-balanced $\cl S$-semi-unital.

Furthermore, an application of Corollary \ref{c_consym} yields a canonical association between the positive elements.
In particular, we have
\[
\psi_{\cl S}^{(n)}(M_n(\cl S)^+) = 
\{x^* \psi_{\cl T}^{(k)}(M_k(\cl S)^+) x \ : \ x \in M_{k,n}(\cl M), k \in \bb N\}^{-\|\cdot\|},
\]
and 
\[
\psi_{\cl T}^{(n)}(M_n(\cl T)^+) = 
\{x \psi_{\cl S}^{(k)}(M_k(\cl S)^+) x^* \ : \ x \in M_{n,k}(\cl M), k \in \bb N\}^{-\|\cdot\|}
\]
for the positive cones $\{C_n({\cl S})\}_{n \in \bb N}$ and $\{C_n({\cl T})\}_{n \in \bb N}$ of $\cl S$ and $\cl T$, respectively.
\end{remark}

\subsection{Factorisation of $\Delta$-equivalent operator systems}

We would like to obtain a converse of Remark \ref{r_deltahaag} that does not depend on a linking representation for the TRO giving the $\Delta$-equivalence.
Furthermore we would like to relax the conditions for obtaining a $\Delta$- or a bihomomorphism context.
Although both versions turn out to be identical with stable isomorphisms by \cite[Theorem 3.8]{ekt}, we would like to make further connection with bihomomoprhisms as they appear in the context of non-commutative graphs \cite{stahlke} which has been a point of motivation.
This will be the subject of this subsection; its main result complements Theorem \ref{th_ekt} with one more characterisation of $\Delta$-equivalence, this time in terms of tensorial factorisations.

\begin{definition}
Let $\cl S$ and $\cl T$ be operator systems and $\cl E$ be a left operator $\cl A_{\cl T}$-space that is also a right operator $\cl A_{\cl S}$-space. 
Suppose that both spaces $\cl E^*\otimes_{\rm s}^{\cl A_{\cl T}}\cl T \otimes_{\rm s}^{\cl A_{\cl T}}\cl E$ and $\cl E\otimes_{\rm s}^{\cl A_{\cl S}}\cl S \otimes_{\rm s}^{\cl A_{\cl S}}\cl E^*$ are operator systems.
A pair $(\al, \be)$ of unital completely positive maps
\[
\alpha \colon \cl E^*\otimes_{\rm s}^{\cl A_{\cl T}}\cl T \otimes_{\rm s}^{\cl A_{\cl T}}\cl E \to \cl S
\ \mbox{ and } \ 
\beta \colon \cl E\otimes_{\rm s}^{\cl A_{\cl S}}\cl S \otimes_{\rm s}^{\cl A_{\cl S}}\cl E^* \to \cl T,
\]
is called \emph{compatible} if the following hold:

\smallskip

\noindent (i)
Write $\alpha_1 \colon \cl E^* \otimes_{\rm s}^{\cl A_{\cl T}}\cl E \to \cl S$ and $\beta_1 \colon \cl E \otimes_{\rm s}^{\cl A_{\cl S}}\cl E^* \to \cl T$ for the restriction maps, obtained after taking into account
the inclusions
\[
 \cl E^* \otimes_{\rm s}^{\cl A_{\cl T}} \cl E \subseteq \cl E^*\otimes_{\rm s}^{\cl A_{\cl T}}\cl T \otimes_{\rm s}^{\cl A_{\cl T}}\cl E
\qand
\cl E \otimes_{\rm s}^{\cl A_{\cl S}}\cl E^* \subseteq \cl E\otimes_{\rm s}^{\cl A_{\cl S}}\cl S \otimes_{\rm s}^{\cl A_{\cl S}}\cl E^*,
\]
that hold by the injectivity of the symmetrisation (see Proposition \ref{p_tensmaps});
then
\begin{equation}\label{eq_abmult}
\alpha_1 (\cl E^* \otimes_{\rm s}^{\cl A_{\cl T}}\cl E) \subseteq \cl A_{\cl S}
\qand
\beta_1 (\cl E \otimes_{\rm s}^{\cl A_{\cl S}}\cl E^*) \subseteq \cl A_{\cl T}.
\end{equation}

\noindent
\item (ii) The maps $\al$ and $\be$ satisfy the conditions
\begin{align} \label{eq_alphabeta}
\al(x_1^* \otimes^{\cl A_{\cl T}} \be(x_2 \otimes^{\cl A_{\cl S}} s \otimes^{\cl A_{\cl S}} x_3^*) \otimes^{\cl A_{\cl T}} x_4) 
& = \nonumber \\
& \hspace{-2cm} = 
\al_1(x_1^* \otimes^{\cl A_{\cl T}} x_2) \cdot s \cdot \al_1(x_3^* \otimes^{\cl A_{\cl T}} x_4) \nonumber \\
\be(x_1 \otimes^{\cl A_{\cl S}} \al(x_2^* \otimes^{\cl A_{\cl T}} t \otimes^{\cl A_{\cl T}} x_3) \otimes^{\cl A_{\cl S}} x_4^*) 
& = \\
& \hspace{-2cm} =
\be_1(x_1 \otimes^{\cl A_{\cl S}} x_2^*) \cdot t \cdot \be_1(x_3 \otimes^{\cl A_{\cl S}} x_4^*), \nonumber
\end{align}
for all $x_1, x_2, x_3, x_4 \in \cl E$, $s \in \cl S$ and $t \in \cl T$.
\end{definition}

The next theorem is the main result of this section.

\begin{theorem}\label{th_facDelta}
Let $\cl S$ and $\cl T$ be operator systems. 
The following are equivalent:
\begin{itemize}
\item[(i)] $\cl S\sim_{\Delta}\cl T$;

\item[(ii)] there exist a closed TRO $\cl M$ such that 
\[
\cl A_{\cl T} \stackrel{\pi_{\cl T}}{\simeq} [\cl M \cl M^*]
\qand
\cl A_{\cl S} \stackrel{\pi_{\cl S}}{\simeq} [\cl M^* \cl M],
\]
$\cl M$ is $\cl A_{\cl T}$-balanced $\cl T$-semi-unital and $\cl M^*$ is $\cl A_{\cl S}$-balanced $\cl S$-semi-unital, and a compatible pair $(\al, \be)$ of unital complete order isomorphisms
\[
\alpha \colon \cl M^*\otimes_{\rm s}^{\cl A_{\cl T}}\cl T \otimes_{\rm s}^{\cl A_{\cl T}}\cl M \to \cl S
\text{ and } 
\beta \colon \cl M\otimes_{\rm s}^{\cl A_{\cl S}}\cl S \otimes_{\rm s}^{\cl A_{\cl S}}\cl M^* \to \cl T,
\]
that are bimodule maps over $\cl A_{\cl S}$ and $\cl A_{\cl T}$, respectively, such that
\begin{equation}\label{eq_module}
\begin{split}
\al(x_1^* \otimes 1_{\cl T} \otimes x_2) = \pi_{\cl S}^{-1}(x_1^* x_2) \cdot 1_{\cl S} \\
\be(x_1 \otimes 1_{\cl S} \otimes x_2^*) = \pi_{\cl T}^{-1}(x_1 x_2^*) \cdot 1_{\cl T},
\end{split}
\end{equation}
for all $x_1, x_2 \in \cl M$;

\item[(iii)] there exist a non-degenerate operator space $\cl E$ that is an operator $\cl A_{\cl T}$-$\cl A_{\cl S}$-bimodule, such that $\cl E$ is $\cl A_{\cl T}$-balanced $\cl T$-semi-unital and $\cl E^*$ is $\cl A_{\cl S}$-balanced $\cl S$-semi-unital, and a compatible pair $(\al, \be)$ of unital completely positive maps
\[
\alpha \colon \cl E^*\otimes_{\rm s}^{\cl A_{\cl T}}\cl T \otimes_{\rm s}^{\cl A_{\cl T}}\cl E \to \cl S
\text{ and }
\beta \colon \cl E\otimes_{\rm s}^{\cl A_{\cl S}}\cl S \otimes_{\rm s}^{\cl A_{\cl S}}\cl E^* \to \cl T.
\]
\end{itemize}

If any (and thus all) of the items hold, then the maps $\al$ and $\be$ in items (ii) and (iii) are surjective.
Moreover, the TRO $\cl M$ in item (ii) can be chosen to be finitely generated as a module over $[\cl M\cl M^*]$.
\end{theorem}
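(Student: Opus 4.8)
The plan is to prove Theorem \ref{th_facDelta} by establishing the cycle of implications (i)$\Rightarrow$(ii)$\Rightarrow$(iii)$\Rightarrow$(i), reserving the surjectivity and finite generation remarks for the end. The implication (i)$\Rightarrow$(ii) is essentially a repackaging of Remark \ref{r_deltahaag}: if $\cl S\sim_\Delta\cl T$, we invoke the concrete picture from \cite{ek,ekt} to obtain complete order embeddings $\psi_{\cl S},\psi_{\cl T}$ and a non-degenerate TRO $\cl M\subseteq\cl B(H,K)$ with $[\cl M^*\cl M]=\psi_{\cl S}(\cl A_{\cl S})$, $[\cl M\cl M^*]=\psi_{\cl T}(\cl A_{\cl T})$, and $[\cl M^*\psi_{\cl T}(\cl T)\cl M]=\psi_{\cl S}(\cl S)$, $[\cl M\psi_{\cl S}(\cl S)\cl M^*]=\psi_{\cl T}(\cl T)$. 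Theorem \ref{t_trosym} then provides that the multiplication maps $\alpha,\beta$ are completely isometric complete order isomorphisms; the semi-unitality and the module formulas \eqref{eq_module} follow from Remark \ref{r_unit} and Corollary \ref{c_troscand} applied to $\cl M^*\otimes_{\rm s}^{\cl A_{\cl T}}\cl M\simeq[\cl M^*\cl M]$. I would check compatibility directly: the associativity relations \eqref{eq_alphabeta} reduce, after concrete representation, to associativity of operator multiplication $m_1^*(m_2 s m_3^*)m_4 = (m_1^* m_2)s(m_3^* m_4)$, which is manifest, while \eqref{eq_abmult} is exactly the statement $[\cl M^*\cl M]\subseteq\cl A_{\cl S}$ from the conditions of Theorem \ref{t_trosym}. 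That $\alpha,\beta$ are bimodule maps over $\cl A_{\cl S},\cl A_{\cl T}$ follows from Theorem \ref{t_opBsy} together with the identification of $\cl J^{\cl A}$ and $\cl J^{\otimes_{\rm s}^{\cl A}}$ in Corollary \ref{c_trosymbim}.

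The implication (ii)$\Rightarrow$(iii) is formally trivial: a TRO $\cl M$ is in particular a non-degenerate operator $\cl A_{\cl T}$-$\cl A_{\cl S}$-bimodule via $\pi_{\cl T},\pi_{\cl S}$ (non-degeneracy of the module structure following from unitality of $[\cl M\cl M^*]$ and $[\cl M^*\cl M]$), complete order isomorphisms are in particular unital completely positive, and the compatibility clause is inherited verbatim. So the content lies in (iii)$\Rightarrow$(i). Here the plan is to manufacture a bihomomorphism context for $\cl S$ and $\cl T$ and then apply Theorem \ref{th_ekt}. Given the compatible pair $(\alpha,\beta)$, define the trilinear maps
\[
[\cdot,\cdot,\cdot]\colon \cl E^*\times\cl T\times\cl E\to\cl S, \quad [y^*,t,x]:=\alpha(y^*\otimes^{\cl A_{\cl T}} t\otimes^{\cl A_{\cl T}} x),
\]
\[
(\cdot,\cdot,\cdot)\colon \cl E\times\cl S\times\cl E^*\to\cl T,\quad (x,s,y^*):=\beta(x\otimes^{\cl A_{\cl S}} s\otimes^{\cl A_{\cl S}} y^*).
\]
By Theorem \ref{t_univpro}(i) and the assumption that $\alpha,\beta$ are unital completely positive, these maps are completely contractive completely positive. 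Condition (ii) of a bihomomorphism context — that $[\cl E^*,1_{\cl T},\cl E]\subseteq\cl A_{\cl S}$ and $(\cl E,1_{\cl S},\cl E^*)\subseteq\cl A_{\cl T}$ — is exactly \eqref{eq_abmult}. Condition (iii), the two associativity relations, is precisely the restatement of \eqref{eq_alphabeta} after unravelling the definitions of $[\cdot,\cdot,\cdot]$, $(\cdot,\cdot,\cdot)$, and $\alpha_1=[\cdot,1_{\cl T},\cdot]$, $\beta_1=(\cdot,1_{\cl S},\cdot)$. Condition (iv), the semi-unit relations \eqref{eq_adjxi}, follows from the hypothesis that $\cl E$ is $\cl A_{\cl T}$-balanced $\cl T$-semi-unital and $\cl E^*$ is $\cl A_{\cl S}$-balanced $\cl S$-semi-unital, via Proposition \ref{p_balsemi-unit}: the symmetric semi-units produce nets $(\underline{x}_i)_i$ over $\cl E$ with $\underline{x}_i^*\otimes^{\cl A_{\cl T}}(1_{\cl T}\otimes I)\otimes^{\cl A_{\cl T}}\underline{x}_i$ converging to the unit of $\cl E^*\otimes_{\rm s}^{\cl A_{\cl T}}\cl T\otimes_{\rm s}^{\cl A_{\cl T}}\cl E$, and applying the unital map $\alpha$ gives $\lim_i[\underline{x}_i^*,1_{\cl T}\otimes I,\underline{x}_i]=1_{\cl S}$, and symmetrically for $\cl T$. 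Condition (i), non-degeneracy of $\cl E$, is assumed. Theorem \ref{th_ekt}(iv)$\Rightarrow$(i) then yields $\cl S\sim_\Delta\cl T$.

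The main obstacle I anticipate is bookkeeping in (iii)$\Rightarrow$(i): one must be careful that the restriction maps $\alpha_1,\beta_1$ referred to in the compatibility definition genuinely agree with $[\cdot,1_{\cl T},\cdot]$ and $(\cdot,1_{\cl S},\cdot)$ under the canonical complete order embeddings $\cl E^*\otimes_{\rm s}^{\cl A_{\cl T}}\cl E\hookrightarrow\cl E^*\otimes_{\rm s}^{\cl A_{\cl T}}\cl T\otimes_{\rm s}^{\cl A_{\cl T}}\cl E$ from Proposition \ref{p_tensmaps}, so that \eqref{eq_alphabeta} really does translate into the associativity axiom as stated; and that the extension of the trilinear maps to the matrix levels required in \eqref{eq_adjxi} (the ``$1_{\cl T}\otimes I$'' terms) is handled by applying $\alpha^{(n)},\beta^{(n)}$, which are unital completely positive since $\alpha,\beta$ are. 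For the closing remarks: surjectivity of $\alpha,\beta$ in (ii) follows from Theorem \ref{t_trosym} (the identification with $[\cl M^*\psi(\cl S)\cl M]$ which equals $\cl S$ under $\psi_{\cl S}$), and in (iii) from the already-proven equivalence with (ii) together with the fact that complete order isomorphisms are onto; finite generation of $\cl M$ over $[\cl M\cl M^*]$ is precisely the last paragraph of Remark \ref{r_unit}, applied since the relevant balanced symmetrisation is unital.
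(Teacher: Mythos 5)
Your proposal follows the same cycle (i)$\Rightarrow$(ii)$\Rightarrow$(iii)$\Rightarrow$(i) as the paper's proof and relies on the same key ingredients: Theorem \ref{t_trosym} and Remark \ref{r_unit} for the forward direction, the trivial observation for (ii)$\Rightarrow$(iii), and the construction of a bihomomorphism context via Proposition \ref{p_balsemi-unit} followed by Theorem \ref{th_ekt} for (iii)$\Rightarrow$(i). The argument is correct and essentially identical to the one in the paper, including the closing remarks on surjectivity and finite generation.
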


\begin{proof}
\noindent
[(i)$\Rightarrow$(ii)]:
By \cite[Proposition 3.10]{ekt}, there exists a closed TRO $\cl M$ for the complete order embeddings $\psi_{\cl S} \colon \cl S \to \cenv(\cl S)$ and $\psi_{\cl T} \colon \cl T \to \cenv(\cl T)$ such that
\[
[\cl M^* \psi_{\cl T}(\cl T) \cl M] = \psi_{\cl S}(\cl S)
\qand
[\cl M \psi_{\cl S}(\cl S) \cl M^*] = \psi_{\cl T}(\cl T),
\]
and 
\[
\cl A_{\cl S} \stackrel{\pi_{\cl S}}{\simeq} [\cl M^* \cl M]
\qand
\cl A_{\cl T} \stackrel{\pi_{\cl T}}{\simeq}  [\cl M \cl M^*],
\]
for $\pi_{\cl S} = \psi_{\cl S}|_{\cl A_{\cl S}}$ and $\pi_{\cl T} = \psi_{\cl T}|_{\cl A_{\cl T}}$.
By construction we have
\[
\pi_{\cl S}(a) \psi_{\cl S}(s) = \psi_{\cl S}(a \cdot s), \ \ a \in \cl A_{\cl S}, s \in \cl S;
\]
That is, $(\psi_{\cl S}, \pi_{\cl S})$ is an $\cl A_{\cl S}$-representation.
Likewise $(\psi_{\cl T}, \pi_{\cl T})$ is an $\cl A_{\cl T}$-repre\-sen\-ta\-tion.

By Theorem \ref{t_trosym} and Remark \ref{r_unit}, we have complete order isomorphisms $\Phi_{\cl T}$ and $\Phi_{\cl S}$ so that the symmetrisations
\[
\cl M^*\otimes_{\rm s}^{\cl A_{\cl T}}\cl T \otimes_{\rm s}^{\cl A_{\cl T}}\cl M 
\stackrel{\Phi_{\cl T}}{\simeq} 
[\cl M^* \psi_{\cl T}(\cl T) \cl M] = \psi_{\cl S}(\cl S)
\]
and
\[
\cl M\otimes_{\rm s}^{\cl A_{\cl S}}\cl S \otimes_{\rm s}^{\cl A_{\cl S}}\cl M^*
\stackrel{\Phi_{\cl S}}{\simeq}
[\cl M \psi_{\cl S}(\cl S) \cl M^*] = \psi_{\cl T}(\cl T)
\]
are unital; a fortiori, $\cl M$ is $\cl A_{\cl T}$-balanced $\cl T$-semi-unital, and $\cl M^*$ is $\cl A_{\cl S}$-balanced $\cl S$-semi-unital.
Moreover, by Corollary \ref{c_trosymbim}, we have that the balanced symmetrisations are C*-bimodules and, by construction $(\Phi_{\cl T}, \pi_{\cl S})$ and $(\Phi_{\cl S}, \pi_{\cl T})$ are C*-representa\-tions over $\cl A_{\cl S}$ and $\cl A_{\cl T}$, respectively.

Let
\[
\al := \psi_{\cl S}^{-1} \circ \Phi_{\cl T} \colon \cl M^*\otimes_{\rm s}^{\cl A_{\cl T}}\cl T \otimes_{\rm s}^{\cl A_{\cl T}}\cl M \to \cl S
\]
and
\[
\be := \psi_{\cl T}^{-1} \circ \Phi_{\cl S} \colon \cl M\otimes_{\rm s}^{\cl A_{\cl S}}\cl S \otimes_{\rm s}^{\cl A_{\cl S}}\cl M^* \to \cl T;
\]
we have that $\al$ and $\be$ are the unital complete order isomorphisms, satisfying
\[
\al(x^*_1 \otimes^{\cl A_{\cl T}} t \otimes^{\cl A_{\cl T}} x_2) 
= 
\psi_{\cl S}^{-1}(x^*_1 \psi_{\cl T}(t) x_2)
\]
and
\[
\be(x_1 \otimes^{\cl A_{\cl S}} s \otimes^{\cl A_{\cl S}} x_2^*) 
= 
\psi_{\cl T}^{-1}(x_1 \psi_{\cl S}(s) x_2^*).
\]
In particular, 
\begin{align*}
\al(x^*_1 \otimes^{\cl A_{\cl T}} 1_{\cl T} \otimes^{\cl A_{\cl T}} x_2)
& = 
\psi_{\cl S}^{-1}(x^*_1 x_2)
=
\psi_{\cl S}^{-1}(x^*_1 x_2 \cdot \psi_{\cl S}(1_{\cl S})) \\
& =
\pi_{\cl S}^{-1}(x_1^* x_2) \cdot \psi_{\cl S}^{-1} (\psi_{\cl S}(1_{\cl S}))
=
\pi_{\cl S}^{-1}(x_1^* x_2) \cdot 1_{\cl S}.
\end{align*}
Note that, in addition, 
\[
\al_1(x^*_1 \otimes^{\cl A_{\cl T}} x_2) 
=
\al(x^*_1 \otimes^{\cl A_{\cl T}} 1_{\cl T} \otimes^{\cl A_{\cl T}} x_2)
=
\psi_{\cl S}^{-1}(x^*_1 x_2)
=
\pi_{\cl S}^{-1}(x_1^* x_2).
\]
We readily verify the analogous properties for $\be$, and thus $(\al, \be)$ satisfies (\ref{eq_module}).
It remains to show the compatibility of the pair.
Towards this end, we have
\begin{align*}
\al (x_1^* \otimes^{\cl A_{\cl T}} \be(x_2 \otimes^{\cl A_{\cl S}} s \otimes^{\cl A_{\cl S}} x_3^*) \otimes^{\cl A_{\cl T}} x_4)
& = \\
& \hspace{-5cm} =
\psi_{\cl S}^{-1}(x_1^* \psi_{\cl T} ( \psi_{\cl T}^{-1}(x_2 \psi_{\cl S}(s) x_3^*) ) x_4) 
=
\psi_{\cl S}^{-1}(x_1^* x_2 \psi_{\cl S}(s) x_3^* x_4) \\
& \hspace{-5cm} =
\pi_{\cl S}^{-1}(x_1^* x_2) \cdot \psi_{\cl S}^{-1}(\psi_{\cl S}(s)) \cdot \pi_{\cl S}^{-1}(x_3^* x_4) 
=
\pi_{\cl S}^{-1}(x_1^* x_2) \cdot s \cdot \pi_{\cl S}^{-1}(x_3^* x_4) \\
& \hspace{-5cm} =
\al_1(x^*_1 \otimes^{\cl A_{\cl T}} x_2) \cdot s \cdot \al_1(x^*_3 \otimes^{\cl A_{\cl T}} x_4),
\end{align*}
where we used that $(\psi_{\cl S}, \pi_{\cl S})$ is a bimodule isomorphism.
The dual relations with $\al$ and $\be$ interchanged are shown in the same way.

\smallskip

\noindent
[(ii)$\Rightarrow$(iii)]: 
By Remark \ref{r_unit}, the closed TRO's $\cl M$ and $\cl M^*$ are finitely generated with $[\cl M \cl M^*]$ and $[\cl M^* \cl M]$ being unital.
Hence $\cl M$ is non-degenerate, and we can set $\cl E = \cl M$.

\smallskip

\noindent
[(iii)$\Rightarrow$(i)]: 
By Theorem \ref{th_ekt}, it suffices to establish the existence of a bihomomorphism context.
Towards this end, define the trilinear maps
\[
[\cdot,\cdot,\cdot] \colon \cl E^* \times \cl T \times \cl E \longrightarrow \cl S
\ \mbox{ and } \ 
(\cdot,\cdot,\cdot) \colon \cl E \times \cl S \times \cl E^* \longrightarrow \cl T,
\]
by letting 
\[
[y^*,t,x] := \alpha(y^*\otimes t\otimes x) 
\ \mbox{ and } \ 
(y,s,x^*) := \beta(y\otimes s\otimes x^*),
\]
for $x,y\in \cl E$, $s\in \cl S$ and $t\in \cl T$. 
Since the pair $(\al, \be)$ is compatible, we have $[\cl E^*, 1_{\cl T}, \cl E] \subseteq \cl A_{\cl S}$ and $(\cl E, 1_{\cl S}, \cl E^*) \subseteq \cl A_{\cl T}$.
Moreover, if $x_1, x_2, x_3, x_4 \in \cl E$ and $s \in \cl S$, then 
\begin{align*}
[x_1^*, (x_2, s, x_3^*), x_4]
& =
\al(x_1^* \otimes^{\cl A_{\cl T}} \be(x_2 \otimes^{\cl A_{\cl S}} s \otimes^{\cl A_{\cl S}} x_3^*) \otimes^{\cl A_{\cl T}} x_4) \\
& =
\al_1(x_1^* \otimes^{\cl A_{\cl T}} x_2) \cdot s \cdot \al_1(x_3^* \otimes^{\cl A_{\cl T}} x_4) \\
& =
[x_1^*, 1_{\cl T}, x_2] \cdot s \cdot [x_3^*, 1_{\cl T}, x_4].
\end{align*}
In an analogous way, 
\[
(x_1, [x_2^*, t, x_3], x_4^*) = (x_1, 1_{\cl S}, x_2^*) \cdot t \cdot (x_3, 1_{\cl S}, x_4^*)
\]
for all $t \in \cl T$ and all $x_1, x_2, x_3, x_4 \in \cl E$.
By Proposition \ref{p_balsemi-unit}, there exist nets $((\un{y}_i)_i, (\un{x}_i)_i)$ and $((\un{w}_i)_i, (\un{z}_i)_i)$ of finitely supported columns over $\cl E$ and $\cl E^*$ respectively, such that
\[
\lim_i \un{y}_i^* \odot^{\cl A_{\cl T}} 1_{\cl T} \odot^{\cl A_{\cl T}} \un{x}_i = 1_{\cl E^* \otimes_{\rm s}^{\cl A_{\cl T}}\cl T \otimes_{\rm s}^{\cl A_{\cl T}}\cl E},
\]
and
\[
\lim_i \un{w}_i \odot^{\cl A_{\cl S}} 1_{\cl S} \odot^{\cl A_{\cl S}} \un{z}_i^* = 1_{\cl E \otimes_{\rm s}^{\cl A_{\cl S}} \cl S \otimes_{\rm s}^{\cl A_{\cl S}} \cl E^*}.
\]
An application of the (unital) maps $\al$ and $\be$ gives
\[
\lim_i [\un{y}_i^*, 1_{\cl T}, \un{x}_i] = \al(\lim_i \un{y}_i^* \odot^{\cl A_{\cl T}} 1_{\cl T} \odot^{\cl A_{\cl T}} \un{x}_i ) = 1_{\cl S}
\]
and 
\[
\lim_i [\un{w}_i, 1_{\cl S}, \un{z}_i^*] = \be(\lim_i \un{w}_i \odot^{\cl A_{\cl S}} 1_{\cl S} \odot^{\cl A_{\cl S}} \un{z}_i^*) = 1_{\cl T}.
\]
Therefore the quintuple $\big( \cl S, \cl T, \cl E, [\cdot, \cdot, \cdot], (\cdot, \cdot, \cdot) \big)$ is a bihomomorphism context for $\cl S$ and $\cl T$.

\smallskip

Finally we have already noted that bihomomorphism contexts are automatically surjective, and thus so are the maps $\al$ and $\be$. 
\end{proof}


\end{document}